\renewcommand\section{\@startsection{section}{1}{\z@}%
    {-21dd plus-8pt minus-4pt}{10.5dd}
     {\large\bfseries\boldmath}}
\renewcommand\subsection{\@startsection{subsection}{2}{\z@}%
  {-13pt plus-8pt minus-4pt}{\z@}{\bfseries\boldmath}}
\renewcommand\subsubsection{\@startsection{subsubsection}{3}{\z@}%
    {-13pt plus-8pt minus-4pt}{\z@}{\bfseries\boldmath}}
\DeclareSymbolFont{AMSb}{U}{msb}{m}{n}
\DeclareMathAlphabet{\Bb}{U}{msb}{m}{n}
\DeclareSymbolFont{AMSb}{U}{eur}{m}{n}
\DeclareMathAlphabet{\eurm}{U}{eur}{m}{n}
\DeclareSymbolFont{AMSb}{U}{eus}{m}{n}
\DeclareMathAlphabet{\eusm}{U}{eus}{m}{n}
\DeclareSymbolFont{AMSb}{U}{euf}{m}{n}
\DeclareMathAlphabet{\eufm}{U}{euf}{m}{n}
\numberwithin{equation}{section}
\numberwithin{theorem}{section}
\numberwithin{lemma}{section}
\newtheorem{rem}[theorem]{Remark}
\newtheorem{clash}{}
\newtheorem{thmx}{Theorem}
\newtheorem{lemmx}{Lemma}
\newtheorem{corsecttwo}[theorem]{Corrollary}
\newtheorem{lemsectfour}[theorem]{Lemma}
\newtheorem{lemsectfive}[theorem]{Lemma}
\newtheorem{lemsectsix}[theorem]{Lemma}
\newtheorem{defsectsix}[theorem]{Definition}
\newtheorem{lemsectseven}[theorem]{Lemma}
\newtheorem{lemsecteight}[theorem]{Lemma}
\newtheorem{corsecteight}[theorem]{Corrollary}
\newtheorem{lemsectthree}[theorem]{Lemma}
\newtheorem{corsectthree}[theorem]{Corrollary}
\newtheorem{lemsectnine}[theorem]{Lemma}
\newtheorem{defsectnine}[theorem]{Definition}
\newtheorem{lemsectten}[theorem]{Lemma}
\newcommand{\fo}[1]{{\mbox{\footnotesize{$#1$}}}}
\newcommand{\tn}[1]{{\mbox{\tiny{$#1$}}}}
\newcommand{\nor}[1]{{\mbox{\normalsize{$#1$}}}}
\newcommand{\sm}[1]{{\mbox{\small{$#1$}}}}
\newcommand{\im}{{\rm{Im}}}
\newcommand{\re}{{\rm{Re}}}
\newcommand{\ie}{\lambda_{{\tn{\triangle}}}}
\newcommand{\he}{F_{\!{\tn{\triangle}}}}
\newcommand{\het}{F_{\!{\tn{\triangle}}}}
\newcommand{\fet}{\mathcal{F}_{{\tn{\square}}}}
\newcommand{\fetr}{\widehat{\mathcal{F}}_{{\tn{\square}}}}
\newcommand{\fetu}{\mathcal{F}_{{\tn{\bigtriangledown}}}}
\newcommand{\fetur}{\widehat{\mathcal{F}}_{{\tn{\bigtriangledown}}}}
\newcommand{\fetd}{\mathcal{F}_{{\tn{\triangle}}}}
\newcommand{\get}{\mathcal{F}_{{\tn{\square}}}}
\newcommand{\getu}{\mathcal{F}_{{\tn{\bigtriangledown}}}}
\newcommand{\Log}{{\rm{Log}}\,}
\newcommand{\Arg}{{\rm{Arg}}\,}
\newcommand{\diff}{\mathrm{d}}
\newcommand{\imag}{ \hspace{0,02cm}\mathrm{i} \hspace{0,015cm}}
\newcommand{\oh}{\mathrm{o}}
\newcommand{\e}{ \hspace{0,02cm}\mathrm{e} \hspace{0,015cm}}
\DeclareMathOperator*{\limsp}{\overline{\lim}}
\DeclareMathOperator*{\limlw}{\underline{\lim}}
\journalname{PREPRINT}
\begin{document}
\title{ Hyperbolic Fourier series}



\titlerunning{Hyperbolic Fourier series}        

\author{$\phantom{a}$\\[-0,5cm]Andrew Bakan \and H{\aa}kan Hedenmalm \and\\[0,1cm] Alfonso Montes-Rodr\'{\i}guez
 \and \\[0,12cm] Danylo Radchenko \and Maryna Viazovska}

\authorrunning{Bakan, Hedenmalm, Montes-Rodr\'{\i}guez, Radchenko, Viazovska}


\institute{Andrew Bakan \at
               Institute of Mathematics, National Academy of Sciences of Ukraine,
01601 Kyiv, Ukraine\\
              \email{andrew@bakan.kiev.ua}           
           \and
          H{\aa}kan Hedenmalm \at
              KTH Royal Institute of Technology, SE–10044 Stockholm,
Sweden\\  \email{haakanh@math.kth.se}
 \and
         Alfonso Montes-Rodr\'{\i}guez \at
             University of Sevilla, 4180 Sevilla,
Spain\\  \email{amontes@us.es}
 \and
         Danylo Radchenko \at
              ETH Z\"{u}rich,  Mathematics Department, 8092 Z\"{u}rich,
Switzerland\\  \email{danradchenko@gmail.com}
 \and
         Maryna Viazovska \at
              \'{E}cole Polytechnique F\'{e}d\'{e}rale de Lausanne,
1015 Lausanne, Switzerland\\  \email{viazovska@gmail.com}
}


\maketitle

\vspace{-1,95cm}
\begin{abstract}\hspace{-0,17cm}{\bf{.}}
In this article we explain the essence of the interrelation described in [PNAS \textbf{118}, 15 (2021)]
 on how to write explicit interpolation formula for solutions of the Klein-Gordon equation by using the recent Fourier pair  interpolation formula of  Viazovska and Radchenko from [Publ Math-Paris \textbf{129}, 1 (2019)].
 By the words of  Fourier pair  interpolation, we mean the interpolation of a pair of functions of one variable, related to each other by the Fourier transform.

   Hedenmalm and Montes-Rodr\'{\i}guez
  established
in 2011 the weak-star completeness  in $L^{\infty}\! (\Bb{R})$ of the sequence
$1$, $\exp (\imag \pi n x)$, $\exp (\imag \pi n/ x)$, $n \in \Bb{Z} \setminus  \{0\}$,
 which is referred to as the {\emph{hyperbolic trigonometric system}}.
We construct explicitly  the sequence in  $L^1  (\Bb{R} )$ which is  biorthogonal
to this system and show that it is   complete in  $L^1  (\Bb{R} )$.
 The construction involves integrals with the kernel studied by Chib\-ri\-ko\-va
 in  1956,  similar to
 what was used
  by Viazovska and Radchen\-ko
 for the Fourier pair
 interpolation on the real line. We associate with each
 $f \in L^1 (\Bb{R}, (1+x^2)^{-1}{\rm{d}} x)$
 its {\emph{hyperbolic Fourier series}} \vspace{0,1cm}
 \begin{center}
 $\eurm{h}_{0}(f) + \sum_{n \in \Bb{Z}\setminus \{0\}}
 \left(\eurm{h}_{n}(f)\,{\rm{e}}^{{\fo{\imag \pi n x}}} +
  \eurm{m}_{n}(f)\,{\rm{e}}^{{\fo{-\imag \pi n/ x}}} \right)$
 \end{center}

 \vspace{0,2cm} \noindent
and prove that it converges to $f$ in the space of tempered
distributions on the real line.
The integral transform of   $\varphi \in L^{1} (\Bb{R}) $ given by
\begin{align*}
     &U_{\varphi} (x, y):=
     \begin{textstyle}
     \int_{\Bb{R}}
     \end{textstyle}\,
    \varphi (t)  \exp \left(\imag xt + {\imag  y}/{t}\right) {\rm{d}} t \,,
    \quad (x, y) \in \Bb{R}^{2}\,,
\end{align*}
\noindent
 is   continuous and  bounded on $\Bb{R}^{2}$, vanishes
at infinity, $u\!=\!U_\varphi$  solves the   Klein-Gor\-don equation  $u_{xy}\!+\!u\!=\!0$,
 and, by the theorem of Hedenmalm and Montes-Rodr\'{\i}\-guez,
it is determined uniquely by its values at
$\{(\pi n, 0), (0, \pi n)\}_{n \in \Bb{Z}}\!\subset\!\Bb{R}^{2}$.
Applied to the above mentioned biorthogonal system, this transform supplies interpolating
functions $\{\eusm{R}_{n}\}_{n \geqslant 0}$ for the Klein-Gordon
equation   on the characteristics with
  $\eusm{R}_{n}(\pi m , 0)\! = \!\delta_{n, m}$,
   $\eusm{R}_{n}(0 , \pi m)\! = \!0$, $m \!\in\! \Bb{Z}$,
   where $\delta_{n, m}$ is the Kronecker delta symbol.
  Under additional  decay conditions on $U_{\varphi}$,
  these interpolating functions allow us to  restore  $U_{\varphi}$
  from its values at  $\{(\pi n, 0), (0, \pi n)\}_{n \in \Bb{Z}}$.
   The restriction of any smooth solution of the Klein-Gordon
   equation to a bounded rectangle can be represented  in the form of $U_{\varphi}$, but this matter  will be pursued elsewhere.

\keywords{Theta functions \and Elliptic  functions \and Gauss hypergeometric function
}

 \subclass{ 81Q05 \and 42C30  \and  33C05 \and 33E05  }
\end{abstract}
\newpage

\setcounter{tocdepth}{3}
 \tableofcontents


\newpage

\section[\hspace{-0,30cm}. \hspace{0,11cm}Introduction]{\hspace{-0,095cm}{\bf{.}}  Introduction}\label{int}


\hspace{-0,2cm}
 An important unsolved issue in mathematics is
 the problem of interpolating oscillatory processes.
  Uniqueness holds a special place in interpolation.
  For example, when gas fluctuates in a gas storage, it is
  necessary to select the position of the pressure gauges
  in order to fully know about the state of the gas at any
  point in the storage. Unfortunately, nothing but a few
  numerical methods, based on considering solutions with
  extreme value of  entropy, are known (see, e.g., \cite{lio}, \cite{ger}).
  There appear to have been no other approaches to
  this topic  until 2011, when the second and the third authors
  \cite{hed}  considered uniformly bounded
  solutions of the Klein-Gordon equation
$ U_{xy} + U = 0$ in the plane of the form
\begin{align}\label{f0int}
    & U_{\varphi} (x, y) = \int\nolimits_{\Bb{R}}\
    {\rm{e}}^{{\fo{{\rm{i}}  x t + {\rm{i}}  y / t}}} \varphi (t) {\rm{d}} t  \ ,
    \quad  \varphi \in L^{1}(\Bb{R}) \,,
\end{align}

\vspace{-0,1cm}
\noindent On a compact subset, such solutions $U_{\varphi}$  can approximate  any continuous solution
 of this equation.
They pioneered  the study of the {\it{discretized Goursat problem}}
which   instead of prescribing the values of a solution on the
two intersecting characteristics $x=0$ and $y=0$  assumes that
these values are known only along the discrete subset of the characteristics
consisting of  equidistant points $\{(\pi n, 0)\}_{n \in \Bb{Z}}$,
 $\{(0, \pi n)\}_{n \in \Bb{Z}}$. It was established
 in \cite{hed} that the values
 $U_{\varphi} (\pi n, 0)$, $n \in \Bb{Z}$, and
 $U_{\varphi} (0, \pi n)$, $n \in \Bb{Z}\setminus \{0\} $,
 determine the function  $U_{\varphi}$ uniquely. They obtained the following result, which we develop further
  in Section~\ref{refa}.
  \newcommand{\reda}{\,\cite[p.\! 1517, Theorem 3.1]{hed}\,}
 \begin{thmx}[\reda]\hspace{-0,15cm}{\bf{.}}\label{intth0}
 Let $ \varphi  \in L^{1} (\Bb{R})$ have
\begin{align*}
    &  \int\limits_{\Bb{R}}  {\rm{e}}^{{\fo{{\rm{i}} \pi n x}}}
    \varphi (x) {\rm{d}} x  =\int\limits_{\Bb{R}}  {\rm{e}}^{{\fo{{-\rm{i}}
     \pi n / x}}} \varphi (x) {\rm{d}} x=0 \, , \quad  n\in \Bb{Z} \, .
\end{align*}

\vspace{-0,3cm}
\noindent Then $ \varphi  = 0$.
\end{thmx}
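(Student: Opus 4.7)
The natural plan is to turn both hypotheses into periodization identities and then to analyse those via the dynamics of the map $x \mapsto -1/x$.

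\textbf{Step 1: Translate the two conditions into vanishing periodizations.} The functions $\{e^{i\pi n x}\}_{n\in\mathbb{Z}}$ are the characters of the $2$-periodic torus, so the first hypothesis is equivalent, by Poisson summation (or directly by Fubini), to the statement that
\[
\Phi(x) \;:=\; \sum_{k\in\mathbb{Z}} \varphi(x+2k)
\]
vanishes for a.e.\ $x$. For the second hypothesis, I would make the change of variable $t = -1/s$, $dt = ds/s^2$, and set $\widetilde{\varphi}(s) := s^{-2}\varphi(-1/s)$, which is again in $L^1(\mathbb{R})$. Then $\int e^{-i\pi n/t}\varphi(t)\,dt = \int e^{i\pi n s}\widetilde{\varphi}(s)\,ds$, so the second hypothesis is exactly that the $2$-periodization of $\widetilde{\varphi}$ vanishes a.e.

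\textbf{Step 2: Turn the two periodization identities into a single transfer-operator equation.} The maps $T_2: x\mapsto x+2$ and $S: x\mapsto -1/x$ generate a free discrete subgroup $\Gamma \subset \mathrm{PSL}(2,\mathbb{R})$ (essentially the theta-group, an index-$3$ subgroup of $\mathrm{PSL}(2,\mathbb{Z})$). Splitting the sum defining $\Phi$ on $[-1,1]$ into the contribution from $k=0$ and the tail, and rewriting the tail using the vanishing $S$-periodization of $\widetilde{\varphi}$, I expect to obtain an identity of the form
\[
\varphi(x) \;=\; \sum_{\gamma\in \Gamma_{+}} J_\gamma(x)\,\varphi(\gamma x)\,,\qquad x\in I,
\]
on a fundamental interval $I$ for $\Gamma$, where $\Gamma_{+}$ is a family of reduced words in $\{T_2^{\pm 1},S\}$ and $J_\gamma$ encodes the Jacobian factor $(cx+d)^{-2}$. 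The right-hand side is nothing but a Perron–Frobenius (transfer) operator $\mathcal{L}$ for the associated Gauss-type interval map, and the identity becomes $\mathcal{L}\varphi = \varphi$ on $I$.

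\textbf{Step 3: Uniqueness of the fixed point.} It now remains to show that the only fixed point of $\mathcal{L}$ in $L^1(I)$ with mean zero is the zero function. The mean-zero condition comes for free from the $n=0$ case of the first hypothesis, namely $\int_{\mathbb{R}}\varphi=0$. The required uniqueness is the standard statement that the Gauss-type map is exact and its transfer operator has a unique (up to scalar) positive fixed density and spectral gap on an appropriate Banach space (bounded variation, or $L^1$ after a quasi-compactness argument); any $L^1$ fixed point is then a scalar multiple of the Gauss density, which is positive, so forcing mean zero forces $\varphi \equiv 0$.

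\textbf{Main obstacle.} The serious difficulty is the last step executed in $L^1$ rather than in a nicer space: transfer operators typically have good spectral theory on spaces of analytic or BV functions, and one needs a bootstrapping argument (for example, iterating $\mathcal{L}$ to smooth $\varphi$, or passing to a dual formulation on $L^\infty$ which is the natural setting of the weak-star completeness statement) in order to legitimately invoke the spectral gap. The rest of Step~2—making the bookkeeping of the decomposition of $\mathbb{R}$ into $\Gamma$-orbits match exactly the symbolic dynamics of the Gauss map—is combinatorially delicate but routine once the fundamental domain is chosen correctly.
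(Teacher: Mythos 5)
Your Steps 1 and 2 are essentially the reduction the paper itself performs in the proof of Theorem~\ref{refath1} (whose case $N=M=0$ is exactly this statement): the two moment conditions are equivalent to the vanishing a.e.\ on $[-1,1]$ of the $2$-periodizations of $\varphi$ and of $\varphi^{J}(t):=t^{-2}\varphi(-1/t)$, and combining the two identities (the paper does this through the symmetrized functions $F_{\sigma}=\varphi+\sigma\varphi^{J}$, $\sigma\in\{1,-1\}$) produces an eigen-equation $F_{\sigma}=-\sigma\,{\mathbf{T}}_{1}[F_{\sigma}]$ on $[-1,1]$, hence $F_{\sigma}={\mathbf{T}}_{1}^{2N}[F_{\sigma}]$ for all $N$, where ${\mathbf{T}}_{1}$ is the Perron--Frobenius operator \eqref{f1bsfir} of the even Gauss map. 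Up to this point your plan is sound.

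The gap is in Step 3, and it is not the technical $L^{1}$-versus-BV issue you flag as the ``main obstacle''. The even-integer (theta-group) Gauss map has an indifferent cycle at $\pm 1$, and by Schweiger's classical result its absolutely continuous invariant measure is \emph{infinite}. Consequently there is no positive fixed density of ${\mathbf{T}}_{1}$ in $L^{1}$, no quasi-compactness and no spectral gap on BV or analytic spaces, so the ``standard'' statement you invoke (unique positive $L^{1}$ fixed density plus spectral gap, then kill it with the mean-zero condition) is false for this map, and no bootstrapping from $L^{1}$ to a nicer space can recover it; the mean-zero normalization is beside the point, since what must be proved is that $0$ is the \emph{only} $L^{1}$ fixed point. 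That statement is precisely the hard infinite-ergodic-theory content of the Hedenmalm--Montes-Rodr\'{\i}guez theorem (conservativity and ergodicity with respect to an infinite invariant measure). The present paper deliberately bypasses that machinery by a quantitative argument: from $F_{\sigma}={\mathbf{T}}_{1}^{2N}[F_{\sigma}]$ it deduces the domination $\int_{\alpha}^{\beta}|F_{\sigma}|\leqslant\int_{\omega_{2N}([\alpha,\beta])}|F_{\sigma}|$, where $\omega_{2N}([\alpha,\beta])$ is the union of the images of $[\alpha,\beta]$ under the inverse branches of length $2N$; it bounds $m\big(\omega_{2N}([\alpha,\beta])\big)$ by a constant times ${\mathbf{T}}_{1}^{2N}[1](0)$; and it proves in Lemma~\ref{refalem1}, by an elementary argument using only that ${\mathbf{T}}_{1}^{N}[1]$ is even, monotone on $[0,1]$ and has integral $2$, that $\liminf_{N\to\infty}{\mathbf{T}}_{1}^{2N}[1](0)=0$. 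Absolute continuity of the integral of the $L^{1}$ function $F_{\sigma}$ then forces $F_{\sigma}=0$, hence $\varphi=0$. Your proof needs either this kind of quantitative substitute or the full infinite-measure ergodic input; as written, Step 3 rests on a spectral picture that does not hold for this map.
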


\vspace{-0,1cm}
In the present paper  we explicitly construct a  system of functions
$\eurm{H}_{0}$, $\eurm{H}_{n}$, $\eurm{M}_{n} \subset
L^1 (\Bb{R}) \cap C^{\infty} (\Bb{R})$, $n\in \Bb{Z}\setminus \{0\}$,
 which is biorthogonal to the {\emph{hyperbolic
trigonometric system}} $1$, $\exp (\imag \pi n x)$,
$\exp (\imag \pi n/ x) \subset L^{\infty} (\Bb{R})$,
$n \in \Bb{Z}\setminus \{0\}$, such that
\begin{align}\label{f02int}
    & \hspace{-0,15cm} \begin{array}{lll}
       \begin{displaystyle}
       \int\limits_{\Bb{R}} {\rm{e}}^{{\fo{-\imag  \pi n x}}}
       \eurm{H}_{p} (x) {\rm{d}} x = \delta_{n, p} \, ,
       \end{displaystyle}
         &
      \begin{displaystyle}
       \int\limits_{\Bb{R}} {\rm{e}}^{{\fo{ {\imag  \pi m}/{x} }}}
       \eurm{H}_{p} (x) {\rm{d}} x = 0 \,,
      \end{displaystyle}  & \  \left|\eurm{H}_{q} (x)
      \right|\leqslant \dfrac{\pi^{6}  |q\,|^{2} }{1+x^{2}}\ , \
      \\[0,5cm]
            \begin{displaystyle}
       \int\limits_{\Bb{R}} {\rm{e}}^{{\fo{\imag  \pi n x}}} \eurm{M}_{q} (x) {\rm{d}} x = 0 \, ,
       \end{displaystyle}
         &
      \begin{displaystyle}
       \int\limits_{\Bb{R}} {\rm{e}}^{{\fo{ {\imag  \pi m}/{x} }}}
       \eurm{M}_{q} (x) {\rm{d}} x = \delta_{m,  q} \,,
      \end{displaystyle}  &  \  \left|\eurm{M}_{q} (x)
      \right|\leqslant \dfrac{\pi^{6} |q\,|^{2} }{1+x^{2}}\ ,
             \end{array}\hspace{-0,2cm}
\end{align}

\vspace{-0,2cm}
\noindent $\left|\eurm{H}_{0} (x) \right|\!\leqslant\!3/ (1+x^{2})$
for all $x\!\in\! \Bb{R} $,  $n, p \!\in \!\Bb{Z}$ and $m, q \!\in\! \Bb{Z}\setminus \{0\}$,
where $ \delta_{n,  m}=1$, if $n=m$, and
$ \delta_{n,  m}=0$, if $n\neq m$, for arbitrary $n, m \in \Bb{Z}$.
We prove that for each $f \in L^1 (\Bb{R}, (1+x^2)^{-1}{\rm{d}} x)$
 its   {\emph{hyperbolic  Fourier series}}
 \begin{align}\label{f03int}
    & \eurm{h}_{0}(f) + \sum_{n \in \Bb{Z}\setminus \{0\}}
    \left(\eurm{h}_{n}(f)\,{\rm{e}}^{{\fo{\imag \pi n x}}} +
  \eurm{m}_{n}(f)\,{\rm{e}}^{{\fo{-\imag \pi n/ x}}} \right)  ,
  \\[0,2cm]  & \nonumber
\eurm{h}_{n}(f):=  \int\limits_{\Bb{R}}  f (t) \eurm{H}_{-n} (t) {\rm{d}} t
 \ , \ n\in \Bb{Z} \ ; \quad \eurm{m}_{n}(f) :=
  \int\limits_{\Bb{R}}  f (t) \eurm{M}_{-n} (t) {\rm{d}} t \ , \ n\in \Bb{Z}\setminus \{0\} \ ,
\end{align}

\noindent
converges to $f$ in the space $\eurm{S}^{\,\prime} (\Bb{R})$ of
tempered distributions on the real line.
We also show in Remark~\ref{rem1} that in  case  $f (x)$
or $f (-1/x)$ is $2$-periodic and integrable on the period, the
series  \eqref{f03int} coincides with its usual Fourier series
of the  argument $x$ or $-1/x$, respectively. Using \eqref{f09int} below
 (see also Theorem~\ref{intforth2}),  we explicitly restore $U_{\varphi}$
  in \eqref{f0int} from its values at the points
  $\{(\pi n, 0), (0, \pi n)\}_{n \in \Bb{Z}}$ under
  the condition \eqref{f08int}. The proof
  of the assertion that any
 smooth solution of the Klein-Gordon equation
  on a bounded rectangle can be represented in the form \eqref{f0int} will be supplied
  elsewhere. Another aspect of the theory which will be developed elsewhere is the series expansion of $U_{\varphi}$ which is obtained by insertion of the series \eqref{f03int}  for the function $\varphi$ in the integral \eqref{f0int}.

\vspace{0.25cm}

\subsection[\hspace{-0,31cm}. \hspace{0,11cm}Key proofs.]{\hspace{-0,11cm}{\bf{.}} Key proofs.}\label{baspro}\!\!
Given the properties \eqref{f02int}, it is easy to deduce from
Theorem~\ref{intth0} that   for every $f \!\in \!L^1 (\Bb{R}, (1\!+\!x^2)^{-1}{\rm{d}} x)$  the series  \eqref{f03int} converges in  $\eurm{S}^{\,\prime} (\Bb{R})$ and  that the biorthogonal system $\{\eurm{H}_{0}\}\cup\{\eurm{H}_{n}, \eurm{M}_{n}\}_{n\in \Bb{Z}\setminus \{0\}} $ is complete in $L^1 (\Bb{R})$ as well.
We indicate briefly the necessary arguments. The tempered test functions form the so-called Schwartz class
$\eurm{S} (\Bb{R})$ on the real axis (see \cite[p.\! 74]{vlad}), so that for all positive integers
$n$, $ k $ and arbitrary $\varphi \in \eurm{S} (\Bb{R})$ integration by part gives\vspace{-0,02cm}
\begin{align*}
    &  \int\nolimits_{\Bb{R}} \varphi (x) {\rm{e}}^{{\fo{{-\rm{i}}\pi  n x }}} {\rm{d}} x = \dfrac{{\rm{i}}}{\pi  n }
     \int\nolimits_{\Bb{R}}  \varphi (x) {\rm{d}} {\rm{e}}^{{\fo{-{\rm{i}}\pi  n x }}}  = \dfrac{1}{(i\,\pi\,  n)^{k}}  \int\nolimits_{\Bb{R}} \varphi^{(k)} (x) {\rm{e}}^{{\fo{{-\rm{i}}\pi  n x }}} {\rm{d}} x \, , \\  &
  \int\nolimits_{\Bb{R}} \varphi (x) {\rm{e}}^{{\fo{  \dfrac{{\rm{i}} \pi n }{x} }}}  {\rm{d}} x \!= \!
  \dfrac{{\rm{i}}}{\pi  n }
     \int\nolimits_{\Bb{R}} x^{2} \varphi (x) {\rm{d}} {\rm{e}}^{{\fo{  \dfrac{{\rm{i}} \pi n }{x} }}}  \!  = \! \dfrac{1}{(\imag \,\pi\,  n)^{k}}   \int\nolimits_{\Bb{R}} \left(\left(x^{2}\dfrac{{\rm{d}}}{{\rm{d}} x}\! +\!2x \right)^{k}\!\!\varphi (x)\!\right) {\rm{e}}^{{\fo{  \dfrac{{\rm{i}} \pi n }{x} }}}  {\rm{d}} x\, .
 \end{align*}

\vspace{-0,15cm}
\noindent As the integrands here are in $L^1 (\Bb{R})$,  the coefficients
 \begin{align}\label{f05aint}
    &   \hspace{-0,15cm} \eurm{h}_{n}^{\star}(\varphi)\!:=\!\int\limits_{\Bb{R}}\! \varphi (x)\, {\rm{ e}}^{{\fo{{-\rm{i}}\pi  n x }}} {\rm{d}} x
      \, ,  \    n \!\in\! \Bb{Z}\,;  \ \
      \eurm{m}_{n}^{\star}(\varphi)\!:=\! \int\limits_{\Bb{R}}\! \varphi (x) \,{\rm{e}}^{{\fo{  \dfrac{{\rm{i}} \pi n }{x} }}}  {\rm{d}} x \, , \    n\! \in\! \Bb{Z}\setminus \{0\},\hspace{-0,1cm}
\end{align}

\vspace{-0,1cm}
\noindent  satisfy, for  any positive integer $k$, \vspace{-0,05cm}
\begin{align}\label{f05bint}
    & \eurm{h}_{n}^{\star}(\varphi)\!=\! {\rm{O}} \left(n^{-k}\right)  \ , \quad
     \eurm{m}_{n}^{\star}(\varphi) \!  =\! {\rm{O}} \left(n^{-k}\right)  \, , \quad n \to \infty\,.
\end{align}

\vspace{-0,05cm}
\noindent Hence,   for all $\{a_{0}\} \cup
 \{a_{n}, b_{n}\}_{n\in \Bb{Z}\setminus \{0\}} \!\subset\! \Bb{C}$ and any  positive integer $N$  we have
\begin{align}\label{f04int}
    &  \hspace{-0,2cm}
    a_{n}, b_{n}\!=\! {\rm{O}} (n^{N}), \ n\!\to\! \infty   \ \Rightarrow \
     a_{0} + \sum\nolimits_{n \in \Bb{Z}\setminus \{0\}} \!\! \left(a_{n} {\rm{e}}^{{\fo{\imag \pi n x}}}\! +\!
 b_{n} {\rm{e}}^{{\fo{-\imag \pi n/ x}}} \right)\!\in\! \eurm{S}^{\,\prime} (\Bb{R}), \hspace{-0,1cm}
\end{align}

\noindent as the series converges in the space $\eurm{S}^{\,\prime} (\Bb{R})$ (see
\cite[p.\! 77]{vlad}){\hyperlink{r012}{${}^{\ref*{case012}}$}}\hypertarget{br012}{}.  In view  of \eqref{f02int}, we see that
 for each $\varphi \in \eurm{S} (\Bb{R})$
 the function
\begin{align*}
    &  \rho_{\varphi} (x)\! :=\!\varphi (x)\! -\! \eurm{h}_{0}^{\star}(\varphi)\eurm{H}_{0}(x) \!-\!
    \sum\nolimits_{n\in \Bb{Z}\setminus \{0\} }\!
    \Big( \eurm{h}_{n}^{\star}(\varphi) \eurm{H}_{n}(x)\! + \!\eurm{m}_{n}^{\star}(\varphi) \eurm{M}_{n}(x)\Big) \, ,    \ \ x\!\in\! \Bb{R} \,,
\end{align*}

\noindent is in   $L^{1} (\Bb{R})$ and satisfies
\begin{align*}
    &  \int\nolimits_{\Bb{R}}  {\rm{e}}^{{\fo{{\rm{i}} \pi n x}}} \rho_{\varphi} (x) {\rm{d}} x  =\int\nolimits_{\Bb{R}}  {\rm{e}}^{{\fo{-{\rm{i}} \pi n / x}}} \rho_{\varphi} (x) {\rm{d}} x=0 \ , \quad  n\in \Bb{Z} \,.
\end{align*}

\noindent From  Theorem~\ref{intth0},  it follows  that $\rho_{\varphi} =0$ and consequently,
for any $\varphi \in \eurm{S} (\Bb{R})$,
\begin{align}\label{f06int}\hspace{-0,2cm}
    &  \varphi (x)\! =\! \eurm{h}_{0}^{\star}(\varphi)\eurm{H}_{0}(x)\! +\!
    \sum\nolimits_{n\in \Bb{Z}\setminus \{0\} }\!
    \Big( \eurm{h}_{n}^{\star}(\varphi) \eurm{H}_{n}(x)\! +\! \eurm{m}_{n}^{\star}(\varphi) \eurm{M}_{n}(x)\Big) \, ,
    \ \ x\!\in \!\Bb{R} \,,
\end{align}

\noindent where the series converges in the weighted uniform norm
$\|\cdot\|_{C_{2}(\Bb{R})}$, in view of \eqref{f02int} and \eqref{f05bint}.
Here, $\|f \|_{C_{2}(\Bb{R})}:=\sup_{x\in \Bb{R}} (1\!+\!x^{2}) |f (x)|$, $f \in C (\Bb{R})$.
 Since  $\eurm{S} (\Bb{R})$ is  dense in
$L^{1} (\Bb{R})$
we deduce the completeness of the system $\{\eurm{H}_{0}\}\cup\{\eurm{H}_{n}, \eurm{M}_{n}\}_{n\in \Bb{Z}\setminus \{0\}} $ in $L^{1} (\Bb{R})$, as claimed.  As for the convergence of the  series \eqref{f03int}, we take an arbitrary
$f \in L^1 (\Bb{R}, (1+x^2)^{-1}{\rm{d}} x)$ and  for  $N \geqslant 1$ we write
\begin{align*}
     & \mathcal{F}_{N} [f] (x) := \eurm{h}_{0}(f) + \sum_{
     n \in \Bb{Z}\setminus \{0\}, \, |n| \leqslant N}\left(\eurm{h}_{n}(f)\,{\rm{e}}^{{\fo{\imag \pi n x}}} +
  \eurm{m}_{n}(f)\,{\rm{e}}^{{\fo{-\imag \pi n/ x}}} \right)\, ,
    \ \ x\!\in\! \Bb{R} \,.
\end{align*}

\noindent It follows from
\eqref{f02int}  and \eqref{f04int}  that, as $N \to +\infty$,   $ \mathcal{F}_{N}[f]$ converges in the space $\eurm{S}^{\,\prime} (\Bb{R})$ to an element  $\mathcal{F} [f] \in \eurm{S}^{\,\prime} (\Bb{R})$.
For any test function $\varphi \in \eurm{S} (\Bb{R})$ we have\vspace{-0,1cm}
\begin{align*}
     &  \int\limits_{\Bb{R}} \mathcal{F}_{N} [f] (x) \varphi (x) {\rm{d}}x =
      \eurm{h}_{0}(f)\eurm{h}_{0}^{\star}(\varphi) +\!\!\! \sum_{
     n \in \Bb{Z}\setminus \{0\}, \, |n| \leqslant N}\left(\eurm{h}_{n}(f)\,\eurm{h}_{n}^{\star}(\varphi) +
  \eurm{m}_{n}(f)\,\eurm{m}_{n}^{\star}(\varphi) \right) \\  &
  = \int\limits_{\Bb{R}}f (x)
  \bigg[
  \eurm{h}_{0}^{\star}(\varphi)\eurm{H}_{0}(x) +
    \sum_{
     n \in \Bb{Z}\setminus \{0\}, \, |n| \leqslant N}
    \Big( \eurm{h}_{n}^{\star}(\varphi) \eurm{H}_{n}(x) + \eurm{m}_{n}^{\star}(\varphi) \eurm{M}_{n}(x)\Big)
  \bigg]
  {\rm{d}} x \,.
\end{align*}

\vspace{-0,1cm}
\noindent From \eqref{f06int} combined with \eqref{f02int}, \eqref{f05bint} and
the Lebesgue dominated convergence theorem \cite[p.\! 161]{nat},    we obtain,  by passing to the limit as $N \to +\infty$,\vspace{0,1cm}
\begin{center}
   $\mathcal{F} [f] \big( \varphi\big) =
     \int_{\Bb{R}} f (x) \varphi (x) {\rm{d}}x \,, \quad \varphi \in \eurm{S} (\Bb{R})$.
\end{center}

\vspace{0,1cm}
\noindent This tells us that $\mathcal{F} [f] = f$ holds in the space $\eurm{S}^{\,\prime} (\Bb{R})$.
In conclusion,  the hyperbolic  Fourier series \eqref{f03int} converges to $f$ in the space  of tempered distributions on the real line.

\newpage
\subsection[\hspace{-0,31cm}. \hspace{0,11cm}Interpolation formula.]{\hspace{-0,11cm}{\bf{.}}
Interpolation formula.}\label{basint}\!\!
We proceed with the interpolation formula for the Klein-Gordon equation.
Let  $\varphi \in L^{1} (\Bb{R})$ be arbitrary  and consider the solution  $U_{\varphi}$ of the Klein-Gordon equation given by \eqref{f0int}. Then the coefficients  of $\varphi$ in \eqref{f05aint} are called the {\emph{conjugate hyperbolic Fourier coefficients}} and they equal the values of $U_{\varphi}$ at the points $\{(\pi n, 0), (0, \pi n)\}_{n \in \Bb{Z}}$,
\begin{align}\label{f07int}
     &\eurm{h}_{n}^{\star}(\varphi)=U_{\varphi}(-\pi  n, 0 )
      \, ,  \    n \!\in\! \Bb{Z}\,;  \ \
      \eurm{m}_{n}^{\star}(\varphi)= U_{\varphi} (0, \pi n)  \, , \    n\! \in\! \Bb{Z}\setminus \{0\}\,.
\end{align}

\noindent   By  the estimates \eqref{f02int} and
manipulations  similar to those employed in the proof of \eqref{f06int} we get that
  the {\emph{conjugate hyperbolic  Fourier series}} of $\varphi$ in the right-hand side of the equality \eqref{f06int} converges to $\varphi$ in the weighted uniform norm $\|\cdot\|_{C_{2}(\Bb{R})}$ on $\Bb{R}$  provided that\vspace{-0,1cm}
\begin{align}\label{f08int}
     &  \sum\nolimits_{n \in \Bb{Z}\setminus \{0\}}n^{2}\left(\,\left|U_{\varphi}(\pi  n, 0 )\right|+ \left|U_{\varphi} (0, \pi n)\right|\,\right) < \infty .
\end{align}

\vspace{-0,1cm}
\noindent If we multiply both sides of  the equality \eqref{f06int} by
$\exp (\imag xt + \imag y/t)$ and integrate with respect to $t$ over the real line,
we restore  $U_{\varphi}$  from its values at the points $\{(\pi n, 0), (0, \pi n)\}_{n \in \Bb{Z}}$ (see Theorem~\ref{intforth2}), by using the identities \eqref{f011int} below,
\begin{multline}\hspace{-0,25cm}
  U_{\varphi} (x, y)\! = \!  U_{\varphi} (0, 0)\, \eusm{R}_{\hspace{0.025cm}0} (x, y)
   \! + \! \sum\limits_{n \geqslant 1} \!  \Big[  U_{\varphi} (\pi n, 0)\,  \eusm{R}_{\hspace{0.025cm}n} (  x,  y)   + U_{\varphi} (0, -\pi n)\, \eusm{R}_{\hspace{0.025cm}n} ( -y,  -x)      \Big]+\\[0,2cm]  \! + \!
   \sum\limits_{n \geqslant 1} \!  \Big[\,  U_{\varphi} (-\pi n, 0)\,  \eusm{R}_{\hspace{0.025cm}n} (- x,  -y)   + U_{\varphi} (0, \pi n)\, \eusm{R}_{\hspace{0.025cm}n} ( y,  x) \Big] \,, \quad (x, y) \in  \Bb{R}^{2} \,. \label{f09int}\end{multline}

\noindent Here,  $\{\eusm{R}_{\hspace{0.025cm}n}\}_{n\geqslant 0}$ are the {\emph{interpolating functions for the Klein-Gordon equation}} given
by\vspace{-0,2cm}
\begin{align*}
    &   \eusm{R}_{n}(x, y)
\!:= \! \int\nolimits_{\Bb{R}}\  {\rm{e}}^{{\fo{\imag  x t+ \imag  y/t}}} \eurm{H}_{-n} (t) {\rm{d}} t\, , \quad
 n\geqslant 0 \, , \quad x,y \in \Bb{R}\,.
\end{align*}

\noindent The biorthogonal system has the  symmetry properties
\begin{align*}
     & \eurm{H}_{-n}(x)\! = \!\eurm{H}_{n}(-x) , \  n\!\geqslant\!0 \, , \  x\!\in\! \Bb{R} \,;  \ \
\eurm{M}_{n}(x) \!= \! \eurm{H}_{n} (-1/x)/x^{2} \, , \   n\!\geqslant\!  1 \, , \  x\!\in\! \Bb{R}_{\neq 0} \,,
\end{align*}

\noindent which for arbitrary $n \geqslant 1$ and $x, y \in \Bb{R}$  lead to the  identities,
\begin{align}\label{f011int}
    &  \hspace{-0,5cm}
  \begin{array}{ll}
    \begin{displaystyle}   \int\limits_{\Bb{R}}  {\rm{e}}^{{\fo{\imag  x t\! +\! \imag  y/t}}} \eurm{H}_{n} (t) {\rm{d}} t\!= \! \eusm{R}_{n}(-x, -y),      \end{displaystyle}  &
    \begin{displaystyle}\    \int\limits_{\Bb{R}}  {\rm{e}}^{{\fo{\imag  x t \!+\! \imag  y/t}}} \eurm{M}_{n} (t) {\rm{d}} t\!= \! \eusm{R}_{n}(y, x),  \end{displaystyle} \\
   \begin{displaystyle}   \int\limits_{\Bb{R}}  {\rm{e}}^{{\fo{\imag  x t \!+\! \imag  y/t}}} \eurm{H}_{-n} (t) {\rm{d}} t\!=\! \eusm{R}_{n}(x, y), \end{displaystyle} &
    \begin{displaystyle} \   \int\limits_{\Bb{R}}  {\rm{e}}^{{\fo{\imag  x t\! +\! \imag  y/t}}} \eurm{M}_{-n} (t) {\rm{d}} t\!= \! \eusm{R}_{n}(-y, -x).\end{displaystyle}
  \end{array}  \vspace{-0,2cm}
\end{align}


\subsection[\hspace{-0,31cm}. \hspace{0,11cm}Basic result.]{\hspace{-0,11cm}{\bf{.}} Basic  result.}\label{intmres}
Let $\Bb{D}:=\{z\in\Bb{C}\ |\  |z| <1\}$ and
\begin{align}
\label{f1int} \het (z) \!:=\!
F (1/2,1/2;1; z) \!=\!1\! +\! \dfrac{1}{\pi}
\sum\limits_{n = 1}^{\infty}
\dfrac{\Gamma(n\!+\!1/2)^{2}}{ n!^{2}}z^{n} \ , \quad
 z\! \in\! \Bb{D}\,,
\end{align}

\noindent be  the Gauss hypergeometric function which can be extended  to a nonvanishing holomorphic function on the set $\Bb{C}\!\setminus\![1,+\infty)$ (see \cite[p.\! 597, (1.19)(b)]{bh2}). In view of the Barnes expansion \cite[p.\! 173]{bar}(1908) (see also \cite[p.\! 299]{whi}, \eqref{f11inttrian}),
\begin{align*}
     &  \dfrac{\pi\het (1\!-\!z)}{\het (z)}= \left(\log \dfrac{16}{z}\right) \!-\!
\dfrac{
  \dfrac{2}{\pi} \sum\limits_{n=1}^{\infty} \dfrac{\Gamma (n+1/2)^{2}}{(n !)^{2}} \left[ \sum\limits_{k=1}^{n} \dfrac{1}{(2k -1)k}\right] z^{n}}{1 + \dfrac{1}{ \pi}\sum\limits_{n = 1}^{\infty}
\dfrac{\Gamma(n+1/2)^{2}}{(n !)^{2}} z^{n}} \ , \ z\! \in\! \Bb{D}\!\setminus\! (-1,0]\,,
\end{align*}

\noindent   for each positive integer $n$,  the function $\exp (n\pi\het (1-z)/\het (z))$ is meromorphic in $ \Bb{D}$. Moreover, there exists an algebraic polynomial $S^{{\tn{\triangle}}}_n$ of degree $n$  with real coefficients and leading coefficient equal to $16^{n}$ such that $S^{{\tn{\triangle}}}_n (0)=0$ and
\begin{align}\nonumber
       {\rm{e}}^{{\fo{ n \pi \ \dfrac{\he(1-z)}{\he(z)}}}} & =
\dfrac{16^{n}}{z^{n}} \  \exp\left(-n z \
\dfrac{\dfrac{1}{2} +   \dfrac{2}{\pi} \sum\limits_{n=1}^{\infty} \dfrac{\Gamma (n+3/2)^{2}}{(n+1) !^{2}} \left[ \sum\limits_{k=1}^{n+1} \dfrac{1}{(2k -1)k}\right] z^{n}}{ 1 + \dfrac{1}{\pi}\sum\limits_{n = 1}^{\infty}
\dfrac{\Gamma(n+1/2)^{2}}{(n !)^{2}} z^{n}}\right)
      \\[0,3cm]  &
    = S^{{\tn{\triangle}}}_{n} (1/z) + \Delta_{n}^{S}(z)\ , \ z \in \Bb{D}\setminus \{0\} \ , \
    \Delta_{n}^{S}\in {\rm{Hol}}(\Bb{D})\, ,
    \label{f1aint}\end{align}

\noindent where ${\rm{Hol}}(D)$ denotes the  space of all holomorphic functions in a domain $D\!\subset\!\Bb{C}$. In addition, all the Taylor coefficients of $\Delta_{n}^{S}$ are real (see \cite[p.\! 199]{rud}, \cite[p.\! 72]{con}). Our main result follows.
\begin{theorem}\hspace{-0,18cm}{\bf{.}}\label{intth1} The system  $\left\{\eurm{H}_{n}(x)\right\}_{n\in \Bb{Z}}\cup \left\{\eurm{M}_{n}(x)\right\}_{n\in \Bb{Z}\setminus \{0\} }\subset L^1 (\Bb{R})$ given by
\begin{align}
    &  \nonumber   \eurm{H}_{0} (x):=\dfrac{1}{2 \pi^{2 }}\int\limits_{ - \infty}^{ +  \infty}
 \frac{\left|\he  (1/2 + \imag  t)\right|^{2} {\rm{d}} t}{  \big(t^{2}+1/4\big)   \Big(
\he  (1/2 - \imag  t)^{2}+  x^{2}\he  (1/2 + \imag  t)^{2}\Big)}  \, , \\[0,4cm]    &\nonumber
\eurm{H}_{n} (x)  =  \dfrac{1}{4 \pi^{3} n}
  \int\limits_{ - \infty}^{ +  \infty}
\dfrac{  S^{{\tn{\triangle}}}_{n} \left(\dfrac{1}{1/2 + \imag  t}\right) {\rm{d}} t}{ \big(t^{2}+1/4\big)
\Big( \he  (1/2 - \imag  t) -\imag   x \he  (1/2 + \imag  t)\Big)^{2} }   \ , \\[0,4cm]    &
\label{f2intth1}
\eurm{H}_{-n}(x) := \eurm{H}_{n}(-x) \ , \quad  n\geqslant 1 \ , \  x\!\in\! \Bb{R} \,;  \\[0,4cm]  &
\label{f3intth1}
\eurm{M}_{n}(x) :=  \eurm{H}_{n} (-1/x)/x^{2} \ , \  n\in \Bb{Z}\setminus \{0\} \ , \  x\!\in\! \Bb{R}\setminus\{0\} \,,
\end{align}

\noindent
is  biorthogonal to the sequence $1$, $\exp ({\rm{i}}\pi n x)$, $\exp ({\rm{i}}\pi n/ x)$, $n \in \Bb{Z}\setminus \{0\}$, and satisfies the equalities \eqref{f02int}. Here, all the integrals are absolutely convergent for every $x\!\in\! \Bb{R}$ and  $\{S^{{\tn{\triangle}}}_{n}\}_{n\geqslant 1}$ are the algebraic polynomials
defined in \eqref{f1aint}. This system is complete in $L^1 (\Bb{R})$ and enjoys the estimates
\begin{align}\label{f1intth1}
     &
   \hspace{-0.7cm}   \left|\eurm{H}_{0} (x) \right|\leqslant \frac{3 }{1+x^{2}}\ , \
     \left|\eurm{H}_{n} (x) \right| + \left|\eurm{M}_{n} (x) \right| \leqslant \frac{\pi^{6} |n|^{2}}{1+x^{2}}\ , \quad  n\in \Bb{Z}\setminus \{0\} \ , \  x\!\in\! \Bb{R} \,.\hspace{-0.3cm}
\end{align}

\end{theorem}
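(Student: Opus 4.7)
My plan is to verify the four assertions separately: absolute convergence of the defining integrals, the pointwise estimates \eqref{f1intth1}, the biorthogonality relations \eqref{f02int}, and completeness in $L^{1}(\Bb{R})$. The last item is essentially free once biorthogonality is in place, since the argument in Section~\ref{baspro} shows that biorthogonality together with the uniqueness Theorem~\ref{intth0} forces completeness; I would therefore concentrate the work on the first three points.

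For absolute convergence and the estimates I would exploit the standard behaviour of the Gauss hypergeometric function on the critical line. The quantity $\het(1/2\pm\imag t)$ is nonvanishing on $t\in\Bb{R}$ and grows at most like a power of $\log|t|$, while the ratio $\het(1/2-\imag t)/\het(1/2+\imag t)$ has unit modulus on $\Bb{R}$ (as follows from the reality of the Taylor coefficients of $\het$). Writing the denominator in $\eurm{H}_{0}$ as $|\het(1/2-\imag t)|^{2}+x^{2}|\het(1/2+\imag t)|^{2}+2\imag x\,\im\bigl(\het(1/2-\imag t)\overline{\het(1/2+\imag t)}\bigr)\ldots$ and isolating its modulus lower bound as a multiple of $(1+x^{2})\,|\het(1/2+\imag t)|^{2}$, the weight $|\het(1/2+\imag t)|^{2}/(t^{2}+1/4)$ is integrable and yields $|\eurm{H}_{0}(x)|\leqslant 3/(1+x^{2})$ upon a direct numerical estimate. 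The analogous bound for $\eurm{H}_{n}$ uses additionally that $S^{{\tn{\triangle}}}_{n}(1/(1/2+\imag t))$ grows polynomially in $n$ (with leading coefficient $16^{n}$), and the factor $1/n$ in front keeps the bound at order $n^{2}$; the bound for $\eurm{M}_{n}$ follows from its definition \eqref{f3intth1} via the substitution $x\mapsto -1/x$.

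The heart of the matter is the biorthogonality \eqref{f02int}, which I would prove by Fubini and contour integration. For a typical case such as $\int_{\Bb{R}}{\rm{e}}^{-\imag\pi n x}\eurm{H}_{p}(x){\rm{d}}x$ with $p\geqslant 1$, I substitute the integral formula for $\eurm{H}_{p}$, interchange order (justified by the decay bounds just obtained), and compute the inner integral
$\int_{\Bb{R}}{\rm{e}}^{-\imag\pi n x}\bigl(\het(1/2-\imag t)-\imag x\het(1/2+\imag t)\bigr)^{-2}{\rm{d}}x$
explicitly by residues in the upper/lower half plane, using that the pole in $x$ lies at $-\imag\het(1/2-\imag t)/\het(1/2+\imag t)$ whose imaginary part has a definite sign on $t\in\Bb{R}$. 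This leaves an integral in $t$ whose integrand, after collecting factors, is proportional to $S^{{\tn{\triangle}}}_{p}(1/z)\,z^{n}$ evaluated against a weight involving $\exp\bigl(-\pi n\het(1-z)/\het(z)\bigr)$ at $z=z(t)$; applying the identity \eqref{f1aint} converts the $S^{{\tn{\triangle}}}_{p}$ piece into an exponential of $\pi p\het(1-z)/\het(z)$ modulo a holomorphic remainder. Closing the contour in the strip $0\leqslant \im s\leqslant 1$ where $s=1/2+\imag t$ (equivalently, pushing the contour to the boundary where $z$ reaches the branch cut), the holomorphic remainder integrates to zero by Cauchy's theorem while the principal part reduces to a standard theta/elliptic integral evaluating to $\delta_{n,p}$. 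The orthogonality against ${\rm{e}}^{\imag\pi m/x}$ is handled analogously using the symmetry $\eurm{H}_{n}\leftrightarrow\eurm{M}_{n}$ under $x\mapsto -1/x$, which shows the two kinds of integrals against $\eurm{H}_{p}$ and $\eurm{M}_{q}$ are exchanged, and then the same residue calculation yields $0$ or $\delta_{m,q}$ as required.

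Completeness in $L^{1}(\Bb{R})$ is then automatic: running the argument of Section~\ref{baspro} verbatim, any $\varphi\in\eurm{S}(\Bb{R})$ has the residual $\rho_\varphi$ annihilated by all hyperbolic exponentials, whence $\rho_\varphi=0$ by Theorem~\ref{intth0}, and $\eurm{S}(\Bb{R})$ is dense in $L^{1}(\Bb{R})$. The expected main obstacle is the contour manipulation in the residue computation described in the previous paragraph: the function $\het(1-z)/\het(z)$ behaves like a modular parameter and one must track the branch cuts of $\het$ carefully, together with the behaviour of $\Delta^{S}_{n}(z)$ at $z=0$ and $z=1$, in order to be sure that the non-principal contributions really do vanish. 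The identity \eqref{f1aint} is the crucial algebraic input that lets the polynomial $S^{{\tn{\triangle}}}_{n}$ in the numerator play the role of the $n$-th power of the nome on the spectral side.
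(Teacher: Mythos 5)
Your plan for biorthogonality and completeness is essentially sound: the completeness argument is the paper's own (Section~\ref{baspro} plus Theorem~\ref{intth0}), and your Fubini-plus-residues computation of $\int_{\Bb{R}}{\rm{e}}^{-\imag\pi n x}\eurm{H}_{p}(x)\,{\rm{d}}x$ rests on the same cancellation mechanism the paper uses, namely the expansion \eqref{f1aint}/\eqref{f12aint} of $S^{{\tn{\triangle}}}_{p}(1/\lambda)$ together with a contour deformation in the modular variable; the paper organizes this instead as a periodization over $x\mapsto x+2k$ (Lemmas~\ref{bslem1}(4) and~\ref{bslem2}(3), justified by Lemma~\ref{lemfou} and the cusp estimates \eqref{f27int}), and the vanishing of the coefficients against ${\rm{e}}^{\imag\pi m/x}$ ultimately needs the expansion \eqref{f23aaint}, which you gloss over but which is available.

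The genuine gap is in the estimates \eqref{f1intth1}. You claim that $S^{{\tn{\triangle}}}_{n}\bigl(1/(1/2+\imag t)\bigr)$ grows only polynomially in $n$ on the contour, so that the prefactor $1/n$ yields an $O(n^{2})$ bound by direct estimation of \eqref{f2intth1}. This is false: by \eqref{f21zint} at $z=1/2$, where $\ie(1/2)=\imag$, one has $S^{{\tn{\triangle}}}_{n}(2)={\rm{e}}^{n\pi}\bigl(1+{\rm{o}}(1)\bigr)$, so the integrand of \eqref{f2intth1} is exponentially large in $n$ at $t=0$, and any termwise estimate of the defining integral can only produce bounds of order ${\rm{e}}^{cn}$ (this is precisely the crude ${\rm{e}}^{2\pi n}$ bound displayed before \eqref{f1bs}). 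The bound $\pi^{6}n^{2}/(1+x^{2})$ is a statement about cancellation in the $t$-integral, and the paper obtains it only through the analytic continuation of the generating functions of $\{\eurm{H}_{n}\}$ and $\{\eurm{M}_{n}\}$ across the circles $2m+\partial\Bb{D}$ (Sections~\ref{bsden}--\ref{contgen}, Theorem~\ref{bsdenacth2}, Corollary~\ref{evagencor1}) followed by shifting the Fourier-coefficient contour to height $\im z=1/n$ as in \eqref{f0evagen}; your proposal contains no substitute for this step. A secondary flaw: your lower bound for the denominator of $\eurm{H}_{0}$ by a constant multiple of $(1+x^{2})\,|\he(1/2+\imag t)|^{2}$ fails at $x=\pm1$, since the unimodular ratio $\he(1/2-\imag t)/\he(1/2+\imag t)$ tends to $\mp\imag$ as $t\to\pm\infty$, so its square approaches $-1$; absolute convergence survives (the loss is only logarithmic), but the stated bound $3/(1+x^{2})$ has to be reached as the paper does, via a uniform bound on the reformulated integral combined with the symmetry $\eurm{H}_{0}(-1/x)=x^{2}\eurm{H}_{0}(x)$ of \eqref{f24int}.
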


\begin{corsecttwo}\hspace{-0,18cm}{\bf{.}}\label{intcor1} Suppose  $f \in L^1 (\Bb{R}, (1+x^2)^{-1}{\rm{d}} x)$.
 Then the series
 \begin{align*}
 \sum\limits_{n\in \Bb{Z}}   {\rm{e}}^{{\fo{{\rm{i}}\pi  n x }}}\int\limits_{\Bb{R}}  f (t) \eurm{H}_{-n} (t) {\rm{d}} t +\sum\limits_{n\in \Bb{Z}\setminus \{0\}}   {\rm{e}}^{{\fo{  -  \dfrac{{\rm{i}} \pi n }{x} }}} \int\limits_{\Bb{R}}  f (t) \eurm{M}_{-n} (t) {\rm{d}} t
 \end{align*}

 \noindent
represents a tempered distribution  on $\Bb{R}$ associated with the regular function $f$.
\end{corsecttwo}

\begin{rem}\hspace{-0,15cm}{\bf{.}}\label{remz}{\rm{
To prove Theorem~\ref{intth1} we first write down in
\eqref{f4int} and \eqref{f7int} the integrands of the integrals \eqref{f2intth1} in terms of the Schwarz triangle function.
Then using the change of variables we express in \eqref{f9int}  these integrals through
the elliptic modular function which is the inverse to the Schwarz triangle function.
This allows us to prove in Theorem~\ref{bslem3}  the fact of biorthogonality indicated in
Theorem~\ref{intth1}.

To establish the estimates \eqref{f1intth1}, we first find in \eqref{f3genfunbs} and \eqref{f4genfunbs} the generating functions of $\left\{\eurm{H}_{n}(x)\right\}_{n\in \Bb{N}}$ and $ \left\{\eurm{M}_{n}(x)\right\}_{n\in \Bb{N}  }$.
And then, having found in Section~\ref{contgen}  a constructive description of the analytic extension of these generating functions, we get in \eqref{f2fevagen} and \eqref{f3fevagen} the desired estimates \eqref{f1intth1}.
}}\end{rem}

The relationship{\hyperlink{r12}{${}^{\ref*{case12}}$}}\hypertarget{br12}{}
\begin{align}\label{f2intcor1}
    &  \he  \left(\dfrac{1}{2} +
\imag t\right)=\dfrac{\he \left(\dfrac{1}{2} + \dfrac{ t}{\sqrt{4 t^{2} +1}}\right)+\imag
\he\left(\dfrac{1}{2} - \dfrac{ t}{\sqrt{4 t^{2} +1}}\right)}{(1+ \imag )\,\sqrt[4]{ 4 t^{2} +1}} \ , \quad t \in \Bb{R}\,,
\end{align}

\noindent allows us{\hyperlink{r14}{${}^{\ref*{case14}}$}}\hypertarget{br14}{} to write the formulas \eqref{f2intth1} by using only the values of $\he$ on the interval
$(0,1)$,\vspace{-0,2cm}
\begin{align*}
    \eurm{H}_{0} (x) &=\dfrac{\imag }{2 \pi^{2 }}\int\limits_{\Bb{R}}
\dfrac{\dfrac{1\!-\!\imag \eurm{y}(t)}{1\!+\!\imag \eurm{y}(t)}
}{x^{2}- \left(\dfrac{1\!-\!\imag \eurm{y}(t)}{1\!+\!\imag \eurm{y}(t)}\right)^{2}}\,\dfrac{{\rm{d}} t}{t^{2}+1/4} \ , \quad
\eurm{y}(t)\! :=\!
\dfrac{ \he\left(\dfrac{1}{2}\! -\! \dfrac{ t}{\sqrt{4 t^{2}\! +\!1}}\right)}{\he \left(\dfrac{1}{2}\! + \!\dfrac{ t}{\sqrt{4 t^{2} \!+\!1}}\right)} \ , \
 \\[0,4cm]
      \eurm{H}_{n} (x)  & =  \dfrac{\imag }{ \pi^{3} n}\int\limits_{\Bb{R}} \dfrac{  S^{{\tn{\triangle}}}_{n} \left(\dfrac{1}{1/2 + \imag  t}\right) \dfrac{{\rm{d}} t}{\sqrt{t^{2}+1/4}}}{
\left((1\!+\!x)\he \left(\dfrac{1}{2} \!- \!\dfrac{ t}{\sqrt{4 t^{2}\! +\!1}}\right)\!+\!\imag  (1\!-\!x)
\he\left(\dfrac{1}{2} \!+\! \dfrac{ t}{\sqrt{4 t^{2} \!+\!1}}\right)\right)^{\!\!2} } \ .
\end{align*}

\noindent Since   $\he (0)=1$, $\he $ increases on $[0,1)$, $ \pi \he (1-x)-\ln (1/x)\in (0, 3\pi )$, $x\in (0,1/3)$
(see \cite[p.\! 602, (2.4)]{bh2}, \cite[p.\! 30, (A.6i)]{bh1}), $S^{{\tn{\triangle}}}_{n}(0)=0$, we obviously get an absolute convergence of these integrals for every $x\!\in\! \Bb{R}$ and $n\geqslant 1$. Here $\eurm{y}:\Bb{R}\mapsto (0, +\infty)$ decreases from $+\infty$ to $0$.

\vspace{-0.2cm}
\subsection[\hspace{-0,31cm}. \hspace{0,11cm}Connection with the Perron-Frobenius-Ruelle operator.]{\hspace{-0,11cm}{\bf{.}} Connection with  the Perron-Frobenius-Ruelle operator.}\label{frobop}

It can be easily seen that
   for arbitrary  $f \in L_1 \big([-1,1], {\rm{d}} x\big)$ in the space $L_1 \big([-1,1], {\rm{d}} x\big)$  there exists the limit
    \begin{align}\label{f1bsfir}
        &   {\mathbf{T}}_{1} [f] (x) \ := \ \lim\limits_{n \to +\infty}  \sum_{\substack{{\fo{k=-n}}\\[0.05cm] {\fo{k\neq 0}}}}^{{\fo{n}}} \dfrac{ f \left(\dfrac{1}{2k- x}\right)}{(2k-x)^{2}} \in L_1 \big([-1,1], {\rm{d}} x\big) \, ,
    \end{align}

\noindent and hence the mapping $f  \mapsto  {\mathbf{T}}_{1} [f]$  defines the  operator ${\mathbf{T}}_{1}: L_1 \big([-1,1], {\rm{d}} x\big) \mapsto L_1 \big([-1,1], {\rm{d}} x\big)$, where ${{\diff}}  x := {\rm{d }}  m (x)$ and  $m$ denotes the
Lebesgue measure on the real line. This operator is known as the {\it{Perron-Frobenius-Ruelle operator}} corresponding to the even Gauss map  $G_{2}: (-1,1]\to(-1,1]$ defined by the formula $G_{2} (x) = \{-1/x\}_{2}$ if $x\neq 0$ and $G_{2} (0) =0$. Here,  $\{x\}_{2}$ assigns to each real $x$ the  unique number
in the interval  $(-1,1]$ such that $x - \{x\}_{2}$ is an even integer {\rm{(}}see \cite[p.\! 81]{kes}, \cite[p.\! 57]{ios}{\rm{)}}.
The properties stated in Lemma~\ref{bslem1}{\rm{(4)}} and Lemma~\ref{bslem2}{\rm{(3,4)}} below tell us that
\begin{align}\label{f2bsfir}
    & \hspace{-0,25cm} \begin{array}{rcll}
 \begin{displaystyle}
 \left(I + {\mathbf{T}}_{1}\right) \left[\eurm{H}_{n}^{+}\right](x) \end{displaystyle} &    =   &   \begin{displaystyle} {\rm{e}}^{{\fo{{\rm{i}} \pi n x}}} \, ,
 \end{displaystyle}  &
 \begin{displaystyle}
 \quad   \left(I - {\mathbf{T}}_{1}\right) \left[\eurm{H}_{n}^{-}\right](x) = {\rm{e}}^{{\fo{{\rm{i}} \pi n x}}}  \ , \ \  n\!\in\! \Bb{Z}\setminus \{0\}\, ,
 \end{displaystyle}   \\[0,2cm]
  \begin{displaystyle}
 \left(I + {\mathbf{T}}_{1}\right) \left[2 \eurm{H}_{0}\right](x) \end{displaystyle}  &    =  &  1 \, ,
  &      \quad   x \in [-1,1] \,,
       \end{array}
\end{align}

\noindent where
\begin{align}\label{f3bsfir}
    & \left\{ \begin{array}{ll}
       \eurm{H}_{n}^{\pm}(-1/x)      =
                    \pm x^{2}\eurm{H}_{n}^{\pm}(x)  \ ,   &    \quad \eurm{H}_{0} (-1/x)\!=\! \eurm{H}_{0} (x) x^{2}\,,     \\[0,2cm]
   \eurm{H}_{n}^{\pm}     :=     2\eurm{H}_{n} \pm 2 \eurm{M}_{n} \ ,    &    \quad
    n\in \Bb{Z}\setminus \{0\} \, , \  x\in \Bb{R}\setminus\{0\} \,.
       \end{array}\right.
\end{align}

\noindent Hence another way to approach the functions  $\eurm{H}_{0}$, $\eurm{H}_{n}$, $\eurm{M}_{n}$, $n\in \Bb{Z}\setminus \{0\}$, is to solve the operator equations \eqref{f2bsfir} and  calculate  the functions  $\eurm{H}_{0}$, $\eurm{H}^{\pm}_{n}$, $n\in \Bb{Z}\setminus \{0\}$, on the interval $(-1,1)$. The symmetry in
\eqref{f3bsfir} gives their values on $\Bb{R}\setminus [-1,1]$ and hence extends them to the real line $\Bb{R}$.


\vspace{-0,2cm}
\subsection[\hspace{-0,31cm}. \hspace{0,11cm}Outline of the paper.]{\hspace{-0,11cm}{\bf{.}} Outline of the paper.}\label{outpap}
 The paper is organized as follows.  In Section~\ref{inttrian} we list a few facts about the elliptic modular function  $\lambda$ following an approach suggested by the first and the second authors in \cite{bh2}. The polynomials introduced and investigated by the fourth and fifth author in \cite{rad} have given reasons for us in Section~\ref{intprel} to introduce and explore a sequence of simpler polynomials, from which the polynomials in \cite{rad} can be obtained by symmetrization. We devote Section~\ref{bs} to various properties of the biorthogonal system.  Two partitions of the upper half-plane  are introduced and developed in Section~\ref{bsden}. The results of Section~\ref{bsden} are needed in Section~\ref{contgen} to make a constructive description of the analytic extension of the generating function for the biorthogonal system. This extension is similar to what was  proposed and implemented   by the fourth and the fifth authors in \cite{rad}. The results of Sections~\ref{bsden} and~\ref{contgen} are presented in the language of the even-integer continued fractions as suggested by the first author. Some estimates of  biorthogonal functions are derived in Section~\ref{evagen}. Applications to the Klein-Gordon equation of the obtained results are made in Section~\ref{mres}. In Section~\ref{refa},  Theorem~\ref{intth0} is extended with the purpose to  apply it for
partial solution of the issues raised in \cite{bhm}. Section~\ref{prgen} elaborates on the proofs of the main results. Further explanatory notes are supplied   in  Section~\ref{detpro}.




\subsection[\hspace{-0,31cm}. \hspace{0,11cm}Notation.]{\hspace{-0,11cm}{\bf{.}} Notation.}\label{not}
We begin this section by listing the frequently used notations.
\begin{align*}
    &
    \begin{array}{ll}\Bb{C},\, \Bb{R},\, \Bb{Z},\, \Bb{N}   & \  \mbox{Complex\! plane, real\! line,
     all\! integers, positive\! integers, respectively.}  \\
      \Bb{D},\ \overline{\Bb{D}},\ \Bb{H}  & \  \Bb{D}\!:=\!\{z\!\in\! \Bb{C} \mid |z |\!< \!1\}, \
        \overline{\Bb{D}}\!:=\!\{z\!\in\! \Bb{C} \mid |z |\!\leqslant \!1\}
        \, \mbox{and} \,
      \Bb{H}\!:=\!\{z\!\in\! \Bb{C} \mid \im\, z \!>\! 0\}.      \\
   \Bb{R}_{>a},\, \Bb{R}_{\geqslant a}\,,\,     & \
   \Bb{R}_{>a}:= (a, +\infty),   \quad  \ \, \Bb{R}_{\geqslant a}:= [a, +\infty),  \, \quad  a\in \Bb{R}\,.
   \\
   \Bb{R}_{<a},\, \Bb{R}_{\leqslant a}  &  \
      \Bb{R}_{<a}:= (-\infty, a),   \quad  \  \,\Bb{R}_{\leqslant a}:= (-\infty, a],   \quad  \, a\in \Bb{R}\,.
      \\
  \Bb{Z}_{\neq 0},\, \Bb{Z}_{\geqslant q}    &  \ \Bb{Z}_{\neq 0}\ :=  \Bb{Z}\setminus\{0\}, \ \ \ \ \
  \Bb{Z}_{\geqslant q}\ := \{\, n \in \Bb{Z} \, | \,  n \geqslant q \, \},  \ q\in \Bb{Z}\,.
      \\
 \Bb{Z}_{\neq 0}^{\hspace{0,02cm}\Bb{N}_{\hspace{-0,02cm}\eurm{f}}} ,\,  \Bb{Z}_{\leqslant q}    &  \   \Bb{Z}_{\neq 0}^{\hspace{0,02cm}\Bb{N}_{\hspace{-0,02cm}\eurm{f}}}\!:=\!\sqcup_{k\geqslant 1}\! \left(\Bb{Z}_{\neq 0}\right)^{k}\!, \
  \Bb{Z}_{\leqslant q}\ := \{\, n \in \Bb{Z} \, | \,  n \leqslant q \, \},  \ q\in \Bb{Z}\,.
       \\
  A_{\re>a},  &    \   A_{\re>a}\ := \,\{\, z\in A \ | \  \re\, z >a  \ \ \}   \, , \ \,   a \in \Bb{R}\, ,  \hspace{0,53cm}  A \subset \Bb{C}\,.
    \\
  A_{\re\geqslant a},  &  \ A_{\re\geqslant a}\ := \,\{\, z\in A \ | \  \re\, z \geqslant a   \ \ \}   \, , \ \,   a \in \Bb{R}\, ,  \hspace{0,53cm}  A \subset \Bb{C}\,.
     \\
   A_{\re<a}   &    \ A_{\re<a}\ := \,\{\, z\in A \ | \  \re\, z <a   \ \ \}   \, , \ \,  a \in \Bb{R}\, ,  \hspace{0,53cm}  A \subset \Bb{C}\,.
     \\
   A_{\re\leqslant a}   &    \ A_{\re\leqslant a}\ := \,\{\, z\in A \ | \  \re\, z \leqslant a   \ \ \}   \, , \ \,  a \in \Bb{R}\, , \hspace{0,53cm} A \subset \Bb{C}\,.
 \\
  A_{|\re|<a},  &  \ A_{|\re|<a}:=\{\, z\in A \ | \  |\re\, z| <a  \, \} \, , \ \,  a \in \Bb{R}_{>0}\, , \ \, A \subset \Bb{C}\,.
   \\
  A_{|\re|\leqslant a},  &  \ A_{|\re|\leqslant a}:=\{\, z\in A \ | \ |\re\, z |\leqslant a  \, \}  \, , \ \, a \in \Bb{R}_{>0}\, , \ \, A \subset \Bb{C}\,.
    \\
   \Bb{D}_{\im > 0}  & \  \Bb{D}_{\im > 0}:= \Bb{D} \cap \Bb{H}\,.   \\
    \overline{\Bb{D}}_{\im > 0}  & \  \overline{\Bb{D}}_{\im > 0}:= \overline{\Bb{D}} \cap \Bb{H}\,.
    \\
   {\rm{sign}} (x)  & \   {\rm{sign}} (x)\  \mbox{is equal to   $-1$ if $x < 0$, $0$ if $x=0$ and $1$ if $x>0$.}
   \\
    \chi_{A}(x)  & \   \chi_{A}(x)\  \mbox{is equal to   $1$ if $x \in A$ and  $0$ if $x \not\in A$ for $A \subset \Bb{C}$.}
     \\
    {\rm{gcd}}(a, b)  & \   {\rm{gcd}}(a, b)\in \Bb{N}\  \mbox{is the greatest common divisor of $a, b \in \Bb{N}$ and}
        \\   & \   \mbox{${\rm{gcd}}(a, b)\! :=\! {\rm{gcd}}(|a|, |b|)$, ${\rm{gcd}}(0, b) \!:=\!|b|$ for arbitrary  $a, b \in \Bb{Z}_{\neq 0}$.}
        \\ {\rm{int}} A  & \   \mbox{${\rm{int}} A\!:= \!\{a \in A \,| \, \exists\, \varepsilon \!>\! 0 : a\! + \!\varepsilon\Bb{D}\subset A\}$, \ $A\!\subset\!\Bb{C}$.}
    \end{array}
\end{align*}

For any $z_{1}, z_{2} \in \Bb{C}$, $z_{1}\neq z_{2}$ the straight line segment  from $z_{1}$ to
$z_{2}$ is denoted by $[z_{1}, z_{2}]$. We extend interval notation on the reals to line segment notation, so that, e.g., for any two points
   $z_{1}, z_{2} \in \Bb{C}$, we write $(z_{1}, z_{2}] := [z_{1}, z_{2}]\setminus \{z_{1}\}$ and $[z_{1}, z_{2}) := [z_{1}, z_{2}]\setminus \{z_{2}\}$.

Following the definitions of \cite[pp.\! 6, 40]{sar}, we denote by
 $\ln : \Bb{R}_{>0}\to \Bb{R}$ the real-valued logarithm defined
 on $\Bb{R}_{>0}$, and let $\Log  ( z )\!=
 \!\ln |z| \!+\! \imag  \Arg  (z)$
  be the principal branch of the logarithm defined for
  $z\!\in \!\Bb{C}\setminus (-\infty, 0]$ with $\Arg  (z)\! \in \!(-\pi, \pi)$.
 Furthermore,  for a simply connected domain $D\!\subset\!\Bb{C}$,
  a point $a\!\in\! D$, and a  function $f\!\in\! {\rm{Hol}}(D)$ which
  is zero-free  in $D$ with $f (a)\!>\!0$, we write  $\log f (z)$  for
  the holomorphic function in $D$ such that  $\exp (\log f (z))\! =\! f (z)$,
  $z\!\in \!D$, and $\log f (a)\! = \!\ln f (a)$ (see \cite[p.\! 94]{con}).
  Then ${\rm{Re}} \, \log f (z) \!= \!\ln |f (z)|$ and
  $\arg f (z) \!:=\! {\rm{Im}} \, \log f (z)$ for each  $z\!\in \!D$.

As for topology, we denote by ${\rm{clos}}(A)$ (or $\overline{A}$),
$\mathrm{int}(A)$, and
$\partial A$ the closure, interior, and boundary of a subset $A\subset\Bb{C}$,
respectively.

Let $C(\Bb{R})$ denote the linear space of all continuous complex-valued functions on $\Bb{R}$. For a non-negative Borel measure $\mu$ on $\Bb{R}$ we denote by $L^{1}(\Bb{R},  \mu)$   the standard normed space of integrable  with respect to $\mu$ complex-valued Borel functions.
For arbitrary non-negative Borel measures $\mu$, $\nu$ on $\Bb{R}$ and $g \in L^{1} (\Bb{R},  \mu)$, we write ${\rm{d}} \nu (x) = g (x) {\rm{d}} \mu (x) $ if $\nu (A) = \int_{A} g (x) {\rm{d}} \mu (x)$ for arbitrary Borel subset $A$ of $\Bb{R}$. If here $\mu$ is the
Lebesgue measure $m$ on the real line and $g (x)= 1/(1+x^{2})$, $x\in \Bb{R}$, then instead of $L^{1}(\Bb{R}, \mu)$ and $L^{1}(\Bb{R},  \nu)$ we write $L^{1}(\Bb{R}, {\rm{d}} x)$ (or $L^{1}(\Bb{R})$) and $L^1 (\Bb{R}, (1+x^2)^{-1}{\rm{d}} x)$, respectively.

For $1 \leqslant p<+\infty$, we denote by   ${\rm{H}}^p_{+}(\Bb{R})$ the class  of all nontangential limits on $\Bb{R}$  of  functions from the Hardy space  (see \cite[p.\! 112]{ko}, \cite[p.\! 57]{gar})\vspace{-0,1cm}
  \begin{align*}
    &     {H}^p (\Bb{H}):= \left\{ f\in {\rm{Hol}} (\Bb{H}) \ \Big| \ \|f\|_{H^p_{+}}:=\sup\limits_{y>0}
    \int_{\Bb{R}} |f(x+\mathrm{i} \hspace{0,015cm} y)|^{p}\, \mathrm{d} x < +\infty \right\} \, ,
\end{align*}

\vspace{-0,1cm}
\noindent   and  ${\rm{H}}^p_{-}(\Bb{R})\! :=\{f (-x)\  | \ f\!\in \! {\rm{H}}^p_{+}(\Bb{R})\}$.



\section[\hspace{-0,30cm}. \hspace{0,11cm}The Schwarz triangle function and  its inverse]{\hspace{-0,095cm}{\bf{.}} The Schwarz triangle function and  its inverse}\label{inttrian}

 We denote  $\gamma (a,\infty):= a+ \imag  (0, +\infty)$, $a\in  \Bb{R}$, and \vspace{-0,1cm}
\begin{align}\label{f1schw}
\gamma (a,b):=\left\{\, z\in \Bb{H}\ \ \big| \ \ \left|z - (a+b)/2\right| =
|b-a|/2 \,\right\} \, ,\hspace{0,22cm} \quad  a,b \in \Bb{R} \, , \ a\neq b \,.
\end{align}

\vspace{-0,1cm}
\noindent Everywhere below we also use the notation $\gamma (a, b)$ for the contour of integration which is extended from $a$ to $b$ along  the open semicircle $\gamma (a, b)$.

\subsection[\hspace{-0,31cm}. \hspace{0,11cm}The Gauss hypergeometric function.]{\hspace{-0,12cm}{\bf{.}} The Gauss hypergeometric function.}\label{gau}

\ Euler's integral representation
\begin{align*}
    &   \he  (z) = \dfrac{1}{\pi }  \int_{0}^{1} \dfrac{ {\diff}
      t}{\sqrt{t(1-t)(1-tz)  }}
    \ , \quad z \in \Bb{C}\setminus \Bb{R}_{\geqslant 1} \, , \
\end{align*}

\noindent of the Gauss hypergeometric function \eqref{f1int} gives its analytic extension from $\Bb{D}$ to
$\Bb{C}\setminus \Bb{R}_{\geqslant 1}$ satisfying
\begin{align}\label{f3inttrian}
    &   \he  (x) > 0 \ , \quad  x \in \Bb{R}_{<\, 1} \ , \
\end{align}

\noindent and together with the Pfaff formula (see \cite[p.\! 79]{and})
\begin{align*}
    &  \he  (z) = \dfrac{1}{\sqrt{1-z}} \ \he
\left(\dfrac{z}{z-1}\right)   \ , \quad
 z \in \Bb{C}\setminus \Bb{R}_{\geqslant 1}  \ , \
\end{align*}

\noindent it allows us to find out  the boundary values of $\he $ on the both sides of the cut along
$\Bb{R}_{\geqslant 1}$
(see \cite[p.\! 491, 19.7.3; p.\! 490, 19.5.1]{olv}),
\begin{align*}
    &  
\he  (x\!\pm\!  \imag   0)\!=\!  \dfrac{1}{\sqrt{x}} \, \he
\left(\dfrac{1}{x}\right)\! \pm \!\dfrac{ \imag  }{\sqrt{x}} \,
\he \left( 1\!- \!\dfrac{1}{x}\right)  ,
\quad x  \in \Bb{R}_{> 1} \,.
\end{align*}

\noindent Moreover, it also allows us to estimate $\he$ for $x\!+\!\imag y\!\in\!\Bb{C}\setminus \Bb{R}_{\geqslant 1}$ as follows
\begin{align*}
    &
 \left|\he (x \!+ \!\imag  y)\right| \!\leq \!  \dfrac{6 \!+ \!\ln \dfrac{x^{2}}{x\!-\!1}}{\sqrt{x}}\chi_{{\fo{\Bb{R}_{>1}}}}(x)\!+\chi_{[0,1)}(x) \!\ln \dfrac{21}{1\!-\!x}\!+ \!\dfrac{ 3 \!+\! \ln \left(1\! +\! |x|\right)}{\sqrt{1+|x|}} \chi_{{\fo{\Bb{R}_{<0}}}}(x)
\end{align*}

\noindent (see \cite[p.\! 35, (A.9e)]{bh1}), and conclude that  $\he $ belongs to the Hardy space $ H^{p}_{+} (\Bb{H})  $  for
arbitrary $2 < p < \infty$. Then the Schwarz integral formula (see
\cite[p.\! 128]{ko})
\begin{align}\label{f6inttrian}
    &   \he  (z)=
\dfrac{1}{\pi} \int\nolimits_{1}^{\infty} \he \left( 1-\dfrac{1}{t} \right)
 \dfrac{ {\diff}  t}{(t-z)\sqrt{t}}  \ \ , \quad z \in \Bb{C}\setminus \Bb{R}_{\geqslant 1} \,,
\end{align}

\noindent  can be applied
 to restore $\he$ on $\Bb{C}\setminus \Bb{R}_{\geqslant 1}$ from the values of $\im\,  \he $ on $\Bb{R}$ (see \cite[p.\! 604, (2.12)]{bh2}). The similar quadratic formula
\begin{align}\label{f7inttrian}
    &   \he  (z)^{2}=
\dfrac{2}{\pi} \int\nolimits_{1}^{\infty} \he \left( 1-\dfrac{1}{t} \right)\he \left( \dfrac{1}{t} \right)
 \dfrac{ {\diff}  t}{(t-z) t} \ \ , \quad z \in \Bb{C}\setminus \Bb{R}_{\geqslant 1}\,,
\end{align}

\noindent  also applies, because  $\he^{2} $ is in the Hardy space $ H^{p}_{+} (\Bb{H})  $  for all $1 < p < \infty$. For arbitrary  $z = x+\imag  y \in \Bb{C}\setminus \Bb{R}_{\geqslant 1}$
we use the relation
\begin{align*}
    &  \im \dfrac{z(1-z)}{1-tz}= y \dfrac{t x^{2}-2x+1}{(1-tx)^{2} + t^{2}y^{2}} \ , \quad t \in [0,1]  \, , \
\end{align*}

\noindent to derive from \eqref{f7inttrian} that
\begin{align*}
    &  \im \left( z(1\!-\!z) \he  (z)^{2} \right)\!=\!
\dfrac{2 y }{\pi} \int\limits_{0}^{1}
\dfrac{\he \left( 1\!-\!t \right)\he \left( t \right)\left(\dfrac{1}{1\!+ \!\sqrt{ 1\!-\!t}}\!-\!x\right)
\left(1\!-\!tx\! +\! \sqrt{ 1\!-\!t}\right){\diff}  t}{(1-tx)^{2} + t^{2}y^{2}}\ ,
\end{align*}

\noindent from which we conclude that
\begin{align}\label{f8inttrian}
    &     {\rm{sign}} \,\im \Big( z(1\!-\!z) \he  (z)^{2} \Big)=
{\rm{sign}}\,\im \left(z\right)
  \, , \quad z \in
\Bb{C}_{\re\,\leqslant 1/2} \,.
\end{align}

\noindent
It follows from \eqref{f6inttrian} that $\he$ has  property
\begin{align*}
    &  \hspace{-0,2cm} \Arg   \he  (z) \in
 \!\left(-\dfrac{\pi}{2},  \dfrac{\pi}{2}\right) \, , \ \
    {{\rm{Re}}} \, \he   (z) \!>\! 0 \, , \quad z\!\in\!\Bb{C}\setminus
    \Bb{R}_{\geqslant 1} \,,
\end{align*}

\noindent  which reinforces \eqref{f3inttrian}, and allows us  to form   the logarithm of it and to obtain the integral representation
\begin{align}\label{f10inttrian}
    &  \Log   \he   (z) = \frac{1}{\pi^{2 }} \!\int\nolimits_{0}^{1} \
\frac{\Log  \left(\dfrac{1}{1-tz}\right)
 }{t(1-t)\left(\vphantom{\dfrac{a}{a}} \he  (t)^{2} +
 \he  (1-t)^{2}\right)} \, {\diff} t \, , \quad z\!\in\!\Bb{C}\setminus
    \Bb{R}_{\geqslant 1} \ , \ \hspace{-0,15cm}
\end{align}

\noindent (see \cite[p.\! 595, (1.15)]{bh2}), where
\begin{align*}
  \dfrac{1}{\pi^{2} } \int\nolimits_{0}^{1} \   \dfrac{{\rm{d}} t}{t(1-t)\left(\vphantom{\dfrac{a}{a}} \he  (t)^{2} +
 \he  (1-t)^{2}\right)}=  \dfrac{1}{2}\,,
\end{align*}

\noindent because\vspace{-0,15cm}
\begin{align}\label{f3pinttheor1}
     & \dfrac{{{\diff}}}{{{\diff}} x} \dfrac{\he
    (1-x)}{\he  (x)} = -  \dfrac{1}{\pi x (1-x) \he  (x)^{2}} < 0 \ ,
     \quad  x \in (0,1) \ , \
\end{align}

\noindent
(see \cite[p.\! 606, (3.9)]{bh2}) implies
\begin{align*}
    &  \dfrac{d}{d x}\left(\dfrac{\pi}{2}- \arctan \dfrac{\he \left(1\!-\!x\right)}{\he \left(x\right)}\right)=
\dfrac{1}{\pi x (1-x)} \dfrac{1}{\he \left(x\right)^{2} + \he \left(1-x\right)^{2}} \ , \quad  x\in (0,1)\,.
\end{align*}

\noindent Here, in view of the Barnes expansion (see \cite[p.\! 173]{bar}, \cite[p.\! 299]{whi}),
\begin{align}\label{f11inttrian}
    &  \he (1-z) =  \dfrac{\he (z)}{\pi}\Log \dfrac{16}{z}- \dfrac{2}{\pi^{2}}\sum\limits_{n=1}^{\infty} \dfrac{\Gamma (n+1/2)^{2}}{(n !)^{2}}\left(\sum\limits_{k= 1}^{n} \dfrac{1}{ (2k -1) k}\right) z^{n} \, , \
\end{align}

\noindent  which is valid for $z \in \Bb{D}\setminus (-1,0]$, we have  (see \cite[p.\! 602, (2.5)]{bh2})
\begin{align}\label{f5pinttheor1}
    & \hspace{-0,2cm}   \he  (1-x)\! =\!\dfrac{1}{\pi}\,
\ln  \dfrac{16}{x}\ +\!
{\rm{O}} \Big( x \ln \dfrac{1}{x}\Big) \ , \ \  \he  (x) \!= \!1\! + {\rm{O}}  (x)  \ , \    x \to 0 , \, x\in(0,1) \, .\hspace{-0,1cm}
\end{align}

\vspace{0.2cm}
\subsection[\hspace{-0,31cm}. \hspace{0,11cm}The Schwarz triangle function.]{\hspace{-0,11cm}{\bf{.}} The Schwarz triangle function.} \ \ \
  In 1873, Schwarz \cite{sch} established the following fact
(see \cite[p.\! 97]{erd1}).
\begin{thmx}\hspace{-0,15cm}{\bf{.}}\label{intthA}
The  Schwarz triangle function
\begin{align}
\label{f1intthA}
       \ie (z) :=   \imag \, \frac{\het (1-z)}{\het (z)}\,,
\quad  z \in  (0,1)\cup \left(\Bb{C}\setminus\Bb{R}\right),
\end{align} \noindent
maps the set $(0,1)\cup \left(\Bb{C}\setminus\Bb{R}\right) $ one-to-one
onto the  ideal hyperbolic quadrilateral
\begin{align}\label{f2intthA}
\fet := \big\{\, z \in\Bb{H}\ \big| \,\  -1 < {\rm{Re}}\,  z < 1 \, , \,  \
|2 z -  1| > 1 , \ | 2 z +  1| > 1 \, \big\}  \, ,
\end{align}

\noindent which will be referred to as Schwarz quadrilateral (see Fig.~\ref{figure:fundamental_domain}).
\end{thmx}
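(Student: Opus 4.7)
The plan is to realize $\ie$ as a Schwarz triangle function and prove the theorem in four steps: (i) establish holomorphicity on the stated domain; (ii) compute boundary values on the three arcs bounding the upper half-plane $\Bb{H}$, using only Pfaff's transformation and the cut-value formula for $\he$ across $\Bb{R}_{\geqslant 1}$ recorded in Section~\ref{gau}; (iii) conclude that $\ie$ maps $\Bb{H}$ bijectively onto the ``right half'' of $\fet$ by the boundary correspondence theorem; (iv) propagate to the full domain via Schwarz reflection.

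For step (i), since $\he$ extends to a zero-free holomorphic function on $\Bb{C}\setminus\Bb{R}_{\geqslant 1}$, the composition $\he(1-z)$ is holomorphic on $\Bb{C}\setminus\Bb{R}_{\leqslant 0}$, so $\ie(z)=\imag\,\he(1-z)/\he(z)$ is holomorphic precisely on the intersection $(0,1)\cup(\Bb{C}\setminus\Bb{R})$; moreover, since $\he$ has real Taylor coefficients at $0$, Schwarz reflection gives the symmetry $\ie(\bar z)=-\overline{\ie(z)}$, which will be used to transfer the conclusion from $\Bb{H}$ to the lower half-plane.

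For step (ii), the boundary of $\Bb{H}\cap\bigl((0,1)\cup(\Bb{C}\setminus\Bb{R})\bigr)$ in $\widehat{\Bb{C}}$ splits into three arcs. On $(0,1)$, positivity \eqref{f3inttrian} of $\he$ forces $\ie\in\imag\Bb{R}_{>0}$, and strict monotonicity \eqref{f3pinttheor1} combined with the asymptotics \eqref{f5pinttheor1} identifies the image as the full open ray $(0,\imag\infty)$ with $0^{+}\!\mapsto\!\imag\infty$ and $1^{-}\!\mapsto\!0$. On the upper edge of $(-\infty,0)$, applying the cut-value formula to $\he((1-x)-\imag 0)$ and then the Pfaff identity $\he(-x/(1-x))=\sqrt{1-x}\,\he(x)$ yields
\begin{align*}
\ie(x+\imag 0)=1+\imag\,\frac{\he\bigl(1/(1-x)\bigr)}{\sqrt{1-x}\,\he(x)},\qquad x<0,
\end{align*}
whose image is the open vertical ray $1+\imag\Bb{R}_{>0}$, running from the ideal vertex $1$ (at $x\to-\infty$) up to $\infty$ (at $x\to 0^{-}$). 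On the upper edge of $\Bb{R}_{>1}$, combining the cut-value formula for $\he(x+\imag 0)$ with the Pfaff identity $\sqrt{x}\,\he(1-x)=\he\bigl((x-1)/x\bigr)$ gives, with $A:=\he\bigl((x-1)/x\bigr)$ and $B:=\he(1/x)$,
\begin{align*}
\ie(x+\imag 0)=\frac{A^{2}+\imag AB}{A^{2}+B^{2}},\qquad x>1,
\end{align*}
which immediately satisfies $|2\ie(x+\imag 0)-1|^{2}=\bigl((A^{2}-B^{2})^{2}+4A^{2}B^{2}\bigr)/(A^{2}+B^{2})^{2}=1$; together with the endpoint limits $0$ (at $x\to 1^{+}$) and $1$ (at $x\to+\infty$), the image is the open upper semicircle of $\{|2w-1|=1\}$.

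These three image arcs piece together into a Jordan curve in $\widehat{\Bb{C}}$ through the ideal vertices $0$, $1$, $\infty$, whose complementary component inside $\Bb{H}$ is precisely $\fet^{+}:=\{w\in\Bb{H}\mid 0<\re w<1,\ |2w-1|>1\}$. Since $\ie$ is holomorphic on $\Bb{H}$, extends continuously to its closure in $\widehat{\Bb{C}}$, and maps $\partial\Bb{H}$ bijectively onto $\partial\fet^{+}$ (with orientation fixed by the local expansion $\ie(1/2+\imag\epsilon)=\imag+2\epsilon\,\he'(1/2)/\he(1/2)+\Oh(\epsilon^{2})\in\fet^{+}$ for small $\epsilon>0$), the boundary correspondence theorem forces $\ie:\Bb{H}\to\fet^{+}$ to be a conformal bijection. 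Applying the symmetry $\ie(\bar z)=-\overline{\ie(z)}$ maps the lower half-plane bijectively onto $-\overline{\fet^{+}}$, the left half of $\fet$; combined with the imaginary-axis image of the shared boundary $(0,1)$, this proves the bijection $\ie:(0,1)\cup(\Bb{C}\setminus\Bb{R})\to\fet$ claimed by the theorem. The main obstacle is the careful treatment of the boundary correspondence at the three ideal vertices $0$, $1$, $\infty$: the image arcs are unbounded, the preimages degenerate (for instance, the entire ray $(-\infty,0)$ is carried onto a single arc ending at the point $1$), and one must verify on the Riemann sphere that the three image arcs splice together into a genuine Jordan curve without cusps or self-intersections at those vertices --- a check that reduces to the logarithmic asymptotics of $\he$ near $0$ and $1$ recorded in \eqref{f5pinttheor1}.
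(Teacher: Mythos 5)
The paper never proves Theorem~\ref{intthA}: it is quoted as Schwarz's 1873 theorem, with references to \cite{sch} and \cite[p.\! 97]{erd1}, and the facts the paper uses later (\eqref{f0pinttheor1}, \eqref{f13inttrian}, \eqref{f5aint}, \eqref{f6intthA}, \eqref{f7intthA}) are all derived \emph{from} it. Your argument is therefore necessarily a different route --- the classical conformal-mapping proof by boundary correspondence plus Schwarz reflection --- and in substance it is correct. Your boundary computations agree with the paper's own formulas: the image of $(0,1)$ is $\imag\Bb{R}_{>0}$ by \eqref{f3inttrian}, \eqref{f3pinttheor1}, \eqref{f5pinttheor1} (this is \eqref{f0pinttheor1}); your value on the upper edge of $\Bb{R}_{<0}$ is exactly \eqref{f6intthA} with ${\fo{\Delta}}$ as in \eqref{f15int}, after the Pfaff step $\he\big(-x/(1-x)\big)=\sqrt{1-x}\,\he(x)$; and your formula on the upper edge of $\Bb{R}_{>1}$, once one notes $(x-1)/x=1-1/x$, reproduces \eqref{f7intthA}, with $|2\ie(x+\imag 0)-1|=1$ as you check. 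The reflection identity $\ie(\bar z)=-\overline{\ie(z)}$ and the splitting of $\fet$ into the right half $\fet^{+}$, the ray $\imag\Bb{R}_{>0}$, and $-\overline{\fet^{+}}$ then transfer the half-plane statement to the full domain, consistently with \eqref{f13inttrian} and \eqref{f5aint}; the orientation check at $z=1/2+\imag\epsilon$ is computed correctly.

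Two points still have to be written out before the boundary-correspondence theorem can be invoked. First, that theorem needs the boundary map to be injective on each arc: you verified this (via monotonicity) on $(0,1)$ and on $\Bb{R}_{<0}$, but on $(1,\infty)$ you only gave the endpoint limits. Injectivity there follows from the same fact \eqref{f3pinttheor1}: with $t=1/x$ your boundary point is the M\"obius image $\imag\rho/(1+\imag\rho)$ of $\rho=\he(1-t)/\he(t)$, which is strictly monotone in $x$; alternatively, one can avoid injectivity altogether by running the argument principle, since containment of each boundary path in its arc together with the endpoint limits already forces winding number $1$ around points of $\fet^{+}$ and $0$ outside. Second, as you yourself flag, the continuity of $\ie$ at the three ideal vertices must be uniform in the spherical metric, including the vertex at $z=\infty$ where $\ie(z)\to 1$; this does not follow from \eqref{f5pinttheor1} alone, but reduces to it through the Pfaff transformation, or can be quoted from \cite[p.\! 609, Lemmas 4.1, 4.2]{bh2} together with the limit of $\ie$ at infinity established there. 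With these two items supplied, your proof is complete.
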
\vspace{-0,5cm}

\begin{figure}[htbp]
    \begin{tikzpicture}
    \definecolor{cv0}{rgb}{0.95,0.95,0.95}
    \begin{scope}[scale=1]
  \clip(-6,-0.5) rectangle (6,3.5);
    \draw (-3,0) circle (0.04) node[below] {$-1$};
    \draw (0,0) circle (0.04) node[below] {$0$};
    \draw (-1.5,0) circle (0.04) node[below] {$-1/2$};
    \draw (1.5,0) circle (0.04) node[below] {$1/2$};
    \draw (3,0) circle (0.04) node[below] {$1$};
        \draw (-5,0) -- (5,0);
    \draw (9,0) --  (9,3);
    \draw (-9,0)  --  (-9,3);
       \fill[color=cv0] (-3,0) arc (180:0:1.5) arc (180:0:1.5) -- (3,3) -- (-3,3);
    \draw (3,0)  --  ( 3,3);
    \draw (-3,0)  -- (-3,3);
    \draw[dotted] (0,0)  -- (0,2.4);
    \draw[dotted] (-1.5,0)  -- (-1.5,1.5);
    \draw[dotted] (1.5,0)  -- (1.5,1.5);
    \draw[dotted] (-3,1.5)  -- (3,1.5);
    \draw (-1.5,1.5) circle (0.04);
     \draw (1.5,1.5) circle (0.04);
      \draw (0,1.5) circle (0.04);
     \draw (0,1.7) node[right] {$i/2$};

    \draw (3,0) arc (0:180:1.5);
    \draw (-3,0) arc (180:0:1.5);
       \draw (0,2.4) node[above]{$\fet$};
    \end{scope}
    \end{tikzpicture}
    \caption{\hspace{-0,2cm}{\bf{.}} The  Schwarz quadrilateral  $\fet$.}
    \label{figure:fundamental_domain}\vspace{-0,3cm}
\end{figure}


\noindent
Theorem~\ref{intthA},  \eqref{f1int}, \eqref{f5pinttheor1}, \eqref{f3pinttheor1}  and the partial case
\begin{align*}
    \ie (x)\! = \!
    \imag \,   \frac{\het (1-x)}{\het (x)}\in {\rm{i }}\,\Bb{R}_{>0} \ ,
     \quad  x \in (0,1) \ ,
\end{align*}

\noindent of \eqref{f1intthA},
 imply
\begin{align}\label{f0pinttheor1}
   & \ie \big((0,1)\big) = \imag  \Bb{R}_{>0} \ , \quad  \ie \big(1/2\big) = \imag  \ ,
\end{align}

\noindent and that the function $\ie ( t)/\imag $ on the interval $(0, 1)$ decreases from $+\infty$ to $0$,
\begin{align}\label{f6pinttheor1}
    &  0 < \ie ( t_{2})/\imag  < \ie ( t_{1})/\imag  < +\infty \ , \quad  0 < t_{1}< t_{2} < 1 \,.
\end{align}

To make other known  properties of $\ie$  obvious, it is necessary to use the
integral representation (see \cite[p.\! 608, (3.13)]{bh2})
\begin{align*}
    &   \Log\dfrac{\ie (z)}{\imag } = \Log   \dfrac{ \he  (1\!-\! z)}{\he  (z)}\!=\!\dfrac{1}{
    \pi^{2 }}\! \int\limits_{0}^{1}\!\! \dfrac{
    \Log  \dfrac{1-tz}{1-t+tz}}{t (1-t)\left(\he  \left(t \right)^{2}
    + \he  \left(1-t \right)^{2}\right)} \,  {\diff}  t \ ,
\end{align*}

\noindent
which is immediate  from \eqref{f10inttrian} for every $z \in (0,1)\cup \left(\Bb{C}\setminus\Bb{R}\right)$.
This  representation has the following convenient properties (see Section~\ref{pinttrianlem1}).
\begin{lemsectthree}\hspace{-0,17cm}{\bf{.}}\label{inttrianlem1} Denote $\Lambda := (0,1)\cup \left(\Bb{C}\setminus\Bb{R}\right)$.
 The integrands in
\begin{align}\label{f1inttrianlem1}
    &  \ln\left|\ie (z)\right|= \ln   \dfrac{ \left|\he  (1\!-\! z)\right|}{\left|\he  (z)\right|}\!=\!\dfrac{1}{
    \pi^{2 }}\! \int\limits_{0}^{1}\!\! \dfrac{
    \ln  \dfrac{|1-tz|}{|1-t+tz|}}{t (1-t)\left(\he  \left(t \right)^{2}
    + \he  \left(1-t \right)^{2}\right)} \,  {\diff}  t \ ,
\end{align}

\noindent and in
\begin{align}\label{f2inttrianlem1}
    &  \Arg\dfrac{\ie (z)}{\imag } = \Arg   \dfrac{ \he  (1\!-\! z)}{\he  (z)}\!=\!\dfrac{1}{
    \pi^{2 }}\! \int\limits_{0}^{1}\!\! \dfrac{
    \Arg \dfrac{1-tz}{1-t+tz}}{t (1-t)\left(\he  \left(t \right)^{2}
    + \he  \left(1-t \right)^{2}\right)} \,  {\diff}  t
\end{align}

\noindent  are of constant sign for each $z \in \Lambda$.
More precisely, if $z \in \Lambda$ then
\begin{align*}
    &
  \dfrac{|1-tz|}{|1-t+tz|}
\left\{
  \begin{array}{ll}
  > 1  \ , & \hbox{if}\quad \re\, z < 1/2\, ,\\
 = 1 , & \hbox{if}\quad \re\, z = 1/2\, ,\\
  < 1 , & \hbox{if}\quad  \re\, z > 1/2\,,
  \end{array}
\right.\quad
\Arg \dfrac{1-tz}{1-t+tz}\left\{
  \begin{array}{ll}
   \in (-\pi, 0) , & \hbox{if}\quad \im\, z > 0\,,\\
  = 0 , & \hbox{if}\quad z \in (0,1)\,,\\
  \in (0, \pi) , & \hbox{if}\quad \im\, z < 0\,,
  \end{array}
\right.
\end{align*}

\noindent for arbitrary $t \in (0,1)$.
\end{lemsectthree}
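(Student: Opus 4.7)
The plan is to prove both sign claims by direct algebraic computation on the quantities inside the integrals, and to obtain the integral representations themselves by subtracting two instances of (\ref{f10inttrian}) and taking real and imaginary parts.

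For the modulus, writing $z=x+\imag y$, cancelling the common $(ty)^{2}$ terms gives
\[
|1-tz|^{2}-|1-t+tz|^{2}=(1-tx)^{2}-(1-t+tx)^{2}=t(1-2x)(2-t),
\]
whose sign equals $\mathrm{sign}(1-2\re z)$ since $t(2-t)>0$ on $(0,1)$. This yields the three cases for $|1-tz|/|1-t+tz|$ stated in the lemma. For the argument, expanding
\[
(1-tz)\,\overline{(1-t+tz)}=(1-tx)(1-t+tx)-t^{2}y^{2}-\imag\, ty(2-t)
\]
shows that $\im\frac{1-tz}{1-t+tz}$ has sign $-\mathrm{sign}(\im z)$ when $y\neq 0$; on $(0,1)$ both $1-tz$ and $1-t+tz$ are strictly positive, so the ratio is a positive real. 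Together these facts place the principal argument of $\frac{1-tz}{1-t+tz}$ in the intervals asserted by the lemma, and in particular show that the ratio never meets $\Bb{R}_{\leqslant 0}$ on $\Lambda$.

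To obtain the displayed integral equalities I would subtract the instance of (\ref{f10inttrian}) at $z$ from the instance at $1-z$, using (\ref{f1intthA}) to rewrite the left-hand side as $\Log\frac{\ie(z)}{\imag}$. The integrand becomes $\Log(1-tz)-\Log(1-t+tz)$, which equals the principal $\Log\frac{1-tz}{1-t+tz}$ on all of $\Lambda$: the set $\Lambda$ is path-connected, by the preceding sign analysis each of $1-tz$, $1-t+tz$ and their ratio avoids $\Bb{R}_{\leqslant 0}$ there, and the additive identity propagates by continuity from the trivial case $z\in(0,1)$, where all three factors are positive reals. Separating real and imaginary parts then yields (\ref{f1inttrianlem1}) and (\ref{f2inttrianlem1}) with the constant-sign integrands as required. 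The whole argument reduces to one elementary factorisation, one imaginary-part computation, and a short branch-of-logarithm check that uses the same sign analysis; I do not anticipate any genuine obstacle.
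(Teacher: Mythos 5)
Your proof is correct and follows essentially the same route as the paper: the two algebraic identities you compute — the factorisation $|1-tz|^{2}-|1-t+tz|^{2}=t(1-2x)(2-t)$ and the expansion of $(1-tz)\,\overline{(1-t+tz)}$ giving $\im\frac{1-tz}{1-t+tz}=-\frac{ty(2-t)}{|1-t+tz|^{2}}$ — are precisely the ones the paper records, and the integral identity is likewise obtained by subtracting two instances of (\ref{f10inttrian}).  The one place you diverge is in how you justify $\Log(1-tz)-\Log(1-t+tz)=\Log\tfrac{1-tz}{1-t+tz}$: the paper simply cites the inequality $|\Arg(1-tz)-\Arg(1-t+tz)|<\pi$ from \cite[p.~608, (3.15)]{bh2}, whereas you prove it yourself via the observation that both factors and their quotient avoid $\Bb{R}_{\leqslant 0}$ on $\Lambda$, so the integer-valued discrepancy is locally constant on the connected set $\Lambda$ and vanishes on $(0,1)$.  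Your continuity argument is self-contained and elementary — and in fact your sign analysis already produces everything needed for it — so it buys independence from the external reference at no real cost; logically the two are equivalent.
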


The properties established in Lemma~\ref{inttrianlem1} allow us to see from \eqref{f2inttrianlem1} that
\begin{align}\label{f13inttrian}
    &   {\rm{sign}}\left( \re\, \ie (z)\right) =   {\rm{sign}} \left(\im\, z\right)  \ , \quad  z  \in (0,1)\cup \left(\Bb{C}\setminus\Bb{R}\right) \,,
\end{align}

\noindent  while \eqref{f1inttrianlem1} yields that $|\ie (z)|\lesseqqgtr 1$ holds if and only of  $\re\, z \lesseqqgtr 1/2$, respectively, and according to Theorem~\ref{intthA} we obtain (see \cite[p.\! 41, (A11.i)]{bh1})
\begin{align}\label{f5aint}
        &
  \begin{array}{l}
  {\rm{(a)}}\	  \ie\left(\Bb{C}_{\re>1/2}\setminus \Bb{R}_{\geqslant 1}\right)=
        \fet \cap \Bb{D}\,,  \\[0,1cm]
  {\rm{(b)}}\	  \ie\left(\Bb{C}_{\re< 1/2}\setminus \Bb{R}_{\leqslant 0}\right)=
        \fet \setminus \overline{\Bb{D}}
           \,,
  \end{array}
       \qquad
       {\rm{(c)}}\	    \ie\big((1/2) + \imag  \Bb{R}\big) = \Bb{H}\cap \partial \Bb{D}  \,.
    \end{align}

\subsection[\hspace{-0,31cm}. \hspace{0,11cm}The elliptic modular function.]{\hspace{-0,11cm}{\bf{.}} The elliptic modular function $\lambda$.}
The function
$\lambda\! :\! \fet\! \to\! (0,1)\cup \left(\Bb{C}\setminus\Bb{R}\right)$
which is the inverse to $ \ie$, i.e.,
\begin{align}\label{f2aint}\hspace{-0,2cm}
    &  {\rm{(a)}}\ \   \ie (\lambda (y)) = y \,,  \ \ y \in \fet \,  , \ \ {\rm{(b)}}\ \   \lambda \big(\ie (z)\big) = z  \ , \ \  z \in
      (0,1)\cup \left(\Bb{C}\setminus\Bb{R}\right),\hspace{-0,1cm}
\end{align}

\noindent is called the  {\emph{{\rm{(}}elliptic{\rm{)}} modular function}}  $\lambda$ (see  \cite[p.\! 99]{erd1} and \cite[p.\! 579]{olv}).

The  modular function $\lambda$ extends to a periodic nonvanishing
holomorphic function in $\Bb{H}$ with period $2$ and (see
\cite[p.\! 598, (1.29)]{bh2})
\begin{align}\label{f2int}
     &\hspace{-0,2cm} \lambda (z)\! = \! {\Theta_{2}(z)^{4}}\big/{\Theta_{3}(z)^{4}}\!\neq\! 0  \, , \
\lambda (-1/z)\!=\!  1\!- \!\lambda (z) \!= \! {\Theta_{4}(z)^{4}}\big/{\Theta_{3}(z)^{4}}\!\neq\! 0   \, , \   z \!\in\!  \Bb{H} \,,\hspace{-0,1cm}
\end{align}

\noindent where for $z \in  \Bb{H}$ and $u \in \Bb{D}$ (see
\cite[pp.\! 612--614, (6.1),(6.7),(6.8)]{bh2}),
\begin{align*}
          \Theta_{3}  (z) &\! := \!   \theta_{3}\left(
          {\rm{e}}^{\imag  \pi z}\right)\, ,  &
      \Theta_{2}  (z) &  \! := \! 2 {\rm{e}}^{\imag  \pi z /4}
 \theta_{2}\left({\rm{e}}^{\imag  \pi z}\right)\, ,
      &
     \Theta_{4}  (z) & \! := \!\theta_{4}\left({\rm{e}}^{\imag  \pi z}\right)\, ,
 \\
\theta_{3} (u)& \! := \!
1  \!+ \! 2\sum\limits_{n\geqslant 1} u^{ n^2  } \, ,  &
\theta_{2} (u) &  \!:=
\!1 \!+\! \sum\limits_{n\geqslant 1} u^{n^2 +n} \, ,  &
 \theta_{4} (u)  & \! :=  \!1  \!+ \! 2\sum\limits_{n\geqslant 1}
(-1)^{n}u^{n^2}\!.
    \end{align*}

\vspace{-0,1cm}
 \noindent Regarding these nonvanishing holomorphic  functions in $\Bb{H}$ and in $\Bb{D}$, correspondingly (see \cite[p.\! 598, (1.26)]{bh2}),
 the main  relationships between them can be written for arbitrary
 $z \in \Bb{H}$ as follows, by using the principal branch
  of the square root,
\begin{align}\label{f3bint}
    &  \hspace{-0.45cm} \begin{array}{llll}
	{\rm{(a)}}\	\Theta_2(-1/z) &= (z/\imag )^{1/2}\Theta_4 (z)\, ,
 \quad   &\quad {\rm{(b)}}\
	\Theta_3(-1/z) &= (z/\imag )^{1/2}\Theta_3(z)\, ,\ \\[0,1cm]
  {\rm{(c)}} \
	\Theta_4(-1/z) &= (z/\imag )^{1/2}\Theta_2(z) \, , &\quad
{\rm{(d)}} \
	\Theta_2(z+1) &= {\rm{e}}^{\imag \pi/4}\Theta_2(z) \, ,
\\[0,1cm]   {\rm{(e)}} \
	\Theta_3(z+1) &= \Theta_4(z) \, , &\quad {\rm{(g)}} \
	\Theta_4(z+1)&= \Theta_3(z) \, ,
\end{array}\hspace{-0.2cm}
\end{align}

\vspace{-0,1cm}\noindent(see \cite[p.\! 614, (6.8)]{bh2}, $\re \,( z/i) > 0$).
In addition,  they are called the {\emph{theta functions}} and   meet the Jacobi identity
\begin{gather}\label{f3cint}
   \Theta_3 (z)^4 \! =\! \Theta_2 (z)^4 \!+\! \Theta_4 (z)^4 , \   z\!\in \!\Bb{H} \, ; \ \   \theta_{3} (u)^{4}  \!= \!16 u\, \theta_{2} (u)^{4}\!+\! \theta_{3} (-u)^{4} , \    u \!\in\! \Bb{D}\,,
\end{gather}

\noindent (see \cite[p.\! 599,\! (1.29)]{bh2}),  which gives for  $u\! \in\! \Bb{D}$  that (see
\cite[p.\! 157,\! (2.1)]{cha})
\begin{align}\label{f3wcint}
    & \begin{array}{lrcl}
     {\rm{(a)}}   &
       \ \  1+ \sum\nolimits_{n=1}^{\infty}\ r_{4}(n)\,u^{n}   &   :=  &   \theta_{3} (u)^{4}  ; \\[0,2cm]
        {\rm{(b)}}   &
       \ \ 1 +  \sum\nolimits_{n=1}^{\infty} (-1)^{n}  r_{4}(n) u^{n}  &   =   &   \theta_{4}(u)^{4} =\theta_{3}(-u)^{4} ; \\[0,2cm] {\rm{(c)}}   &
     \ \ \sum\nolimits_{n=0}^{\infty}  r_{4}(2n+1) u^{2n}&   =   &8 \, \theta_{2} (u)^{4} .
      \end{array}
\end{align}

\vspace{0,1cm}
It can be seen from \eqref{f6pinttheor1}, \eqref{f2int} and \eqref{f19int}  that (see Section~\ref{pinttrianlem1})
\begin{align}\label{f0apinttheor1}
   &\hspace{-0,3cm}  {\rm{(a)}}\ \ \lambda \big( \imag \Bb{R}_{>0}\big)\! =\!(0,1) , \ \
   \lambda \big( \imag \big)\! =\!1/2  , \ \ {\rm{(b)}}\ \ {\rm{e}}^{{\fo{-  \pi y }}} \!< \! \lambda (\imag y)\! <\! 16\, {\rm{e}}^{{\fo{-  \pi y }}} , \ \
 0 \!<\! y \!<\! \infty     \, ,\hspace{-0,2cm}
\end{align}

\noindent and that the function $\lambda (\imag  t)$ on the interval $(0, +\!\infty)$ strictly decreases from $1$ to $0$,
\begin{align}\label{f6apinttheor1}
    & \lambda (\imag  y_{1})+ \lambda (\imag / y_{1})=1 \ , \quad   0 < \lambda (\imag  y_{2}) < \lambda (\imag  y_{1}) < 1 \ , \quad  0 < y_{1}< y_{2} < +\infty \,.
\end{align}

\noindent In addition, \eqref{f5aint} and \eqref{f13inttrian} can be written as follows (\,$\gamma (-1,1):=\Bb{H} \cap\partial\Bb{D}$\,)
\begin{align}\label{f5int}
        &\hspace{-0,2cm}
  \begin{array}{ll}
   {\rm{(a)}}\ \lambda\left(\fet \cap \Bb{D}\right)\!=\!
       \Bb{C}_{\re>1/2}\!\setminus\! \Bb{R}_{\geqslant 1} \,;  &
   \ {\rm{(b)}}\   \lambda\big(\gamma (-1,1)\big)\! =  \!(1/2)\! + \!i \Bb{R} \,;
        \\[0,1cm]
 {\rm{(c)}}\   \lambda\left(\fet \setminus \overline{\Bb{D}}\right)\!=\!\Bb{C}_{\re< 1/2}
 \!\setminus\! \Bb{R}_{\leqslant 0}
           \,; &   \ {\rm{(d)}}\
         {\rm{sign}} \left(\im\, \lambda(z)\right) \! =\!  {\rm{sign}}\left( \re\, z\right)\, , \   z\!  \in \!\fet \,.
  \end{array}
     \hspace{-0,1cm}
    \end{align}

\vspace{-0,05cm}
The known relationships (see \cite[p.\! 598, (1.25); p.\! 599, (1.32)]{bh2}){\hyperlink{r13}{${}^{\ref*{case13}}$}}\hypertarget{br13}{}
\begin{align}\label{f14inttrian}
    &  \hspace{-0,2cm}
    \begin{array}{ll}
  {\rm{(a)}}\	  \Theta_{2}\big(\ie (z)\big)\!= \!z^{1/4} \he  (z)^{1/2}\!  ,  &
 \qquad
     {\rm{(b)}}\	   \Theta_{3}\big(\ie (z)\big)\!= \!\he  (z)^{1/2}\, ,
     \\[0,1cm]
 {\rm{(c)}}\	  \Theta_{4}\big(\ie (z)\big)\!=\! (1\!-\!z)^{1/4}\he  (z)^{1/2}\!  , &
\phantom{{\rm{(c)}}a}
 \qquad z\in (0,1)\cup \left(\Bb{C}\setminus\Bb{R}\right),
    \end{array}\  \  \hspace{-0,1cm}
\end{align}

\noindent for the principal branches of the quadratic and of  the fourth roots together with \eqref{f2aint} and \eqref{f2int}, show that the Schwarz triangle  function $\ie$ is a key  that links the theta functions $\{\Theta_{k}\}_{k=2}^{4}$ with the Gauss hypergeometric function $\he$.
This allows each property of $\he$ to be formulated as a property of theta functions and vice versa.

\vspace{0,1cm}
For instance, by virtue of  \eqref{f7inttrian}, we have
${\rm{sign}}\, \im\, \he (z)^{2} = {\rm{sign}}\, \im\,z$ for all $z\! \in \! \Bb{C}\!\setminus\! \Bb{R}_{\geqslant 1}$, and
as a consequence, if we take into account \eqref{f13inttrian}
and \eqref{f14inttrian}(b), we get the result that\vspace{-0,2cm}
\begin{align}\label{f9bint}
    &  {\rm{sign}}\Big( \im\, \Theta_{3} \big(z\big)^{4}\Big) = {\rm{sign}} \left(\re\,z\right)    \, , \quad  z  \in \fet \,,
\end{align}

\noindent which was  established by the first and the second authors  in Corollary~1.2 of \cite[p.\! 599]{bh2}.
Our second example is the property
\begin{align}\label{f16inttrian}
  {\rm{sign}}\big( \re\,  \lambda^{\,\prime}  (z) \big)= -
 {\rm{sign}}  \left(\re\, z\right) \ , \quad
  z\in  \fet \setminus {\rm{clos}}\left(\Bb{D}\right) \,,
\end{align}

\noindent
which follows from \eqref{f8inttrian} and the identity
\begin{align}\label{f19int}
    &     \lambda^{\,\prime} (z)=  \imag \, \pi \,\lambda (z) \,\left(1 -\lambda (z)\right)\,\Theta_{3}\left(z\right)^{4} =  {\rm{i}}\, \pi \,\dfrac{\Theta_{2}(z)^{4} \Theta_{4}(z)^{4}}{\Theta_{3}(z)^{4}}\ , \ \   z \in \Bb{H}
\end{align}

\noindent (see  \cite[p.\! 599;  (1.30)]{bh2}), by application to them  \eqref{f14inttrian}, \eqref{f5aint}(b) and \eqref{f13inttrian}, as with the help of \eqref{f14inttrian},  the identity \eqref{f19int} can be written in the form
\begin{align*}
     &  \dfrac{ \lambda^{\,\prime} \left(\ie (z)\right)  }{\imag \, \pi}=\dfrac{\Theta_{2}\left(\ie (z)\right)^{4}\Theta_{4}
       \left(\ie (z)\right)^{4}}{\Theta_{3}\left(\ie (z)\right)^{4}} = z (1\!-\!z)\he  (z)^{2} , \quad  z  \in (0,1)\cup \left(\Bb{C}\setminus\Bb{R}\right) ,
\end{align*}

\noindent and then, in a second step, using \eqref{f8inttrian}, \eqref{f13inttrian} and \eqref{f5aint}(b), we obtain
\begin{align*}
     &  -{\rm{sign}} \,\re\,\lambda^{\,\prime} \left(\ie (z)\right)=
 {\rm{sign}} \,\im \dfrac{ \lambda^{\,\prime} \left(\ie (z)\right)  }{\imag \, \pi}= {\rm{sign}} \,\im \big(z (1\!-\!z)\he  (z)^{2}\big) = \\ & =  {\rm{sign}}\,\im \left(z\right)={\rm{sign}}\left( \re\, \ie (z)\right)  , \ \  \ie (z) \in
 \ie\left(\Bb{C}_{\re< 1/2}\setminus \Bb{R}_{\leqslant 0}\right)=
        \fet \setminus \overline{\Bb{D}} \,.
\end{align*}

\noindent
As for conclusions going in the opposite direction, here we can mention that
 the combination of \eqref{f14inttrian}(b), \eqref{f2aint}, \eqref{f2int} and  the  Landen trans\-for\-mation equations
\begin{align}\label{f3d1int}
    & \begin{array}{ll}
   {\rm{(a)}} \  2 \Theta_{2}(2 z)^{2}\! = \!\Theta_{3}( z)^{2}\!-\! \Theta_{4}( z)^{2},
   &   \quad  {\rm{(b)}}\ 2 \Theta_{3}(2 z)^{2} \!= \!\Theta_{3}( z)^{2}\!+ \!\Theta_{4}( z)^{2},
   \\[0,2cm] {\rm{(c)}}\ \Theta_{4}(2 z)^{2}\! =\! \Theta_{3}( z)\Theta_{4}( z)  ,
     & \quad \phantom{{\rm{(c)}}a}   z\in \Bb{H}\,,
  \end{array}
\end{align}

\noindent (see  \cite[p.\! 18]{law}{\hyperlink{r38}{${}^{\ref*{case38}}$}}\hypertarget{br38}{}),  gives{\hyperlink{r1}{${}^{\ref*{case1}}$}}\hypertarget{br1}{} the quadratic   transformation relation (3.1.10) with $a\!=\!b\!=\!1$ of   \cite[p.\! 128]{and} for the hypergeometric function $\he$ and equality \eqref{f2intcor1} as well. At the same time, each of the four
nontrivial functional relationships for the modular function $\lambda$ in the table
of
\cite[p.\! 111]{cha} (except for the first and third, considered as trivial)
can be written as the corresponding
Kummer transformation rule for $\he $ (see \cite[p.\! 106]{erd1}). For instance, in \cite[p.\! 33]{bh1} the Kummer identity
(27) of \cite[p.\! 106]{erd1})  was derived
from \eqref{f2aqinttheor1}.
It also follows{\hyperlink{r2}{${}^{\ref*{case2}}$}}\hypertarget{br2}{} from \eqref{f3d1int} that
\begin{align}\label{f3dint}
    & \lambda \left(\dfrac{1+z}{1-z}\right)=\dfrac{1}{2} +
\dfrac{i\lambda_{1} (z)}{4  \sqrt{\lambda_{2} (z)}}  \, ,   &     &
\lambda \left(\dfrac{z-1}{z+1}\right)=\dfrac{1}{2} -
\dfrac{i\lambda_{1} (z)}{4  \sqrt{\lambda_{2} (z)}}  \, ,    &     &    z\in \fet\,,
 \\    &
 \lambda_{2} (z):= \lambda (z)\big(1-\lambda (z)\big) \, ,    &     &   \lambda_{1} (z):= 1-2\lambda (z)\, ,    &     & z\in \Bb{H}\,,
\label{f3eint} \\    &
 \lambda_{2} (-1/z)\!=\!\lambda_{2} (z) \, ,    &     &  \lambda_{1} (-1/z)\!=\!-\!\lambda_{1} (z)\, ,    &     & z\in \Bb{H}\,.
\nonumber 
\end{align}

\noindent Observe that  the principal branch of the square root  can be used in \eqref{f3dint}  because
$\lambda(\fet) = (0,1)\cup \left(\Bb{C}\setminus\Bb{R}\right)$ and the two inclusions  $z\!\in\! \Bb{C}$,
$z(1-z)\!\in \!\Bb{R}_{<0}$ hold  if and only if $z\! \in \! \Bb{R}_{<0}\!\cup \Bb{R}_{>1}$. A combination of \eqref{f0apinttheor1}(b) and \eqref{f3dint} gives that{\hyperlink{r4}{${}^{\ref*{case4}}$}}\hypertarget{br4}{}
\begin{align}\label{f3zint}
 64  \left|\lambda \left(\dfrac{t+i}{t-i}\right)\right|=16\,\big/\!\sqrt{\lambda (it)\lambda (i/t)}  \geqslant \exp \dfrac{\pi }{2}\left(t + \dfrac{1}{t}\right) \ , \quad  t > 0\,,
\end{align}

\noindent while the relations \eqref{f3d1int} and \eqref{f3bint} imply{\hyperlink{r3}{${}^{\ref*{case3}}$}}\hypertarget{br3}{} that
\begin{align}\label{f3yint}
\left|\Theta_{3} \left(\dfrac{t+i}{t-i}\right)\right|^{4}  \leqslant 5 \left(t + \dfrac{1}{t}\right)
 \exp \left(-\dfrac{\pi }{4}\left(t + \dfrac{1}{t}\right)\right)  \ , \quad  t > 0 \,.
\end{align}

The relation
 \begin{align}
\label{f3int}
       \imag  \, \pi\, \ie^{\,\prime} (z)\, \he  (z)^{2}
       =\frac{1}{ z(1-z)}
      \ , \quad z \in (0,1)\cup \left(\Bb{C}\setminus\Bb{R}\right),
\end{align}

\noindent  (see  \cite[p.\! 597, (1.20)]{bh2})
allow us in Section~\ref{pf4aint}  to show that
the integral formulas of Theorem~\ref{intth1} for every $x\!\in\! \Bb{R}$ can be written as
follows, 
\begin{align}\label{f4int}
      \eurm{H}_{0} (x)  & =\dfrac{1}{ 2 \pi\imag}\!\!\!\!\int\limits_{ 1/2 - {\rm{i}} \infty}^{1/2 + {\rm{i}} \infty}
 \frac{\ie (y)\ie^{\,\prime} (y) \, \he  (y)^{2}}{ x^{2}- \ie (y)^{2}  } d y \, ,  \\
  \eurm{H}_{n} (x)   & =  \dfrac{(-1)}{4 \pi^{2} n}  \!\!\!\!   \int\limits_{ 1/2 - {\rm{i}} \infty}^{1/2 + {\rm{i}} \infty} \
\frac{S^{{\tn{\triangle}}}_{n}\! \left({1}\big/{y}\right)\ie^{\,\prime} (y) d y}{\big(x+\ie (y)\big)^{2} }
 \, , \quad n \geqslant 1 \,.
\label{f7int}\end{align}

\noindent
In view of \eqref{f14inttrian}(b), we can  use \eqref{f5int} and Theorem~\ref{intthA} to change the variable
$z = \ie (y)$ in the integrals \eqref{f4int}, \eqref{f7int} and for arbitrary $x\!\in\! \Bb{R}$ obtain (see \eqref{f5intlem3})
\begin{align}\label{f9int}
    &\hspace{-0,2cm} \eurm{H}_{0} (x)\!=\dfrac{1}{2 \pi\imag} \!\!\! \!\!\! \int\limits_{\gamma (-1,1)} \!\!\!\! \frac{z \, \Theta_{3}\left(z\right)^{4}}{  x^{2} - z^{2}} d z \, , \  \  \eurm{H}_{n} (x) \! = \dfrac{(-1)}{4 \pi^{2} n} \!\!\! \!\!\!  \int\limits_{\gamma (-1,1)}\!\!\!\!\!
\frac{S^{{\tn{\triangle}}}_{n} \!\left(\dfrac{1}{\lambda (z)}\right) d z}{(x+z)^{2}}\, , \quad n \geqslant 1 \,.\hspace{-0,1cm}
\end{align}

\noindent Here, the integrals are absolutely convergent for all $x\in \Bb{R}$ as follows from \eqref{f5intlem3}, \eqref{f3yint},
\eqref{f3zint} and the parametrization $\gamma (-1,1) \ni z = (t+i)/(t-i)$, $t \in \Bb{R}_{> 0}$.

\subsection[\hspace{-0,31cm}. \hspace{0,11cm}Monotonicity properties of the modular function.]{\hspace{-0,1cm}{\bf{.}} Monotonicity properties of the modular function $\lambda$.}
The equality \eqref{f19int}
  together with  \eqref{f9bint},  the identities
\begin{align}\label{f9cint} &
\begin{array}{l}
  \begin{displaystyle}
 \dfrac{d}{d x}\log\left( \left|\lambda (i y\! +\! x)\right|  \left|1\!-\!\lambda (i y\! +\! x)\right|\right)\!=\!
    \pi\,\im\,\left(\Theta_{3}^{4}(i y \!+\! x)\!-\!2\Theta_{3}^{4}(i y \!+\! x\!-\!1)\right)   \, ,
 \end{displaystyle}
 \\[0,2cm]
   \begin{displaystyle}
 \dfrac{d}{d x}\log \left|\lambda (i y\! + \!x)\right|= -  \pi\,\im \, \Theta_{3}(i y\! + \!x\!-\!1)^{4} \, , \quad x + {\rm{i}} y\in \Bb{H} \, ,
 \end{displaystyle}
\end{array}
\end{align}

\noindent and \eqref{f16inttrian} allow us to obtain in Section~\ref{pinttheor1} the following monotonicity properties  of $\lambda$
and the estimates of Corollary~\ref{intcorol1} below as well.

 \vspace{-0,06cm}
\begin{theorem}\hspace{-0,16cm}{\bf{.}}\label{inttheor1}
    Let $a \geqslant  1/2$ and $\lambda_{2} (z):= \lambda (z)\big(1-\lambda (z)\big)$. Then
\begin{align}\label{f1inttheor1}
    &\hspace{-1cm}  x \dfrac{d}{d x}\ \left|\lambda (x + {\rm{i}} y)\right|\ \,> 0,    \hspace{0,37cm}  x\!+\!i y\in \fet \!\setminus \!\left(
    {\rm{i}} \Bb{R}_{> 0}\right)\!,  \\    &  \label{f4inttheor1}
    \hspace{-1cm}  x \dfrac{d}{d x}\, \left|\lambda_{2} (x + {\rm{i}} y)\right|\, > 0,    \hspace{0,37cm}  x\!+\!i y\in \fet \!\setminus \!\left(
    {\rm{i}} \Bb{R}_{> 0}\right)\!,
     \\  &\hspace{-1cm}
    \label{f2inttheor1}
 x \dfrac{d}{d x} \ \re\, \lambda (x\! + \!i y)\, <  0 ,   \hspace{0,4cm}    x\!+\!i y\in \fet \!\setminus \!\left( \Bb{D}\cup {\rm{i}} \Bb{R}_{> 0}\right), \\  &\hspace{-0,45cm}
       \max_{{\fo{ \im z \!\geqslant\! a}} } \left|\lambda (z)\right| \ =  \left|\lambda ( 1 + {\rm{i}} a)\right|\  =  \frac{\lambda (i a)}{1-\lambda (i a) }  \ \,, \label{f3inttheor1}
 \\  &\hspace{-0,5cm}
       \max_{{\fo{ \im z \!\geqslant\! a}} } \left|\lambda_{2} (z)\right|  =  \left|\lambda_{2} ( 1 + {\rm{i}} a)\right|  =  \frac{\lambda (i a)}{\left(1-\lambda (i a)\right)^{2} }  \ \,. \label{f6inttheor1}
\end{align}

\end{theorem}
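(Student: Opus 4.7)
\emph{Plan of proof.} I will prove the four statements in sequence, relying on the formula $\lambda'(z)={\rm{i}}\pi\lambda(z)(1-\lambda(z))\Theta_{3}(z)^{4}$ from \eqref{f19int} together with the sign rules of \eqref{f9bint}, \eqref{f8inttrian}, and \eqref{f5int}(d) already established.

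\emph{Monotonicity \eqref{f1inttheor1} and \eqref{f4inttheor1}.} The second line of \eqref{f9cint} reads $\frac{d}{dx}\log|\lambda(x+{\rm{i}}y)|=-\pi\,\im\,\Theta_{3}((x-1)+{\rm{i}}y)^{4}$. Since $\Theta_{3}$ is $2$-periodic by \eqref{f3bint}(e,g), for $z=x+{\rm{i}}y\in\fet$ I may set $w=z-1$ when $x>0$ and $w=z+1$ when $x<0$ and replace $\Theta_{3}(z-1)^{4}$ by $\Theta_{3}(w)^{4}$. The key technical step is to verify that $w$ again lies in $\fet$ with $\re w$ of sign opposite to $\re z$: the strip bound $-1<\re w<1$ is immediate from $-1<x<1$; one of the two disk conditions $|2w\pm 1|>1$ is inherited from the corresponding condition on $z$, while the other is trivially satisfied because the shift pushes $\re w$ beyond the range of the nearer disk. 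Then \eqref{f9bint} applied to $w$ gives $\im\Theta_{3}(w)^{4}$ of sign opposite to $\re z$, which is exactly \eqref{f1inttheor1}. For \eqref{f4inttheor1} I then use the first line of \eqref{f9cint}, whose two summands $\pi\,\im\Theta_{3}(z)^{4}$ and $-2\pi\,\im\Theta_{3}(z-1)^{4}$ both have the sign of $\re z$ (the first by \eqref{f9bint} applied directly to $z\in\fet$, the second by what was just shown), so their sum shares this sign.

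\emph{Equation \eqref{f2inttheor1}.} Combining \eqref{f14inttrian}(b), which gives $\Theta_{3}(z)^{4}=\he(\lambda(z))^{2}$, with \eqref{f19int} yields $\lambda'(z)={\rm{i}}\pi\lambda(z)(1-\lambda(z))\he(\lambda(z))^{2}$, and hence $\re\lambda'(z)=-\pi\,\im\bigl(\lambda(1-\lambda)\he(\lambda)^{2}\bigr)(z)$. For $z\in\fet\setminus(\Bb{D}\cup{\rm{i}}\Bb{R}_{>0})$ the image $\lambda(z)$ lies in $\Bb{C}_{\re\leqslant 1/2}$ by \eqref{f5int}(b,c), and $\im\lambda(z)\neq 0$ by \eqref{f5int}(d); applying \eqref{f8inttrian} to $\lambda(z)$ therefore gives ${\rm{sign}}\,\im\bigl(\lambda(1-\lambda)\he(\lambda)^{2}\bigr)={\rm{sign}}\,\im\lambda(z)={\rm{sign}}(\re z)$, and so $x\frac{d}{dx}\re\lambda(x+{\rm{i}}y)=x\re\lambda'(z)<0$.

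\emph{Extremal identities \eqref{f3inttheor1} and \eqref{f6inttheor1}.} From \eqref{f3bint}(d,e), $\Theta_{2}(z+1)^{4}=-\Theta_{2}(z)^{4}$ and $\Theta_{3}(z+1)^{4}=\Theta_{4}(z)^{4}$; combined with \eqref{f2int} this gives $\lambda(1+{\rm{i}}y)=-\lambda({\rm{i}}y)/(1-\lambda({\rm{i}}y))$, whence $|\lambda(1+{\rm{i}}y)|=\lambda({\rm{i}}y)/(1-\lambda({\rm{i}}y))$ and $|\lambda_{2}(1+{\rm{i}}y)|=\lambda({\rm{i}}y)/(1-\lambda({\rm{i}}y))^{2}$. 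The symmetry $\lambda(-\bar z)=\overline{\lambda(z)}$ (a direct consequence of the real Taylor coefficients of $\theta_{2},\theta_{3}$ in the definitions of $\Theta_{2},\Theta_{3}$) together with \eqref{f1inttheor1} and \eqref{f4inttheor1} shows that on any horizontal segment $[-1,1]+{\rm{i}}y$ lying in $\fet$ both $|\lambda|$ and $|\lambda_{2}|$ are strictly increasing in $|\re z|$ on $[0,1]$; for $a\geqslant 1/2$ this applies to every $y\geqslant a$ (the boundary case $y=1/2$ being covered by continuity, since only the two points $\pm 1/2+{\rm{i}}/2$ fail to be interior to $\fet$). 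By $2$-periodicity the maxima over $\Bb{R}+{\rm{i}}y$ are then attained at $z=\pm 1+{\rm{i}}y$. Since $y\mapsto\lambda({\rm{i}}y)$ strictly decreases on $(0,\infty)$ by \eqref{f6apinttheor1}, and both $t\mapsto t/(1-t)$ and $t\mapsto t/(1-t)^{2}$ are strictly increasing on $(0,1)$, the functions $y\mapsto|\lambda(1+{\rm{i}}y)|$ and $y\mapsto|\lambda_{2}(1+{\rm{i}}y)|$ strictly decrease on $(0,\infty)$; hence their suprema on $\{\im z\geqslant a\}$ are attained at $z=1+{\rm{i}}a$, giving the stated formulas. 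The principal obstacle throughout is the shift verification of the first step---a careful case analysis on the sign of $\re z$ ensuring that the translate $z\mp 1$ remains in $\fet$---after which everything else follows from the theta transformation laws and elementary single-variable monotonicity.
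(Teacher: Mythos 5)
Your proposal is correct and follows essentially the same route as the paper: sign analysis of the identities \eqref{f9cint} via \eqref{f9bint} applied to $z$ and to the unit translate $z\mp 1$ (whose membership in $\fet$ you verify explicitly, a step the paper leaves implicit), the sign rule for $\re\,\lambda^{\,\prime}$ that you re-derive from \eqref{f19int}, \eqref{f14inttrian}, \eqref{f8inttrian} and \eqref{f5int} (this is exactly \eqref{f16inttrian}, which the paper simply cites), and the translation relation $\lambda(z\pm 1)=\lambda(z)/(\lambda(z)-1)$ combined with the monotone decrease of $\lambda({\rm{i}}y)$ from \eqref{f6apinttheor1} for the extremal identities. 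The only substantive difference is that you take \eqref{f9cint} as given, which is legitimate since it is displayed in the text before the theorem, whereas the paper's proof section also supplies its derivation.
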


 Since $0\!< \!\lambda (i y)\!< \!1/2\!=\! \lambda (i )$ holds for  $y\!> \!1$, the next properties can  be  derived from Theorem~\ref{inttheor1}, \eqref{f19int} and $\lambda\big(\gamma (-1,1)\big)\! = \! (1/2)\! +\! {\rm{i}} \,\Bb{R}$ (see \eqref{f0apinttheor1}--\eqref{f5int}).

\begin{corsectthree}\hspace{-0,17cm}{\bf{.}}\label{intcorol1}
Suppose $t>0$, $x \in [-1,1]$,  $y> 1$ and put
\begin{align*}
    &  \eusm{E}^{0}_{\!{\tn{\frown}}}:= -1/\eusm{E}^{\infty}_{\!{\tn{\frown}}},\quad \eusm{E}^{\infty}_{\!{\tn{\frown}}}:=
\Bb{H} \setminus\underset{{\fo{m \in \Bb{Z}}}}{\cup } \left(2m +
\overline{\Bb{D}}\right),\quad\fet \cap \Bb{D}\subset \eusm{E}^{0}_{\!{\tn{\frown}}}.
\end{align*}

\noindent
Then $\lambda (i y)\in (0,1/2)$ and
\begin{align}\label{f1intcorol1} &\hspace{-0,25cm}
\begin{array}{ll}
\begin{displaystyle}
   {\rm{(a)}} \  \lambda (x\! + \!i y)\in \Bb{C}_{\re \leqslant \lambda (i y)} \bigcap
   \frac{\lambda (i y)}{1-\lambda (i y)}\,{\rm{clos}}\left(\Bb{D}\right) ,
   \end{displaystyle}
   &  \  {\rm{(b)}} \   \left|\lambda^{\,\prime} (x\! + \!i y)\right| \!\leqslant\!  9\ ,\hspace{-0,1cm} \\[0,5cm]
 {\rm{(c)}} \ \hspace{0,3cm}
  4 \left|\lambda(z)-\lambda(x+iy)\right|\geqslant \left|\lambda(z)\right|\, \left|1-2\lambda(iy)\right| \, ,  &  \  z \in \eusm{E}^{0}_{\!{\tn{\frown}}}\,, \\[0,25cm]
 {\rm{(d)}} \
  \sqrt{2}\, \left|\lambda(z)-\lambda(1+it)\right|\geqslant \left|\lambda(z)\right|+\left|\lambda(1+it)\right| \, ,  &  \ z \in \eusm{E}^{0}_{\!{\tn{\frown}}}\,.
\end{array}
\end{align}
\end{corsectthree}

\subsection[\hspace{-0,31cm}. \hspace{0,11cm}Imaginary part of the Schwarz triangle function.]{\hspace{-0,11cm}{\bf{.}} Imaginary part of the Schwarz triangle function.}
We note that   $\ie$ enjoys the functional relation\vspace{-0,1cm}
\begin{align}
\label{f3intthA}
\ie (z)\ie (1-z) =-1, \quad z \in
 (0,1)\cup \left(\Bb{C}\setminus\Bb{R}\right)\,,
\end{align}

\vspace{-0,1cm}
\noindent
 (see \cite[p.\! 597; (1.21)]{bh2}) and for every $x \! \in  \!\Bb{R} \setminus \!\{0, 1\}$ there exist
  $  \ie \! (x\pm\!  \mathrm{i}  \hspace{0,015cm}   0)
 :=\lim_{\, z\in \Bb{H}, \,  z \to 0} \  \ie  (x\pm z)$ (see \cite[p.\! 604]{bh2}). Moreover, there are relationships
 between the values of
$\ie$ on the  two sides of the cut along $\Bb{R}_{< 0}$:
\begin{align}
\label{f4intthA}
\ie (-x +  \imag  0 ) = 2 +  \ie (-x -
 \imag  0 ) \ , \quad  x>0\,,
\end{align}
\noindent  (see \cite[p.\! 597; (1.22)]{bh2}),  and  along the other cut  $\Bb{R}_{> 1}$ as well,
\begin{align}
\label{f5intthA}\dfrac{1}{\ie (1 + x+  \imag  0 )} = 2 +
\dfrac{1}{\ie (1+ x - \imag  0 )}
\  ,
\quad x>0\,,
\end{align}

 \noindent  (see \cite[p.\! 597; (1.23)]{bh2}). It was also explained in \cite[p.\! 601; (4.1)]{bh2} that  the Pfaff formula (see \cite[p.\! 79]{and}) gives that\vspace{-0,1cm}
\begin{align}\label{f6intthA}
    &  \ie (-x \pm  \imag   0 ) = \pm 1 +  \imag   \,
 {\fo{\Delta}} (x)  \, , \ \  x > 0 \,,
\end{align}

\vspace{-0,1cm}
\noindent where for  $x > 0$ we write (see \cite[p.\! 599]{bh2})
 \begin{align}\label{f15int}
    & \hspace{-0,2cm}{\fo{\Delta}} (x)\! := \!   \dfrac{\he   \left( {1}\big/{(1\!  +\!  x)}\right)}{\he
 \left(  {x}\big/{(1\!  +\!  x)}\right)} \ ,
    \quad
    \left\{\begin{array}{l}
    {\fo{\Delta}} (0)\! =\!  +\! \infty \ ,\\[0,2cm]
    {\fo{\Delta}} (+\infty)\!  = \! 0 \ ,
    \end{array}\right.
    \ \  \ \dfrac{{{\diff}} {\fo{\Delta}} (x)}{{{\diff}} x}\!  <\!  0, \ \
     {\fo{\Delta}} (x){\fo{\Delta}} (1/x)\!  = \!1.\hspace{-0,1cm}
\end{align}

\noindent
Then, by  \eqref{f3intthA},
\begin{align}\label{f2intlemma1}
     &
  \ie (1 + x \pm \imag  0)  =-\frac{1}{\ie (-x \mp \imag  0 )} =
     - \frac{1}{\mp 1 +  \imag   \,  {\fo{\Delta}} (x)} =
     \dfrac{\pm 1 +  \imag   \,  {\fo{\Delta}} (x)}{1 + {\fo{\Delta}} (x)^{2}} \ ,
\end{align}

\noindent i.e., we have
\begin{align}\label{f7intthA}
    &  \ie (1 + x + \imag  0)  =\frac{1 +  \imag   \,  {\fo{\Delta}} (x)}{1 + {\fo{\Delta}} (x)^{2}}\,, \ \
  \ie (1 + x - \imag  0)  =  \frac{-1 +  \imag   \,  {\fo{\Delta}} (x)}{1 + {\fo{\Delta}} (x)^{2}}\,, \ \  x > 0 \,.
\end{align}

\noindent Together with \eqref{f6intthA} this shows that the values of
$\im\,\ie$ on the two sides of the  cuts at $\Bb{R}_{< 0}$  and  at $\Bb{R}_{> 1}$ coincide. Hence,
 we extend the function $\im\,\ie$, initially defined on $(0,1)\cup(\Bb{C}\setminus\Bb{R})$, to $\Bb{R}_{< 0}\cup\Bb{R}_{> 1}$ declaring its values  to be given by
\begin{align}\label{f8intthA}
    &\hspace{-0,3cm}
   \begin{array}{lccclcl}
   \im\, \ie (  1\!+\! x  )  & \!  := \!  & \im\,\ie (1\! +\! x \!+ \!\imag  0 )  & \!  =\!   &
\im\, \ie (  1\!+\! x \!-\!  \imag  0 )& \!  = \!  & \dfrac{ {\fo{\Delta}} (x)}{1\! + \!{\fo{\Delta}} (x)^{2}} \ ,
   \\[0,3cm]
    \im\, \ie (-x  )  &  \! :=  \! &   \im\, \ie (-x \!+ \! \imag  0 )   & \!  = \!  &  \im\,\,  \ie (-x \!-\!
 \imag  0 ) &  \! = \!  & {\fo{\Delta}} (x)  \ ,   \ \   x >0 \,.
   \end{array}
\hspace{-0,1cm}\end{align}

\noindent We find that the resulting function $\im\,  \ie$ is continuous on $\Bb{C}\!\setminus\!\{0,1\}$. Taking the limit in the relation \eqref{f2aint}(b) as
$z \in (0,1)\!\cup\! \left(\Bb{C}\!\setminus\!\Bb{R}\right),
 z\! \to \!\Bb{R}_{<0}\!\cup \Bb{R}_{>1}$,  we obtain from \eqref{f6intthA} and \eqref{f2intlemma1} that
 \begin{align}\label{f1intlemma1}
    &
   \left\{ \begin{array}{rcccl} \lambda \big(\ie (-x \pm  \imag   0 )\big) &   =   &
    \lambda \left(\pm 1+i{\fo{\Delta}} (x)\right) &   =   & -x \,, \\[0,1cm]
     \lambda \big(\ie (1+ x \pm  \imag   0 )\big) &   =   &
   \lambda \left(\,1 \big/ \big(\pm 1-i{\fo{\Delta}} (x)\hspace{0,015cm}\big)\hspace{0,015cm}\right) &   =   & 1 + x\,,
    \end{array}\right.
    \qquad    x > 0 \,,
 \end{align}

\noindent where ${\fo{\Delta}}$ maps $\Bb{R}_{>0}$   onto itself in a one-to-one fashion, by \eqref{f15int}. Together with Theorem~\ref{intthA} this  means the following (see \eqref{f1schw}).

\begin{lemsectthree}\hspace{-0,17cm}{\bf{.}}\label{lemoneto} The modular function $\lambda$ maps each of the sets
$\gamma (1,\infty)=1+\imag \Bb{R}_{>0}$ and
 $\gamma (-1,\infty)=-1+\imag \Bb{R}_{>0}$ one-to-one onto $\Bb{R}_{<0}$ and each of the set \vspace{0,1cm} \\
$\begin{array}{lcccr} \hspace{1.9cm}
   \gamma (0,1) & = & 1/(1-\imag \Bb{R}_{>0})& = &1/2 + (1/2)(\Bb{H} \cap\partial \Bb{D}),\\  \hspace{1.9cm}
   \gamma (-1,0)& = &1/(-1-\imag \Bb{R}_{>0})& = &-1/2 + (1/2)(\Bb{H} \cap\partial \Bb{D})
\end{array}$

 \vspace{0,1cm}
 \noindent
 one-to-one onto $\Bb{R}_{>1}$. As a consequence, $\lambda$ maps each of the sets
\begin{align}\label{f1lemoneto}
     & \begin{array}{ll}
   \fet \sqcup \gamma (1,\infty) \sqcup \gamma (0,1)  \ , \   &
   \fet \sqcup \gamma (-1,\infty) \sqcup \gamma (0,1)  \ , \   \\[0,1cm]
    \fet \sqcup \gamma (1,\infty) \sqcup \gamma (-1,0)  \ , \      &
    \fet \sqcup \gamma (-1,\infty) \sqcup \gamma (-1,0) \ , \
       \end{array}
\end{align}

\noindent one-to-one onto $\Bb{C}\setminus\{0, 1\}$.
\end{lemsectthree}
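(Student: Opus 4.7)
The plan is to derive the four boundary bijectivities first, and then piece them together with Theorem~\ref{intthA} for the ``as a consequence'' statement.

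First I would verify the parametrizations of the four arcs. From \eqref{f1schw}, the rays $\gamma(\pm 1,\infty)=\pm 1+\imag\Bb{R}_{>0}$ are tautological. For $t\in\Bb{R}_{>0}$, a direct computation gives $1/(1-\imag t)=(1+\imag t)/(1+t^{2})$ and $1/(-1-\imag t)=(-1+\imag t)/(1+t^{2})$; checking $|z\mp 1/2|^{2}=1/4$ in each case shows these points trace the upper semicircles $\gamma(0,1)$ and $\gamma(-1,0)$, respectively, bijectively as $t$ runs through $\Bb{R}_{>0}$.

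The key step is to invoke the already-established boundary identities \eqref{f1intlemma1}, namely $\lambda\bigl(\pm 1+\imag {\fo{\Delta}}(x)\bigr)=-x$ and $\lambda\bigl(1/(\pm 1-\imag {\fo{\Delta}}(x))\bigr)=1+x$ for $x\in\Bb{R}_{>0}$, together with the fact recorded in \eqref{f15int} that ${\fo{\Delta}}$ is a strictly decreasing bijection of $\Bb{R}_{>0}$ onto itself. Combining these with the parametrizations above, $\lambda$ sends each of $\gamma(1,\infty)$ and $\gamma(-1,\infty)$ bijectively onto $\Bb{R}_{<0}$, and each of $\gamma(0,1)$ and $\gamma(-1,0)$ bijectively onto $\Bb{R}_{>1}$, which is the first half of the lemma.

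For the ``as a consequence'' part, each of the four sets listed in \eqref{f1lemoneto} is a disjoint union of $\fet$ with exactly one arc mapped by $\lambda$ onto $\Bb{R}_{<0}$ and one arc mapped onto $\Bb{R}_{>1}$. Since Theorem~\ref{intthA} provides the bijection $\lambda:\fet\to(0,1)\cup(\Bb{C}\setminus\Bb{R})$, and since the three target pieces $(0,1)\cup(\Bb{C}\setminus\Bb{R})$, $\Bb{R}_{<0}$, and $\Bb{R}_{>1}$ are pairwise disjoint with union $\Bb{C}\setminus\{0,1\}$, the assembled map is a bijection of each set in \eqref{f1lemoneto} onto $\Bb{C}\setminus\{0,1\}$. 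There is no substantive obstacle: the proof is a bookkeeping exercise that matches the two-sided boundary values recorded in \eqref{f8intthA}--\eqref{f1intlemma1} against the surjectivity of ${\fo{\Delta}}$ and the fundamental-domain assertion of Theorem~\ref{intthA}.
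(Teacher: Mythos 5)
Your proposal is correct and follows the same route as the paper: the paper derives the lemma exactly from the boundary identities \eqref{f1intlemma1} (equivalently \eqref{f6intthA}, \eqref{f7intthA}), the fact from \eqref{f15int} that ${\fo{\Delta}}$ is a decreasing bijection of $\Bb{R}_{>0}$ onto itself, and Theorem~\ref{intthA}, with the disjoint decomposition $\Bb{C}\setminus\{0,1\}=\bigl((0,1)\cup(\Bb{C}\setminus\Bb{R})\bigr)\sqcup\Bb{R}_{<0}\sqcup\Bb{R}_{>1}$ supplying the final assembly. Your explicit check of the semicircle parametrizations is just a spelled-out version of what the paper treats as immediate.
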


By using the property  ${\fo{\Delta}}(x)\! \to \! +\infty$ as $x\!> \!0 $, $x\! \to\! 0$, which comes  from \eqref{f15int},
 we obtain from \eqref{f8intthA}  that $\im\, \ie (-x  )\!\to \! +\infty$  and $\im\, \ie (1\!+\!x  )\!\to\! 0$ as $x\!> \!0 $, $x\! \to\! 0$, which together with the properties
\begin{align*}
    &  \lim\nolimits_{{\fo{z\! \in\!\Lambda, z\! \to \!1}}} \ie (z) =0 \, , \
     \lim\nolimits_{{\fo{z\! \in\!\Lambda,  z\! \to \!0}}} \im\,\ie (z) = +\infty \, , \  \Lambda\!:=\! (0,1)\!\cup\! \left(\Bb{C}\!\setminus\!\Bb{R}\right),
\end{align*}

\noindent (see \cite[p.\! 609; Lemmas 4.1, 4.2]{bh2})
means that the  function $\im\, \ie$, which is continuous on
 $\Bb{C}\setminus\{0,1\}$, can be continuously extended to the point $1$ as well, with  value $\im\, \ie (1)=0 $,  while
$\lim_{ z\to \,0} \im\, \ie (z)=+\infty$. As a result, we get the following statement.

\vspace{-0,2cm}
\begin{lemsectthree}\hspace{-0,18cm}{\bf{.}}\label{intlemma1} Let ${\fo{\Delta}}$ be defined as in \eqref{f15int}. The harmonic function $\im\,\ie$ on
 $(0,1)\cup \left(\Bb{C}\setminus\Bb{R}\right)$ can be continuously extended to   $\Bb{C}\setminus\{0\}$
 with  values on $\Bb{R}_{< 0}\!\cup \Bb{R}_{\geqslant 1}$ given by \eqref{f8intthA} and
 $\im\, \ie (1)\!:=\!0$. The extended  function $\im\, \ie$  is positive on $\Bb{C}\setminus\{0, 1\}$ and satisfies

 \vspace{-0,5cm}
\begin{align}\label{f9intthA}
    &\hspace{-0,2cm}
     {\rm{(a)}} \ \lim_{ z\to \,0} \im\, \ie (z)=+\infty   \, , \ \  {\rm{(b)}} \  \im\, \ie \big(\lambda(y)\big) \!=\!\im\, y  \, , \ \  y \!\in\! \Bb{H}\cap {\rm{clos}}\left(\mathcal{F}_{{\tn{\square}}}\right)
.\hspace{-0,1cm}
\end{align}
\end{lemsectthree}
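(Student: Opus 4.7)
The plan proceeds by verifying the four assertions of the lemma separately, noting that most of the ingredients are already assembled in the paragraphs preceding the statement.

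First, for the continuous extension across the cuts $\Bb{R}_{<0}$ and $\Bb{R}_{>1}$, I would invoke the boundary-value formulas \eqref{f6intthA} and \eqref{f7intthA}: these show that the one-sided limits of $\im\,\ie$ from the upper and lower half-planes coincide and equal the right-hand sides of \eqref{f8intthA}. To extend continuously at $z=1$, I would combine the cited fact $\lim_{z\in\Lambda,\,z\to 1}\ie(z)=0$ with ${\fo{\Delta}}(x)\to+\infty$ as $x\to 0^+$ from \eqref{f15int}; then ${\fo{\Delta}}(x)/(1+{\fo{\Delta}}(x)^2)\to 0$, which matches the prescribed value $\im\,\ie(1):=0$. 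Assertion (a) is then immediate: $\im\,\ie(z)\to+\infty$ as $z\to 0$ from $\Lambda$ by the cited result, while $\im\,\ie(-x)={\fo{\Delta}}(x)\to+\infty$ as $x\to 0^+$ by \eqref{f15int}.

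For positivity on $\Bb{C}\setminus\{0,1\}$, I would partition the domain: on $(0,1)$, use $\ie((0,1))=\imag\,\Bb{R}_{>0}$ from \eqref{f0pinttheor1}; on $\Bb{C}\setminus\Bb{R}$, Theorem~\ref{intthA} places $\ie$ inside $\fet\subset\Bb{H}$; and on $\Bb{R}_{<0}\cup(\Bb{R}_{>1}\setminus\{1\})$, the extended values ${\fo{\Delta}}(x)$ and ${\fo{\Delta}}(x)/(1+{\fo{\Delta}}(x)^2)$ in \eqref{f8intthA} are manifestly positive since ${\fo{\Delta}}>0$ on $\Bb{R}_{>0}$.

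The only genuinely new content is the identity (b). For $y$ in the interior $\fet$, the relation \eqref{f2aint}(a) gives $\ie(\lambda(y))=y$ directly, hence $\im\,\ie(\lambda(y))=\im\,y$. For $y$ on the boundary $\partial\fet\cap\Bb{H}$, I would apply Lemma~\ref{lemoneto} to isolate the four arcs $\gamma(\pm 1,\infty)$ and $\gamma(0,1),\,\gamma(-1,0)$, each mapped by $\lambda$ bijectively onto $\Bb{R}_{<0}$ or $\Bb{R}_{>1}$ respectively, and then use the explicit parametrizations in \eqref{f1intlemma1}: if $y=\pm 1+i{\fo{\Delta}}(x)\in\gamma(\pm 1,\infty)$, then $\lambda(y)=-x$ and $\im\,y={\fo{\Delta}}(x)=\im\,\ie(-x)$; if $y=1/(\pm 1-i{\fo{\Delta}}(x))=(\pm 1+i{\fo{\Delta}}(x))/(1+{\fo{\Delta}}(x)^2)$ lies on $\gamma(0,1)$ or $\gamma(-1,0)$, then $\lambda(y)=1+x$ and $\im\,y={\fo{\Delta}}(x)/(1+{\fo{\Delta}}(x)^2)=\im\,\ie(1+x)$. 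The main (and essentially only) technical point is reading off the parametrization of each boundary arc from \eqref{f1intlemma1} and checking that the resulting imaginary part of $y$ coincides with the boundary formula \eqref{f8intthA} at $\lambda(y)$; once the four cases are lined up, each verification reduces to a one-line calculation, and everything else in the lemma is a direct consequence of results already established.
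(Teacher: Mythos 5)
Your proposal is correct and follows essentially the same route as the paper. The paper does not present a formal proof block for this lemma; instead the lemma is stated as the culmination of the discussion in the preceding paragraphs of Section~\ref{inttrian}, and your proposal reconstructs exactly that discussion: \eqref{f6intthA} and \eqref{f7intthA} establish that the one-sided imaginary parts across $\Bb{R}_{<0}$ and $\Bb{R}_{>1}$ agree (giving \eqref{f8intthA} and continuity across the cuts), the cited limits of $\ie$ at $0$ and $1$ from \cite{bh2} together with ${\fo{\Delta}}(x)\to+\infty$ handle those two points, and positivity follows from the range decomposition you give. For part (b), the paper's own ingredient is \eqref{f1intlemma1} (which it derives by taking limits in \eqref{f2aint}(b)), and it leaves the final imaginary-part comparison implicit; you make that check explicit for each of the four boundary arcs identified in Lemma~\ref{lemoneto}, which is exactly what is needed. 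No gaps.
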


\vspace{-0,1cm}
It follows from Theorem~\ref{intthA}, \eqref{f1intlemma1} and \eqref{zbssqlem1} that for every $a \in \Bb{R}_{<0}\cup \Bb{R}_{>1}$ the equation
$\lambda (z) = a$ with $z\in \Bb{H}\cap {\rm{clos}}\left(\mathcal{F}_{{\tn{\square}}}\right)$ has exactly two solutions $z=\ie (a \pm {\rm{i}} 0)$ with equal imaginary parts, by virtue of \eqref{f8intthA}, while for every
$a \in  (0,1)\!\cup\! \left(\Bb{C}\!\setminus\!\Bb{R}\right)$ it has a unique solution given by $z=\ie (a )$.  In fact,   Lemma~\ref{lemdhp} below implies the following stronger property, which we obtain later on  in Section~\ref{pinttheor2}.
\begin{theorem}\hspace{-0,15cm}{\bf{.}}\label{inttheor2} For every
$a \in \Bb{C}\setminus\{0,1\}$ there exists a finite maximum

\vspace{-0,4cm}
\begin{align*}
     & \max \left\{\,\im\, z \, \mid \, \lambda(z)=a \ , \
     z\in \Bb{H}_{|\re|\leqslant 1}
     \,\right\} , \ \ \Bb{H}_{|\re|\leqslant 1}:=\left\{\,z\in \Bb{H} \,\mid \, \re\, z \in [-1,1]\,\vphantom{\Bb{H}_{|\re|\leqslant 1}}\right\} ,
\end{align*}

\vspace{-0,1cm}
\noindent   attained at one or two points, which belong to $\Bb{H}\cap {\rm{clos}}\left(\mathcal{F}_{{\tn{\square}}}\right)$. There is  one such  point $\ie (a)$ if $a \in (0,1)\cup \left(\Bb{C}\setminus\Bb{R}\right)$ and two points $\ie (a \pm {\rm{i}} 0)$ if $a  \in (-\infty, 0) \cup (1, +\infty)$.
 In particular

 \vspace{-0,4cm}
\begin{align}\label{f1inttheor2}
     & z \in \Bb{C}\setminus\{0,1\} , \ y\in \Bb{H}_{|\re|\leqslant 1}\setminus{\rm{clos}}\left(\mathcal{F}_{{\tn{\square}}}\right)  , \
     \lambda(y)=z \ \Rightarrow \ \im\, y <   \im\, \ie (z) \,,
\end{align}

\vspace{-0,1cm}
\noindent where $\im\, \ie $ is  as in  Lemma~{\rm{\ref{intlemma1}}}.
\end{theorem}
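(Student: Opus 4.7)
The plan is to combine the exact description of $\lambda^{-1}(a)\cap{\rm{clos}}(\fet)$ that follows from Theorem~\ref{intthA} and Lemma~\ref{lemoneto} with the tiling of the strip $\Bb{H}_{|\re|\leqslant 1}$ by M\"obius translates of ${\rm{clos}}(\fet)$ under the group of transformations preserving $\lambda$ (developed in Section~\ref{bsden}), and then to compare imaginary parts across neighboring tiles via the standard identity $\im\bigl((\alpha z+\beta)/(cz+d)\bigr)=\im z\,/\,|cz+d|^{2}$, invoking the forthcoming Lemma~\ref{lemdhp} for the key estimate $|cz+d|>1$.

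First I pin down all preimages of $a$ that lie in $\Bb{H}\cap{\rm{clos}}(\fet)$. When $a\in(0,1)\cup(\Bb{C}\setminus\Bb{R})$, Theorem~\ref{intthA} yields the single interior preimage $\ie(a)$. When $a\in\Bb{R}_{<0}$, formulas \eqref{f6intthA} together with \eqref{f1intlemma1} place the two preimages $\ie(a\pm{\rm i}0)$ on the vertical rays $\gamma(\pm 1,\infty)$, while when $a\in\Bb{R}_{>1}$, formulas \eqref{f7intthA} and \eqref{f1intlemma1} place them on the semicircles $\gamma(-1,0)\sqcup\gamma(0,1)$; in both two-preimage cases \eqref{f8intthA} shows that the two points share a common imaginary part, which by Lemma~\ref{intlemma1}(b) equals the extended value $\im\,\ie(a)$. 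Lemma~\ref{lemoneto} certifies that no further preimages occur on the boundary $\partial\fet$.

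Next, take any $y\in\Bb{H}_{|\re|\leqslant 1}\setminus{\rm{clos}}(\fet)$ with $\lambda(y)=a$, and write $y=(\alpha y_{0}+\beta)/(c y_{0}+d)$, where $y_{0}\in{\rm{clos}}(\fet)$ is a preimage of $a$ provided by the partition from Section~\ref{bsden} and the M\"obius element is not the identity (such a representation exists because $\lambda$ is invariant under the covering group and ${\rm{clos}}(\fet)$ is a fundamental domain). The transformation rule for the imaginary part reduces the strict inequality \eqref{f1inttheor2} to the claim $|c y_{0}+d|>1$ for every non-identity element whose image tile $\gamma\cdot{\rm{clos}}(\fet)$ meets $\Bb{H}_{|\re|\leqslant 1}\setminus{\rm{clos}}(\fet)$; this is precisely the geometric content that Lemma~\ref{lemdhp} is designed to supply. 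Finiteness of the maximum is then immediate since $\im\,\ie(a)<+\infty$ for $a\neq 0$ by Lemma~\ref{intlemma1}(a). The main obstacle is the boundary interaction: when $y_{0}\in\partial\fet$ (exactly the two-preimage case $a\in\Bb{R}_{<0}\cup\Bb{R}_{>1}$) and the generator in question identifies one boundary arc of $\fet$ with another, the inequality $|c y_{0}+d|\geqslant 1$ can degenerate to equality, and one must read from Lemma~\ref{lemdhp} that this happens only when the image itself falls back into ${\rm{clos}}(\fet)$, so that a genuine $y\notin{\rm{clos}}(\fet)$ always satisfies the strict bound.
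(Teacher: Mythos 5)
Your overall architecture matches the paper's: identify the canonical preimages in $\Bb{H}\cap{\rm{clos}}\left(\fet\right)$ (one point $\ie(a)$ or two points $\ie(a\pm{\rm{i}}0)$ with equal imaginary part), represent every other preimage in the strip as a M\"obius image of a canonical one, and win by strict decrease of the imaginary part. The gap is in where you locate the decisive estimate. You reduce \eqref{f1inttheor2} to the bound $|c\,y_{0}+d|>1$ and assert that "this is precisely the geometric content that Lemma~\ref{lemdhp} is designed to supply." It is not. Lemma~\ref{lemdhp} only enumerates the preimage set $\{z\in\Bb{H}_{|\re|\leqslant 1}\mid\lambda(z)=\lambda(y)\}$ as an explicit list of points $\phi_{\eufm{n}}(y)$, $\phi_{\eufm{n}}(-1/y)$, $\phi_{\eufm{n}}(y-\sigma+\sigma_{\eufm{n}})$, $\phi_{\eufm{n}}(\sigma_{\eufm{n}}+\sigma-1/y)$ and records that it is countable with no limit points; it contains no inequality on imaginary parts or on $|cz+d|$. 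In the paper that estimate is proved from scratch inside the proof of Theorem~\ref{inttheor2}: the elementary computations \eqref{f1pinttheor2} and \eqref{f2pinttheor2} show $\im\,\phi_{n}(z)<\im\,z$ for $z\in\Bb{H}_{|\re|\leqslant 1}$ and $\im\,\phi_{n}(-1/z)\leqslant\im\,z$ for $z\in\Bb{H}\cap{\rm{clos}}\left(\fet\right)$, with strictness only under the side conditions $|n|\geqslant 2$, or $n\cdot\re\,z>0$, or $z\in\fet$; these are then iterated through the continued-fraction composition $\phi_{\eufm{n}}=\phi_{n_N}\circ\cdots\circ\phi_{n_1}$ to get \eqref{f7pinttheor2}. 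Without proving these (or an equivalent $|c\,y_{0}+d|>1$ statement) your argument has no quantitative content.

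The same omission makes your treatment of the degenerate boundary case hand-waving rather than a proof. For $a\in\Bb{R}_{>1}$, i.e. $y_{0}\in\gamma(\sigma,0)\subset\partial\fet$, the one-step maps with $|n_{1}|=1$ do give $|c\,y_{0}+d|=1$, and the images are exactly the two canonical preimages $y$ and $1/(-2+1/y)$; you say one "must read from Lemma~\ref{lemdhp} that this happens only when the image falls back into ${\rm{clos}}\left(\fet\right)$", but Lemma~\ref{lemdhp} asserts nothing of the sort — establishing it is precisely the case analysis the paper performs via \eqref{f7pinttheor2}(c),(d) and the relations \eqref{f5wapinttheor2}--\eqref{f5xapinttheor2}, where strictness for $n_{1}\in\Bb{Z}\setminus\{-1,0,1\}$ and for $n_{1}={\rm{sign}}(\re\,y)$ is checked separately before iterating. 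Also, a small point: finiteness of $\im\,\ie(a)$ for $a\neq 0$ follows from continuity of the extended $\im\,\ie$ on $\Bb{C}\setminus\{0\}$ (Lemma~\ref{intlemma1}), not from Lemma~\ref{intlemma1}(a), which states the blow-up at $0$; and attainment of the maximum is what the strict inequality plus the no-limit-point statement of Lemma~\ref{lemdhp} deliver, so that part of your reduction is fine once the strict inequality is actually proved.
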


\vspace{-0,2cm}
\begin{corsectthree}\hspace{-0,15cm}{\bf{.}}\label{intcorol2}
For all pairs $(y,z)$ with $y \in \Bb{H}\cap{\rm{clos}}\left(\mathcal{F}_{{\tn{\square}}}\right)$,
$z \in \Bb{H}_{|\re|\leqslant 1}$, and $\im\, z > \im\, y $, we have  $\lambda (z)- \lambda(y)\neq 0$.
\end{corsectthree}


\vspace{-0.25cm}

\section[\hspace{-0,30cm}. \hspace{0,11cm}Schwarz triangle polynomials]{\hspace{-0,095cm}{\bf{.}} Schwarz triangle polynomials}\label{intprel}

\vspace{-0,2cm}
We begin this section by recalling the following property of periodic functions.{\hyperlink{r711}{${}^{\ref*{case711}}$}}\hypertarget{br711}{}

\vspace{-0,1cm}
\begin{lemsectfour}\hspace{-0,2cm}{\bf{.}}\label{lemfou}
 Let $a \in \Bb{R}$ and $f_{0}\!\in\! {\rm{Hol}} ({\rm{i}}a + \Bb{H})$ be periodic with period $2$. Then

 \vspace{-0,2cm}
 \begin{align}\label{f1lemfou}
    &  \int_{z_{1}}^{z_{1}+2} f_{0} (\zeta) d \zeta = \int_{z_{2}}^{z_{2}+2} f_{0} (\zeta) d \zeta \ , \quad
    z_{1}, z_{2} \in {\rm{i}}a + \Bb{H} \,.
 \end{align}

\vspace{-0,2cm}
 \noindent If $\varphi : \Bb{D}\setminus\{0\} \mapsto \Bb{C}$ is such that $\varphi (\exp ({\rm{i}}\pi z)) \in {\rm{Hol}} (\Bb{H})$ then $\varphi \in {\rm{Hol}} (\Bb{D}\setminus\{0\}) $.
\end{lemsectfour}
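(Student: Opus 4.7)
Here is how I would attack Lemma~\ref{lemfou}.

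For the first assertion \eqref{f1lemfou}, the plan is to apply the Cauchy integral theorem to the closed rectangular contour with vertices $z_{1}, z_{1}+2, z_{2}+2, z_{2}$. Since the domain $\mathrm{i}a+\Bb{H}=\{\zeta\in\Bb{C}\mid \im\,\zeta>a\}$ is a (convex) half-plane containing both $z_{1}$ and $z_{2}$, it contains this entire rectangle; holomorphy of $f_{0}$ there then yields that the total contour integral vanishes. The two horizontal sides give $\int_{z_{1}}^{z_{1}+2}f_{0}\,d\zeta - \int_{z_{2}}^{z_{2}+2}f_{0}\,d\zeta$. The two "slanted" sides, parametrised as $\zeta=z_{1}+t(z_{2}-z_{1})$ and $\zeta=z_{1}+2+t(z_{2}-z_{1})$ with $t\in[0,1]$, yield integrals that cancel after invoking the period-$2$ identity $f_{0}(\zeta+2)=f_{0}(\zeta)$ and noting the opposite orientation. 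This establishes \eqref{f1lemfou}. There is no essential obstacle here; the only point to be careful about is that the parametrisation of the slanted sides always stays inside $\mathrm{i}a+\Bb{H}$, which is immediate from convexity.

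For the second assertion, the plan is to use the fact that the exponential map $e:z\mapsto\exp(\mathrm{i}\pi z)$ sends $\Bb{H}$ surjectively and locally biholomorphically onto $\Bb{D}\setminus\{0\}$, since for $z=x+\mathrm{i}y$ with $y>0$ one has $|e(z)|=\e^{-\pi y}\in(0,1)$ and $e'(z)=\mathrm{i}\pi e(z)\neq 0$. Thus, given any $w_{0}\in\Bb{D}\setminus\{0\}$, pick any $z_{0}\in\Bb{H}$ with $e(z_{0})=w_{0}$, and choose a local holomorphic right inverse $\psi$ of $e$ on a neighbourhood $V\subset\Bb{D}\setminus\{0\}$ of $w_{0}$ satisfying $\psi(w_{0})=z_{0}$. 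Setting $f_{0}(z):=\varphi(e(z))\in{\rm{Hol}}(\Bb{H})$ by hypothesis, we obtain on $V$ the identity $\varphi(w)=\varphi(e(\psi(w)))=f_{0}(\psi(w))$, exhibiting $\varphi$ as a composition of holomorphic functions in a neighbourhood of $w_{0}$. Since $w_{0}\in\Bb{D}\setminus\{0\}$ was arbitrary, $\varphi\in{\rm{Hol}}(\Bb{D}\setminus\{0\})$.

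Neither part presents a genuine obstacle: the main idea is simply to recognise the first statement as a Cauchy-theorem identity on a convex domain and the second as the universal property of the holomorphic covering $e:\Bb{H}\to\Bb{D}\setminus\{0\}$. The only place where slight care is needed is in verifying that $\varphi$ is well-defined as a single-valued map before we try to lift and push down along $e$; but this is given to us as part of the hypothesis since $\varphi$ is assumed a priori to be a function on $\Bb{D}\setminus\{0\}$.
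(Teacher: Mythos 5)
Your proof is correct, but it takes a genuinely different route from the paper's. For the first assertion you apply Cauchy's theorem directly to the parallelogram $z_{1}\to z_{1}+2\to z_{2}+2\to z_{2}\to z_{1}$ (you call it ``rectangular'' but then correctly treat the slanted sides as a parallelogram), letting periodicity kill the slanted sides. For the second you use that $e(z):=\exp(\mathrm{i}\pi z)$ is a surjective local biholomorphism $\Bb{H}\to\Bb{D}\setminus\{0\}$ and express $\varphi$ locally as $f_{0}\circ\psi$ via a local right inverse $\psi$. The paper instead runs both parts through the single global branch $z\mapsto\Log z/(\mathrm{i}\pi)$: it first proves $\Phi^{\delta}(z):=f_{\delta}(\Log z/(\mathrm{i}\pi))$ is holomorphic on $\Bb{D}\setminus(-1,0]$, then uses periodicity to match boundary values across the slit $(-1,0)$ and invokes Morera's theorem to extend across the cut; this gives the second assertion immediately when $\delta=1$, and for $\delta=0$ yields a Laurent/Fourier expansion whose constant term produces \eqref{f1lemfou}. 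Your approach avoids both Morera and Laurent series entirely and is more elementary for each part separately; the paper's buys a unified treatment (one construction handles both conclusions) at the cost of a slightly more technical argument. Both are valid.
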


\subsection[\hspace{-0,31cm}. \hspace{0,11cm}Definition and connection with Faber polynomials.]{\hspace{-0,11cm}{\bf{.}} Definition and connection with Faber polynomials.}
The notation \eqref{f1intthA}
permits us for arbitrary positive integer $n$  to write the decomposition \eqref{f1aint} in the form
\begin{align}\label{f6int}    &
  {\rm{e}}^{{\fo{- n \pi \imag \ie(z)  }}} \!=\! S^{{\tn{\triangle}}}_{n} (1/z)\! + \!\Delta_{n}^{S}(z)\ , \ z \!\in \! \Bb{D}\setminus \{0\} \ , \
 S^{{\tn{\triangle}}}_{n} (0)\!=\!0     \, , \ \Delta_{n}^{S}\!\in\! {\rm{Hol}}(\Bb{D}) \,,
\end{align}

\noindent
  and we will call $S^{{\tn{\triangle}}}_{n}$  the $n$-th {\it{Schwarz triangle polynomial}}.

\vspace{0.05cm}
A special symmetrization of algebraic polynomials occurs when we
consider the expression $S^{{\tn{\triangle}}}_{n} \!\left({1}/{\lambda (z)}\right)\pm S^{{\tn{\triangle}}}_{n} \!\left({1}/{\lambda (-1/z)}\right)$
to  make  the function $S^{{\tn{\triangle}}}_{n} \!\left({1}/{\lambda (z)}\right)$ in
\eqref{f9int} invariant up to a multiplier $\pm 1$ with respect to the argument reversal
$z\mapsto-1/z$. In view of \eqref{f2int}, we have $\lambda (-1/z)\!=\!  1\!- \!\lambda (z)$,  which suggests the following algebraic
operations (see Section~\ref{pintlem1}).

\begin{lemsectfour}\hspace{-0,18cm}{\bf{.}}\label{intlem1} Let $n$ be a positive integer and $P_n$ be an algebraic polynomial of degree $n$ with real coefficients. Then there exist two algebraic
polynomials $\eusm{R}^{\pm}\left[P_n\right]$ of degree $n$ with real coefficients such that for every
$x \in \Bb{C}\setminus\{0, 1\}$, we have
\begin{align*} &
   P_{n} \left(\frac{1}{x}\right) +  P_{n} \left(\frac{1}{1-x}\right) =
  \eusm{R}^{+}\!\!\left[P_n\right] \left(\frac{1}{x (1-x)}\right)  ,     &     &   \eusm{R}^{+}\!\left[P_n\right](0)= 2 P_n (0) \, , \      \\  &
   P_{n} \left(\frac{1}{x}\right) -  P_{n} \left(\frac{1}{1-x}\right) =
 (1-2x) \,\eusm{R}^{-}\!\left[P_n\right] \left(\frac{1}{x (1-x)}\right)  \, ,   &     &      \eusm{R}^{-}\!\left[P_n\right]  (0)=0  .
\end{align*}
\end{lemsectfour}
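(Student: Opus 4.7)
The plan is to exploit the involution $\sigma\colon x\mapsto 1-x$ of $\Bb{R}[x]$, under which the ring of invariants is $\Bb{R}[u]$ with $u:=x(1-x)$. Setting $v:=1-2x$, one has $v^{2}=1-4u$, so $\Bb{R}[x]=\Bb{R}[u]\oplus v\,\Bb{R}[u]$ as $\Bb{R}[u]$-modules; this is the canonical splitting into $\sigma$-symmetric and $\sigma$-antisymmetric parts. The key observation is that $P_{n}(1/x)+P_{n}(1/(1-x))$ is $\sigma$-invariant whereas $P_{n}(1/x)-P_{n}(1/(1-x))$ is $\sigma$-antisymmetric, so the decomposition should lift to these rational functions after clearing denominators.

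Concretely, I would multiply by $u^{n}$. Writing $Q(x):=x^{n}P_{n}(1/x)=c_{n}+c_{n-1}x+\dots+c_{0}x^{n}$ for the reversal of $P_{n}$ (where $c_{n}$ is the leading coefficient of $P_{n}$ and $c_{0}=P_{n}(0)$), the functions
\begin{equation*}
R_{\pm}(x)\;:=\;u^{n}\bigl[P_{n}(1/x)\pm P_{n}(1/(1-x))\bigr]\;=\;(1-x)^{n}Q(x)\,\pm\,x^{n}Q(1-x)
\end{equation*}
are polynomials in $\Bb{R}[x]$ of $x$-degree at most $2n$. By the decomposition above, $R_{+}=A(u)$ and $R_{-}=v\,B(u)$ for some $A,B\in\Bb{R}[u]$. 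A degree count (using $\deg_{x}u=2$, $\deg_{x}v=1$) gives $\deg_{u}A\leqslant n$; for $R_{-}$ the leading $x^{2n}$-terms $(-1)^{n}c_{0}\,x^{2n}$ of $(1-x)^{n}Q(x)$ and $x^{n}Q(1-x)$ cancel, so $\deg_{x}R_{-}\leqslant 2n-1$ and therefore $\deg_{u}B\leqslant n-1$.

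Then I would define $\eusm{R}^{+}[P_{n}](y):=y^{n}A(1/y)$ and $\eusm{R}^{-}[P_{n}](y):=y^{n}B(1/y)$. These are polynomials in $y$ with real coefficients, and substituting $y=1/u$ recovers the two identities of the lemma. Evaluating at $x=0$ gives $A(0)=R_{+}(0)=c_{n}$ and $B(0)=R_{-}(0)/v(0)=c_{n}$, since the $P_{n}(1/(1-x))$ summand is killed at $x=0$ by the prefactor $x^{n}$; hence the coefficient of $y^{n}$ in each $\eusm{R}^{\pm}[P_{n}]$ equals $c_{n}\neq 0$, making both polynomials of degree exactly $n$. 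The normalizations are then immediate: $\eusm{R}^{-}[P_{n}](0)=0$ because $\deg_{u}B\leqslant n-1$ makes $y^{n}B(1/y)$ divisible by $y$, while $\eusm{R}^{+}[P_{n}](0)$ equals the leading $u$-coefficient $a_{n}$ of $A$, read from the leading $x^{2n}$-coefficient of $R_{+}$ which is $2(-1)^{n}c_{0}$, matched against $a_{n}(-1)^{n}x^{2n}$ from $a_{n}u^{n}$, yielding $a_{n}=2P_{n}(0)$. No step is deep; the only point needing care is the top-degree cancellation in $R_{-}$ that lowers $\deg_{u}B$ by one, without which the normalization $\eusm{R}^{-}[P_{n}](0)=0$ would fail.
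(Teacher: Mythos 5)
Your proof is correct and follows essentially the same route as the paper: clear denominators by multiplying by $x^{n}(1-x)^{n}$, then exploit the $\sigma$-symmetry of $R_{+}$ and $\sigma$-antisymmetry of $R_{-}$ under $x\mapsto 1-x$ to write them as $A(u)$ and $(1-2x)B(u)$ with $u=x(1-x)$, and finish by leading-coefficient bookkeeping. The only cosmetic difference is that you invoke the standard splitting $\Bb{R}[x]=\Bb{R}[u]\oplus(1-2x)\Bb{R}[u]$ as a known fact, whereas the paper derives it explicitly by shifting to $1/2+x$ and separating even and odd powers.
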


\noindent For each $ n \!\geqslant\! 1$, the
symmetrized functions
\begin{align}\label{f10int}\hspace{-0,2cm}
\eurm{R}^{\pm}_{n}(z)\!:=\! S^{{\tn{\triangle}}}_{n} \left(\frac{1}{\lambda (z)}\right) \!\pm \! S^{{\tn{\triangle}}}_{n} \left(\frac{1}{1\!-\!\lambda (z)}\right) ,
 \ \eurm{R}^{\pm}_{n}(-1/z)\!= \! \pm\eurm{R}^{\pm}_{n}(z), \  \  z\!\in\! \Bb{H},\hspace{-0,1cm}
\end{align}

\noindent  are holomorphic in $\Bb{H}$  and $2$-periodic. They were considered for the first time
by the fourth  and fifth authors \cite{rad} in the context of      Fourier pair   interpolation on the real line.

\vspace{0,05cm}

In accordance with \eqref{f2int},  for $z\in \Bb{H}$ and
$ u\in \Bb{D}$ we introduce the notation
\begin{align}\label{f11int}
    &  \hspace{-0,2cm}  \lambda (z)\! = \! \dfrac{16 \e^{\imag  \pi z }
 \theta_{2}\left({\e}^{\imag  \pi z}\right)^{4}}{\theta_{3}\left({\e}^{\imag  \pi z}\right)^{4}}\! = \!
  \lambda_{\Bb{D}}\left({\e}^{\imag  \pi z}\right) \, , \ \ \lambda_{\Bb{D}}(u)\!  :=\!
  \dfrac{16 u
\left(1  \!+ \!\sum\nolimits_{n\geqslant 1} u^{n^2 +n} \right)^{4}}{\left(1  \!+ \! 2\sum\nolimits_{n\geqslant 1} u^{ n^2  }\right)^{4}}. \hspace{-0,2cm}
\end{align}

\noindent  Then clearly, we have $\lambda_{\Bb{D}} \in {\rm{Hol}}(\Bb{D})$, $\lambda_{\Bb{D}}(0)=0$, $\lambda^{\,\prime}_{\Bb{D}}(0)=16$.
As we substitute $\lambda(z)$ for $z$ in \eqref{f6int} and apply the left-hand side identity in \eqref{f2aint},
we find
\begin{align}\label{f12int}
    &  \hspace{-0,3cm}\Delta_{n}^{S}\left(\lambda_{\Bb{D}}\left( {\rm{e}}^{i \pi z}\right) \right) \!=\!
     {\rm{e}}^{{\fo{-i n \pi z}}}\! - \!S^{{\tn{\triangle}}}_{n}\left(\!\frac{1}{\lambda(z)}\! \right), \  \  z \!\in\!\ie(\Bb{D}\setminus (-1,0])\subset \fet  \, , \  n \! \geqslant\! 1\,. \hspace{-0,25cm}
\end{align}

\noindent
In view of \eqref{f2int}, $\lambda$, $1/\lambda \in {\rm{Hol}}(\Bb{H})$ and hence the right-hand side function in \eqref{f12int} is holomorphic on $\Bb{H}$. In view of Lemma~\ref{lemfou} applied to the left-hand side function in \eqref{f12int}
we see that $\Delta_{n}^{S} (\lambda_{\Bb{D}})\in {\rm{Hol}}(\Bb{D}\setminus\{0\} )$.
But since $\Delta_{n}^{S}, \lambda_{\Bb{D}} \!\in\! {\rm{Hol}}(\Bb{D})$ with
$\lambda_{\Bb{D}}(0)=0$, the composition  $\Delta_{n}^{S} (\lambda_{\Bb{D}})$ is holomorphic in
a neighborhood of the origin, so that
in fact the function  $\Delta_{n}^{S} \!\in\! {\rm{Hol}}(\Bb{D})$ of \eqref{f6int} meets
\begin{align*}
    &  \Delta_{n}^{S} (\lambda_{\Bb{D}})\in {\rm{Hol}}(\Bb{D}) \,, \quad n \geqslant 1 \,,
\end{align*}

\noindent and, in view of the uniqueness theorem  for analytic functions (see  \cite[p.\! 78]{con}),  the relationship \eqref{f12int} can be written as
\begin{align*}
    &\hspace{-0,3cm} S^{{\tn{\triangle}}}_{n}\left({1}/{\lambda_{\Bb{D}}(u)} \right)\! =\! u^{-n} \!+ \! \Delta_{n}^{S}\left(\lambda_{\Bb{D}}\left( u\right) \right) \, , \  u\! \in \!\Bb{D}\setminus\{0\}  \, , \  \Delta_{n} (\lambda_{\Bb{D}})\!\in\! {\rm{Hol}}(\Bb{D}) \, , \   n \!\geqslant\! 1\,,\hspace{-0,1cm}
    \end{align*}

\noindent  where $S^{{\tn{\triangle}}}_{n} (0)=0$.
According to the definition of Issai Schur \cite[p.\! 34]{hur}, if $F_{n}$ is the $n$-th
 {\emph{Faber polynomial}} of $16/ \lambda_{\Bb{D}}(1/u)$ then $S^{{\tn{\triangle}}}_{n}(x)\!=\!F_{n}(16 x)\!-\!F_{n}(0)$, $n\!\geqslant\!1$.

\subsection[\hspace{-0,31cm}. \hspace{0,11cm}The generating function.]{\hspace{-0,11cm}{\bf{.}} The generating function.}\label{genfun}

\vspace{0,1cm}
The generating function of the Schwarz triangle polynomials is calculated in the next lemma.

\begin{lemsectfour}\hspace{-0,2cm}{\bf{.}}\label{lemgenerat} We have
\begin{align}
     & \hspace{-0,1cm} {\rm{i}} \pi \sum\limits_{n=1}^{\infty} S^{{\tn{\triangle}}}_{n} \left(1/z \right) {\rm{e}}^{\imag  \pi n y} = \frac{
\lambda^{\,\prime}\left(y\right)  }{z  - \lambda\left(y\right)} \ , \  \
\im\, y >  \im\, \ie (z) \ , \ \
 z\in \Bb{C} \setminus \{0\} ,\hspace{-0,1cm}
\label{f22int}\end{align}

\noindent where the series  converges absolutely and the function $\im\, \ie $ is continuously extended to $ \Bb{C}\setminus\{0\}$ in accordance with Lemma~{\rm{\ref{intlemma1}}}. 
\end{lemsectfour}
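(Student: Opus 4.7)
The plan is to recast the identity in terms of $w := e^{\imag \pi y}$ and prove it as a Taylor expansion around $w = 0$. Using \eqref{f11int}, we have $\lambda(y) = \lambda_{\Bb{D}}(w)$ and $\lambda^{\,\prime}(y) = \imag \pi\, w\, \lambda_{\Bb{D}}^{\,\prime}(w)$, so the claimed identity \eqref{f22int} becomes equivalent to
\begin{align*}
    G_{z}(w) := \frac{w\, \lambda_{\Bb{D}}^{\,\prime}(w)}{z - \lambda_{\Bb{D}}(w)} = \sum_{n=1}^{\infty} S^{{\tn{\triangle}}}_{n}(1/z)\, w^{n}.
\end{align*}
Thus it suffices to prove that the Taylor coefficients of $G_{z}$ at $w = 0$ are precisely $S^{{\tn{\triangle}}}_{n}(1/z)$ and that the disk of convergence at $0$ equals $\{|w| < e^{-\pi \im \ie(z)}\}$.

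First I would extract the coefficient $[w^{n}] G_{z}$ for $z \in \Bb{D}\setminus\{0\}$, where \eqref{f6int} is directly available. Applying the Cauchy formula on a small circle $|w| = \rho$ and then the change of variable $\zeta := \lambda_{\Bb{D}}(w)$ (legitimate since $\lambda_{\Bb{D}}$ is conformal at the origin with inverse $\mu := \lambda_{\Bb{D}}^{-1}$ satisfying $\mu(\zeta) = \zeta/16 + \Oh(\zeta^{2})$) converts the coefficient to
\begin{align*}
    [w^{n}]\, G_{z}(w) = \underset{\zeta = 0}{\mathrm{Res}}\; \frac{1}{(z - \zeta)\, \mu(\zeta)^{n}}.
\end{align*}
Decomposing $\mu(\zeta)^{-n} = P_{n}(1/\zeta) + R_{n}(\zeta)$ into its principal and regular parts at $\zeta = 0$ (so that $P_{n}$ is a polynomial of degree $n$ with $P_{n}(0) = 0$), expanding $(z - \zeta)^{-1}$ as a geometric series in $\zeta/z$, and reading off the $\zeta^{-1}$ coefficient gives $[w^{n}] G_{z} = P_{n}(1/z)$. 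Meanwhile, from $\lambda = \lambda_{\Bb{D}}\circ \exp(\imag \pi\, \cdot)$ and $\ie = \lambda^{-1}$ on $\fet$ one reads off $\mu(z) = e^{\imag \pi \ie(z)}$ for $z \in \Bb{D}\setminus\{0\}$, hence $\mu(z)^{-n} = e^{-\imag n \pi \ie(z)}$; the decomposition in \eqref{f6int} then identifies $P_{n} = S^{{\tn{\triangle}}}_{n}$ and $R_{n} = \Delta^{S}_{n}$ by uniqueness of the Laurent splitting at $z = 0$. Since both sides of $[w^{n}] G_{z} = S^{{\tn{\triangle}}}_{n}(1/z)$ are rational in $z$, the coefficient identity then persists for every $z \in \Bb{C}\setminus\{0\}$.

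For the region of convergence I would observe that the singularities of $G_{z}$ in $\Bb{D}$ are precisely the zeros of $z - \lambda_{\Bb{D}}(w)$, which via $w = e^{\imag \pi y}$ correspond to the solutions of $\lambda(y) = z$ in the fundamental strip $|\re\, y| \leqslant 1$. Theorem~\ref{inttheor2} together with Lemma~\ref{intlemma1} asserts that the largest imaginary part among these solutions equals $\im\, \ie(z)$ in the continuously extended sense, so the radius of convergence of $G_{z}$ at $0$ is $e^{-\pi \im\, \ie(z)}$. The condition $|w| < e^{-\pi \im\, \ie(z)}$ is exactly $\im\, y > \im\, \ie(z)$; multiplying through by $\imag \pi$ and applying $\lambda^{\,\prime}(y) = \imag \pi\, w\, \lambda_{\Bb{D}}^{\,\prime}(w)$ then yields \eqref{f22int}.

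The principal technical obstacle is pinning down the radius of convergence to exactly $e^{-\pi \im\, \ie(z)}$ uniformly in $z \in \Bb{C}\setminus\{0\}$, particularly on the cuts $z \in \Bb{R}_{<0}\cup \Bb{R}_{>1}$ where $\lambda(y) = z$ has two highest solutions $\ie(z \pm \imag 0)$ with equal imaginary parts, which must be matched with the continuous extension of $\im\, \ie$ supplied by \eqref{f8intthA}. A secondary care-point is verifying that the circle $|w| = \rho$ used in the residue calculation can be taken small enough that $\lambda_{\Bb{D}}$ is univalent on its closed disk and the transformed integrand has its only interior singularity at $\zeta = 0$.
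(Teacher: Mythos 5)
Your argument is correct, but it takes a genuinely different route from the paper's. The paper turns the Cauchy-integral extraction of $S^{{\tn{\triangle}}}_{n}(1/z)$ from \eqref{f6int} into an integral of $\lambda^{\,\prime}(\zeta)\,e^{-n\pi{\rm{i}}\zeta}/(z-\lambda(\zeta))$ over a horizontal segment $[-1,1]+{\rm{i}}a$ with $a>\im\,\ie(z)$ (via the substitution $\zeta=\ie(\beta e^{{\rm{i}}\varphi})$, the monotonicity \eqref{f1inttheor1}, Corollary~\ref{intcorol2} and the periodicity Lemma~\ref{lemfou}), then sums the geometric series under the integral sign and picks up the residue at $\zeta=y$, checking that the shifted top-contour contribution vanishes as its height tends to $+\infty$. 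You instead recast \eqref{f22int} as the Taylor expansion at $w=0$ of $G_{z}(w)=w\lambda_{\Bb{D}}^{\,\prime}(w)/(z-\lambda_{\Bb{D}}(w))$: the coefficients are computed once by a local residue at the origin and identified with $S^{{\tn{\triangle}}}_{n}(1/z)$ through $\mu(z)=e^{{\rm{i}}\pi\ie(z)}$ near $0$ and the uniqueness of the principal/regular splitting in \eqref{f6int} (the coefficient being a polynomial in $1/z$ then removes the restriction $z\in\Bb{D}$), while the region of validity is read off from the zero-free region of $z-\lambda_{\Bb{D}}$ supplied by Theorem~\ref{inttheor2} and Lemma~\ref{intlemma1}. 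Both proofs rest on the same key input, namely the maximality of $\im\,\ie(z)$ among solutions of $\lambda(y)=z$ in the strip, but your version avoids the contour shifting and the decay estimate at infinity, and it yields the sharpness of the condition $\im\,y>\im\,\ie(z)$ for free (the paper only records this afterwards, around \eqref{f21zint}); on the other hand, the paper's intermediate representation \eqref{f17aint} is itself reused later (e.g. in \eqref{f17zint} and \eqref{f21zint}), a byproduct your route does not produce. Two small touch-ups: only the inclusion ``radius of convergence $\geqslant e^{-\pi\,\im\,\ie(z)}$'' is needed for the lemma, so the exactness you single out as the main obstacle is optional; and since Theorem~\ref{inttheor2} excludes $z=1$, that case should be settled by the one-line remark that $1-\lambda(y)=\lambda(-1/y)\neq 0$ on $\Bb{H}$ together with $\im\,\ie(1)=0$.
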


\noindent
{\bf{Proof of Lemma~\ref{lemgenerat}.}}
Let
$z\in \Bb{C} \setminus \{0\}$. Then there exists $\beta\in (0,1)$ such that  $|z|>\beta$.
We calculate
\begin{align*}
    &   \frac{1}{2 \pi {\rm{i}}} \int\nolimits_{\beta\partial  \Bb{D}}\, \zeta^{n}\zeta^{-p-1}  d \zeta=\delta_{n, p}  \ , \ \ \  n, p \in \Bb{Z}; \quad     \dfrac{1}{z  - \zeta} =   \sum\nolimits_{n\geqslant 0} \ \frac{\zeta^{n}}{z^{n+1}}  \ , \quad   \zeta\in \beta\partial  \Bb{D},
\end{align*}

\noindent and in view of \eqref{f6int}, we have
\begin{align}\label{f13int}
    &  \hspace{-0,3cm} S^{{\tn{\triangle}}}_{n} (1/z) =
   \frac{1}{2 \pi {\rm{i}}} \int_{\beta\partial  \Bb{D}}   \frac{
   {\rm{e}}^{{\fo{- n \pi {\rm{i}} \ \ie(\zeta) }}} d \zeta}{z  - \zeta} =
   \frac{1}{2 \pi {\rm{i}}} \int_{-\pi}^{\pi}   \frac{
   {\rm{e}}^{{\fo{- n \pi {\rm{i}} \ \ie\left(\beta {\rm{e}}^{{\rm{i}} \varphi}\right) }}} d \left(\beta {\rm{e}}^{{\rm{i }}\varphi}\right) }{z  - \beta
    {\rm{e}}^{{\rm{i}} \varphi}} \, , \hspace{-0,1cm}
\end{align}

\noindent  where the curve $\ie\left(\beta {\rm{e}}^{{\rm{i}} \varphi}\right)$, $-\pi <  \varphi < \pi $,
lies in $\fet$ and connects two points
$\ie\!\!\left(-\beta - {\rm{i}} 0\right) $ and $\ie\!\!\left(-\beta + {\rm{i}} 0\right)$ which according to (see \eqref{f6intthA} and \eqref{f15int})
\begin{align*}
    &  \ie (-\beta +  \imag  0 ) = 2 +  \ie (-\beta - \imag  0 ) = 1 +   \imag   \,
{\fo{\Delta}} (\beta) , \quad {\fo{\Delta}} (\beta)  > 1 ,
\end{align*}

\noindent  belong to the boundary of $ \fet$.  Making the change of variables $\zeta = \ie\left(\beta
{\rm{e}}^{{\rm{i}} \varphi}\right)$ for  $-\pi <  \varphi < \pi $ in \eqref{f13int} and  using  \eqref{f2aint}, we obtain
$\beta {\rm{e}}^{{\rm{i}} \varphi} = \lambda\left(\zeta\right)$
and
\begin{align}\label{f17int}
    &    S^{{\tn{\triangle}}}_{n} (1/z) = \frac{1}{2 \pi {\rm{i}}} \int\nolimits_{{\fo{ -1 +   \imag   {\fo{\Delta}} (\beta)  }} }^{{\fo{ 1 +   \imag    {\fo{\Delta}} (\beta) }} }  \ \frac{
   \lambda^{\,\prime}\left(\zeta\right){\rm{e}}^{{\fo{- n \pi {\rm{i}}  \zeta }}}  }{z  - \lambda\left(\zeta\right)}\, d \zeta\,, \qquad |z|> \beta  , \ \beta\in (0,1) \, ,
\end{align}

\noindent where the contour of integration $\{\ie\!\left(\beta \label{e}^{{\rm{i}} \varphi}\right)\}_{-\pi <   \varphi  <  \pi }$ can be replaced by the straight line segment $[ -1, 1 ]\!+ \!  \imag    {\fo{\Delta}} (\beta)$ connecting the two points $\pm 1 +   \imag    {\fo{\Delta}} (\beta)$, as by \eqref{f1inttheor1},\vspace{-0,2cm}
\begin{align*}
    & \lambda \Big( \big\{\ z\in \Bb{H}\ \big|\ |\re\, z| \leqslant 1 , \  \im\, z \geqslant {\fo{\Delta}} (\beta)\ \big\}\Big) \subset     \beta\Bb{D}  \ , \quad \beta\in \left(0, 1\right).
\end{align*}

\vspace{-0,2cm}
\noindent In view of \eqref{f9intthA}, Corollary~\ref{intcorol2} tells us that the integrand in
\eqref{f17int} is a holomorphic function of the variable $\zeta$ when $\im \, \zeta > \im\, \ie (z)$, and hence the periodicity of the integrand in \eqref{f17int} allows us to use Lemma~\ref{lemfou} to shift the
segment of integration $[ -1, 1 ]\!+ \!  \imag    {\fo{\Delta}} (\beta)$ to
   $[ -1, 1 ]\!+ \!  \imag    a$ for any $a >   \im\, \ie (z)$. This gives
\begin{align}\label{f17aint}\hspace{-0,2cm}
    &    S^{{\tn{\triangle}}}_{n} (1/z)\! =\!  \frac{1}{2 \pi {\rm{i}}} \int\nolimits_{{\fo{ -1 \! +  \!  \imag   a }} }^{{\fo{ 1 \! + \!   \imag    a }} }   \frac{
   \lambda^{\,\prime}\left(\zeta\right){\rm{e}}^{{\fo{- n \pi {\rm{i}}  \zeta }}}  }{z  - \lambda\left(\zeta\right)}\, d \zeta\,, \ \ a >   \im\, \ie (z)  , \
   z\in \Bb{C} \setminus \{0\}  .\hspace{-0,1cm}
\end{align}

  \noindent
  We conclude that the series\vspace{-0,2cm}
\begin{align}\label{f21int}
    &  \sum\limits_{n=1}^{\infty} S^{{\tn{\triangle}}}_{n} \left(1/z \right) {\rm{e}}^{\imag  \pi n y} =
    \frac{1}{2 \pi {\rm{i}}} \int\nolimits_{{\fo{ -1 +   \imag   a }} }^{{\fo{ 1 +   \imag    a }} }
    \  \frac{  \lambda^{\,\prime}\left(\zeta\right) d \zeta }{\left({\rm{e}}^{{\fo{  \pi {\rm{i}}(  \zeta -y) }}} -1\right)\big(z  - \lambda\left(\zeta\right)\big)} \,,
\end{align}

\vspace{-0,1cm}
\noindent converges absolutely if $\im\, y\!>\!a \!>\! \im\, \ie (z)$. By the periodicity of the integrand in \eqref{f21int}, we can  apply Lemma~\ref{lemfou} again to shift the interval of integration  horizontally  by $\re\, y$. We then obtain from the Cauchy formula applied to the rectangle with vertices $ \pm  1 + {\rm{i}} a + \re\, y$, $ \pm 1  + {\rm{i}}A+ \re\, y$ with $A > \im\, y $ that this integral equals\vspace{-0,2cm}
\begin{align*}
     &  \frac{1}{i \pi}  \frac{  \lambda^{\,\prime}\left(y\right)  }{z  - \lambda\left(y\right)} +
      \frac{1}{2 \pi {\rm{i}}} \int\limits_{{\fo{-1 +   \imag   A +\re\, y}} }^{{\fo{  1 +   \imag   A +\re\, y}} }     \frac{  \lambda^{\,\prime}\left(\zeta\right) d \zeta }{\left({\rm{e}}^{{\fo{  \pi {\rm{i}}(  \zeta -y) }}} -1\right)\big(z  - \lambda\left(\zeta\right)\big)} \ , \quad A > \im\, y \,.
     \end{align*}

\vspace{-0,2cm}
\noindent The integral expression tends to zero as  $A \to +\infty$, because
  it follows from the expressions \eqref{f19int} for $\lambda^{\,\prime}\left(\zeta\right)$ and
\eqref{f11int} for $\lambda \left(\zeta\right)$ that
$\lambda^{\,\prime}\left(\zeta\right)\to 0$ and
$\lambda \left(\zeta\right)\to 0$, as $\im\,\zeta \to +\infty$.
This proves that the identity \eqref{f22int} holds. $\square$

\vspace{-0,2cm}
\subsection[\hspace{-0,31cm}. \hspace{0,11cm}Asymptotic behavior  for the large index.]{\hspace{-0,11cm}{\bf{.}} Asymptotic behavior  for the large index.} The condition in \eqref{f22int} for the convergence of the series for the generating function of the Schwarz triangle
polynomials is sharp. Indeed, it follows from  Lemma~\ref{lemdhp} that for every $a \in \Bb{C}\setminus\{0,1\}$ the set $\{y\in \Bb{H}_{|\re|\leqslant1} \,|\,\lambda(y)= a\}$  is countable and has no limit points in $\Bb{H}$.  Then \eqref{f1inttheor2} implies
that for every $z\! \in\! \Bb{C}\!\setminus\!\{0,1\}$ there exists $\beta (z) \!\in\! (0,1)$ such
that $\im\, y \!<\! \beta (z) \im\, \ie (z)$ for all $y\!\in \! \Bb{H}_{|\re|\!\leqslant 1}\!\setminus\!{\rm{clos}}\left(\mathcal{F}_{{\tn{\square}}}\right)$
satisfying $\lambda(y)= z$. This allows us to apply the Cauchy formula in \eqref{f17aint} for the rectangle with vertices $ \pm \! 1 \!+ \!\imag a $,
$ \pm 1 \! +  \imag    \beta (z) \im\, \ie (z)$ and  obtain
\begin{align}\label{f21zint}
    &   S^{{\tn{\triangle}}}_{n} (1/z)\! =\!\!\!\!\!\!\sum\limits_{{\fo{\left\{y\!\in\! {\rm{clos}}\left(\mathcal{F}_{{\tn{\square}}}\right) \ | \ \lambda(y)\!=\! z\right\}}}}\!\!{\rm{e}}^{{\fo{- n \pi \imag  y }}}+
{\rm{O}} \left({\rm{e}}^{{\fo{ n \pi \beta (z) \im\, \ie (z) }}}\right) \, , \  n\to +\infty .
\end{align}

\noindent where{\hyperlink{r9}{${}^{\ref*{case9}}$}}\hypertarget{br9}{} by  Theorem~\ref{inttheor2}, \eqref{f6intthA} and \eqref{f7intthA},
\begin{align*}
    &  \sum\limits_{{\fo{\left\{y\!\in\! {\rm{clos}}\left(\mathcal{F}_{{\tn{\square}}}\right) \ \big| \ \lambda(y)\!=\! z\right\}}}}\!\!\!\!\!\!\!\!\!\!\!\!\!\!\!\!\!\!\!\!{\rm{e}}^{{\fo{- n \pi \imag  y }}}=
\left\{
  \begin{array}{ll}
\exp \left(- n \pi \imag \ie (z)\right)    \,, &\ \  \hbox{if}\quad z \in
 (0,1)\cup \left(\Bb{C}\setminus\Bb{R}\right) \,; \\[0,25cm]
 2 {\rm{e}}^{{\fo{    \dfrac{  \, n \pi {\fo{\Delta}} (x)}{1 + {\fo{\Delta}} (x)^{2}}  }}}\cos  \dfrac{ n \pi }{1 + {\fo{\Delta}} (x)^{2}} \  , &\ \ \hbox{if}\quad z= 1+x  \ , \quad  x > 0 \,;\\[0,5cm]
2 (-1)^{n} \exp \left( n \pi {\fo{\Delta}} (x)\right)     , &\ \  \hbox{if}\quad z = - x  \ , \qquad  x > 0 \,.
  \end{array}
\right.
\end{align*}

\subsection[\hspace{-0,31cm}. \hspace{0,11cm}Symmetry property.]{\hspace{-0,11cm}{\bf{.}} Symmetry property.} The condition for the convergence of the series in
\eqref{f22int} is at its weakest when $z=1$ as $\im\, \ie (1)=0$. In the next lemma we explain this fact by showing in
\eqref{f2intlem3} that $S^{{\tn{\triangle}}}_{n} (1 ) = {\rm{O}}  (n^{2})$, as $n\to \infty$,
and derive in \eqref{f1intlem3}(a) an important symmetric property  of the
Schwarz triangle polynomials.\vspace{-0,2cm}
\begin{lemsectfour}\hspace{-0,15cm}{\bf{.}}\label{intlem3} The following identities hold
\begin{align}\label{f1intlem3}  & {\rm{(a)}} \ \,(-1)^{n} S^{{\tn{\triangle}}}_{n} (z)=S^{{\tn{\triangle}}}_{n} (1-z)-
S^{{\tn{\triangle}}}_{n} (1) \ ,  \quad   {\rm{(b)}} \ \,   S^{{\tn{\triangle}}}_{2n} (1)=0 \ , \qquad  n \!\geqslant \!1\,,
 \\    & \sum\limits_{n=1}^{\infty} S^{{\tn{\triangle}}}_{n} \left(1 \right) u^{n-1}\! =\! 16\,\theta_{2}(u)^{4} ,   \   u \!\in \!\Bb{D} \, ;  \ \  S^{{\tn{\triangle}}}_{n}(1) \!=\!
    \left(1\! - \!(-1)^{n}\right)  r_{4}(n) , \   n \!\geqslant\! 1 \,,
\label{f2intlem3}
 \\[-0,2cm]      \label{f5intlem3}
    &  S^{{\tn{\triangle}}}_{n} (z)\! := \! \sum\limits_{k=1}^{n} s^{{\tn{\triangle}}}_{n, k} z^{k}
      , \  n\!\geqslant \!1\,,  \
        S^{{\tn{\triangle}}}_{1} (z)\! = \!16 z  , \
 S^{{\tn{\triangle}}}_{2} (z)\! =\! 16^{2} z (z - 1), \ \ z\!\in\! \Bb{C}\,,
       \\    &   \label{f6intlem3} s^{{\tn{\triangle}}}_{n, n} =16^{n}
      \ , \quad  n\geqslant 1\,,  \quad
    s^{{\tn{\triangle}}}_{n, n-1} =- 8 n 16^{n-1} \ , \quad  n\geqslant 2\,, \\    &
\label{f7intlem3}    \sum\limits_{n=1}^{\infty} s^{{\tn{\triangle}}}_{n, 1} u^{n-1} \!= \!
     16 \,\theta_{2}(u)^{4}\,\theta_{3}(-u)^{4}\,\theta_{3}(u)^{-4} \ , \quad  u \in \Bb{D}\,.
\end{align}
\end{lemsectfour}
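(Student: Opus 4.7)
The plan is to extract every assertion of the lemma from the generating-function identity \eqref{f22int} of Lemma \ref{lemgenerat}; the only substantial piece of work is the symmetry \eqref{f1intlem3}(a), and all the remaining claims are short corollaries.

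To prove \eqref{f1intlem3}(a), I would evaluate \eqref{f22int} at three parameter choices and match Fourier coefficients in $y$. First, replacing $z$ by $z/(z-1)$ (so $1/z$ is replaced by $(z-1)/z$) gives
\begin{align*}
i\pi\sum_{n\geqslant 1}S^{{\tn{\triangle}}}_n\!\left(\tfrac{z-1}{z}\right)e^{i\pi n y}=\frac{\lambda'(y)(z-1)}{z-(z-1)\lambda(y)}.
\end{align*}
Second, shifting $y\mapsto y+1$ in \eqref{f22int} multiplies the $n$-th term by $(-1)^n$, and the right-hand side becomes $\lambda'(y+1)/(z-\lambda(y+1))$. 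Using the Landen-type relations $\lambda(y+1)=\lambda(y)/(\lambda(y)-1)$ and $\lambda'(y+1)=-\lambda'(y)/(\lambda(y)-1)^{2}$ (consequences of \eqref{f3bint}(d),(e),(g) through \eqref{f2int} and \eqref{f19int}), together with the instance of \eqref{f22int} at $z=1$,
\begin{align*}
i\pi\sum_{n\geqslant 1}S^{{\tn{\triangle}}}_n(1)\,e^{i\pi n y}=\frac{\lambda'(y)}{1-\lambda(y)},
\end{align*}
a short algebraic simplification will yield
\begin{align*}
\frac{\lambda'(y+1)}{z-\lambda(y+1)}+\frac{\lambda'(y)}{1-\lambda(y)}=\frac{\lambda'(y)(z-1)}{z-(z-1)\lambda(y)}.
\end{align*}
Equating Fourier coefficients of $e^{i\pi n y}$ on a common domain of convergence (e.g., $\im\, y$ and $|z|$ both sufficiently large, which is admissible by Lemmas \ref{lemgenerat} and \ref{intlemma1}) then gives $S^{{\tn{\triangle}}}_n((z-1)/z)=(-1)^n S^{{\tn{\triangle}}}_n(1/z)+S^{{\tn{\triangle}}}_n(1)$ as a polynomial identity in $1/z$; replacing $1/z$ by $z$ delivers \eqref{f1intlem3}(a). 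Part \eqref{f1intlem3}(b) is then immediate: evaluating (a) at $z=1$ and using $S^{{\tn{\triangle}}}_n(0)=0$ yields $((-1)^n+1)S^{{\tn{\triangle}}}_n(1)=0$, forcing $S^{{\tn{\triangle}}}_{2n}(1)=0$.

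For \eqref{f2intlem3}, the $z=1$ case of \eqref{f22int} combined with \eqref{f2int} and \eqref{f19int} gives $\lambda'(y)/(1-\lambda(y))=i\pi\,\Theta_2(y)^{4}$; inserting $\Theta_2(y)^{4}=16u\,\theta_2(u)^{4}$ with $u=e^{i\pi y}$ (from the definitions displayed after \eqref{f2int} and from \eqref{f11int}) produces $\sum S^{{\tn{\triangle}}}_n(1)u^{n-1}=16\,\theta_2(u)^{4}$. Comparison with \eqref{f3wcint}(c) then supplies $S^{{\tn{\triangle}}}_n(1)=(1-(-1)^n)r_{4}(n)$. The coefficients in \eqref{f6intlem3} come from the small-$z$ expansion in \eqref{f1aint}: $\exp(-nz(\tfrac12+O(z)))=1-nz/2+O(z^{2})$ multiplied by $16^{n}/z^{n}$ gives $s^{{\tn{\triangle}}}_{n,n}=16^{n}$ and $s^{{\tn{\triangle}}}_{n,n-1}=-8n\cdot 16^{n-1}$. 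The closed forms $S^{{\tn{\triangle}}}_1(z)=16z$ and $S^{{\tn{\triangle}}}_2(z)=16^{2} z(z-1)$ then follow: the former from the degree, $S^{{\tn{\triangle}}}_1(0)=0$ and $s^{{\tn{\triangle}}}_{1,1}=16$; the latter from (a) and (b) (which force $S^{{\tn{\triangle}}}_2(z)=S^{{\tn{\triangle}}}_2(1-z)$) together with $S^{{\tn{\triangle}}}_2(0)=0$ and leading coefficient $16^{2}$. Finally, \eqref{f7intlem3} is obtained by expanding the right-hand side of \eqref{f22int} in powers of $1/z$ for $|z|>|\lambda(y)|$: the coefficient of $1/z$ is $\lambda'(y)$, so $i\pi\sum_n s^{{\tn{\triangle}}}_{n,1}u^{n}=\lambda'(y)\big|_{u}$; using \eqref{f19int} in $\theta$-function form together with $\theta_4(u)=\theta_3(-u)$ (compare \eqref{f3wcint}(a),(b)) and dividing by $u$ gives the claimed formula.

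The main obstacle is the convergence bookkeeping for (a): one must verify that the three instances of \eqref{f22int} used (at $z$, at $z/(z-1)$, and at $z$ with $y$ replaced by $y+1$) all hold on a common open set of $(y,z)$, so that equating Fourier coefficients is justified. This is settled by choosing $\im\, y$ and $|z|$ both sufficiently large, in which case the required inequality $\im\, y>\im\,\ie(z)$ of Lemma \ref{lemgenerat} is trivially satisfied for each of the three instances (using the continuous extension of $\im\,\ie$ from Lemma \ref{intlemma1}), and the resulting polynomial identity then extends to all $z$ by analytic continuation.
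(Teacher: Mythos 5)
Your plan is correct, but for the central symmetry \eqref{f1intlem3}(a) it runs along a genuinely different track than the paper. The paper does not manipulate the generating function at all at this point: it exponentiates the functional relation $\ie(z)-{\rm{sign}}(\im z)=\ie\!\left(z/(z-1)\right)$ (quoted from \cite[p.\! 48, (A.14n)]{bh1}) to get ${\rm{e}}^{-{\rm{i}}\pi n\ie(z/(z-1))}=(-1)^{n}{\rm{e}}^{-{\rm{i}}\pi n\ie(z)}$, inserts the decomposition \eqref{f6int} at $z$ and at $z/(z-1)$, and observes that $S^{{\tn{\triangle}}}_{n}(1-1/z)-(-1)^{n}S^{{\tn{\triangle}}}_{n}(1/z)$ must then be the constant $\big((-1)^{n}-1\big)\Delta_{n}^{S}(0)$, which is identified with $S^{{\tn{\triangle}}}_{n}(1)$ via \eqref{f12cint} and \eqref{f2intlem3}. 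Your route instead compares three instances of \eqref{f22int} (the $z\mapsto z/(z-1)$ substitution, the shift $y\mapsto y+1$ via $\lambda(y+1)=\lambda(y)/(\lambda(y)-1)$ as in \eqref{f2pinttheor1}, and $z=1$) and reduces (a) to the partial-fractions identity you state, which does check out; Fourier-coefficient matching for $\im\,y$ large and analytic continuation in $z$ then finish the argument. What your version buys is self-containedness: it only uses ingredients already proved in this paper (Lemmas \ref{lemgenerat} and \ref{intlemma1} and the translation relation for $\lambda$), avoiding the external identity for $\ie(z/(z-1))$; what the paper's version buys is that no simultaneous-convergence bookkeeping is needed, since everything happens at the level of a single exponential identity and holomorphy near the origin. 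Your treatment of \eqref{f5intlem3}--\eqref{f2intlem3} coincides with the paper's (expansion of \eqref{f1aint}; the $z=1$ instance of \eqref{f22int} together with \eqref{f19int}, \eqref{f2int} and \eqref{f3wcint}(c)), and deriving (b) from (a) at $z=1$ is a harmless variant.

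One place where you should add a sentence of justification is \eqref{f7intlem3}: extracting the coefficient of $1/z$ from the left-hand side of \eqref{f22int} means interchanging an infinite sum with a Laurent-coefficient (contour) integral, so you need a bound such as $\sup_{|z|=R}\big|S^{{\tn{\triangle}}}_{n}(1/z)\big|\leqslant C\,{\rm{e}}^{2\pi n}$ for a fixed large circle $|z|=R$ (available from \eqref{f17aint} with $a=2$, since $\im\,\ie$ is bounded on $|z|=R$), giving uniform convergence on the circle for $\im\,y>2$ and legitimizing term-by-term integration. The paper handles the same issue in the dual way, rewriting \eqref{f17aint} as \eqref{f4pintlem3}, bounding $|S^{{\tn{\triangle}}}_{n}(z)|/|z|$ on a small disc and letting $z\to 0$; either argument works, but as written your coefficient extraction is asserted rather than justified.
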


We supply the proof of Lemma~\ref{intlem3} in Section~\ref{pintlem1}.
 By the  symmetric property \eqref{f1intlem3} and
the change of variables $z^{\,\prime}=-1/z$  on the right-hand side integral of \eqref{f9int},
 for arbitrary $x\in \Bb{R} $ and $n \geqslant 1$ we obtain from \eqref{f3intth1},  \eqref{f2int} and \eqref{f2pinttheor1} that\,{\hyperlink{r7}{${}^{\ref*{case7}}$}}\hypertarget{br7}{}
\begin{align}\label{f23int}
    & \hspace{-0,2cm}
    \eurm{M}_{n} (x)\! =\!\dfrac{1}{ 4 \pi^{2} n }\!\!\! \!\!\!  \int\limits_{\gamma (-1,1)}\!\!\!\!\!
\frac{S^{{\tn{\triangle}}}_{n} \!\left(\dfrac{1}{1\!-\!\lambda (z)}\right) d z}{( x+z)^{2}}\!=\!
\dfrac{(-1)^{n}}{ 4 \pi^{2} n }\!\!\! \!\!\!  \int\limits_{\gamma (-1,1)}\!\!\!\!\!
\frac{S^{{\tn{\triangle}}}_{n} \!\big(\lambda (z\!+\!1)\big) -S^{{\tn{\triangle}}}_{n}(1)}{( x+z)^{2}} d z\,.\hspace{-0,1cm}
\end{align}

\noindent Here, in view of \eqref{f3bint}(d),(e) and \eqref{f11int},  we have
\begin{align*}
    & \hspace{-0,2cm}\lambda (z\!+\!1)=- \dfrac{\Theta_{2}(z)^{4}}{\Theta_{4}(z)^{4}} = \lambda_{\Bb{D}}\left(-{\rm{e}}^{\imag  \pi z}\right) , \quad  z\in \Bb{H} , \hspace{-0,1cm}
\end{align*}

\noindent where $\lambda_{\Bb{D}} \in {\rm{Hol}}(\Bb{D})$ with $\lambda_{\Bb{D}}(0)=0$.
Hence for arbitrary $n \geqslant 1$ and $z\in \Bb{H}$, we obtain
\begin{align}\nonumber
     (-1)^{n} &S^{{\tn{\triangle}}}_{n} \!\left(\dfrac{1}{\lambda (-1/z)}\right) \! =\!(-1)^{n} S^{{\tn{\triangle}}}_{n} \!\left(\dfrac{1}{1\!-\!\lambda (z)}\right)\\[0.2cm]  & \! =\!S^{{\tn{\triangle}}}_{n} \!\big(\lambda (z\!+\!1)\big) -S^{{\tn{\triangle}}}_{n}(1)
  = -S^{{\tn{\triangle}}}_{n}(1)\!-\!16s^{{\tn{\triangle}}}_{n, 1}{\rm{e}}^{\imag  \pi  z} \!+\!\sum\nolimits_{k\geqslant 2} {\eurm{b}}_{n,k}{\rm{e}}^{\imag  \pi k z} ,
\label{f23aaint}\end{align}

\noindent where   we have used the following notation for the Taylor series
(see \eqref{f5intlem3})
\begin{align}\label{f23baint}
     & \sum\limits_{k\geqslant 2} {\eurm{b}}_{n,k}u^{k}:=16s^{{\tn{\triangle}}}_{n, 1}u + \sum\limits_{k=1}^{n} s^{{\tn{\triangle}}}_{n, k}  \lambda_{\Bb{D}}\left(-u\right)^{k} \in {\rm{Hol}}(\Bb{D}) \ , \quad u\in\Bb{D} \, , \quad  n \geqslant 1\,.
\end{align}

\noindent  Besides, by using \eqref{f3bint}(b) and the change of variables $z^{\,\prime}=-1/z$  in the left-hand side integral of \eqref{f9int}, we find {\hyperlink{r8}{${}^{\ref*{case8}}$}}\hypertarget{br8}{}
\begin{align}\label{f24int}
    & \eurm{H}_{0} (x)= \eurm{H}_{0} (-x) \, , \ \ x\in \Bb{R}\,, \qquad
    \eurm{H}_{0} (-1/x)\!=\eurm{H}_{0} (x) x^{2}\, , \ \ x\in \Bb{R}\setminus\{0\} \,.
\end{align}

\vspace{0,2cm}
\subsection[\hspace{-0,31cm}. \hspace{0,11cm}Characteristic behavior at the vertices.]{\hspace{-0,11cm}{\bf{.}} Characteristic behavior at the vertices of the Schwarz quadrilateral.}We note that  there are only four distinct M\"{o}bius transformations (see \cite[p.\! 47]{con})
\begin{align*}
    &  z \ , \ \ -{1}\big/{z} \ , \ \  { (z -1)}\big/{(z + 1)}  \ , \ \
    {(1+ z)}\big/{(1- z)} \ ,
\end{align*}

\noindent
 which{\hyperlink{r11}{${}^{\ref*{case11}}$}}\hypertarget{br11}{} map $\fet $ onto itself    and interchange
 the vertices according to
\begin{align*}
\begin{array}{lrclr}
z\in   \fet,   &\ \      z \to 0 \ \  & \Leftrightarrow &  \ \
  -1/z \in\fet ,   &\quad    -1/z \to \infty  \ ,\\
 z \in \fet,   & \ \      z \to 1 \ \ & \Leftrightarrow & \ \ {(1 + z)}\big/{(1- z)}\in \fet ,  &\quad
   {(1 + z)}\big/{(1- z)} \to \infty \ , \\
z\in \fet,   &   \ \    z \to -1 \ \ & \Leftrightarrow & \ \  { (z -1)}\big/{(z + 1)} \in \fet,    &  \quad    { (z -1)}\big/{(z + 1)} \to \infty\,.
\end{array}
\end{align*}

The $n$-th Schwarz triangle polynomial $ S^{{\tn{\triangle}}}_{n}$ can be fully characterized by the behavior of the function  $ S^{{\tn{\triangle}}}_{n} (1/\lambda)$ from \eqref{f12int} at the vertices
of the Schwarz quadrilateral. Note that \eqref{f12int} for arbitrary
$n \!\geqslant\! 1$ and $z\!\in\! \Bb{H}$ can also be written as
\begin{align}\label{f12aint}
    &  S^{{\tn{\triangle}}}_{n}\left(\!\frac{1}{\lambda(z)}\! \right)\! =\! S^{{\tn{\triangle}}}_{n} \!\left(\dfrac{1}{1\!-\!\lambda (-1/z)}\right)\!=\!  {\rm{e}}^{{\fo{-i n \pi z}}}\! - \!\Delta_{n}^{S} (0) - \sum\nolimits_{k\geqslant 1} {\eurm{d}}_{n,k} {\rm{e}}^{\imag  \pi k z} ,
\end{align}

\noindent where  we use the Taylor series notation
\begin{align*}
    &  \sum\nolimits_{k\geqslant 1} {\eurm{d}}_{n,k} u^{k} := \Delta_{n}^{S} (\lambda_{\Bb{D}} (u)) - \Delta_{n}^{S} (0) \in {\rm{Hol}}(\Bb{D}) \ , \quad u\in\Bb{D} \, , \quad  n \geqslant 1\,.
\end{align*}

\noindent By  \eqref{f3wcint}(b) and  the same line of argument{\hyperlink{r10}{${}^{\ref*{case10}}$}}\hypertarget{br10}{} as in Section~\ref{genfun}, we get
\begin{align}\label{f12cint}
    & 1\! +\!  \sum\nolimits_{n=1}^{\infty} \Delta_{n}^{S}(0) u^{n}\!=\! \theta_{3}(-u)^{4},  \   u \!\in\! \Bb{D} \, ;    \quad \Delta_{n}^{S}(0) \!= \!(-1)^{n}   r_{4}(n)  ,  \   n \!\geqslant\! 1 \,,
\end{align}

\noindent and, consequently, $\Delta_{n}^{S}(0)\! =\! {\rm{O}}  (n^{2})$, as $n\!\to \! \infty$.
It  follows from \eqref{f2int}, from the identities
\begin{align}\label{f2qinttheor1}
    & \frac{1}{\lambda\left(1\!-\!{1}/{z}\right)}=\lambda(z\!+\!1) \ ,  \quad  \frac{1}{1-\lambda\left(1\!-\!{1}/{z}\right)}=\lambda(z)
      \ , \quad  z \in \Bb{H},
\end{align}

\noindent  (see \cite[p.\! 111]{cha})  and from \eqref{f11int} that
\begin{align*}
    &  S^{{\tn{\triangle}}}_{n}
    \left(\!\frac{1}{\lambda\left(1\!-\!{1}/{z}\right)}\! \right) \!=\!
S^{{\tn{\triangle}}}_{n}\Big(\!\lambda_{\Bb{D}}\left(-{\rm{e}}^{{\rm{i}} \pi z}\right)\! \Big)  \ , \
S^{{\tn{\triangle}}}_{n}
\left(\!\frac{1}{1-\lambda\left(1\!-\!{1}/{z}\right)}\! \right) \!=\!
S^{{\tn{\triangle}}}_{n}\Big(\!\lambda_{\Bb{D}}\left({\rm{e}}^{{\rm{i}} \pi z}\right)\!  \Big) \,,
\end{align*}

\noindent and thus for arbitrary $n \geqslant 1$ and $z\in \Bb{H}$, we obtain, by \eqref{f23baint},
\begin{align}\label{f25int}\hspace{-0,2cm}
    S^{{\tn{\triangle}}}_{n}\left(\!\frac{1}{\lambda\left(1\!-\!{1}/{z}\right)}\! \right)&\!=\! -16 s^{{\tn{\triangle}}}_{n, 1 } {\rm{e}}^{\imag  \pi  z } +
\sum\nolimits_{k\geqslant 2}{\eurm{b}}_{n, k}{\rm{e}}^{\imag  \pi k z }  , \hspace{-0,1cm} \\
\label{f25aint}\hspace{-0,2cm}
    S^{{\tn{\triangle}}}_{n}\left(\!\frac{1}{1-\lambda\left(1\!-\!{1}/{z}\right)}\! \right)&\!=\ 16 s^{{\tn{\triangle}}}_{n, 1 } {\rm{e}}^{\imag  \pi  z } +
\sum\nolimits_{k\geqslant 2}(-1)^{k}{\eurm{b}}_{n, k}{\rm{e}}^{\imag  \pi k z }  . \hspace{-0,1cm}
\end{align}

 When $z \in \fet$ tends to one of the vertices $\alpha \in \{1, -1, 0, \infty\}$ of $\fet $, we assume that $\im\, z \geqslant 1/2$
if $\alpha= \infty$ and  $\im\, z < 1/2$ if $\alpha \in \{1, -1, 0\}$, which imply that
$2\,\im\, z \geqslant |\re\, z|$ if $\alpha= \infty$, while
$\im\, z \geqslant |\re\, z - \alpha|$ if $\alpha \in \{ 0, 1, -1\}$. Hence it follows from \eqref{f23aaint}, \eqref{f12aint}, \eqref{f25int} and \eqref{f25aint} that for every $n \geqslant 1$ there exists ${\eusm{D}}_{n}\in \Bb{R}_{>0}$ such that
\begin{align}\label{f27int}
    &\hspace{0,2cm}\begin{array}{llll}
    {\rm{(a)}} &\ \   \left|\eurm{R}_{n}^{{\tn{\triangle}}} (z)\right|\!\leqslant\! {\eusm{D}}_{n} \exp\left( - \dfrac{  \pi}{2\, \im\, z} \right) ,   &
\ \  \left|z\pm 1\right| \leqslant 1/\sqrt{2}  \, ,      &\ \    z\in  \fet\,;\\[0,3cm]  {\rm{(b)}} &\ \
\left|\eurm{R}_{n}^{{\tn{\triangle}}} (z)\!-\! \exp \left( -\imag n \pi z\right)\right| \!\leqslant\! {\eusm{D}}_{n}  \ ,      & \ \   \im\, z \geqslant 1/2  \, ,
& \ \   z\in  \fet\,;\\[0,25cm]{\rm{(c)}} &\ \
\left|\eurm{R}_{n}^{{\tn{\triangle}}} (z)\right|\!\leqslant\! {\eusm{D}}_{n}  \ ,   &     \ \  \left|z\right| \leqslant 1/\sqrt{2} \, ,      &\ \    z\in  \fet\,;
     \end{array}
\end{align}

\noindent and
\begin{align}\label{f27aint}
    &\hspace{-0,3cm}\begin{array}{llll}
    {\rm{(a)}} &\ \   \left|\eurm{R}_{n}^{{\tn{\triangle}}} (-1/z)
    \right|\!\leqslant\! {\eusm{D}}_{n} \exp\left( - \dfrac{  \pi}{2\, \im\, z} \right) ,   &
\ \  \left|z\pm 1\right| \leqslant 1/\sqrt{2}  \, ,      &\ \    z\in  \fet\,;\\[0,3cm]  {\rm{(b)}} &\ \
\left|\eurm{R}_{n}^{{\tn{\triangle}}} (-1/z)\right|\!\leqslant\! {\eusm{D}}_{n}
  \ ,      & \ \   \im\, z \geqslant 1/2  \, ,
& \ \   z\in  \fet\,;\\[0,25cm]{\rm{(c)}} &\ \
\left|\eurm{R}_{n}^{{\tn{\triangle}}} (-1/z)\!-\! \exp \left( \imag n \pi /z\right)\right| \!\leqslant\! {\eusm{D}}_{n}  \ ,   &     \ \  \left|z\right| \leqslant 1/\sqrt{2} \, ,      &\ \    z\in  \fet\,;
     \end{array}\hspace{-0,1cm}
\end{align}

\noindent where for every $n \geqslant 1$ the functions\vspace{-0,1cm}
\begin{align}\label{f2bvslem1}
    &\hspace{-0,2cm}
    \begin{array}{lrcccl}
 {\rm{(a)}} \qquad &  \eurm{R}_{n}^{{\tn{\triangle}}} (z) \!  & :=  & \!S^{{\tn{\triangle}}}_{n}
\left(\!\dfrac{1}{\lambda(z)} \!\right)\! &  = & \!
\begin{displaystyle}\sum\nolimits_{k=1}^{n} \ \dfrac{s^{{\tn{\triangle}}}_{n, k}}{\lambda(z)^{k}}\ ;
\end{displaystyle}
 \\[0,4cm]
{\rm{(b)}} \qquad &   \eurm{R}_{n}^{{\tn{\triangle}}} (-1/z) \!  & =  & \!S^{{\tn{\triangle}}}_{n}
\left(\!\dfrac{1}{1\!-\!\lambda(z)}\! \right) \! &  = & \! \begin{displaystyle}\sum\nolimits_{k=1}^{n}\ \dfrac{s^{{\tn{\triangle}}}_{n, k}}{\big(1\!-\!\lambda(z)\big)^{k}}
 \  ;
\end{displaystyle}
    \end{array}
 \hspace{-0,1cm}
\end{align}

\vspace{-0,1cm}
\noindent are  holomorphic in $\Bb{H}$ and periodic with period $2$. We observe that in this notation the formulas \eqref{f9int} and \eqref{f23int} can be written  in the form
\begin{align}\label{f28int}
    &  \eurm{H}_{n} (x) \! = \dfrac{(-1)}{4 \pi^{2} n} \!\!\! \!\!\!  \int\limits_{\gamma (-1,1)}\!\!\!\!\!
\frac{ \eurm{R}_{n}^{{\tn{\triangle}}} (z) {\rm{d}} z}{(x+z)^{2}}\, ,
   \quad
 \eurm{M}_{n} (x)\! =\!\dfrac{1}{ 4 \pi^{2} n }\!\!\! \!\!\!  \int\limits_{\gamma (-1,1)}\!\!\!\!\!
\frac{\eurm{R}_{n}^{{\tn{\triangle}}} (-1/z) {\rm{d}} z}{( x+z)^{2}} \ , \ \
n \geqslant 1 \,,
\end{align}

\noindent where the behavior of the both integrands in $\fet$ is completely described by  \eqref{f27int} and \eqref{f27aint}. In particular, it follows
from \eqref{f27int}(a) and \eqref{f27aint}(a) that
the both integrals converge absolutely for all $x\in \Bb{R} $.
Furthermore, by writing \eqref{f2qinttheor1} in the form (see \cite[p.\! 111]{cha})\vspace{-0,2cm}
\begin{align}\label{f2aqinttheor1}
    &  \dfrac{1}{\lambda (z)} +  \dfrac{1}{\lambda (z+1)} =1  \ , \quad  z \in \Bb{H} \,,
\end{align}

\noindent we conclude from \eqref{f1intlem3} and \eqref{f2bvslem1} that for any $z\in \Bb{H}$ and $n \geqslant 1$ we have
\begin{align*}
&
\eurm{R}_{2n}^{{\tn{\triangle}}} (z+1)    =    \eurm{R}_{2n}^{{\tn{\triangle}}} (z) \,,\quad
  \eurm{R}_{2n-1}^{{\tn{\triangle}}} (z+1)   =   -\eurm{R}_{2n-1}^{{\tn{\triangle}}} (z) + 2\, r_{4}(2n-1) \,.
 \end{align*}

\vspace{0.2cm}
  The following Liouville-type property holds for the Schwarz quadrilateral $\fet$, where in the notation \eqref{f2intthA},
$\fet = -1/\fet$ and
\begin{align}\label{zbssqlem1}
    &\hspace{-0,3cm}
    \partial_{\Bb{H}}
\fet\! :=\! \Bb{H}\cap\partial
\fet\! =\! \gamma(1,\infty) \cup \gamma(-1,\infty)\cup
\gamma(0,1)\cup
 \gamma(0,-1).\hspace{-0,1cm}
\end{align}

\vspace{-0,1cm}
 \newcommand{\red}{\cite[p.\! 597]{bh2}}
\begin{lemmx}[\red]\hspace{-0,18cm}{\bf{.}}
\label{bssqlem1}\hypertarget{hbssqlem1}{}
Suppose that $f\!\in\! {\rm{Hol}}(\fet) \cap C (\Bb{H}\, \cap \,{\rm{clos}}\,\fet)$ satisfies \
  $f(z)=f(z+2)$ \  and  \
$ f (-1/z) = f \big(-1/(z+2) \big) \vphantom{\dfrac{A}{A}}$\ \  for each  $z \in -1 +  \imag
  \Bb{R}_{>0}$.
Suppose also that there exist nonnegative integers $n_{\infty}$,
 $n_{0}$, and $n_{ 1}$ such that
\begin{align*}
     &   \left|f (z)\right|
     =\oh\big(\exp\left(\pi\left( n_{\infty}
+1\right) |z|\right)\big)    \ ,   &     &  \hspace{-0cm}
z \in \fet,  \, \ \  z\to \infty \, ,  \\[0,2cm]   
  &
       \left|f (z)\right|=\oh\big( \exp \left(\pi
\left(n_{0}+1\right)|z|^{-1} \right)\big)  \ , &     &
\hspace{-0cm} z\in \fet, \,  \ \  z \to 0 \, ,  \\[0,2cm] 
       &          \left|f (z)\right|=
\oh\big(\exp \left(\pi\left( n_{ 1} +1\right)|z-\sigma|^{-1}\right)\big)
 \ , &     &     \hspace{-0cm} z\in\fet, \,  \ \  z \to\sigma \, , \ \sigma\in\{1,-1\}\,.
\end{align*}
Then there exists an algebraic polynomial $P$ of
degree $\le n_{\infty} + n_{0} + n_{1}$ such that
\begin{align*}
    &  f \big( z\big) = \frac{\vphantom{\frac{a}{b}}
P\big(\lambda(z)\big)}{\vphantom{\dfrac{a}{b}}
     \lambda(z)^{\,{\fo{n_{\infty}}}} \big(1-\lambda(z)\big)^{\,{\fo{n_{0}}}} }  \ ,
\quad  z \in \fet  \,.
\end{align*}
\end{lemmx}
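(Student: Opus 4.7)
The plan is to lift $f$ through the inverse $\ie$ of the modular function to a meromorphic function on the Riemann sphere and apply a Liouville-type argument. Define $g(w) := f(\ie(w))$ on $(0,1)\cup(\Bb{C}\setminus\Bb{R}) = \Bb{C}\setminus(\Bb{R}_{\leqslant 0}\cup\Bb{R}_{\geqslant 1})$, where $\ie$ is holomorphic with image $\fet$ by Theorem~\ref{intthA}; hence $g$ is holomorphic there. I then want to (i)~extend $g$ holomorphically across $\Bb{R}_{<0}$ and $\Bb{R}_{>1}$, (ii)~read off pole-order bounds for $g$ at $0$, $1$, $\infty$ from the three growth hypotheses, and (iii)~conclude via Liouville.

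For stage~(i), the two periodicity hypotheses are exactly what is needed to match one-sided boundary limits. When $w\in\Bb{R}_{<0}$, the identities~\eqref{f6intthA} give $\ie(w\pm i0)=\mp 1+i\,{\fo{\Delta}}(|w|)\in\gamma(\mp 1,\infty)$, and the continuity of $f$ on $\Bb{H}\cap{\rm{clos}}\,\fet$ together with $f(z)=f(z+2)$ at $z=-1+i\,{\fo{\Delta}}(|w|)$ yields $g(w+i0)=g(w-i0)$. When $w=1+x\in\Bb{R}_{>1}$ and $t:={\fo{\Delta}}(x)$, the identities~\eqref{f7intthA} give $\ie(w\pm i0)=(\pm 1+it)/(1+t^{2})$; a direct calculation shows $-1/\ie(w+i0)=-1+it$ and $-1/\ie(w-i0)=(-1+it)+2$, so the second hypothesis at $z=-1+it$ again produces matching limits. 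Morera's theorem applied on small disks centered on $\Bb{R}_{<0}\cup\Bb{R}_{>1}$ then yields $g\in{\rm{Hol}}(\Bb{C}\setminus\{0,1\})$.

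For stage~(ii), the vertex $z=\infty$ of $\fet$ corresponds to $w=0$: since $|z|\leqslant 1+\im z$ in $\fet$, while Theorem~\ref{inttheor1} combined with~\eqref{f0apinttheor1}(b) gives $|w|=|\lambda(z)|\leqslant 32\,\e^{-\pi\,\im z}$ for $\im z\geqslant 1$, one finds $|z|\leqslant 1+\pi^{-1}\log(32/|w|)$, so the first growth hypothesis becomes $|g(w)|=o(|w|^{-(n_{\infty}+1)})$ as $w\to 0$ — by a Laurent-coefficient estimate, a pole at $0$ of order at most $n_{\infty}$. The involution $z\mapsto-1/z$ preserves $\fet$, swaps the vertices $0\leftrightarrow\infty$, and sends $\lambda\mapsto 1-\lambda$; applying the same reasoning to $\tilde f(z):=f(-1/z)$ yields a pole at $w=1$ of order at most $n_{0}$. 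For the vertices $z=\pm 1$, both mapped by $\lambda$ to $w=\infty$, the identity $\lambda(\pm 1+it)=-\lambda(it)/(1-\lambda(it))$ together with $1-\lambda(it)=\lambda(i/t)<16\,\e^{-\pi/t}$ gives $|\lambda(z)|\gtrsim\e^{\pi/|z\mp 1|}$ as $z\to\pm 1$ along $\gamma(\pm 1,\infty)$; uniformity over all directions in $\fet$ follows by applying the M\"obius transformation $T(z)=(z-1)/(z+1)$ (which preserves $\fet$ and sends $1\to 0$) and using~\eqref{f3dint} near $T(z)=0$. The third growth hypothesis then becomes $|g(w)|=o(|w|^{n_{1}+1})$ as $w\to\infty$, i.e., a pole at $\infty$ of order at most $n_{1}$.

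Setting $P(w):=w^{n_{\infty}}(1-w)^{n_{0}}\,g(w)$, the three pole bounds make $P$ entire of polynomial growth of order at most $n_{\infty}+n_{0}+n_{1}$, hence a polynomial of that degree by Liouville. Substituting $w=\lambda(z)$ and using $g(\lambda(z))=f(z)$ for $z\in\fet$ gives the stated formula. The main technical obstacle is stage~(i): one must correctly identify which pairs of boundary points of $\fet$ are collapsed to the same point of $\Bb{R}_{<0}$ and $\Bb{R}_{>1}$ under $\lambda$, and realize that \emph{both} periodicity hypotheses — one gluing across the vertical sides $\gamma(\pm 1,\infty)$ and one gluing across the semicircular sides $\gamma(0,\pm 1)$ — are simultaneously required for the extension to exist.
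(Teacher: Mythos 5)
This lemma is not proved in the paper at all: it is imported verbatim as Lemma~B from \cite[p.\ 597]{bh2}, so there is no in-paper argument to compare you against. Judged on its own terms, your lift-and-Liouville plan is the natural proof and its skeleton is correct. You identify correctly which hypothesis glues which cut: \eqref{f6intthA} shows $\lambda$ collapses $\gamma(1,\infty)$ and $\gamma(-1,\infty)$ onto $\Bb{R}_{<0}$, matched by $f(z)=f(z+2)$, while \eqref{f7intthA} shows it collapses $\gamma(0,1)$ and $\gamma(-1,0)$ onto $\Bb{R}_{>1}$, matched by the second hypothesis (your quotation of \eqref{f6intthA} has the two sides of the cut swapped, $\ie(-x+\imag 0)=+1+\imag\,{\fo{\Delta}}(x)$, but since the hypothesis equates the two limit values this is immaterial). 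The translation of the hypothesis at $\infty$ into $|g(w)|=\oh(|w|^{-(n_{\infty}+1)})$ via $|z|\leqslant 1+\im z$ and $|\lambda(z)|\leqslant 32\,\e^{-\pi\im z}$ is right, and the reduction of the singularity at $w=1$ to the one at $w=0$ through $\tilde f(z)=f(-1/z)$ is legitimate because $\ie(1-w)=-1/\ie(w)$ by \eqref{f3intthA} (and the two gluing hypotheses are interchanged by $z\mapsto-1/z$, though once $g$ is already holomorphic on $\Bb{C}\setminus\{0,1\}$ only the growth transfer is needed). The final Liouville step with $P(w)=w^{n_{\infty}}(1-w)^{n_{0}}g(w)$ gives exactly the stated degree bound.

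The one place where your write-up is thinner than a proof is the uniform cusp estimate at $z=\pm1$: the computation along $\gamma(\pm1,\infty)$ plus a vague appeal to \eqref{f3dint} near $T(z)=0$ does not cover all approach directions inside $\fet$. A concrete patch: from \eqref{f2pinttheor1} and $\lambda(-1/z)=1-\lambda(z)$ one gets $\lambda\big(z/(1-z)\big)=1/\lambda(z)$, so the same bound $|\lambda(u)|\leqslant 32\,\e^{-\pi\im u}$ (valid for all $u\in\Bb{H}$ with $\im u\geqslant 1$) yields $|\lambda(z)|\geqslant\tfrac{1}{32}\exp\big(\pi\,\im z/|1-z|^{2}\big)$; and inside $\fet$ near $1$ the constraint $|2z-1|>1$ forces $|1-z|-\im z\leqslant 2|1-z|^{3}$, hence $\im z/|1-z|^{2}\geqslant |1-z|^{-1}-2$, which is precisely what converts the hypothesis at $1$ into $|g(w)|=\oh(|w|^{\,n_{1}+1})$ as $w\to\infty$ (the vertex $-1$ is handled the same way after the shift $z\mapsto z+2$, the displayed lower bound for $|\lambda|$ not requiring membership in $\fet$). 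You should also record two routine points your coefficient estimates on full circles silently use: the little-o hypotheses, stated on the open $\fet$, extend to $\Bb{H}\cap\mathrm{clos}\,\fet$ by the assumed continuity of $f$; and $|\lambda|$ is bounded away from $0$ on $\mathrm{clos}\,\fet\cap\{\im z\leqslant 1\}$, so small $|w|$ indeed forces $\im z\geqslant 1$ in your estimate at $w=0$. With these patches the argument is complete.
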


\vspace{-0,2cm} The next lemma shows that the function  $  \eurm{R}_{n}^{{\tn{\triangle}}} = S^{{\tn{\triangle}}}_{n} (1/\lambda)$ is uniquely determined
 by much weaker asymptotic conditions   at the vertices of the Schwarz quadrilateral than those of \eqref{f27int}.
\begin{lemsectfour}\hspace{-0,18cm}{\bf{.}}\label{bvslem1} For a given positive integer $n$ there exists a unique function $ \eurm{R}_{n}^{{\tn{\triangle}}}$
in $C (\Bb{H}\, \cap \,{\rm{clos}}\,\fet) \cap {\rm{Hol}}(\fet)$
   such that for any $\sigma\in\{1,-1\}$ we have
  \begin{align}\label{f1bvslem1}
     &
\begin{array}{lll}
 {\rm{(a)}} \ &     \eurm{R}_{n}^{{\tn{\triangle}}}(z)
\!=\!\eurm{R}_{n}^{{\tn{\triangle}}}(z\!+\!2)  ,\ \
 \eurm{R}_{n}^{{\tn{\triangle}}} (-1/z)\!=\!\eurm{R}_{n}^{{\tn{\triangle}}} \big(-1/(z\!+\!2)\big)  ,
 &
\ \   z \!\in \!-1\! +\!  \imag  \Bb{R}_{>0} \, ,
  \\[0,2cm]         {\rm{(b)}} \  &
       \eurm{R}_{n}^{{\tn{\triangle}}} (z)\! =\! {\rm{O}}(1)  \, , &
 \ \  z\! \in \!\fet,  \   z \!\to\! 0  \, ,  \\[0,2cm]
              {\rm{(c)}}  \  &         \eurm{R}_{n}^{{\tn{\triangle}}} (z) \!=\!
\oh\big(1\big)
 \ , &
 \ \ z\!\in\!\fet,  \    z \!\to \!\sigma \,,
  \\[0,2cm]          {\rm{(d)}}  \  &
        \eurm{R}_{n}^{{\tn{\triangle}}} (z) \! = \!e^{{\fo{-{\rm{i}}\pi  n  z}} }\! + \!{\rm{O}}(1)  \, , &
 \ \ z\!\in\!\fet,   \,   z \!\to\! \infty  \,.
\end{array}
\end{align}

\end{lemsectfour}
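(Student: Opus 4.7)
\medskip
\noindent\textbf{Proof plan for Lemma~\ref{bvslem1}.}
The plan is to treat existence and uniqueness separately. For existence, I would take as the candidate precisely the function $\eurm{R}^{{\tn{\triangle}}}_{n}(z):=S^{{\tn{\triangle}}}_{n}(1/\lambda(z))$ introduced in \eqref{f2bvslem1}(a). Since $\lambda\in{\rm{Hol}}(\Bb{H})$ and $\lambda$ is non-vanishing on $\Bb{H}$ by \eqref{f2int}, this $\eurm{R}^{{\tn{\triangle}}}_{n}$ is holomorphic on the whole upper half-plane and therefore belongs to $C(\Bb{H}\cap{\rm{clos}}\,\fet)\cap{\rm{Hol}}(\fet)$. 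The two periodicity identities in \eqref{f1bvslem1}(a) follow directly from $\lambda(z+2)=\lambda(z)$ and $\lambda(-1/z)=1-\lambda(z)$ (see \eqref{f2int}), which also yield $\eurm{R}^{{\tn{\triangle}}}_{n}(-1/z)=S^{{\tn{\triangle}}}_{n}(1/(1-\lambda(z)))$ as recorded in \eqref{f2bvslem1}(b). The three growth conditions (b), (c), (d) are nothing but the estimates already established in \eqref{f27int}: (c) of that display gives (b) here, (a) of that display gives (c) here (with the $\exp(-\pi/(2\,\im z))\to 0$ decay being stronger than the required $\oh(1)$), and (b) of that display gives (d) here.

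For uniqueness, I would suppose that $F_{1}$ and $F_{2}$ both satisfy \eqref{f1bvslem1}(a)--(d) and set $h:=F_{1}-F_{2}$. Because the singular term $\exp(-{\rm{i}}\pi n z)$ in (d) cancels, $h$ is $\Oh(1)$ as $z\to\infty$ in $\fet$, is $\Oh(1)$ as $z\to 0$, and is $\oh(1)$ as $z\to\pm 1$, and $h$ inherits the periodicities in (a). I would then invoke the Liouville-type statement, Lemma~\hyperlink{hbssqlem1}{B}, with $n_{\infty}=n_{0}=n_{\pm 1}=0$: each of the three asymptotic bounds on $h$ is majorized by the corresponding $\oh(\exp(\pi|z|))$, $\oh(\exp(\pi|z|^{-1}))$, $\oh(\exp(\pi|z-\sigma|^{-1}))$. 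The lemma's conclusion then produces an algebraic polynomial $P$ of degree at most $n_{\infty}+n_{0}+n_{1}=0$ with
\begin{equation*}
h(z)=\frac{P(\lambda(z))}{\lambda(z)^{0}(1-\lambda(z))^{0}}=P(\lambda(z))\,,\qquad z\in\fet,
\end{equation*}
so $h$ is a constant. Finally the $\oh(1)$ condition from (c) at $\sigma=1$ forces that constant to vanish, and thus $h\equiv 0$.

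The only step that requires care is checking that the hypotheses of Lemma~\hyperlink{hbssqlem1}{B} are met in full: the boundary periodicities on $-1+{\rm{i}}\Bb{R}_{>0}$ (available verbatim from (a)), the continuity on $\Bb{H}\cap{\rm{clos}}\,\fet$, and the three localized $\oh(\exp(\pi\cdot))$ growth bounds at $\infty,0,\pm 1$. None of these is a serious obstacle, since $h$'s bounds are strictly stronger than what the lemma demands; the real verification work is just recording that our $\Oh(1)$ and $\oh(1)$ inputs are $\oh$ of the exponentials with exponents $n_{\infty}=n_{0}=n_{\pm 1}=0$. The part of the argument one must not overlook is that the final step requires condition (c) rather than (b) or (d): only (c) furnishes a genuine $\oh(1)$ that can eliminate a non-zero constant. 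With that observation in place, the lemma follows, and the construction $\eurm{R}^{{\tn{\triangle}}}_{n}=S^{{\tn{\triangle}}}_{n}(1/\lambda)$ is identified as the unique function meeting \eqref{f1bvslem1}.
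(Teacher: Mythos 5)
Your proposal is correct and follows essentially the same route as the paper: existence via the candidate $S^{{\tn{\triangle}}}_{n}(1/\lambda)$ together with the estimates \eqref{f27int}, and uniqueness by applying the Liouville-type Lemma~\hyperlink{hbssqlem1}{B} with $n_{\infty}=n_{0}=n_{1}=0$ to a difference of solutions, with condition (c) killing the resulting constant. The only cosmetic difference is that the paper subtracts the explicit function $S^{{\tn{\triangle}}}_{n}(1/\lambda)$ from an arbitrary competitor rather than subtracting two arbitrary solutions, which changes nothing in substance.
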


\noindent Lemma~\ref{bvslem1} follows easily from Lemma~\ref{bssqlem1}.
Indeed,  the conditions \eqref{f1bvslem1} are satisfied if \eqref{f2bvslem1}(a) holds, as follows from \eqref{f27int}.
As for the uniqueness,
suppose that the function
$F_{n}$ has all the properties of  \eqref{f1bvslem1} (with $\eurm{R}_{n}^{{\tn{\triangle}}}$ replaced by $F_n$).
Then the difference
$F_{n} -  S^{{\tn{\triangle}}}_{n} (1/\lambda)$ meets the conditions of Lemma~\ref{bssqlem1}
with $n_{\infty} = n_{0} = n_{1}=0$. By Lemma~\ref{bssqlem1},  this  difference is a constant function, which
must equal
zero, by property  \eqref{f1bvslem1}(c). This  completes the proof of  Lemma~\ref{bvslem1}.

\vspace{0,1cm}
 The symmetrized functions
 $\eurm{R}^{\pm}_{n}(z)\!:=\!\eurm{R}_{n}^{{\tn{\triangle}}} (z)\pm \eurm{R}_{n}^{{\tn{\triangle}}} (-1/z)$
 introduced in \eqref{f10int} are characterized in a similar manner.
\begin{lemsectfour}\hspace{-0,18cm}{\bf{.}}\label{bvslem2} For every positive integer $n$,  there exist unique functions $ \eurm{R}_{n}^{+}$ and $ \eurm{R}_{n}^{-}$ in the set $C (\Bb{H}\, \cap \,{\rm{clos}}\,\fet) \cap {\rm{Hol}}(\fet)$
   such that
  \begin{align*}
     &
\begin{array}{lll}
{\rm{(a)}} \ \   &      \eurm{R}_{n}^{\pm}(z)
\!=\! \eurm{R}_{n}^{\pm}(z\!+\!2)  ,
 &
\ \   z \!\in \!-1\! +\!  \imag  \Bb{R}_{>0} \, ,
  \\[0,15cm]         {\rm{(b)}} \ \  &
       \eurm{R}_{n}^{\pm} (z)= \pm\, \eurm{R}_{n}^{\pm} (-1/z)  \, , &
 \ \  z \in \fet \, ,  \\[0,15cm]
              {\rm{(c)}} \ \  &          \eurm{R}_{n}^{\pm} (z) \!=\!
\oh\big(1\big)
 \ , &
 \ \ z\!\in\!\fet,  \  \,  z \!\to \! 1 \,,
  \\[0,15cm]         {\rm{(d)}} \ \  &
         \eurm{R}_{n}^{\pm} (z) \! = \!{\rm{e}}^{{\fo{-{\rm{i}}\pi  n  z}} }\! + \!{\rm{O}}(1)  \, , &
 \ \  z\!\in\!\fet,  \   z \!\to\! \infty  \,.
\end{array}
\end{align*}
\end{lemsectfour}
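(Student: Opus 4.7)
The plan is to mirror the argument given for Lemma~\ref{bvslem1}: first exhibit the desired functions via the symmetrization already introduced in \eqref{f10int}, then obtain uniqueness from Lemma~\ref{bssqlem1} applied with $n_{\infty}=n_{0}=n_{1}=0$.

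For existence I would take
\begin{equation*}
\eurm{R}_{n}^{\pm}(z) := \eurm{R}_{n}^{{\tn{\triangle}}}(z) \pm \eurm{R}_{n}^{{\tn{\triangle}}}(-1/z)\,, \quad z\in\Bb{H}\,,
\end{equation*}
and verify \textup{(a)}--\textup{(d)} directly from Lemma~\ref{bvslem1}. Property \textup{(b)} is built into the definition, since $\fet=-1/\fet$. Property \textup{(a)} is immediate from \eqref{f1bvslem1}(a), applied to each of the two summands. For \textup{(c)}, as $z\in\fet$ tends to $1$ the point $-1/z$ tends to $-1$, and \eqref{f1bvslem1}(c) used at both vertices $\sigma=1$ and $\sigma=-1$ yields $\eurm{R}_{n}^{{\tn{\triangle}}}(z)=\oh(1)$ together with $\eurm{R}_{n}^{{\tn{\triangle}}}(-1/z)=\oh(1)$. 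For \textup{(d)}, as $z\to\infty$ in $\fet$ one has $-1/z\to 0$, so \eqref{f1bvslem1}(d) gives $\eurm{R}_{n}^{{\tn{\triangle}}}(z)=e^{-\imag\pi n z}+\Oh(1)$, while \eqref{f1bvslem1}(b) gives $\eurm{R}_{n}^{{\tn{\triangle}}}(-1/z)=\Oh(1)$.

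For uniqueness I would suppose $F_{n}^{\pm}\in C(\Bb{H}\cap{\rm{clos}}\,\fet)\cap{\rm{Hol}}(\fet)$ also satisfies \textup{(a)}--\textup{(d)}, and set $D:=F_{n}^{\pm}-\eurm{R}_{n}^{\pm}$. Then $D$ is $2$-periodic on $-1+\imag\Bb{R}_{>0}$, satisfies $D(z)=\pm D(-1/z)$ on $\fet$, and obeys $D(z)=\oh(1)$ as $z\to 1$ and $D(z)=\Oh(1)$ as $z\to\infty$ (the oscillatory terms $e^{-\imag\pi n z}$ cancelling). Combining the $2$-periodicity with the symmetry gives $D(-1/(z+2))=\pm D(z+2)=\pm D(z)=D(-1/z)$ on $-1+\imag\Bb{R}_{>0}$, so the second periodicity hypothesis of Lemma~\ref{bssqlem1} is also met. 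The symmetry transfers the vertex bounds to the remaining corners: as $z\to 0$ in $\fet$ one has $-1/z\to\infty$, hence $D(z)=\pm D(-1/z)=\Oh(1)$; as $z\to -1$, $-1/z\to 1$, hence $D(z)=\pm D(-1/z)=\oh(1)$. Thus $D$ meets the hypotheses of Lemma~\ref{bssqlem1} with $n_{\infty}=n_{0}=n_{1}=0$, whence $D\equiv c$ is a constant. In the ``$+$'' case, $c=0$ by \textup{(c)} at $z\to 1$; in the ``$-$'' case, $D(z)=-D(-1/z)$ forces $c=-c$, again $c=0$.

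The main obstacle is checking that the leading oscillatory term $e^{-\imag\pi n z}$ at the cusp at infinity cancels cleanly in the difference $D$, leaving an $\Oh(1)$ remainder; this is immediate from the explicit asymptotic in \textup{(d)}, after which everything reduces to a routine application of Lemma~\ref{bssqlem1}, parallel to the uniqueness step already carried out for Lemma~\ref{bvslem1}.
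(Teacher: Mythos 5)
Your proof is correct and takes essentially the same route the paper intends: the paper gives only the one-line remark that the symmetrized functions "are characterized in a similar manner" to Lemma~\ref{bvslem1}, and your argument is precisely that parallel — existence by verifying (a)--(d) for $\eurm{R}_{n}^{{\tn{\triangle}}}(z)\pm\eurm{R}_{n}^{{\tn{\triangle}}}(-1/z)$ from Lemma~\ref{bvslem1}, and uniqueness by applying Lemma~\ref{bssqlem1} with $n_{\infty}=n_{0}=n_{1}=0$ to the difference, then killing the constant via (c).
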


On a side note, we observe that the functional properties \eqref{f2qinttheor1}  imply
that for arbitrary $n \geqslant 1$ and $z\in\Bb{H}$,  we have
\begin{align*}
    &  \eurm{R}^{+ }_{n}\left(\!1\!-\!\dfrac{1}{z}\!\right)\!=\! S^{{\tn{\triangle}}}_{n}\left(\lambda (z)\right)\!+\! S^{{\tn{\triangle}}}_{n}\left(\lambda (z\!+\!1)\right) \, , \ \
\eurm{R}^{- }_{n}\left(\!1\!-\!\dfrac{1}{z}\!\right)\!= \!S^{{\tn{\triangle}}}_{n}\left(\lambda (z)\right)\!-\! S^{{\tn{\triangle}}}_{n}\left(\lambda (z\!+\!1)\right) \,.
\end{align*}

\section[\hspace{-0,30cm}. \hspace{0,11cm}The  biorthogonal sequence]{\hspace{-0,095cm}{\bf{.}} The biorthogonal sequence}\label{bs}

This section is devoted to the study of the functions $\eurm{H}_{0}$, $\eurm{H}_{n}$,
$\eurm{M}_{n}$, for $n\in \Bb{Z}_{\neq 0}$, given in Theorem~\ref{intth1}.

\subsection[\hspace{-0,31cm}. \hspace{0,11cm}Explicit expressions.]{\hspace{-0,11cm}{\bf{.}} Explicit expressions for the biorthogonal sequence.}\label{exprfunbs} \
It can easily be derived  from
\eqref{f28int}, \eqref{f17zint} and \eqref{f3zint}  that
$\eurm{H}_{n}, \eurm{M}_{n}\! \in \!{\rm{Hol}}(\Bb{H})\!\cap\! C(\Bb{H}\!\cup \! \Bb{R})$ with{\hyperlink{r35}{${}^{\ref*{case35}}$}}\hypertarget{br35}{}
\begin{align*}
 &   \left|\eurm{H}_{n} (z)\right|+ \left|\eurm{M}_{n} (z)\right| \leqslant
    \dfrac{8\,{\rm{e}}^{{\fo{2 \pi n  }}} }{|z-1|^{2} +|z+1|^{2}}\ , \quad z \in \Bb{H}\cup \Bb{R} \, , \
   n \geqslant 1 \,,
\end{align*}

\noindent from which, in view of \eqref{f2intth1}, we obtain
\begin{align}\label{f1bs}
    & \eurm{H}_{n} \, , \  \eurm{M}_{n}\in {\rm{H}}^1_{+}(\Bb{R}) \ , \quad
 \eurm{H}_{-n} \, , \  \eurm{M}_{-n}\in {\rm{H}}^1_{-}(\Bb{R}) \ , \quad n \geqslant 1\,.
\end{align}

The main results of  this section are the following assertions, the proofs of which are deferred to
Sections~\ref{pbslem1}.
\begin{lemsectfive}\hspace{-0,18cm}{\bf{.}}\label{bslem1}
    For every  $n\in \Bb{Z}_{\neq 0}$  the functions $\eurm{H}_{n} $ and $ \eurm{M}_{n}$ have the following properties:
\begin{align*}
     & {\rm{(1)}} \ \eurm{H}_{n}, \eurm{M}_{n}\! \in\! C^{\infty} (\Bb{R}) \cap {\rm{H}}^{1}_{+} (\Bb{R}),\  \mbox{if}\ n \!\geqslant\! 1, \quad
 \eurm{H}_{n}, \ \eurm{M}_{n}\! \in\! C^{\infty} (\Bb{R}) \cap {\rm{H}}^{1}_{-} (\Bb{R}), \ \mbox{if}\ n \!\leqslant\! -1  ;
\\
     & {\rm{(2)}} \  \forall m \in \Bb{Z}_{\geqslant 0}:\,\,\eurm{H}_{n}^{(m)} (x),  \eurm{M}_{n}^{(m)} (x) ={\rm{O}} \left(|x|^{-2-m}\right), \ \ \ |x| \to +\infty  \,   ;\   \\
     & {\rm{(3)}} \ \eurm{M}_{n}(x) =  \eurm{H}_{n} (-1/x)/x^{2}, \ \ \ x\!\in\! \Bb{R}\!\setminus\!\{0\};
 \\ &
{\rm{(4)}} \  2 \!\!\sum_{{\fo{k \in \Bb{Z}}}}   \eurm{H}_{n} (x + 2k) = {\rm{e}}^{{\fo{{\rm{i}} \pi n x}}}\ ,  \qquad   \sum_{{\fo{k \in \Bb{Z}}}}  \eurm{M}_{n} (x + 2 k) = 0\ , \quad x \in \Bb{R}\,.
\end{align*}
\end{lemsectfive}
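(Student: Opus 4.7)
I would treat the four properties in the order (3), (1)--(2), (4). Property (3) is the definition \eqref{f3intth1}, combined with $\eurm{H}_{-n}(x):=\eurm{H}_n(-x)$ from \eqref{f2intth1} for negative indices, so there is nothing to prove. For (1) and (2), the symmetries (3) and $\eurm{H}_{-n}(x)=\eurm{H}_n(-x)$ reduce everything to $n\geqslant 1$, where differentiation under the integral sign in \eqref{f28int} gives
\[
\eurm{H}_n^{(m)}(x)=\frac{(-1)^{m+1}(m+1)!}{4\pi^2 n}\int_{\gamma(-1,1)}\frac{\eurm{R}_{n}^{{\tn{\triangle}}}(z)}{(x+z)^{m+2}}\,dz
\]
(and analogously for $\eurm{M}_n$). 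For real $x$ the only possible singularity on $\gamma(-1,1)$ is at the endpoints $z=\pm 1$, and it is attained only at $x=\mp 1$; the estimates \eqref{f27int}(a) and \eqref{f27aint}(a) dominate the algebraic blow-up $|x+z|^{-(m+2)}\asymp(\im z)^{-(m+2)}$ by the exponential factor $\exp(-\pi/(2\im z))$, giving smoothness on all of $\Bb{R}$ and legitimizing differentiation. The decay $\Oh(|x|^{-2-m})$ comes from pulling out $|x|^{-(m+2)}$ from the integrand for $|x|$ large, since $\gamma(-1,1)$ is compact. The Hardy-space memberships have already been recorded in \eqref{f1bs}.

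The substantive content lies in (4). Two inputs are decisive. First, both $\eurm{R}_{n}^{{\tn{\triangle}}}(z)$ and $\eurm{R}_{n}^{{\tn{\triangle}}}(-1/z)$ are $2$-periodic holomorphic on $\Bb{H}$: the former by \eqref{f2bvslem1}(a), the latter because \eqref{f2int} gives $\lambda(-1/z)=1-\lambda(z)$ while $\lambda$ is itself $2$-periodic. Second, they admit clean Fourier expansions in $e^{\imag \pi w}$. From \eqref{f12aint} one has
\[
\eurm{R}_{n}^{{\tn{\triangle}}}(w)=e^{-\imag \pi n w}-\Delta_{n}^{S}(0)-\sum_{k\geqslant 1}\eurm{d}_{n,k}\,e^{\imag \pi k w},
\]
while $\eurm{R}_{n}^{{\tn{\triangle}}}(-1/w)=S^{{\tn{\triangle}}}_{n}\bigl(1/(1-\lambda(w))\bigr)$ is a power series in $e^{\imag \pi w}$ (since $1-\lambda(w)=1+\Oh(e^{\imag \pi w})$) with constant term $S^{{\tn{\triangle}}}_{n}(1)$, hence
\[
\eurm{R}_{n}^{{\tn{\triangle}}}(-1/w)=S^{{\tn{\triangle}}}_{n}(1)+\sum_{k\geqslant 1}\widetilde{\beta}_{n,k}\,e^{\imag \pi k w}.
\]
Both series converge absolutely and uniformly on $\{\im w\geqslant a\}$ for $a$ sufficiently large.

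With these in hand I would fix such $a$ and deform $\gamma(-1,1)$ by Cauchy's theorem to the rectangular contour $\Gamma_a=[-1,-1+\imag a]\cup[-1+\imag a,1+\imag a]\cup[1+\imag a,1]$ (no poles of the integrand in the enclosed region for real $x$). Writing $f$ for either of the integrand factors and translating $z\mapsto z-2k$, the $2$-periodicity gives $\int_{\Gamma_a}f(z)(x+z+2k)^{-2}dz=\int_{\Gamma_a+2k}f(w)(x+w)^{-2}dw$. Summing over $k\in\Bb{Z}$, the vertical segments of adjacent translates at $\re w=2k+1$ cancel pairwise (same integrand, opposite orientations) and the horizontal pieces stitch together to the whole line $\{\im w=a\}$, yielding $\int_{-\infty+\imag a}^{+\infty+\imag a}f(w)(x+w)^{-2}dw$. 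Termwise integration of the Fourier expansion now uses the residue identity
\[
\int_{-\infty+\imag a}^{+\infty+\imag a}\frac{e^{\imag \pi k w}\,dw}{(x+w)^2}=\begin{cases}2\pi^2 k\,e^{-\imag \pi k x}, & k<0,\\ 0, & k\geqslant 0,\end{cases}
\]
obtained by closing the contour below at the double pole $w=-x$ for $k<0$, above (no poles) for $k>0$, and by direct evaluation for $k=0$. For $\eurm{H}_n$ only the $k=-n$ mode contributes, producing $-2\pi^2 n\,e^{\imag \pi n x}$ and hence $2\sum_k\eurm{H}_n(x+2k)=e^{\imag \pi n x}$; for $\eurm{M}_n$ every Fourier mode has $k\geqslant 0$, giving $\sum_k\eurm{M}_n(x+2k)=0$. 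The case $n\leqslant -1$ reduces to the above via $k\mapsto -k$ and the symmetries from (3) and \eqref{f2intth1}. The main obstacle is the bookkeeping around the periodization --- the exact cancellation of vertical segments, the stitching of horizontal pieces into one line integral, and the legitimacy of termwise integration --- but all three reductions rest on absolute convergence, which is secured by the uniform Fourier bounds on $\{\im w\geqslant a\}$ and by the $\Oh(|x|^{-2})$ decay from (2).
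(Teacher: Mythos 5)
Your proposal is correct, and its skeleton is the paper's: properties (1)--(3) are read off from the representations \eqref{f28int} together with the estimates \eqref{f27int}, \eqref{f27aint}, the Hardy-space facts \eqref{f1bs} and the definition \eqref{f3intth1} (the paper dismisses them in one line), while (4) is a periodization of the contour integrals resting on the $2$-periodicity of $\eurm{R}_{n}^{{\tn{\triangle}}}(z)$, $\eurm{R}_{n}^{{\tn{\triangle}}}(-1/z)$ and their expansions \eqref{f12aint}, \eqref{f23aaint}, with $n\leqslant -1$ reduced to $n\geqslant 1$ via \eqref{f2intth1} and \eqref{f3intth1}. Where you genuinely diverge is the execution of the periodization: the paper keeps the deformed contour at height $1$, truncates to $|k|\leqslant N$ (the two leftover vertical pieces at $\re z=\pm(2N+1)$ tend to $0$), sums the shifted Cauchy kernels in closed form through $\sum_{k\in\Bb{Z}}(2k+z)^{-2}=\pi^{2}/(4\sin^{2}(\pi z/2))=-\pi^{2}\sum_{m\geqslant 1}m\,{\rm{e}}^{{\rm{i}}\pi m z}$, and finishes by orthogonality of ${\rm{e}}^{{\rm{i}}\pi m t}$ over the period $[-1,1]$; you instead unfold the whole sum into one integral over the line $\im w=a$ (cancelling vertical segments of adjacent translates) and evaluate each Fourier mode by residues and Jordan's lemma. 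The two devices are equivalent --- your residue identity is the mode-by-mode form of the paper's $\sin^{-2}$ expansion --- and both isolate the unique negative-frequency mode ${\rm{e}}^{-{\rm{i}}\pi n w}$ of $\eurm{R}_{n}^{{\tn{\triangle}}}$, giving $-2\pi^{2}n\,{\rm{e}}^{{\rm{i}}\pi n x}$, and the absence of negative modes for $\eurm{R}_{n}^{{\tn{\triangle}}}(-1/\cdot)$, giving $0$. Your version trades the truncation-and-limit bookkeeping for a justification of the unfolding; your appeal to absolute convergence does the job, provided you also record that the deformation from $\gamma(-1,1)$ to $\Gamma_a$ and the behaviour of the vertical segments down to the real axis are controlled by the exponential decay \eqref{f27int}(a), \eqref{f27aint}(a) at the corners $\pm1$ (in particular when $x=\mp1$, where the double pole sits at a shared endpoint) --- the same point you invoke for smoothness. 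One cosmetic remark: by \eqref{f23aaint} the constant term of $\eurm{R}_{n}^{{\tn{\triangle}}}(-1/w)$ is $(-1)^{n+1}S^{{\tn{\triangle}}}_{n}(1)$ rather than $S^{{\tn{\triangle}}}_{n}(1)$; since $S^{{\tn{\triangle}}}_{n}(1)$ vanishes for even $n$ by \eqref{f2intlem3} and the constant integrates to zero in any case, this does not affect the argument.
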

 \begin{lemsectfive}\hspace{-0,15cm}{\bf{.}}\label{bslem2}
   The function $\eurm{H}_{0} $ has the following properties:
\begin{align*}
     & {\rm{(1)}} \ \eurm{H}_{\,0}\! \in\!  C^{\infty} (\Bb{R}) ,   \
\eurm{H}_{0} (x)\! = \! \eurm{H}_{0} (-x) , \  x\! \in\!  \Bb{R}\,,
 &  &
{\rm{(2)}} \  \eurm{H}_{\,0} (x) = {\rm{O}} \left(x^{-2}\right) , \  |x| \to +\infty
   \, , \\ &
{\rm{(3)}}\  2 \!\! \sum_{{\fo{k \in \Bb{Z}}}}   \eurm{H}_{\,0} (x + 2k) = 1\ , \  \ x \in \Bb{R}\,,  &    &
{\rm{(4)}} \   \eurm{H}_{0} (-1/x)\!=\! \eurm{H}_{0} (x) x^{2}\!\! , \  \ \
x\! \in \! \Bb{R}\! \setminus\! \{0\}.
\end{align*}
\end{lemsectfive}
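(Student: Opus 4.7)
The plan is to extract everything from the contour representation \eqref{f9int},
\[
\eurm{H}_{0}(x) = \frac{1}{2\pi i}\int_{\gamma(-1,1)}\frac{z\,\Theta_{3}(z)^{4}}{x^{2}-z^{2}}\,d z,
\]
combined with the modular transformation law \eqref{f3bint}(b) for $\Theta_{3}$ and the exponential cusp decay of $\Theta_{3}(z)^{4}$ at $z=\pm 1$ (obtained from $\Theta_{3}(z+1)=\Theta_{4}(z)$ together with $\Theta_{4}(-1/z)=(z/i)^{1/2}\Theta_{2}(z)$ and the standard decay $|\Theta_{2}(iy)|\sim 2\,e^{-\pi y/4}$ as $y\to+\infty$).

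Items (1) and (2) are essentially direct. Evenness $\eurm{H}_{0}(-x)=\eurm{H}_{0}(x)$ is immediate because $x$ enters only through $x^{2}$. For smoothness on $\Bb{R}$ I differentiate under the integral: the only potentially problematic points are $x=\pm 1$ (where $x^{2}=z^{2}$ at the endpoints $z=\pm 1$ of $\gamma(-1,1)$), but the factor $\Theta_{3}(z)^{4}$ vanishes faster than any power of $|z\mp 1|$ at those cusps, so the integrand and all its $x$-derivatives are dominated uniformly in $x\in\Bb{R}$ by an integrable function. For the decay in item (2), for $|x|\geqslant\sqrt{2}$ one has $|x^{2}-z^{2}|\geqslant\tfrac{1}{2}x^{2}$ uniformly for $z\in\gamma(-1,1)$, so $|\eurm{H}_{0}(x)|\leqslant C/x^{2}$ with $C=\tfrac{1}{\pi}\int_{\gamma(-1,1)}|z||\Theta_{3}(z)|^{4}\,|d z|<\infty$.

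For item (4), I substitute $w=-1/z$ in \eqref{f9int}. The involution $z\mapsto-1/z$ preserves the semicircle $\gamma(-1,1)$ while reversing its orientation, and \eqref{f3bint}(b) gives $\Theta_{3}(-1/z)^{4}=(z/i)^{2}\Theta_{3}(z)^{4}=-z^{2}\Theta_{3}(z)^{4}$. After the change of variable,
\[
\eurm{H}_{0}(x) = \frac{1}{2\pi i}\int_{\gamma(-1,1)}\frac{z\,\Theta_{3}(z)^{4}}{1-x^{2}z^{2}}\,d z,
\]
while plugging $-1/x$ directly into \eqref{f9int} and pulling out $x^{2}$ from the denominator produces $x^{2}$ times exactly this same integral; hence $\eurm{H}_{0}(-1/x)=x^{2}\,\eurm{H}_{0}(x)$.

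Item (3) is the main point. I interchange summation and integration (legal by absolute convergence) and use the cotangent partial fraction $\sum_{k\in\Bb{Z}}(w+2k)^{-1}=\tfrac{\pi}{2}\cot(\pi w/2)$ together with $\cot A-\cot B=\sin(B-A)/(\sin A\sin B)$ to reduce the sum to
\[
2\sum_{k\in\Bb{Z}}\eurm{H}_{0}(x+2k) = \frac{1}{2i}\int_{\gamma(-1,1)}\frac{\Theta_{3}(z)^{4}\sin(\pi z)}{\cos(\pi z)-\cos(\pi x)}\,d z.
\]
The integrand is $2$-periodic in $z$ and, since $\cos(\pi z)=\cos(\pi x)$ forces $z\in\Bb{R}$ for real $x$, holomorphic in the region $\{z\in\Bb{H}:|z|>1,\;|\re z|<1\}$. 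I close the contour upward by the rectangle with vertices $\pm 1,\pm 1+iT$ whose bottom is replaced by $\gamma(-1,1)$: Cauchy's theorem gives $0$; the two vertical sides cancel by $2$-periodicity ($F(1+it)=F(-1+it)$); and on the top edge one has $\Theta_{3}(z)^{4}\to 1$ and $\sin(\pi z)/(\cos(\pi z)-\cos(\pi x))\to i$ uniformly as $T\to+\infty$, so the top integral tends to $-2i$. Therefore $\int_{\gamma(-1,1)}F\,d z=2i$ and division by $2i$ yields $1$. The one delicate step is this contour deformation, which reduces to two routine verifications: the absence of poles in the region (from the real-solution analysis above) and the uniform asymptotics of the integrand as $\im z\to+\infty$, both via the Fourier expansions of $\Theta_{3}$, $\sin$, and $\cos$.
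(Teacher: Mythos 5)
Your proof is correct. Parts (1), (2), (4) are handled the same way as in the paper, which merely states they are ``simple consequences'' of \eqref{f9int} and \eqref{f24int} without spelling out the details you supply; in particular, your change of variable $z\mapsto -1/z$ for (4), combined with $\Theta_3(-1/z)^4=-z^2\Theta_3(z)^4$ from \eqref{f3bint}(b), is exactly what the paper does to establish \eqref{f24int}.

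For part (3), both arguments pivot on the cotangent partial-fraction sum
$\sum_{k\in\Bb{Z}} z/((2k+x)^2-z^2)=-\tfrac{\pi}{4}\bigl(\cot\tfrac{\pi(z-x)}{2}+\cot\tfrac{\pi(z+x)}{2}\bigr)$,
but the finishing steps are genuinely different. The paper first deforms $\gamma(-1,1)$ to the polygonal path through $\pm1+i$, then sums over $k$ so that the vertical contributions telescope as $N\to\infty$, and finally evaluates the resulting integral along $[-1+i,1+i]$ by expanding both $\Theta_3^4$ and the cotangent kernel in Fourier series; the answer $1$ emerges because only the constant modes survive the integration over $[-1,1]$. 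You instead sum first, compress the cotangent identity into the closed form $\frac{\pi\sin\pi z}{2z(\cos\pi z-\cos\pi x)}$, and then close the contour all the way to $\im z=+\infty$: the vertical edges cancel by $2$-periodicity, there are no poles in the strip because $\cos\pi z=\cos\pi x$ forces $z$ real for real $x$ while the cusp decay of $\Theta_3^4$ kills the endpoint singularities at $z=\pm1$, and the top edge contributes $-2i$ since $\Theta_3(z)^4\to1$ and $\sin\pi z/(\cos\pi z-\cos\pi x)\to i$ as $\im z\to+\infty$. Your route replaces the termwise Fourier computation with a single asymptotic evaluation of the integrand, making the appearance of the constant transparent; the price is that you should spell out the interchange $\sum_k\int=\int\sum_k$ (which follows from the $O(k^{-2})$ decay of the summands together with the integrability of $|\Theta_3^4|$ along $\gamma(-1,1)$ provided by \eqref{f3yint}) and observe that the $\sum_k(w+2k)^{-1}$ identity is a symmetric limit, which is the sense in which you use it after pairing the two cotangents.
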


\noindent It can easily be obtained from Lemmas~\ref{bslem1} and~\ref{bslem2} that for every $m \in \Bb{Z}$ we have
\begin{align}\label{f2bs}
    &   \int\limits_{\Bb{R}} {\rm{e}}^{{\fo{-{\rm{i}} \pi m x}}} \eurm{H}_{n} (x) {\rm{d}} x = \delta_{m,  n} \, ,   &     &
   \int\limits_{\Bb{R}} {\rm{e}}^{{\fo{ {{\rm{i}} \pi m}/{x} }}}  \eurm{H}_{n} (x) {\rm{d}} x = 0  \, ,   &     &    n \in \Bb{Z} \, , \   \\  & \int\limits_{\Bb{R}} {\rm{e}}^{{\fo{-{\rm{i}} \pi m x}}} \eurm{M}_{n} (x) {\rm{d}} x = 0 \, ,   &     &
 \int\limits_{\Bb{R}} {\rm{e}}^{{\fo{ {{\rm{i}} \pi m}/{x} }}}  \eurm{M}_{n} (x) {\rm{d}} x = \delta_{m,  n} \,,   &     &
 n \in \Bb{Z}_{\neq 0}  \, .
\label{f3bs}\end{align}

\noindent Indeed, Lemma~\ref{bslem1}(2,3) together with Lemma~\ref{bslem2}(2,3)  give that
\begin{align*}
    \int\nolimits_{\Bb{R}} {\rm{e}}^{{\fo{-{\rm{i}} \pi m x}}} \eurm{H}_{n} (x) {\rm{d}} x &= \int\nolimits_{-1}^{1}
{\rm{e}}^{{\fo{-{\rm{i}} \pi m x}}}\left(\sum\nolimits_{{\fo{k \in \Bb{Z}}}}   \eurm{H}_{n} (x + 2k)\right) {\rm{d}} x \\    &   =
\dfrac{1}{2} \int\nolimits_{-1}^{1}{\rm{e}}^{{\fo{-i \pi m x}}}{\rm{e}}^{{\fo{{\rm{i}} \pi n x}}}{\rm{d}} x = \delta_{n,  m}  \ , \quad
m \in \Bb{Z}  \, , \ n \in \Bb{Z}_{\geqslant 0}   \, ,
\end{align*}

\noindent while \eqref{f2intth1}, \eqref{f3intth1},  Lemma~\ref{bslem1}(2),(4) and Lemma~\ref{bslem2}(1),(4)
for any $n \geqslant 1$  and $m \in \Bb{Z}$  entail
\begin{align*}
   \int\nolimits_{\Bb{R}} {\rm{e}}^{{\fo{-{\rm{i}} \pi m x}}} \eurm{H}_{-n} (x) {\rm{d}} x  & = \int\nolimits_{\Bb{R}} {\rm{e}}^{{\fo{-{\rm{i}} \pi m x}}} \eurm{H}_{n} (-x) {\rm{d}} x =  \int\nolimits_{\Bb{R}} {\rm{e}}^{{\fo{{\rm{i}} \pi m x}}} \eurm{H}_{n} (x) {\rm{d}} x =  \delta_{-n,  m} \, ,  \\
 \int\limits_{\Bb{R}} {\rm{e}}^{{\fo{ {{\rm{i}} \pi m}/{x} }}}  \eurm{H}_{0} (x) {\rm{d}} x & = \int\nolimits_{\Bb{R}} {\rm{e}}^{{\fo{-{\rm{i}} \pi m x}}} \eurm{H}_{0} (x) {\rm{d}} x=  \delta_{0,  m} \, , \ \\
\int\limits_{\Bb{R}} {\rm{e}}^{{\fo{ {{\rm{i}} \pi m}/{x} }}}  \eurm{H}_{n} (x) {\rm{d}} x &= \int\nolimits_{\Bb{R}} {\rm{e}}^{{\fo{-{\rm{i}} \pi m x}}} \eurm{M}_{n} (x) {\rm{d}} x  \\    &    = \int\nolimits_{-1}^{1}
{\rm{e}}^{{\fo{-{\rm{i}} \pi m x}}}\left(\sum\nolimits_{{\fo{k \in \Bb{Z}}}}   \eurm{M}_{n} (x + 2k)\right) {\rm{d}} x    =0\,.
\end{align*}

\noindent This concludes the verification of \eqref{f2bs}. By applying the change of the variables $x = -1/x^{\,\prime}$ in \eqref{f2bs},
the relations \eqref{f3bs} are immediate, since the functions $\eurm{H}_{n},\eurm{M}_n$ are connected via the relation \eqref{f3intth1}.
Expressed differently, the following theorem holds, which contains part of the assertions made in Theorem~\ref{intth1}.
\begin{theorem}\hspace{-0,15cm}{\bf{.}}\label{bslem3}
The system consisting of the functions
$\eurm{H}_{0}$, $\eurm{H}_{n}$, $\eurm{M}_{n} \subset L^1 (\Bb{R}) \cap C^{\infty} (\Bb{R})$, $n\in \Bb{Z}_{\neq 0}$,  is biorthogonal to the hyperbolic trigonometric system $1$, $\exp ({\rm{i}}\pi n x)$, $\exp ({\rm{i}}\pi n/ x)$ $ \subset L^{\infty} (\Bb{R})$, $n \in \Bb{Z}_{\neq 0}$.
\end{theorem}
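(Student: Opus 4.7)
The plan is to verify directly the biorthogonality relations \eqref{f2bs} and \eqref{f3bs}, which is exactly what Theorem~\ref{bslem3} asserts in different language. The essential tools are already packaged in Lemma~\ref{bslem1} and Lemma~\ref{bslem2}: the periodization identities \textbf{(4)} and \textbf{(3)} respectively, together with the $\mathrm{O}(x^{-2})$ decay in \textbf{(2)} of each lemma, which makes the integrals absolutely convergent and justifies exchanging summation with integration.

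First I would treat $\int_{\Bb{R}} {\rm{e}}^{{-\imag \pi m x}} \eurm{H}_n(x)\,{\rm{d}}x$ for $n\in \Bb{Z}_{\geqslant 0}$. Decomposing $\Bb{R}=\sqcup_{k\in\Bb{Z}}[-1+2k,1+2k]$ and using the $2$-periodicity of ${\rm{e}}^{-\imag \pi m x}$ together with the uniform convergence of $\sum_{k}|\eurm{H}_n(x+2k)|$ on $[-1,1]$, the integral becomes
$$
\int_{-1}^{1}{\rm{e}}^{-\imag \pi m x}\Bigl(\sum_{k\in\Bb{Z}}\eurm{H}_n(x+2k)\Bigr)\,{\rm{d}}x.
$$
By Lemma~\ref{bslem2}(3) the inner sum equals $1/2$ when $n=0$, and by Lemma~\ref{bslem1}(4) it equals ${\rm{e}}^{\imag \pi n x}/2$ when $n\geqslant 1$; in either case the remaining integral on $[-1,1]$ yields $\delta_{m,n}$ by the standard orthogonality of the exponentials $\{{\rm{e}}^{\imag \pi k x}\}_{k\in\Bb{Z}}$ on a period. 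The second identity in \eqref{f2bs} for $\eurm{H}_0$ is immediate from Lemma~\ref{bslem2}(1), and for $\eurm{H}_n$ with $n\geqslant 1$ it will come from Lemma~\ref{bslem1}(4)'s companion statement $\sum_k \eurm{M}_n(x+2k)=0$ after the change of variables indicated below.

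The negative-index case is reduced to the positive-index case by the symmetry $\eurm{H}_{-n}(x)=\eurm{H}_n(-x)$ from \eqref{f2intth1}, which turns $\int_\Bb{R}{\rm{e}}^{-\imag \pi m x}\eurm{H}_{-n}(x)\,{\rm{d}}x$ into $\int_\Bb{R}{\rm{e}}^{\imag \pi m x}\eurm{H}_n(x)\,{\rm{d}}x=\delta_{m,-n}$. The four relations involving $\eurm{M}_n$ and the integrals with kernel ${\rm{e}}^{\imag \pi m / x}$ are then obtained by the substitution $t=-1/x$, which by \eqref{f3intth1} and Lemma~\ref{bslem2}(4) converts $\eurm{H}_n(t)\,{\rm{d}}t$ into $\eurm{M}_n(x)\,{\rm{d}}x$ (respectively $\eurm{H}_0(t)\,{\rm{d}}t$ into $\eurm{H}_0(x)\,{\rm{d}}x$) and simultaneously interchanges ${\rm{e}}^{-\imag \pi m t}$ with ${\rm{e}}^{\imag \pi m / x}$. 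Thus every one of the four families in \eqref{f2bs}--\eqref{f3bs} follows from the same periodization computation carried out once.

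The only step requiring any genuine care is the justification of the interchange of sum and integral: one needs the bound $|\eurm{H}_n(x+2k)|\lesssim (1+(x+2k)^2)^{-1}$ (and the analogous bound for $\eurm{M}_n$) furnished by Lemma~\ref{bslem1}(2) and Lemma~\ref{bslem2}(2), which dominates the series by an integrable tail $\sum_{|k|\geqslant 1}k^{-2}$ uniformly for $x\in[-1,1]$. Once this is in place, the argument is purely mechanical, with no analytic obstacle beyond elementary Fourier orthogonality on the period, because the substantive content of the theorem has already been absorbed into Lemmas~\ref{bslem1} and~\ref{bslem2}.
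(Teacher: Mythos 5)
Your proposal is correct and follows essentially the same route as the paper: periodize the integral over $\Bb{R}$ onto $[-1,1]$ using the summation identities of Lemma~\ref{bslem1}(4) and Lemma~\ref{bslem2}(3) (justified by the decay estimates in item (2) of each lemma), apply ordinary Fourier orthogonality on the period, and transfer the remaining relations via $\eurm{H}_{-n}(x)=\eurm{H}_{n}(-x)$ and the substitution $t=-1/x$ through \eqref{f3intth1} and Lemma~\ref{bslem2}(4). One small slip: the vanishing of $\int_{\Bb{R}}{\rm{e}}^{\imag\pi m/x}\eurm{H}_{0}(x)\,{\rm{d}}x$ for $m\neq 0$ rests on the symmetry of Lemma~\ref{bslem2}(4), not on item (1), but this is exactly the symmetry you invoke in your substitution step, so the argument is intact.
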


\begin{rem}\hspace{-0,15cm}{\bf{.}}\label{rem1}{\rm{
Let  $f \in L^{1}([0,2], {\rm{d}} x)$  be $2$-periodic on $\Bb{R}$. Then clearly $f \in L^1 (\Bb{R}, (1+x^2)^{-1}{\rm{d}} x)$ and,
in view of   Lemma~\ref{bslem1}(4) and Lemma~\ref{bslem2}(3),
\begin{align*}
    &  \eurm{h}_{n}(f)=  \int\limits_{\Bb{R}}  f (t) \eurm{H}_{-n} (t) {\rm{d}} t =
 \int\limits_{0}^{2}  f (t) \sum_{{\fo{k \in \Bb{Z}}}}   \eurm{H}_{-n} (t + 2k) {\rm{d}} t =
 (1/2) \int\limits_{0}^{2}  f (t)\,{\rm{e}}^{{\fo{ -{\rm{i}} \pi n t}}} {\rm{d}} t,  \\    &
 \eurm{m}_{m}(f)=  \int\limits_{\Bb{R}}  f (t) \eurm{M}_{-m} (t) {\rm{d}} t =
 \int\limits_{0}^{2}  f (t) \sum_{{\fo{k \in \Bb{Z}}}}   \eurm{M}_{-m} (t + 2k) {\rm{d}} t = 0, \  n \in \Bb{Z}\,,\  m \in \Bb{Z}_{\neq 0}\,.
\end{align*}

\noindent It follows that the hyperbolic  Fourier series \eqref{f03int} of the $2$-periodic function $f$
is the usual Fourier series
$\sum_{n\in \Bb{Z}} \eurm{h}_{n}(f) \exp ({\rm{i}} \pi n x)$ of $f$ on the interval $[0,2]$.  Next, assume that
$g (-1/x)= f (x)$, $x\in\Bb{R}_{\neq 0}$. Then, by \eqref{f3intth1} and the above identities for $f$,
\begin{align*}
    & \eurm{m}_{n}(g)=  \int\nolimits_{\Bb{R}}  g (t) \eurm{M}_{-n} (t) {\rm{d}} t =
    \int\nolimits_{\Bb{R}}  f (t) \eurm{H}_{-n} (t) {\rm{d}} t =  (1/2) \int\nolimits_{0}^{2} g (-1/t)\,{\rm{e}}^{{\fo{ -{\rm{i}} \pi n t}}} {\rm{d}} t,
    \\    &   \eurm{h}_{n}(g)=  \int\nolimits_{\Bb{R}}  g (t) \eurm{H}_{-n} (t) {\rm{d}} t =
    \int\nolimits_{\Bb{R}}  f (t) \eurm{M}_{-n} (t) {\rm{}}d t =0, \    \ \ n \in \Bb{Z}_{\neq 0}\,,
\end{align*}

\noindent while, as follows from  Lemma~\ref{bslem2}(4) and the above equalities,
\begin{align*}
    &  \eurm{h}_{0}(g)=  \int\nolimits_{\Bb{R}}  g (t) \eurm{H}_{0} (t) {\rm{d}} t =
    \int\nolimits_{\Bb{R}}  f (t) \eurm{H}_{0} (t) {\rm{d}} t = (1/2) \int\nolimits_{0}^{2} g (-1/t) {\rm{d}} t\,.
\end{align*}

\noindent It follows that the hyperbolic  Fourier series \eqref{f03int} of $g$ equals the usual Fourier series
$\eurm{h}_{0}(g) + \sum_{n\in \Bb{Z}_{\neq 0}} \eurm{m}_{n}(g) \exp (-i \pi n/ x)$ of $g (-1/x)$ on the interval $[0,2]$ expressed in the variable $-1/x$.

}}\end{rem}

\subsection[\hspace{-0,31cm}. \hspace{0,11cm}The generating function.]{\hspace{-0,11cm}{\bf{.}} The generating function for the biorthogonal sequence.}\label{genfunbs}
In the notation of \eqref{f2bvslem1},
it follows from \eqref{f22int} and \eqref{f9intthA} that
\begin{align}
     & \hspace{-0,1cm} i \pi \sum\limits_{n=1}^{\infty} \eurm{R}_{n}^{{\tn{\triangle}}} (z)\,
{\rm{e}}^{\imag  \pi n y} \!=\! \frac{
\lambda^{\,\prime}\left(y\right)  }{\lambda(z) \! -\! \lambda\left(y\right)} \ , \quad
\im\, y \!>\! 1\! \geqslant  \! \im\,  z ,  \
 z \!\in\! \gamma (-1,1) ,\hspace{-0,1cm}
\label{f1genfunbs}\end{align}

\noindent where for arbitrary $a \in (1, \im\, y)$, in accordance with \eqref{f17aint}, we have
\begin{align}\label{f17zint}\hspace{-0,2cm}
    &    \eurm{R}_{n}^{{\tn{\triangle}}} (z)\! =\!  \frac{1}{2 \pi i} \int\nolimits_{{\fo{ -1 \! +  \!  \imag   a }} }^{{\fo{ 1 \! + \!   \imag    a }} }   \frac{
   \lambda^{\,\prime}\left(\zeta\right)e^{{\fo{- n \pi i  \zeta }}}  }{\lambda(z)  - \lambda\left(\zeta\right)}\, {\rm{d }}\zeta\,, \quad \im\, y \!>\! a \!>\! 1 \!\geqslant\! \im\, z , \
   z \!\in\! \gamma (-1,1)  .\hspace{-0,1cm}
\end{align}

\noindent In view of Corollary~\ref{intcorol1}, we have $|\lambda^{\,\prime}(\zeta)|\leqslant 9$ and
$  4 |\lambda(z)-\lambda(\zeta)|\geqslant |\lambda(z)|\, |1-2\lambda(i a)|$ for all $\zeta \in [-1+ia, 1+ia]$ and
$z \!\in\! \gamma (-1,1)$. Hence we obtain
\begin{align*}
    &  \left|\sum\nolimits_{n=1}^{\infty} \eurm{R}_{n}^{{\tn{\triangle}}} (z)\, {\rm{e}}^{\imag  \pi n y} \right| \!\leqslant\!
 \sum\nolimits_{n=1}^{\infty} \dfrac{12\, {\rm{e}}^{ \pi n (a-\im\, y)} }{  |1\!-\!2\lambda(i a)|\, |\lambda(z)|}=
 \dfrac{12\, \left( {\rm{e}}^{ \pi ((\im\, y) - a)} -1\right)^{-1} }{  |1\!-\!2\lambda(i a)|\,  |\lambda(z)|}\,,
\end{align*}

\noindent for every $z \!\in\! \gamma (-1,1)$ and  $\im\, y \!>\! a \!>\! 1$. Together with \eqref{f3zint}, this inequality allows us to apply the Lebesgue dominated convergence theorem (see \cite[p.\! 161]{nat}) in order to derive from \eqref{f1genfunbs} and \eqref{f28int} that
\begin{align}\label{f3genfunbs}
    & \hspace{-0,2cm}  \sum\limits_{n \geqslant 1}
 {\rm{i}}\pi \, n \,\eurm{H}_{n} (x)\, {\rm{e}}^{{\fo{{\rm{i}}\pi  n  y}} }  =  \frac{1}{4 \pi^{2}} \hspace{-0,2cm}    \int\limits_{ \gamma (-1,1)} \hspace{-0,2cm}\dfrac{\lambda^{\,\prime}(y)}{\lambda(y)- \lambda(z)}\frac{{\rm{d}} z}{(x+z)^{2}}  \, , \quad \  \ \ \im\, y \!>\! 1\,,\hspace{-0,1cm}
\\    &  \hspace{-0,2cm}
\sum\limits_{n \geqslant 1}
 {\rm{i}}\pi \, n \,\eurm{M}_{n} (x)\, {\rm{e}}^{{\fo{{\rm{i}}\pi  n  y}} }  =  \frac{1}{4 \pi^{2}}  \hspace{-0,2cm}   \int\limits_{ \gamma (-1, 1)}\hspace{-0,2cm} \dfrac{\lambda^{\,\prime}(y)}{1\!-\!\lambda(y)\!-\! \lambda(z)}\frac{{\rm{d }}z}{(x+z)^{2}}  \, , \  \ \  \im\, y \!>\!  1\,,\hspace{-0,1cm}
\label{f4genfunbs}
\end{align}

\noindent where
both series  converge absolutely and uniformly over all $x\in \Bb{R}$.



\vspace{-0,15cm}
\section[\hspace{-0,30cm}. \hspace{0,11cm}Partitions  of the upper half-plane]{\hspace{-0,095cm}{\bf{.}} Partitions  of the upper half-plane}
\label{bsden}$\phantom{a}$\vspace{-0,7cm}

In the Poincar\'{e} half-plane model of hyperbolic space   the (generalized) semicircles $\gamma (a,b)$, $\gamma (a,\infty)$, $a,b \in \Bb{R}$,  $a\neq b$, (see \eqref{f1schw}) are the  hyperbolic straight lines connecting ideal points $a$ and $b$, or $a$ and $\infty$, respectively.
 Moreover, given an arbitrary collection of four (three) points $\!-\!\infty\! <\! a\! <\! b\!<\! c\!<\! d\!<\!+\!\infty$ ($-\!\infty\! < \!a \!<\! b\!<\! c\! <\!+\!\infty$)
a bounded set $A \subset \Bb{H}$ is called an {\it{ideal hyperbolic quadrilateral}}
({\it{triangle}}),
 with  four (three) {\it{vertices}} $a,b,c,d$ ($a,b,c$) if $\Bb{H} \cap \partial A$ is equal to the union of the
 three (two) {\it{lower arches}} $\gamma (a,b), \gamma (b,c),
\gamma (c,d)$  ($\gamma (a,b), \gamma (b,c)$) and the {\it{roof}} (upper arch)  $\gamma (a,d)$ \hspace{0,005cm}($\gamma (a,c)$).
 We will omit the word "ideal" in the sequel,
and note that clearly, there exists a unique  bounded closed set satisfying this definition for any given collection of  vertices lying on $\Bb{R}$.

Let $\Gamma$ be the  group of M\"obius transformations (called modular)\\[0,1cm] $\phantom{a}$\hspace{1cm}$\phi_{M}(z) :=(az+b)(cz+d)^{-1}$, $M=(\begin{smallmatrix} a & b \\ c & d \end{smallmatrix})$,  $ \ a, b, c, d \in\Bb{Z}$, $ad - bc=1$,\\[0,1cm]\noindent  on $\Bb{H}$ with the superposition as a group operation  (see \cite[p.\! 11]{cha}). The theta subgroup $\Gamma_{\vartheta}$ of $\Gamma$ is defined as a collection of all $\phi_{M}\!\in\!\Gamma$ with $M$, satisfying   either
$M\equiv (\begin{smallmatrix} 1 & 0 \\ 0 & 1 \end{smallmatrix}) ({\rm{mod}}\, 2)$ or
$M\equiv (\begin{smallmatrix} 0 & 1 \\ 1 & 0 \end{smallmatrix}) ({\rm{mod}}\, 2)$, while the subgroup  $\Gamma (2)$ of $\Gamma_{\vartheta}$ as all $\phi_{M}\!\in\!\Gamma_{\vartheta}$ with $M\equiv (\begin{smallmatrix} 1 & 0 \\ 0 & 1 \end{smallmatrix}) ({\rm{mod}}\, 2)$.

\begin{defsectsix}\hspace{-0,2cm}{\bf{.}} {\rm{For $(a,b) \in \{ (1,\infty),  (-1,1)\}$, we say that the set
 $\{\phi (\gamma (a,b))\}_{\phi \in \Gamma_{\vartheta}}$ {\rm{(}}called orbit of $\gamma (a,b)$ with respect to $\Gamma_{\vartheta}${\rm{)}}
generates a partition $\sqcup_{k\geqslant 1} A_{k}=\Bb{H}$ of $\Bb{H}$  if $\cup_{k\geqslant 1}\partial A_{k}=\cup_{\phi \in \Gamma_{\vartheta}}\phi (\gamma (a,b))$ and ${\rm{int}}A_{k} \neq \emptyset$ for all
$k\in \Bb{N}$.}}
\end{defsectsix}

 In this section,  we consider two partitions of $\Bb{H}$ generated by  $\{\phi (\gamma (1,\infty))\}_{\phi \in \Gamma_{\vartheta}}$ and  $\{\phi (\gamma (-1,1))\}_{\phi \in \Gamma_{\vartheta}}$.
 Both partitions arise from the need to analyze  integral operators with   kernel of the form\\
$\phantom{a}$ \hspace{3,5cm}
$G_{\lambda}(z, \zeta):=\lambda^{\,\prime}(z)/(\lambda(z)-\lambda(\zeta))$
\\
\noindent and integration over $\zeta \in \gamma(-1,1)$. An operator of this type (but with integration over $z$) was first studied  by Chibrikova in 1956 and later reproduced in Gakhov's monograph
\cite[p.\! 513, (52.3)]{gah} from 1966 when considering the Riemann boundary value problem in a fundamental domain with respect to a Fuchsian group of linear fractional transformations.  The importance of studying such integral operators was highlighted in the
recent work of the fourth  and fifth authors \cite{rad}, where the following crucial observation was made.
Given, e.g., a bounded function $f\in {\rm{Hol}} (\Bb{H}) $,
if we integrate of $ f(\zeta)G_{\lambda}(z, \zeta)$ with respect to $\zeta$ along the semicircle
$\gamma(-1,1)$, we obtain an analytic function in the domain
$\Bb{H}\setminus (2 \Bb{Z} + \overline{\Bb{D}})$ which can be analytically extended to all of
$\Bb{H}$, while preserving relevant growth control.
 The main goal of introducing the above mentioned  partitions of the upper half-plane is to obtain explicit formulas for such an analytic extension,  see Section~\ref{contgen} below. The results of Sections~\ref{bsden} and~\ref{contgen} are expressed in the language of the even-integer continued fractions which is the suggestion of the first author. This elaborates on the approach of the second and the third authors \cite{hed} based on the
 connection  between   even-integer continued fractions and the even Gauss map, see, e.g., \cite{pio} and  \cite{lop}.

 \subsection[\hspace{-0,31cm}. \hspace{0,11cm}Even Gauss map in the upper half-plane.]{\hspace{-0,11cm}{\bf{.}} Even Gauss map in the upper half-plane.}\label{bsdentppre} \
For arbitrary $N \in \Bb{N}$ and  $n_{1}, n_{2}, ..., n_{N}\in \Bb{Z}_{\neq 0}\vphantom{A^{A^{A}}}$ let $\phi_{{\fo{\hspace{0,005cm}n_{N}, ..., n_{1}}}}$ be an even-integer continued fraction of the form
(see \cite{sho} and \eqref{f6zpf8contgen})

\begin{align}\label{f1bsden}
    &   \phi_{{\fo{\hspace{0,005cm}n_{N}, ..., n_{1}}}}(z):=
  {\fo{\dfrac{1}{2n_{N}\! -\! \dfrac{1}{ \begin{subarray}{l}
\begin{displaystyle} \vphantom{A^{A}}
2n_{N-1}\!- \!\end{displaystyle}\\[-0.4cm]
\hphantom{2n_{k-1} 11 -} \ddots\\[-0.4cm]
\hphantom{2n_{k-1} 11 - \ddots} \begin{displaystyle} - \!\end{displaystyle} \dfrac{1}{2 n_{2} \!- \! \dfrac{1}{
\vphantom{A^{A}}2 n_{ 1} -z }}\end{subarray} }}}}
  \ ,  \ z \in [-1,1]\cup \left(\Bb{C}\setminus\Bb{R}\right) \,.
  \end{align}

\vspace{0,2cm}
\noindent The theta subgroup $\Gamma_{\vartheta}$ is generated by the M\"obius transformations $z\mapsto z+2$ and $z\mapsto-1/z$ (see \cite[p.\! 112]{cha}). If we write, for $z\in \Bb{H}$, $\Bb{Z}_{\neq 0}^{k} := \left(\Bb{Z}_{\neq 0}\right)^{k}$, $k \in \Bb{N}$,
\begin{align}\label{f1zbsden}\hspace{-0,2cm}
\eufm{n}\!:=\!(n_{N}, ..., n_{1})\in \Bb{Z}_{\neq 0}^{\hspace{0,02cm}\Bb{N}_{\hspace{-0,02cm}\eurm{f}}}\!:=\!\sqcup_{k\geqslant 1} \Bb{Z}_{\neq 0}^{k} , \ \phi_{\eufm{n}}\!:= \!\phi_{{\fo{n_{N}, ..., n_{1}}}}  \, , \ \phi_{0} (z):= z ,
\end{align}

\noindent
 we can represent the theta subgroup as{\hyperlink{r17}{${}^{\ref*{case17}}$}}\hypertarget{br17}{}
\begin{align}\label{f4contgen}
    & \hspace{-0,3cm} \left\{ \, \phi (z)\,\right\}_{{\fo{\phi\! \in\! \Gamma_{\vartheta} }}} \! =\!
\left\{  \,  \phi_{\eufm{n}}(z), \  -1/\phi_{\eufm{n}}(z) , \  \phi_{\eufm{n}}(-1/z)  , \  -1/\phi_{\eufm{n}}(-1/z)\,\right\}_{\,{\fo{\eufm{n} \!\in \!\Bb{Z}_{\neq 0}^{\hspace{0,02cm}\Bb{N}_{\hspace{-0,02cm}\eurm{f}}}\cup\!\{0\} }}}\,.\hspace{-0,1cm}
\end{align}

\noindent Here, we see that
\begin{align}\label{f5contgen}
    & \left\{\begin{array}{rlll}
      -1/\phi_{{\fo{\hspace{0,005cm}n_{1}}}}(z)  &    =   &   - 2 n_{1} + z \,,   &\ \mbox{if} \ N=1 \,,    \\
       -1/\phi_{{\fo{\hspace{0,005cm}n_{N}, ..., n_{1}}}}(z)  &    =  &    - 2 n_{N} +
    \phi_{{\fo{\hspace{0,005cm}n_{N-1}, ..., n_{1}}}}(z)\,, &\ \mbox{if}\ N\geqslant 2\,,
      \end{array}\right.
\end{align}

\noindent for every $z \in [-1,1]\cup \left(\Bb{C}\setminus\Bb{R}\right)$,  $N \in \Bb{N}$ and $(n_{N}, ..., n_{1})\in \Bb{Z}_{\neq 0}^{N}$. Hence, if $A \subset \Bb{H}$
is invariant under the inversion $z\mapsto-1/z$, i.e., if $A = - 1/A:= \{-1/z \, | \, z \in A \}$, then its orbit with respect to  $\Gamma_{\vartheta}$
can be written in the form
\begin{align}\label{f6contgen}
    & \hspace{-0,3cm}  \left\{ \, \phi (A)\,\right\}_{{\fo{\phi\! \in\! \Gamma_{\vartheta} }}} \! =\!
     \left\{ \, 2n_{0} + \phi_{\eufm{n}} (A)\,\right\}_{\,
           {\fo{n_{0} }}  \, {\fo{\in}}  \,    {\fo{\Bb{Z}}}  \, , \    {\fo{\eufm{n} }}  \,{\fo{\in}}  \,   {\fo{ \Bb{Z}_{\neq 0}^{\hspace{0,02cm}\Bb{N}_{\hspace{-0,02cm}\eurm{f}}}\cup\!\{0\} }} } \ , \quad  A \subset \Bb{H} \, , \  A = - 1/A\,.\hspace{-0,1cm}
\end{align}

\noindent This suggests the introduction of the following subset of  $\Gamma_{\vartheta}$,
\begin{align}\label{f7contgen}
    &   \Gamma_{\vartheta}^{\hspace{0,015cm}{\tn{||}}}  := \left\{\,  \phi_{\eufm{n}}
      \, \right\}_{{\fo{
    \eufm{n} \!\in \!\Bb{Z}_{\neq 0}^{\hspace{0,02cm}\Bb{N}_{\hspace{-0,02cm}\eurm{f}}} }}} \,,\quad   (a_{\phi}z+b_{\phi})(c_{\phi}z+d_{\phi})^{-1}:=  \phi(z) \,,\quad  \phi \in \!\Gamma_{\vartheta}^{\hspace{0,015cm}{\tn{||}}}\,,
\end{align}

\noindent $a_{\phi}, b_{\phi}, c_{\phi}, d_{\phi} \!\in\!\Bb{Z} $,  with the basic properties  (see Sections~\ref{pf8contgen}  and~\ref{not})
\begin{align}\label{f8contgen}\hspace{-0,2cm}
    & |a_{\phi}|\!<\! |b_{\phi}|\!<\!|d_{\phi}|,\ |a_{\phi}|\!<\! |c_{\phi}|\!<\!|d_{\phi}|,\ \
     \phi \left(\Bb{H}_{|\re|\leqslant 1}\right)\! \subset\! \Bb{H}_{|\re|<1}\! \setminus\!\fet \, , \ \  \phi \!\in \!\Gamma_{\vartheta}^{\hspace{0,015cm}{\tn{||}}}  \,.
\end{align}

\vspace{0,2cm}
Let $\{x\}$ and $\lfloor{x}\rfloor$ denote the usual fractional and integer parts of $x \in \Bb{R}$, respectively, and put (see \cite[pp.\! 599, 600]{bh2})
\begin{align}\label{f0bsdentp}
    &  \mathcal{F}^{\,{\tn{||}}}_{{\tn{\square}}}:= \mathcal{F}_{{\tn{\square}}}\sqcup
(- 1 +  \mathrm{i} \hspace{0,015cm}   \Bb{R}_{>0})\sqcup( 1 +  \mathrm{i}
\hspace{0,015cm}   \Bb{R}_{>0}) \, ,  &     &    \mathcal{F}^{\infty}_{{\tn{\square}}}:=
\cup_{\,{\fo{m \in \Bb{Z}}}} \left(2m +
\mathcal{F}^{\,{\tn{||}}}_{{\tn{\square}}}\right).
\end{align}

\noindent  For arbitrary $x\! \in\! \Bb{R}$ we define its {\it{even integer part}}
$\rceil{x}\lceil_{\! 2}\,\in\! 2 \Bb{Z}$   as the average of the endpoints of the interval that the point $x$ belongs to
in the following interval partition of the real axis,\vspace{-0,1cm}
\begin{align}\label{f1defegm}
   \Bb{R} = (-1,1) \  \sqcup \   \bigsqcup\limits_{{\fo{ \ m \geqslant 1}}} \ \bigg(\Big(-2m-1, -2m+1 \Big] \   \sqcup  \ \Big[2m-1, 2m+1 \Big)\bigg),
   \end{align}

\vspace{-0,1cm}
\noindent
and we define its {\it{even fractional part}}
$\{x\}^{{\tn{\rceil\hspace{-0,015cm}\lceil}}}_{2} \in [-1,1]$ to be $\{x\}^{{\tn{\rceil\hspace{-0,015cm}\lceil}}}_{2} := x - \rceil{x}\lceil_{\!2}$. As such, they have the properties $\{x\}^{{\tn{\rceil\hspace{-0,015cm}\lceil}}}_{2}\! =\! (-1+ 2 \, \{ (1+|x|)/ 2 \})\,  {\rm{sign}} (x)$,
 $\{-x\}^{{\tn{\rceil\hspace{-0,015cm}\lceil}}}_{2} \!=\! - \{x\}^{{\tn{\rceil\hspace{-0,015cm}\lceil}}}_{2}$,
$ \rceil{x}\lceil_{\!2}=\!  2 \, \lfloor{(1\!+\!|x|)/ 2}\rfloor\,  {\rm{sign}} (x)$, $\rceil{-x}\lceil_{\!2}=\!-\rceil{x}\lceil_{\!2}$ for each $x\! \in\! \Bb{R}$, and
\begin{align}\label{f1ybsden}
    &\hspace{-0,2cm} \rceil{-2 n +  {\rm{sign}} (n)}\lceil_{\!2}\,=-2 n \,, \quad
 \{\hspace{0.07cm} -2 n +  {\rm{sign}} (n)\hspace{0.07cm}\}^{{\tn{\rceil\hspace{-0,015cm}\lceil}}}_{2}\! = \!{\rm{sign}} (n), \quad n  \in \Bb{Z} \,.\hspace{-0,1cm}
\end{align}

\begin{defsectsix}\hspace{-0,17cm}{\bf{.}}\label{contgendef1}
We define the complex analogue   $\Bb{G}_{2} : \Bb{H}_{|\re|\leqslant 1} \to \Bb{H}_{|\re|\leqslant 1}$ of the even Gauss map
$G^{{\tn{\rceil\hspace{-0,015cm}\lceil}}}_{2} : [-1,1] \to [-1,1]$,
$G^{{\tn{\rceil\hspace{-0,015cm}\lceil}}}_{2} (0) := 0$,
$G^{{\tn{\rceil\hspace{-0,015cm}\lceil}}}_{2} (x) := \{-1/x\}^{{\tn{\rceil\hspace{-0,015cm}\lceil}}}_{2}$,
$x\in [-1,1]\setminus\{0\}$, associated with the  even fractional part $\{x\}^{{\tn{\rceil\hspace{-0,015cm}\lceil}}}_{2}$,
 as follows
\begin{align}\label{f2contgendef1}
&
 \Bb{G}_{2} (z) :=
         \     \left\{\re \left(- \dfrac{1}{z}\right)\right\}^{{\tn{\rceil\hspace{-0,015cm}\lceil}}}_{2} + {\rm{i}} \,\im \left(- \dfrac{1}{z}\right)\,,
\ \  \ \ z \in \Bb{H}_{|\re|\leqslant 1} \, .
\end{align}

\noindent For $ z \in \Bb{H}_{|\re|\leqslant 1}$, we have that
\begin{align}\label{f3contgendef1}
    &  \Bb{G}_{2} (z) =  - \dfrac{1}{z} -
\left\rceil{\re\left(- \dfrac{1}{z}\right)}\right\lceil_{2}
\in
\Bb{H}_{|\re|\leqslant 1}
\ , \qquad \left\rceil{\re\left(- \dfrac{1}{z}\right)}\right\lceil_{2}
 \in
 2\, \Bb{Z}_{\neq 0} \ .
\end{align}

\end{defsectsix}

 We note that
\begin{align}\label{f9bsdentp}
     & \vspace{-0,3cm}
\Bb{G}_{2} (z) = -1/z  \ , \ z \in \Bb{H}_{|\re|\leqslant 1}
\ \ \Longleftrightarrow\ \  z \in \Bb{H}_{|\re|\leqslant 1} \cap \left(-1 \big/\Bb{H}_{|\re|< 1}\right) = \mathcal{F}^{\,{\tn{||}}}_{{\tn{\square}}},
\vspace{-0,1cm}
\end{align}

\noindent
and observe that it follows from \eqref{f3contgendef1} and \eqref{f1defegm}
that{\hyperlink{r19}{${}^{\ref*{case19}}$}}\hypertarget{br19}{}
\begin{align}\label{f1contgendef1}
    &
\begin{array}{lrcll}{\rm{(a)}} \   &
\Bb{G}_{2} (\gamma(\sigma,0)) & = & \gamma(\sigma,\infty)\,,  & \quad \sigma\in \{1,-1\}\,,\\[0,2cm]
{\rm{(b)}} \   &
\Bb{G}_{2} (\gamma(\sigma,\infty)) & = & \gamma(-\sigma,0)\,,  & \quad \sigma\in \{1,-1\}\,,\\[0,1cm]
{\rm{(c)}} \   &
  \Bb{G}_{2} \Big( \phi_{{\fo{n}}} \big(\hspace{0.07cm}\gamma(\sigma_{n}, \infty)\hspace{0.07cm}\big)\Big) & = &
\gamma(\sigma_{n}, \infty) \, ,  & \quad  \sigma_{n}:={\rm{sign}} (n) \, , \  n  \in \Bb{Z}_{\neq 0}  \,.
\end{array}
\end{align}

\noindent
Estimating from above the modulus of $z$  lying on the union of semicircles $\gamma (1,0)\cup \gamma (-1,0)$
we obtain from the identity  $\im (-1/z) = (\im\, z)/|z|^{2} $, $z\in \Bb{H}$, that
\begin{align}\label{f17contgen}
    & \hspace{-0,2cm} \im\,  \Bb{G}_{2} \left(z\right) \geqslant \dfrac{\im\, z}{(1/2) + \sqrt{(1/4) - (\im\, z)^{2}}}  \ , \quad  z \in \Bb{H}_{|\re|\leqslant 1} \setminus   \mathcal{F}^{\,{\tn{||}}}_{{\tn{\square}}} \subset
 \Bb{H}_{\im\leqslant 1/2}
\, .\hspace{-0,1cm}
\end{align}

\noindent
For arbitrary $N \in \Bb{N}$ and  $n_{1}, n_{2}, ..., n_{N}\in \Bb{Z}_{\neq 0}$ it follows from  \eqref{f5contgen} and \eqref{f8contgen} that
\begin{align}\label{f10bsdentp}
    & \left\{\begin{array}{rlll}
     \re \left(-1/\phi_{{\fo{\hspace{0,005cm}n_{1}}}}(z)\right)  &    \in   &   - 2 n_{1} + (-1,1) \,,   &\ \mbox{if} \ N=1 \,, \
z \! \in \! \Bb{H}_{|\re|< 1}  \,,    \\
    \re  \left( -1/\phi_{{\fo{\hspace{0,005cm}n_{N}, ..., n_{1}}}}(z) \right) &    \in  &    - 2 n_{N} +
    (-1,1)\,, &\ \mbox{if}\ N\geqslant 2\,, \ z \! \in \! \Bb{H}_{|\re|\leqslant 1}  \,,
      \end{array}\right.
\end{align}

\noindent and hence we see from \eqref{f3contgendef1}  and \eqref{f5contgen}
that{\hyperlink{r20}{${}^{\ref*{case20}}$}}\hypertarget{br20}{}
\begin{align}\label{f15contgen}
    & \hspace{-0,3cm}  \begin{array}{lrlllc}{\rm{(a)}} \   &
 \Bb{G}^{N}_{2} \big( \phi_{{\fo{n_{N}, ..., n_{1}}}} (z)\big)  &\! = \!  &
z  \,,   &\  \mbox{if} \ N \! \geqslant\!  1\,,   &\  \ z \! \in \! \Bb{H}_{|\re|< 1}  \,;\\[0,1cm]
{\rm{(b)}} \   &
 \Bb{G}^{k}_{2} \big( \phi_{{\fo{n_{N}, ..., n_{1}}}} (z)\big)  & \! = \!  & \phi_{{\fo{\hspace{0,005cm}n_{N-k}, ..., n_{1}}}}(z)\,, &\  \mbox{if} \ 1 \! \leqslant\!  k \! \leqslant\!  N\! -\! 1 \,,  &\  \  z \! \in \! \Bb{H}_{|\re|\leqslant 1} \,.
      \end{array}
\hspace{-0,1cm}
\end{align}

\subsection[\hspace{-0,31cm}. \hspace{0,11cm}The Schwarz partition of the upper half-plane.]{\hspace{-0,11cm}{\bf{.}} The Schwarz partition of the upper half-plane.}\label{bsdentp} \ We introduce the notation
\begin{align*} & \hspace{-0,24cm}
 \fet^{\hspace{0,01cm}\phi}\!:=\!\fet^{\hspace{0,01cm}\eufm{n}}\!:= \! \fet^{\hspace{0,05cm}n_{N}, ..., n_{1}} \!:= \! \phi_{{\fo{n_{N}, ..., n_{1}}}}\big( \fet\big), \  \  \eufm{n}\!=\!(n_{N}, ..., n_{1})\!\in\! \Bb{Z}_{\neq 0}^{N}, \ \  N \!\in\! \Bb{N}\,,
\end{align*}

\noindent where   $\phi := \phi_{\eufm{n}}$ is as in \eqref{f1bsden}.
We associate with  an arbitrary  $N$-tuple $\eufm{n}\!=\!(n_{N}, ..., n_{1})\!\in\!
\Bb{Z}_{\neq 0}^{N}$, $N \!\in\! \Bb{N}$,  and, correspondingly,  with each   $\phi \!=\! \phi_{\eufm{n}}\!\in \!\Gamma_{\vartheta}^{\hspace{0,015cm}{\tn{||}}} $,  its~"sign"  \begin{align}\label{f9contgen}
    & \sigma_{\phi}:=  \sigma_{\eufm{n}}:=\sigma_{_{{\fo{\hspace{0,005cm}n_{N}, ..., n_{1}}}}}:=\sigma_{n_{1}} := {\rm{sign}} (n_{1}) \in \{+1, -1\}  \ , \quad \phi = \phi_{\eufm{n}} \in \Gamma_{\vartheta}^{\hspace{0,015cm}{\tn{||}}} \,.
\end{align}

\noindent Depending on the sign of the transformation $ \phi = \phi_{\eufm{n}}\in \Gamma_{\vartheta}^{\hspace{0,015cm}{\tn{||}}}$, $\eufm{n}\!=\!(n_{N}, ..., n_{1}) \in  \Bb{Z}_{\neq 0}^{\hspace{0,02cm}\Bb{N}_{\hspace{-0,02cm}\eurm{f}}}$, which we apply to the
Schwarz quadrilateral $\fet $,
we add to  $\fet $ one of the rays $\pm 1 +  \imag   \Bb{R}_{>0}$,
\begin{align}\label{f10contgen}
    &    \fet^{\,{\tn{|}}\sigma_{\phi}}:=\fet^{\,{\tn{|}}\sigma_{\eufm{n}}}
    :=
    \fet^{\,{\tn{|}}\sigma_{n_{1}}}:=
    \fet \sqcup \left( \sigma_{n_{1}} \!+\! i \Bb{R}_{>0} \right) \, ,
\end{align}

\noindent
and denote the resulting image as follows
  \begin{align}
  \fetr^{\hspace{0,01cm}\phi}:=\fetr^{\hspace{0,01cm}\eufm{n}}:= \fetr^{\hspace{0,05cm}n_{N}, ..., n_{1}} :=  \phi_{{\fo{n_{N}, ..., n_{1}}}}\big( \fet^{\,{\tn{|}}\sigma_{n_{1}}}\big) \subset \Bb{H}_{|\re|<1} \setminus\fet\,.
        \label{f2bsden}
 \end{align}

\noindent For each $\eufm{n}\in  \Bb{Z}_{\neq 0}^{\hspace{0,02cm}\Bb{N}_{\hspace{-0,02cm}\eurm{f}}}$
the open set  $\fet^{\hspace{0,01cm}\eufm{n}}$ is a  hyperbolic quadrilateral
with four vertices $\{\,\phi_{\eufm{n}}(x)\, | \, x\in \{-1,0,1,\infty\}\,\} \subset [-1,1]$  whose position is  completely determined by the value of
 $\sigma_{\eufm{n}}$, as follows from the  relationships (see  Section~\ref{pf8contgen})
 \vspace{-0,2cm}
 \begin{align}\label{f3bsden}
    & \begin{array}{l}
     {\rm{(a)}}\   \phi_{\eufm{n}}(- 1) < \phi_{\eufm{n}}(0) < \phi_{\eufm{n}}(1) \ ,      \quad{\rm{(b)}}\
    \sigma_{\eufm{n}} \phi_{\eufm{n}}(\infty) < \sigma_{\eufm{n}} \phi_{\eufm{n}}(-\sigma_{\eufm{n}}) \, , \\[0,2cm]
    {\rm{(c)}}\ \   {\rm{sign}} \big(\phi_{\eufm{n}}(- 1)\big)={\rm{sign}} \big(\phi_{\eufm{n}}(0)\big)={\rm{sign}} \big(\phi_{\eufm{n}}( 1)\big)\, ,
          \quad  \eufm{n}\in  \Bb{Z}_{\neq 0}^{\hspace{0,02cm}\Bb{N}_{\hspace{-0,02cm}\eurm{f}}}\,.
      \end{array}
 \end{align}

\vspace{-0,2cm}
\noindent Moreover,  for arbitrary $\eufm{n}\in  \Bb{Z}_{\neq 0}^{\hspace{0,02cm}\Bb{N}_{\hspace{-0,02cm}\eurm{f}}}$ the properties \eqref{f3bsden} and ($\gamma_{0}(a, b)\!:= \! \gamma (a, b)$)
\begin{align}\label{12bsdenac}
    &\hspace{-0,2cm}  \gamma_{\eufm{n}}(a, b)\!:= \!\phi_{\eufm{n}} \big(\gamma (a, b) \big)\!=\!
  \gamma \big(\phi_{\eufm{n}} (a), \phi_{\eufm{n}} (b)\big)\, ,  \ \    a \in  [-1,1] \, , \  b \in  [-1,1] \cup \{\infty\} \, ,
\end{align}

\noindent $a\neq b$,
imply  that  $ \Bb{H} \cap \partial
 \fet^{\hspace{0,01cm}\eufm{n}} $ is a hyperbolic polygon which consists of the three lower arches  $\gamma_{\eufm{n}}( - \sigma_{\eufm{n}}, \infty)$, $\gamma_{\eufm{n}}(-1, 0)$,
$\gamma_{\eufm{n}} (0, 1)$,
which do not belong to  $\fetr^{\hspace{0,01cm}\eufm{n}}$,  and of the  roof
$\gamma_{\eufm{n}} (\sigma_{\eufm{n}}, \infty)$, which is in $\fetr^{\hspace{0,01cm}\eufm{n}}$(see Figure~\ref{figure:image}).  Obviously, $\fet^{\hspace{0,01cm}\eufm{n}} = {\rm{int}} (\fetr^{\hspace{0,01cm}\eufm{n}})$\, and\, $\partial
 \fet^{\hspace{0,01cm}\eufm{n}} = \partial
 \fetr^{\hspace{0,01cm}\eufm{n}}$.

\begin{lemsectsix}\hspace{-0,15cm}{\bf{.}}\label{bsdentplem1}  Let $N \!\in\! \Bb{N}$ and $\eufm{n}:=(n_{N}, ..., n_{1})\!\in\!  \Bb{Z}_{\neq 0}^{N}$ {\rm{(}}see also \eqref{f1bsden}, \eqref{f2bsden}{\rm{)}}.

\vspace{0,1cm}
\noindent
{\rm{{\bf{(a)}}}} For arbitrary $\eufm{n}, \eufm{m}\in \Bb{Z}_{\neq 0}^{\hspace{0,02cm}\Bb{N}_{\hspace{-0,02cm}\eurm{f}}}\!:=\!\cup_{k\geqslant 1} \Bb{Z}_{\neq 0}^{k} $, $\eufm{n}\neq \eufm{m}$,  we have
$\fetr^{\hspace{0,01cm}\eufm{n}} \cap \fetr^{\hspace{0,01cm}\eufm{m}} = \emptyset$.

\vspace{0,1cm}
\noindent
{\rm{{\bf{(b)}}}} The lower arches\vspace{-0,1cm}
\begin{align*}
    & \phi_{\eufm{n}} \big( \gamma (-1, 0) \big)\,, \quad \phi_{\eufm{n}} \big( \gamma (0, 1) \big)\,,  \quad\phi_{\eufm{n}} \big( \gamma(- \sigma_{n_{1}}, \infty)\big)\,,
\end{align*}

\vspace{-0,1cm}
\noindent of $\fet^{\hspace{0,05cm}n_{N}, ..., n_{1}}$  are the roofs  of the respective quadrilaterals\vspace{-0,1cm}
\begin{align*}
    &  \fet^{\hspace{0,05cm}n_{N}, ..., n_{1},-1} \, , \quad  \fet^{\hspace{0,05cm}n_{N}, ..., n_{1}, \,1} \, , \quad \fet^{\hspace{0,05cm}n_{N}, ..., n_{1}+ \sigma_{n_{1}} } \, .
\end{align*}

\vspace{-0,1cm}
\noindent {\rm{{\bf{(c)}}}} The roofs of $\fet^{\hspace{0,05cm}1}$ and $\fet^{\hspace{0,05cm}-1}$ are  $\gamma (0, 1)$ and $\gamma (-1, 0)$, respectively.

\vspace{0,2cm}
\noindent {\rm{{\bf{(d)}}}} The roof of $\fet^{\hspace{0,05cm}n_{N}, ..., n_{1}}$ is the lower arch of\vspace{-0,1cm}
\begin{align*}
    &
\begin{array}{lr}
 \fet^{\hspace{0,05cm}n_{N}, ..., n_{1}-\sigma_{n_{1}}}  &, \  \mbox{if} \ \ n_{1}\!\neq\! \sigma_{n_{1}} \ , \quad  N \geqslant 1 \,,      \\[0,1cm]
  \fet^{\hspace{0,05cm}n_{N}, ..., n_{2}} &,  \  \mbox{if} \ \ n_{1}\!= \! \sigma_{n_{1}}\ , \quad  N \geqslant 2 \,.
\end{array}
\end{align*}

\vspace{-0,25cm}
\noindent {\rm{{\bf{(e)}}}} \ $2 \max \left\{ \ \im z \, \left| \, z \in  \fetr^{\hspace{0,05cm}n_{N}, ..., n_{1}} \,  \right\}\right. \leqslant 1
\Big/\left(1 + \sum\limits_{k=2}^{N} (2 |n_{k}| -1) \right) \leqslant 1/N$ holds, \vspace{-0,15cm} \\
\hphantom{ {\rm{{\bf{(e)}}}}   }where it is assumed that $\sum\nolimits_{k=2}^{1} := 0$.
\end{lemsectsix}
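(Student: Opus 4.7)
The plan is to deal with (c) directly, then bootstrap (b) and (d) from (c) via composition, exploit the resulting tree of quadrilaterals to establish (a), and finally use the vertex-ordering properties in \eqref{f3bsden} to reduce (e) to a maximum on the roof bounded via induction on the length of $\eufm{n}$.

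For (c), I would compute $\phi_{\pm 1}(z) = 1/(\pm 2 - z)$ on $\{-1,0,1,\infty\}$, list the four vertices of $\fet^{\pm 1}$, and read off the roof from the definition of a hyperbolic quadrilateral; this yields $\gamma(0,1)$ and $\gamma(-1,0)$ respectively. For (b), the identity $\phi_{\eufm{n},\pm 1} = \phi_{\eufm{n}} \circ \phi_{\pm 1}$ together with (c) gives the first two assertions at once. The third reduces, after stripping the common prefix $\phi_{n_N} \circ \cdots \circ \phi_{n_2}$, to the one-line identity $\phi_{n_1}(\gamma(-\sigma_{n_1},\infty)) = \phi_{n_1+\sigma_{n_1}}(\gamma(\sigma_{n_1},\infty))$, both sides equaling $\gamma(1/(2n_1+\sigma_{n_1}),\,0)$. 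Part (d) is then (b) reread from the perspective of parents rather than children.

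For (a), parts (b) and (d) display $\{\fet^{\eufm{n}}\}$ as a tree in which each quadrilateral has three distinguished children (one per lower arch), the roof of each child being a lower arch of its parent. Two distinct quadrilaterals $\fet^{\eufm{n}}$, $\fet^{\eufm{m}}$ therefore have disjoint interiors, and their closures can meet only on a common arch or a common vertex. Vertices lie on $\Bb{R}$ and so are excluded from $\fetr^{\eufm{n}} \subset \Bb{H}$. A shared arch is the roof of one quadrilateral and a lower arch of the other, so by \eqref{f10contgen} and \eqref{f2bsden} it is included in exactly one of the two $\fetr$-sets. Hence $\fetr^{\eufm{n}} \cap \fetr^{\eufm{m}} = \emptyset$.

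For (e), the ordering in \eqref{f3bsden} identifies the extreme vertices of $\fet^{\eufm{n}}$ as $\phi_{\eufm{n}}(\sigma_{n_1})$ and $\phi_{\eufm{n}}(\infty)$, so the roof is the Euclidean semicircle on these two points and $\max\{\im z : z \in \fetr^{\eufm{n}}\} = \tfrac{1}{2}|\phi_{\eufm{n}}(\sigma_{n_1}) - \phi_{\eufm{n}}(\infty)|$. Writing $\phi_{\eufm{n}}(z) = (az+b)/(cz+d)$ with $ad-bc=1$, this equals $1/(2|c||c\sigma_{n_1}+d|)$, so it suffices to prove $|c_{\eufm{n}}| \geq 1 + \sum_{k=2}^N (2|n_k|-1)$ and $|c_{\eufm{n}}\sigma_{n_1}+d_{\eufm{n}}| \geq 1$. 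The recursion $\phi_{\eufm{n}} = \phi_{n_N} \circ \phi_{\eufm{n}'}$ yields $c_{\eufm{n}} = 2n_N c_{\eufm{n}'} - c_{\eufm{n}''}$ (with $c_\emptyset := 0$). Induction on $N$ establishes the sign pattern ${\rm{sign}}(c_{\eufm{n}}) = -\prod_{k=2}^N {\rm{sign}}(n_k)$ using $|2n_N c_{\eufm{n}'}| > |c_{\eufm{n}''}|$, from which $|c_{\eufm{n}}| - |c_{\eufm{n}'}| \geq 2|n_N|-1$; the bound $|a'\sigma_{n_1}+b'| = |\phi_{\eufm{n}'}(\sigma_{n_1})|\cdot|c'\sigma_{n_1}+d'| \leq |c'\sigma_{n_1}+d'|$ (valid because vertices of $\fet^{\eufm{n}'}$ lie in $[-1,1]$) then yields $|c\sigma_{n_1}+d| \geq (2|n_N|-1)|c'\sigma_{n_1}+d'| \geq 1$. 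Telescoping gives the first inequality, and (e) follows. The main obstacle is precisely this inductive sign-and-magnitude bookkeeping for the continued-fraction matrix entries in (e); once the tree structure of (b), (d) is in place, (a) reduces to a case check on shared arches.
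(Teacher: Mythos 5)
Your treatment of (b), (c), (d) and (e) is correct and, modulo notation, coincides with the paper's own arguments: (c) is the same explicit computation of $\phi_{\pm1}$ on the vertices; your derivation of (b) by pre-composition and of (d) as the ``parent'' reading of (b) is a legitimate reorganization of the paper's direct computations (the paper proves (d) first and obtains (b), (c) by the same manipulations); and for (e) your identity $|\phi_{\eufm{n}}(\infty)-\phi_{\eufm{n}}(\sigma_{n_1})|=1/\bigl(|c_{\eufm{n}}|\,|c_{\eufm{n}}\sigma_{n_1}+d_{\eufm{n}}|\bigr)$ together with the inductive bounds on $|c_{\eufm{n}}|$ is exactly the paper's estimate $1/\bigl(|p^{\eufm{n}}_N|(|q^{\eufm{n}}_N|-|p^{\eufm{n}}_N|)\bigr)\leqslant 1/|p^{\eufm{n}}_N|$ and its induction $|p^{\eufm{n}}_N|\geqslant 1+\sum_{k=2}^{N}(2|n_k|-1)$, with your bound $|c_{\eufm{n}}\sigma_{n_1}+d_{\eufm{n}}|\geqslant 1$ replacing the paper's sign relation that gives $|q^{\eufm{n}}_N+\sigma_{n_1}p^{\eufm{n}}_N|=|q^{\eufm{n}}_N|-|p^{\eufm{n}}_N|$; your recursive argument for it (using that the vertices of $\fet^{\,\eufm{n}'}$ lie in $[-1,1]$) is sound.

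Part (a) is where you diverge from the paper, and where the gap is. The paper argues algebraically: it writes $\phi_{\eufm{n}}$ as the word $ST^{-2n_N}\cdots ST^{-2n_1}$, cancels equal leading factors in a putative identity $\phi_{\eufm{n}}(z)=\phi_{\eufm{m}}(y)$ with $z\in\fet^{\,{\tn{|}}\sigma_{n_1}}$, $y\in\fet^{\,{\tn{|}}\sigma_{m_1}}$, and then uses \eqref{f8contgen} to place one side of the reduced identity in $\Bb{H}_{|\re|<1}\setminus\fet$ while the other lies in $\Bb{H}_{|\re|\geqslant1}$ (with a separate hands-on analysis of the $N=M=1$ case, where the attached rays $\pm1+\imag\Bb{R}_{>0}$ matter). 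Your route through the tree of quadrilaterals can be made to work, but as written the decisive sentence ``two distinct quadrilaterals therefore have disjoint interiors'' is asserted, not proved: the adjacency facts of (b) and (d) (roof of the child $=$ lower arch of the parent) do not by themselves yield global pairwise disjointness. What is missing is the nesting induction: each quadrilateral lies in the open half-disc under its roof and outside the closed half-discs under its lower arches; the half-disc under any lower arch of $Q$ is contained in the half-disc under the roof of $Q$; the parent map of (d) terminates at $\fet$ after finitely many steps; and then, for two distinct words, either one is an ancestor of the other (and the descendant sits inside a half-disc that the ancestor's set $\fetr$ avoids) or the two ancestor chains diverge at a common ancestor, placing the two sets under distinct lower arches and hence in disjoint half-discs. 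Your subsequent reduction ``closures can meet only on a common arch or a common vertex'' also presupposes this edge-to-edge containment structure. Since pairwise disjointness of the $\fetr^{\,\eufm{n}}$ is precisely the content of (a), this bookkeeping must be supplied (or replaced by the paper's word-cancellation argument via \eqref{f8contgen}); once it is, your handling of the boundary arcs — included in exactly one of the two sets by \eqref{f10contgen} and \eqref{f2bsden} — is correct.
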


The proof of Lemma~\ref{bsdentplem1} is supplied in Section~\ref{pbsdentplem1}. In the sequel, we agree to use the notation\vspace{-0,15cm}
 \begin{align}\label{f9contgendef1}
     &
     \phi_{\eufm{n}, \eufm{m}}\!:= \!\phi_{{\fo{n_{N}, ..., n_{1},
    m_{M}, ..., m_{1}
     }}} \, , \ \
     \eufm{m}\!=\!(m_{M}, ..., m_{1})  , \ \eufm{n}\!=\!(n_{N}, ..., n_{1})\in \Bb{Z}_{\neq 0}^{\hspace{0,02cm}\Bb{N}_{\hspace{-0,02cm}\eurm{f}}}\,.
 \end{align}

 \noindent
As we combine the properties \eqref{f15contgen}(b) and \eqref{f1contgendef1}(c)
together with the definitions \eqref{f2bsden}, \eqref{f10contgen} and \eqref{f9contgen},
we find for every $N\! \in\! \Bb{N}$ and
$(n_{N}, ..., n_{1})\!\in \!\Bb{Z}_{\neq 0}^{N}$ that{\hyperlink{r21}{${}^{\ref*{case21}}$}}\hypertarget{br21}{}
\begin{align}\label{f4contgendef1}
    &
\hspace{-0,3cm}  \begin{array}{lrcccll}{\rm{(a)}} \,   &
 \Bb{G}^{N}_{2} \left(\fetr^{\hspace{0,05cm}n_{N}, ..., n_{1}}\right)& \! = \!  &
 \fet^{\,{\tn{|}}\sigma_{n_{1}}} & \! = \!  & \fet \sqcup \left( \sigma_{n_{1}} \!+\! i \Bb{R}_{>0} \right)  \,,   &\  \mbox{if} \ N \! \geqslant\!  1\,;   \\[0,25cm]
{\rm{(b)}} \,   &
 \Bb{G}^{k}_{2} \left(\fetr^{\hspace{0,05cm}n_{N}, ..., n_{1}}\!\right)& \! = \!  &\fetr^{\hspace{0,05cm}n_{N-k}, ..., n_{1}}& \! = \!  &
\phi_{{\fo{n_{N-k}, ..., n_{1}}}}\big( \fet^{\,{\tn{|}}\sigma_{n_{1}}}\!\big)
\,, &\  \mbox{if} \ 1 \! \leqslant\!  k \! \leqslant\!  N\! -\! 1 \,.
      \end{array}\hspace{-0,2cm}
\end{align}

\noindent
It follows  from Lemma~\ref{bsdentplem1}(a) and \eqref{f2bsden} that  the sets
$\fetr^{\hspace{0,01cm}\eufm{n}}$, $\eufm{n}\in \Bb{Z}_{\neq 0}^{\hspace{0,02cm}\Bb{N}_{\hspace{-0,02cm}\eurm{f}}}$, are disjoint subsets of
$\Bb{H}_{|\re|<1} \setminus\fet$, and their union equals all of
$\Bb{H}_{|\re|<1} \setminus\fet $.

\begin{lemsectsix}\hspace{-0,15cm}{\bf{.}}\label{bsdentplem2} Let  $\Bb{Z}_{\neq 0}^{\hspace{0,02cm}\Bb{N}_{\hspace{-0,02cm}\eurm{f}}}\!:=\!\cup_{k\geqslant 1} \Bb{Z}_{\neq 0}^{k}$.  We have the partition
\begin{align}\label{f3bsdentp}
    &  \Bb{H}_{|\re |\leqslant1}
     =\bigsqcup\limits_{\, {\fo{\eufm{n} \!\in \!\Bb{Z}_{\neq 0}^{\hspace{0,02cm}\Bb{N}_{\hspace{-0,02cm}\eurm{f}}} \cup\{0\}  }}}
\fetr^{\,\eufm{n}} = \mathcal{F}^{\,{\tn{||}}}_{{\tn{\square}}}\sqcup
 \bigsqcup\limits_{\, {\fo{\phi\!\in \!  \Gamma_{\vartheta}^{\hspace{0,015cm}{\tn{||}}}   }}}
 \phi \left(\fet^{\,{\tn{|}}\sigma_{\phi}}\right) ,
 \end{align}

\noindent where for $\eufm{n} =0$  we use the  notation $ \fetr^{\hspace{0,04cm}0} :=\fet^{\,{\tn{||}}}$.
\end{lemsectsix}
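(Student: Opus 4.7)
The plan is to split the argument into (a) disjointness of the pieces in the claimed partition and (b) their joint coverage of $\Bb{H}_{|\re|\leqslant 1}$. Disjointness among the sets $\fetr^{\hspace{0,01cm}\eufm{n}}$ for distinct $\eufm{n}\in\Bb{Z}_{\neq 0}^{\hspace{0,02cm}\Bb{N}_{\hspace{-0,02cm}\eurm{f}}}$ is already the content of Lemma~\ref{bsdentplem1}(a). Their separation from the remaining piece $\mathcal{F}^{\,{\tn{||}}}_{{\tn{\square}}}$ (the $\eufm{n}=0$ term) follows at once from \eqref{f8contgen}: every $\phi\in\Gamma_{\vartheta}^{\hspace{0,015cm}{\tn{||}}}$ sends $\Bb{H}_{|\re|\leqslant 1}$ into $\Bb{H}_{|\re|<1}\setminus\fet$, so $\fetr^{\hspace{0,01cm}\eufm{n}}$ meets neither $\fet$ nor the two vertical rays $\pm 1+\imag\Bb{R}_{>0}$ that complement $\fet$ inside $\mathcal{F}^{\,{\tn{||}}}_{{\tn{\square}}}$.

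For coverage, given $z_0\in\Bb{H}_{|\re|\leqslant 1}\setminus\mathcal{F}^{\,{\tn{||}}}_{{\tn{\square}}}$, I would iterate the even Gauss map $z_{k+1}:=\Bb{G}_2(z_k)$ and stop at the first index $N\geqslant 1$ with $z_N\in\mathcal{F}^{\,{\tn{||}}}_{{\tn{\square}}}$. At each step $k<N$, formula \eqref{f3contgendef1} produces an integer $2m_k=\rceil\re(-1/z_k)\lceil_2\in 2\Bb{Z}_{\neq 0}$ with $z_k=1/(-2m_k-z_{k+1})=\phi_{-m_k}(z_{k+1})$; since the maps in \eqref{f1bsden} compose as $\phi_a\circ\phi_b=\phi_{a,b}$, this telescopes to $z_0=\phi_{\eufm{n}}(z_N)$ with $\eufm{n}=(-m_0,-m_1,\ldots,-m_{N-1})\in\Bb{Z}_{\neq 0}^N$, hence $z_0\in\fetr^{\hspace{0,01cm}\eufm{n}}$ once one verifies $z_N\in\fet^{\,{\tn{|}}\sigma_{\eufm{n}}}$.

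The main technical point is termination. Setting $u_k:=1/\im z_k\geqslant 2$, the inequality \eqref{f17contgen} rearranges to $u_{k+1}\leqslant u_k/2+\sqrt{u_k^2/4-1}$, and the identity $(v+\sqrt{v^2-1})(v-\sqrt{v^2-1})=1$ applied with $v=u_k/2$ then yields $u_k-u_{k+1}\geqslant 1/u_{k+1}\geqslant 1/u_0$, the last step by monotonicity of $u_k$. Consequently $u_k\leqslant u_0-k/u_0$, so $u_k<2$ (equivalently $\im z_k>1/2$) after at most $\lceil u_0^2\rceil$ steps; together with $|\re z_k|\leqslant 1$ the crude bound $|2z_k\pm 1|^2>4(\im z_k)^2>1$ then places $z_k\in\fet\subset\mathcal{F}^{\,{\tn{||}}}_{{\tn{\square}}}$, forcing the iteration to halt.

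It remains to check that the terminal $z_N$ sits inside the prescribed half $\fet^{\,{\tn{|}}\sigma_{\eufm{n}}}=\fet\sqcup(\sigma_{\eufm{n}}+\imag\Bb{R}_{>0})$. If $z_N\in\fet$ this is immediate; otherwise $z_N=\sigma+\imag y$ with $\sigma\in\{\pm 1\}$, and by the definition of $\Bb{G}_2$ we have $\sigma=\re z_N=\{\re(-1/z_{N-1})\}^{{\tn{\rceil\hspace{-0,015cm}\lceil}}}_2$. The identities \eqref{f1ybsden} then force $\rceil\re(-1/z_{N-1})\lceil_2$ to have sign $-\sigma$, i.e.\ $m_{N-1}$ has sign $-\sigma$, so the innermost index $n_1=-m_{N-1}$ of $\eufm{n}$ has sign $\sigma$ and $\sigma_{\eufm{n}}=\sigma$, as required. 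I expect the termination estimate to be the principal obstacle; the ray consistency at the terminal step is essentially careful bookkeeping of signs through the even-integer-part convention \eqref{f1defegm}.
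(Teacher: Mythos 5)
Your proposal is correct and follows essentially the same route as the paper's own proof: disjointness is delegated to Lemma~\ref{bsdentplem1}(a) together with \eqref{f8contgen}, and coverage is obtained by iterating the even Gauss map $\Bb{G}_2$, reconstructing $z_0=\phi_{\eufm{n}}(z_N)$ from the even integer parts and fixing the sign at a terminal ray via \eqref{f1ybsden}, exactly as in the paper. The only divergence is cosmetic: the paper reads \eqref{f17contgen} as geometric growth of $\im z_k$, while you convert it into an arithmetic decrease of $u_k=1/\im z_k$ (where your intermediate bound $u_k-u_{k+1}\geqslant 1/u_{k+1}$ is more safely replaced by the directly available $u_k-u_{k+1}\geqslant 1/u_k\geqslant 1/u_0$, which is all your termination estimate uses).
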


\noindent  The proof of Lemma~\ref{bsdentplem2} is supplied in Section~\ref{pbsdentplem1}.
It is clear that (see \eqref{f0bsdentp})
\begin{align}\label{f2bsdentp}
    &  \mathcal{F}^{\infty}_{{\tn{\square}}}
    = \mathcal{F}_{{\tn{\square}}}  \sqcup \bigsqcup\nolimits_{\,n \geqslant 1} \
 \Big(\big( \fet^{\,{\tn{|\!+\!1}}} - 2n\big) \sqcup \big( \fet^{\,{\tn{|\!-\!1}}} + 2n\big)\Big)\,.
\end{align}

\noindent By combining \eqref{f3bsdentp} and \eqref{f2bsdentp}, we arrive at the partition\vspace{-0,1cm}
\begin{multline}\label{f4bsdentp}
\Bb{H} =   \mathcal{F}_{{\tn{\square}}} \  \sqcup
   \bigsqcup\limits_{{\fo{n_{0} \in \Bb{Z}_{\neq 0}}}} \bigg(-2 n_{0}+ \Big( \fet \sqcup \big({\rm{sign}}(n_{0})  + i \Bb{R}_{> 0}\big)\Big)\bigg)
    \\[-0,1cm]  \sqcup
 \bigsqcup\limits_{
\substack{  {\fo{n_{1}, ... , n_{N} \in \Bb{Z}_{\neq 0}}} \\[0,075cm] {\fo{N \in \Bb{N}\,, \ \ n_{0} \in \Bb{Z}}} }
}\bigg( 2 n_{0} + \phi_{{\fo{\,n_{N}, ..., n_{1}}}}
\Big( \fet \sqcup \big({\rm{sign}}(n_{1})  + i \Bb{R}_{> 0}\big)\Big)\bigg)\,.
\end{multline}

\noindent which we refer to as the {\it{Schwarz partition of $\Bb{H}$}}. The following
property is obtained in Section~\ref{pbsdentpth2}.
\begin{theorem}\hspace{-0,15cm}{\bf{.}}\label{bsdentpth2}\! The set $\{\phi  (1\!+\!{\rm{i}}\Bb{R}_{>0})\}_{\phi \in\! \Gamma_{\vartheta}}$\!  generates the Schwarz
partition of $\Bb{H}$.
\end{theorem}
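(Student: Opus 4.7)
The plan is to verify directly the two conditions in the definition of ``generates a partition'' applied to the arc $\gamma(1,\infty)=1+\imag\Bb{R}_{>0}$. First I would confirm that the right-hand side of \eqref{f4bsdentp} is a genuine disjoint partition of $\Bb{H}$ whose pieces have nonempty interior: this is essentially bookkeeping on top of Lemma~\ref{bsdentplem2} (which tiles $\Bb{H}_{|\re|\le 1}$ by $\fet^{\,{\tn{||}}}$ and the $\fetr^{\hspace{0,01cm}\eufm{n}}$), the decomposition \eqref{f2bsdentp} of $\mathcal{F}^{\infty}_{{\tn{\square}}}$, and the observation that the horizontal translations $z\mapsto z+2n_0$ all lie in $\Gamma_{\vartheta}$. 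Every piece is a M\"obius image (possibly composed with a translation) of the open set $\fet$, possibly enlarged by an open vertical ray; in either case the interior is nonempty.

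Next I would compute the topological boundary of each piece in $\Bb{H}$. Adjoining an open vertical ray to $\fet$ does not change its closure, so $\partial\fet^{\,{\tn{|}}\sigma_{\phi}}=\partial\fet$ consists of the four arcs $\gamma(-1,\infty),\gamma(-1,0),\gamma(0,1),\gamma(1,\infty)$. Consequently the boundary of each piece of \eqref{f4bsdentp} has the form $\psi(\partial\fet)$ for some $\psi\in\Gamma_{\vartheta}$, and $\bigcup_{k}\partial A_{k}$ is exactly the union of $\Gamma_\vartheta$-images of the four edges of $\fet$. The crux is then to verify that each of these four edges already lies in the orbit of $\gamma(1,\infty)$: trivially $\gamma(1,\infty)=\mathrm{id}\cdot\gamma(1,\infty)$; the translation $z\mapsto z-2$ (matrix $\equiv I\ (\mathrm{mod}\,2)$) sends $\gamma(1,\infty)$ to $\gamma(-1,\infty)$; the inversion $z\mapsto-1/z$ (matrix $\equiv\bigl(\begin{smallmatrix}0&1\\1&0\end{smallmatrix}\bigr)\ (\mathrm{mod}\,2)$) sends $\gamma(1,\infty)$ to $\gamma(-1,0)$; and the M\"obius transformation $z\mapsto 1/(2-z)$, whose matrix $\bigl(\begin{smallmatrix}0&1\\-1&2\end{smallmatrix}\bigr)$ is $\equiv\bigl(\begin{smallmatrix}0&1\\1&0\end{smallmatrix}\bigr)\ (\mathrm{mod}\,2)$ and hence lies in $\Gamma_{\vartheta}$, sends $\gamma(1,\infty)$ to $\gamma(0,1)$. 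This yields the inclusion $\bigcup_{k}\partial A_{k}\subset\bigcup_{\phi\in\Gamma_\vartheta}\phi(\gamma(1,\infty))$.

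For the reverse inclusion, every arc $\phi(\gamma(1,\infty))$ with $\phi\in\Gamma_{\vartheta}$ is one of the four edges of $\phi(\fet)$, and by the parametrization \eqref{f4contgen} the set $\phi(\fet)$ coincides with one of the pieces of \eqref{f4bsdentp} (after the appropriate vertical ray is absorbed via \eqref{f2bsdentp}); hence $\phi(\gamma(1,\infty))\subset\bigcup_{k}\partial A_{k}$. The main technical subtlety I expect is in Step~1: the vertical rays $\pm 1+2n_0+\imag\Bb{R}_{>0}$ together with all their $\Gamma_{\vartheta}^{\hspace{0,015cm}{\tn{||}}}$-images must each be assigned to exactly one adjacent piece to keep the decomposition disjoint, which is precisely the role of the sign index $\sigma_{\eufm{n}}={\rm sign}(n_{1})$ in \eqref{f10contgen}. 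Once that bookkeeping is in place, the orbit comparison above is combinatorial.
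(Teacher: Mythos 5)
Your proposal is correct and takes essentially the same route as the paper's proof: both identify $\bigcup_{k}\partial A_{k}$ with the $\Gamma_{\vartheta}$-orbit of $\partial_{\Bb{H}}\fet=\gamma(1,\infty)\cup\gamma(-1,\infty)\cup\gamma(-1,0)\cup\gamma(0,1)$ (using $-1/\fet=\fet$ and \eqref{f6contgen}), and then reduce to checking that these four arcs lie in the orbit of $\gamma(1,\infty)$ via explicit elements of $\Gamma_{\vartheta}$. Your map $z\mapsto 1/(2-z)$ is exactly the paper's composite $-1/\big(\gamma(1,\infty)-2\big)$, so the verifications coincide.
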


\vspace{-0,2cm}
\begin{rem}\hspace{-0,15cm}{\bf{.}}\label{rem51}\!\!  {\rm{
It is known  that the fundamental domain (see \cite[p.\! 15]{cha}) $F_{\Gamma(2)}$ for the subgroup $ \Gamma(2)\! \subset\!  \Gamma $, (elements written below are linear fractional mappings)
\begin{align}\nonumber
     \hspace{-0,25cm}
   \Gamma(2)   &    =
     \bigsqcup\limits_{ n_{0} \in \Bb{Z}}\left\{2n_{0} + z \right\}
   \ \   \sqcup
   \bigsqcup\limits_{
\substack{  {\fo{n_{1}, ... , n_{2 N} \in \Bb{Z}_{\neq 0}}} \\[0,075cm] {\fo{N \in \Bb{N}\,, \ \ n_{0} \in \Bb{Z}}} }
} \left\{ 2 n_{0} + \phi_{{\fo{\hspace{0,005cm} n_{2N}, n_{2N-1},..., n_{1}}}}(z)\right\}
\\[0,1cm]   & \hspace{1,63cm}
  \sqcup
\bigsqcup\limits_{
\substack{  {\fo{n_{1}, ... , n_{2 N-1} \in \Bb{Z}_{\neq 0}}} \\[0,075cm] {\fo{N \in \Bb{N}\,, \ \ n_{0} \in \Bb{Z}}} }
} \left\{2 n_{0} + \phi_{{\fo{\hspace{0,005cm} n_{2N-1}, n_{2N-2},..., n_{1}}}}(-1/z) \right\},
 \hspace{-0,1cm}
\label{f4zbsdentp}\end{align}

\vspace{-0,1cm}
\noindent
 generated by the M\"obius transformations $z\mapsto z+2$ and $z\mapsto z/(1-2z)$ (see \cite[p.\! 111]{cha}),   can be chosen by any of the four sets written in
\eqref{f1lemoneto}, for instance, $F_{\Gamma(2)} =  \fet \sqcup \gamma (-1,\infty) \sqcup \gamma (1,0)$, where
 ${\rm{int}} F_{\Gamma(2)} =  \fet$
 (cp. \cite[p.\! 115]{cha}). Then
\begin{align}\label{f4ybsdentp}
    & \Bb{H} =   \bigsqcup\limits_{{\fo{  \phi\! \in\! \Gamma(2)   }}}   \phi \big(F_{\Gamma(2)} \big) =
   \bigsqcup\limits_{{\fo{  \phi\! \in\! \Gamma(2)   }}}\Big(
   \phi \big(\fet\big)\sqcup\phi \big(\gamma (-1,\infty) \big)\sqcup \phi \big(\gamma (1,0) \big)\Big)   \, , \
\end{align}

\noindent
forms a partition of the upper half-plane corresponding to the subgroup $ \Gamma(2) $. But it follows from \eqref{f4zbsdentp}, $-1/\fet=\fet$    and  \eqref{f4ybsdentp} that
\begin{align}\label{f4xbsdentp}\hspace{-0,2cm}
\bigsqcup\limits_{{\fo{  \phi\! \in\! \Gamma(2)   }}} \!\!\!  \phi \big(\fet \big)     =
     \bigsqcup\limits_{ {\fo{n_{0} \in \Bb{Z}}}}\left(2n_{0} + \fet \right) \
     \sqcup \!\!\!
   \bigsqcup\limits_{
\substack{  {\fo{n_{1}, ... , n_{ N} \in \Bb{Z}_{\neq 0}}} \\[0,075cm] {\fo{N \in \Bb{N}\,, \ \ n_{0} \in \Bb{Z}}} }
} \!\!\!\!\!\!\left( 2 n_{0} + \phi_{{\fo{\hspace{0,005cm} n_{N}, ..., n_{1}}}}(\fet)\right) \, , \
\end{align}

\noindent and therefore the partition \eqref{f4ybsdentp} coincides with the Schwarz partition  \eqref{f4bsdentp}  on the set $\Bb{H} \setminus \{\phi  (1\!+\!{\rm{i}}\Bb{R}_{>0})\}_{\phi \in \Gamma_{\vartheta}}= \sqcup_{ \phi \in \Gamma(2)} \phi(\fet)$, because
\begin{align}\hspace{-0,3cm}\nonumber
      \bigsqcup\nolimits_{{\fo{  \phi\! \in\! \Gamma(2)   }}} \  \phi \big( \gamma (-1,\infty) \sqcup
    \gamma (1,0)\big)&  = \bigcup\nolimits_{{\fo{  \phi\! \in\! \Gamma(2)   }}}  \  \phi \big( \partial_{\Bb{H}}\fet\big)   =
    \bigcup\nolimits_{{\fo{  \phi\! \in\! \Gamma_{\vartheta}   }}}\  \phi(\partial_{\Bb{H}}\fet) \\    & =
    \bigcup\nolimits_{{\fo{  \phi\! \in\! \Gamma_{\vartheta}   }}}\ \phi  (1\!+\!{\rm{i}}\Bb{R}_{>0}),
\hspace{-0,14cm}\label{f4wbsdentp}\end{align}

\noindent as follows from $\gamma (1,\infty)=2+\gamma (-1,\infty)$, $\gamma (-1,0)=\phi_{-1}(-1/\gamma (1,0))$, \eqref{zbssqlem1},
 $-1/\partial_{\Bb{H}}\fet=\partial_{\Bb{H}}\fet$, $\left\{ \, \phi (z)\,\right\}_{{\fo{\phi\! \in\! \Gamma_{\vartheta} }}} \! =\! \left\{ \, \phi (z)\,, \ \phi (-1/z)\,\right\}_{{\fo{\phi\! \in\!
\Gamma (2) }}}$ (see \cite[p.\! 115]{cha}) and $\phi_{-1} (z)= z/(1-2z)$, $z\in \Bb{H}$, $\phi_{-1} \in \Gamma(2)$. But on the set
\begin{align}
    & \bigcup\limits_{{\fo{\phi \in \Gamma_{\vartheta}}}} \phi  \big(1+{\rm{i}}\Bb{R}_{>0}\big) = \Bb{H}\setminus
    \bigcup\limits_{{\fo{\phi \in \Gamma (2)}}} \phi  \big(\, {\rm{int}} F_{\Gamma(2)}\big)
\label{f4vbsdentp}\end{align}

 \noindent   these two partitions are
 different.
 This difference is  essential, since the sets that make up the Schwarz partition are much simpler than the sets of the partition \eqref{f4ybsdentp}. Actually, if the partition \eqref{f4ybsdentp} adds to each open hyperbolic quadrilateral  $\phi_{{\fo{\hspace{0,005cm} n_{N}, ..., n_{1}}}} (\fet)$   its two sides  $ \gamma_{{\fo{\hspace{0,005cm} n_{N}, ..., n_{1}}}}(0,1)$ and $\gamma_{{\fo{\hspace{0,005cm} n_{N}, ..., n_{1}}}} (-1,\infty)$, which can be two lower arches  or a lower arch and a roof,  depending on the sign of $n_{1}$, then  the Schwarz partition
  adds to such a quadrilateral
  its own roof $\gamma_{{\fo{\hspace{0,005cm} n_{N}, ..., n_{1}}}}  ({\rm{sign}} (n_{1}), \infty)$, where
  $n_{1}, ... , n_{ N} \in \Bb{Z}_{\neq 0}$ and $N \in \Bb{N}$ (see the note before Lemma~\ref{bsdentplem1}). Hence, the  Schwarz partition  \eqref{f4bsdentp} is an easy-to-use modification of the known partition \eqref{f4ybsdentp}
  associated with the subgroup $\Gamma(2)$. Moreover, the choice of the most convenient partition of the set
  \eqref{f4vbsdentp}
 is a separate issue{\hyperlink{r42}{${}^{\ref*{case42}}$}}\hypertarget{br42}{}, which is no longer related to the structure of  $ F_{\Gamma(2)} \cap \partial F_{\Gamma(2)}$, but depends on the mutual arrangement of the sets $\{\phi (\,{\rm{int}} F_{\Gamma(2)})\}_{\phi \in \Gamma(2) }$.
}}\end{rem}

\subsection[\hspace{-0,31cm}. \hspace{0,11cm}Even rational  partition of the upper half-plane.]{\hspace{-0,11cm}{\bf{.}} Even rational  partition of the upper half-plane.
}\label{bsdenac}
By virtue of Lemma~2 of \cite[p.\! 112]{cha}{\hyperlink{r43}{${}^{\ref*{case43}}$}}\hypertarget{br43a}{} and \eqref{f8contgen}, the relationships \eqref{f4contgen} and \eqref{f7contgen} can be expressed in the form
(the elements of the sets below are linear fractional mappings){\hyperlink{r44}{${}^{\ref*{case44}}$}}\hypertarget{br44}{}
\begin{align}\nonumber
    & \hspace{-0,2cm} \Gamma_{\vartheta} \! =\!\left\{\,
 {\fo{ \dfrac{1\cdot z + 0}{0\cdot z +1} }} \,, \
 {\fo{ \dfrac{0\cdot z - 1}{1\cdot z +0} }} \,\right\}  \\ &  \hspace{0,08cm}\bigsqcup \bigg\{\,
{\fo{ \dfrac{a z + b}{c z +d} }} \,,\,
{\fo{\dfrac{(-c) z +(-d)}{a z + b} }} \,,\,
{\fo{ \dfrac{b z +(- a)}{d z +(- c)} }} \,,\,
{\fo{ \dfrac{(-d) z + c}{b z +(- a)} }} \ \bigg| \
 {\fo{\dfrac{a z + b}{c z +d}}} \in  \Gamma_{\vartheta}^{\hspace{0,015cm}{\tn{||}}}
\, \bigg\}\, ,\hspace{-0,1cm}
\label{f1denac}  \\[0,2cm]  & \nonumber
\hspace{-0,2cm}  \Gamma_{\vartheta}^{\hspace{0,015cm}{\tn{||}}} \! =\!
\bigg\{
{\fo{\, \dfrac{a z + b}{c z +d} }} \ \bigg| \
{\fo{\begin{array}{l}
 |a|\!<\! |b|\!<\!d,\ |a|\!<\! |c|\!<\!d     \\
ab\equiv cd \equiv 0 ({\rm{mod}}\, 2)    \\
ad - bc\!=\!1 \, , \ a, b, c, d \!\in\! \Bb{Z}
\end{array}}}\, \bigg\} \\  &\hspace{0,19cm}
= \!\!\Bigg\{\,
 {\fo{\dfrac{1}{2n_{N}\! -\! \dfrac{1}{ \begin{subarray}{l}
\begin{displaystyle} \vphantom{A^{A}}
2n_{N-1}\!- \!\end{displaystyle}\\[-0.4cm]
\hphantom{2n_{k-1} 11 -} \ddots\\[-0.4cm]
\hphantom{2n_{k-1} 11 - \ddots} \begin{displaystyle} - \!\end{displaystyle} \dfrac{1}{2 n_{2} \!- \! \dfrac{1}{
\vphantom{A^{A}}2 n_{ 1} -z }}\end{subarray} }}}}
\ \Bigg| \ n_{1}, n_{2}, ..., n_{N}\in \Bb{Z}_{\neq 0} \, , \
 N\in \Bb{N} \ \Bigg\} \, ,
\label{f2denac} \end{align}

\noindent where no repetitions of linear fractional maps occur in the
listing of the sets on the  right-hand sides of \eqref{f2denac} and \eqref{f1denac}. That this is so follows
from Lem\-ma~\ref{bsdentplem1}(a) and the fact that neither of the mappings $z\mapsto z$ nor
$z\mapsto -1/z$ can be an element of the rightmost set of \eqref{f1denac}, where
the listed
four linear fractional maps are all different as the unique entry in the associated $2\times2$ matrix with
biggest absolute value (denoted by $d$) occupies four different positions. Consequently, it makes sense to define  the "order" of each element $\phi\! \in\! \Gamma_{\vartheta}$ by
\begin{align}\label{f11contgen}
    &  d(\phi )  \!= \!\left\{
\begin{array}{llll}
 0  ,      &   \mbox{if} \ \phi(z)=\phi_{0}(z)=z \, ;         &
\  1  ,       &    \mbox{if} \ \phi(z)=-1/z  \, ;      \\
N  ,       &      \mbox{if} \ \phi(z)=\phi_{\eufm{n}}(z)  \, ;    &
 \ N -1 ,       &      \mbox{if} \ \phi(z)= -1/\phi_{\eufm{n}}(z)  \, ;   \\
 N +1 ,       &     \mbox{if} \ \phi(z)= \phi_{\eufm{n}}(-1/z)  \, ;     &
\ N  ,       &     \mbox{if} \ \phi(z)= -1/\phi_{\eufm{n}}(-1/z)  \, ,   \\
\end{array}
\right.
\end{align}

\noindent where $\eufm{n}\!=\!(n_{N}, ..., n_{1})\in \Bb{Z}_{\neq 0}^{N}$, $N \in \Bb{N}$. Then the functional relations $\lambda(z+2)=\lambda(z)$, $\lambda(-1/z)=1-\lambda(z)$, $z\in \Bb{H}$, give that
(cf. \eqref{f2int}, \cite[p.\! 111]{cha})
\begin{align}\label{f14bsdenac}
    &  \lambda\big(\phi (z)\big)=
\left\{\begin{array}{rl}
  \lambda(z), &  \  \mbox{if} \   d(\phi) \in \{0\}\cup 2 \Bb{N}\,, \\[0,1cm]
1-\lambda(z),   &   \   \mbox{if} \    d(\phi) \in 2 \Bb{N}\!-\!1\,, \end{array}\right.
 \  \quad   \phi\! \in\! \Gamma_{\vartheta}  \, , \  z\in \Bb{H}\,.
\end{align}

\noindent Moreover, since the semicircle $\gamma (-1,1)$ is invariant under the inversion
$z\mapsto-1/z$, we realize that
\begin{align}\label{f7denac}
    &  \lambda \Big(\phi \big(\gamma (-1,1)\big)\Big)=
\lambda\big(\gamma (-1,1)\big) = (1/2)+ i \Bb{R}_{> 0} \, , \quad  \phi\!\in \!  \Gamma_{\vartheta}  \, , \
\end{align}

\noindent and, in the notation $\Lambda:=(0,1)\cup (\Bb{C}\setminus \Bb{R})$,
\begin{align}\label{f14zbsdenac}
    & \lambda \left(\phi \big(\fet^{\,{\tn{|}}\sigma_{\phi}}\big)\right)=
\left\{\begin{array}{rl}
   \lambda\left(\fet\cup \gamma(\sigma_{\phi}, \infty)\right)
=\Bb{R}_{<0}\cup\Lambda
, &  \  \mbox{if} \   d(\phi) \in \{0\}\cup 2 \Bb{N}\,, \\[0,1cm]
 \lambda\left(\fet\cup \gamma(-\sigma_{\phi}, 0)\right)
=\Bb{R}_{>1}\cup\Lambda
,   &   \   \mbox{if} \    d(\phi) \in 2 \Bb{N}\!-\!1\,, \end{array}\right.
\end{align}

\noindent holds for every $\phi\!\in \!  \Gamma_{\vartheta} \cup \{\phi_{0}\}$ (where $\phi_{0} (z):= z$). By Lemma~\ref{lemoneto},  \eqref{f14zbsdenac} means that $\lambda$ is one-to-one on each set of the partition \eqref{f3bsdentp}, with the exception of  $\mathcal{F}^{\,{\tn{||}}}_{{\tn{\square}}}$, where $\lambda (\gamma (1, \infty)) = \lambda (\gamma (-1, \infty))= \Bb{R}_{<0}$. This has one useful consequence, the proof of which is supplied in Section~\ref{plemdhp}.
\begin{lemsectsix}\hspace{-0,15cm}{\bf{.}}\label{lemdhp} Let us write
$\Bb{Z}_{\neq 0}^{2\Bb{N}_{\hspace{-0,02cm}\eurm{f}}}\!:=\!\cup_{k\geqslant 1} \Bb{Z}_{\neq 0}^{2k} $  and
$\Bb{Z}_{\neq 0}^{2\Bb{N}_{\hspace{-0,02cm}\eurm{f}}-1}\!:=\!\cup_{k\geqslant 1}\Bb{Z}_{\neq 0}^{2k-1}$.
Then for arbitrary $y \in \Bb{H}\cap {\rm{clos}}\left(\mathcal{F}_{{\tn{\square}}}\right)$ and
$\sigma \in\{1, -1\}$ we have
\begin{align}\label{f1lemdhp}
    &  \left\{ z\in \Bb{H}_{|\re|\leqslant 1} \ | \ \lambda(z)=\lambda(y)\right\} \\[0,15cm] \nonumber    &=
\left\{
  \begin{array}{ll}
  \left\{y\right\}\sqcup
\left\{\phi_{{\fo{\eufm{n}}}}(y)\ \left|  \ \eufm{n}\in  \Bb{Z}_{\neq 0}^{2\Bb{N}_{\hspace{-0,02cm}\eurm{f}}}  \right\}\right.\sqcup
\left\{\phi_{{\fo{\eufm{n}}}}(-1/y)\ \left| \ \eufm{n}\in  \Bb{Z}_{\neq 0}^{2\Bb{N}_{\hspace{-0,02cm}\eurm{f}}-1}  \right\}\right., &\ \hbox{if}\ \
y\in \fet;
 \\[0,15cm]
\left\{y\right\}\sqcup\left\{y-2\sigma\right\}
\sqcup
\left\{\phi_{{\fo{\eufm{n}}}}\left(y-\sigma  + \sigma_{\eufm{n}}\right)\ \left| \ \eufm{n}\in  \Bb{Z}_{\neq 0}^{2\Bb{N}_{\hspace{-0,02cm}\eurm{f}}}  \right\} \right., & \  \hbox{if}\  \ y  \in \gamma(\sigma, \infty); \\[0,15cm]
\left\{\phi_{{\fo{\eufm{n}}}}\left(  \sigma_{\eufm{n}}+\sigma-{\fo{(}}1/y{\fo{)}}
\hspace{0,05cm}\right) \ \left| \ \eufm{n}\in  \Bb{Z}_{\neq 0}^{2\Bb{N}_{\hspace{-0,02cm}\eurm{f}}-1}  \right\}\right. , & \  \hbox{if} \ \ y  \in \gamma(\sigma, 0) \, ,
  \end{array}
\right.
\end{align}

\noindent where each set is countable and has no limit points in $\Bb{H}$.
\end{lemsectsix}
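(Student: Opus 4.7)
The strategy is to apply the partition of $\Bb{H}_{|\re|\leqslant1}$ from Lemma~\ref{bsdentplem2},
\[
\Bb{H}_{|\re|\leqslant 1}=\mathcal{F}^{\,{\tn{||}}}_{{\tn{\square}}}\sqcup\!\!\!
\bigsqcup_{\phi\in\Gamma_{\vartheta}^{\hspace{0,015cm}{\tn{||}}}}\!\!\!\phi(\fet^{\,{\tn{|}}\sigma_{\phi}}),
\]
and to count and locate the preimages of $\lambda(y)$ inside each piece separately. On each $\fet^{\,{\tn{|}}\sigma_{\phi}}=\fet\sqcup(\sigma_{\phi}+i\Bb{R}_{>0})$ the function $\lambda$ is injective, by Theorem~\ref{intthA} and Lemma~\ref{lemoneto}, and its image equals $\Bb{R}_{<0}\cup\Lambda$ if $d(\phi)\in\{0\}\cup 2\Bb{N}$ and $\Bb{R}_{>1}\cup\Lambda$ if $d(\phi)\in 2\Bb{N}-1$, according to \eqref{f14zbsdenac}. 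Writing any preimage as $z=\phi(w)$ with $w\in\fet^{\,{\tn{|}}\sigma_{\phi}}$, the functional relation \eqref{f14bsdenac} reduces the equation $\lambda(z)=\lambda(y)$ either to $\lambda(w)=\lambda(y)$ (even $d(\phi)$) or to $\lambda(w)=1-\lambda(y)=\lambda(-1/y)$ (odd $d(\phi)$). Existence of $w$ is decided by checking whether the prescribed value lies in $\lambda(\fet^{\,{\tn{|}}\sigma_{\phi}})$, and $w$ itself is then produced by the injectivity above.

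The three cases of the lemma are handled by this template. If $y\in\fet$, then $\lambda(y)\in\Lambda$ and $\lambda(-1/y)\in\Lambda$, so for every $\phi\in\Gamma_{\vartheta}^{\hspace{0,015cm}{\tn{||}}}$ a unique $w\in\fet$ is forced; namely $w=y$ for even $d(\phi)$ and $w=-1/y$ for odd $d(\phi)$. The single preimage in $\mathcal{F}^{\,{\tn{||}}}_{{\tn{\square}}}$ is $y$ itself, since $\lambda(y)\notin\Bb{R}_{<0}$ rules out the two rays. If $y=\sigma+it\in\gamma(\sigma,\infty)$, then $\lambda(y)\in\Bb{R}_{<0}$; two preimages in $\mathcal{F}^{\,{\tn{||}}}_{{\tn{\square}}}$ appear, $y$ and $y-2\sigma$, by $2$-periodicity of $\lambda$; for even $d(\phi)$ the equation $\lambda(w)=\lambda(y)$ together with injectivity on $\sigma_{\phi}+i\Bb{R}_{>0}$ gives $w=\sigma_{\phi}+it=y-\sigma+\sigma_{\eufm{n}}$; odd $d(\phi)$ is excluded because $1-\lambda(y)\in\Bb{R}_{>1}\not\subset\lambda(\fet^{\,{\tn{|}}\sigma_{\phi}})$. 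If $y\in\gamma(\sigma,0)$, then $\lambda(y)\in\Bb{R}_{>1}$, so $\mathcal{F}^{\,{\tn{||}}}_{{\tn{\square}}}$ and all even-$d(\phi)$ pieces contribute nothing; for odd $d(\phi)$ one uses that $y\in\gamma(\sigma,0)$ implies $-1/y\in\gamma(-\sigma,\infty)$, writes $-1/y=-\sigma+it$, and deduces $w=\sigma_{\phi}+it=\sigma_{\eufm{n}}+\sigma-1/y$ by the same injectivity argument. The check that $\eufm{n}=(\sigma)$ recovers $y$ in the third case (and similarly matches the first two cases) verifies that every expected preimage is produced exactly once.

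The last assertion, that the preimage set is countable with no limit points in $\Bb{H}$, follows from the fact that the parametrization runs over $\Bb{Z}_{\neq 0}^{\hspace{0,02cm}\Bb{N}_{\hspace{-0,02cm}\eurm{f}}}$, which is countable, combined with the uniform bound
\[
2\max\bigl\{\im z\,:\,z\in\fetr^{\hspace{0,05cm}n_{N},\dots,n_{1}}\bigr\}\leqslant 1/N
\]
from Lemma~\ref{bsdentplem1}(e). Any accumulation point of preimages coming from tuples $\eufm{n}$ of unbounded length would therefore satisfy $\im z=0$, contradicting membership in $\Bb{H}$; and only finitely many tuples of each fixed length contribute to any compact subset of $\Bb{H}$.

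The main obstacle I expect is bookkeeping in the boundary cases, in particular verifying that the single compact formula $w=\sigma_{\eufm{n}}+\sigma-1/y$ (respectively $w=y-\sigma+\sigma_{\eufm{n}}$) correctly captures both choices of $\sigma_{\eufm{n}}\in\{\pm1\}$ relative to $\sigma$, and that the distinguished point $y$ itself is indeed recovered by the tuple $\eufm{n}=(\sigma)$ in the third case (and by $\phi_{0}$ in the first two), so that there is no double-counting. All other ingredients—injectivity of $\lambda$ on the closed fundamental quadrilateral, the image computation \eqref{f14zbsdenac}, and the height estimate of Lemma~\ref{bsdentplem1}(e)—are already in hand.
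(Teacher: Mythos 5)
Your proposal is correct and takes essentially the same route as the paper: the same partition from Lemma~\ref{bsdentplem2}, reduction via \eqref{f14bsdenac} to $\lambda(w)=\lambda(y)$ or $\lambda(w)=\lambda(-1/y)$ on $\fet^{\,{\tn{|}}\sigma_\phi}$ for each $\phi\in\Gamma_\vartheta^{\hspace{0,015cm}{\tn{||}}}$, case analysis by the parity of $d(\phi)$, and the height estimate of Lemma~\ref{bsdentplem1}(e) for the no-limit-points claim. One small slip in phrasing: you attribute the parity-dependent image $\Bb{R}_{<0}\cup\Lambda$ versus $\Bb{R}_{>1}\cup\Lambda$ to $\lambda$ restricted to $\fet^{\,{\tn{|}}\sigma_\phi}$ itself and cite \eqref{f14zbsdenac}, but that display gives the image of $\lambda$ on $\phi(\fet^{\,{\tn{|}}\sigma_\phi})$; the image of $\lambda$ on $\fet^{\,{\tn{|}}\sigma_\phi}$ is always $\Bb{R}_{<0}\cup\Lambda$, and that is the set your case analysis actually (and correctly) tests membership in.
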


  Lemma~\ref{lemdhp} can be used to characterize the sets
\begin{align}\label{f1contgen}
    & \hspace{-0,15cm} S^{\hspace{0,02cm}{\tn{||}}}_{\!{\tn{\frown}}}:=\! \left\{\,  z \!\in\! \Bb{H}_{|\re|\leqslant 1} \, \left|\, \lambda (z)\! \in \!
    (1/2)\! +\! i \Bb{R}\,\right\} \right.\! , \  S^{\infty}_{\!{\tn{\frown}}}\!:=\! \left\{\,  z \!\in \!\Bb{H} \,  \left|\, \lambda (z) \!\in \! (1/2) \!+\! i \Bb{R} \,\right\} \right.\! .\hspace{-0,1cm}
\end{align}

\noindent Indeed, the case $y\in \fet$ in \eqref{f1lemdhp}  applied   to the  semicircle  $\gamma (-1,1) \!\subset\! \fet$,  gives
\begin{align}\label{f2bsdenac}
    &\hspace{-0,25cm}  {\rm{(a)}}\  S^{\hspace{0,02cm}{\tn{||}}}_{\!{\tn{\frown}}}\!=\!  \bigsqcup\nolimits_{{\fo{\phi \in \Gamma_{\vartheta}^{\hspace{0,02cm}{\tn{||}}}\cup \{\phi_{0}\} }}} \ \phi \big(\gamma (-1,1)\big) \ , \ \  {\rm{(b)}}\
     S_{\!{\tn{\frown}}}^{\infty}=\bigsqcup\nolimits_{n \in \Bb{Z}} \  (2n + S^{\hspace{0,02cm}{\tn{||}}}_{\!{\tn{\frown}}})\,,\hspace{-0,1cm}
\end{align}

 \noindent and it follows from  $-1/\gamma (-1,1)=\gamma (-1,1)$ and  \eqref{f6contgen} that
\begin{align}\label{f1zbsdenac}
    & \hspace{-0,2cm}
     S_{\!{\tn{\frown}}}^{\infty}\!=\!\!\bigcup\limits_{{\fo{\phi \in \Gamma_{\vartheta} }}}\!\!\! \phi \big(\gamma (-1,1)\big) \ , \ \ \phi \left(  S_{\!{\tn{\frown}}}^{\infty}\right)\!= \! S_{\!{\tn{\frown}}}^{\infty}  , \  \phi \left( \Bb{H}\setminus S_{\!{\tn{\frown}}}^{\infty}\right)\!= \! \Bb{H}\setminus S_{\!{\tn{\frown}}}^{\infty}  , \
          \ \
\phi \!\in \!\Gamma_{\vartheta} \, . \hspace{-0,1cm}
\end{align}

Regarding the two triangular parts into which the open semicircle $\gamma (-1,1)$ splits the Schwarz
quadrilateral, we introduce the notation (see \eqref{f10contgen})
\begin{align}\label{f3zbsdenac}
\fetu:=\mathcal{F}_{{\tn{\square}}} \setminus\overline{\Bb{D}} \, ,  \ \
\fetu^{\,{\tn{||}}}:=\mathcal{F}^{\,{\tn{||}}}_{{\tn{\square}}} \setminus\overline{\Bb{D}} \, ,  \ \
 \fetd := \Bb{D} \cap \fet   \, , \ \
         \fetu^{\,{\tn{|}}\sigma} :=  \fet^{\,{\tn{|}}\sigma}  \setminus\overline{\Bb{D}} \, ,
\end{align}

\noindent where $\sigma\in \{+1,-1\}$. Then
\begin{align}\label{f4denac}
    & \hspace{-0,15cm}\fet  = \fetu\sqcup \gamma (-1,1) \sqcup \fetd \, , \
\fet^{\,{\tn{|}}\sigma}  = \fetu^{\,{\tn{|}}\sigma}\sqcup \gamma (-1,1) \sqcup \fetd \, , \ \ \sigma\in \{+1,-1\} \,,\hspace{-0,1cm}
\end{align}

\noindent and for each $\eufm{n}\in \Bb{Z}_{\neq 0}^{\hspace{0,02cm}\Bb{N}_{\hspace{-0,02cm}\eurm{f}}}$ we can apply $\phi_{\eufm{n}}$ to the  equalities \eqref{f4denac}
with $\sigma=\sigma_{\eufm{n}}$, to obtain
\begin{align}\label{f5bsden}
    & \hspace{-0,2cm}
    \begin{array}{lll}
\fetr^{\hspace{0,01cm}\eufm{n}} =
\fetur^{\hspace{0,01cm}\eufm{n}} \sqcup \gamma_{\eufm{n}} (-1, 1)  \sqcup \fetd^{\hspace{0,01cm}\eufm{n}} \, ,  & \ \fetur^{\hspace{0,01cm}\eufm{n}} := \phi_{\eufm{n}}  \big(\fetu^{\,{\tn{|}}\sigma_{\eufm{n}}}\big),  & \ \fetd^{\hspace{0,01cm}\eufm{n}} := \phi_{\eufm{n}} \big(\fetd  \big),       \\[0,2cm]
\get^{\hspace{0,01cm}\eufm{n}}\! =
\getu^{\hspace{0,01cm}\eufm{n}} \sqcup \gamma_{\eufm{n}} (-1, 1)  \sqcup \fetd^{\hspace{0,01cm}\eufm{n}} \, ,  & \
\getu^{\hspace{0,01cm}\eufm{n}} := \phi_{\eufm{n}}  \big(\fetu\big),  & \
\   \eufm{n}\in \Bb{Z}_{\neq 0}^{\hspace{0,02cm}\Bb{N}_{\hspace{-0,02cm}\eurm{f}}}\,.
    \end{array}
\hspace{-0,1cm}
\end{align}

\noindent Here $\fetd^{\hspace{0,01cm}\eufm{n}}$ and $\fetur^{\hspace{0,01cm}\eufm{n}}$ are  hyperbolic triangles, the set
$\{\,\phi_{\eufm{n}}(x)\, | \, x\!\in\! \{-1,0,1\}\,\}$ forms the vertices of $\fetd^{\hspace{0,01cm}\eufm{n}}$, and $\fetd^{\hspace{0,01cm}\eufm{n}}$
  is open,
    while $\gamma_{\eufm{n}} (-1, 1)$ is the roof of  $\fetd^{\hspace{0,01cm}\eufm{n}}$.  At the same time, $\{\,\phi_{\eufm{n}}(x)\, | \, x\!\in\! \{-1,1,\infty\}\,\}$  are also the vertices of $\fetur^{\hspace{0,01cm}\eufm{n}}$,   $\gamma_{\eufm{n}} (-1, 1)$ and  $\gamma_{\eufm{n}} ( - \sigma_{\eufm{n}}, \infty)$ are the lower arches of $\fetur^{\hspace{0,01cm}\eufm{n}}$   not contained in $\fetur^{\hspace{0,01cm}\eufm{n}}$,  while
 the roof $\gamma_{\eufm{n}}(\sigma_{\eufm{n}}, \infty)$ of   $\fetr^{\hspace{0,01cm}\eufm{n}}$ is contained in
 $\fetur^{\hspace{0,01cm}\eufm{n}}$.

A combination of  \eqref{f3bsdentp} and \eqref{f2bsdenac}, \eqref{f4denac}, \eqref{f5bsden} gives
\begin{align*}
    & \hspace{-0,3cm} \Bb{H}_{|\re |\leqslant 1}\setminus S^{\hspace{0,02cm}{\tn{||}}}_{\!{\tn{\frown}}} =
   \fetu^{\,{\tn{||}}} \sqcup \fetd
\sqcup\hspace{-0,3cm}
 \bigsqcup\limits_{ {\fo{n_{1}, ... , n_{N} \in \Bb{Z}_{\neq 0}}} \,, \  {\fo{N \in \Bb{N}}} }\left(\fetur^{\hspace{0,05cm}n_{N}, ..., n_{1}}
  \sqcup \fetd^{\hspace{0,05cm}n_{N}, ..., n_{1}}\right).\hspace{-0,1cm}
\end{align*}

\noindent By regrouping the subsets involved, we find that
\begin{align}\label{f15bsdenac}
    &  \Bb{H}_{|\re |\leqslant 1}\setminus S^{\hspace{0,02cm}{\tn{||}}}_{\!{\tn{\frown}}} = \fetu^{\,{\tn{||}}}  \sqcup \bigsqcup\nolimits_{{\fo{\ \eufm{n}\in \Bb{Z}_{\neq 0}^{\hspace{0,02cm}\Bb{N}_{\hspace{-0,02cm}\eurm{f}}}\cup\{0\} }}}\eusm{E}^{\eufm{n}}_{\!{\tn{\frown}}} \, , \
\end{align}

\vspace{-0,4cm}
\noindent where we use the notation
\begin{align}\label{f8bsdenac}
    &\hspace{-0,2cm}
\begin{array}{rl}
{\rm{(a)}}   &  \begin{displaystyle}
   \eusm{E}^{0}_{\!{\tn{\frown}}}\!:=\! \fetd \sqcup\!\!\!\!
 \bigsqcup\limits_{{\fo{n_{0}\!\in\! \Bb{Z}_{\neq 0}}}}\!\!\!\!\! \fetur^{\hspace{0,05cm} n_{0}} ; \qquad \eusm{E}^{\hspace{0,05cm}n_{N}, ...,\, n_{1}}_{\!{\tn{\frown}}}\!:=\!\fetd^{\hspace{0,05cm}n_{N}, ...,\, n_{1}} \sqcup \!\!\!\! \bigsqcup\limits_{{\fo{n_{0}\!\in\! \Bb{Z}_{\neq 0}}}}\!\!\!\!\!  \fetur^{\hspace{0,05cm}n_{N}, ..., \,n_{1}, \,n_{0}} ,
\end{displaystyle}
 \\[0,6cm]
{\rm{(b)}}  &\begin{displaystyle}
  \eusm{E}^{\hspace{0,05cm}n_{N}, ...,\, n_{1}}_{\!{\tn{\frown}}} =   \phi_{{\fo{n_{N}, ..., n_{1}}}}\big( \eusm{E}^{0}_{\!{\tn{\frown}}}\big),
 \
 \quad  n_{N}, ..., n_{1}\!\in\!  \Bb{Z}_{\neq 0}\,, \ \ N \in \Bb{N}\,,
\end{displaystyle}\end{array}\hspace{-0,1cm}
\end{align}

\noindent  and the  identities \eqref{f8bsdenac}(b) follow directly from \eqref{f5bsden} and \eqref{f2bsden}.
Moreover,
\begin{align*}
    &  -1\big/ \eusm{E}^{0}_{\!{\tn{\frown}}}\!=\! \fetu\sqcup\!\!\!
 \bigsqcup\limits_{{\fo{n_{0}\!\in\! \Bb{N}}}}\!\!\!\Big( \big(\fetu^{\,{\tn{|}}+1}-2 n_{0}\big) \sqcup\big(\fetu^{\,{\tn{|}}-1}+2 n_{0}\big)\Big)=
\Bb{H} \setminus\underset{{\fo{m \in \Bb{Z}}}}{\cup } \left(2m +
\overline{\Bb{D}}\right),
\end{align*}

\noindent  which leads to ($\phi_{0}(z):=z$)
\begin{align}\label{f5denac}
    &
 \eusm{E}^{\eufm{n}}_{\!{\tn{\frown}}} =   \phi_{\eufm{n}}\big(- 1/\eusm{E}^{\infty}_{\!{\tn{\frown}}}\big) \ , \quad
 \eufm{n}\in \{0\}\cup \Bb{Z}_{\neq 0}^{\hspace{0,02cm}\Bb{N}_{\hspace{-0,02cm}\eurm{f}}} \, , \quad \eusm{E}^{\infty}_{\!{\tn{\frown}}}:=
\Bb{H} \setminus\underset{{\fo{m \in \Bb{Z}}}}{\cup } \left(2m +
\overline{\Bb{D}}\right).
\end{align}

\noindent In particular, the set $\eusm{E}^{0}_{\!{\tn{\frown}}}$ is open and its boundary
consists of the  roof $\gamma(-1,1)$ and the infinitely many lower arches $\gamma (1/(2n+1), 1/(2n-1))$,
$n\!\in\! \Bb{Z}_{\neq 0}$, which accumulate at the origin, i.e.,
\begin{align}\label{f5tdenac}
 {\rm{co}} \left(\eusm{E}^{0}_{\!{\tn{\frown}}}\right)\!=\!\Bb{D}_{\im > 0} \, ,  \qquad
    \eusm{E}^{0}_{\!{\tn{\frown}}}\!=\! \Bb{D}_{\im > 0}\Big\backslash \bigsqcup\limits_{{\fo{n \in \Bb{Z}_{\neq 0}}}} \phi_{n}\big(\overline{\Bb{D}}_{\im > 0}\big) ,
\end{align}

\noindent where $ {\rm{co}} (A)$ denotes the convex hull of a given subset $A \subset \Bb{R}^{2}$.
For every $\eufm{n}\!=\!(n_{N}, ..., n_{1}) \in  \Bb{Z}_{\neq 0}^{\hspace{0,02cm}\Bb{N}_{\hspace{-0,02cm}\eurm{f}}}$ the
associated subset $\eusm{E}^{\eufm{n}}_{\!{\tn{\frown}}}$ has the similar structure, as follows from the  relationships \eqref{f5denac}.
To be specific, $ \eusm{E}^{\eufm{n}}_{\!{\tn{\frown}}}$ is open and its boundary  consists of the roof $\gamma(\phi_{\eufm{n}}(-1),\phi_{\eufm{n}}(1))$ and the infinitely many lower arches
\\
$\phantom{a}$ \hspace{2,25cm}
$\gamma \big(\phi_{\eufm{n}}(1/{\sm{(}}2n+1{\sm{)}}), \phi_{\eufm{n}}(1/{\sm{(}}2n-1{\sm{)}})\big)$,  \quad   $n\!\in\! \Bb{Z}_{\neq 0}$,
\\
\noindent
 which accumulate at the  point $\phi_{\eufm{n}} (0)$. The analogue of \eqref{f5tdenac} reads   (see \eqref{f9contgendef1})
\begin{align}\label{f5zdenac}
    &\hspace{-0,25cm}
\begin{array}{l}
 \ \, {\rm{co}} \left(\eusm{E}^{\eufm{n}}_{\!{\tn{\frown}}}\right)\!=\!
    \phi_{\eufm{n}}\big(\Bb{D}_{\im > 0}\big), \
   \eusm{E}^{\eufm{n}}_{\!{\tn{\frown}}}=
\phi_{\eufm{n}}\big(\Bb{D}_{\im > 0}\big)\big\backslash\!\!\! \bigsqcup\limits_{{\fo{n_{0} \in \Bb{Z}_{\neq 0}}}}
\phi_{{\fo{\eufm{n}, n_{0}}}}\big(\overline{\Bb{D}}_{\im > 0}\big)
 \,.
\end{array}\hspace{-0,15cm}
\end{align}

\vspace{-0,15cm}\noindent
It follows that (see \eqref{f3bsden}, \eqref{12bsdenac} and \eqref{f1schw})
\begin{align*}
 &
\hspace{-0,25cm}\begin{array}{ll}
  \phi_{\eufm{n}}\big(\Bb{D}_{\im > 0}\big) = {\rm{co}}\big(\,\gamma \left(\phi_{\eufm{n}}(-1), \phi_{\eufm{n}}(1)\right)\,\big) \, , &\quad
 -1\! \leqslant\! \phi_{\eufm{n}}(-1)\! <\! \phi_{\eufm{n}}(1) \!\leqslant\! 1   \ , \\[0,25cm]
\Bb{H}\! \cap \partial \phi_{\eufm{n}}\big(\Bb{D}_{\im > 0}\big)\! = \!\gamma_{\eufm{n}} (-1,1)  \, , &\quad
 \eufm{n}\!\in \!\Bb{Z}_{\neq 0}^{\hspace{0,02cm}\Bb{N}_{\hspace{-0,02cm}\eurm{f}}} \,.
\end{array} \hspace{-0,1cm}
\end{align*}

\noindent In particular, $\eusm{E}^{\eufm{n}}_{\!{\tn{\frown}}}$ is simply connected  for each  $\eufm{n} \in \Bb{Z}_{\neq 0}^{\hspace{0,02cm}\Bb{N}_{\hspace{-0,02cm}\eurm{f}}}\cup\!\{0\} $ (see \cite[p.\! 93]{con}).

\vspace{0,1cm}The rational number $p/q$, $p \in \Bb{Z}$, $ q \in \Bb{Z}_{\neq 0}$, ${\rm{gcd}}(p, q) =1$, is called even if
$p\,q \,\equiv 0 ({\rm{mod}}\, 2)$.
It is known
(see \cite[p.\! 303]{lop}){\hyperlink{r39}{${}^{\ref*{case39}}$}}\hypertarget{br39}{}
that   each nonzero even rational number on $(-1,1)$ can be uniquely represented in the form $\phi_{\eufm{n}} (0)$ with some  $\eufm{n}\in  \Bb{Z}_{\neq 0}^{\hspace{0,02cm}\Bb{N}_{\hspace{-0,02cm}\eurm{f}}}$, and conversely.
This suggests the introduction of the following notions.

\begin{defsectsix}\hspace{-0,17cm}{\bf{.}}\label{contgendef2}
Given $m\in \Bb{Z}$ and $\eufm{n}\in  \Bb{Z}_{\neq 0}^{\hspace{0,02cm}\Bb{N}_{\hspace{-0,02cm}\eurm{f}}}\!:=\!\sqcup_{k\geqslant 1}\! \left(\Bb{Z}_{\neq 0}\right)^{k}$, the open sets \\[0,1cm] $\phantom{a}$\hspace{3cm}$2m\!+\!\eusm{E}^{0}_{\!{\tn{\frown}}}$, \   $\eusm{E}^{\infty}_{\!{\tn{\frown}}}$\   and \   $2m+  \eusm{E}^{\eufm{n}}_{\!{\tn{\frown}}}$  \\[0,1cm]\noindent
    are called the {\it{even rational  neighborhoods}} of
 $2m$, $\infty$  and   the even rational number $2m+ \phi_{\eufm{n}} (0)\in (2m-1,2m+1)\setminus\{2m\}$,
  respectively.
\end{defsectsix}

\vspace{-0.7cm}
\begin{figure}[htbp] \centering
    \begin{tikzpicture}
    \definecolor{cv0}{rgb}{0.95,0.95,0.95}
    \definecolor{cv1}{rgb}{0.9,0.9,0.9}
       \begin{scope}[scale=1]
  \clip(-5.5,-1) rectangle (6,4.5);
   \draw (-5.25,0) circle (0.04)  node[below] {$-3$};
    \draw (-3.5,0) circle (0.04)  node[below] {$-2$};
    \draw (-1.75,0) circle (0.04) node[below] {$-1$};
     \draw (-0.6,0) circle (0.025) (-0.7,0) node[below] {${\fo{-\dfrac{1}{3}}}$};
     \draw (-0.15,0)  node[below] {$\cdot\!\!\cdot\!\!\cdot$};
    \draw (0,0) circle (0.025) (0,-0.2) node[below] {$0$};
     \draw (0.15,0)  node[below] {$\cdot\!\!\cdot\!\!\cdot$};
          \draw (0.6,0) circle (0.025) node[below] {${\fo{\dfrac{1}{3}}}$};
    \draw (1.75,0) circle (0.04) node[below] {$1$};
    \draw (3.5,0) circle (0.04)  node[below] {$2$};
       \draw (5.25,0) circle (0.04)  node[below] {$3$};
       \fill[color=cv0,draw=lightgray] (-5.25,0) arc (180:0:1.75) arc (180:0:1.75) arc (180:0:1.75) -- (5.25,4) -- (-5.25,4);
\fill[color=cv1,draw=lightgray] (-1.75,0) arc (180:0:0.575) arc (180:0:0.15) arc (180:0:0.1)
arc (180:0:0.05) arc (180:0:0.05) arc (180:0:0.1) arc (180:0:0.15) arc (180:0:0.575)  arc (0:180:1.75);
 \draw (-0.3,0) circle (0.025) (-0.4,0.1) node[above] {${\fo{-\dfrac{1}{5}}}$};
 \draw (0.3,0) circle (0.025) (0.3,0.1) node[above] {${\fo{\dfrac{1}{5}}}$};
             \draw (-5.25,0) -- (5.25,0);
        \draw (-5.25,0) arc (180:0:1.75) arc (180:0:1.75) arc (180:0:1.75);
         \draw  (-1.75,0) arc (180:0:0.575) arc (180:0:0.15) arc (180:0:0.1)  arc (180:0:0.05);
          \draw  (1.75,0) arc (0:180:0.575) arc (0:180:0.15) arc (0:180:0.1) arc (0:180:0.05);
          \draw (0,0.835) node[above] {${\nor{\eusm{E}^{0}_{\!{\tn{\frown}}}\!:=\! - 1/\eusm{E}^{\infty}_{\!{\tn{\frown}}}   }} $};
          \draw (0,2.4) node[above] {${\nor{\eusm{E}^{\infty}_{\!{\tn{\frown}}}\!:=\!
\Bb{H} \setminus\underset{{\fo{m \in \Bb{Z}}}}{\cup } \left(2m +
\overline{\Bb{D}}\,\right)}}$}; 
 \draw (0,1.75) circle (0.04); \draw (0,1.95)  node[right] {$\!i$};
 \draw[dotted] (0,0)  -- (0,1);  \draw[dotted] (0,1.3)  -- (0,2.4); \draw[dotted] (0,3.25)  -- (0,4);
 \draw[dotted] (-5.25,1.75)  -- (5.25,1.75);
 \draw[dotted] (-3.5,0)  -- (-3.5,1.75); \draw (-3.5,1.75) circle (0.04);
 \draw[dotted] (3.5,0)  -- (3.5,1.75); \draw (3.5,1.75) circle (0.04);
      \end{scope}
    \end{tikzpicture}\vspace{-0.3cm}
    \caption{\hspace{-0,2cm}{\bf{.}}  Even rational  neighborhoods $\eusm{E}^{\infty}_{\!{\tn{\frown}}}$ and
    $\eusm{E}^{0}_{\!{\tn{\frown}}}$ of $\infty$ and of $0$, respectively.}
    \label{figure:neighborhood}\vspace{0,3cm}
\end{figure}

\vspace{-0.3cm}
For each given $\eufm{n}\in \{0\} \cup\Bb{Z}_{\neq 0}^{\hspace{0,02cm}\Bb{N}_{\hspace{-0,02cm}\eurm{f}}} $, we add  to the subset $\eusm{E}^{\eufm{n}}_{\!{\tn{\frown}}}$  its corresponding roof
$\phi_{\eufm{n}} (-1,1)$,  and
obtain the roofed subsets (see \eqref{12bsdenac}),
\begin{align}\label{f16bsdenac}
    &  \widehat{\eusm{E}}^{\hspace{0,025cm}\eufm{n}}_{\!{\tn{\frown}}}\!:=\!
   \gamma_{\eufm{n}}(-1,1) \sqcup   \eusm{E}^{\hspace{0,05cm}\eufm{n}}_{\!{\tn{\frown}}}, \quad
   \widehat{\eusm{E}}^{\hspace{0,025cm}\eufm{n}}_{\!{\tn{\frown}}}\!=\!
\phi_{\eufm{n}}\big(\overline{\Bb{D}}_{\im > 0}\big)\setminus \bigsqcup\nolimits_{\, {\fo{n_{0} \in \Bb{Z}_{\neq 0}}}}
\ \phi_{{\fo{\eufm{n}, n_{0}}}}\big(\overline{\Bb{D}}_{\im > 0}\big)
    ,
\end{align}

\noindent
which in view of \eqref{f15bsdenac} and \eqref{f2bsdenac} gives us  the following partition
\begin{align}
    &  \Bb{H} = \eusm{E}^{\infty}_{\!{\tn{\frown}}}\  \sqcup
   \bigsqcup\limits_{{\fo{n_{0} \in \Bb{Z}}}} \Big(2 n_{0}+
  \widehat{\eusm{E}}^{\hspace{0,025cm}0}_{\!{\tn{\frown}}} \Big)
 \ \  \sqcup \!\!\!\!\!\!
 \bigsqcup\limits_{
  {\fo{\eufm{n}\in  \Bb{Z}_{\neq 0}^{\hspace{0,02cm}\Bb{N}_{\hspace{-0,02cm}\eurm{f}}}\,, \ n_{0} \in \Bb{Z}  }} }
\Big( 2 n_{0} + \widehat{\eusm{E}}^{\hspace{0,025cm}\eufm{n}}_{\!{\tn{\frown}}}\Big),
 \label{f6zdenac}
\end{align}

\vspace{-0,2cm}
\noindent which we refer to as the {\it{even rational  partition}} of $\Bb{H}$. Here, we
write
$\eusm{E}^{\infty}_{\!{\tn{\frown}}} = \Bb{H} \setminus \cup_{\, {\fo{m \in \Bb{Z}}}}\ (2m + \overline{\Bb{D}})$
and in accordance with \eqref{f2bsdenac}(b) and \eqref{f15bsdenac}, it follows from \eqref{f6zdenac} that (for notation, cf. \eqref{f1contgen})
\begin{align}\label{f6wdenac}
    &  \Bb{H} \big\backslash  S_{\!{\tn{\frown}}}^{\infty}= \eusm{E}^{\infty}_{\!{\tn{\frown}}}\  \sqcup
   \bigsqcup\limits_{{\fo{n_{0} \in \Bb{Z}}}} \Big(2 n_{0}+
  \eusm{E}^{0}_{\!{\tn{\frown}}} \Big)
 \ \  \sqcup \!\!\!\!\!\!
 \bigsqcup\limits_{
  {\fo{\eufm{n}\in  \Bb{Z}_{\neq 0}^{\hspace{0,02cm}\Bb{N}_{\hspace{-0,02cm}\eurm{f}}}\,, \ n_{0} \in \Bb{Z}  }} }
\Big( 2 n_{0} + \eusm{E}^{\eufm{n}}_{\!{\tn{\frown}}}\Big).
\end{align}

\vspace{-0,2cm}
\noindent The following property is obtained in Section~\ref{pbsdentpth2}.
\begin{theorem}\hspace{-0,15cm}{\bf{.}}\label{bsdentpth3} The set
$\{\phi ( \Bb{H}\cap \partial\Bb{D} )\}_{\phi \in \Gamma_{\vartheta}}$ generates the even rational
partition \eqref{f6zdenac} of $\Bb{H}$.
\end{theorem}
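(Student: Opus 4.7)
The plan is to verify the two items required by the ``generates a partition'' convention: that each piece of \eqref{f6zdenac} has nonempty interior, and that the union of the boundaries (taken in $\Bb{H}$) of the pieces equals $\cup_{\phi\in\Gamma_{\vartheta}}\phi(\Bb{H}\cap\partial\Bb{D})$. The fact that \eqref{f6zdenac} is a disjoint covering of $\Bb{H}$ is already established in the text. The nonempty-interior condition is immediate: $\eusm{E}^{\infty}_{\!{\tn{\frown}}}$ is open by \eqref{f5denac}, and each translate $2n_{0}+\widehat{\eusm{E}}^{\hspace{0,025cm}\eufm{n}}_{\!{\tn{\frown}}}$ contains the open nonempty subset $2n_{0}+\eusm{E}^{\hspace{0,05cm}\eufm{n}}_{\!{\tn{\frown}}}$, whose openness and nonemptiness are recorded in \eqref{f5zdenac} for $\eufm{n}\in\Bb{Z}_{\neq 0}^{\hspace{0,02cm}\Bb{N}_{\hspace{-0,02cm}\eurm{f}}}$ and in \eqref{f5tdenac} for $\eufm{n}=0$.

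Next I would compute the boundaries piece by piece. From $\eusm{E}^{\infty}_{\!{\tn{\frown}}}=\Bb{H}\setminus\cup_{m\in\Bb{Z}}(2m+\overline{\Bb{D}})$ one reads off $\Bb{H}\cap\partial\eusm{E}^{\infty}_{\!{\tn{\frown}}}=\bigcup_{m\in\Bb{Z}}(2m+\gamma(-1,1))$. For any $\eufm{n}\in\{0\}\cup\Bb{Z}_{\neq 0}^{\hspace{0,02cm}\Bb{N}_{\hspace{-0,02cm}\eurm{f}}}$, formula \eqref{f16bsdenac} presents $\widehat{\eusm{E}}^{\hspace{0,025cm}\eufm{n}}_{\!{\tn{\frown}}}$ as the closed (in $\Bb{H}$) upper half-disk $\phi_{\eufm{n}}(\overline{\Bb{D}}_{\im>0})$ minus a countable disjoint union of smaller closed upper half-disks $\phi_{\eufm{n},n'}(\overline{\Bb{D}}_{\im>0})$, $n'\in\Bb{Z}_{\neq 0}$, which accumulate only at the ideal point $\phi_{\eufm{n}}(0)\in\Bb{R}$. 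Its boundary in $\Bb{H}$ is therefore $\phi_{\eufm{n}}(\gamma(-1,1))\cup\bigcup_{n'\in\Bb{Z}_{\neq 0}}\phi_{\eufm{n},n'}(\gamma(-1,1))$, and this simply shifts by $2n_{0}$ under the corresponding translation.

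Collecting all contributions and reindexing produces $\bigcup_{n_{0}\in\Bb{Z},\,\eufm{m}\in\{0\}\cup\Bb{Z}_{\neq 0}^{\hspace{0,02cm}\Bb{N}_{\hspace{-0,02cm}\eurm{f}}}}(2n_{0}+\phi_{\eufm{m}}(\gamma(-1,1)))$: concatenations $(\eufm{n},n')$ with $\eufm{n}\in\Bb{Z}_{\neq 0}^{\hspace{0,02cm}\Bb{N}_{\hspace{-0,02cm}\eurm{f}}}$ and $n'\in\Bb{Z}_{\neq 0}$ exhaust the sequences of length $\geq 2$, the inner arcs of $\widehat{\eusm{E}}^{\hspace{0,025cm}0}_{\!{\tn{\frown}}}$ supply length-$1$ sequences, and the translates of $\gamma(-1,1)$ itself (serving simultaneously as inner arcs of $\eusm{E}^{\infty}_{\!{\tn{\frown}}}$ and as the roofs of the pieces $2n_{0}+\widehat{\eusm{E}}^{\hspace{0,025cm}0}_{\!{\tn{\frown}}}$) handle the empty-sequence case $\eufm{m}=0$. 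Since $\gamma(-1,1)=\Bb{H}\cap\partial\Bb{D}$ is invariant under $z\mapsto -1/z$, formula \eqref{f6contgen} identifies this union with $\cup_{\phi\in\Gamma_{\vartheta}}\phi(\gamma(-1,1))$, completing the verification. The main (though routine) obstacle is exactly this bookkeeping step --- ensuring that every orbit element and each of its $2\Bb{Z}$-translates appears through the correct combination of ``roof'' and ``inner arc'' contributions, which is precisely the structural reason for enlarging $\eusm{E}^{\eufm{n}}_{\!{\tn{\frown}}}$ to $\widehat{\eusm{E}}^{\hspace{0,025cm}\eufm{n}}_{\!{\tn{\frown}}}$ by adjoining its roof.
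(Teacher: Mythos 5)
Your proposal is correct and follows essentially the same route as the paper's proof: identify the union of the boundaries (in $\Bb{H}$) of the pieces with the roofs $\gamma_{\eufm{n}}(-1,1)$ and their even-integer translates, and then use $-1/\gamma(-1,1)=\gamma(-1,1)$ together with \eqref{f6contgen} to recognize this union as the orbit $\{\phi(\Bb{H}\cap\partial\Bb{D})\}_{\phi\in\Gamma_{\vartheta}}$. Your explicit bookkeeping via \eqref{f16bsdenac} and the verification of the nonempty-interior condition merely spell out what the paper compresses into its citation of \eqref{f5tdenac} and \eqref{f5zdenac}.
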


By intersecting the both sides of \eqref{f6zdenac} with the closed unit disk $\overline{\Bb{D}}$, we get
\begin{align}\label{f17bsdenac}
    &
\overline{ \Bb{D}}_{\im > 0}= \widehat{\eusm{E}}^{\hspace{0,025cm}0}_{\!{\tn{\frown}}}\ \  \sqcup \ \
 \bigsqcup\nolimits_{
  {\fo{\ \eufm{n}\in  \Bb{Z}_{\neq 0}^{\hspace{0,02cm}\Bb{N}_{\hspace{-0,02cm}\eurm{f}}}  }} } \ \   \widehat{\eusm{E}}^{\hspace{0,025cm}\eufm{n}}_{\!{\tn{\frown}}} \, ,
\end{align}

\noindent
and for each $\eufm{n}\in \Bb{Z}_{\neq 0}^{\hspace{0,02cm}\Bb{N}_{\hspace{-0,02cm}\eurm{f}}}$ we can apply $\phi_{\eufm{n}}$ to both sides of  \eqref{f17bsdenac}, to obtain, in view of \eqref{f9contgendef1}  and $\phi_{{\fo{\,\eufm{n}}}}(\overline{ \Bb{D}}_{\im > 0}\big)  \!=\!
   \gamma_{\eufm{n}}(-1,1) \sqcup   \phi_{{\fo{\,\eufm{n}}}}\big(\Bb{D}_{\im > 0})$,
\begin{align}\label{f18bsdenac}
    &\hspace{-0,25cm}  \phi_{{\fo{\eufm{n}}}}\big(\overline{\Bb{D}}_{\im > 0}\big)=\widehat{\eusm{E}}^{\hspace{0,025cm}\eufm{n}}_{\!{\tn{\frown}}}\ \sqcup \
 \bigsqcup\nolimits_{\
  {\fo{\eufm{m} \in \Bb{Z}_{\neq 0}}} }
\
\widehat{\eusm{E}}^{\eufm{n}, \eufm{m}}_{\!{\tn{\frown}}}
\ , \ \eufm{n}:=(n_{N}, ..., n_{1})\!\in\!  \Bb{Z}_{\neq 0}^{N} \ , \ N \in \Bb{N}.
\end{align}

\noindent This for arbitrary $N \in \Bb{N}$ shows that  (cf. \cite[p.\! 44]{bon})
\begin{align*}
\bigsqcup\limits_{  {\fo{\eufm{n} \in \Bb{Z}^{N}_{\neq 0}}} } \phi_{{\fo{\eufm{n}}}}\big(\overline{\Bb{D}}_{\im > 0}\big)  =
\bigsqcup\limits_{ \ {\fo{\eufm{n} \in \Bb{Z}^{N}_{\neq 0}}} }\Big(\widehat{\eusm{E}}^{\hspace{0,025cm}\eufm{n}}_{\!{\tn{\frown}}}\ \  \sqcup \!\!
 \bigsqcup\limits_{
 {\fo{\eufm{m} \in \Bb{Z}^{\hspace{0,02cm}\Bb{N}_{\hspace{-0,02cm}\eurm{f}}}_{\neq 0}}} }
\widehat{\eusm{E}}^{\eufm{n}, \eufm{m}}_{\!{\tn{\frown}}}
\Big)  =
\bigsqcup\limits_{
  {\fo{\eufm{m}\in \Bb{Z}^{M}_{\neq 0}}} \,,\ {\fo{M\geqslant N  }} }
\widehat{\eusm{E}}^{\hspace{0,025cm}\eufm{m}}_{\!{\tn{\frown}}} \ ,
\end{align*}

\noindent so that in view of \eqref{f11contgen},
\begin{align*}
     & \hspace{-0,25cm}\bigsqcup\limits_{
 {\fo{ d (\phi)\geqslant N  }} \,, \ {\fo{\phi\in \Gamma_{\vartheta}^{\hspace{0,015cm}{\tn{||}}}}}\cup \{\phi_{0}\}  } \hspace{-0,5cm} \phi
\left(\widehat{\eusm{E}}^{\hspace{0,025cm}0}_{\!{\tn{\frown}}}\right) = \bigsqcup\limits_{
 {\fo{ d (\phi)= N  }} \,, \ {\fo{\phi\in \Gamma_{\vartheta}^{\hspace{0,015cm}{\tn{||}}}}}\cup \{\phi_{0}\}   }
\hspace{-0,5cm}\phi \big(\overline{\Bb{D}}_{\im > 0}\big) , \ \  N\in \Bb{Z}_{\geqslant 0}\,,\hspace{-0,1cm}
\end{align*}

\noindent where for $N\!=\!0$ this splitting coincides with \eqref{f17bsdenac}.
In accordance with \eqref{f11contgen}, \eqref{f5denac} and \eqref{f6zdenac}, we can associate with each point $z\!\in\! \Bb{H}$ the corresponding {\it{even rational  height}}
\begin{align}\label{f16xcontgen}
    & \hspace{-0,5cm} \eurm{h}_{\eusm{E}} (z) :=
\left\{\begin{array}{ll}
  0  \, , \    &    \ \mbox{if} \ z \in \eusm{E}^{\infty}_{\!{\tn{\frown}}}=
\Bb{H} \setminus \cup_{\, {\fo{m \in \Bb{Z}}}}\ \left(2m +
\overline{\Bb{D}}\right) \, , \    \\[0,15cm]
   1 + d (\phi) \, , \    &
\begin{displaystyle}
 \ \mbox{if} \ z \in   \bigsqcup\nolimits_{{\ \fo{n_{0} \in \Bb{Z}}}}
\ \Big(2 n_{0}+
  \phi
\big(\widehat{\eusm{E}}^{\hspace{0,025cm}0}_{\!{\tn{\frown}}}\big) \Big)\, , \ \phi\in \Gamma_{\vartheta}^{\hspace{0,015cm}{\tn{||}}}\cup \{\phi_{0}\}\,.
\end{displaystyle}
   \end{array}\right.\hspace{-0,7cm}
\end{align}

\noindent The results established  in \cite[p.\! 44]{bon} imply the asymptotics
\begin{align*}
    &  \int\nolimits_{-1}^{1}\eurm{h}_{\eusm{E}} (x+ iy )  d x = 2 \pi^{-2}
    \log^{2}(1/ y) + {\rm{O}} \big(\log (1/ y)\big) \, ,
\qquad  0<y \to 0 \,.
\end{align*}

\vspace{0,1cm}
Finally, we notice that the inclusion $\eusm{E}^{0}_{\!{\tn{\frown}}}\! \subset\! \Bb{H}_{|\re|< 1} $, taken together with the properties \eqref{f8bsdenac}(b) and \eqref{f15contgen}, shows that{\hyperlink{r25}{${}^{\ref*{case25}}$}}\hypertarget{br25}{}
\begin{align}\label{f16contgen}
    &    \begin{array}{lrllrcll}
    {\rm{(a)}} \   &
\Bb{G}_{2}  \big(\eusm{E}^{0}_{\!{\tn{\frown}}} \big)& \! = \!  &
        \mathcal{F}^{\,{\tn{||}}}_{{\tn{\square}}} \setminus\overline{\Bb{D}} \,,
           &\Bb{G}_{2}\big(\gamma(-1,1)\big)& \! = \!  &\gamma(-1,1)\,;
          &   \\[0,3cm]
    {\rm{(b)}} \   &
\Bb{G}_{2}^{N}  \big( \eusm{E}^{\hspace{0,05cm}\eufm{n}}_{\!{\tn{\frown}}} \big)& \! = \!  &
      \eusm{E}^{0}_{\!{\tn{\frown}}} \ ,
      &\Bb{G}_{2}^{N}\big(\gamma_{\eufm{n}}(-1,1)\big)& \! = \!  &\gamma(-1,1),
        &\  \mbox{if} \ N \! \geqslant\!  1\,;  \\[0,3cm]
{\rm{(c)}} \   &
\Bb{G}_{2}^{k}  \big( \eusm{E}^{\hspace{0,05cm}\eufm{n}}_{\!{\tn{\frown}}} \big)  & \! = \!  & \eusm{E}^{\hspace{0,05cm}\eufm{n}_{k}}_{\!{\tn{\frown}}}\,,
&\Bb{G}_{2}^{k}\big(\gamma_{\eufm{n}}(-1,1)\big)& \! = \!  &
\gamma_{\eufm{n}_{k}}(-1,1),
&\  \mbox{if} \ 1 \! \leqslant\!  k \! \leqslant\!  N\! -\! 1, \, N\! \geqslant\! 2  \,.
      \end{array}\hspace{-1cm}
\end{align}

\noindent
 Here,
$\eufm{n} =(n_{N}, ..., n_{1})\!\in \!\Bb{Z}_{\neq 0}^{N}$
for a given $N\! \in\! \Bb{N}$,
 and we write
$\eufm{n}_{k} \!:= \!(n_{N-k}, ..., n_{1})$ for $k$ with $1\!\leqslant\! k\! \leqslant \!N\!-\!1$, $N\! \geqslant\! 2$. We observe that it follows from \eqref{f5int}, Lemma~\ref{lemoneto} and \eqref{f2int} that
\begin{align}\label{f3contgen}
    &
\begin{array}{lrcccl}
 {\rm{(a)}} & \,  \lambda \left( \fet^{\,{\tn{||}}} \setminus\overline{\Bb{D}}\,\right) &  =  & \lambda \left( \eusm{E}^{\infty}_{\!{\tn{\frown}}} \right) & = &    \Bb{C}_{\,\re < 1/2}\setminus\{0\}\,;
\\[0,3cm]
  {\rm{(b)}}& \,  \lambda \big(\eusm{E}^{0}_{\!{\tn{\frown}}}\big) &  =  &
\lambda \left(-1/ \eusm{E}^{\infty}_{\!{\tn{\frown}}} \right) &  = &
    \Bb{C}_{\,\re > 1/2}\setminus\{1\}
\,.
\end{array}
\end{align}



\section[\hspace{-0,30cm}. \hspace{0,11cm}Analytic continuation of the generating function]{\hspace{-0,095cm}{\bf{.}} Analytic continuation of the generating function}
\label{contgen}$\phantom{a}$ \vspace{-0,5cm}

 For arbitrary $N \!\in \!\Bb{N}$ and $\eufm{n}\!=\!(n_{N}, ..., n_{1})\!\in\! \Bb{Z}_{\neq 0}^{N}$
 let $d(\eufm{n} )\!:= \!d(\phi_{\eufm{n}})\! =\! N$ (see \eqref{f11contgen}),
\begin{align}\label{f4bsdenac}
    & \hspace{-0,3cm}  \psi_{{\fo{\eufm{n}}}} (z)\! :=\!  \psi_{{\fo{n_{N}, ..., n_{1}}}} (z)\! :=\!
{1}\big/{\phi_{{\fo{n_{1}, ..., n_{N}}}}(1/z)} \, , \ \ \ \psi_{0} (z) := z\,,
\  \  z \!\in \!\Bb{H} \,,\hspace{-0,1cm}
\end{align}

\noindent  and $d(0):= 0$.  It can easily be shown from \eqref{f1bsden} that
 \begin{align}\label{f5bsdenac}
     &
\begin{array}{l}
 {1}\big/{\phi_{{\fo{n_{1}, ..., n_{N}}}}(1/z)} = - {1}\big/{\phi_{{\fo{-n_{1}, ..., -n_{N}}}}(-1/z)} \, , \quad \ \, (n_{1}, ..., n_{N})\in \Bb{Z}_{\neq 0}^{\hspace{0,02cm}\Bb{N}_{\hspace{-0,02cm}\eurm{f}}}\,,
\\[0,15cm]
 \psi_{{\fo{\eufm{n}}}}\big(\phi_{{\fo{\eufm{n}}}} (z) \big)=
     \phi_{{\fo{\eufm{n}}}}\big(\psi_{{\fo{\eufm{n}}}} (z) \big)=z\,, \qquad
     z \in \Bb{H} \,, \quad \psi_{{\fo{\eufm{n}}}} \! \in\! \Gamma_{\vartheta}\,, \quad  \eufm{n}\in \Bb{Z}_{\neq 0}^{\hspace{0,02cm}\Bb{N}_{\hspace{-0,02cm}\eurm{f}}}\cup\{0\}\,,
\end{array}
 \end{align}

\noindent and the  identities  \eqref{f8bsdenac}(b) can be written as
\begin{align}\label{f16zcontgen}
    &   \psi{{\fo{n_{N}, ..., n_{1}}}}  \big( \eusm{E}^{\hspace{0,05cm}n_{N}, ...,\, n_{1}}_{\!{\tn{\frown}}} \big)=
      \eusm{E}^{0}_{\!{\tn{\frown}}}= - 1/\eusm{E}^{\infty}_{\!{\tn{\frown}}} \ , \quad  n_{N}, ..., n_{1}\in  \Bb{Z}_{\neq 0}\,, \ \ N \in \Bb{N}\,.
\end{align}

\noindent The formulas \eqref{f15contgen} for  arbitrary $n_{1}, n_{2}, ..., n_{N}\in \Bb{Z}_{\neq 0}$ and $N\! \in \!\Bb{N}$  take the form{\hyperlink{r22}{${}^{\ref*{case22}}$}}\hypertarget{br22}{}
\begin{align}\label{f2bsdentppre}
    & \hspace{-0,3cm}  \Bb{G}^{N}_{2} (y)\! = \! \psi{{\fo{n_{N}, ..., n_{1}}}} (y)  , \ N
\! \geqslant\! 1, \
 \Bb{G}^{k+1}_{2} (z)\! = \! \psi{{\fo{n_{N}, ..., n_{N-k}}}} (z) , \
 0 \! \leqslant\!  k \! \leqslant\!  N\! -\! 2 \,,\hspace{-0,1cm}
\end{align}

\noindent  provided that $ y\! \in \!\phi_{{\fo{n_{N}, ..., n_{1}}}} \left(\Bb{H}_{|\re|< 1}\right)$ and  $ z \!\in\! \phi_{{\fo{n_{N}, ..., n_{1}}}} \left(\Bb{H}_{|\re|\leqslant 1}\right)$.
In addition, according to \eqref{f2bsden},  \eqref{f5bsden}, \eqref{f3zbsdenac}$\vphantom{A^{A^{A}}}$ and \eqref{f5bsdenac},  we  have
\begin{align*}
    & \psi_{{\fo{\eufm{n}}}} \big(\fetr^{\hspace{0,01cm}\eufm{n}}\big) = \fet^{\,{\tn{|}}\sigma_{\eufm{n}}}
\, , \quad
\psi_{{\fo{\eufm{n}}}} \big(\fetur^{\hspace{0,01cm}\eufm{n}}\big) =
  \fetu^{\,{\tn{|}}\sigma_{\eufm{n}}} \, , \quad  \psi_{{\fo{\eufm{n}}}} \big(\fetd^{\hspace{0,01cm}\eufm{n}}\big) =
 \fetd \ , \quad  \eufm{n}\in \Bb{Z}_{\neq 0}^{\hspace{0,02cm}\Bb{N}_{\hspace{-0,02cm}\eurm{f}}} \, .
\end{align*}

\noindent Moreover, we observe that \eqref{f14bsdenac} gives
\begin{align}\label{f9bsdenac}
    &  \lambda^{\,\prime} (z) = (-1)^{d(\phi )}
      \lambda^{\,\prime}(\phi (z))\phi^{\,\prime}(z)\,,  \  \quad   \phi\! \in\! \Gamma_{\vartheta}  \, , \  z\in \Bb{H}\,.
\end{align}

We let $x\! \in\! \Bb{R} $ be fixed, and consider the following two functions of $z$,
\begin{align}\label{f1bsdenac}
     &  \hspace{-0,15cm}
\Phi^{\,\delta}_{\infty}(x;z) := \frac{1}{2\pi i}
\int\limits_{{\fo{\gamma (-1,1)}}}\dfrac{\lambda^{\,\prime}(z){\rm{d}} \zeta}{\big(\lambda(z)- \lambda(\zeta)\big)\big(x^{\delta}\zeta-(-x)^{1-\delta}\big)^{2}} \ \, , \quad \delta \in \{0,1\} \, . \hspace{-0,1cm}
\end{align}

\noindent
By  \eqref{f7denac}, \eqref{f1contgen} and \eqref{f3zint}, the functions $\Phi^{\hspace{0,05cm}0}_{\infty}(x;z)$ and $\Phi^{1}_{\infty}(x;z)$ are  holomorphic at each point of
$\Bb{H}\setminus S_{\!{\tn{\frown}}}^{\infty}$, and, in view of \eqref{f3genfunbs}
and \eqref{f4genfunbs},{\hyperlink{r24}{${}^{\ref*{case24}}$}}\hypertarget{br24}{}
\begin{align}\label{f1wbsdenac}
    & \hspace{-0,25cm}  \sum\limits_{n \geqslant 1}
  \! n \,\eurm{H}_{n} (x)\, {\rm{e}}^{{\fo{{\rm{i}}\pi  n  z}} }\!
 = \! \frac{\Phi^{\hspace{0,02cm}0}_{\infty}(x;z)}{2 \pi^{2} }\, ,
\ \ \sum\limits_{n \geqslant 1}
  \! n \,\eurm{M}_{n} (x)\, {\rm{e}}^{{\fo{{\rm{i}}\pi  n  z}} }\!  = \! \frac{\Phi^{\hspace{0,02cm}1}_{\infty}(x;z)}{2 \pi^{2} }\, ,   \ \ z\in \Bb{H}_{\,\im > 1}\,.\hspace{-0,1cm}
\end{align}

\noindent
Furthermore,  \eqref{f1zbsdenac}  and
 \eqref{f9bsdenac} imply that{\hyperlink{r23}{${}^{\ref*{case23}}$}}\hypertarget{br23}{}
\begin{align}\label{10bsdenac}
    &  (-1)^{d(\phi)} \Phi^{\hspace{0,02cm}\delta}_{\infty}(x, z) =
\left\{\begin{array}{ll}
\phi^{\,\prime} (z)\,\Phi^{\hspace{0,02cm}\delta}_{\infty}(x;\phi (z))  \, , & \quad  \mbox{if} \   d(\phi) \in  \{0\}\cup 2 \Bb{N}\,, \\[0,25cm]
 \phi^{\,\prime} (z) \, \Phi^{\hspace{0,02cm}1-\delta}_{\infty}(x;\phi (z))  \, ,   &  \quad   \mbox{if} \    d(\phi) \in 2 \Bb{N}\!-\!1\,, \end{array}\right.
\end{align}

\noindent holds for all  $z\in \Bb{H}\setminus S_{\!{\tn{\frown}}}^{\infty}$,
$\phi \in \Gamma_{\vartheta}$ and $ \delta \!\in \! \{0,1\}$.
  It follows from \eqref{f2bsdenac} that
 $\Phi^{\hspace{0,025cm}\delta}_{\infty}(x;z)$ is holomorphic $2$-periodic function on the set $\eusm{E}^{\infty}_{\!{\tn{\frown}}}= \Bb{H} \setminus \cup_{\, {\fo{m \in \Bb{Z}}}}\ (2m + \overline{\Bb{D}})$. The goal of  this section
is to prove that for every $ \delta \!\in \! \{0,1\}$ the function $\Phi^{\hspace{0,025cm}\delta}_{\infty}(x;z)$ can  be analytically extended from this set
to $\Bb{H}$. To do this, it suffices to show that such an extension is possible from the set $\eusm{E}^{\infty}_{\!{\tn{\frown}}}$ to $ \Bb{H}_{|\re |< 1}\cup \eusm{E}^{\infty}_{\!{\tn{\frown}}}$, because then the desired extension $\Phi^{\hspace{0,025cm}\delta}_{\Bb{H}}(x;z)$ on the remaining set $  \cup_{n\in\Bb{Z}_{\neq 0} } (2n + \overline{\Bb{D}}_{\im > 0})$ can be constructed from the resulting extension $\Phi^{\hspace{0,025cm}\delta}_{\,{\tn{||}}}(x;z)$ by the formula
\begin{align}\label{11bsdenac}
    &  \Phi^{\hspace{0,025cm}\delta}_{\Bb{H}}(x; z\! +\! 2n)\! := \!\Phi^{\hspace{0,025cm}\delta}_{\,{\tn{||}}}(x;z) \ , \quad  z\! \in \!
\overline{\Bb{D}}_{\im > 0}\, , \  \ \ n\! \in\! \Bb{Z}_{\neq 0} \, , \quad \delta \in \{0,1\} \, .
\end{align}

\subsection[\hspace{-0,31cm}. \hspace{0,11cm}Auxiliary lemmas.]{\hspace{-0,11cm}{\bf{.}} Auxiliary lemmas.
}\label{bsdenancont} In view of Lemma~\ref{lemoneto}, for every $x \!\in\! \Bb{R}$, $z\!\in\! \fet$, $\im\, z \!\neq \!2$, we introduce the function $\Phi^{\hspace{0,02cm}\delta}_{{\tn{\sqcap}}}(x;z)$ as the integral \eqref{f1bsdenac},  where $\gamma(-1,1)$ is replaced by the oriented contour $\Pi(-1,1)$,  which passes from $-1$ to $1$  along the polygonal contour $(-1,-1+2{\rm{i}}]\cup [-1+2{\rm{i}}, 1+2{\rm{i}}] \cup [1+2{\rm{i}}, 1)$.

\begin{lemsectseven}\hspace{-0,17cm}{\bf{.}}\label{bsdenaclem1}
    For each  $x \!\in\! \Bb{R}$ and $ \delta \!\in \! \{0,1\}$ the function $\Phi^{\hspace{0,02cm}\delta}_{\infty}(x;z)$ of $z$ can  be analytically   ex\-ten\-ded from $\eusm{E}^{\infty}_{\!{\tn{\frown}}} = \Bb{H} \setminus \cup_{\, {\fo{m \in \Bb{Z}}}}\ (2m + \overline{\Bb{D}})$  to $\eusm{E}^{\infty}_{\!{\tn{\frown}}}\sqcup \gamma (-1,1)\sqcup  \eusm{E}^{0}_{\!{\tn{\frown}}}$ such that the resulting extension
     $\Phi_{0}^{\hspace{0,02cm}\delta}(x, z)$ satisfies
    \begin{align}\label{f1bsdenaclem1}
    &  \Phi_{0}^{\hspace{0,02cm}\delta}(x, z)\! =\!
    \left\{\begin{array}{ll} \Phi^{\hspace{0,02cm}\delta}_{\infty}(x;z) \, ,    &  \quad   \ \mbox{if} \ \   z\in
 \eusm{E}^{\infty}_{\!{\tn{\frown}}}  \, ;
\\[0,2cm]    \Phi^{\hspace{0,02cm}\delta}_{\infty}(x;z)  -
\big(x^{\delta}z\!-\!(-x)^{1-\delta}\big)^{-2}
 \, ,  &  \quad \ \mbox{if} \  \  z\in  \eusm{E}^{0}_{\!{\tn{\frown}}}\,.
\end{array}\right.
\end{align}
\end{lemsectseven}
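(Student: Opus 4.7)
I will construct the analytic extension across $\gamma(-1,1)$ by a contour-deformation argument, using the auxiliary integral $\Phi^{\,\delta}_{{\tn{\sqcap}}}(x;z)$ (defined just before the lemma via the polygonal contour $\Pi(-1,1)$) as a common bridge. The key point is that $\Phi^{\,\delta}_{{\tn{\sqcap}}}$ is holomorphic on a full two-sided neighborhood of $\gamma(-1,1)$; once one expresses $\Phi^{\,\delta}_{\infty}$ in terms of $\Phi^{\,\delta}_{{\tn{\sqcap}}}$ on each side of $\gamma(-1,1)$, the two pieces of $\Phi_{0}^{\,\delta}$ in \eqref{f1bsdenaclem1} will collapse to the same holomorphic formula in that neighborhood, which is exactly the required extension.

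\textbf{Holomorphy of $\Phi^{\,\delta}_{{\tn{\sqcap}}}$ and residue comparison.} First I will verify that $\lambda(z)\neq\lambda(\zeta)$ for every $\zeta\in\Pi(-1,1)$ and every $z$ in a neighborhood of $\gamma(-1,1)\cup\eusm{E}^{0}_{\!{\tn{\frown}}}$. For $\zeta$ on either vertical side of $\Pi(-1,1)$, Lemma~\ref{lemoneto} gives $\lambda(\zeta)\in\Bb{R}_{<0}$, while on the top segment $[-1+2\imag,\,1+2\imag]$ the estimate \eqref{f3inttheor1} at $a=2$ bounds $|\lambda(\zeta)|$ by $\lambda(2\imag)/(1-\lambda(2\imag))$, which is small; in both cases $\re\,\lambda(\zeta)<1/2$. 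Conversely, by \eqref{f5int}(b) and \eqref{f3contgen}(b), $\re\,\lambda(z)\geqslant 1/2$ on $\gamma(-1,1)\cup\eusm{E}^{0}_{\!{\tn{\frown}}}$, giving the required uniform separation and thereby the holomorphy of $\Phi^{\,\delta}_{{\tn{\sqcap}}}$ on that neighborhood. Next, the oriented closed contour $\gamma(-1,1)\cup\bigl(-\Pi(-1,1)\bigr)$ bounds counterclockwise the cap region $R:=\fet\cap\{|z|>1,\,\im z<2\}\subset\eusm{E}^{\infty}_{\!{\tn{\frown}}}$. By the injectivity of $\lambda|_{\fet}$ (Theorem~\ref{intthA}), for $z\in R$ the only interior singularity of the $\zeta$-integrand is the simple pole at $\zeta=z$, with residue $-\bigl(x^{\delta}z-(-x)^{1-\delta}\bigr)^{-2}$, obtained from the local expansion $\lambda(z)-\lambda(\zeta)=-\lambda^{\,\prime}(z)(\zeta-z)+\Oh\bigl((\zeta-z)^{2}\bigr)$. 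The residue theorem then yields
\begin{align*}
\Phi^{\,\delta}_{\infty}(x;z)\ =\ \Phi^{\,\delta}_{{\tn{\sqcap}}}(x;z)\ -\ \frac{1}{\bigl(x^{\delta}z-(-x)^{1-\delta}\bigr)^{2}},\qquad z\in R,
\end{align*}
while for $z\in\eusm{E}^{0}_{\!{\tn{\frown}}}\subset\Bb{D}_{\im>0}$, the unique $\fet$-preimage of $\lambda(z)$ lies in $\fetd\subset\Bb{D}$ (hence outside $R$), so no residue is contributed and $\Phi^{\,\delta}_{\infty}(x;z)=\Phi^{\,\delta}_{{\tn{\sqcap}}}(x;z)$ on $\eusm{E}^{0}_{\!{\tn{\frown}}}$.

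\textbf{Conclusion and main obstacle.} Substituting these two identities into the piecewise formula \eqref{f1bsdenaclem1} for $\Phi_{0}^{\,\delta}$, in a two-sided neighborhood of $\gamma(-1,1)$ both branches collapse to the single expression $\Phi_{0}^{\,\delta}(x;z)=\Phi^{\,\delta}_{{\tn{\sqcap}}}(x;z)-\bigl(x^{\delta}z-(-x)^{1-\delta}\bigr)^{-2}$, which is holomorphic by the first step. This realizes the desired analytic extension across $\gamma(-1,1)$, and \eqref{f1bsdenaclem1} uniquely specifies $\Phi_{0}^{\,\delta}$ on the whole of $\eusm{E}^{\infty}_{\!{\tn{\frown}}}\sqcup\gamma(-1,1)\sqcup\eusm{E}^{0}_{\!{\tn{\frown}}}$. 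The main obstacle is the separation estimate for $\lambda(\Pi(-1,1))$: controlling $|\lambda|$ uniformly on the top of $\Pi(-1,1)$ requires the monotonicity of $|\lambda|$ from Theorem~\ref{inttheor1} in an essential way. A minor additional technicality arises near the endpoints $\pm 1$ of $\gamma(-1,1)$ when $|x|=1$, where the factor $\bigl(x^{\delta}\zeta-(-x)^{1-\delta}\bigr)^{-2}$ develops a boundary singularity on the contour; this borderline case is handled by first establishing the result for $|x|\neq 1$ and then continuing in the parameter $x$.
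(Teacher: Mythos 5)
Your proof is correct and takes essentially the same route as the paper: deform $\gamma(-1,1)$ to $\Pi(-1,1)$, use the residue theorem to get $\Phi^{\hspace{0,02cm}\delta}_{\infty}(x;z)=\Phi^{\hspace{0,02cm}\delta}_{{\tn{\sqcap}}}(x;z)-\big(x^{\delta}z-(-x)^{1-\delta}\big)^{-2}$ on $\fetu\cap\Bb{H}_{\im<2}$ and $\Phi^{\hspace{0,02cm}\delta}_{\infty}=\Phi^{\hspace{0,02cm}\delta}_{{\tn{\sqcap}}}$ on the inner side, and glue across $\gamma(-1,1)$ through the holomorphy of $\Phi^{\hspace{0,02cm}\delta}_{{\tn{\sqcap}}}$ (the paper organizes the gluing via $\fetd$ and the uniqueness theorem, and only later records, with exactly your separation argument, that the no-residue identity holds on all of $\eusm{E}^{0}_{\!{\tn{\frown}}}$). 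Two small repairs: your phrase ``the unique $\fet$-preimage of $\lambda(z)$ lies in $\fetd$'' fails when $\lambda(z)\in\Bb{R}_{>1}$ (e.g.\ $z$ on $\gamma(0,1)\cap\eusm{E}^{0}_{\!{\tn{\frown}}}$), but the absence of poles in the cap region already follows from the half-plane separation $\lambda\left(\fet\setminus\Bb{D}\right)\subset\Bb{C}_{\re\,\leqslant 1/2}$ versus $\lambda\big(\eusm{E}^{0}_{\!{\tn{\frown}}}\big)\subset\Bb{C}_{\re\,>1/2}$ that you invoked, and the detour for $|x|=1$ is unnecessary since \eqref{f3zint} and \eqref{f2auxevagen} give absolute convergence of both integrals in that case as well.
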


\begin{proof}\hspace{-0,15cm}{{.}}
By  transforming the contour $\gamma(-1,1)$ of integration in \eqref{f1bsdenac} to $\Pi(-1,1)$ and using
 Lemma~\ref{lemoneto},  we obtain from the residue theorem \cite[p.\! 112]{con} that
\begin{align}\label{f1bsdenaclem1p}
    &
\begin{array}{ll}
  \Phi^{\hspace{0,02cm}\delta}_{\infty}(x;z)\! =\!\Phi^{\hspace{0,02cm}\delta}_{{\tn{\sqcap}}}(x;z) \, ,   &   \quad
z\in \fetd  \sqcup  \Bb{H}_{\im >2} ,    \\[0,2cm]
   \Phi^{\hspace{0,02cm}\delta}_{\infty}(x;z)\! =\!\Phi^{\hspace{0,02cm}\delta}_{{\tn{\sqcap}}}(x;z) - \big(x^{\delta}z\!-\!(-x)^{1-\delta}\big)^{-2}\, ,   &    \quad
z\in \fetu \cap \Bb{H}_{\im <2}\,  .
\end{array}
\end{align}

\noindent This means that the function $\Phi^{\hspace{0,02cm}\delta}_{{\tn{\sqcap}}}(x;z)  -   (x^{\delta}z - (-x)^{1-\delta})^{-2}$, being holomorphic on
$\Bb{H}_{\im <2}\cap \fet $, coincides  on the set  $\Bb{H}_{\im <2} \cap \fetu  \subset  \Bb{H}_{\im <2}\cap \fet$  with the  function $\Phi^{\hspace{0,02cm}\delta}_{\infty}(x;z)$, which is  holomorphic on $\fetu  \supset  \Bb{H}_{\im <2} \cap \fetu$. By the uniqueness theorem  for analytic functions (see \cite[p.\! 78]{con}),  we find that
the function $\Phi^{\hspace{0,02cm}\delta}_{\infty}(x;z)$  can  be analytically   ex\-ten\-ded from $\fetu$  to $\mathcal{F}_{{\tn{\square}}}  =  \fetu \cup
(\Bb{H}_{\im <2}\cap \fet)$.   Moreover, for $z \in  \fetd \subset \Bb{H}_{\im <2}\cap \fet$, the resulting
extension equals the expression  $H(z):=\Phi^{\hspace{0,02cm}\delta}_{\infty}(x;z) - (x^{\delta}z-(-x)^{1-\delta})^{-2}$ since
 $\Phi^{\hspace{0,02cm}\delta}_{{\tn{\sqcap}}}(x;z)=
\Phi^{\hspace{0,02cm}\delta}_{\infty}(x;z) $ holds for all  $z\in
\fetd $, in view of \eqref{f1bsdenaclem1p}. But by  \eqref{f8bsdenac}(a) and \eqref{f6wdenac}, we see that $\fetd \!\subset \!\eusm{E}^{0}_{\!{\tn{\frown}}}\!\subset\!\Bb{H} \big\backslash  S_{\!{\tn{\frown}}}^{\infty}$, and
since $\eusm{E}^{0}_{\!{\tn{\frown}}}$ is simply connected it follows that
  the latter function $H(z)$  is actually holomorphic on $\eusm{E}^{0}_{\!{\tn{\frown}}}$.
  This proves  \eqref{f1bsdenaclem1} and completes the proof
of Lemma~\ref{bsdenaclem1}.
$\square$
\end{proof}

\begin{lemsectseven}\hspace{-0,17cm}{\bf{.}}\label{bsdenaclem2}
 Let  $x \!\in\! \Bb{R}$, $N \!\in\! \Bb{N}$, $\eufm{n}\!=\!(n_{N}, ..., n_{1})\in \Bb{Z}_{\neq 0}^{N}$, $ \delta \!\in \! \{0,1\}$ and the sets
$\fetd^{\hspace{0,01cm}\eufm{n}}$, $\getu^{\hspace{0,01cm}\eufm{n}}$  be defined as in \eqref{f5bsden}.
 The function $\Phi^{\hspace{0,02cm}\delta}_{\infty}(x;z)$ of $z$ can be  analytically  ex\-ten\-ded from $\getu^{\hspace{0,01cm}\eufm{n}}$  to $\get^{\hspace{0,01cm}\eufm{n}} =
\getu^{\hspace{0,01cm}\eufm{n}} \sqcup \gamma_{\eufm{n}} (-1, 1)  \sqcup \fetd^{\hspace{0,01cm}\eufm{n}}$ such that the resulting extension
     $\Phi_{\eufm{n}}^{\hspace{0,02cm}\delta}(x, z)$ satisfies
    \begin{align}\label{f1bsdenaclem2}
    &  \Phi_{\eufm{n}}^{\hspace{0,02cm}\delta}(x, z)\! =\!
    \left\{\begin{array}{ll} \Phi^{\hspace{0,02cm}\delta}_{\infty}(x;z) \, ,    &  \quad   \ \mbox{if} \ \   z\in
 \getu^{\hspace{0,01cm}\eufm{n}}  \, ;
\\[0,2cm]    \Phi^{\hspace{0,02cm}\delta}_{\infty}(x;z)  -
\Delta^{\delta}_{{\fo{\eufm{n}}}} (x;z)
 \, ,  &  \quad \ \mbox{if} \  \  z\in   \fetd^{\hspace{0,01cm}\eufm{n}}\,.
\end{array}\right.
\end{align}

\noindent where the holomorphic at each $z\!\in\! \Bb{H}$ function  $\Delta^{\delta}_{{\fo{\eufm{n}}}}(x;z)$  is defined as
\begin{align}\label{f2bsdenaclem2}
   & \hspace{-0,4cm} \Delta^{\delta}_{{\fo{\eufm{n}}}} (x;z)\!:= \! \left\{\begin{array}{ll}
\hphantom{-} \psi^{\,\prime}_{{\fo{\eufm{n}}}} (z)\big(x^{\delta}\psi_{\eufm{n}} (z)-(-x)^{1-\delta}\big)^{-2} \, ,
&\quad  N \in 2 \Bb{N}\,,   \\[0,2cm]
- \psi^{\,\prime}_{{\fo{\eufm{n}}}} (z)\big(x^{1-\delta}\psi_{\eufm{n}} (z)-(-x)^{\delta}\big)^{-2} \, ,
&  \quad   N \!\in \!2 \Bb{N}\!-\!1\,.   \end{array}
    \right.
\end{align}

\end{lemsectseven}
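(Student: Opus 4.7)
The plan is to transplant Lemma~\ref{bsdenaclem1} from the base triangles $\fetu$ and $\fetd$ across the arc $\gamma(-1,1)$ to the triangles $\getu^{\hspace{0.01cm}\eufm{n}}$ and $\fetd^{\hspace{0.01cm}\eufm{n}}$ across the arc $\gamma_{\eufm{n}}(-1,1)$, by pullback along the M\"obius self-map $\psi_{\eufm{n}}$ of $\Bb{H}$, combined with the functional identity~\eqref{10bsdenac}. By~\eqref{f5bsdenac} the map $\psi_{\eufm{n}}\in\Gamma_{\vartheta}$ is the inverse of $\phi_{\eufm{n}}$, so from the definitions in \eqref{f5bsden} we read off
\begin{equation*}
\psi_{\eufm{n}}(\get^{\hspace{0.01cm}\eufm{n}})=\fet,\quad \psi_{\eufm{n}}(\getu^{\hspace{0.01cm}\eufm{n}})=\fetu,\quad \psi_{\eufm{n}}(\fetd^{\hspace{0.01cm}\eufm{n}})=\fetd,\quad \psi_{\eufm{n}}(\gamma_{\eufm{n}}(-1,1))=\gamma(-1,1).
\end{equation*}
In particular, Lemma~\ref{bsdenaclem1} supplies a function $\Phi_0^{\hspace{0.02cm}\delta}(x;\cdot)$ that is holomorphic on the whole quadrilateral $\fet$ and that reduces to $\Phi^{\hspace{0.02cm}\delta}_{\infty}(x;\cdot)$ on $\fetu$.

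I would then define the candidate extension on $\get^{\hspace{0.01cm}\eufm{n}}$ by
\begin{equation*}
\Phi_{\eufm{n}}^{\hspace{0.02cm}\delta}(x,z):=\begin{cases}\hphantom{-}\psi_{\eufm{n}}'(z)\,\Phi_{0}^{\hspace{0.02cm}\delta}\bigl(x;\psi_{\eufm{n}}(z)\bigr), & N\in 2\Bb{N},\\[2pt] -\psi_{\eufm{n}}'(z)\,\Phi_{0}^{\hspace{0.02cm}1-\delta}\bigl(x;\psi_{\eufm{n}}(z)\bigr), & N\in 2\Bb{N}-1.\end{cases}
\end{equation*}
Because $\psi_{\eufm{n}}$ is a M\"obius map sending $\get^{\hspace{0.01cm}\eufm{n}}$ biholomorphically onto $\fet$ and $\Phi_0^{\hspace{0.02cm}\delta}(x;\cdot)$ is holomorphic on $\fet$, the composite $\Phi_{\eufm{n}}^{\hspace{0.02cm}\delta}(x,\cdot)$ is holomorphic on all of $\get^{\hspace{0.01cm}\eufm{n}}$, in particular across $\gamma_{\eufm{n}}(-1,1)$. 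On $\getu^{\hspace{0.01cm}\eufm{n}}$ the image $\psi_{\eufm{n}}(z)\in\fetu\subset\eusm{E}^{\infty}_{\!{\tn{\frown}}}$ lies where $\Phi_0^{\hspace{0.02cm}\delta^{*}}$ equals $\Phi^{\hspace{0.02cm}\delta^{*}}_{\infty}$ by~\eqref{f1bsdenaclem1}, so the parity-indexed transformation rule \eqref{10bsdenac} applied with $\phi=\psi_{\eufm{n}}$ (for which $d(\psi_{\eufm{n}})=N$ by~\eqref{f11contgen}) gives $\Phi_{\eufm{n}}^{\hspace{0.02cm}\delta}(x,z)=\Phi^{\hspace{0.02cm}\delta}_{\infty}(x;z)$, which is the first line of~\eqref{f1bsdenaclem2}.

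For the second line, on $\fetd^{\hspace{0.01cm}\eufm{n}}$ one has instead $\psi_{\eufm{n}}(z)\in\fetd\subset\eusm{E}^{0}_{\!{\tn{\frown}}}$, so the second case of~\eqref{f1bsdenaclem1} yields
\begin{equation*}
\Phi_{0}^{\hspace{0.02cm}\delta^{*}}\bigl(x;\psi_{\eufm{n}}(z)\bigr)=\Phi^{\hspace{0.02cm}\delta^{*}}_{\infty}\bigl(x;\psi_{\eufm{n}}(z)\bigr)-\bigl(x^{\delta^{*}}\psi_{\eufm{n}}(z)-(-x)^{1-\delta^{*}}\bigr)^{-2},
\end{equation*}
with $\delta^{*}=\delta$ or $1-\delta$ according to the parity of $N$. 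Feeding the first term back through~\eqref{10bsdenac} reproduces $\Phi^{\hspace{0.02cm}\delta}_{\infty}(x;z)$, while the remainder, after collecting the sign $(-1)^{N}\psi_{\eufm{n}}'(z)$, gives exactly $-\Delta_{\eufm{n}}^{\hspace{0.02cm}\delta}(x;z)$ in the form prescribed by~\eqref{f2bsdenaclem2}. Holomorphy of $\Delta_{\eufm{n}}^{\hspace{0.02cm}\delta}(x;\cdot)$ on all of $\Bb{H}$ is automatic, since $\psi_{\eufm{n}}(\Bb{H})\subset\Bb{H}$ makes $\psi_{\eufm{n}}(z)$ non-real and keeps the denominators away from zero. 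The only genuinely delicate step is the parity bookkeeping: the sign $(-1)^{N}$ in~\eqref{10bsdenac} must be combined consistently with the swap $\delta\leftrightarrow 1-\delta$ that also occurs when $N$ is odd, and a routine split on $N\bmod 2$ is what collapses the two candidate expressions into the single formula~\eqref{f2bsdenaclem2}.
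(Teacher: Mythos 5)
Your proposal is correct and follows essentially the same route as the paper: it pulls back the extension $\Phi_{0}^{\hspace{0,02cm}\delta_{N}}$ of Lemma~\ref{bsdenaclem1} along $\psi_{\eufm{n}}$ (using $\psi_{\eufm{n}}(\get^{\hspace{0,01cm}\eufm{n}})=\fet\subset\eusm{E}^{\infty}_{\!{\tn{\frown}}}\sqcup\gamma(-1,1)\sqcup\eusm{E}^{0}_{\!{\tn{\frown}}}$), defines $\Phi_{\eufm{n}}^{\hspace{0,02cm}\delta}(x,z)=(-1)^{N}\psi_{\eufm{n}}^{\,\prime}(z)\Phi_{0}^{\hspace{0,02cm}\delta_{N}}(x,\psi_{\eufm{n}}(z))$, and uses \eqref{10bsdenac} with the parity of $d(\psi_{\eufm{n}})=N$ to recover \eqref{f1bsdenaclem2} and \eqref{f2bsdenaclem2}. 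No gaps worth noting.
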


\begin{proof}\hspace{-0,15cm}{{.}}
We put $\delta_{N}:= \delta$ if $N$ is even, while $\delta_{N}:= 1-\delta$ if $N$ is odd. Let the function $z \mapsto \Phi_{0}^{\hspace{0,02cm}\delta_{N}}(x, z) \in
{\rm{Hol}} (\eusm{E}^{\infty}_{\!{\tn{\frown}}}\sqcup \gamma (-1,1)\sqcup  \eusm{E}^{0}_{\!{\tn{\frown}}})$  be given as in  Lemma~\ref{bsdenaclem1} with $\delta = \delta_{N}$.
By \eqref{f4denac}, \eqref{f5bsden} and \eqref{f5bsdenac}, we have
$\psi_{\eufm{n}}(\get^{\hspace{0,01cm}\eufm{n}}) = \fet$, and
$\fet \subset \eusm{E}^{\infty}_{\!{\tn{\frown}}}\sqcup \gamma (-1,1)\sqcup  \eusm{E}^{0}_{\!{\tn{\frown}}}$, in view of \eqref{f8bsdenac}(a).
It now follows that $z \mapsto \Phi_{0}^{\hspace{0,02cm}\delta_{N}}(x, \psi_{\eufm{n}} (z))$ is in
${\rm{Hol}} (\get^{\hspace{0,01cm}\eufm{n}})$. By   \eqref{10bsdenac},
we obtain from \eqref{f1bsdenaclem1} that, for $z\in \getu^{\hspace{0,01cm}\eufm{n}}$,
\begin{align*}
     &\hspace{-0,25cm} \Phi_{0}^{\hspace{0,02cm}\delta_{N}}(x, \psi_{\eufm{n}}(z))\!=\!
      \left\{\begin{array}{ll}
\Phi^{\hspace{0,02cm}\delta}_{\infty}(x;\psi_{\eufm{n}}(z))=
\psi_{\eufm{n}}^{\, \prime}(z)^{-1} \Phi^{\hspace{0,02cm}\delta}_{\infty}(x;z)\, ,
&\quad  N \in 2 \Bb{N}\,,   \\[0,1cm]
\Phi^{\hspace{0,02cm}1-\delta}_{\infty}(x;\psi_{\eufm{n}}(z))=
-\psi_{\eufm{n}}^{\, \prime}(z)^{-1} \Phi^{\hspace{0,02cm}\delta}_{\infty}(x;z) \, ,
&  \quad   N \!\in \!2 \Bb{N}\!-\!1\,.   \end{array}
    \right.\hspace{-0,2cm}
\end{align*}
\noindent
On the other hand, if  $z\in \fetd^{\hspace{0,01cm}\eufm{n}}$ then
\begin{align*}
     &\hspace{-0,25cm}\Phi_{0}^{\hspace{0,02cm}\delta_{N}}(x, \psi_{\eufm{n}}(z))\!=\!
      \left\{\begin{array}{lll}
\psi_{\eufm{n}}^{\, \prime}(z)^{-1} \Phi^{\hspace{0,02cm}\delta}_{\infty}(x;z)\!-\!(x^{\delta}\psi_{\eufm{n}}(z)\!-\!
      (-x)^{1-\delta})^{-2}\! ,
&\,  N \in 2 \Bb{N}\,,   \\[0,1cm]
\!-\psi_{\eufm{n}}^{\, \prime}(z)^{-1}\Phi^{\hspace{0,02cm}\delta}_{\infty}(x;z)\!-\!
(x^{1-\delta}\psi_{\eufm{n}}(z)\!-\!(-x)^{\delta})^{-2}
  \! ,
&  \,   N \!\in \!2 \Bb{N}\!-\!1\,.   \end{array}
    \right.\hspace{-0,2cm}
\end{align*}

  \noindent Hence the function $\Phi_{\eufm{n}}^{\hspace{0,02cm}\delta}(x, z):=(-1)^{N}\psi_{\eufm{n}}^{\, \prime}(z)\Phi_{0}^{\hspace{0,02cm}\delta_{N}}(x, \psi_{\eufm{n}}(z))$   is in
${\rm{Hol}} (\get^{\hspace{0,01cm}\eufm{n}})$, where $\get^{\hspace{0,01cm}\eufm{n}} =\getu^{\hspace{0,01cm}\eufm{n}} \sqcup \gamma_{\eufm{n}} (-1, 1)  \sqcup \fetd^{\hspace{0,01cm}\eufm{n}}$ and $
\Phi_{\eufm{n}}^{\hspace{0,02cm}\delta}(x, z)=\Phi^{\hspace{0,02cm}\delta}_{\infty}(x;z)$ for all $z\in \getu^{\hspace{0,01cm}\eufm{n}}$, while
\begin{align*}
     & \Phi_{\eufm{n}}^{\hspace{0,02cm}\delta}(x, z) = \Phi^{\hspace{0,02cm}\delta}_{\infty}(x;z) - (-1)^{N}(x^{\delta_{N}}\psi_{\eufm{n}}(z)\!-\!
      (-x)^{1-\delta_{N}})^{-2} \ , \quad z \in \fetd^{\hspace{0,01cm}\eufm{n}}\,.
\end{align*}

\noindent This shows that as a function of $z$, $\Phi^{\hspace{0,02cm}\delta}_{\infty}(x;z)$  can  be  analytically  ex\-ten\-ded from $\getu^{\hspace{0,01cm}\eufm{n}}$  to $\get^{\hspace{0,01cm}\eufm{n}}$, and the formulas \eqref{f1bsdenaclem2}, \eqref{f2bsdenaclem2} hold as well. Lemma~\ref{bsdenaclem2} follows.$\square$
\end{proof}

The desired analytical extension $\Phi^{\,\delta}_{\,{\tn{||}}}(x;z)$ of the function $\Phi^{\,\delta}_{\infty}(x;z)$ from the set $\eusm{E}^{\infty}_{\!{\tn{\frown}}}$ to  $ \Bb{H}_{|\re |< 1} \cup \eusm{E}^{\infty}_{\!{\tn{\frown}}}$ can  be easily constructed by combining  the results of  Lemmas~\ref{bsdenaclem1} and~\ref{bsdenaclem2}.
\begin{lemsectseven}\label{bsdenacth1}
    Let $x\in\Bb{R}$,  $ \delta \!\in \! \{0,1\}$   and $\Phi^{\,\delta}_{\infty}(x;z)$ be given by \eqref{f1bsdenac}. The function $z \mapsto \Phi^{\,\delta}_{\infty}(x;z)$ extends analytically
 from the set $\eusm{E}^{\infty}_{\!{\tn{\frown}}}$ to $\Bb{H}_{|\re |< 1} \cup\eusm{E}^{\infty}_{\!{\tn{\frown}}}$ such that the resulting extension  $\Phi^{\,\delta}_{{\tn{||}}}(x;z)$ on the set
\begin{align}\label{f2bsdenacth1}
    &  \left(\Bb{H}_{|\re |< 1} \cup\eusm{E}^{\infty}_{\!{\tn{\frown}}}\right)\big\backslash  S^{\infty}_{\!{\tn{\frown}}}= \eusm{E}^{\infty}_{\!{\tn{\frown}}}\  \sqcup
    \eusm{E}^{0}_{\!{\tn{\frown}}}\ \  \sqcup \ \
 \bigsqcup\nolimits_{
  {\fo{\ \eufm{n}\in  \Bb{Z}_{\neq 0}^{\hspace{0,02cm}\Bb{N}_{\hspace{-0,02cm}\eurm{f}}}  }} } \ \   \eusm{E}^{\eufm{n}}_{\!{\tn{\frown}}}
\end{align}

\vspace{-0,35cm}
\noindent
 satisfies
\begin{align}     &   \label{f4bsdenacth1} \Phi^{\,\delta}_{{\tn{||}}}(x;z)=\Phi^{\,\delta}_{\infty}(x; z) \, , \    &   &    \hspace{-0,5cm}   z\in \eusm{E}^{\infty}_{\!{\tn{\frown}}} \, , \   \\    &\label{f5bsdenacth1}
\Phi^{\,\delta}_{{\tn{||}}}(x;z)=\Phi^{\,\delta}_{\infty}(x; z)- \Delta^{\delta}_{0}(x; z) \, , \    &  & \hspace{-0,5cm}      z\in  \eusm{E}_{\!{\tn{\frown}}}^{\hspace{0,05cm}0}  \, , \
 \\[-0,3cm]    &  \Phi^{\,\delta}_{{\tn{||}}}(x;z)=\Phi^{\,\delta}_{\infty}(x; z) - \Delta^{\delta}_{0}(x; z)- \sum\limits_{k=0}^{N-1} \Delta^{\delta}_{n_{N}, n_{N-1}, ..., n_{N-k}}(x; z)
 \ ,   &    &
\label{f5zbsdenacth1}\end{align}

\noindent when $z\in \eusm{E}_{\!{\tn{\frown}}}^{\hspace{0,05cm}n_{N}, ...,\, n_{1}}$  for arbitrary $N \!\in\! \Bb{N}$ and $ \eufm{n} =(n_{N}, ..., n_{1})\!\in\!  \Bb{Z}_{\neq 0}^{N}$. Here, for every  $ \eufm{n}\!\in\!  \Bb{Z}_{\neq 0}^{N}$,
$\Delta^{\delta}_{{\fo{\eufm{n}}}} (x;z)$   is defined as in \eqref{f2bsdenaclem2} and
\begin{align}\label{f3bsdenacth1}
    & \Delta^{\delta}_{0}(x; z) := (x^{\delta}z - (-x)^{1-\delta})^{-2} \ , \quad  x\in\Bb{R}\,,  \  \delta \!\in \! \{0,1\} \,, \ z \in \Bb{H}\,.
\end{align}
\end{lemsectseven}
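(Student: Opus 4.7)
The plan is to define $\Phi^{\,\delta}_{\,{\tn{||}}}(x;\cdot)$ piecewise on the chambers of the partition \eqref{f2bsdenacth1} by the prescriptions \eqref{f4bsdenacth1}--\eqref{f5zbsdenacth1}, and then to verify analytic gluing across each arc of $S^{\infty}_{\!{\tn{\frown}}}$ that separates neighboring chambers. The partition \eqref{f2bsdenacth1} itself follows because $\fetu\subset\eusm{E}^{\infty}_{\!{\tn{\frown}}}$ (for any $z\in\fetu$ one has $|z|>1$ together with $|z-2m|\geq 2|m|-|\re z|>1$ for $m\in\Bb{Z}_{\neq 0}$), so it reduces to an application of \eqref{f15bsdenac} combined with the description of $S^{\infty}_{\!{\tn{\frown}}}$ in \eqref{f1zbsdenac}. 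On every chamber the prescribed right-hand side is manifestly holomorphic: $\Phi^{\,\delta}_{\infty}(x;\cdot)\in{\rm Hol}(\Bb{H}\setminus S^{\infty}_{\!{\tn{\frown}}})$, while each $\Delta^{\delta}_{\eufm{m}}(x;\cdot)\in{\rm Hol}(\Bb{H})$ since $\psi_{\eufm{m}}\in\Gamma_{\vartheta}$ maps $\Bb{H}$ into $\Bb{H}$ and so never attains the real value $(-x)^{1-\delta}/x^{\delta}$.

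The arc $\gamma(-1,1)$, which separates $\eusm{E}^{\infty}_{\!{\tn{\frown}}}$ from $\eusm{E}^{0}_{\!{\tn{\frown}}}$, is handled directly by Lemma~\ref{bsdenaclem1}, whose output $\Phi_{0}^{\delta}$ matches \eqref{f4bsdenacth1} on one side and \eqref{f5bsdenacth1} on the other. For any remaining arc $\gamma_{\eufm{n}}(-1,1)$ with $\eufm{n}=(n_{N},\dots,n_{1})\in\Bb{Z}_{\neq 0}^{N}$, the decomposition \eqref{f8bsdenac} of each chamber into its top triangle and its countably many upper-triangle satellites places the upper-triangle piece $\getu^{\eufm{n}}\subset\fetur^{\eufm{n}}$ inside the parent chamber $\eusm{E}^{\eufm{n}'}_{\!{\tn{\frown}}}$, where $\eufm{n}'=(n_{N},\dots,n_{2})$ if $N\geq 2$ and $\eufm{n}'=0$ if $N=1$, while the lower-triangle piece $\fetd^{\eufm{n}}$ lies in the child chamber $\eusm{E}^{\eufm{n}}_{\!{\tn{\frown}}}$. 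Applying Lemma~\ref{bsdenaclem2} with this specific tuple $\eufm{n}$ then continues $\Phi^{\,\delta}_{\infty}$ analytically across $\gamma_{\eufm{n}}(-1,1)$ and produces exactly the new summand $-\Delta^{\delta}_{\eufm{n}}(x;z)$. This is precisely the increment demanded by the telescoping formula \eqref{f5zbsdenacth1} when passing from $\eufm{n}'$ to $\eufm{n}$; all previously accumulated correction terms are holomorphic on $\Bb{H}$ and therefore extend trivially across the arc. An induction on $N$ thus verifies \eqref{f5zbsdenacth1} on each $\fetd^{\eufm{n}}$, and an identity-principle argument upgrades the formula to the full chamber $\eusm{E}^{\eufm{n}}_{\!{\tn{\frown}}}=\phi_{\eufm{n}}(\eusm{E}^{0}_{\!{\tn{\frown}}})$, which is simply connected by \eqref{f5zdenac}.

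The main obstacle is essentially combinatorial: one must correctly identify the parent--child relation $\eufm{n}'\mapsto\eufm{n}$ for every arc of the Schwarz partition, and check that the parity-dependent sign hidden inside the definition \eqref{f2bsdenaclem2} of $\Delta^{\delta}_{\eufm{n}}$ (inherited from the alternation $\lambda\leftrightarrow 1-\lambda$ encoded in \eqref{f14bsdenac} and \eqref{10bsdenac}) combines with the telescoping to yield the parity-free expression \eqref{f5zbsdenacth1}. Once this matching has been carried out the three chamber-wise formulas \eqref{f4bsdenacth1}--\eqref{f5zbsdenacth1} are verified simultaneously and the holomorphic extension $\Phi^{\,\delta}_{\,{\tn{||}}}(x;\cdot)$ is complete on $(\Bb{H}_{|\re|<1}\cup\eusm{E}^{\infty}_{\!{\tn{\frown}}})\setminus S^{\infty}_{\!{\tn{\frown}}}$, while the removable arcs in $S^{\infty}_{\!{\tn{\frown}}}$ are then absorbed automatically by the analytic gluing just established.
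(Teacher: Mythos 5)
Your proposal is correct and follows essentially the same route as the paper's own proof: the partition \eqref{f2bsdenacth1} via \eqref{f15bsdenac} and $\eusm{E}^{\infty}_{\!{\tn{\frown}}}\cap S^{\infty}_{\!{\tn{\frown}}}=\emptyset$, chamber-wise holomorphy of the prescribed formulas, gluing across $\gamma(-1,1)$ by Lemma~\ref{bsdenaclem1}, and gluing across each $\gamma_{\eufm{n}}(-1,1)$ by Lemma~\ref{bsdenaclem2} together with the parent--child placement $\getu^{\eufm{n}}\subset\fetur^{\eufm{n}}\subset\eusm{E}^{\hspace{0,05cm}n_{N},\ldots,n_{2}}_{\!{\tn{\frown}}}$ (resp. $\eusm{E}^{0}_{\!{\tn{\frown}}}$ for $N=1$) and the observation that the previously accumulated corrections $\Delta^{\delta}_{0},\Delta^{\delta}_{n_{N},\ldots,n_{N-k}}$ are holomorphic on all of $\Bb{H}$. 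The closing induction/identity-principle remark is a harmless repackaging of the same telescoping check the paper performs directly.
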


 {\emph{Proof.}} As follows from $\eusm{E}^{\infty}_{\!{\tn{\frown}}} = \Bb{H} \setminus \cup_{\, {\fo{m \in \Bb{Z}}}}\ (2m + \overline{\Bb{D}})$  and \eqref{f2bsdenac}, we have $\eusm{E}^{\infty}_{\!{\tn{\frown}}}\cap  S_{\!{\tn{\frown}}}^{\infty}=\emptyset$, and hence \eqref{f2bsdenacth1} is immediate from \eqref{f15bsdenac}.
Since  $\Phi^{\,\delta}_{\infty}(x; z)$ is holomorphic at each point of
$\Bb{H}\setminus S_{\!{\tn{\frown}}}^{\infty}$ we conclude from \eqref{f2bsdenacth1} that $\Phi^{\hspace{0,02cm}\delta}_{\,{\tn{||}}}(x;z)$  is holomorphic on $ \eusm{E}^{\eufm{n}}_{\!{\tn{\frown}}}$  for every $\eufm{n} \!\in \!\Bb{Z}_{\neq 0}^{\hspace{0,02cm}\Bb{N}_{\hspace{-0,02cm}\eurm{f}}} \cup\{0\} $.

What remains to prove is that $\Phi^{\hspace{0,02cm}\delta}_{\,{\tn{||}}}(x;z)$ extends analytically  across
$\gamma_{\eufm{n}}(-1,1)$ for each $\eufm{n} \!\in \!\Bb{Z}_{\neq 0}^{\hspace{0,02cm}\Bb{N}_{\hspace{-0,02cm}\eurm{f}}} \cup\{0\} $.
 In the case when $\eufm{n}=0$, this fact follows from
Lemma~\ref{bsdenaclem1} because,  in accordance with \eqref{f1bsdenaclem1}, $\Phi^{\hspace{0,02cm}\delta}_{\,{\tn{||}}}(x;z)=\Phi_{0}^{\hspace{0,02cm}\delta}(x, z) $ for all
$z \in \eusm{E}^{\infty}_{\!{\tn{\frown}}}\sqcup  \eusm{E}^{0}_{\!{\tn{\frown}}}$.
If $N \in \Bb{N}$ and $\eufm{n}:=(n_{N}, ..., n_{1})\!\in\!  \Bb{Z}_{\neq 0}^{N}$ then $\gamma_{\eufm{n}} (-1, 1)$ is the roof of  $\fetd^{\hspace{0,01cm}\eufm{n}}$ and is the lower arch of $\getu^{\hspace{0,01cm}\eufm{n}}$, by virtue of \eqref{f5bsden}.
According to  \eqref{f8bsdenac}(a), $\fetd^{\hspace{0,01cm}\eufm{n}}\subset \eusm{E}^{\eufm{n}}_{\!{\tn{\frown}}}$
and $\getu^{\hspace{0,05cm}n_{N}, ...,\, n_{1}}\subset \fetur^{\hspace{0,05cm}n_{N}, ...,\, n_{1}}\subset\eusm{E}^{\hspace{0,05cm}n_{N}, ...,\, n_{2}}_{\!{\tn{\frown}}}$, if $N \geqslant 2$, while $\getu^{\hspace{0,05cm}n_{1}}\subset \fetur^{\hspace{0,05cm}n_{1}}\subset\eusm{E}^{0}_{\!{\tn{\frown}}}$, if $N = 1$.
   Hence, in this case,  the required property follows from Lemma~\ref{bsdenaclem2}, because for each
$z \in \getu^{\hspace{0,01cm}\eufm{n}}  \sqcup \fetd^{\hspace{0,01cm}\eufm{n}}$,
the difference
$\Phi^{\hspace{0,02cm}\delta}_{\,{\tn{||}}}(x;z)-\Phi_{\eufm{n}}^{\hspace{0,02cm}\delta}(x, z)$   equals the function
\begin{align*} &
  - \Delta^{\delta}_{0}(x; z)- \sum\nolimits_{k=0}^{N-2} \Delta^{\delta}_{n_{N}, n_{N-1}, ..., n_{N-k}}(x; z)\,, \quad \sum\nolimits_{k=0}^{-1} := 0\,,
\end{align*}

\noindent which is
holomorphic on $\Bb{H}$, in view of \eqref{f2bsdenaclem2} and \eqref{f3bsdenacth1}.
Lemma~\ref{bsdenacth1} follows. $\square$

\vspace{0,25cm}\subsection[\hspace{-0,31cm}. \hspace{0,11cm}Main result.]{\hspace{-0,11cm}{\bf{.}} Main result.
}\label{bsdenanmain}
By  \eqref{f16bsdenac}, we obtain from \eqref{f17bsdenac} and \eqref{f18bsdenac} that
\begin{align}\label{f21bsdenac}
    &\hspace{-0,25cm}  \Bb{D}_{\,\im > 0}\setminus S^{\hspace{0,02cm}{\tn{||}}}_{\!{\tn{\frown}}}= \eusm{E}^{0}_{\!{\tn{\frown}}}\ \  \sqcup \ \
 \bigsqcup\limits_{
  {\fo{\ \eufm{m}\in  \Bb{Z}_{\neq 0}^{\hspace{0,02cm}\Bb{N}_{\hspace{-0,02cm}\eurm{f}}}  }} } \ \   \eusm{E}^{\eufm{m}}_{\!{\tn{\frown}}} \, , \\    &
\hspace{-0,25cm}  \phi_{{\fo{\eufm{n}}}}\big(\Bb{D}_{\,\im > 0}\big)\setminus S^{\hspace{0,02cm}{\tn{||}}}_{\!{\tn{\frown}}}=\eusm{E}^{\hspace{0,05cm}n_{N}, ...,\, n_{1}}_{\!{\tn{\frown}}}\ \sqcup \!\!\!\!\!\!
 \bigsqcup\limits_{\
  {\fo{m_{M}, ...,\, m_{1} \in \Bb{Z}_{\neq 0}\,, \ M \in \Bb{N}  }} }
\!\!\!\!\!\! \eusm{E}^{\hspace{0,05cm}n_{N}, ...,\, n_{1}, \,m_{M}, ...,\, m_{1}}_{\!{\tn{\frown}}}
\ ,
\label{f22bsdenac}\end{align}

\noindent where $N \in \Bb{N}$ and $\eufm{n}:=(n_{N}, ..., n_{1})\!\in\!  \Bb{Z}_{\neq 0}^{N}$. Consequently, for each $x\in\Bb{R}$  and  $ \delta \!\in \! \{0,1\}$, the formula for
$\Phi^{\,\delta}_{{\tn{||}}}(x;z)$ in Lemma~\ref{bsdenacth1}
can be written in the form
\begin{align*}
    &\hspace{-0,3cm}  \Phi^{\,\delta}_{{\tn{||}}}(x;z)\!=\!\Phi^{\,\delta}_{\infty}(x; z)\!-\!
\!\!\!\!\!\sum\limits_{{\fo{\eufm{n} \!\in \!\Bb{Z}_{\neq 0}^{\hspace{0,02cm}\Bb{N}_{\hspace{-0,02cm}\eurm{f}}} \cup\{0\}}}}\Delta^{\delta}_{{{\eufm{n}}}} (x;z)\, \chi_{{\fo{\phi_{{\fo{\eufm{n}}}}\left(\Bb{D}_{\,\im > 0}\right)}}}(z),
\quad z\in \Bb{H}_{|\re |\leqslant 1}\setminus S^{\hspace{0,02cm}{\tn{||}}}_{\!{\tn{\frown}}},
\hspace{-0,1cm}
\end{align*}

\noindent where, for every $z\in \Bb{H}_{|\re |\leqslant 1}\setminus S^{\hspace{0,02cm}{\tn{||}}}_{\!{\tn{\frown}}}$, the sum
has only finitely many nonzero  terms, the number of which equals (see \eqref{f16xcontgen}, \eqref{f21bsdenac} and \eqref{f22bsdenac}){\hyperlink{r5}{${}^{\ref*{case5}}$}}\hypertarget{br5}{}
\begin{align}\label{f24bsdenac}
&  \sum\limits_{{\fo{\eufm{n} \!\in \!\Bb{Z}_{\neq 0}^{\hspace{0,02cm}\Bb{N}_{\hspace{-0,02cm}\eurm{f}}} \cup\{0\}}}}\chi_{{\fo{\phi_{{\fo{\eufm{n}}}}\left(\Bb{D}_{\,\im > 0}\right)}}}(z)= \eurm{h}_{\eusm{E}} (z) \ , \quad z\in \Bb{H}_{|\re |\leqslant 1}\setminus S^{\hspace{0,02cm}{\tn{||}}}_{\!{\tn{\frown}}}\, .
\end{align}

Let   $N \!\in\! \Bb{Z}_{\geqslant 2}$,  $ \eufm{n} =(n_{N}, ..., n_{1})\!\in\!  \Bb{Z}_{\neq 0}^{N}$, $ \delta \!\in \! \{0,1\}$, and put
$\delta_{2n}:=\delta$, $\delta_{2n+1}:=1-\delta$,  $n\in \Bb{Z}_{\geqslant 0}$.
The equalities of sets in \eqref{f16zcontgen} and in \eqref{f8bsdenac}(b) lead to
\begin{align*}
     \psi{{\fo{n_{N}, ..., n_{N-k}}}}\big(\eusm{E}^{\hspace{0,05cm}n_{N}, ...,\, n_{1}}_{\!{\tn{\frown}}} \big)&=
\psi{{\fo{n_{N}, ..., n_{N-k}}}}\big(\phi{{\fo{n_{N}, ..., n_{N-k}}}}\big(\phi{{\fo{n_{N-k-1}, ..., n_{1}}}}\big(\eusm{E}^{0}_{\!{\tn{\frown}}}\big)\big)\big) \\    &    = \phi{{\fo{n_{N-k-1}, ..., n_{1}}}}\big(\eusm{E}^{0}_{\!{\tn{\frown}}}\big) = \eusm{E}^{\hspace{0,05cm}n_{N-k-1}, ...,\, n_{1}}_{\!{\tn{\frown}}} \ , \quad  0\! \leqslant\! k\! \leqslant\! N\!-\!2\,,
\end{align*}

\noindent and hence, in view of \eqref{f2bsdentppre}, we find that
\begin{align}\label{f25bsdenac}
    & \hspace{-0,25cm}
 \begin{array}{lll}
\psi_{{\fo{n_{N}, ..., n_{N-k}}}} (z) \!=\! \Bb{G}^{k+1}_{2} (z) ,  &  \  \ \ 0  \!\leqslant \! k  \!\leqslant\!  N\! -\! 1 \,,     & \ z\!\in\!  \eusm{E}^{n_{N}, ...,\, n_{1}}_{\!{\tn{\frown}}}  \, ,
 \\[0,3cm]    \Bb{G}^{k+1}_{2} \left(\eusm{E}^{n_{N}, ...,\, n_{1}}_{\!{\tn{\frown}}}\right) \!=\!  \eusm{E}^{\hspace{0,05cm}n_{N-k-1}, ...,\, n_{1}}_{\!{\tn{\frown}}} \, ,  & \  \ \
\Bb{G}^{N}_{2} \left(\eusm{E}^{n_{N}, ...,\, n_{1}}_{\!{\tn{\frown}}}\right) \!=\! \eusm{E}^{0}_{\!{\tn{\frown}}}  \, ,    &   \  0  \!\leqslant \! k  \!\leqslant\!  N\! -\! 2 \,.
 \end{array}\hspace{-0,1cm}
\end{align}

\noindent By applying the identity
\begin{align*}
    &  \left|\phi^{\,\prime} (z)\right| = \dfrac{\im  \,\phi (z)}{\im\, z} \, , \qquad  \phi\in \Gamma_{\vartheta} \ , \quad  z \in \Bb{H}\,,
\end{align*}

\noindent we see from \eqref{f25bsdenac} and \eqref{f5bsdenac} that
\begin{align}\label{f27bsdenac}
    &
      \left|\psi^{\,\prime}_{{\fo{n_{N}, ..., n_{N-k}}}} (z)\right|       =
\dfrac{\im\,\Bb{G}^{k+1}_{2} (z) }{\im\,z}  \ ,      \quad      0  \leqslant  k  \leqslant\!  N - 1 \,, \quad  z\in  \eusm{E}^{\hspace{0,05cm}n_{N}, ...,\, n_{1}}_{\!{\tn{\frown}}}\,.
     \end{align}

\noindent In view of \eqref{f25bsdenac},  for every $0\leqslant k \leqslant N-1$, $x\!\in\!\Bb{R}$ and $z\in \eusm{E}_{\!{\tn{\frown}}}^{\hspace{0,05cm}n_{N}, ...,\, n_{1}} $ we can write the function   $\Delta^{\delta}_{n_{N}, n_{N-1}, ..., n_{N-k}}$  from \eqref{f5zbsdenacth1} in the form (see \eqref{f2bsdenaclem2})
\begin{align}\label{f31bsdenac}
     & \Delta^{\delta}_{n_{N}, n_{N-1}, ..., n_{N-k}}(x,z) =
     \dfrac{(-1)^{k+1}\psi_{n_{N},...,n_{N-k}}^{\,\prime} (z)}{\left(
x^{1-\delta_{k}   }\Bb{G}^{k+1}_{2} (z) -
(-x)^{\delta_{k} }\right)^{2}} \ , \
\end{align}

\noindent where  $ \delta \!\in \! \{0,1\}$ and the properties \eqref{f25bsdenac}, \eqref{f27bsdenac} hold. On the other hand, in the  formula \eqref{f5zbsdenacth1}  the values of the function $\Phi^{\,\delta}_{\infty}(x; z)$ for $z\in \eusm{E}_{\!{\tn{\frown}}}^{\hspace{0,05cm}n_{N}, ...,\, n_{1}} $ can be expressed with the help of \eqref{10bsdenac} through its values on $\eusm{E}_{\!{\tn{\frown}}}^{\hspace{0,05cm}0}$:
\begin{align}\label{f32bsdenac}
    & (-1)^{N} \Phi^{\,\delta}_{\infty}(x; z)\big/ \psi^{\,\prime}_{{\fo{n_{N}, ...,  n_{1}}}} (z)= \Phi^{\,\delta_{N}}_{\infty}(x; \psi_{n_{N},...,n_{1}} (z)) =
\Phi^{\,\delta_{N}}_{\infty}(x; \Bb{G}^{N}_{2} (z)) ,
\end{align}

\noindent because $\psi_{n_{N},...,n_{1}} \in \Gamma_{\vartheta}$, by virtue of \eqref{f5bsdenac}, and
$\psi_{n_{N},...,n_{1}} (z)=\Bb{G}^{N}_{2} (z) \in      \eusm{E}^{0}_{\!{\tn{\frown}}}$ for all $ z\in  \eusm{E}^{\hspace{0,05cm}n_{N}, ...,\, n_{1}}_{\!{\tn{\frown}}}$. Moreover, the first equality in \eqref{f1bsdenaclem1p} actually holds  for all   $ z\in  \eusm{E}^{0}_{\!{\tn{\frown}}}$. Indeed,   by \eqref{f3contgen}(b) we have  $\lambda \big(\eusm{E}^{0}_{\!{\tn{\frown}}}\big)\!  = \!     \Bb{C}_{\,\re > 1/2}~\setminus~\{1\}$, while  $\lambda\left(\fet \setminus \Bb{D}\right)\!=\!\Bb{C}_{\re\leqslant 1/2}  \!\setminus\! \Bb{R}_{\leqslant 0}$, as follows from
\eqref{f5int}(c). It follows that $\lambda(z)- \lambda(\zeta)\neq 0$ holds for all $\zeta \in \fet \setminus \Bb{D}$
and $ z\in \eusm{E}^{0}_{\!{\tn{\frown}}}$. Since $\fet \setminus \Bb{D}$ is simply connected and $\gamma(-1,1)\subset \fet \setminus \Bb{D}$, $\Pi(-1,1)\subset \fet \setminus \Bb{D}$,
 we can transform the contour $\gamma(-1,1)$ of integration in \eqref{f1bsdenac} to $\Pi(-1,1)$ for each
$ z\in  \eusm{E}^{0}_{\!{\tn{\frown}}}$  to get $ \Phi^{\hspace{0,02cm}\delta}_{\infty}(x;z)\! =\!\Phi^{\hspace{0,02cm}\delta}_{{\tn{\sqcap}}}(x;z)$ and hence
\begin{align}\label{f33bsdenac}
    &  \Phi^{\hspace{0,02cm}\delta}_{\infty}(x;\Bb{G}^{N}_{2} (z))\! =\!\Phi^{\hspace{0,02cm}\delta}_{{\tn{\sqcap}}}(x;\Bb{G}^{N}_{2} (z)),
\ z\in  \eusm{E}^{\hspace{0,05cm}n_{N}, ...,\, n_{1}}_{\!{\tn{\frown}}} \, , \  x \in \Bb{R}, \ \delta \!\in \! \{0,1\}\,.
\end{align}

\noindent A similar change of the contour $\gamma(-1,1)$ can be made in \eqref{f5bsdenacth1} and  in  \eqref{f4bsdenacth1} after application   \eqref{10bsdenac} to  \eqref{f4bsdenacth1}
with $\phi (z)=-1/z$.
Combining this with \eqref{f33bsdenac}, \eqref{f32bsdenac},  \eqref{f31bsdenac},  \eqref{f25bsdenac}, \eqref{f27bsdenac} and Lemma~\ref{bsdenacth1} gives the  main result of this section.

    Let
\begin{align*} \hspace{-0,2cm} \Phi^{\,\delta}_{\infty}(x;z) & := I_{{\fo{\gamma (-1,1)}}}^{\delta} (x;z) , \ \, z \!\in \!\Bb{H}\setminus S_{\!{\tn{\frown}}}^{\infty} \, ; \ \
\Phi^{\hspace{0,02cm}\delta}_{{\tn{\sqcap}}}(x;z) := I_{{\fo{\Pi (-1,1)}}}^{\delta} (x;z) , \ \, z\! \in \! \eusm{E}_{\!{\tn{\frown}}}^{\hspace{0,05cm}0} \,,
    \\[0,1cm]       \hspace{-0,25cm}
I^{\,\delta}_{\Gamma}(x;z) & :={{ \dfrac{1}{2\pi i}}}
\int\limits_{{\fo{\Gamma}}} {{\dfrac{\lambda^{\,\prime}(z)\,{\rm{d}} \zeta}{\big(\lambda(z)- \lambda(\zeta)\big)\big(x^{\delta}\zeta-(-x)^{1-\delta}\big)^{\!2}}}}  \ , \quad   x\in\Bb{R}  \ , \quad  \delta \in  \{0,1\} \, .
 \end{align*}

 \vspace{-0,2cm}
\noindent Here the contour $\Pi (-1,1)= (-1,-1+\!2{\rm{i}}]\cup [-1\!+\!2{\rm{i}}, 1+\!2{\rm{i}}] \cup [1+\!2{\rm{i}}, 1)$ passes from $-1$ to $1$,  $\lambda \big(\eusm{E}^{0}_{\!{\tn{\frown}}}\big)\! \subset \!  \Bb{C}_{\,\re > 1/2}$, $\lambda\left(\Pi(-1,1)\right)\!\subset\!\Bb{C}_{\re< 1/2}$ and the convergence of both integrals is absolute, in view of \eqref{f3zint} and \eqref{f2auxevagen}.

\vspace{0.15cm}
 \begin{theorem}\label{bsdenacth2} For each $x\in\Bb{R}$  and  $\delta \in  \{0,1\}$ the function $\Phi^{\,\delta}_{\infty}(x;z)$ of the va\-ri\-ab\-le $z$ can  be analytically extended
 from the set $\eusm{E}^{\infty}_{\!{\tn{\frown}}}= \Bb{H} \setminus \cup_{\, {\fo{m \in \Bb{Z}}}}\ (2m + \overline{\Bb{D}})$ to $\Bb{H}_{|\re |< 1} \cup\eusm{E}^{\infty}_{\!{\tn{\frown}}}$ such that the resulting extension  $\Phi^{\,\delta}_{{\tn{||}}}(x;z)$ on the set {\rm{(}}see \eqref{f6wdenac}{\rm{)}}
\begin{align}\label{f2bsdenacth2}
    &  \hspace{-0,3cm} \left(\Bb{H}_{|\re |< 1} \cup\eusm{E}^{\infty}_{\!{\tn{\frown}}}\right)\big\backslash  S^{\hspace{0,02cm}{\tn{||}}}_{\!{\tn{\frown}}}\!=\! \eusm{E}^{\infty}_{\!{\tn{\frown}}}  \sqcup
    \eusm{E}^{0}_{\!{\tn{\frown}}}\ \sqcup \!\!\!\!\!\!
 \bigsqcup\limits_{
 {\fo{\,n_{1}, ... , n_{N} \in \Bb{Z}_{\neq 0}}} \,, \,  {\fo{N \in \Bb{N}} }} \,   \eusm{E}_{\!{\tn{\frown}}}^{\hspace{0,05cm}n_{N}, ...,\, n_{1}} \hspace{-0,15cm}
\end{align}

\noindent for   $\delta_{2n}\!:=\!\delta$, $\delta_{2n+1}\!:=\!1\!-\!\delta$,  $n\!\in\! \Bb{Z}_{\geqslant 0}$, and arbitrary $N \!\in\! \Bb{N}$, $n_{N}, ..., n_{1}\!\in\!  \Bb{Z}_{\neq 0}$
 satisfies
\begin{align}\label{f1bsdenacth2}
    &  \hspace{-0,25cm}  \Phi^{\,\delta}_{{\tn{||}}}(x;z)=
\left\{
\begin{array}{ll}
 - z^{-2}
 \Phi^{\hspace{0,02cm}1-\delta}_{{\tn{\sqcap}}}(x;-1/z),   &  \hspace{-2,2cm} \mbox{if}  \ z\in \eusm{E}^{\infty}_{\!{\tn{\frown}}} \,,
\\[0,2cm]
- {{\dfrac{1}{(x^{\delta}z - (-x)^{1-\delta})^{2}} }} + \Phi^{\hspace{0,02cm}\delta}_{{\tn{\sqcap}}}(x;z)
     \ ,   &  \hspace{-2,2cm}  \mbox{if}  \ z\in\eusm{E}_{\!{\tn{\frown}}}^{\hspace{0,05cm}0}\,,
\\[0,4cm]
  - {{\dfrac{1}{(x^{\delta}z - (-x)^{1-\delta})^{2}}}}+
\sum\limits_{k=0}^{N-1} {{\dfrac{(-1)^{k}\psi_{{\fo{n_{N},...,n_{N-k}}}}^{\,\prime} (z)}{\left(
x^{1-\delta_{k}  }\Bb{G}^{k+1}_{2} (z) -
(-x)^{\delta_{k} }\right)^{2}}}}
\\[0,7cm]
+(-1)^{N}\psi^{\,\prime}_{{\fo{n_{N}, ...,  n_{1}}}} (z)\,
\Phi^{\hspace{0,02cm}\delta_{N}}_{{\tn{\sqcap}}}\left(x;\Bb{G}^{N}_{2} (z) \right)
  ,   &     \hspace{-2,2cm}  \mbox{if}  \  z\in\eusm{E}_{\!{\tn{\frown}}}^{\hspace{0,05cm}n_{N}, ...,\, n_{1}},
\end{array}
\right. \hspace{-0,3cm}
\end{align}

\noindent where the sets in \eqref{f2bsdenacth2} and $\psi_{{\fo{n_{N}, ...,  n_{1}}}} \! \in\! \Gamma_{\vartheta}$ in \eqref{f1bsdenacth2} are defined as in \eqref{f1zbsdenac}, \eqref{f8bsdenac}, \eqref{f5zdenac},  and  as in \eqref{f4bsdenac}, correspondingly. Here, in \eqref{f1bsdenacth2} we have
\begin{align*}
     &\vspace{-0,25cm} \psi_{{\fo{n_{N}, ..., n_{N-k}}}} (z) \!=\! \Bb{G}^{k+1}_{2} (z)\,  ,   \quad z\!\in\!  \eusm{E}^{n_{N}, ...,\, n_{1}}_{\!{\tn{\frown}}}  ,  \ \ 0 \! \leqslant\!  k \! \leqslant\!  N \!-\! 1\,,\vspace{-0,1cm}
\end{align*}

\noindent $\Bb{G}^{N}_{2} \left(\eusm{E}^{n_{N}, ...,\, n_{1}}_{\!{\tn{\frown}}}\right) \!=\! \eusm{E}^{0}_{\!{\tn{\frown}}}$ and
$-1/\eusm{E}^{\infty}_{\!{\tn{\frown}}}=
 \eusm{E}_{\!{\tn{\frown}}}^{\hspace{0,05cm}0}$ {\rm{(}}see also \eqref{f25bsdenac} and \eqref{f27bsdenac}{\rm{)}}.
 \end{theorem}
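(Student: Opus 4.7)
The plan is to start from Lemma~\ref{bsdenacth1}, which already provides the analytic continuation $\Phi^{\,\delta}_{{\tn{||}}}$ together with the formulas \eqref{f4bsdenacth1}--\eqref{f5zbsdenacth1} expressing it in terms of $\Phi^{\,\delta}_{\infty}$ and the correction terms $\Delta^{\delta}_{\eufm{n}}$. The remaining task is to rewrite those expressions in the target form \eqref{f1bsdenacth2}. The three ingredients I will assemble are: (i) the covariance relation \eqref{10bsdenac} of $\Phi^{\,\delta}_{\infty}$ under the theta group $\Gamma_{\vartheta}$, which changes $\delta$ into $1-\delta$ whenever $d(\phi)$ is odd; (ii) the explicit formula \eqref{f31bsdenac} for each $\Delta^{\delta}_{\eufm{n}}$ in terms of an iterate of the even Gauss map, which rests on the identification $\psi_{{\fo{n_{N},\ldots,n_{N-k}}}}(z)=\Bb{G}^{k+1}_{2}(z)$ on $\eusm{E}^{\hspace{0,05cm}n_{N},\ldots,n_{1}}_{\!{\tn{\frown}}}$ recorded in \eqref{f25bsdenac}; and (iii) the contour-deformation identity $\Phi^{\,\delta}_{\infty}(x;z)=\Phi^{\,\delta}_{{\tn{\sqcap}}}(x;z)$ on $\eusm{E}^{0}_{\!{\tn{\frown}}}$, valid because $\lambda(\eusm{E}^{0}_{\!{\tn{\frown}}})\subset\Bb{C}_{\re>1/2}$ while $\lambda(\fet\setminus\Bb{D})\subset\Bb{C}_{\re\leqslant 1/2}$, so that no pole is crossed when passing from $\gamma(-1,1)$ to $\Pi(-1,1)$ inside the simply connected set $\fet\setminus\Bb{D}$ (this is precisely the argument already given preceding \eqref{f33bsdenac}).

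First I would treat the two base cases of the partition \eqref{f2bsdenacth2}. For $z\in \eusm{E}^{\infty}_{\!{\tn{\frown}}}$, formula \eqref{f4bsdenacth1} states $\Phi^{\,\delta}_{{\tn{||}}}(x;z)=\Phi^{\,\delta}_{\infty}(x;z)$. I apply (i) with $\phi(z)=-1/z\in\Gamma_{\vartheta}$ of order one, producing $\Phi^{\,\delta}_{\infty}(x;z)=-z^{-2}\,\Phi^{\,1-\delta}_{\infty}(x;-1/z)$; since $-1/\eusm{E}^{\infty}_{\!{\tn{\frown}}}=\eusm{E}^{0}_{\!{\tn{\frown}}}$, ingredient (iii) upgrades this to $-z^{-2}\,\Phi^{\,1-\delta}_{{\tn{\sqcap}}}(x;-1/z)$, matching the first branch of \eqref{f1bsdenacth2}. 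For $z\in \eusm{E}^{0}_{\!{\tn{\frown}}}$, combining \eqref{f5bsdenacth1} with the definition \eqref{f3bsdenacth1} of $\Delta^{\delta}_{0}$ gives $\Phi^{\,\delta}_{{\tn{||}}}(x;z)=\Phi^{\,\delta}_{\infty}(x;z)-(x^{\delta}z-(-x)^{1-\delta})^{-2}$, and a direct application of (iii) replaces $\Phi^{\,\delta}_{\infty}$ by $\Phi^{\,\delta}_{{\tn{\sqcap}}}$, which yields the second branch.

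For $z\in \eusm{E}^{\hspace{0,05cm}n_{N},\ldots,n_{1}}_{\!{\tn{\frown}}}$ with $N\geqslant 1$ I unfold \eqref{f5zbsdenacth1}. The leading term $\Phi^{\,\delta}_{\infty}(x;z)$ is transported to $\eusm{E}^{0}_{\!{\tn{\frown}}}$ by \eqref{f32bsdenac}, yielding $(-1)^{N}\psi^{\,\prime}_{{\fo{n_{N},\ldots,n_{1}}}}(z)\,\Phi^{\,\delta_{N}}_{\infty}(x;\Bb{G}^{N}_{2}(z))$, after which (iii) converts $\Phi^{\,\delta_{N}}_{\infty}$ to $\Phi^{\,\delta_{N}}_{{\tn{\sqcap}}}$ at the point $\Bb{G}^{N}_{2}(z)\in\eusm{E}^{0}_{\!{\tn{\frown}}}$. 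Each summand $-\Delta^{\delta}_{n_{N},\ldots,n_{N-k}}(x;z)$ becomes, via \eqref{f31bsdenac}, the term $(-1)^{k}\psi^{\,\prime}_{{\fo{n_{N},\ldots,n_{N-k}}}}(z)\big/\bigl(x^{1-\delta_{k}}\Bb{G}^{k+1}_{2}(z)-(-x)^{\delta_{k}}\bigr)^{2}$, while the remaining contribution $-\Delta^{\delta}_{0}(x;z)$ produces the singular piece $-(x^{\delta}z-(-x)^{1-\delta})^{-2}$. Summing reproduces precisely the third branch of \eqref{f1bsdenacth2}. The main obstacle here is combinatorial rather than analytical: one must accurately propagate the parity convention $\delta_{k}\equiv\delta+k\pmod 2$ through $N$ consecutive applications of \eqref{10bsdenac} (each odd-order transformation flipping $\delta$), absorb the sign $(-1)^{k+1}$ from $\Delta^{\delta}_{\eufm{n}}$ into the displayed $(-1)^{k}$, and invoke the identification $\psi_{{\fo{n_{N},\ldots,n_{N-k}}}}=\Bb{G}^{k+1}_{2}$ on the correct even-rational neighbourhood. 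Once this bookkeeping is recorded, verification of \eqref{f1bsdenacth2} reduces to direct substitution.
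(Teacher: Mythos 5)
Your proposal is correct and follows essentially the same route as the paper's own argument: starting from Lemma~\ref{bsdenacth1}, transporting the terms via the covariance relation \eqref{10bsdenac} and the identifications \eqref{f25bsdenac}, \eqref{f31bsdenac}, \eqref{f32bsdenac}, and then replacing $\Phi^{\,\delta}_{\infty}$ by $\Phi^{\,\delta}_{{\tn{\sqcap}}}$ on $\eusm{E}^{0}_{\!{\tn{\frown}}}$ through the contour deformation justified by $\lambda(\eusm{E}^{0}_{\!{\tn{\frown}}})\subset\Bb{C}_{\re>1/2}$ and $\lambda(\fet\setminus\Bb{D})\subset\Bb{C}_{\re\leqslant 1/2}$. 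The sign and parity bookkeeping you describe (the flip $\delta\mapsto 1-\delta$ for odd order, the factor $(-1)^{k}$ absorbed from $\Delta^{\delta}_{\eufm{n}}$, and the case $\phi(z)=-1/z$ for $z\in\eusm{E}^{\infty}_{\!{\tn{\frown}}}$) matches the paper's computation exactly.
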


 We observe that by the definition \eqref{f3contgendef1} of $ \Bb{G}_{2} (z)$ and the identity
$\im (-1/z) = (\im\, z)/|z|^{2} $, $z\!\in \!\Bb{H}$, we have $\im\,  \Bb{G}_{2} (z)\! = \!\im (-1/z)$ for every
$ z \!\in\! \Bb{H}_{|\re|\leqslant 1}$ and thus
\begin{align}\label{f29bsdenac}
    & \im\,  \Bb{G}_{2} (z)\! =\! \im (-1/z)\! \geqslant\! \im\, z  \, , \ \  z\!\in \!\Bb{D}_{\im>0} \, ; \ \
\im\,  (-1/z) \! \leqslant \!\dfrac{1}{\im\, z}  \, , \ \  z\!\in\! \Bb{H} \,.
\end{align}

\noindent Since for every $  z\!\in\!  \eusm{E}^{\hspace{0,05cm}n_{N}, ...,\, n_{1}}_{\!{\tn{\frown}}}$ we have
$\Bb{G}_{2}^{k} (z) \in \Bb{D}_{\im>0}$, $0 \leqslant k \leqslant N$, then
\begin{align}\label{f30bsdenac}
    & \begin{array}{l}\hspace{-0,25cm}
      \im\, z \!\leqslant \!\im\,\Bb{G}_{2}^{k} (z) \! \leqslant \! \im\,\Bb{G}_{2}^{k+1} (z)\!=\!\im\, \left( - 1/ \Bb{G}_{2}^{k} (z) \right)\!  \leqslant\! 1  \ , \quad  0 \!\leqslant\! k \leqslant\! N\!-\!1 \, , \\[0,3cm]\hspace{-0,25cm}
      \im\left( - 1/ \Bb{G}_{2}^{N} (z) \right)\!\leqslant \!1/\im\, z \, , \quad  z\!\in\!  \eusm{E}^{\hspace{0,05cm}n_{N}, ...,\, n_{1}}_{\!{\tn{\frown}}} \, , \  n_{N}, ..., n_{1}\!\in\!  \Bb{Z}_{\neq 0}  , \ N \!\in\! \Bb{N}.
      \end{array}
\end{align}


\section[\hspace{-0,30cm}. \hspace{0,11cm}Evaluation of biorthogonal functions for large index]{\hspace{-0,095cm}{\bf{.}} Evaluation of biorthogonal functions for large index}
\label{evagen}

\ The relations{\hyperlink{r28}{${}^{\ref*{case28}}$}}\hypertarget{br28}{}
\begin{align*}
    &
\begin{array}{l}
  \Theta_{2} (z)^{4}=(\sigma- z)^{-2} \Theta_{4} \left({1}/{(\sigma- z)}\right)^{4}, \ \ z \in \Bb{H}\,;
  \\[0,15cm]
\im (\sigma- z)^{-1} \geqslant 1/2 \, , \  |z|> 1 \, , \  \sigma \re\, z \in [0,1] \, , \  \im \, z \leqslant 1 \, , \ \sigma \in \left\{1, -1\right\},
\end{array}
\end{align*}
\noindent
 and  the Landen equations
\eqref{f3d1int} imply that{\hyperlink{r1}{${}^{\ref*{case1}}$}}\hypertarget{br1a}{}
\begin{align}\label{f1auxevagen}
    & \hspace{-0,25cm} \left|\Theta_{2} (z)\right|^{4} \im^{2} z\! \leqslant\! \theta_{3}({\rm{e}}^{-\pi/2})^{4}
\! \leqslant\!  5 , \     z\! \in \! \eusm{E}^{\infty}_{\!{\tn{\frown}}} \, ; \ \
 \lambda (2z)\!=\! \left(\dfrac{1\!-\!\sqrt{1\!-\!\lambda (z)}}{1\!+\!\sqrt{1\!-\!\lambda (z)}}\right)^{2} \!\!\! \!,  \,  z\!\in\! \fet\,, \hspace{-0,1cm}
\end{align}

\noindent where the value of $\theta_{3}({\rm{e}}^{-\pi/2})$
is taken
from \cite[p.\! 325]{ber1} and the principal branch of the square root is used.
   From \eqref{f2aqinttheor1}  and \eqref{f0apinttheor1}(b) we thus obtain{\hyperlink{r26}{${}^{\ref*{case26}}$}}\hypertarget{br26}{}
\begin{align}\label{f2auxevagen}
    &   \dfrac{1}{1-2 \lambda (2{\rm{i}})} =
\dfrac{11 + 8\sqrt{2}}{21}  \, , \quad  \left|\lambda (\pm 1 \!+\! {\rm{i}} t)\right| \geqslant \dfrac{ 17 - 12\sqrt{2}}{16} {\rm{e}}^{\pi /t} \ , \quad  t  \in (0,2]\,.
\end{align}

\subsection[\hspace{-0,31cm}. \hspace{0,11cm}Evaluation of the generating function.]{\hspace{-0,11cm}{\bf{.}} Evaluation of the generating function.} \
Using the identity $\lambda^{\,\prime}(z)= \pi i \Theta_{4}\left(z \right)^{4} \lambda (z)$, $z\in \Bb{H}$,
written in \eqref{f19int}, and the fact that all three values of the variable $z$ used in the formula \eqref{f1bsdenacth2} for the function
$\Phi^{\,\delta}_{{\tn{\sqcap}}}(x;z)$ belong to $\eusm{E}_{\!{\tn{\frown}}}^{\hspace{0,05cm}0}$, this function  can be estimated  as
\begin{align*}
    &  \left|\Phi^{\,\delta}_{{\tn{\sqcap}}}(x;z)\right|\!\leqslant\! \left|\Theta_{4}\left(z \right)\right|^{4}I_{\delta} (z) \, , \
I_{\delta} (z):= \!\!\!\!\!\!\!\int\limits_{{\fo{\Pi (-1,1)}}}\!\!\!\!\!\dfrac{
\left(\left|\lambda \left(z \right)\right| \big/\big|\lambda\left(z \right)\!-\! \lambda(\zeta)\big|\right)|{\rm{d}} \zeta|}{2\big|x^{\delta}\zeta-(-x)^{1-\delta }\big|^{2} }  \ , \quad  z\!\in\!\eusm{E}_{\!{\tn{\frown}}}^{\hspace{0,05cm}0} \, .
\end{align*}

For arbitrary $z\in\Bb{D}_{\im>0}$ and $x\in \Bb{R}$ we obviously have
\begin{align*}
    &  \left|z+x\right|\geqslant \im z  \, , \  \left|x z-1\right|= \left|z\right|
\left|x- (1/z)\right|\geqslant \left|z\right| \im (-1/z)  =  \im z /\left|z\right|\geqslant \im z \,,
\end{align*}

\noindent and hence
\begin{align}\label{f1evagen}
    &  \left|x^{\delta}z - (-x)^{1-\delta}\right| \geqslant \left(\min \left\{1, |z|^{-1}\right\}\right) \im z   \ , \quad  z\in\Bb{H} \, , \
\delta \!\in \! \{0,1\}  \, , \  x\!\in\!\Bb{R}\,.
\end{align}

\noindent Since the sets $\eusm{E}_{\!{\tn{\frown}}}^{\hspace{0,05cm}0}$ and $\eusm{E}_{\!{\tn{\frown}}}^{\hspace{0,05cm}n_{N}, ...,\, n_{1}}$ in \eqref{f1bsdenacth2} are the subsets of $\Bb{D}$ and
 for any $z$ from these sets we have
 $  \Bb{G}^{k}_{2} (z) \in \Bb{D} $, $0 \leqslant k \leqslant N$ ($N=0$, if $ z\in \eusm{E}_{\!{\tn{\frown}}}^{\hspace{0,05cm}0}$), in view of \eqref{f25bsdenac},
\eqref{f17bsdenac} and \eqref{f16bsdenac},  we deduce from \eqref{f1evagen} that
\begin{align*}
    & \left|\dfrac{1}{(x^{\delta}\Bb{G}^{k}_{2} (z) - (-x)^{1-\delta})^{2}}\right|\leqslant\dfrac{1}{\im^{2}\Bb{G}^{k}_{2} (z) } \, , \ 0 \leqslant k \leqslant d(\eufm{n})  \ , \quad
z\in  \eusm{E}_{\!{\tn{\frown}}}^{\eufm{n}} \, , \  \eufm{n}\in \{0\}\cup \Bb{Z}_{\neq 0}^{\hspace{0,02cm}\Bb{N}_{\hspace{-0,02cm}\eurm{f}}}\,,
\end{align*}

\noindent  where $\Bb{G}^{0}_{2} (z):=z$.
Together with \eqref{f27bsdenac}, this gives  the following estimate for the sum in \eqref{f1bsdenacth2}
\begin{align*}
    & \left|  {{\dfrac{1}{(x^{\delta}z - (-x)^{1-\delta})^{2}}}}-
\sum\limits_{k=0}^{N-1} {{\dfrac{(-1)^{k}\psi_{{\fo{n_{N},...,n_{N-k}}}}^{\,\prime} (z)}{\left(
x^{1-\delta_{k}  }\Bb{G}^{k+1}_{2} (z) -
(-x)^{\delta_{k} }\right)^{2}}}}\right|\leqslant \sum\limits_{k=0}^{N}\dfrac{\im \,\Bb{G}^{k}_{2} (z)}{(\im\,z)\, \im^{2} \Bb{G}^{k}_{2} (z)}\ .
\end{align*}

\noindent As the roof $\gamma_{\eufm{n}}(-1,1)$ of $\eusm{E}^{\eufm{n}}_{\!{\tn{\frown}}}$ is also the roof of
$\fet^{\hspace{0,05cm}n_{N}, ..., n_{1}}$ then, in accordance with Lemma~\ref{bsdentplem1}(e),
the property $z\in\eusm{E}^{\eufm{n}}_{\!{\tn{\frown}}}$ implies that $\im \,z \leqslant 1/(2N) $, i.e.,  $2 N \leqslant 1/( \im \,z)$ and, by  \eqref{f30bsdenac},
we obtain for the latter sum the estimate
\begin{align}\label{f2zevagen}
    &  \sum\limits_{k=0}^{N}\dfrac{1}{(\im\,z)\, \im \,\Bb{G}^{k}_{2} (z)} \leqslant \dfrac{N+1}{(\im\,z)^{2}}\leqslant \dfrac{1}{(\im\,z)^{3}}\, , \quad  z\!\in\!  \eusm{E}^{\eufm{n}}_{\!{\tn{\frown}}} \, , \  \eufm{n}\!\in\!  \Bb{Z}_{\neq 0}^{N}\, , \ N\in \Bb{N} \, .
\end{align}

\noindent As a result, we obtain  the following estimates  for the functions from \eqref{f1bsdenacth2}:
\begin{align}\label{f2evagen}
    &\hspace{-0,3cm}   \left|\Phi^{\,\delta}_{{\tn{||}}}(x;z)\right|\!\leqslant\!
\left\{
  \begin{array}{ll}
  \dfrac{\left|\Theta_{4}\left(-1/z \right)\right|^{4}}{ |z|^{2}}\, I_{\delta} (-1/z), &\ \  \hbox{if}\ \, z\in \eusm{E}^{\infty}_{\!{\tn{\frown}}} \,;\\
 \left|\Theta_{4}\left(z \right)\right|^{4} \,I_{\delta} (z) + \dfrac{1}{(\im\,z)^{2}}, &\ \   \hbox{if}\ \, z\in\eusm{E}_{\!{\tn{\frown}}}^{\hspace{0,05cm}0}\,;\\
 \dfrac{\left|\Theta_{4}\big(\Bb{G}^{N}_{2} (z) \big)\right|^{4}}{ \left(\im\,z\right)\big/
\im\,\Bb{G}^{N}_{2} (z)
}\, I_{\delta} \big(\Bb{G}^{N}_{2} (z)\big)+
\dfrac{\sum\limits_{k=0}^{N}\dfrac{1}{\im \,\Bb{G}^{k}_{2} (z)}}{\im\,z }
, &\ \  \hbox{if}\ \, z\in\eusm{E}^{\eufm{n}}_{\!{\tn{\frown}}}  \,.
  \end{array}
\right.\hspace{-0,25cm}
\end{align}

\noindent Here,   $N \!\in\! \Bb{N}$, $ \eufm{n} =(n_{N}, ..., n_{1})\!\in\!  \Bb{Z}_{\neq 0}^{N}$,
  $x \!\in\! \Bb{R}$, $ \delta \!\in \! \{0,1\}$  and the equality \eqref{f27bsdenac} was used.

Since for arbitrary $\zeta \in \Pi (-1,1)$, we have $|\zeta| \leqslant \sqrt{5}$,  \eqref{f1evagen} gives that
\begin{align*}
    &  \dfrac{ I_{\delta} (z)}{(5/2)}\!\leqslant \!\!\!\!\!\!\!\!\int\limits_{{\fo{\Pi (-1,1)}}}\!\!\!\! \!\dfrac{
\left|\lambda \left(z \right)\right| |{\rm{d}} \zeta|}{\big|\lambda\left(z \right)- \lambda(\zeta)\big|
(\im\, \zeta)^{2} } \!= \!\int\limits_{-1}^{1} \!\dfrac{\left|\lambda \left(z \right)\right| {\rm{d}} t}{4 \big|\lambda\left(z \right)- \lambda(2 {\rm{i}} \!+\! t)\big| }\! + \!\int\limits_{0}^{2}\! \dfrac{2 \left|\lambda \left(z \right)\right| {\rm{d}} t}{t^{2} \big|\lambda\left(z \right)- \lambda(1 \!+\! {\rm{i}} t)\big| }\,.
\end{align*}

\noindent If we denote by  $I^{-}_{\delta} (z)$ and $I^{|}_{\delta} (z)$ the first and the second integrals in the right-hand side of the above equality, correspondingly,  we can apply  \eqref{f1intcorol1}(c) and \eqref{f1intcorol1}(d) to get $(2/5) I_{\delta} (z)\!\leqslant \! I^{-}_{\delta} (z) +  I^{|}_{\delta} (z) $  and
\begin{align}\label{f4evagen}
    &
 I^{-}_{\delta} (z)\leqslant \dfrac{2}{1-2 \lambda (2{\rm{i}})} \, , \
I^{|}_{\delta} (z)\leqslant \int\limits_{0}^{2}\! \dfrac{2 \sqrt{2}\left|\lambda \left(z \right)\right| {\rm{d}} t}{t^{2} \left(\big|\lambda\left(z \right)\big|+ \big|\lambda(1 \!+\! {\rm{i}}\, t)\big|\right) } \ , \quad  z\in\eusm{E}_{\!{\tn{\frown}}}^{\hspace{0,05cm}0}  \,.
\end{align}

\noindent Let $a := \big|\lambda\left(z \right)\big|$. Then $\lambda \big(\eusm{E}^{0}_{\!{\tn{\frown}}}\big)\! \subset \!  \Bb{C}_{\,\re > 1/2}$
implies $a > 1/2$ and using \eqref{f2auxevagen} we get{\hyperlink{r27}{${}^{\ref*{case27}}$}}\hypertarget{br27}{}
\begin{align*}
    &  \dfrac{I^{|}_{\delta} (z)}{2 a\sqrt{2}}\leqslant \int\limits_{0}^{2}\! \dfrac{ {\rm{d}} t}{t^{2} \left(a+ \dfrac{ 17 - 12\sqrt{2}}{16} \,{\rm{e}}^{\pi /t}\right) }
= \dfrac{1}{\pi a}  \log  \left( 1 + \dfrac{16\, {\rm{e}}^{-\pi /2}a}{17 - 12\sqrt{2}}\right) \, , \
\end{align*}

\noindent from which\vspace{-0,15cm}
\begin{align*}
    &  I^{|}_{\delta} (z) \leqslant \dfrac{2 \sqrt{2}}{\pi} \log  \left( 1 + \dfrac{16\, {\rm{e}}^{-\pi /2}}{17 - 12\sqrt{2}}\big|\lambda\left(z \right)\big|\right)\ , \quad  z\in\eusm{E}_{\!{\tn{\frown}}}^{\hspace{0,05cm}0}  \,,
\end{align*}

\noindent and hence,
we deduce from \eqref{f4evagen} and \eqref{f2auxevagen} that
\begin{align}\label{f5evagen}
    & I_{\delta} (z)\!\leqslant \! \dfrac{55 + 40\sqrt{2}}{21}  + \dfrac{5 \sqrt{2}}{\pi} \log  \left( 1 + \dfrac{16\, {\rm{e}}^{-\pi /2}}{17 - 12\sqrt{2}}\big|\lambda\left(z \right)\big|\right) \ , \quad  z\in\eusm{E}_{\!{\tn{\frown}}}^{\hspace{0,05cm}0}  \,   \,.
\end{align}

\vspace{0,1cm}
 To obtain the final estimates of $\Phi^{\,\delta}_{{\tn{||}}}$, we need to deal with the variable $z$
lying in $\eusm{E}^{\infty}_{\!{\tn{\frown}}}= - 1/ \eusm{E}_{\!{\tn{\frown}}}^{\hspace{0,05cm}0}$ but not in $\eusm{E}_{\!{\tn{\frown}}}^{\hspace{0,05cm}0}$. Hence, we transform \eqref{f5evagen}, taking into account that due to $2$-periodicity of $\lambda$ we have $1-\lambda(z) = \lambda(- 1/z) = \lambda(\Bb{G}_{2} (z))$, where $\Bb{G}_{2} (z)\in \mathcal{F}^{\,{\tn{||}}}_{{\tn{\square}}} \setminus\overline{\Bb{D}}$ for all $z \in \eusm{E}_{\!{\tn{\frown}}}^{\hspace{0,05cm}0}$, in view of \eqref{f16contgen}(a). Namely,
\begin{align*}
    &  I_{\delta} (z)\!\leqslant \!
 \dfrac{55 + 40\sqrt{2}}{21}  + \dfrac{5 \sqrt{2}}{\pi} \log\left(1+ \dfrac{16 \,{\rm{e}}^{-\pi /2}}{17 - 12\sqrt{2}}\right)
+    \dfrac{5 \sqrt{2}}{\pi}\log \left(1 + \big|\lambda\left(- 1/z \right)\big|\right),
\end{align*}

\noindent which after numerical calculations can be written as{\hyperlink{r29}{${}^{\ref*{case29}}$}}\hypertarget{br29}{}
\begin{align}\label{f6evagen}
    &  \hspace{-0,2cm} I_{\delta} (z)\leqslant 16 + (9/4) \log \Big( 1 \!+ \!\left|\lambda \big(\Bb{G}_{2} (z)\big)\right|\Big), \ \
\Bb{G}_{2} (z)\!\in\! \mathcal{F}^{\,{\tn{||}}}_{{\tn{\square}}} \setminus\overline{\Bb{D}}\!\subset \! \eusm{E}^{\infty}_{\!{\tn{\frown}}},
\ \   z\in\eusm{E}_{\!{\tn{\frown}}}^{\hspace{0,05cm}0}  \,.\hspace{-0,1cm}
\end{align}

\noindent
But for arbitrary $x+\imag y \in  \mathcal{F}^{\,{\tn{||}}}_{{\tn{\square}}} \setminus\overline{\Bb{D}}$ by \eqref{f1inttheor1}, \eqref{f2aqinttheor1} and
\eqref{f0apinttheor1}(b) we have
\begin{align*}
     & \left|\lambda(x+\imag  y)\right|\leqslant\left|\lambda(1+\imag  y)\right| =
     \lambda(\imag  y)\big/\lambda(\imag / y) \leqslant  16\, {\rm{e}}^{{\fo{-  \pi y + \pi / y }}} \,,
\end{align*}

\noindent from which
\begin{align*}
     & \log \left(1 + \left|\lambda(x+\imag  y)\right|\right)\leqslant
     \log \left(1 + 16\, {\rm{e}}^{{\fo{-  \pi y + \pi / y }}}\right) \leqslant \pi \left(1
+  {1}/{y}\right)    ,
\end{align*}

\noindent  and hence{\hyperlink{r29}{${}^{\ref*{case29}}$}}\hypertarget{br29}{}
\begin{align}\label{f12evagen}
    &  \hspace{-0,2cm} I_{\delta} (z)\leqslant \dfrac{147 \pi}{20}
\left(1
+  \dfrac{1}{\im\,\Bb{G}_{2} (z) }\right), \ \
\Bb{G}_{2} (z)\!\in\! \mathcal{F}^{\,{\tn{||}}}_{{\tn{\square}}} \setminus\overline{\Bb{D}}\!\subset \! \eusm{E}^{\infty}_{\!{\tn{\frown}}},
\ \   z\in\eusm{E}_{\!{\tn{\frown}}}^{\hspace{0,05cm}0}  \,.\hspace{-0,1cm}
\end{align}

By applying for the three expressions in \eqref{f2evagen} the obvious consequence
$ \im \, z=\im \, \Bb{G}_{2} (-1/z) $, $z \in \eusm{E}^{\infty}_{\!{\tn{\frown}}}$,  of \eqref{f2contgendef1}, $ (147/20) \theta_{3}({\rm{e}}^{-\pi/2})^{4} <30 $,
the left-hand side inequality of \eqref{f1auxevagen}, \eqref{f3bint}(a),(c) and \eqref{f12evagen}, we obtain in Section~\ref{pevagenlem1} the following properties.

\begin{lemsecteight}\hspace{-0,17cm}{\bf{.}}\label{evagenlem1}
Let $x\in\Bb{R}$,  $\delta \in  \{0,1\}$,   $z\in \Bb{H}_{|\re |< 1} \cup\eusm{E}^{\infty}_{\!{\tn{\frown}}}$ and the function $\Phi^{\,\delta}_{{\tn{||}}}(x;z)$  be defined as in
Theorem~{\rm{\ref{bsdenacth2}}}.  Then\vspace{-0,1cm}
\begin{align}\label{f13evagen}
    &   \hspace{-0,23cm}\left|\Phi^{\,\delta}_{{\tn{||}}}(x;z)\right|\leqslant
\left\{
  \begin{array}{ll}
\dfrac{30 \pi\left( 1 + \dfrac{1}{\im\, z}\right)}{\im^{2} z} , &\quad \hbox{if}\ \ z\in \eusm{E}^{\infty}_{\!{\tn{\frown}}} \,;\\[0,15cm]
\dfrac{1}{\im^{2}\,z} + \dfrac{30 \pi \, \left( 1 + \dfrac{1}{\im\, \Bb{G}_{2} (z)}\right) }{(\im\,z)\, \im\,\Bb{G}_{2} (z)}
, &\quad  \hbox{if}\ \ z\in\eusm{E}_{\!{\tn{\frown}}}^{\hspace{0,05cm}0}\,;\\[0,25cm]
\dfrac{\sum\limits_{k=0}^{N}\dfrac{1}{\im \,\Bb{G}^{k}_{2} (z)}}{\im\,z }
+ \dfrac{30 \pi\,\left( 1 + \dfrac{1}{\im\, \Bb{G}^{N+1}_{2} (z)}\right)}{(\im\,z)\,\im\,\Bb{G}_{2}^{N+1} (z)}
, &\quad \hbox{if}\ \ z\in\eusm{E}^{\eufm{n}}_{\!{\tn{\frown}}}  \,;
  \end{array}
\right.\hspace{-0,1cm}
\end{align}

\noindent $\Bb{G}_{2} \big(\eusm{E}^{0}_{\!{\tn{\frown}}}\big) \! = \! \Bb{G}_{2}^{N+1}\big(\eusm{E}^{\eufm{n}}_{\!{\tn{\frown}}}\big) \! = \!
        \mathcal{F}^{\,{\tn{||}}}_{{\tn{\square}}} \setminus\overline{\Bb{D}}$,
             for arbitrary   $N \!\in\! \Bb{N}$ and $ \eufm{n}\! = \! (n_{N}, ..., n_{1})\!\in\!  \Bb{Z}_{\neq 0}^{N}$.
\end{lemsecteight}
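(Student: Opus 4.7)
The plan is to combine the three preliminary bounds collected in \eqref{f2evagen} with the previously obtained control on the integral $I_\delta$ from \eqref{f12evagen} and the theta-function estimate \eqref{f1auxevagen}, converting the factor $|\Theta_4|^4$ into a controllable quantity via the modular transformation \eqref{f3bint}(a),(c). The key bookkeeping identity, already noted in the text, is that $\im(-1/z)=\im\,\Bb{G}_2(z)$ for $z\in\eusm{E}^{0}_{\!{\tn{\frown}}}$ (and more generally at each iterate), together with $\im\,\Bb{G}_2(-1/z)=\im\,z$ for $z\in\eusm{E}^{\infty}_{\!{\tn{\frown}}}$. Throughout, the absolute numerical constant is produced from the inequality $(147/20)\,\theta_3({\rm e}^{-\pi/2})^4<30$, which is flagged before the lemma.

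For the first case $z\in\eusm{E}^{\infty}_{\!{\tn{\frown}}}$, I would use \eqref{f3bint}(c) in the form $|\Theta_4(-1/z)|^4=|z|^2|\Theta_2(z)|^4$ to cancel the factor $|z|^2$ in the denominator of the first bound of \eqref{f2evagen}, leaving $|\Theta_2(z)|^4\,I_\delta(-1/z)$. Now \eqref{f1auxevagen} gives $|\Theta_2(z)|^4\leqslant\theta_3({\rm e}^{-\pi/2})^4/\im^2 z$, while \eqref{f12evagen} applied at the point $-1/z\in\eusm{E}^{0}_{\!{\tn{\frown}}}$ together with $\im\,\Bb{G}_2(-1/z)=\im\,z$ yields $I_\delta(-1/z)\leqslant(147\pi/20)(1+1/\im z)$; multiplying these estimates produces the first line of \eqref{f13evagen}.

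For the second case $z\in\eusm{E}^{0}_{\!{\tn{\frown}}}$, I would apply \eqref{f3bint}(a) to write $|\Theta_4(z)|^4=|\Theta_2(-1/z)|^4/|z|^2$; since $-1/z\in\eusm{E}^{\infty}_{\!{\tn{\frown}}}$, \eqref{f1auxevagen} gives $|\Theta_2(-1/z)|^4\leqslant\theta_3({\rm e}^{-\pi/2})^4/\im^2(-1/z)$. Substituting $\im(-1/z)=\im\,\Bb{G}_2(z)$ and $|z|^2=\im\,z/\im\,\Bb{G}_2(z)$ collapses the factor to $\theta_3({\rm e}^{-\pi/2})^4/(\im\,z\cdot\im\,\Bb{G}_2(z))$. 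Combining with \eqref{f12evagen} applied at $z$ then with the additive term $1/\im^2 z$ inherited from the second line of \eqref{f2evagen}, I obtain the second line of \eqref{f13evagen}. The third case is the same computation applied at the iterate $\Bb{G}_2^N(z)\in\eusm{E}^{0}_{\!{\tn{\frown}}}$: by the case just handled, $|\Theta_4(\Bb{G}_2^N(z))|^4\leqslant\theta_3({\rm e}^{-\pi/2})^4/(\im\,\Bb{G}_2^N(z)\cdot\im\,\Bb{G}_2^{N+1}(z))$, and $I_\delta(\Bb{G}_2^N(z))\leqslant(147\pi/20)(1+1/\im\,\Bb{G}_2^{N+1}(z))$. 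The factor $\im\,\Bb{G}_2^N(z)/\im\,z$ appearing in the third bound of \eqref{f2evagen} cancels one factor of $\im\,\Bb{G}_2^N(z)$, leaving the first term in the third line of \eqref{f13evagen}; the remaining summation term is copied unchanged from \eqref{f2evagen}.

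The argument is essentially a careful matching of the transformation identities with the available estimates; no genuine analytic obstacle appears. The main care required is to track which factor in $|\Theta_4|^4$ gets absorbed where, and to verify that at each use of \eqref{f12evagen} the relevant point does indeed lie in $\eusm{E}^{0}_{\!{\tn{\frown}}}$ and that its $\Bb{G}_2$-image lies in $\mathcal{F}^{\,{\tn{||}}}_{{\tn{\square}}}\setminus\overline{\Bb{D}}$, both of which follow from \eqref{f16contgen} and the relations \eqref{f25bsdenac}. Consolidating the scalar prefactors via $(147/20)\,\theta_3({\rm e}^{-\pi/2})^4<30$ yields the clean constant $30\pi$ in each line of \eqref{f13evagen}.
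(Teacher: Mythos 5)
Your argument is correct and follows essentially the same route as the paper's proof: you convert $|\Theta_4|^4$ through \eqref{f3bint}(a),(c) exactly as the paper does, invoke \eqref{f1auxevagen} at the relevant point of $\eusm{E}^{\infty}_{\!{\tn{\frown}}}$ (using $\im\,\Bb{G}_2(-1/z)=\im\,z$, respectively $\im(-1/z)=\im\,\Bb{G}_2(z)$ and $\Bb{G}_2^{N}(z)\in\eusm{E}^{0}_{\!{\tn{\frown}}}$ via \eqref{f25bsdenac}, \eqref{f16contgen}), bound $I_\delta$ by \eqref{f12evagen}, and consolidate the constant with $(147/20)\,\theta_3({\rm e}^{-\pi/2})^4<30$. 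The only nitpick is a harmless transposition in your wording of the third case: the cancellation of $\im\,\Bb{G}_2^{N}(z)$ yields the second (the $30\pi$) term of the displayed bound, while the summation term carried over from \eqref{f2evagen} is the first.
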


By \eqref{f29bsdenac}, $\im\,\Bb{G}_{2} (z)\geqslant \im\,z$ for every
$z\in \eusm{E}_{\!{\tn{\frown}}}^{\hspace{0,05cm}0}\subset \Bb{D}_{\im>0}$. At the same time, for any $\eufm{n}\in \Bb{Z}_{\neq 0}^{\hspace{0,02cm}\Bb{N}_{\hspace{-0,02cm}\eurm{f}}}$ and $z\in\eusm{E}^{\eufm{n}}_{\!{\tn{\frown}}} $ we have $\Bb{G}_{2}^{N} (z)\in \eusm{E}_{\!{\tn{\frown}}}^{\hspace{0,05cm}0}\subset \Bb{D}_{\im>0}$ and
$\im\,\Bb{G}_{2}^{N} (z)\geqslant \im\,z$, in view of \eqref{f16contgen}(b) and \eqref{f30bsdenac}, correspondingly. Applying \eqref{f29bsdenac} once more,
we get $\im\,\Bb{G}_{2}^{N+1} (z)\geqslant \im\,z$.
  Since $\Phi^{\,\delta}_{{\tn{||}}}$ of the variable $z$ is continuous on $ \Bb{H}_{|\re|\leqslant 1}$ we derive from   \eqref{f13evagen} and \eqref{f2zevagen} the next assertion.{\hyperlink{r31}{${}^{\ref*{case31}}$}}\hypertarget{br31}{}

\begin{corsecteight}\hspace{-0,17cm}{\bf{.}}\label{evagencor1}
Let $x\in\Bb{R}$,  $\delta \in  \{0,1\}$,   $z\in \Bb{H}_{|\re |< 1} \cup\eusm{E}^{\infty}_{\!{\tn{\frown}}}$ and the function $\Phi^{\,\delta}_{{\tn{||}}}(x;z)$  be defined as in
Theorem~{\rm{\ref{bsdenacth2}}}.  Then
\begin{align}\label{f14evagen}
    &  \hspace{-0,23cm}\left|\Phi^{\,\delta}_{{\tn{||}}}(x;z)\right|\leqslant
\left\{
\begin{array}{ll}
\dfrac{20 \pi^{2}}{\im^{3} z} \ , \   & \mbox{if}\ \, \im \, z\!  \leqslant \! 1,
  \\[0,3cm]
  \dfrac{20 \pi^{2}}{\im^{2} z} \ , \
 & \mbox{if}\ \, \im \, z\!  \geqslant\!  1,
\end{array}
\right.
  \  \quad   \ z \! \in \!  \Bb{H}_{|\re|\leqslant 1}, \
x \!\in\! \Bb{R} , \  \delta \!\in \! \{0,1\} \,.
\end{align}
\end{corsecteight}

\subsection[\hspace{-0,31cm}. \hspace{0,11cm}Main inequalities.]{\hspace{-0,11cm}{\bf{.}} Main inequalities.}\label{mainevagen} \
It follows from \eqref{f1wbsdenac}  that for arbitrary $n  \geqslant 1$ and  $x\in \Bb{R}$ we have
\begin{align*}
    &
   4 \pi^{2} n   \eurm{H}_{n} (x)  =\! \!\! \int\limits_{-1+2\imag }^{1+2\imag } \! \!\! {\rm{e}}^{{\fo{ - {\rm{i}}\pi  n  z}} } \Phi^{\hspace{0,02cm}0}_{\infty}(x;z)  {\rm{d}} z , \ \ \
   4 \pi^{2} n   \eurm{M}_{n} (x)  = \! \!\!\int\limits_{-1+2\imag }^{1+2\imag }\! \!\!   {\rm{e}}^{{\fo{ - {\rm{i}}\pi n  z}} } \Phi^{\hspace{0,02cm}1}_{\infty}(x;z)  {\rm{d}} z\, .
\end{align*}

\noindent  By Theorem~\ref{bsdenacth2},  for every  $ \delta \!\in \! \{0,1\}$ the function $\Phi^{\hspace{0,02cm}\delta}_{\infty}$ can be analytically extended to $\Bb{H}_{|\re |< 1} \cup\eusm{E}^{\infty}_{\!{\tn{\frown}}}$ and in a second step, by \eqref{11bsdenac}, to $2$-periodic holomorphic
function $\Phi^{\hspace{0,025cm}\delta}_{\Bb{H}}(x; z)$ on $\Bb{H}$, which  equals $\Phi^{\,\delta}_{{\tn{||}}}(x;z)$ on $\Bb{H}_{|\re |\leqslant 1}$. As a consequence, by
 Lemma~\ref{lemfou}, we obtain from the above formulas that
\begin{align*}
    &
 4 \pi^{2} n   \eurm{H}_{n} (x)  =  \! \!\!\int\limits_{-1+\imag /n}^{1+\imag /n}  \! \!\! {\rm{e}}^{{\fo{ - {\rm{i}}\pi  n  z}} } \Phi^{\,0}_{{\tn{||}}}(x;z)  {\rm{d}} z , \ \ \
   4 \pi^{2} n   \eurm{M}_{n} (x)  =  \! \!\!\int\limits_{-1+\imag /n}^{1+\imag /n} \! \!\!   {\rm{e}}^{{\fo{ - {\rm{i}}\pi n  z}} } \Phi^{\,1}_{{\tn{||}}}(x;z)  {\rm{d}} z \,,
\end{align*}

\noindent from which it follows that
\begin{align}\label{f0evagen}
    &   \hspace{-0,3cm}
  \left\{\begin{array}{c}
          \left| \eurm{H}_{n} (x)\right|  \\[0,1cm]
         \left| \eurm{M}_{n} (x)\right|
         \end{array}
  \right\}
 \!\leqslant\! \dfrac{ {\rm{e}}^{{\fo{\pi }} }}{4 \pi^{2} n}
\int\limits_{-1}^{1} \left|  \Phi^{\,
 {\tn{\left\{\begin{array}{c}
          0  \\
         1
         \end{array}
  \right\} }}
}_{\,{\tn{||}}} \left(x; \dfrac{\imag }{n}\!+\!t\right)  \right|{\rm{d}} t , ,\hspace{-0,15cm}
\end{align}

\noindent for  $n  \geqslant 1$ and  $x\in \Bb{R}$. Since $\im\, (t+ \imag /n) = 1/n \leqslant 1$ we can apply
\eqref{f14evagen} to estimate the integral in the right-hand side of \eqref{f0evagen} as follows
\begin{align*}
    &  \left| \eurm{H}_{n} (x)\right| \ , \
         \left| \eurm{M}_{n} (x)\right|\leqslant 20 \pi^{2} n^{3}\dfrac{ {\rm{e}}^{{\fo{\pi }}} }{ 2\pi^{2} n} =  10 \,{\rm{e}}^{{\fo{\pi }}} n^{2}< \dfrac{\pi^{6} n^{2}}{4}\, , \quad x\in \Bb{R} \,.
\end{align*}

\noindent Here, we used{\hyperlink{r32}{${}^{\ref*{case32}}$}}\hypertarget{br32}{} that $40 \exp (\pi) < \pi^{6}$.
Using  \eqref{f2intth1} and \eqref{f3intth1}, we obtain{\hyperlink{r33}{${}^{\ref*{case33}}$}}\hypertarget{br33}{} \begin{align}\label{f2fevagen}
     &
\left| \eurm{H}_{n} (x)\right| ,
         \left| \eurm{M}_{n} (x)\right|\leqslant \min \left\{\,\dfrac{\pi^{6} n^{2}}{4}\, , \ \
         \dfrac{\pi^{6}n^{2}}{2(1+ x^{2})}\,\right\}\, , \quad x\in \Bb{R} \, , \ n \in \Bb{Z}_{\neq 0}\,.
\end{align}

\noindent The corresponding estimates of $\eurm{H}_{0}$ can be  derived from the explicit integral formula written after \vspace{-0,2cm} \eqref{f2intcor1}:{\hyperlink{r34}{${}^{\ref*{case34}}$}}\hypertarget{br34}{}
\begin{align}\label{f3fevagen}
    &  \left| \eurm{H}_{\hspace{0,02cm}0} (x) \right|\leqslant \min  \left\{\dfrac{3}{2} \ , \
\dfrac{3}{1 + x^{2}} \right\}\ , \quad  x \in \Bb{R} \ .
\end{align}

\vspace{-0,1cm}
 The first immediate consequence of the obtained estimates and \eqref{f1wbsdenac} is the possibility of expressing  the functions $\Phi^{\hspace{0,02cm}\delta}_{\Bb{H}}$, $\delta \!\in \! \{0,1\}$, for any $x \in \Bb{R}$ in the form
\begin{align}
    &    \sum\limits_{n \geqslant 1}
  \! n \,\eurm{H}_{n} (x)\, {\rm{e}}^{{\fo{{\rm{i}}\pi  n  z}} }\!
 = \! \frac{\Phi^{\hspace{0,02cm}0}_{\Bb{H}}(x;z)}{2 \pi^{2} }\, ,
\ \ \sum\limits_{n \geqslant 1}
  \! n \,\eurm{M}_{n} (x)\, {\rm{e}}^{{\fo{{\rm{i}}\pi  n  z}} }\!  = \! \frac{\Phi^{\hspace{0,02cm}1}_{\Bb{H}}(x;z)}{2 \pi^{2} }\, ,   \ \ z\in \Bb{H}\,,\hspace{-0,1cm}
\label{f6fevagen}\end{align}

\noindent because, by  \eqref{f2fevagen} and \eqref{f3fevagen}, both series on the left-hand sides of the equalities in \eqref{f1wbsdenac} turn out to be holomorphic on $\Bb{H}$. It follows from \eqref{f1bsdenac}, the identity
 $\lambda^{\,\prime}(z)= \pi {\rm{i}} \Theta_{4}\left(z \right)^{4} \lambda (z)$, $z\in \Bb{H}$
(see \eqref{f19int}), and the asymptotics $\lambda (\imag t) \to 0$ as $t \to +\infty$, that for all $\delta \in \{0,1\}$ and
$x\in \Bb{R}$, the function
\begin{align}\label{f8fevagen}
     &  \hspace{-0,15cm}
\Psi^{\,\delta}_{\infty}(x;z) := \frac{1}{2\pi \imag }
\int\limits_{{\fo{\gamma (-1,1)}}}\big(x^{\delta}\zeta-(-x)^{1-\delta}\big)^{-2}\log \left(1-\dfrac{\lambda (z)}{\lambda (\zeta)}\right) {\rm{d}} \zeta \ \, , \hspace{-0,1cm}
\end{align}

\vspace{-0,15cm}
\noindent of the variable $z\in \eusm{E}^{\infty}_{\!{\tn{\frown}}}$ is the unique primitive of $\Phi^{\,\delta}_{\infty}$  which has limit $0$ as $z \in {\rm{i}} \Bb{R}_{>1}, z \to \infty$. Then we derive from \eqref{f1wbsdenac} that
\begin{align*}
    & \hspace{-0,25cm}  \sum\limits_{n \geqslant 1}
  \!\eurm{H}_{n} (x)\, {\rm{e}}^{{\fo{{\rm{i}}\pi  n  z}} }\!
 = \! \frac{\imag  \Psi^{\hspace{0,02cm}0}_{\infty}(x;z)}{2 \pi  }\, ,
\ \ \sum\limits_{n \geqslant 1}
  \! \eurm{M}_{n} (x)\, {\rm{e}}^{{\fo{{\rm{i}}\pi  n  z}} }\!  = \! \frac{\imag  \Psi^{\hspace{0,02cm}1}_{\infty}(x;z)}{2 \pi }\, ,   \ \ z\in \Bb{H}_{\,\im > 1}\,,\hspace{-0,1cm}
\end{align*}

\noindent and since both series here for each $x\in \Bb{R}$ are holomorphic on $\Bb{H}$, in view of \eqref{f2fevagen} and \eqref{f3fevagen}, we conclude that for any $\delta \in \{0,1\}$  and
$x\in \Bb{R}$ the function $\Psi^{\hspace{0,02cm}\delta}_{\infty}$ has analytic
 extension from
$\Bb{H}_{\,\im > 1}$  to $\Bb{H}$ and the resulting extension $\Psi^{\hspace{0,02cm}\delta}_{\Bb{H}}$
satisfies
\begin{align}\label{f9fevagen}
    & \hspace{-0,25cm}  \sum\limits_{n \geqslant 1}
  \!\eurm{H}_{n} (x)\, {\rm{e}}^{{\fo{{\rm{i}}\pi  n  z}} }\!
 = \! \frac{\imag  \Psi^{\hspace{0,02cm}0}_{\Bb{H}}(x;z)}{2 \pi  }\, ,
\ \ \sum\limits_{n \geqslant 1}
  \! \eurm{M}_{n} (x)\, {\rm{e}}^{{\fo{{\rm{i}}\pi  n  z}} }\!  = \! \frac{\imag  \Psi^{\hspace{0,02cm}1}_{\Bb{H}}(x;z)}{2 \pi }\, ,  \  z\in \Bb{H} \,.\hspace{-0,1cm}
\end{align}

\noindent It follows from \eqref{f6fevagen} and \eqref{f9fevagen} that\vspace{-0,05cm}
\begin{align}\label{f13fevagen}
   ({\partial}/{\partial z})\,   \Psi^{\,\delta}_{\Bb{H}}(x;z) =  \Phi^{\,\delta}_{\Bb{H}}(x;z) \ , \quad  x\in \Bb{R} \, , \  z\in \Bb{H} \, , \  \delta \in \{0,1\}\,.
\end{align}

\vspace{-0,05cm}
\noindent
When we integrate by parts, \eqref{f8fevagen}  becomes  (see \eqref{f3zint})
\begin{align*}
  &   \Psi^{\,\delta}_{\infty}(x;z) =
   -\frac{1}{2\pi \imag }
\int\limits_{{\fo{\gamma (-1,1)}}}\!\!\!\!\!\log \left(1-\dfrac{\lambda (z)}{
\lambda (\zeta)}\right)\, {\rm{d}} \dfrac{\zeta^{\delta}}{x^{\delta}\zeta-(-x)^{1-\delta}} \\    &     =
\frac{1}{2\pi \imag }
\int\limits_{{\fo{\gamma (-1,1)}}} \!\!\!\!\! \dfrac{\lambda (z)\lambda^{\,\prime} (\zeta)}{\lambda (\zeta)\left(\lambda (\zeta)-\lambda (z)\right)}
\dfrac{\zeta^{\delta}{\rm{d}}\zeta}{\big(x^{\delta}\zeta-(-x)^{1-\delta}\big)} \, , \  \
\delta \in \{0,1\} \, , \  x\in \Bb{R}\setminus\{0\}\,,
\end{align*}

\vspace{-0,1cm}
\noindent and hence, in view of $\lambda^{\,\prime} (z)\!= \!\imag  \pi \lambda (z)
       \Theta_{4}(z)^{4}$, $z\in \Bb{H}$ (see \eqref{f19int}, \eqref{f2int}), we find
\begin{align}\label{f10fevagen}
      & \Psi^{\,\delta}_{\infty}(x;z)= \frac{1}{2 }\!\!\!\!\!\int\limits_{{\fo{\gamma (-1,1)}}}\!\!\!
\dfrac{\lambda (z)\Theta_{4}(\zeta)^{4}\, \zeta^{\delta}{\rm{d}}\zeta}{
\big(\lambda(\zeta)-\lambda(z)\big)\big(x^{\delta}\zeta-(-x)^{1-\delta}\big)} \, , \   \  \delta\! \in\! \{0,1\}\,.
\end{align}

\vspace{-0,1cm}
\noindent In comparison with \eqref{f8fevagen},
the formula \eqref{f10fevagen} determines the values of $\Psi^{\,\delta}_{\infty}(x;z)$
for arbitrary $ x\!\in\! \Bb{R}$ and
$z\in \Bb{H}\setminus S_{\!{\tn{\frown}}}^{\infty}$, and, it can be easily calculated that{\hyperlink{r40}{${}^{\ref*{case40}}$}}\hypertarget{br40}{}\vspace{-0,05cm}
\begin{align*}
      &    ({\partial}/{\partial z})\,   \Psi^{\,\delta}_{\infty}(x;z) =  \Phi^{\,\delta}_{\infty}(x;z) \ , \quad z\in \Bb{H}\setminus S_{\!{\tn{\frown}}}^{\infty} \, , \  x\in \Bb{R}\setminus\{0\} \, , \  \delta \in \{0,1\} \,.
\end{align*}

\vspace{-0,05cm}
\noindent This relationship enables us to apply the reasons as in the proof of Lemma~\ref{bsdenaclem1} to obtain for arbitrary $\delta \in \{0,1\}$ and $x\in \Bb{R}$ that{\hyperlink{r41}{${}^{\ref*{case41}}$}}\hypertarget{br41}{}\vspace{-0,1cm}
\begin{align}\label{f12fevagen}
      &  \Psi^{\hspace{0,02cm}\delta}_{\Bb{H}}(x;z) = \Psi^{\,\delta}_{\infty}(x;z)
        + \dfrac{z^{\delta}}{x^{\delta} z-(-x)^{1-\delta}} \ , \quad
        z\in  \eusm{E}^{0}_{\!{\tn{\frown}}} \subset \Bb{H}\setminus S_{\!{\tn{\frown}}}^{\infty}\,.
\end{align}



\section[\hspace{-0,30cm}. \hspace{0,11cm}Interpolation formula for the Klein-Gordon equation]{\hspace{-0,095cm}{\bf{.}} Interpolation formula for the Klein-Gordon equation}\label{mres}

\subsection[\hspace{-0,31cm}. \hspace{0,11cm}Hyperbolic  Fourier series in the upper half-plane.]{\hspace{-0,11cm}{\bf{.}} Hyperbolic  Fourier series in $\Bb{H}$.}\label{hypfouupp} \
In view of \eqref{11bsdenac}, \eqref{f5bsdenacth1} and \eqref{f1bsdenaclem1}, for arbitrary fixed $x \!\in\! \Bb{R}$ and $ \delta \!\in \! \{0,1\}$ we have $\Phi^{\hspace{0,025cm}\delta}_{\Bb{H}}(x; z) = \Phi^{\hspace{0,025cm}\delta}_{0}(x; z)$
for any $z\in \eusm{E}^{\infty}_{\!{\tn{\frown}}}\sqcup  \eusm{E}^{0}_{\!{\tn{\frown}}}$, where
$-1/\eusm{E}^{\infty}_{\!{\tn{\frown}}} = \eusm{E}^{0}_{\!{\tn{\frown}}}$. Then  for each $z\in \eusm{E}^{\infty}_{\!{\tn{\frown}}}$ it follows from \eqref{10bsdenac}, \eqref{f1bsdenaclem1}, \eqref{f4bsdenacth1} and \eqref{11bsdenac} that
\begin{align*}
    & \Phi^{\hspace{0,025cm}\delta}_{\Bb{H}}(x; -1/z) \!=\! \Phi^{\hspace{0,025cm}\delta}_{0}(x; -1/z) \!=\! \Phi^{\hspace{0,02cm}\delta}_{\infty}(x;-1/z)  -
\big(x^{\delta}(-1/z)\!-\!(-x)^{1-\delta}\big)^{-2} \\    &    =\! -z^{2} \Phi^{1-\delta}_{\infty}(x;z)
-z^{2}\big(x^{\delta}\!+\!z (-x)^{1-\delta}\big)^{-2} \! = \!
-z^{2} \Phi^{1-\delta}_{\Bb{H}}(x;z)
-z^{2}\big(x^{1-\delta}z\!-\! (-x)^{\delta}\big)^{-2}\! \! ,
\end{align*}

\noindent and since the functions appearing on the two sides of this identity are holomorphic in $\Bb{H}$ we obtain, by  the uniqueness theorem  for analytic functions (see
\cite[p.\! 78, Theorem 3.7(c)]{con}), that
\begin{align}\label{f1mres}
    & \hspace{-0,2cm}\Phi^{1-\delta}_{\Bb{H}}(x;z)\! +\! z^{-2}\Phi^{\hspace{0,025cm}\delta}_{\Bb{H}}(x; -1/z)\! =\!
- \big(x^{1-\delta}z\!-\! (-x)^{\delta}\big)^{-2}\!\! , \ z\!\in \!\Bb{H}, \ x\!\in\! \Bb{R}, \ \delta \!\in\! \{0,1\}.\hspace{-0,1cm}
\end{align}

\noindent In the case of $\delta =1$, we integrate \eqref{f1mres} with respect to the variable $z$ and obtain, by
\eqref{f13fevagen},\vspace{-0,4cm}
\begin{align*}
    &  \Psi^{0}_{\Bb{H}}(x;z) + \Psi^{\hspace{0,025cm}1}_{\Bb{H}}(x; -1/z) =
 \big(z+ x\big)^{-1} + \eta (x) \,,
\end{align*}

\noindent where, for $x\in \Bb{R}\setminus\{0\}$, we use \eqref{f10fevagen}, \eqref{f12fevagen} and $\lim_{t \to +\infty}\lambda ({\rm{i}}/t) = 1$ to see that
\begin{align*}
     \eta (x) & = \lim_{t \to +\infty}    \Psi^{\hspace{0,02cm}1}_{\Bb{H}}(x;\imag /t)=\!
- \frac{1}{2 }\!\!\!\!\!\int\limits_{{\fo{\gamma (-1,1)}}}\!\!\!\!\!
    \dfrac{\Theta_{3}(\zeta)^{4}\zeta {\rm{d}} \zeta}{x \zeta -1}   =- \frac{1}{2 }\!\!\!\!\!\int\limits_{{\fo{\gamma (-1,1)}}}\!\!\!\!\!
    \dfrac{-\Theta_{3}(-1/\zeta)^{4} {\rm{d}} \zeta}{\zeta^{2}\big(x  -1/\zeta\big)}
    \\    &    = -\frac{1}{2}\!\!\!\!\!
\int\limits_{{\fo{\gamma (-1,1)}}}\!\!\!\!\!\dfrac{\Theta_{3}(\zeta)^{4} {\rm{d}} \zeta}{
\zeta+x}  \ ,
\end{align*}

\noindent where we have applied \eqref{f2int} and \eqref{f3bint}(b). Thus,
\begin{align}\label{f2mres}
    &  \Psi^{0}_{\Bb{H}}(x;z) + \Psi^{\hspace{0,025cm}1}_{\Bb{H}}\left(x; -\dfrac{1}{z}\right) =
\dfrac{1}{z+x}  -  \frac{1}{2}\!\!\!\!\!
\int\limits_{{\fo{\gamma (-1,1)}}}\!\!\!\!\!\dfrac{\Theta_{3}(\zeta)^{4} {\rm{d}} \zeta}{
\zeta+x} \ , \quad  z\!\in \!\Bb{H}, \ x\!\in\! \Bb{R}\, .
\end{align}

\vspace{-0,15cm}
\noindent By substituting  the identities \eqref{f6fevagen} and \eqref{f9fevagen} into \eqref{f1mres}(with $\delta =1$) and
in \eqref{f2mres}, respectively, we see that for arbitrary $z\in\Bb{H}$ and $x, t\in \Bb{R}$ the following identities hold:
\begin{align}
    &
 \dfrac{1}{2 \pi \imag  (x-z)^{2} }= \sum\limits_{n \geqslant 1}
   \imag  \pi n \,\eurm{H}_{-n} (x)\, {\rm{e}}^{{\fo{{\rm{i}}\pi  n  z}} } + \dfrac{1}{z^{2}}\sum\limits_{n \geqslant 1}
   \imag  \pi n \,\eurm{M}_{-n} (x)\, {\rm{e}}^{\fo{{- \dfrac{{\rm{i}}\pi  n }{z} }} } \, ,
\label{f5fevagen}\end{align}

\vspace{-0,15cm}
\noindent and\vspace{-0,15cm}
\begin{align}
    &\hspace{-0,2cm}
 \dfrac{1}{2 \pi \imag }\bigg(\dfrac{1}{t-z}  -  \frac{1}{2}\!\!\!\!\!
\int\limits_{{\fo{\gamma (-1,1)}}}\!\!\!\!\!\dfrac{\Theta_{3}(\zeta)^{4} d \zeta}{
t- \zeta} \bigg) = \sum\limits_{n \geqslant 1}
  \eurm{H}_{-n} (t)\, {\rm{e}}^{{\fo{{\rm{i}}\pi  n  z}} } + \sum\limits_{n \geqslant 1}
   \eurm{M}_{-n} (t)\, {\rm{e}}^{\fo{{- \dfrac{{\rm{i}}\pi  n }{z} }} } \, .
 \hspace{-0,1cm}
\label{f3mres}\end{align}

\vspace{-0,2cm}
\noindent By multiplying the latter equality  by a function $F \in {H}^1_{+} (\Bb{R})$ we  integrate it with
respect to $t\in \Bb{R}$ and apply well-known property of functions in ${H}^1 (\Bb{H})$ (see \cite[p.\! 116]{ko}), together with the estimate \eqref{f3yint} and
the  identity (see \eqref{f9int})
\begin{align*}
  2 \pi \imag  \int\limits_{\Bb{R}} F (t)  \eurm{H}_{0} (t) {\rm{d}} t   &= \int\limits_{\Bb{R}} F (t)
\bigg(\ \frac{1}{2}\!\!\!
\int\limits_{{\fo{\gamma (-1,1)}}}\!\!\!\!\!\dfrac{\Theta_{3}(\zeta)^{4} {\rm{d}} \zeta}{
t- \zeta}   - \frac{1}{2}\!\!\!\!\!
\int\limits_{{\fo{\gamma (-1,1)}}}\!\!\!\!\!\dfrac{\Theta_{3}(\zeta)^{4} {\rm{d}} \zeta}{
t+ \zeta} \ \bigg){\rm{d}} x \\    &   =
  \int\limits_{\Bb{R}} F (t)
\bigg(\ \frac{1}{2}\!\!\!
\int\limits_{{\fo{\gamma (-1,1)}}}\!\!\!\!\!\dfrac{\Theta_{3}(\zeta)^{4} {\rm{d}} \zeta}{
t- \zeta}    \ \bigg){\rm{d}} x
 \, ,
\end{align*}

\vspace{-0,15cm}
\noindent to obtain that  $F$ enjoys the representation  (see \eqref{f03int})
\begin{align}
     &\hspace{-0,2cm} F (z) =  \eurm{h}_{0}(F) + \sum\limits_{n \geqslant 1}
   \eurm{h}_{n}(F)\, {\rm{e}}^{{\fo{{\rm{i}}\pi  n  z}} }+ \sum\limits_{n \geqslant 1}
   \eurm{m}_{n}(F) \, \exp \left({\fo{{- \dfrac{{\rm{i}}\pi  n }{z} }} }\right), \ \  z \in \Bb{H}\,.
\label{f3zmres}\end{align}

\noindent
We now extend this representation to a larger class of functions.
According to \cite[p.\! 60, Corollary 3.4]{gar}, $\Phi (\imag  (1-z)/(1+z)) \in {H}^1 (\Bb{D})$ if and only if $\Phi(z)/(z+\imag )^{2} \in {H}^1 (\Bb{H})$. Since then $F_{\varepsilon} (z):= \Phi(z)/(\imag + \varepsilon z)^{2} \in {H}^1 (\Bb{H})$ for any $\varepsilon >0$ we can expand $F_{\varepsilon}$ as in \eqref{f3zmres}. We then apply the estimates \eqref{f2fevagen}, \eqref{f3fevagen} together with the
Lebesgue dominated convergence theorem \cite[p.\! 161]{nat} as $0 < \varepsilon \to 0$ to get
the following representation for functions in a weighted  Hardy class ${H}^1 (\Bb{H})$ (cf. Theorem~3.1 in \cite[p.\! 14]{bon}).

\begin{theorem}\hspace{-0,15cm}{\bf{.}}\label{mresth2}
Let $F(z)/(z+\imag )^{2} \in {H}^1 (\Bb{H})$. Then
\begin{align*}
    & F (z) =  \eurm{h}_{0}(F) + \sum\limits_{n \geqslant 1}
   \eurm{h}_{n}(F)\, {\rm{e}}^{{\fo{{\rm{i}}\pi  n  z}} } + \sum\limits_{n \geqslant 1}
   \eurm{m}_{n}(F) \, {\rm{e}}^{\fo{{- \dfrac{{\rm{i}}\pi  n }{z} }} }  \ , \quad  z \in \Bb{H}\,,
\end{align*}

\vspace{-0,25cm}
\noindent where
\begin{align*}
    &  \eurm{h}_{n}(F)
\!:= \! \int\limits_{\Bb{R}}\!  F (t) \eurm{H}_{-n} (t) {\rm{d}} t\, , \ n\!\in\! \Bb{Z}_{\geqslant 0} \, , \ \ \eurm{m}_{n}(F)\! :=\!  \int\limits_{\Bb{R}}\!  F (t) \eurm{M}_{-n} (t) {\rm{d}} t \ , \ n\!\in\! \Bb{N} \, ,
\end{align*}

\noindent and $ |\eurm{h}_{n}(F)|, |\eurm{m}_{n}(F)| \leqslant (\pi^{6}/4)\, n^{2} \left\|F_{\Bb{D}}\right\|_{H^1_{+}}$, $n \geqslant 1$, $F_{\Bb{D}}(z):= F(z)/(z+\imag )^{2}$, $z\in \Bb{H}$.
\end{theorem}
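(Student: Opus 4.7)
The plan is to first establish the claimed expansion for the unweighted Hardy class $H^1_{+}(\Bb{R})$ via the reproducing identity \eqref{f3mres}, and then transfer the result to the weighted class by a standard conformal change of variables and a dominated-convergence argument in a regularization parameter.

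First, I would take $F\in H^1_{+}(\Bb{R})$, multiply both sides of \eqref{f3mres} by $F(t)$ and integrate in $t$ over $\Bb{R}$. Fubini's theorem applies because, by \eqref{f2fevagen}, $\eurm{H}_{-n},\eurm{M}_{-n}\in L^{\infty}(\Bb{R})$ with summable geometric factor $\e^{-\pi n\im z}$ for $z\in\Bb{H}$, and $F\in L^{1}(\Bb{R})$. On the left-hand side, the Cauchy formula for the Hardy space $H^{1}(\Bb{H})$ (see \cite[p.\! 116]{ko}) gives $\frac{1}{2\pi\imag}\int_{\Bb{R}}F(t)/(t-z)\,{\rm{d}}t=F(z)$, while for $\zeta\in\gamma(-1,1)\subset\Bb{H}$ the analogous formula applied with $-\zeta\in-\Bb{H}$ yields $\int_{\Bb{R}}F(t)/(t+\zeta)\,{\rm{d}}t=0$; this is how I would justify replacing $\frac{1}{2}\int_{\gamma(-1,1)}\Theta_3(\zeta)^{4}{\rm{d}}\zeta/(t-\zeta)$ by $2\pi\imag\,\eurm{H}_{0}(t)$ via the identity displayed between \eqref{f3mres} and \eqref{f3zmres} (which expresses $\eurm{H}_{0}$ as the symmetric contour integral of $\Theta_3^{4}/(t-\zeta)$). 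The right-hand side integrates term-by-term using the definitions of $\eurm{h}_{n}(F),\eurm{m}_{n}(F)$; the interchange of summation and integration is controlled by the estimate $\|\eurm{H}_{n}\|_{\infty}+\|\eurm{M}_{n}\|_{\infty}\leqslant \pi^{6}n^{2}/4$ of \eqref{f2fevagen} and the fact that $\sum_{n\geqslant 1}n^{2}\e^{-\pi n\im z}<\infty$ for each $z\in\Bb{H}$. This produces precisely the expansion \eqref{f3zmres} for $F\in H^{1}_{+}(\Bb{R})$.

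Next, I would extend to the weighted class. Suppose $F_{\Bb{D}}(z):=F(z)/(z+\imag)^{2}\in H^{1}(\Bb{H})$. Following the hint in the text, I invoke the Garnett-type characterization \cite[p.\! 60, Corollary 3.4]{gar}: $\Phi(\imag(1-z)/(1+z))\in H^{1}(\Bb{D})$ iff $\Phi(z)/(z+\imag)^{2}\in H^{1}(\Bb{H})$. Set $F_{\varepsilon}(z):=F(z)/(\imag+\varepsilon z)^{2}$ for $\varepsilon>0$. A short computation with the same characterization shows $F_{\varepsilon}\in H^{1}(\Bb{H})$ with norm bounded uniformly in $\varepsilon\in(0,1]$ by a multiple of $\|F_{\Bb{D}}\|_{H^{1}_{+}}$. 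Apply the first step to $F_{\varepsilon}$, yielding the expansion with coefficients $\eurm{h}_{n}(F_{\varepsilon}),\eurm{m}_{n}(F_{\varepsilon})$ bounded by $(\pi^{6}/4)n^{2}\|F_{\varepsilon}\|_{H^{1}_{+}}\lesssim (\pi^{6}/4)n^{2}\|F_{\Bb{D}}\|_{H^{1}_{+}}$, using the pointwise bound $|\eurm{H}_{-n}(t)|,|\eurm{M}_{-n}(t)|\leqslant (\pi^{6}/2)n^{2}/(1+t^{2})$ from \eqref{f2fevagen}. As $\varepsilon\downarrow 0$, $F_{\varepsilon}(t)\to -F(t)$ pointwise on $\Bb{R}$; the Lebesgue dominated convergence theorem \cite[p.\! 161]{nat} (with majorant a constant multiple of $|F(t)|/(1+t^{2})\cdot(1+t^{2})\leqslant |F(t)|$ times the $n^{2}$ factor suitably absorbed via a decay factor $\e^{-\pi n\im z}$) gives the convergence of each coefficient and of the series for $z\in\Bb{H}$.

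The main obstacle, I expect, will be the bookkeeping in the limit $\varepsilon\downarrow 0$: one must simultaneously pass the limit inside the integrals defining $\eurm{h}_{n}(F_{\varepsilon}),\eurm{m}_{n}(F_{\varepsilon})$ (easy by dominated convergence with majorant $C|F_{\Bb{D}}(t)|(1+t^{2})\cdot|\eurm{H}_{-n}(t)|\lesssim n^{2}|F_{\Bb{D}}(t)|$) and then inside the double sum, using the $n^{2}$-bound on the coefficients against the exponential decay $\e^{-\pi n\im z}$ and $\e^{-\pi n/|z|}\cdot\e^{\pi n\im(-1/z)}$ (where $\im(-1/z)>0$ for $z\in\Bb{H}$). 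Once these two interchanges are justified, the stated coefficient bound $|\eurm{h}_{n}(F)|,|\eurm{m}_{n}(F)|\leqslant(\pi^{6}/4)n^{2}\|F_{\Bb{D}}\|_{H^{1}_{+}}$ follows directly from \eqref{f2fevagen} and the boundedness of $\|F_{\varepsilon}\|_{H^{1}_{+}}$, completing the proof.
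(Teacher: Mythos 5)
Your proposal follows essentially the same route as the paper: multiply \eqref{f3mres} by $F\in{\rm{H}}^1_{+}(\Bb{R})$ and integrate, using the Cauchy formula for $H^{1}(\Bb{H})$ and the vanishing of $\int_{\Bb{R}}F(t)\,{\rm{d}}t/(t+\zeta)$ for $\zeta\in\Bb{H}$ to obtain \eqref{f3zmres}, then regularize with $F_{\varepsilon}(z)=F(z)/(\imag+\varepsilon z)^{2}$ and pass to the limit $\varepsilon\to 0$ by dominated convergence with the estimates \eqref{f2fevagen}, \eqref{f3fevagen} (the sign in $F_{\varepsilon}\to -F$, which you note, is immaterial by linearity). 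One caveat: your intermediate claim that $\|F_{\varepsilon}\|_{H^1_{+}}$ is bounded uniformly in $\varepsilon$ by a multiple of $\|F_{\Bb{D}}\|_{H^1_{+}}$ is false --- take $F\equiv 1$, so $F_{\Bb{D}}(z)=(z+\imag)^{-2}$ and $\|F_{\varepsilon}\|_{H^1_{+}}=\pi/\varepsilon$ --- but it is also unnecessary: all that is required is $F_{\varepsilon}\in H^{1}(\Bb{H})$ for each fixed $\varepsilon$ together with the uniform-in-$\varepsilon$ coefficient bounds, which you correctly obtain from $|F_{\varepsilon}(t)|\leqslant(1+t^{2})|F_{\Bb{D}}(t)|$ on $\Bb{R}$ and the weighted pointwise bounds on $\eurm{H}_{-n},\eurm{M}_{-n}$, exactly as the paper does.
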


In view of Jordan's lemma, for any $x, y > 0$ the function $z\mapsto\exp (\imag  x z- \imag  y/z)$ is bounded and holomorphic in
$\Bb{H}$, and we can calculate that
\begin{align}\nonumber
    \int\limits_{\Bb{R}} \dfrac{{\rm{e}}^{{\fo{{\rm{i}} x t- {\rm{i}} y/t}}} {\rm{d}} t}{t-z} &  := \lim\limits_{A\to+\infty} \int\limits_{-A}^{A} \dfrac{{\rm{e}}^{{\fo{{\rm{i}} x t- {\rm{i}} y/t}}} {\rm{d}} t}{t-z} \\[0,2cm]  &  =2 \pi {\rm{i}} \, {\rm{e}}^{{\fo{{\rm{i}} x z- {\rm{i}} y/z}}} \chi_{\Bb{H}}(z)  \, , \ \
      z \in \Bb{C}\setminus \Bb{R}  \ , \ x > 0\ , \  y \geqslant 0 \, .
\label{f4zmres}\end{align}

\noindent By multiplying \eqref{f3mres} by this function and after integrating over $t\in \Bb{R}$, we obtain as above,
\begin{align*}
    &\hspace{-0,2cm} {\rm{e}}^{{\fo{{\rm{i}} x z- {\rm{i}} y /z}}} =  \eusm{R}_{0}(x, -y) + \sum\limits_{n \geqslant 1}
   \eusm{R}_{n}(x, -y)\, {\rm{e}}^{{\fo{{\rm{i}}\pi  n  z}} } + \sum\limits_{n \geqslant 1}
    \eusm{R}_{n}(y, -x) \, {\rm{e}}^{\fo{{- \dfrac{{\rm{i}}\pi  n }{z} }} } , \end{align*}

\noindent for all $z \in \Bb{H}$ and $x,y \geqslant 0$, where
\begin{align}\label{f5mres}
    &   \eusm{R}_{n}(x, y)
\!:= \! \int\nolimits_{\Bb{R}}\  e{\rm{}}^{{\fo{{\rm{i}} x t+ {\rm{i }}y/t}}} \eurm{H}_{-n} (t) {\rm{d}} t\, , \quad n\!\in\! \Bb{Z}_{\geqslant 0} \, , \quad x,y \in \Bb{R}\,.
\end{align}

\noindent Here,  the change of variable $t^{\,\prime}= - 1/t$ and the symmetry property \eqref{f3intth1}
entail that, for arbitrary $x,y \in \Bb{R}$,
\begin{align}\label{f6mres}
    &  \eusm{R}_{n}(y, -x)
\!= \! \int\nolimits_{\Bb{R}}\,  {\rm{e}}^{{\fo{i{\rm{}} y t- {\rm{{\rm{i}}}} x/t}}} \eurm{H}_{-n} (t) {\rm{d}} t = \int\nolimits_{\Bb{R}}\,  {\rm{e}}^{{\fo{{\rm{i}} x t- {\rm{i}} y/t}}} \eurm{M}_{-n} (t) {\rm{d}} t  \ , \quad  n\in \Bb{N} \,,
\end{align}

\noindent while, as a consequence of \eqref{f24int}, we have
\begin{align}\label{f6zmres}
  \eusm{R}_{0}(x, -y)= \eusm{R}_{0}(y, -x) =\eusm{R}_{0}(-x, y) \ , \quad x,y \in \Bb{R}\,.
\end{align}

\subsection[\hspace{-0,31cm}. \hspace{0,11cm}Conjugate hyperbolic  Fourier series.]{\hspace{-0,11cm}{\bf{.}} Conjugate hyperbolic  Fourier series.}\label{conhypfouser}
For arbitrary $\varphi\in L^1 (\Bb{R})$, the series in the right-hand side of the equality \eqref{f06int}
is called {\emph{conjugate hyperbolic  Fourier series}} of $\varphi$, where  the coefficients are defined as in
\eqref{f05bint}.
Taking account the estimates \eqref{f2fevagen}, the result  \eqref{f06int}
can be improved as follows.
\begin{theorem}\hspace{-0,15cm}{\bf{.}}\label{mresth1}
Let  $\varphi\in L^1 (\Bb{R})$, the numbers $\{\eurm{h}_{n}^{\star}(\varphi)\}_{n \in \Bb{Z}}$\,,
$\{\eurm{m}_{n}^{\star}(\varphi)\}_{n \in \Bb{Z}_{\neq 0}}$ be defined as in \eqref{f05aint}, and
\begin{align}\label{f1mresth1}
    & \sum\nolimits_{n \in \Bb{Z}\setminus \{0\}}n^{2}\left(\left|\eurm{h}_{n}^{\star}(\varphi)\right|+ \left|\eurm{m}_{n}^{\star}(\varphi)\right|\right) < \infty .
\end{align}
\noindent  Then $\varphi$ can be expanded  into the conjugate hyperbolic  Fourier series \eqref{f06int},
which  converges absolutely and uniformly over all  $x\in \Bb{R}$.
\end{theorem}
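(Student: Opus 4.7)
The plan is to first establish absolute and uniform convergence of the series in \eqref{f06int} on $\Bb{R}$ by a Weierstrass majorant, and then to identify its sum with $\varphi$ via Theorem~\ref{intth0}.

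For the first step, I would combine the pointwise estimates \eqref{f1intth1} (that is, $|\eurm{H}_0(x)|\leqslant 3/(1+x^2)$ and $|\eurm{H}_n(x)|,|\eurm{M}_n(x)|\leqslant \pi^{6} n^{2}/(1+x^{2})$ for $n\neq 0$) with the summability hypothesis \eqref{f1mresth1} to dominate every tail of the series by a summable, $x$-independent majorant on $\Bb{R}$. This yields a continuous function
\begin{align*}
S(x):= \eurm{h}_{0}^{\star}(\varphi)\eurm{H}_{0}(x) + \sum_{n\in\Bb{Z}\setminus\{0\}}\left(\eurm{h}_{n}^{\star}(\varphi)\eurm{H}_{n}(x) + \eurm{m}_{n}^{\star}(\varphi)\eurm{M}_{n}(x)\right)
\end{align*}
to which the series converges in the weighted norm $\|\cdot\|_{C_2(\Bb{R})}$, and hence absolutely and uniformly on $\Bb{R}$; since $\int_{\Bb{R}}(1+x^{2})^{-1}{\rm{d}} x=\pi$, integrating the same majorant shows $S\in L^{1}(\Bb{R})$.

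For the second step, I would set $g:=\varphi-S\in L^{1}(\Bb{R})$ and test $g$ against the two families of characters appearing in Theorem~\ref{intth0}. The $L^{1}$ bounds $\|\eurm{H}_{n}\|_{L^{1}},\|\eurm{M}_{n}\|_{L^{1}}\leqslant\pi^{7}n^{2}$ implied by \eqref{f1intth1}, together with \eqref{f1mresth1}, legitimize (via Fubini--Tonelli) the term-by-term integration of the series for $S$ against each bounded character ${\rm{e}}^{\pm\imag\pi m x}$ or ${\rm{e}}^{\mp\imag\pi m/x}$. The biorthogonality relations \eqref{f2bs}--\eqref{f3bs} then collapse each such integral to a single Kronecker delta, selecting exactly one coefficient $\eurm{h}_{m}^{\star}(\varphi)$ or $\eurm{m}_{m}^{\star}(\varphi)$, which by the defining formula \eqref{f05aint} coincides with the corresponding integral of $\varphi$. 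Consequently
\begin{align*}
\int_{\Bb{R}}{\rm{e}}^{{\fo{\imag\pi m x}}}g(x)\,{\rm{d}} x \,=\, \int_{\Bb{R}}{\rm{e}}^{{\fo{-\imag\pi m/x}}}g(x)\,{\rm{d}} x \,=\, 0, \quad m\in\Bb{Z}\,,
\end{align*}
so Theorem~\ref{intth0} forces $g\equiv 0$, and hence $\varphi=S$ with the convergence already established.

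No serious obstacle is anticipated: the whole argument reduces to the biorthogonality from Theorem~\ref{bslem3}, the pointwise $\pi^{6}n^{2}/(1+x^{2})$ estimates of Theorem~\ref{intth1}, and the uniqueness result of Hedenmalm and Montes-Rodr\'{\i}guez (Theorem~\ref{intth0}). The one technical point worth singling out is the Fubini--Tonelli interchange of summation and integration against the bounded characters in the orthogonality test, which is handled cleanly by the $n^{2}$-weighted summability hypothesis \eqref{f1mresth1} in combination with the $\pi^{7}n^{2}$ bound on the $L^{1}$ norms of $\eurm{H}_{n}$ and $\eurm{M}_{n}$.
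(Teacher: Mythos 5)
Your proposal is correct and follows essentially the same route the paper intends: the pointwise bounds $\pi^{6}n^{2}/(1+x^{2})$ from \eqref{f1intth1} combined with \eqref{f1mresth1} give the absolute, uniform (indeed $\|\cdot\|_{C_{2}(\Bb{R})}$) convergence, and the identification of the sum with $\varphi$ is done exactly as in the paper's proof of \eqref{f06int}, by testing the $L^{1}$ difference against the hyperbolic trigonometric system, using the biorthogonality \eqref{f2bs}--\eqref{f3bs} (with the term-by-term integration justified by the same estimates), and invoking Theorem~\ref{intth0}.
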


We now apply Theorem~\ref{mresth1} to obtain the conjugate hyperbolic Fourier series expansion of the Poisson kernel{\hyperlink{r36}{${}^{\ref*{case36}}$}}\hypertarget{br36}{}
\begin{align}\nonumber
  &    \dfrac{1}{ \pi } \dfrac{y}{(t- x)^{2} + y^{2}} =  \eurm{H}_{0}(t)\! +\!
 \sum\limits_{n\geqslant 1 }\!
  \Big(  {\rm{e}}^{{\fo{ -\pi n \,(  y+ {\rm{i}}x) }}}
\eurm{H}_{n}(t)
\! +\!
{\rm{e}}^{{\fo{-\dfrac{ \pi n  }{  y+{\rm{i}} x}}}}
\eurm{M}_{n}(t)\Big) \\    &
+ \sum\limits_{n\geqslant 1 }\!
  \Big(  {\rm{e}}^{{\fo{- \pi n \,(  y - {\rm{i}}   x) }}}
\eurm{H}_{-n}(t)
\! +\!
{\rm{e}}^{{\fo{-\dfrac{ \pi n  }{  y - {\rm{i}}  x}}}}
\eurm{M}_{-n}(t)\Big) \ , \quad   \ t, x\in \Bb{R}, \ y > 0 \,.
\label{f1conhypfouser}\end{align}

\noindent This allows us to expand the harmonic extension to the upper half-plane of a given
$f \in L^1 (\Bb{R}, (1+x^2)^{-1}d x)$  given by convolution with the Poisson kernel in the form
\begin{align*}
    &  \dfrac{1}{\pi}\int\limits_{\Bb{R}} \dfrac{y}{(t- x)^{2} + y^{2}} f (t) {\rm{d}} t  =
  \eurm{h}_{0}(f)\! +\!
 \sum\limits_{n\geqslant 1 }\!
  \left(  {\rm{e}}^{{\fo{- \pi n \, (  y - {\rm{i}}   x) }}}
\eurm{h}_{n}(f)
\! +\!
{\rm{e}}^{{\fo{-\dfrac{  \pi n }{  y -{\rm{i}}   x}}}}
\eurm{m}_{n}(f)\right) \\    &
+
\sum\limits_{n\geqslant 1 }\!
  \left(  {\rm{e}}^{{\fo{ -\pi n\, (   y+ {\rm{i}} x) }}}
\eurm{h}_{-n}(f)
\! +\!
{\rm{e}}^{{\fo{-\dfrac{ \pi n }{  y+{\rm{i}}x}}}}
\eurm{m}_{-n}(f)\right)
 \ , \quad   \  x\in \Bb{R}, \ y > 0 \,.
\end{align*}

\noindent This formula can be understood as the regularization of the hyperbolic Fourier series
\eqref{f03int} of $f$ found by considering the harmonic extensions of the basis functions.

\subsection[\hspace{-0,31cm}. \hspace{0,11cm}Density in Hardy classes.]{\hspace{-0,11cm}{\bf{.}} Density in Hardy classes.}

By manipulations  similar to those employed in the proof of Corollary 3.3 in \cite[p.\! 59]{gar} we obtain in Section~\ref{pintreslem3} the following property of the Hardy class ${\rm{H}}^1_{+}(\Bb{R})$.
\begin{lemsectnine}\hspace{-0,15cm}{\bf{.}}\label{intreslem3}
   The linear subspace ${\rm{H}}^1_{+}(\Bb{R}) \cap \eurm{S} (\Bb{R})$ is  dense in ${\rm{H}}^1_{+}(\Bb{R})$.
\end{lemsectnine}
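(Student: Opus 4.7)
The plan is a two-step regularization: first smooth $f$ by a vertical shift into $\Bb{H}$, then enforce Schwartz decay via multiplication by an $H^{\infty}(\Bb{H})$ factor whose boundary values decay polynomially on $\Bb{R}$.

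Given $f\in {\rm{H}}^1_{+}(\Bb{R})$, let $F\in H^{1}(\Bb{H})$ be its holomorphic extension and, for $\varepsilon>0$, set $f_{\varepsilon}(x):=F(x+\imag\varepsilon)$. The standard fact $\|f_{\varepsilon}-f\|_{L^{1}}\to 0$ as $\varepsilon\to 0^{+}$ (see, e.g., \cite[p.~57]{gar}) handles approximation in the $L^{1}=H^{1}_{+}$ norm. Applying Cauchy's integral formula on the circle of radius $\varepsilon/2$ about $x+\imag\varepsilon$, together with the Poisson pointwise bound $|F(\zeta)|\leqslant \|f\|_{L^{1}}/(\pi\,\im\zeta)$, yields uniform bounds $|f_{\varepsilon}^{(k)}(x)|\leqslant C_{k,\varepsilon}\|f\|_{L^{1}}$ for every $k\in\Bb{Z}_{\geqslant 0}$, so $f_{\varepsilon}\in C^{\infty}(\Bb{R})$ with every derivative bounded on the real line.

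For $\delta>0$ and an integer $N\geqslant 2$, introduce the multiplier
\begin{equation*}
g_{\delta,N}(z):=(1-\imag\delta z)^{-N},
\end{equation*}
whose only singularity is the pole at $z=-\imag/\delta\in -\imag\Bb{R}_{>0}$. Direct computation gives $|g_{\delta,N}(z)|\leqslant 1$ for all $z\in\overline{\Bb{H}}$, so $g_{\delta,N}\in H^{\infty}(\Bb{H})$, and $|\partial^{j}g_{\delta,N}(x)|={\rm{O}}(|x|^{-N-j})$ as $|x|\to\infty$ for each $j\geqslant 0$. Setting $h_{\varepsilon,\delta,N}:=f_{\varepsilon}\cdot g_{\delta,N}$, we have $h_{\varepsilon,\delta,N}\in H^{1}(\Bb{H})\cdot H^{\infty}(\Bb{H})\subset H^{1}(\Bb{H})$, so $h_{\varepsilon,\delta,N}\in{\rm{H}}^1_{+}(\Bb{R})$. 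Leibniz' formula together with the uniform boundedness of each $f_{\varepsilon}^{(k)}$ and the polynomial decay of $\partial^{j}g_{\delta,N}$ gives $\sup_{x\in\Bb{R}}|x^{k}\partial^{j}h_{\varepsilon,\delta,N}(x)|<\infty$ whenever $k\leqslant N$; since $N$ may be chosen arbitrarily large, $h_{\varepsilon,\delta,N}\in\eurm{S}(\Bb{R})$.

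The passage to the limit is now routine: for fixed $\varepsilon$, $g_{\delta,N}(x)\to 1$ pointwise on $\Bb{R}$ as $\delta\to 0^{+}$ with $|g_{\delta,N}|\leqslant 1$, so Lebesgue's dominated convergence applied to the dominating function $|h_{\varepsilon,\delta,N}-f_{\varepsilon}|\leqslant 2|f_{\varepsilon}|\in L^{1}(\Bb{R})$ yields $\|h_{\varepsilon,\delta,N}-f_{\varepsilon}\|_{L^{1}}\to 0$. A diagonal choice $\delta=\delta(\varepsilon)\to 0$ therefore produces Schwartz members of ${\rm{H}}^1_{+}(\Bb{R})$ converging to $f$ in the $H^{1}_{+}$ norm. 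The principal technical point is the uniform $C^{\infty}$ control of $f_{\varepsilon}$ on all of $\Bb{R}$, which is precisely what allows the cutoff $g_{\delta,N}$ to upgrade mere $L^{1}$-integrability into Schwartz decay; the rest is a standard Hardy-space manipulation parallel to the polynomial density argument in Garnett's Corollary~3.3, adapted from the disk to the half-plane.
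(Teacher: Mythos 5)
There is a genuine gap at the step where you conclude $h_{\varepsilon,\delta,N}\in\eurm{S}(\Bb{R})$. For a \emph{fixed} $N$, the product $h_{\varepsilon,\delta,N}=f_{\varepsilon}\cdot g_{\delta,N}$ with $g_{\delta,N}(z)=(1-\imag\delta z)^{-N}$ only satisfies $\sup_{x}|x^{k}\partial^{j}h_{\varepsilon,\delta,N}(x)|<\infty$ for $k\leqslant N$ (the worst Leibniz term is $f_{\varepsilon}^{(j)}(x)\,(1-\imag\delta x)^{-N}$, which decays no faster than $|x|^{-N}$ since the vertical translate $f_{\varepsilon}$ is merely bounded with bounded derivatives, with no quantified decay). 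Membership in $\eurm{S}(\Bb{R})$ requires \emph{all} seminorms to be finite for a single function; letting ``$N$ be arbitrarily large'' produces a different function for each $N$ and does not upgrade any one of them to the Schwartz class. Concretely, take $f(t)=(t+\imag)^{-2}\in{\rm{H}}^1_{+}(\Bb{R})$: then $f_{\varepsilon}(x)=(x+\imag(1+\varepsilon))^{-2}$ and $h_{\varepsilon,\delta,N}(x)$ decays exactly like $|x|^{-N-2}$, so $x^{N+3}h_{\varepsilon,\delta,N}(x)$ is unbounded and $h_{\varepsilon,\delta,N}\notin\eurm{S}(\Bb{R})$ for every choice of $\varepsilon,\delta,N$. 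Thus your approximants are not in the space ${\rm{H}}^1_{+}(\Bb{R})\cap\eurm{S}(\Bb{R})$ you are trying to exhaust, and the argument does not prove the lemma as written.

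The structure of your argument (smooth first, then multiply by a bounded analytic factor tending to $1$) is the same as the paper's; what must change is the decay-enforcing factor. You need a multiplier that is bounded and holomorphic on $\Bb{H}$ and whose boundary restriction is \emph{itself} rapidly decreasing together with all derivatives. The paper takes $h_{\delta}(x)=\int_{0}^{\delta}{\rm{e}}^{\imag t x}\,\omega_{\delta}(t)\,{\rm{d}}t$, the Fourier transform of a $C^{\infty}$ bump supported in $(0,\delta)$: since $|{\rm{e}}^{\imag t z}|\leqslant 1$ for $\im z\geqslant 0$ and $t\geqslant 0$ it lies in ${\rm{H}}^{\infty}_{+}(\Bb{R})$ with $|h_{\delta}|\leqslant 1$, integration by parts shows $h_{\delta}\in\eurm{S}(\Bb{R})$, and $h_{\delta}\to 1$ appropriately as $\delta\to 0$; then the product with the smoothed function (the paper uses a one-sided mollification $\varphi_{\varepsilon}=\omega_{\varepsilon}*\varphi$ in place of your vertical shift, which plays the same role) is genuinely Schwartz, lies in ${\rm{H}}^1_{+}(\Bb{R})$, and converges to $\varphi$ in $L^{1}$. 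If you replace $g_{\delta,N}$ by such a factor (and keep your Cauchy-estimate control of $f_{\varepsilon}^{(k)}$), your proof can be repaired along essentially the paper's lines; the remaining limiting argument you give is then fine, though the final $L^{1}$ estimate needs the slightly more careful splitting of $\Bb{R}$ used in the paper (or your dominated-convergence argument with the new multiplier).
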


\noindent In view of \eqref{f06int}, every $\varphi \in {\rm{H}}^1_{+}(\Bb{R}) \cap \eurm{S} (\Bb{R})$,  can be
expanded in an absolutely convergent conjugate hyperbolic  Fourier series, all whose coefficients
with non-positive indexes are zero, as follows from \eqref{f05aint} and the known properties of the functions
from the class ${\rm{H}}^1_{+}(\Bb{R})$ (see \cite[p.\! 88, 2(iv)]{gar}). It now follows from \eqref{f1bs} and  \eqref{f2intth1} that the following property holds.
\begin{theorem}\hspace{-0,15cm}{\bf{.}}\label{bsdenth1}
   The function system  $\left\{\eurm{H}_{ n}(x)\right\}_{n\geqslant 1}\!\cup\! \left\{\eurm{M}_{ n}(x)\right\}_{n\geqslant 1 }\!\subset\!{\rm{H}}^1_{+ } (\Bb{R})$ is complete in ${\rm{H}}^1_{+} (\Bb{R})$, while the functions
$\left\{\eurm{H}_{- n}(x)\right\}_{n\geqslant 1}\!\cup\! \left\{\eurm{M}_{- n}(x)\right\}_{n\geqslant 1 }\!\subset\!{\rm{H}}^1_{- } (\Bb{R})$ form a complete system in ${\rm{H}}^1_{-} (\Bb{R})$.
\end{theorem}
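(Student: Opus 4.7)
The plan is to reduce the completeness claim in ${\rm{H}}^1_{+}(\Bb{R})$ to the expansion \eqref{f06int}, which is already available for all Schwartz functions. By Lemma~\ref{intreslem3} the intersection ${\rm{H}}^1_{+}(\Bb{R}) \cap \eurm{S}(\Bb{R})$ is dense in ${\rm{H}}^1_{+}(\Bb{R})$, and the latter is norm--closed in $L^{1}(\Bb{R})$ (it is the kernel of the projection onto negative Fourier frequencies). Hence it suffices to exhibit every $\varphi \in {\rm{H}}^1_{+}(\Bb{R}) \cap \eurm{S}(\Bb{R})$ as an $L^{1}$--convergent series in the system $\{\eurm{H}_{n}, \eurm{M}_{n}\}_{n\geqslant 1}$.

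The first step is to show that the coefficients with non-positive index in \eqref{f06int} vanish for such $\varphi$. Since $\varphi \in {\rm{H}}^1_{+}(\Bb{R}) \subset L^{1}(\Bb{R})$, its Fourier transform $\hat{\varphi}$ is continuous and supported in $[0,+\infty)$, hence vanishes at the origin; therefore $\eurm{h}_{n}^{\star}(\varphi) = \hat{\varphi}(\pi n) = 0$ for every $n \leqslant 0$. For the mixed coefficients, the substitution $x = -1/t$ gives $\eurm{m}_{n}^{\star}(\varphi) = \int_{\Bb{R}} \psi(t)\, {\rm{e}}^{-{\rm{i}} \pi n t}\, {\rm{d}} t$ with $\psi(t) := \varphi(-1/t)/t^{2}$. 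The M\"obius map $z \mapsto -1/z$ is an automorphism of $\Bb{H}$ and the factor $1/z^{2}$ is precisely the associated conformal weight, so $\Psi(z) := \Phi(-1/z)/z^{2}$ is holomorphic on $\Bb{H}$ with $\int_{\Bb{R}} |\Psi(x+{\rm{i}} y)|\, {\rm{d}} x$ transporting (via the same substitution) to an ${\rm{H}}^{1}$--bounded integral of $\Phi$ over a circle in $\Bb{H}$; thus $\psi \in {\rm{H}}^{1}_{+}(\Bb{R})$, and by the same Fourier criterion $\eurm{m}_{n}^{\star}(\varphi) = 0$ for $n \leqslant 0$.

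The second step upgrades the $\|\cdot\|_{C_{2}(\Bb{R})}$--convergence of \eqref{f06int} to $L^{1}$--convergence. After the vanishing established above, \eqref{f06int} reduces to $\varphi = \sum_{n\geqslant 1} \bigl( \eurm{h}_{n}^{\star}(\varphi)\, \eurm{H}_{n} + \eurm{m}_{n}^{\star}(\varphi)\, \eurm{M}_{n} \bigr)$, with convergence in $\|\cdot\|_{C_{2}(\Bb{R})}$. The trivial bound $\|f\|_{L^{1}} \leqslant \|f\|_{C_{2}(\Bb{R})}\! \int_{\Bb{R}} (1+x^{2})^{-1} {\rm{d}} x = \pi\, \|f\|_{C_{2}(\Bb{R})}$ transfers this to absolute convergence of the same series in $L^{1}(\Bb{R})$ (alternatively, the estimates \eqref{f2fevagen} together with the rapid decay \eqref{f05bint} of the coefficients give absolute $L^{1}$--convergence directly). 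Each partial sum lies in ${\rm{H}}^1_{+}(\Bb{R})$ by \eqref{f1bs}, and closedness of ${\rm{H}}^{1}_{+}(\Bb{R})$ in $L^{1}(\Bb{R})$ then places the limit $\varphi$ in the closed linear span of $\{\eurm{H}_{n}, \eurm{M}_{n}\}_{n\geqslant 1}$. Density finishes the first assertion.

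The statement for ${\rm{H}}^1_{-}(\Bb{R})$ follows by the reflection $x \mapsto -x$: if $\varphi \in {\rm{H}}^{1}_{-}(\Bb{R})$ then $\tilde{\varphi}(x) := \varphi(-x) \in {\rm{H}}^{1}_{+}(\Bb{R})$, and the symmetries $\eurm{H}_{-n}(x) = \eurm{H}_{n}(-x)$ from \eqref{f2intth1} combined with $\eurm{M}_{n}(x) = \eurm{H}_{n}(-1/x)/x^{2}$ from \eqref{f3intth1} (which yield $\eurm{M}_{-n}(x) = \eurm{M}_{n}(-x)$) transport the expansion of $\tilde\varphi$ in $\{\eurm{H}_{n}, \eurm{M}_{n}\}_{n\geqslant 1}$ into an expansion of $\varphi$ in $\{\eurm{H}_{-n}, \eurm{M}_{-n}\}_{n\geqslant 1}$. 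The chief technical point of the proof is verifying the ${\rm{H}}^{1}$--invariance of the map $\varphi \mapsto \varphi(-1/\,\cdot\,)/(\,\cdot\,)^{2}$; all other steps are either direct Fourier-analytic computations or the elementary uniform-to-$L^{1}$ comparison afforded by the weight $(1+x^{2})^{-1}$.
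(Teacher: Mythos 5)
Your proposal is correct and follows essentially the same route as the paper: density of ${\rm{H}}^1_{+}(\Bb{R})\cap\eurm{S}(\Bb{R})$ via Lemma~\ref{intreslem3}, the expansion \eqref{f06int} whose coefficients with non-positive index vanish by the Fourier-support characterization of ${\rm{H}}^1_{+}$, membership of the system in ${\rm{H}}^1_{+}$ by \eqref{f1bs}, and the reflection symmetry \eqref{f2intth1}, \eqref{f3intth1} for the ${\rm{H}}^1_{-}$ statement. Your added details (the weighted M\"obius invariance giving $\varphi(-1/\,\cdot\,)/(\,\cdot\,)^{2}\in{\rm{H}}^1_{+}$, and the $\|\cdot\|_{C_{2}}$-to-$L^{1}$ comparison) merely make explicit what the paper leaves implicit.
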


\subsection[\hspace{-0,31cm}. \hspace{0,11cm}Interpolating functions and interpolation formula.]{\hspace{-0,11cm}{\bf{.}} Interpolating functions and interpolation formula.}\label{intfor}\
We apply  Jordan's lemma and the residue theorem in the same way as
for the proof of \eqref{f4zmres},  and obtain
\begin{align}\label{f1intfor}
    &   \int\limits_{\Bb{R}} \dfrac{{\rm{e}}^{{\fo{{\rm{i}} x t- {\rm{i}} y/t}}} {\rm{d}} t}{(t-z)^{2}} = -2 \pi\left(x+ \dfrac{y}{z^{2}}\right){\rm{e}}^{{\fo{{\rm{i}} x z- {\rm{i}} y/z}}}\chi_{\Bb{H}}(z) \ , \ \  z \in \Bb{C}\setminus \Bb{R},
\end{align}

\noindent for all $x> 0$, $y \geqslant 0$.  We proceed and apply this identity together with \eqref{f4zmres}
to the formulas \eqref{f9int} written in the form
\begin{align}\nonumber 
    &   \eurm{H}_{0} (t)\!=\dfrac{1}{4 \pi\imag} \!\!\! \!\!\! \int\limits_{\gamma (-1,1)} \!\!\!\! \Theta_{3}\left(z\right)^{4} \left(\dfrac{1}{t-z} - \dfrac{1}{t+z}\right)   {\rm{d}} z \, ,  &     &
    t \in \Bb{R}\,,  \\  &
     \eurm{H}_{-n} (t)= \eurm{H}_{n} (-t) = \dfrac{(-1)}{4 \pi^{2} n} \!\!\! \!\!\!  \int\limits_{\gamma (-1,1)}\!\!\!\!\!
\dfrac{S^{{\tn{\triangle}}}_{n} \!\left({1}/{\lambda (z)}\right){\rm{ d}} z}{(t-z)^{2}} \ ,   &     &     n\geqslant 1  \, , \     t \in \Bb{R}\,,
\label{f4intfor}\end{align}

\noindent and obtain from \eqref{f5mres}, by using \eqref{f3yint}, \eqref{f2bvslem1}(a) and \eqref{f27int}(a), that
the integral representations
\begin{align}\nonumber
    &  \eusm{R}_{n}(x, -y)=  \int\limits_{\Bb{R}}\!  {\rm{e}}^{{\fo{{\rm{i}} x t- {\rm{i}} y/t}}} \eurm{H}_{-n} (t) {\rm{d}} t= \dfrac{1}{2 \pi n} \!\!\! \!\!\!  \int\limits_{\gamma (-1,1)}\!\!\!\!\!
\left(x+ \dfrac{y}{z^{2}}\right){\rm{e}}^{{\fo{{\rm{i}} x z- {\rm{i}} y/z}}}
S^{{\tn{\triangle}}}_{n} \!\left(\dfrac{1}{\lambda (z)}\right) {\rm{d}} z \, ,  \\    &
  \eusm{R}_{0}(x, -y)= \! \int\limits_{\Bb{R}}\!  {\rm{e}}^{{\fo{{\rm{i}} x t- {\rm{i}} y/t}}} \eurm{H}_{0} (t) {\rm{d}} t= \dfrac{1}{2} \!\!\! \!\!\! \int\limits_{\gamma (-1,1)} \!\!\!\! \Theta_{3}\left(z\right)^{4} {\rm{e}}^{{\fo{{\rm{i}} x z- {\rm{i}} y/z}}}    {\rm{d}} z
\, , \ \ n\geqslant 1\,,
\label{f2intfor}\end{align}

\noindent hold for all $x, y \geqslant 0$,   because $\eusm{R}_{n} \in C (\Bb{R}^{2})$ for all $ n\geqslant 0$, as we see from    the estimates \eqref{f2fevagen} and \eqref{f3fevagen}.  In addition to the symmetry property
\eqref{f6zmres}, we observe that by substituting \eqref{f4intfor} into \eqref{f5mres}, while taking into account \eqref{f1intfor}  (interchange in orders of integration is justified by \eqref{f2bvslem1}(a) and \eqref{f27int}(a)), we arrive at
\begin{align}\label{f6intfor}
       \eusm{R}_{n}(-x, y)&=  \int\limits_{\Bb{R}}\!  {\rm{e}}^{{\fo{-{\rm{i}} x t+ {\rm{i}} y/t}}} \eurm{H}_{-n} (t) {\rm{d}} t = \int\limits_{\Bb{R}}\!  {\rm{e}}^{{\fo{{\rm{i}} x t- {\rm{i}} y/t}}} \eurm{H}_{n} (t) {\rm{d}} t  \\  & =
     \dfrac{(-1)}{4 \pi^{2} n} \!\!\! \!\!\!  \int\limits_{\gamma (-1,1)}\!\!\!\!\!
S^{{\tn{\triangle}}}_{n} \!\left(\dfrac{1}{\lambda (z)}\right)
 \int\limits_{\Bb{R}}\!  \dfrac{{\rm{e}}^{{\fo{{\rm{i}} x t- {\rm{i}} y/t}}}  {\rm{d}} t}{(t+z)^{2}}
 \ {\rm{ d}}  z = 0 \ , \ \ \  x, y > 0\,, \ n\geqslant 1 \,,
\nonumber \end{align}

\noindent and hence, since $\eusm{R}_{n}$ is continuous on $\Bb{R}^{2}$ for each $n\geqslant 1$, we find that
\begin{align}\label{f5intfor}
 & \eusm{R}_{n}(-x, y)=0 \,, \quad  x, y \geqslant 0\,, \ n\geqslant 1 \,.
\end{align}

\noindent This property also follows  directly  from $\eurm{H}_{n} \in {\rm{H}}^1_{+}(\Bb{R})$ (see \eqref{f1bs}) and the equality \eqref{f6intfor}. In Section~\ref{pintreslem3} we prove the following assertion.

\begin{theorem}\hspace{-0,15cm}{\bf{.}}\label{intforth1}
   Let $\{\eusm{R}_{\hspace{0.025cm}n}\}_{n \geqslant 0} $ be given by \eqref{f5mres}. Then $\eusm{R}_{\hspace{0.025cm}n}\in C (\Bb{R}^{2})$, and, in addition, for each $ n\geqslant 0$,
the restriction of the function  $\eusm{R}_{\hspace{0.025cm}n}$ to the quadrant
$\Bb{R}_{\geqslant 0}\times \Bb{R}_{\leqslant 0}$ extends to all of $\Bb{C}^{2}$ as an entire function of
two variables.  At the same time, we have
\begin{gather}\label{f1intforth1}
    \begin{array}{lllll}\hspace{-0,5cm}
 {\rm{{(a)}}}&  \ \  \eusm{R}_{\hspace{0.025cm}0} (\pi m, 0) = \delta_{0 m}\ ,&  \ \  \eusm{R}_{\hspace{0.025cm}0} (0, - \pi m) = \delta_{0 m}\ , & \ m \geqslant 0 \ , & \
  \\[0.2cm]
\hspace{-0,5cm} {\rm{{(b)}}}&  \ \  \eusm{R}_{\hspace{0.025cm}n} (\pi m, 0) = \delta_{n m}\ ,&   \ \  \eusm{R}_{\hspace{0.025cm} n} (0,-\pi m) = 0 \ , &  \ m \geqslant 0\ , & \ n \geqslant 1 \ .
 \end{array}
 \end{gather}

\noindent The function $\eusm{R}_{\hspace{0.025cm}0}$ satisfies
\begin{align}\label{f3intforth1}
   &  \eusm{R}_{0}(x, -y)= \eusm{R}_{0}(y, -x) =\eusm{R}_{0}(-x, y) , \
  \ \   \left|\eusm{R}_{0}(x, y)\right|\leqslant 3 \pi
    \, ,   &     &     x,y \in \Bb{R}\,; \\  &
    \left|\eusm{R}_{0}(x, -y)\right|\leqslant
 5 K_{0} \left(\sqrt{2 \pi (x+y+1)}\right) ,   &     &    x, y \geqslant 0\,,
\label{f5intforth1}\end{align}

\noindent while for any $n \geqslant 1$ we have
\begin{align} &  \label{f4intforth1}
\eusm{R}_{\hspace{0.025cm}n} (-x, y)= 0\, , \ \ \ \ x, y \geqslant 0 \, ; \ \
 \ \ \ \ \ \ \ \  \left|\eusm{R}_{n}(x, y)\right|\leqslant  \pi^{7} n^{2}/2
    \, ,  &     &   x,y \in \Bb{R}\,;
\\
     &   \left|\eusm{R}_{n}(x, -y)\right|\leqslant 2 \pi^{3} e^{{\fo{2 \pi n  }}}
     \dfrac{(x+y)}{\sqrt{x+y+1}}\, K_{1} \left(2\sqrt{\pi (x+y+1)}\right) ,  &     &  x, y \geqslant 0\,,
\label{f2intforth1}\end{align}

\noindent where $K_{0} (x) =\int_{1}^{\infty}(t^{2}-1)^{-1/2} \exp(-x t)\,{\rm{d}} t$,  $K_{1} (x) =\int_{0}^{\infty} \exp(-x \sqrt{t^{2}+1})\,{\rm{d}} t$,
$x~>~0$, are the modified Hankel function satisfying  $K_{j} (x)\, x^{1/2} \, \exp (x) \to \sqrt{\pi/2}$ as $x\to+\infty$ for each $j \in \{0, 1\}$.
\end{theorem}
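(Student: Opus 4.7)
The plan is to base everything on the contour-integral representations \eqref{f2intfor}, which express $\eusm{R}_{n}(x,-y)$ for $x,y\geqslant 0$ as an integral of a kernel jointly entire in $(x,y)\in\Bb{C}^{2}$ over the compact semicircle $\gamma(-1,1)\subset\Bb{H}$. Continuity of $\eusm{R}_{n}$ on $\Bb{R}^{2}$ is immediate from the defining formula \eqref{f5mres}: since $\eurm{H}_{-n}\in L^{1}(\Bb{R})$ by \eqref{f1intth1} and $|\e^{\imag xt+\imag y/t}|=1$, the Lebesgue dominated convergence theorem applies in $(x,y)$. For the entire extension to the quadrant $\Bb{R}_{\geqslant 0}\times\Bb{R}_{\leqslant 0}$ I would write $y=-y'$ with $y'\geqslant 0$ and invoke \eqref{f2intfor}; since $\gamma(-1,1)$ is a compact arc with $\im\,z>0$, both $|\Theta_{3}(z)^{4}|$ and $|S^{{\tn{\triangle}}}_{n}(1/\lambda(z))|$ are bounded on it (by \eqref{f3yint} and \eqref{f27int}), while the kernel $\e^{\imag xz-\imag y'/z}$ is jointly entire in $(x,y')\in\Bb{C}^{2}$; Morera's theorem combined with Fubini then yields the entire extension of $\eusm{R}_{n}(x,-y')$ to $\Bb{C}^{2}$.

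The interpolation values reduce directly to biorthogonality. For $m\geqslant 0$ and $n\geqslant 1$,
\begin{align*}
\eusm{R}_{n}(\pi m,0)=\int_{\Bb{R}}\e^{\imag\pi mt}\eurm{H}_{-n}(t)\,{\rm{d}}t=\delta_{m,n}
\end{align*}
follows from \eqref{f2bs} after the substitutions $m\mapsto -m$, $n\mapsto -n$; meanwhile $\eusm{R}_{n}(0,-\pi m)=\int_{\Bb{R}}\e^{-\imag\pi m/t}\eurm{H}_{-n}(t)\,{\rm{d}}t$ vanishes from the second identity of \eqref{f2bs} when $m\neq 0$, and equals $\int_{\Bb{R}}\eurm{H}_{-n}=\delta_{0,-n}=0$ when $m=0$. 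For $n=0$, the analogous computation together with the identity $\int_{\Bb{R}}\e^{\imag\pi m/x}\eurm{H}_{0}(x)\,{\rm{d}}x=\delta_{0,m}$ derived right after \eqref{f3bs} delivers both $\eusm{R}_{0}(\pi m,0)=\delta_{0,m}$ and $\eusm{R}_{0}(0,-\pi m)=\delta_{0,m}$. The vanishing $\eusm{R}_{n}(-x,y)=0$ on $\Bb{R}_{\geqslant 0}\times\Bb{R}_{\geqslant 0}$ for $n\geqslant 1$ is already recorded in \eqref{f5intfor}: using $\eurm{H}_{-n}\in{\rm{H}}^{1}_{-}(\Bb{R})$ from \eqref{f1bs}, the contour in $\int\e^{-\imag xt+\imag y/t}\eurm{H}_{-n}(t)\,{\rm{d}}t$ may be pushed into the lower half-plane where the integrand is holomorphic and decays, giving $0$ by Jordan's lemma. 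The symmetries of $\eusm{R}_{0}$ in \eqref{f3intforth1} follow from the substitutions $t\mapsto -t$ and $t\mapsto -1/t$ together with $\eurm{H}_{0}(-t)=\eurm{H}_{0}(t)$ and $\eurm{H}_{0}(-1/t)=t^{2}\eurm{H}_{0}(t)$ from Lemma~\ref{bslem2}, and the global bounds $|\eusm{R}_{0}|\leqslant 3\pi$, $|\eusm{R}_{n}|\leqslant \pi^{7}n^{2}/2$ fall out of the pointwise estimates \eqref{f1intth1} upon integrating $\int_{\Bb{R}}{\rm{d}}t/(1+t^{2})=\pi$.

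The substantive work lies in the Hankel-type bounds \eqref{f5intforth1}--\eqref{f2intforth1}. I would parametrize $\gamma(-1,1)$ by $z=\e^{\imag\theta}$, $\theta\in(0,\pi)$; since $|z|=1$, one has $\im\,z=\im(-1/z)=\sin\theta$, so $|\e^{\imag xz-\imag y/z}|=\e^{-(x+y)\sin\theta}$ for $x,y\geqslant 0$. For $\eusm{R}_{0}$, the estimate \eqref{f3yint} transcribes (via $z=(t+\imag)/(t-\imag)$ with $t=\cot(\theta/2)$, giving $t+1/t=2/\sin\theta$) to $|\Theta_{3}(\e^{\imag\theta})|^{4}\leqslant (10/\sin\theta)\exp(-\pi/(2\sin\theta))$, and the change of variable $\tau=1/\sin\theta$ reduces the contour integral in \eqref{f2intfor} to
\begin{align*}
|\eusm{R}_{0}(x,-y)|\leqslant 10\int_{1}^{\infty}\frac{\exp\!\bigl(-\pi\tau/2-(x+y)/\tau\bigr)}{\sqrt{\tau^{2}-1}}\,{\rm{d}}\tau.
\end{align*}
To bring the right-hand side into $K_{0}$-form I use the elementary inequality $(x+y)/\tau\geqslant (x+y+1)/\tau-1$ (valid for $\tau\geqslant 1$), then rescale $\tau=\sqrt{2(x+y+1)/\pi}\,u$, which gives $\pi\tau/2+(x+y+1)/\tau=\sqrt{2\pi(x+y+1)}\,(u+1/u)/2$; the hyperbolic substitution $s=(u+1/u)/2\geqslant 1$ then converts the right-hand side into $K_{0}(\sqrt{2\pi(x+y+1)})$, and the multiplicative constants work out to the factor $5$. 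The argument for $\eusm{R}_{n}$, $n\geqslant 1$, runs in parallel from the formula for $\eusm{R}_{n}(x,-y)$ in \eqref{f2intfor}: one bounds $|x+y/z^{2}|\leqslant x+y$ on $|z|=1$, combines the endpoint decay of $|S^{{\tn{\triangle}}}_{n}(1/\lambda(z))|$ from \eqref{f27int}(a) with the middle bound from \eqref{f27int}(b) (whose $\e^{-\imag\pi nz}$ term contributes the exponential factor in $n$), and then reuses the same rescaling together with the representation $K_{1}(x)=\int_{0}^{\infty}\e^{-x\sqrt{t^{2}+1}}\,{\rm{d}}t$.

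The main obstacle will be the choreography of changes of variable needed to package the elementary contour estimate into a genuine Hankel function: the saddle of $\pi\tau/2+(x+y)/\tau$ crosses $\tau=1$ precisely when $x+y\sim\pi/2$, and the ``$+1$'' inside the argument of $K_{0}$ (respectively $K_{1}$) is exactly what absorbs both regimes into a single clean bound. Tracking the multiplicative constants $5$ and $2\pi^{3}\e^{2\pi n}$ through several substitutions is tedious but conceptually routine; all the real content is carried by the pointwise bounds \eqref{f3yint} and \eqref{f27int} on the modular factors, combined with the exponential decay of $\e^{\imag xz-\imag y/z}$ on the arc $\gamma(-1,1)$.
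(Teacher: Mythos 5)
Up to the final Hankel-type estimates your proposal follows the paper's own route: continuity and the entire extension come from the contour formulas \eqref{f2intfor} (note only that $\gamma(-1,1)$ is not a compact arc --- its closure meets $\Bb{R}$ at $\pm1$ --- so the endpoint decay supplied by \eqref{f3yint} and \eqref{f27int}(a), which you do invoke, is genuinely needed, not mere boundedness on a compact set), the values \eqref{f1intforth1} reduce to the biorthogonality relations \eqref{f02int}, \eqref{f2bs}--\eqref{f3bs} exactly as in the paper, the vanishing \eqref{f4intforth1} is the Hardy-space/Jordan argument of \eqref{f5intfor}--\eqref{f6intfor}, and the symmetries and crude bounds are handled identically. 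One small slip: for $|\eusm{R}_{n}|\leqslant\pi^{7}n^{2}/2$ you need \eqref{f2fevagen}; the estimate \eqref{f1intth1} you cite only yields $\pi^{7}n^{2}$.

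The genuine gap is precisely the step you call the main obstacle, namely \eqref{f5intforth1} and \eqref{f2intforth1}. The paper does not run an inequality chain there: after the same parametrization it substitutes $s=\frac{1}{2}(t+1/t)$, for which $2t/(t^{2}+1)=1/s$, so the exponent becomes exactly $bs+a/s$ with $a=x+y$, and it then invokes the closed-form evaluations \eqref{f4pintforth1}. Your chain --- apply $(x+y)/\tau\geqslant(x+y+1)/\tau-1$, rescale $\tau=\sqrt{2(x+y+1)/\pi}\,u$, then set $s=(u+1/u)/2$ --- does not deliver the stated bound: the first step costs a factor $\mathrm{e}$ that is never recovered, the rescaling neither carries the measure $\mathrm{d}\tau/\sqrt{\tau^{2}-1}$ into $\mathrm{d}u/\sqrt{u^{2}-1}$ nor keeps the lower limit at $1$, and the claim that the constants ``work out to the factor $5$'' cannot be substantiated: since $\eusm{R}_{0}(0,0)=1$ by \eqref{f1intforth1}(a) while $5K_{0}(\sqrt{2\pi}\,)\approx 0.31$, no chain of valid upper bounds can terminate at \eqref{f5intforth1} with that constant and that argument. (The same check at $a=0$ shows the identities \eqref{f4pintforth1}, on which the paper itself relies, are not exact either, because $\int_{0}^{\infty}\exp\bigl(-\tfrac{b}{2}(t+1/t)\bigr)\,\mathrm{d}t/t=2K_{0}(b)\neq K_{0}(2\sqrt{b})$; the exponential decay rate in \eqref{f5intforth1}--\eqref{f2intforth1} is correct, but the printed constants are not attainable.) To complete your argument you must actually prove a bound of the form $\int_{1}^{\infty}e^{-bs-a/s}(s^{2}-1)^{-1/2}\,\mathrm{d}s\leqslant C\,K_{0}\bigl(c\sqrt{a+1}\bigr)$ with explicit, verified $C$ and $c$ --- for instance $bs+a/s\geqslant\tfrac{b}{2}s+\sqrt{2ab}$ gives the honest bound $e^{-\sqrt{2ab}}K_{0}(b/2)$ --- and then restate the final estimates with whatever constants actually emerge.
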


For  $\varphi\in L^{1} (\Bb{R})$, let  $U_{\varphi}$ be defined as in
\eqref{f0int}, i.e.,
\begin{align}\label{f7intfor}
    &  U_{\varphi} (x, y) = \int\limits_{\Bb{R}}  {\rm{e}}^{{\fo{{\rm{i}}  x t + {\rm{i}}  y / t}}} \varphi (t) {\rm{d}} t  \ , \quad x, y \in \Bb{R} \,.
\end{align}

\noindent
Then $u=U_\varphi$ solves the Klein-Gordon equation $u_{x y} + u = 0$ in the sense of distribution theory on $\Bb{R}^2$.
We recall that $u$ is  a {\emph{solution}} of  the Klein-Gordon equation on a given open subset
$G\! \subset\! \Bb{R}^{2}$ in the sense of distribution theory if $u \!\in\! \mathcal{D}^{\, \prime} (G)$ and the equality
$u_{x y} + u = 0$ holds in the sense of  an equality for the linear functionals $u_{x y},  u\!\in\! \mathcal{D}^{\, \prime} (G)$  on test functions   $C_{0}^{\infty}(G)$, the compactly supported
$C^\infty$-smooth  functions mapping $G$ to $\Bb{C}$  (cf. \cite[pp.\! 14, 34]{hor}). Given that our primary interest is in solutions of
the form $U_{\varphi}$, which are continuous  on $\Bb{R}^{2}$, it is more convenient to use
 alternative definition which  is equivalent to the definition above for continuous
 solutions of the Klein-Gordon equation.

\begin{defsectnine}\hspace{-0,15cm}{\bf{.}}\label{intfordef1}
Let $G$ be  an open convex subset of $\Bb{R}^{2}$. We say that $U$  is a {\it{continuous solution}}
 of  the Klein-Gordon equation
on $G$ if $U \in C \left(G\right) $ and
\begin{gather}\label{f1intfordef1}
   U (b, d)\! - \!U (b, c )\! -\! U (a, d)\! +\! U (a, c)\! +\! \!\int\nolimits_{c}^{\,d}\int\nolimits_{a}^{\,b} U (t, s) \, {\rm{d}} t {\rm{d}} s=0 \ ,
\end{gather}

\noindent for all $[a, b]\times [c, d] \subset G$,
where $-\infty < a < b < +\infty$ and  $-\infty < c < d < +\infty$.
\end{defsectnine}

We readily verify that \eqref{f1intfordef1} holds for $U=U_{\varphi}$ and $G = \Bb{R}^{2}$,  and, consequently,
$U_{\varphi}$  is a continuous solution of  the Klein-Gordon equation on $\Bb{R}^{2}$.

Next, let us assume that $\varphi$  meets the condition \eqref{f1mresth1} of  Theorem~\ref{mresth1}.  In view of the identities \eqref{f07int}, this is the same as requiring that \eqref{f08int} holds. Then the conjugate hyperbolic  Fourier series \eqref{f06int} of $\varphi$ can be substituted for  $\varphi$ into the integral in \eqref{f7intfor} to get the equality \eqref{f09int} by taking into account the identities \eqref{f011int}. As a consequence, we obtain the following interpolation formula for the solutions of  the Klein-Gordon equation of the
type \eqref{f7intfor}.
\begin{theorem}\hspace{-0,2cm}{\bf{.}}\label{intforth2} Suppose $\varphi \in L^{1}(\Bb{R})$
and that the continuous  solution $U_{\varphi}$ of the Klein-Gordon equation on $\Bb{R}^{2}$ is given by
\eqref{f7intfor}. If $U_{\varphi}$ satisfies \eqref{f08int} then
\begin{multline*}\hspace{-0,3cm}
  U_{\varphi} (x, y)\! = \!  U_{\varphi} (0, 0)\, \eusm{R}_{\hspace{0.025cm}0} (x, y)
    + \! \sum\limits_{n \geqslant 1} \!\Big[  U_{\varphi} (\pi n, 0)  \eusm{R}_{\hspace{0.025cm}n} (  x,  y)   + U_{\varphi} (0, -\pi n) \eusm{R}_{\hspace{0.025cm}n} ( -y,  -x)      \Big]\\[0,2cm]  \! + \!
   \sum\limits_{n \geqslant 1} \Big[\,  U_{\varphi} (-\pi n, 0)\,  \eusm{R}_{\hspace{0.025cm}n} (- x,  -y)   + U_{\varphi} (0, \pi n)\, \eusm{R}_{\hspace{0.025cm}n} ( y,  x) \Big] \,, \quad (x, y) \in  \Bb{R}^{2} \,, \hspace{-0,15cm}
   \end{multline*}
\noindent
where  the sequence $\{\eusm{R}_{\hspace{0.025cm}n}\}_{n \geqslant 0} $ is given by \eqref{f5mres}, and the series  converges absolutely and uniformly over all
$(x,y)\in \Bb{R}^2$.
\end{theorem}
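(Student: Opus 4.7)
The plan is to derive the interpolation formula by substituting the conjugate hyperbolic Fourier expansion of $\varphi$ given by Theorem~\ref{mresth1} into the integral \eqref{f7intfor}, and then identifying each resulting integral with a value of one of the interpolating functions $\eusm{R}_{n}$ via the symmetry identities \eqref{f011int}. The hypothesis \eqref{f08int} is exactly the growth condition \eqref{f1mresth1} required by Theorem~\ref{mresth1}, once one reads \eqref{f07int}, because $\eurm{h}_{n}^{\star}(\varphi)=U_{\varphi}(-\pi n,0)$ for $n\in\Bb{Z}$ and $\eurm{m}_{n}^{\star}(\varphi)=U_{\varphi}(0,\pi n)$ for $n\in\Bb{Z}\setminus\{0\}$.

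First, I would invoke Theorem~\ref{mresth1} to conclude that
\begin{align*}
\varphi(t) & = U_{\varphi}(0,0)\,\eurm{H}_{0}(t) \\
& \quad + \sum_{n\in\Bb{Z}\setminus\{0\}}\bigl(U_{\varphi}(-\pi n,0)\,\eurm{H}_{n}(t)+U_{\varphi}(0,\pi n)\,\eurm{M}_{n}(t)\bigr),
\end{align*}
where the series converges absolutely and uniformly in the weighted uniform norm $\|\cdot\|_{C_{2}(\Bb{R})}$. Since $\|f\|_{C_{2}(\Bb{R})}$ majorizes $(1+t^{2})|f(t)|$ and $(1+t^{2})^{-1}\in L^{1}(\Bb{R})$, the partial sums in fact converge to $\varphi$ in the $L^{1}(\Bb{R})$-norm as well. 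Because $|e^{ixt+iy/t}|=1$ for all real $t\neq 0$, the integration defining $U_{\varphi}$ then commutes with the summation, giving, for every $(x,y)\in\Bb{R}^{2}$,
\begin{align*}
U_{\varphi}(x,y) & = U_{\varphi}(0,0)\int_{\Bb{R}}e^{ixt+iy/t}\eurm{H}_{0}(t)\,{\rm{d}} t \\
& \quad +\sum_{n\in\Bb{Z}\setminus\{0\}}\Bigl(U_{\varphi}(-\pi n,0)\!\int_{\Bb{R}}\!e^{ixt+iy/t}\eurm{H}_{n}(t)\,{\rm{d}} t + U_{\varphi}(0,\pi n)\!\int_{\Bb{R}}\!e^{ixt+iy/t}\eurm{M}_{n}(t)\,{\rm{d}} t\Bigr).
\end{align*}

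Next, I would rewrite each integral on the right-hand side using the definition \eqref{f5mres} of $\eusm{R}_{n}$ together with the identities \eqref{f011int}: the $n=0$ term produces $U_{\varphi}(0,0)\,\eusm{R}_{0}(x,y)$, while for every $n\geqslant 1$ the four integrals against $\eurm{H}_{n}$, $\eurm{H}_{-n}$, $\eurm{M}_{n}$, $\eurm{M}_{-n}$ evaluate respectively to $\eusm{R}_{n}(-x,-y)$, $\eusm{R}_{n}(x,y)$, $\eusm{R}_{n}(y,x)$, and $\eusm{R}_{n}(-y,-x)$. Regrouping the terms with index $n$ and $-n$ and pairing them with the corresponding values of $U_{\varphi}$ at $(\pm\pi n,0)$ and $(0,\pm\pi n)$ gives precisely the displayed interpolation formula.

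It remains to confirm that the resulting series converges absolutely and uniformly on $\Bb{R}^{2}$. This is immediate from the global bound $|\eusm{R}_{n}(x,y)|\leqslant \pi^{7}n^{2}/2$ of \eqref{f4intforth1}, since the series is then term-by-term dominated by
\begin{align*}
\frac{\pi^{7}}{2}\sum_{n\geqslant 1} n^{2}\bigl(|U_{\varphi}(\pi n,0)|+|U_{\varphi}(-\pi n,0)|+|U_{\varphi}(0,\pi n)|+|U_{\varphi}(0,-\pi n)|\bigr),
\end{align*}
which is finite by the hypothesis \eqref{f08int}. I do not expect a serious obstacle; the only delicate point is the interchange of integration and summation, which is handled cleanly by the $L^{1}$-convergence deduced from the stronger $C_{2}$-convergence produced by Theorem~\ref{mresth1}.
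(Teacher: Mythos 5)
Your proposal is correct and is essentially the paper's own argument: you substitute the conjugate hyperbolic Fourier expansion of $\varphi$ (available under \eqref{f08int} via \eqref{f07int} and Theorem~\ref{mresth1}) into the integral \eqref{f7intfor}, justify the interchange of sum and integral, identify each term through \eqref{f5mres} and \eqref{f011int}, and obtain absolute uniform convergence of the resulting series from the bound \eqref{f4intforth1} together with \eqref{f08int}. The only cosmetic remark is that the $\|\cdot\|_{C_{2}(\Bb{R})}$-convergence you attribute to Theorem~\ref{mresth1} is actually asserted in Section~\ref{basint} (it follows from the estimates \eqref{f02int} combined with \eqref{f08int}), while Theorem~\ref{mresth1} itself states only absolute and uniform convergence; this does not affect your argument, since either formulation yields the $L^{1}$-convergence you need for the interchange.
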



\section[\hspace{-0,30cm}. \hspace{0,11cm}Extension of Theorem~\ref{intth0}]{\hspace{-0,095cm}{\bf{.}} Extension of Theorem~\ref{intth0}}\label{refa}

In this section, we derive the following extension of Theorem~\ref{intth0}. We make an effort to present a direct proof which
does not rely on ergodic theory, and which also covers the instance of Theorem~\ref{intth0} (when $N=M=0$ in \eqref{f1refath1} below). This makes the approach direct, but
we should mention that there are shortcuts available if we were to abandon this aim.
\begin{theorem}\hspace{-0,2cm}{\bf{.}}\label{refath1}
  Let $ \varphi  \in L^{1} (\Bb{R})$, $N, M \in \Bb{Z}_{\geqslant 0}$, $\delta_{0}:=1$, and  $\delta_{k}\in \left\{0,1\right\}$, $k \in \Bb{N}$. 
  Assume that\vspace{-0,2cm}
\begin{align}\label{f1refath1}
    &
    \begin{array}{l} {\rm{(a)}} \ \
\begin{displaystyle}
   \int\limits_{\Bb{R}} \varphi (t)  \dfrac{{\rm{e}}^{{\fo{{\rm{i}} \pi n t }}} -
   \begin{displaystyle}
     \sum\limits_{k=0}^{N-1}
   \end{displaystyle}
\dfrac{1}{k\, !}\left({\rm{i}} \pi n t \right)^{k}}{\vphantom{A^{A^{A}}}({\rm{i}} t)^{N-1+\delta_{N}}}  {\rm{d}} t  =   0\,,
\end{displaystyle}
      \\[0,7cm]  {\rm{(b)}} \ \
  \begin{displaystyle}
\int\limits_{\Bb{R}} \varphi (t) \left({\rm{e}}^{{\fo{ \dfrac{{\rm{i}} \pi n}{t}}}} - \sum\limits_{k=0}^{M-1}
\dfrac{1}{k\, !}\left( \dfrac{{\rm{i}} \pi n}{t}\right)^{k}\right)\left(\dfrac{t}{{\rm{i}}}\right)^{M-1+\delta_{M}} {\rm{d}} t=0\,,
\end{displaystyle}
    \end{array}
      \quad  n\in \Bb{Z} \, ,
\end{align}

\vspace{-0,1cm}
\noindent where $\sum_{k=0}^{-1}:=0$.   Then $ \varphi  = 0$.
\end{theorem}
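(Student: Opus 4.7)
I would prove Theorem~\ref{refath1} by induction on the index sum $N+M$, with Theorem~\ref{intth0} as the base case. Indeed, when $N=M=0$ (and $\delta_{0}=1$ is fixed by hypothesis), the Taylor-polynomial sums $\sum_{k=0}^{-1}$ are empty by convention, the weight factors $(it)^{N-1+\delta_{N}}$ and $(t/i)^{M-1+\delta_{M}}$ both equal $1$, and conditions \eqref{f1refath1} reduce precisely to the hypotheses of Theorem~\ref{intth0}.

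For the inductive step, I would use the Taylor integral remainder
\begin{equation*}
e^{u}-\sum_{k=0}^{N-1}\frac{u^{k}}{k!}=\frac{u^{N}}{(N-1)!}\int_{0}^{1}(1-s)^{N-1}e^{su}\,ds,\qquad N\geqslant 1,
\end{equation*}
with $u=i\pi nt$ (and analogously with $u=i\pi n/t$). After this substitution and, whenever the double integral admits Fubini, condition \eqref{f1refath1}(a) takes the integrated form
\begin{equation*}
\int_{0}^{1}(1-s)^{N-1}\biggl[\int_{\Bb{R}}\varphi(t)\,t^{\,1-\delta_{N}}e^{is\pi nt}\,dt\biggr]ds=0,\qquad n\in\Bb{Z}\setminus\{0\},
\end{equation*}
and a parallel formula holds for \eqref{f1refath1}(b). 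This recasts the hypotheses as a family of Fourier-type vanishing conditions parametrized by $s\in[0,1]$.

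The inductive reduction itself is driven by bounded multipliers, most naturally $h(t):=e^{i\pi t}-1$ and its reciprocal analogue $\widetilde{h}(t):=e^{-i\pi/t}-1$. Since $h(t)e^{i\pi nt}=e^{i\pi(n+1)t}-e^{i\pi nt}$, multiplication by $h$ implements a discrete difference in the Fourier index $n$, precisely the operation that lowers by one the polynomial degree subtracted in \eqref{f1refath1}(a). One then checks that $\tilde\varphi(t):=\varphi(t)h(t)$ lies in $L^{1}(\Bb{R})$ (as $|h|\leqslant 2$) and satisfies conditions \eqref{f1refath1} with $N$ replaced by $N-1$ (for a suitably chosen $\delta_{N-1}\in\{0,1\}$) and $M$ unchanged, so the inductive hypothesis applies. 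The dual multiplier $\widetilde{h}$ reduces $M$ by one in the analogous way.

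The principal difficulty is the coupling between \eqref{f1refath1}(a) and \eqref{f1refath1}(b): the substitution $t\mapsto-1/t$ interchanges the two conditions (up to signs and factors of $i$), but neither $h$ nor $\widetilde{h}$ is equivariant with respect to this substitution, so multiplication by $h$ that cleanly reduces (a) will in general perturb (b). To absorb these cross contributions, one expands products such as $(e^{i\pi t}-1)e^{i\pi n/t}$ and controls the leftover terms via the inductive hypothesis at shifted values of $n$, possibly alternating between $h$ and $\widetilde{h}$ and allowing temporary increases in $\delta$. Once $N$ and $M$ are exhausted, Theorem~\ref{intth0} applied to $\bigl(\prod_{j}h_{j}\bigr)\varphi$ yields that this product is zero almost everywhere, and since each $h_{j}$ vanishes on a countable subset of $\Bb{R}$, one concludes $\varphi=0$ a.e.
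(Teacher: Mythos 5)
The inductive step is where your argument breaks down, and the obstruction is exactly the coupling you flag but do not resolve. Multiplying by $h(t)={\rm{e}}^{{\fo{{\rm{i}}\pi t}}}-1$ and testing the new function $\tilde\varphi=\varphi h$ against the kernels of condition \eqref{f1refath1}(b) produces integrals of the form $\int_{\Bb{R}}\varphi(t)\,{\rm{e}}^{{\fo{{\rm{i}}\pi t+{\rm{i}}\pi n/t}}}\,t^{\,r}\,{\rm{d}} t$ with $n\neq 0$. These are mixed exponentials, i.e.\ values of the associated Klein--Gordon data off the two characteristic axes, and they are not linear combinations of the hypothesized integrals at any shifted indices: conditions \eqref{f1refath1}(a),(b) only control functionals built from the pure exponentials ${\rm{e}}^{{\fo{{\rm{i}}\pi m t}}}$ and ${\rm{e}}^{{\fo{{\rm{i}}\pi m/t}}}$ with rational weights in $t$. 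Alternating between $h$ and $\widetilde h$, or allowing the $\delta$'s to drift, only multiplies the number of such cross terms; asserting that they vanish is essentially equivalent to the uniqueness statement one is trying to prove, so the induction cannot close. (The Taylor-remainder/Fubini reformulation is harmless but also unnecessary: applying the finite difference $\Delta^{N}$ in the index $n$ to \eqref{f1refath1}(a) removes the polynomial part in one stroke, which is the same algebra as your multiplier $h^{N}$ but performed on the hypotheses rather than on $\varphi$, and hence creates no new functionals.)

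The paper avoids the reduction to Theorem~\ref{intth0} altogether (indeed it deliberately reproves the case $N=M=0$). After the finite-difference step it does not feed the weighted function back into a smaller instance of the theorem; instead it uses completeness of $\{{\rm{e}}^{{\fo{{\rm{i}}\pi n x}}}\}_{n\in\Bb{Z}}$ on $[-1,1]$ to convert the two families of conditions into the periodization identities \eqref{f4refath1} for $\varphi(t)/t^{\,p}$ and $\varphi^{J}(t)/t^{\,q}$, where $\varphi^{J}(t)=\varphi(-1/t)/t^{2}$, $p=N-1+\delta_{N}$, $q=M-1+\delta_{M}$. The coupling between (a) and (b) that defeats your induction is then handled by symmetrization: the functions $F_{\sigma}=t^{-p}\varphi+\sigma t^{-q}\varphi^{J}$ of \eqref{f5zrefath1} satisfy the transfer-operator equation \eqref{f7refath1}, $F_{\sigma}=\sigma(-1)^{q+1}{\mathbf{T}}_{p+q+1}[F_{\sigma(-1)^{p+q}}]$, whose second iterate fixes $F_{\sigma}$. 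The contraction is then extracted measure-theoretically: the change-of-variables bound \eqref{f16refath1} together with Lemma~\ref{refalem1} (the estimate $\limlw_{N\to\infty}{\mathbf{T}}_{1}^{2N}[1](0)=0$) forces $F_{\sigma}=0$ a.e.\ on $[-1,1]$, whence $\varphi=0$. If you want to keep an induction-flavoured write-up, you would have to replace the multiplier step by something that simultaneously tracks both families of conditions, which is precisely what the $F_{\sigma}$/transfer-operator mechanism does.
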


In \cite{hed2}, the second and the third authors sharpened Theorem~\ref{intth0}
as follows:
\begin{align*}
     &   \varphi \! \in  \!L^{1} (\Bb{R}) \, , \
     \int\limits_{\Bb{R}} \! \!  {\rm{e}}^{{\fo{{\rm{i}} \pi n x}}}
    \varphi (x) {\rm{d}} x   \!= \!\int\limits_{\Bb{R}} \! \!  {\rm{e}}^{{\fo{{-\rm{i}}
     \pi n / x}}} \varphi (x) {\rm{d}} x \!= \!0 \, , \  n \!\geqslant \! 0
      \ \Rightarrow \  \varphi \! \in  \! {\rm{H}}^1_{+}(\Bb{R})\,.
\end{align*}

\noindent The analogue of this assertion for the biorthogonal system $\eurm{H}_{0}$, $\eurm{H}_{n}$, $\eurm{M}_{n} $, $n\in \Bb{Z}_{\neq 0}$, can be formulated as the following theorem.

\begin{theorem}\hspace{-0,2cm}{\bf{.}}\label{refath2} Let $ \varphi  \in L^{1} (\Bb{R})$.
Suppose that
\begin{align*}
      &    \int\limits_{\Bb{R}}  \varphi (t) \eurm{H}_{n} (t) {\rm{d}} t=
  \int\limits_{\Bb{R}}  \varphi (t) \eurm{M}_{n} (t) {\rm{d}} t=0 \ , \quad n\in \Bb{N} \,.
\end{align*}

\noindent Then  $\varphi \in  {\rm{H}}^1_{+}(\Bb{R})$.
\end{theorem}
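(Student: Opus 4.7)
The plan is to prove $\varphi \in {\rm{H}}^1_+(\Bb{R})$ by showing that the Cauchy transform
\begin{align*}
F(w) \,:=\, \int_{\Bb{R}} \dfrac{\varphi(t)\,\mathrm{d}t}{t-w}
\end{align*}
vanishes identically on the lower half-plane $-\Bb{H}$. The first step is to derive a ``dual'' version of the identity \eqref{f5fevagen}. Applying the substitutions $x\mapsto -x$ and then $z\mapsto -w$ in \eqref{f5fevagen}, together with the symmetries $\eurm{H}_{-n}(-x)=\eurm{H}_n(x)$ and $\eurm{M}_{-n}(-x)=\eurm{M}_n(x)$ (both immediate from \eqref{f2intth1} and \eqref{f3intth1}), I obtain
\begin{align*}
\dfrac{1}{2\pi\imag(x-w)^2} \,=\, \sum_{n\geq 1} \imag\pi n\,\eurm{H}_n(x)\,e^{-\imag\pi n w} + \dfrac{1}{w^2}\sum_{n\geq 1}\imag\pi n\,\eurm{M}_n(x)\,e^{\imag\pi n/w},
\end{align*}
valid for every $w\in -\Bb{H}$ and $x\in\Bb{R}$.

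The second step is term-by-term integration against $\varphi$. Fixing $w\in-\Bb{H}$ with $\im w = -b<0$, the pointwise estimates \eqref{f1intth1} together with $|e^{-\imag\pi n w}|=e^{-\pi n b}$ and $|e^{\imag\pi n/w}|=e^{-\pi n b/|w|^2}$ bound the $n$-th term of each series uniformly in $x$ by $C(w)\,n^3 e^{-\varepsilon(w) n}/(1+x^2)$ for some positive $\varepsilon(w),\,C(w)$. Since $\varphi\in L^1(\Bb{R})$, Fubini's theorem permits exchanging sum and integral, and by the hypotheses on $\varphi$ every coefficient $\int_\Bb{R}\varphi\,\eurm{H}_n$ and $\int_\Bb{R}\varphi\,\eurm{M}_n$ with $n\geq 1$ vanishes. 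Hence $F'(w) = \int_{\Bb{R}} \varphi(t)/(t-w)^2\,\mathrm{d}t = 0$ for all $w\in -\Bb{H}$.

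Consequently $F$ is constant on the connected set $-\Bb{H}$; the trivial bound $|F(w)|\leq \|\varphi\|_{L^1}/|\im w|$ forces $F(w)\to 0$ as $|w|\to\infty$, so $F\equiv 0$ on $-\Bb{H}$. To complete the proof I invoke the Paley--Wiener characterization of ${\rm{H}}^1_+(\Bb{R})$: writing $1/(t-w) = -\imag\int_0^\infty e^{\imag\xi(t-w)}\,\mathrm{d}\xi$ for $\im w<0$ and applying Fubini gives $F(w) = -\imag\int_0^\infty e^{-\imag\xi w}\,\hat\varphi(-\xi)\,\mathrm{d}\xi$, so $F\equiv 0$ on $-\Bb{H}$ forces $\hat\varphi(\xi)=0$ for every $\xi<0$ by Fourier uniqueness, which is precisely the condition $\varphi \in {\rm{H}}^1_+(\Bb{R})$. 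The main technical point to watch is the absolute convergence in the second step, particularly for the $\eurm{M}_n$-series, where the decay rate $e^{-\pi n b/|w|^2}$ deteriorates as $|w|\to\infty$; however, since we only need the identity at each \emph{fixed} $w$, this does not obstruct the argument.
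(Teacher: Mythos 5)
Your argument is correct and follows essentially the same route as the paper's proof: rewrite \eqref{f5fevagen} using the symmetries from \eqref{f2intth1} and \eqref{f3intth1}, integrate term by term against $\varphi$ (justified by the bounds \eqref{f1intth1}) so that the hypotheses annihilate the derivative of the Cauchy transform on one half-plane, and then conclude that the Cauchy transform vanishes identically and hence $\varphi \in {\rm{H}}^1_{+}(\Bb{R})$. The only cosmetic difference is the final step, where the paper cites the Cauchy-integral characterization of ${\rm{H}}^1_{+}(\Bb{R})$ directly (\cite[p.\! 88, 2(ii)]{gar}), while you pass through the Fourier-support condition, which is the equivalent item 2(iv) of the same reference.
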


\vspace{0.25cm}
\noindent
{\bf{Proof of Theorem~\ref{refath2}.}} We apply \eqref{f5fevagen} written in the form
\begin{align*}
      &   \dfrac{1}{2 \pi \imag  (x+z)^{2} }= \sum\limits_{n \geqslant 1}
   \imag  \pi n \,\eurm{H}_{n} (x)\, {\rm{e}}^{{\fo{{\rm{i}}\pi  n  z}} } + \dfrac{1}{z^{2}}\sum\limits_{n \geqslant 1}
   \imag  \pi n \,\eurm{M}_{n} (x)\, {\rm{e}}^{\fo{{- \dfrac{{\rm{i}}\pi  n }{z} }} }
     \ , \quad  x\in \Bb{R} \, , \  z\in \Bb{H}\,,
\end{align*}

\noindent
multiply it by the function $\varphi (x)$ and integrate over $x\in \Bb{R}$, to obtain from the given assumptions that
\begin{align*}
      &  0= \int\limits_{\Bb{R}} \dfrac{ \varphi (x ) {\rm{d}} x}{(x+z)^{2}} = \dfrac{{\rm{d}}}{{\rm{d}} z}\, F (z)  \, , \quad  F (z) := \int\limits_{\Bb{R}} \dfrac{ \varphi (x ) {\rm{d}} x}{x+z} \ , \qquad  z\in \Bb{H}\,.
\end{align*}

\noindent  It follows that the function $F$, which is holomorphic in $\Bb{H}$, must be constant.
Since $ \varphi  \in L^{1} (\Bb{R})$ automatically implies that $F (i t) \to 0$ as $t \to +\infty$, we find that
$F (z)=0$ for all $z\in \Bb{H}$. By the known characterizations of the space ${\rm{H}}^1_{+}(\Bb{R})$
(see \cite[p.\! 88, 2(ii)]{gar}), we obtain that  $\varphi \in  {\rm{H}}^1_{+}(\Bb{R})$. Theorem~\ref{refath2} follows. $\square$

\vspace{0.25cm}
\subsection[\hspace{-0,31cm}. \hspace{0,11cm}Proof of Theorem~\ref{refath1}.]{\hspace{-0,11cm}{\bf{.}} Proof of Theorem~\ref{refath1}.}\label{prtha}\
 We observe that an application of the finite difference
$\Delta^{r}[f](x)=\sum_{k=0}^{r}\binom{r}{k} (-1)^{r-k} f (x+k)$ over $n$ of order $r=N$ and $r=M$ to \eqref{f1refath1}(a) and to \eqref{f1refath1}(b), respectively, gives (see \cite[p.\! 29]{mil})
\begin{align*}
    &
    \begin{array}{l} {\rm{(a)}} \ \
\begin{displaystyle}
   \int\limits_{\Bb{R}} \varphi (t) {\rm{e}}^{{\fo{{\rm{i}} \pi n t }}} \dfrac{\left({\rm{e}}^{{\fo{{\rm{i}} \pi  t }}} -1\right)^{N}
  }{\vphantom{A^{A^{A}}}({\rm{i}} t)^{N-1+\delta_{N}}} \, {\rm{d}} t  =   0\,,
\end{displaystyle}
      \\[0,7cm]  {\rm{(b)}} \ \
  \begin{displaystyle}
\int\limits_{\Bb{R}} \varphi (t) {\rm{e}}^{{\fo{ \dfrac{{\rm{i}} \pi n}{t}}}}\left({\rm{e}}^{{\fo{ \dfrac{{\rm{i}} \pi }{t}}}} - 1\right)^{M} \left(\dfrac{t}{{\rm{i}}}\right)^{M-1+\delta_{M}} {\rm{d}} t=0\,,
\end{displaystyle}
    \end{array}
      \quad  n\in \Bb{Z} \,,
\end{align*}

\noindent or, in the notation $\varphi^{J} (t):= \varphi (-1/t)/t^{2}$, $t \in \Bb{R}_{\neq 0}$,
where $\varphi^{J}\in L^{1} (\Bb{R})$ by a change-of-variables, the conditions may be written as follows:
\begin{align*}
    &\hspace{-0,3cm}
      \int\limits_{\Bb{R}} \varphi (t) {\rm{e}}^{{\fo{{\rm{i}} \pi n t }}} \dfrac{\left({\rm{e}}^{{\fo{{\rm{i}} \pi  t }}} -1\right)^{N}
  }{\vphantom{A^{A^{A}}} t^{N-1+\delta_{N}}}\,  {\rm{d}} t  =
\int\limits_{\Bb{R}} \varphi^{J}  (t) {\rm{e}}^{{\fo{ {\rm{i}} \pi n t  }}}
 \dfrac{\left(1- {\rm{e}}^{{\fo{-{\rm{i}} \pi  t }}}\right)^{M}
  }{\vphantom{A^{A^{A}}} t^{M-1+\delta_{M}}} \, {\rm{d}} t=0\,,
      \ \   n\in \Bb{Z} \,.\hspace{-0,1cm}
\end{align*}

\noindent From this we obtain that
\begin{align}\label{f4refath1}
    &
      \begin{array}{l} {\rm{(a)}} \ \
 \dfrac{\varphi (t)}{ t^{N-1+\delta_{N}}}+
 \begin{displaystyle}
  \sum\limits_{k \in \Bb{Z}_{\neq 0}}
 \end{displaystyle}
 \dfrac{\varphi (t+2k)}{ (t+2k)^{N-1+\delta_{N}}} = 0 \, , \
      \\[0,7cm]  {\rm{(b)}} \ \
  \dfrac{\varphi^{J} (t)}{ t^{M-1+\delta_{M}}}+  \begin{displaystyle}
  \sum\limits_{k \in \Bb{Z}_{\neq 0}}
 \end{displaystyle} \dfrac{\varphi^{J} (t+2k)}{ (t+2k)^{M-1+\delta_{M}}} = 0 \, , \
    \end{array} \end{align}

\noindent for almost all $t \in [-1,1]$ because the system $\{\exp (i \pi n x)\}_{n \in \Bb{Z}}$ is complete
in $L^{\infty} ([-1,1])$. Since the right-hand side series in \eqref{f4refath1}  represent  integrable functions on
$[-1,1]$ we conclude that the  left-hand side functions in \eqref{f4refath1}, being obviously integrable on $\Bb{R}\setminus [-1,1]$,  are also integrable on
 $[-1,1]$, and hence
\begin{align*}
    &  t^{-p}\varphi (t)  \, , \   t^{-q}\varphi^{J} (t) \in
    L^{1} (\Bb{R}) \ , \ \ \   p\!:=\! N\!-\!1\!+\!\delta_{N}\!\geqslant\! 0 \, , \  q\!:= \!M\!-\!1\!+\!\delta_{M}\!\geqslant\! 0\,.
\end{align*}

\noindent If for arbitrary $\sigma \in \{1, -1\}$ we introduce
\begin{align}\label{f5zrefath1}
    &  F_{\sigma} (t):=  t^{-p}\varphi (t)+ \sigma t^{-q}\varphi^{J} (t) \ , \quad  t \in \Bb{R}_{\neq 0} \ , \quad   F_{\sigma} \in  L^{1} (\Bb{R})   \, , \
\end{align}

\noindent then
\begin{align*}
    &   F_{\sigma} (-1/t)=
(-1)^{p} t^{p+2}\varphi^{J} (t)+
(-1)^{q}     t^{q+2}    \sigma\varphi (t)  \, , \
\end{align*}

\noindent from which
\begin{align*}
    & F_{\sigma} (-1/t)/t^{p+q+2 }=(-1)^{p} t^{-q}\varphi^{J} (t)+
(-1)^{q}     t^{-p}    \sigma\varphi (t)=\sigma (-1)^{q} F_{\sigma(-1)^{p+q}}(t) \, ,
\end{align*}

\noindent and thus
\begin{align*}
    & \hspace{-0,3cm} F_{\sigma}(t)=\sigma (-1)^{p} F_{\sigma(-1)^{p+q}} (-1/t)/t^{p+q+2 } \, , \quad \sigma \in \{1, -1\}\,, \ t \in \Bb{R}_{\neq 0}\,. \hspace{-0,1cm}
\end{align*}

\noindent Then it follows from \eqref{f4refath1} that\vspace{-0,3cm}
\begin{align*}
    & - \! F_{\sigma}(t) \! =\!\! \sum\limits_{k \in \Bb{Z}_{\neq 0}}\!\!F_{\sigma}(2k\!+\!t) \!= \!\!
    \sum\limits_{k \in \Bb{Z}_{\neq 0}}\!\!F_{\sigma}(-2k\!+\!t ) \!= \!
   \sigma (-1)^{q}\! \sum\limits_{k \in \Bb{Z}_{\neq 0}}\!\!
   \dfrac{F_{\sigma(-1)^{p+q}} \left(\dfrac{1}{2k\!-\!t}\right)}{(2k\!-\!t)^{p+q+2 }}\ ,
\end{align*}

\noindent i.e.,
\begin{align}\label{f7refath1}
    &\hspace{-0,25cm}  F_{\sigma}(t) \! =\!  \sigma (-1)^{q+1} {\mathbf{T}}_{p+q +1} \left[F_{\sigma(-1)^{p+q}}\right] (t)
    \, , \ \  t \!\in\! [-1,1]\setminus\{0\} \,, \ \sigma\! \in \!\{1, -1\} \,,\hspace{-0,1cm}
\end{align}

\noindent where it can be easily seen that
   for arbitrary  $f \in L_1 \big([-1,1]\big)$ in the space $L_1 \big([-1,1]\big)$  there exists
   the limit\vspace{-0,3cm}
    \begin{align}\label{f8refath1}
        &   {\mathbf{T}}_{p+q +1} [f] (x) \ := \ \lim\limits_{n \to +\infty}  \sum_{\substack{{\fo{n=-N}}\\[0.05cm] {\fo{n\neq 0}}}}^{{\fo{N}}} \dfrac{ f \big({1}/{(2n- x)}\big)}{(2n-x)^{p+q +2}} \in L_1 \big([-1,1]\big) \,.
    \end{align}

\noindent In view of $|2k-x|\geqslant 1$, $k \in \Bb{Z}_{\neq 0}$, $x \in [-1,1]$, the operator \eqref{f8refath1}
for arbitrary Borel set $A \subset (-1,1)$  possesses the following essential property
\begin{align*}
    & \int\limits_{A}\left| {\mathbf{T}}_{p+q +1} [f] (x) \right|\,{\rm{d}} x \leqslant\!\!\! \sum\limits_{n \in \Bb{Z}_{\neq 0}}
  \int\limits_{A}  \dfrac{ \left|f \big({1}/{(2n- x)}\big)\right|}{|2n-x|^{p+q +2}}\,{\rm{d}} x\leqslant\!\!\!\sum\limits_{n \in \Bb{Z}_{\neq 0}}  \int\limits_{A}  \dfrac{ \left|f \big({1}/{(2n- x)}\big)\right|}{(2n-x)^{2}}\,{\rm{d}} x \\    &
 =\sum\limits_{n \in \Bb{Z}_{\neq 0}} \ \  \int\limits_{1/(2n-A)} \!\!\!\!\!\!|f (x)| \,{\rm{d}} x =
 \int\limits_{{\fo{\sqcup_{{\fo{\, n \in \Bb{Z}_{\neq 0}}}} \ 1/(2n-A)}}} \!\!\!\!\!\!|f (x)| \,{\rm{d}} x \, , \
\end{align*}

\noindent or, in the notation \eqref{f1bsden},
\begin{align}\label{f9refath1}
    & \hspace{-0,25cm} \int\limits_{A}\left| {\mathbf{T}}_{p+q +1} [f] (x) \right|\,{\rm{d}} x \leqslant \!\!\!\!\! \int\limits_{\omega_{1} (A)}\!\!\!\!\!|f (x)| \,{\rm{d}} x  \, , \  A \subset (-1,1)\,, \ \omega_{1} (A) :=
    \underset{n \in \Bb{Z}_{\neq 0}}{\sqcup} \phi_{n} (A),\hspace{-0,1cm}
\end{align}

\noindent because the sets $1/(2n-(-1,1))$, $n \in \Bb{Z}_{\neq 0}$, are disjoint.  Since $\omega_{1} (A) \subset (-1,1)$ we can successively apply  \eqref{f9refath1} to get
\begin{align}\label{f10refath1}
    &  \begin{array}{l}
    \begin{displaystyle}
      \int\limits_{A}\left| {\mathbf{T}}^{N}_{p+q +1} [f] (x) \right|\,{\rm{d}} x \leqslant \!\!\!\!\! \int\limits_{{\fo{\omega_{N} (A)}}}\!\!\!\!\!|f (x)| \,{\rm{d}} x  \, , \  A \subset (-1,1)\,,  \end{displaystyle}
      \\[0,7cm]
        \omega_{N} (A) :=   \underset{{\fo{n_{1}, ..., n_{N} \in \Bb{Z}_{\neq 0}}}}{\sqcup} \phi_{n_{1}, ..., n_{N}} (A)\, , \  \ N \in \Bb{N}\,,
       \end{array}
\end{align}

\noindent where for the Lebesgue measure $m$ on the real line and a  Borel set $A \subset (-1,1)$  after the similar manipulations  we obtain
\begin{align}\label{f11refath1}
    &  m \big(\omega_{N} (A)\big)=  \int\limits_{A} {\mathbf{T}}^{N}_{1} [1] (x) \,{\rm{d}} x \, , \qquad  A \subset (-1,1)\,,\  \ N \in \Bb{N}\,.
\end{align}

\noindent Here, in the notations \eqref{f1pf8contgen},  \eqref{f3pf8contgen} and \eqref{f0preresauxlem1},
for any $x\!\in\! [-1,1]$ we have
\begin{align}\label{f12refath1}
    & \hspace{-0,25cm} {\mathbf{T}}^{N}_{1} [1] (x) =\hspace{-0,5cm} \sum_{{\fo{n_{1}, n_{2}, ... n_{N}\! \in\! \Bb{Z}_{\neq 0}}}} \frac{1}{\left(p_{N}^{\eufm{n}}  x + q_{N}^{\eufm{n}}\right)^{2}} \ , \quad   \eufm{n}\!=\!(n_{N}, ..., n_{1})\!\in\! \Bb{Z}_{\neq 0}^{N} \, , \ N \geqslant 1\,,\hspace{-0,15cm}
\end{align}

\noindent because, in view of $\phi^{\,\prime}_{{\fo{n_{1}, n_{2}, ... n_{N-1}}}}(\phi_{n_{N}} (x))\phi^{\, \prime}_{n_{N}} (x)= \phi^{\,\prime}_{{\fo{n_{1}, n_{2}, ... n_{N}}}}(x)$, $N \geqslant 2\, $,
\begin{align*}
    &  {\mathbf{T}}^{N}_{1} [1] (x) =\hspace{-0,5cm} \sum_{{\fo{n_{1}, n_{2}, ... n_{N}\! \in\! \Bb{Z}_{\neq 0}}}} \hspace{-0,75cm}    \phi^{\,\prime}_{{\fo{n_{1}, n_{2}, ... n_{N}}}}(x) \, , \quad \phi_{{\fo{n_{1}, n_{2}, ... n_{N}}}}(x) =
\dfrac{z p^{\eufm{n}}_{N-1} + q^{\eufm{n}}_{N-1}}{z p^{\eufm{n}}_{N} + q^{\eufm{n}}_{N}}\, , \ N \geqslant 1 \, .
\end{align*}

\noindent But according to \eqref{f7pf8contgen}, $\left| p^{\eufm{n}}_{N}\right| < \left| q^{\eufm{n}}_{N}\right|$, and hence, we obtain from
\begin{align*}
      &   \left|p_{N}^{\eufm{n}}  x + q_{N}^{\eufm{n}}\right| \geqslant  \left|q_{N}^{\eufm{n}}\right| - \left|x\right| \left|p_{N}^{\eufm{n}}\right| \geqslant (1-|x|) |q_{N}^{\eufm{n}}|
\end{align*}

\noindent
and \eqref{f12refath1} that
\begin{align*}
    &  {\bf{T}}_{2}^{N} [1] (x) \leqslant \dfrac{{\bf{T}}_{2}^{N} [1] (0)}{(1-|x|)^{2}}  \ , \quad  x \in [-1,1] \, ,
\end{align*}

\noindent and, consequently, \eqref{f11refath1} implies that
\begin{align}\label{f14refath1}
    &   m \big(\omega_{N} ([\alpha, \beta])\big)\leqslant  \dfrac{2 {\bf{T}}_{2}^{N} [1] (0)}{
    \min \left\{(1+ \alpha)^{2}, \left(1-\beta\right)^{2}\right\}    } \, , \ \
    -1\! <\! \alpha\! < \!\beta\! <\! 1\,,\  \ N\! \in\! \Bb{N}\,.
\end{align}

\noindent Iterating \eqref{f7refath1} gives
\begin{align*}
    &\hspace{-0,25cm}  F_{\sigma}(t) \! =\!   {\mathbf{T}}^{2N}_{p+q +1} \left[F_{\sigma}\right] (t)
    \, , \ \  t \!\in\! [-1,1]\setminus\{0\} \,, \ \sigma\! \in \!\{1, -1\} \,, N \in \Bb{N}\,,\hspace{-0,1cm}
\end{align*}

\noindent which together with   \eqref{f10refath1} lead us to the conclusion that
\begin{align}\label{f16refath1}
    &  \hspace{-0,3cm} \int\limits_{\alpha}^{\beta}\left| F_{\sigma} (x) \right|\,{\rm{d}} x \leqslant \hspace{-0,25cm}\int\limits_{{\fo{\omega_{2 N} ([\alpha, \beta])}}} \hspace{-0,45cm}|F_{\sigma} (x)| \,{\rm{d}} x
    \,, \ \sigma\! \in \!\{1, -1\}     \, , \ \
        -1\! <\! \alpha\! < \!\beta\! <\! 1\,,\  \ N\! \in\! \Bb{N}\,.
\hspace{-0,1cm} \end{align}

\noindent But in view of \eqref{f1arefalem1} below and \eqref{f14refath1} we have $\limlw\nolimits_{N \to \infty} m \big(\omega_{2 N} ([\alpha, \beta])\big) =0$, which by $F_{\sigma} \in  L^{1} ([-1,1])$ and \eqref{f16refath1} yields that
  $ F_{\sigma} (x) = 0$ for almost all $x\in [-1,1]$ and  for any $\sigma\! \in \!\{1, -1\}$.
 Definition  \eqref{f5zrefath1} of the functions $F_{\sigma}$ gives that $\varphi (x)=0$ and $\varphi (-1/x)=0$
 for almost all $x\in [-1,1]$. Thus,  $\varphi (x)=0$  for almost all $x\in \Bb{R}$ and Theorem~\ref{refath1} follows. $\square$

\vspace{0.25cm}
We supply the following auxiliary lemma which was referred to in the proof of Theorem~\ref{refath1}.
\begin{lemsectten}\hspace{-0,2cm}{\bf{.}}\label{refalem1}
Let ${\bf{T}}_{1}$ be defined as in \eqref{f1bsfir}. Then
 \begin{align}\label{f1arefalem1}
    &   \limlw\limits_{N \to \infty} {\bf{T}}_{1}^{2 N} [1] (0) = 0 \ .
 \end{align}
 \end{lemsectten}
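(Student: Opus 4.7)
The plan is to reduce the lemma, via the biorthogonality identity \eqref{f2bsfir}, to a pointwise decay property of $\mathbf{T}_1^M[H_0]$ at $x=0$, and then to invoke the conservative infinite-measure ergodic structure of the even Gauss map $G_2(x)=\{-1/x\}_2$, whose Perron--Frobenius operator is precisely $\mathbf{T}_1$.

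First I would exploit the relation $\mathbf{T}_1[2H_0]+2H_0=1$ from \eqref{f2bsfir}. Applying $\mathbf{T}_1^N$ to both sides gives the pointwise identity $\mathbf{T}_1^N[1]=\mathbf{T}_1^N[2H_0]+\mathbf{T}_1^{N+1}[2H_0]$, and evaluating at $x=0$ yields
\begin{equation*}
\mathbf{T}_1^{2N}[1](0)\;=\;\mathbf{T}_1^{2N}[2H_0](0)+\mathbf{T}_1^{2N+1}[2H_0](0),\qquad N\geqslant 0.
\end{equation*}
Both summands on the right are nonnegative (since $H_0\geqslant 0$ and $\mathbf{T}_1$ preserves positivity), so it suffices to prove the pointwise decay $\mathbf{T}_1^{M}[H_0](0)\to 0$ as $M\to\infty$; this will in fact give the stronger conclusion $\lim_{N\to\infty}\mathbf{T}_1^{2N}[1](0)=0$.

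Second, to establish the decay $\mathbf{T}_1^M[H_0](0)\to 0$, I would invoke the conservative ergodic structure of $G_2$. A direct telescoping by partial fractions shows that $h(x)=1/(1-x^2)$ satisfies $\mathbf{T}_1[h]=h$ pointwise, but $h\notin L^1([-1,1],dx)$. Hence $G_2$ preserves the infinite $\sigma$-finite measure $h\,dx$ and has indifferent fixed points at $\pm 1$ (since $G_2(\pm 1)=\pm 1$ and $G_2'(\pm 1)=1$), and the cone of nonnegative $\mathbf{T}_1$-invariant functions consists of multiples of $h$, none of which (except $0$) is in $L^1$. Ergodicity of $(G_2,h\,dx)$ is standard and can be verified by inducing on a compact set $K\Subset(-1,1)$ where the first-return map is uniformly expanding and piecewise smooth Markov of Gauss--Kuzmin type, hence admits a unique absolutely continuous invariant probability measure with spectral gap. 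Thaler's theorem on piecewise monotone Markov interval maps with indifferent fixed points then delivers $\mathbf{T}_1^M[f](x)\to 0$ uniformly on compact subsets of $(-1,1)$ for every $f\in L^1([-1,1],dx)$; applied with $f=H_0$ and $x=0$, this gives the required decay.

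The main obstacle is precisely this ergodic decay step, which relies on nontrivial infinite ergodic theory (the absence of a finite invariant density forces one out of the classical Gauss--Kuzmin--Wirsing setting and into the Thaler--Aaronson framework). A fully self-contained alternative would have to derive the decay directly from the recursion $q_k=2n_k q_{k-1}-q_{k-2}$ for the even-integer continued-fraction denominators, by decomposing $\mathbf{T}_1^{2N}[1](0)=\sum_{\eufm{n}\in\Bb{Z}_{\neq 0}^{2N}}|q_{2N}^{\eufm{n}}|^{-2}$ into a contribution from tuples with $|q_{2N}^{\eufm{n}}|\leqslant R$ and a tail; controlling the first is delicate because along the exceptional sequences with $|n_k|=1$ for many $k$ the denominators grow only linearly ($q_k=k+1$ when all $n_k=1$), which would require refined combinatorial counting of tuples producing a given denominator size.
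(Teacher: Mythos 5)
Your reduction step is fine (and the nonnegativity of $\eurm{H}_{0}$ that you invoke is both unproved and unnecessary: once $\mathbf{T}_{1}^{M}[2\eurm{H}_{0}](0)\to 0$ you get $\mathbf{T}_{1}^{2N}[1](0)=\mathbf{T}_{1}^{2N}[2\eurm{H}_{0}](0)+\mathbf{T}_{1}^{2N+1}[2\eurm{H}_{0}](0)\to 0$ regardless of signs). The genuine gap is the decay step itself, which is the whole content of the lemma and which you do not prove: you delegate it to ergodicity of the even Gauss map with respect to the infinite invariant measure $dx/(1-x^{2})$ together with ``Thaler's theorem''. Neither hypothesis is verified. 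The map $G_{2}$ has countably many branches accumulating at $0$ and indifferent behaviour at the endpoints $\pm 1$, so one must check that it falls into an AFN-type class for which Thaler's local-uniform asymptotics apply, and one must actually establish conservativity/ergodicity (your ``inducing on a compact set'' is a one-sentence sketch, and you yourself flag this as the main obstacle). Moreover the statement you quote is too strong: local uniform convergence $\mathbf{T}_{1}^{M}[f]\to 0$ on compacta does not hold for arbitrary $f\in L^{1}([-1,1])$; Thaler-type results require a regularity class (e.g.\ $f/h$ Riemann integrable or of bounded variation). For $f=\eurm{H}_{0}$ (or indeed $f=1$, which makes your $\eurm{H}_{0}$-reduction superfluous even within your own scheme) this could be arranged, but as written the proof consists of a reduction plus an unverified citation, and the paper explicitly sets itself the task of avoiding exactly this ergodic-theoretic machinery.

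For comparison, the paper's argument is elementary and self-contained: it first shows $\limlw_{N}\mathbf{T}_{1}^{N}[1](0)=0$ by contradiction, using that $\mathbf{T}_{1}^{N}[1]$ is even and nondecreasing on $[0,1]$, the pointwise recursion bound $\mathbf{T}_{1}^{N}[1](x)\geqslant \varepsilon/9+(2-x)^{-2}\mathbf{T}_{1}^{N-1}[1](x)$, iteration of this bound, and the normalization $\int_{-1}^{1}\mathbf{T}_{1}^{N}[1](x)\,\mathrm{d}x=2$ coming from the measure identity \eqref{f11refath1}; the resulting lower bound contains a divergent harmonic-type sum, giving the contradiction. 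It then converts the subsequence into even indices via the explicit one-step inequality $\mathbf{T}_{1}^{N}[1](0)\leqslant(\pi^{2}/4-1)\,\mathbf{T}_{1}^{N-1}[1](0)$, obtained from the continued-fraction denominator recursion \eqref{f12refath1} and the summation $\sum_{n\neq 0}(2n+y)^{-2}$. If you want to keep your route, you would need to supply (or precisely cite, with hypotheses checked for this infinite-branch map) the conservative-ergodic and Thaler-type ingredients for $(G_{2},\,dx/(1-x^{2}))$; otherwise the elementary recursion/normalization argument is the way to close the gap.
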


\noindent
{\bf{Proof of Lemma~\ref{refalem1}.}} We first obtain  that
\begin{align}\label{f1refalem1}
    &   \limlw\limits_{N \to \infty} {\bf{T}}_{1}^{N} [1] (0) = 0 \ .
 \end{align}

\noindent
 Assume that there exists $\varepsilon > 0$ such that
\begin{align*}
    & {\bf{T}}_{1}^{N} [1] (0) \geqslant \varepsilon > 0  \ \quad  \ \mbox{for all} \ N \geqslant 1 \ .
\end{align*}

\noindent Since ${\bf{T}}_{1}^{N} [1] (x)$ is  nondecreasing on $ [0,1]$
(see \cite[p.\! 1715, Proposition 3.7.2]{hed1}) then
\begin{align*}
    & {\bf{T}}_{1}^{N} [1] (x) \geqslant \varepsilon > 0  \ \quad  \ \mbox{for all} \ N \geqslant 1 \, , \  \ x \in [0,1] \ .
\end{align*}

\noindent For arbitrary $x \in [0,1]$ and $N \geqslant 1$  we obviously have
\begin{align}\nonumber
      {\bf{T}}_{1}^{N} [1] (x) &=  \sum_{n \geqslant  1} \left[\frac{ {\bf{T}}_{1}^{N-1} [1]\left(\dfrac{1}{2 n- x }\right) }{( 2 n- x)^{2}} +
 \frac{{\bf{T}}_{1}^{N-1} [1] \left(\dfrac{1}{2 n+ x }\right) }{( 2 n+ x)^{2}}\right]  \\    &   >
\frac{ {\bf{T}}_{1}^{N-1} [1]\left(\dfrac{1}{2 - x }\right) }{( 2 - x)^{2}} +
 \frac{{\bf{T}}_{1}^{N-1} [1] \left(\dfrac{1}{2 + x }\right) }{( 2 + x)^{2}} \ .
\label{f2refalem1}\end{align}

\noindent
 and, in view of the inequality
\begin{align*}
    &  \dfrac{1}{2-x} \geqslant x  \ \Leftrightarrow \  1 \geqslant 2 x - x^{2}  \ \Leftrightarrow \  (x-1)^{2} \geqslant 0 \, , \quad x \in [0,1]\,,
\end{align*}

\noindent and of nondecreasing property of ${\bf{T}}_{1}^{N-1} [1] (x)$ on $ [0,1]$, we get
\begin{align*}
    &  {\bf{T}}_{1}^{N-1} [1]\left(\dfrac{1}{2 - x }\right) \geqslant {\bf{T}}_{1}^{N-1} [1](x)\, , \quad x \in [0,1] \ .
\end{align*}

\noindent As a consequence, we derive from \eqref{f2refalem1} that
\begin{align*}
    &\hspace{-0,3cm}  {\bf{T}}_{1}^{N} [1] (x) \geqslant \dfrac{{\bf{T}}_{1}^{N-1} [1](x)}{( 2 - x)^{2}}  + \frac{{\bf{T}}_{1}^{N-1} [1] \left(\dfrac{1}{2 + x }\right) }{( 2 + x)^{2}} \geqslant  \dfrac{{\bf{T}}_{1}^{N-1} [1](x)}{( 2 - x)^{2}}  + \dfrac{\varepsilon}{9} \ , \ \ \  N \geqslant 1 \, .
\end{align*}

\noindent Iterating this procedure for $N \geqslant 4$, we obtain
\begin{align*}
    &   {\bf{T}}_{1}^{N} [1] (x) \geqslant \dfrac{\varepsilon}{9} + \dfrac{1}{( 2 - x)^{2}} {\bf{T}}_{1}^{N-1} [1](x)   \geqslant \dfrac{\varepsilon}{9} + \dfrac{1}{( 2 - x)^{2}} \left[\dfrac{\varepsilon}{9} +  \dfrac{1}{( 2 - x)^{2}} {\bf{T}}_{1}^{N-2} [1](x)\right]  \\ & =
\dfrac{\varepsilon}{9}\left(1+  \dfrac{1}{( 2 - x)^{2}}\right) + \dfrac{1}{( 2 - x)^{4}} {\bf{T}}_{1}^{N-2} [1](x) \\    &   \geqslant \dfrac{\varepsilon}{9}\left(1+  \dfrac{1}{( 2 - x)^{2}}\right) + \dfrac{1}{( 2 - x)^{4}} \left[\dfrac{\varepsilon}{9} + \dfrac{1}{( 2 - x)^{2}} {\bf{T}}_{1}^{N-3} [1](x)\right]  \\ & =
\dfrac{\varepsilon}{9}\left(1+  \dfrac{1}{( 2 - x)^{2}} + \dfrac{1}{( 2 - x)^{4}}\right) + \dfrac{1}{( 2 - x)^{6}} {\bf{T}}_{1}^{N-3} [1](x)\geqslant \ldots  \\    &   \geqslant \dfrac{\varepsilon}{9}\left(1+  \dfrac{1}{( 2 - x)^{2}} + \dfrac{1}{( 2 - x)^{4}}+ \ldots + \dfrac{1}{( 2 - x)^{2 (m-1)}}\right) + \dfrac{1}{( 2 - x)^{2m}} {\bf{T}}_{1}^{N-m} [1](x)\\    & \geqslant \ldots\geqslant   \dfrac{\varepsilon}{9} \sum_{k=0}^{N-1} \dfrac{1}{( 2 - x)^{2k}} +  \dfrac{1}{( 2 - x)^{2N}} \ ,
\end{align*}

\noindent i.e.,
\begin{align*}
    &   {\bf{T}}_{1}^{N} [1] (x) \geqslant \dfrac{\varepsilon}{9} \sum_{k=0}^{N-1} \dfrac{1}{( 2 - x)^{2k}} +  \dfrac{1}{( 2 - x)^{2N}} \ , \qquad x\in [0,1] \, .
\end{align*}

\noindent Hence,
\begin{align}\nonumber
     \int_{0}^{1} {\bf{T}}_{1}^{N} [1] (x) \,{\rm{d}} x &\geqslant \dfrac{\varepsilon}{9} \sum_{k=0}^{N-1} \int_{0}^{1}\dfrac{{\rm{d}} x}{( 2 - x)^{2k}} +  \int_{0}^{1}\dfrac{{\rm{d}} x}{( 2 - x)^{2N}} \\    &    = \dfrac{\varepsilon}{9} \left( 1 + \sum_{k=1}^{N-1}\dfrac{1- \dfrac{1}{2^{2k-1}}}{2k-1}\right) +
\dfrac{1- \dfrac{1}{2^{2N-1}}}{2N-1} \ .
\label{f3refalem1}\end{align}

\noindent In view of \eqref{f10refath1}, for any $N \geqslant 1$
the set  $\omega_{N} ((-1,1))$ contains at least all irrational point
of $(-1,1)$. Therefore  its Lebesgue measure is equal to $2$.  Then, by virtue of \eqref{f11refath1},
\begin{align*}
    &  \int_{-1}^{1} {\bf{T}}_{1}^{N} [1] (x) \,{\rm{d}} x =m \Big(
    \omega_{N} \big((-1,1)\big)\Big) = 2 \, , \quad N \geqslant 1 \ , \
\end{align*}

\noindent and since ${\bf{T}}_{1}^{N} [1] (x)$ is  even on $ [0,1]$
(see \cite[p.\! 1715, Proposition 3.7.2]{hed1}) we derive from \eqref{f3refalem1} that
\begin{align*}
     1 &= \dfrac{1}{2}\int_{-1}^{1} {\bf{T}}_{1}^{N} [1] (x) \,{\rm{d}} x = \int_{0}^{1} {\bf{T}}_{1}^{N} [1] (x) \,{\rm{d}} x  \\  & \geqslant \dfrac{\varepsilon}{9}
\sum_{k=1}^{N-1}\dfrac{1}{2k-1} - \dfrac{\varepsilon}{9}\sum_{k=1}^{N-1} \dfrac{1}{2^{2k-1}}\cdot \dfrac{1}{2k-1} \geqslant   -\dfrac{\varepsilon}{9} + \dfrac{\varepsilon}{9}\sum_{k=1}^{N-1}\dfrac{1}{2k-1} \to + \infty \ ,
\end{align*}

\noindent as $N \to \infty$. This contradiction proves \eqref{f1refalem1}.

\vspace{0.15cm} Next, to establish \eqref{f1arefalem1}, we appeal  to the formula
\eqref{f12refath1} with $N\geqslant 2$. In view of \eqref{f2preresauxlem1a}, for arbitrary $x \in (-1,1)$ we have
\begin{align}\label{f4refalem1}  & \hspace{-0,25cm}
\left\{
\begin{array}{ll}
   y_{k}= 2 n_{k} y_{k-1}-y_{k-2} \, , \   \  1 \leqslant k \leqslant N \,;   &
   y_{k} := p_{k}^{\eufm{n}}  x + q_{k}^{\eufm{n}}  \ ,  \ \   -1 \leqslant k \leqslant N \, ;    \\
 a:=  y_{-1} =p_{-1}^{\eufm{n}}  x + q_{-1}^{\eufm{n}}=x \in (-1,1) \, ,   &    b:= y_{0} =p_{0}^{\eufm{n}}  x + q_{0}^{\eufm{n}}=1 \,,
\end{array}
\right. \hspace{-0,1cm}
\end{align}

\noindent where $ \eufm{n}\!=\!(n_{N}, ..., n_{1})\!\in\! \Bb{Z}_{\neq 0}^{N}$.
These relationships mean that we can apply conclusion \eqref{f2preresauxprlem2} to the finite collection of numbers $\{y_{k}\}_{k = -1}^{Q}$ with $Q=N$, according to which,
\begin{align*}
     &   1 = \left|p_{0}^{\eufm{n}}  x + q_{0}^{\eufm{n}}\right| <
     \left|p_{1}^{\eufm{n}}  x + q_{1}^{\eufm{n}}\right|<\ldots<\left|p_{N}^{\eufm{n}}  x + q_{N}^{\eufm{n}}\right| \ , \quad  x \in (-1,1)\,.
\end{align*}

\noindent In particular,
\begin{align*}
      & \hspace{-0,25cm}  \dfrac{p_{N-2}^{\eufm{n}}  x + q_{N-2}^{\eufm{n}}}{p_{N-1}^{\eufm{n}}  x + q_{N-1}^{\eufm{n}}} \in (-1,1) \, , \  x \in (-1,1) \, , \  \eufm{n}\!=\!(n_{N}, ..., n_{1})\!\in\! \Bb{Z}_{\neq 0}^{N} \, , \ N\geqslant 2\,.\hspace{-0,1cm}
\end{align*}

\noindent Then we derive from \eqref{f12refath1} and \eqref{f4refalem1} that
\begin{align*}
      & {\mathbf{T}}^{N}_{1} [1] (x) =\hspace{-0,35cm} \sum_{{\fo{n_{1}, n_{2}, ... n_{N}\! \in\! \Bb{Z}_{\neq 0}}}}\hspace{-0,15cm} \left(p_{N}^{\eufm{n}}  x + q_{N}^{\eufm{n}}\right)^{-2}
       =\hspace{-0,35cm} \sum_{{\fo{n_{1}, n_{2}, ... n_{N-1}\! \in\! \Bb{Z}_{\neq 0}}}}\hspace{-0,15cm} \left(p_{N-1}^{\eufm{n}}  x + q_{N-1}^{\eufm{n}}\right)^{-2}
 \times  \\    & \times   \sum_{{\fo{ n_{N}\! \in\! \Bb{Z}_{\neq 0}}}}
  \left(2n_{N}-\dfrac{p_{N-2}^{\eufm{n}}  x + q_{N-2}^{\eufm{n}}}{p_{N-1}^{\eufm{n}}  x + q_{N-1}^{\eufm{n}}}\right)^{-2}\leqslant {\mathbf{T}}^{N-1}_{1} [1] (x)\max_{{\fo{y\in [-1, 1]}}}\sum_{{\fo{n\in\! \Bb{Z}_{\neq 0}}}}\frac{1}{(2n + y)^{2}} \\    &
 = {\mathbf{T}}^{N-1}_{1} [1] (x)\max_{{\fo{y\in [-1, 1]}}}
  \left(\frac{\pi^{2}}{4\sin^{2} (\pi y/2)} - \frac{1}{y^{2}}\right)=
\left(\frac{\pi^{2}}{4} -1\right){\mathbf{T}}^{N-1}_{1} [1] (x)
\,,
\end{align*}

\noindent for all $x \in (-1,1) $. Hence,
\begin{align}\label{f7refalem1}
      &   0 < {\mathbf{T}}^{N}_{1} [1] (0)\leqslant\left(\frac{\pi^{2}}{4} -1\right){\mathbf{T}}^{N-1}_{1} [1] (0) \, , \  N\geqslant 2\,.
\end{align}

\noindent In view of \eqref{f1refalem1}, there exists an increasing  sequence of positive integers $\{N_{k}\}_{k\geqslant 1}$ such that ${\bf{T}}_{1}^{N_{k}} [1] (0) \to 0$ as $k\to+\infty$. By keeping each even $N_{k}$ and replacing each odd $N_{k}$ by $N_{k}+1$ we obtain a new sequence
of even integers $\{2 n_{k}\}_{k\geqslant 1}$ with the property that ${\bf{T}}_{1}^{2 n_{k}} [1] (0) \to 0$ as $k\to+\infty$, as follows from \eqref{f7refalem1}. This obtains \eqref{f1arefalem1} and establishes the validity of Lemma~\ref{refalem1}. $\square$

\newpage

\section[\hspace{-0,15cm}. \hspace{-0,02cm}Proofs]{\hspace{-0,095cm}{\bf{.}} Proofs}\label{prgen}

\subsection[\hspace{-0,15cm}. \hspace{-0,04cm}Proofs for Section~\ref{inttrian}]{\hspace{-0,11cm}{\bf{.}} Proofs for Section~\ref{inttrian}}

\subsubsection[\hspace{-0,2cm}. Proofs of Lemma~\ref{inttrianlem1} and \eqref{f0apinttheor1}(b).]{\hspace{-0,11cm}{\bf{.}} Proofs of Lemma~\ref{inttrianlem1} and \eqref{f0apinttheor1}(b).}\vspace{-0,25cm}
\label{pinttrianlem1} \ The statement of Lemma~\ref{inttrianlem1} is immediate from the relationship
\begin{align*}
    &   \big| \Arg  (1\! -\! tz)\! -\!  \Arg  (1\! -\! t\!  +\!
      t z)\big| \! <\!  \pi  \, ,
\end{align*}

\noindent  proved in \cite[p.\! 608, (3.15)]{bh2} and from the identities
\begin{align*}
    &  |1\!-\!t\!+\!tz|^{2}\!-\!|1\!-\!tz|^{2} =t(2x\!-\!1)(2\!-\!t) \, , \ \
\im\,\dfrac{1\!-\!tz}{1\!-\!t\!+\!tz}\!=\!- \dfrac{y t(2\!-\!t)}{(1\!-t\!+\!tx)^{2} \!+ \!t^{2}y^{2}}\ ,
\end{align*}

\noindent where $t \in (0,1)$ and  $z=x+{\rm{i}} y\!\in \! (0,1)\cup \left(\Bb{C}\setminus\Bb{R}\right)$.

\vspace{0.25cm}
  To prove \eqref{f0apinttheor1}(b) we use \eqref{f19int}, \eqref{f2int} and \eqref{f3cint}, to obtain, for any $t > 0$,
\begin{align*}
    &  \frac{d}{d t} {\rm{e}}^{{\fo{  \pi t }}} \lambda ({\rm{i}}t) =
 {\rm{e}}^{{\fo{  \pi t }}}\left( \pi \lambda ({\rm{i}}t) + {\rm{i}} \lambda^{\,\prime}  ({\rm{i}}t)\right)=
 {\rm{e}}^{{\fo{  \pi t }}}\left(  \pi \lambda ({\rm{i}}t) + {\rm{i}} \lambda^{\,\prime}  ({\rm{i}}t)\right)  \\    &   =
 {\rm{e}}^{{\fo{  \pi t }}}
\left(\pi \lambda ({\rm{i}}t) \!-\! \pi \,\lambda ({\rm{i}}t) \,\left(1 \!-\!\lambda ({\rm{i}}t)\right)\,\Theta_{3}\left({\rm{i}}t\right)^{4}\right)=
\pi \lambda ({\rm{i}}t)\, {\rm{e}}^{{\fo{  \pi t }}}\left(1\!- \!\left(1\! -\!\lambda ({\rm{i}}t)\right)\,\Theta_{3}\left({\rm{i}}t\right)^{4}\right)  \\    &   =
\pi \lambda ({\rm{i}}t)\,  {\rm{e}}^{{\fo{  \pi t }}}\left(1- \Theta_{4}\left({\rm{i}}t\right)^{4}\right) \, , \
\end{align*}

\noindent from which
\begin{align*}
    & \frac{d}{d t} {\rm{e}}^{{\fo{  \pi t }}} \lambda ({\rm{i}}t) =\frac{d}{d t} \dfrac{16 \,
 \theta_{2}\left({\rm{e}}^{-  \pi t}\right)^{4}}{\theta_{3}\left({\rm{e}}^{-  \pi t}\right)^{4}} =
  \dfrac{16 \pi\,
 \theta_{2}\left({\rm{e}}^{-  \pi t}\right)^{4}\left(1-\theta_{4}
\left({\rm{e}}^{-  \pi t}\right)^{4}\right)}{\theta_{3}\left({\rm{e}}^{-  \pi t}\right)^{4}} > 0 \ , \quad  t> 0,
\end{align*}

\noindent because
\begin{align*}
    & \theta_{4} \left({\rm{e}}^{-  \pi t}\right) \!  =\!
1\!-
2\sum\limits_{k\geqslant 0} \left({\rm{e}}^{-  \pi (2k+1)^2 t}
\!-\! {\rm{e}}^{-  \pi (2k+2)^2 t}\right) < 1 \ , \quad  t> 0.
\end{align*}

\noindent Inequalities \eqref{f0apinttheor1}(b) is now immediate from $\lim_{0 < t \to 0} {\rm{e}}^{{\fo{  \pi t }}} \lambda ({\rm{i}}t) =1$ and from
\begin{align*}
    &  \lim_{t \to+\infty} {\rm{e}}^{{\fo{  \pi t }}} \lambda ({\rm{i}}t) =\lim_{t \to+\infty}  \dfrac{16\,
 \theta_{2}\left({\rm{e}}^{-  \pi t}\right)^{4}}{\theta_{3}\left({\rm{e}}^{-  \pi t}\right)^{4}} =16\,.
\end{align*}

\vspace{0.25cm}
\subsubsection[\hspace{-0,2cm}. Proofs of \eqref{f4int} and \eqref{f7int}.]{\hspace{-0,11cm}{\bf{.}} Proofs of \eqref{f4int} and \eqref{f7int}.}
\label{pf4aint}  \ For the integrand in the formula for $\eurm{H}_{0}$ we have
 \begin{align*}
    & {\rm{I}}_{0} := \dfrac{\he  (1/2 - {\rm{i}} t)}{  \big(t^{2}+1/4\big)  \he  (1/2 + {\rm{i}} t) \Big(
x^{2}\he  (1/2 + {\rm{i}} t)^{2}+ \he  (1/2 - {\rm{i}} t)^{2} \Big)} \\    &   =
  \dfrac{\dfrac{\he  (1/2 - {\rm{i}} t)}{ \he  (1/2 + {\rm{i}} t)}}{  \big(1/2 + {\rm{i}} t\big)\big(1/2 - {\rm{i}} t\big)  \he  (1/2 + {\rm{i}} t)^{2} \left(
x^{2}+\dfrac{\he  (1/2 - {\rm{i}} t)^{2}}{\he  (1/2 + {\rm{i}} t)^{2}}  \right)}  \,,
 \end{align*}

\noindent where by \eqref{f1intthA} and \eqref{f3int},
 written for $z= 1/2 + {\rm{i}} t \in (0,1)\cup \left(\Bb{C}\setminus\Bb{R}\right)$, $t \in \Bb{R}$,
\begin{align}\label{f1pf4aint}
    &    \frac{\het (1/2 - {\rm{i}} t)}{\het (1/2 + {\rm{i}} t)} =
    \dfrac{\ie (1/2 + {\rm{i}} t)}{\imag}
      \, , \\    &    \label{f2pf4aint}
       \frac{\het (1/2 - {\rm{i}} t)^{2}}{\het (1/2 + {\rm{i}} t)^{2}}=  \, -   \ie (1/2 + {\rm{i}} t)^{2}  \, , \\    &
     \dfrac{1}{\big(1/2 + {\rm{i}} t\big)\big(1/2 - {\rm{i}} t\big)  \he  (1/2 + {\rm{i}} t)^{2} } =  \imag  \, \pi\, \ie^{\,\prime} (1/2 + {\rm{i}} t)  \, , \
\label{f3pf4aint}\end{align}

\noindent we obtain
\begin{align*}
    &   {\rm{I}}_{0} =  \  \pi\,  \frac{\ie (1/2 + {\rm{i}} t)\ie^{\,\prime} (1/2 + {\rm{i}} t) \, \he  (1/2 + {\rm{i}} t)}{ x^{2}- \ie (1/2 + {\rm{i}} t)^{2}  } \ .
\end{align*}

\noindent Hence,
\begin{align*}
      \eurm{H}_{0} (x)& =\dfrac{1}{2 \pi^{2 }}\int\limits_{ - \infty}^{ +  \infty}
 \frac{\he  (1/2 - {\rm{i}} t) {\rm{d}} t}{  \big(t^{2}+1/4\big)  \he  (1/2 + {\rm{i}} t) \Big(
x^{2}\he  (1/2 + {\rm{i}} t)^{2}+ \he  (1/2 - {\rm{i}} t)^{2} \Big)}  \\    &   =
\dfrac{1}{2 \pi}\int\limits_{ - \infty}^{ +  \infty} \frac{\ie (1/2 + {\rm{i}} t)\ie^{\,\prime} (1/2 + {\rm{i}} t) \, \he  (1/2 + {\rm{i}} t) {\rm{d}} t }{ x^{2}- \ie (1/2 + {\rm{i}} t)^{2}  }\\    &   =
\dfrac{1}{2 \pi \imag}\int\limits_{ - \infty}^{ +  \infty} \frac{\ie (1/2 + {\rm{i}} t)\ie^{\,\prime} (1/2 + {\rm{i}} t) \, \he  (1/2 + {\rm{i}} t) {\rm{d}} (1/2 + {\rm{i}} t) }{ x^{2}- \ie (1/2 + {\rm{i}} t)^{2}  }\\    &   =
\dfrac{1}{2 \pi \imag}
\int\limits_{ 1/2 - {\rm{i}} \infty}^{1/2 + {\rm{i}} \infty}
 \frac{\ie (y)\ie^{\,\prime} (y) \, \he  (y)}{ x^{2}- \ie (y)^{2}  } {\rm{d}} y \, , \
\end{align*}

\noindent which proves \eqref{f4int}.

Similarly, by using \eqref{f1pf4aint} and \eqref{f3pf4aint}, we get
\begin{align*}
    &  \dfrac{1}{ \big(t^{2}+1/4\big) \Big(x \he  (1/2 + {\rm{i}} t) +{\rm{i}} \he  (1/2 - {\rm{i}} t)\Big)^{2}} =\\    &   =
   \dfrac{1}{ \big(1/2 + {\rm{i}} t\big)\big(1/2 - {\rm{i}} t\big)  \he  (1/2 + {\rm{i}} t)^{2}
   \left(x  +{\rm{i}} \dfrac{\he  (1/2 - {\rm{i}} t)}{\he  (1/2 + {\rm{i}} t)}\right)^{2}}\\    &   =
   \dfrac{\imag  \, \pi\, \ie^{\,\prime} (1/2 + {\rm{i}} t)}{ \left(x  +\ie (1/2 + {\rm{i}} t)\right)^{2}} \, , \
\end{align*}

\noindent and therefore
\begin{align*}
     \eurm{H}_{n} (x)  & =  \dfrac{(-1)}{4 \pi^{3} n}
  \int\limits_{ - \infty}^{ +  \infty}
\dfrac{  S^{{\tn{\triangle}}}_{n} \left(\dfrac{1}{1/2 + {\rm{i}} t}\right) {\rm{d}} t}{ \big(t^{2}+1/4\big) \Big(x \he  (1/2 + {\rm{i}} t) +{\rm{i}} \he  (1/2 - {\rm{i}} t)\Big)^{2} } \\    &   =
\dfrac{(-1)}{4 \pi^{3} n}
  \int\limits_{ - \infty}^{ +  \infty} \ \dfrac{\imag  \, \pi\, \ie^{\,\prime} (1/2 + {\rm{i}} t) S^{{\tn{\triangle}}}_{n} \left(\dfrac{1}{1/2 + {\rm{i}} t}\right) {\rm{d}} t}{\left(x  +\ie (1/2 + {\rm{i}} t)\right)^{2}} \\    &=
\dfrac{(-1)}{4 \pi^{2} n}
  \int\limits_{ - \infty}^{ +  \infty} \ \dfrac{ \ie^{\,\prime} (1/2 + {\rm{i}} t) S^{{\tn{\triangle}}}_{n} \left(\dfrac{1}{1/2 + {\rm{i}} t}\right) {\rm{d}} (1/2 + {\rm{i}} t)}{\left(x  +\ie (1/2 + {\rm{i}} t)\right)^{2}} \\    &   =
  \dfrac{(-1)}{4 \pi^{2} n} \int\limits_{ 1/2 - {\rm{i}} \infty}^{1/2 + {\rm{i}} \infty} \
\frac{S^{{\tn{\triangle}}}_{n}\! \left({1}\big/{y}\right)\ie^{\,\prime} (y) {\rm{d}} y}{(x+\ie (y))^{2} } \, , \
\end{align*}

\noindent which completes the proof of  \eqref{f7int}.

\subsubsection[\hspace{-0,2cm}. Proofs of Theorem~\ref{inttheor1} and Corollary~\ref{intcorol1}.]{\hspace{-0,11cm}{\bf{.}} Proofs of Theorem~\ref{inttheor1} and Corollary~\ref{intcorol1}.}
\label{pinttheor1} $\phantom{a}$

\vspace{0.25cm} \ We first prove \eqref{f9cint}. By using \eqref{f19int}, for arbitrary $x + {\rm{i}} y\in \Bb{H} $  we get
\begin{align*}
    &   \dfrac{{\rm{d}}}{{\rm{d}} x} \left|\lambda ({\rm{i}} y + x)\right|^{2} =  \dfrac{{\rm{d}}}{{\rm{d}} x}\lambda ({\rm{i}} y + x) \lambda ({\rm{i}} y - x)  =
    \lambda^{\,\prime} ({\rm{i}} y + x) \lambda ({\rm{i}} y - x)  \\    &    - \lambda ({\rm{i}} y + x) \lambda^{\,\prime} ({\rm{i}} y - x) =
    {\rm{i}} \pi \left|\lambda ({\rm{i}} y + x)\right|^{2}
      \big(1-\lambda ({\rm{i}} y + x)\big)  \Theta_{3}\left({\rm{i}} y + x\right)^{4} \\    &   -
      {\rm{i}} \pi \left|\lambda ({\rm{i}} y + x)\right|^{2}\big(1-\lambda ({\rm{i}} y - x)\big)  \Theta_{3}\left({\rm{i}} y - x\right)^{4} \, , \
\end{align*}

\noindent where, in accordance with \eqref{f2int} and \eqref{f3bint}(g),
\begin{align*}
    & \left(1\!- \!\lambda (z)\right)\Theta_{3}(z)^{4}=\Theta_{4}(z)^{4} = \Theta_{3}(z-1)^{4}
      \, , \ \   z \!\in\!  \Bb{H} \,,
\end{align*}

\noindent and therefore
\begin{align*}
    & 2\dfrac{{\rm{d}}}{{\rm{d}} x}\log \left|\lambda ({\rm{i}} y\! + \!x)\right|\!=\!  {\rm{i}} \pi \left(\Theta_{4}({\rm{i}} y + x)^{4}- \Theta_{4}({\rm{i}} y - x)^{4}\right) =    - 2 \pi \im \Theta_{3}({\rm{i}} y + x-1)^{4} \, , \
\end{align*}

\noindent which proves the second equality in \eqref{f9cint}. Similarly,
\begin{align*}
    &   \dfrac{{\rm{d}}}{{\rm{d}} x} \left|\lambda ({\rm{i}} y + x)\right|^{2}  \left|1-\lambda ({\rm{i}} y + x)\right|^{2}  = \\ & =
 \dfrac{{\rm{d}}}{{\rm{d}} x}\lambda ({\rm{i}} y + x)\left(1-\lambda ({\rm{i}} y + x)\right)
\lambda ({\rm{i}} y - x)\left(1-\lambda ({\rm{i}} y - x)\right) = \\ & =
{\rm{i}} \pi \left|\lambda_{2} ({\rm{i}} y + x)\right|^{2}
\left[\left(1-2\lambda ({\rm{i}} y + x)\right)\Theta_{3}({\rm{i}} y + x)^{4}-\left(1-2\lambda ({\rm{i}} y - x)\right)\Theta_{3}({\rm{i}} y - x)^{4}\right],
\end{align*}

\noindent where, in accordance with \eqref{f2int} and \eqref{f19int},
\begin{align*}
    & (1-2\lambda(z))\Theta_{3}^{4}(z)=
\Theta_{3}^{2}(z)-2\Theta_{2}^{2}(z)
= \Theta_{4}^{4}(z)-\Theta_{2}^{4}(z)=2\Theta_{4}^{4}(z)-\Theta_{3}^{4}(z)
\end{align*}

\noindent for arbitrary $ z \!\in\!  \Bb{H}$, and hence, by \eqref{f3bint}(g), we obtain
\begin{align*}
      2\dfrac{{\rm{d}}}{{\rm{d}} x}\log \left|\lambda_{2} ({\rm{i}} y\! + \!x)\right| &\!=\!
    -2\pi \im \left(
2\Theta_{4}^{4}({\rm{i}} y + x)-\Theta_{3}^{4}({\rm{i}} y + x)\right) \\    &   \!=\!2\pi \im \left(
\Theta_{3}^{4}({\rm{i}} y + x)-2\Theta_{3}^{4}({\rm{i}} y + x-1)\right) \, , \
\end{align*}

\noindent which completes the proof of the first equality in \eqref{f9cint}.

\vspace{0,1cm}
Prove now \eqref{f1inttheor1} and \eqref{f4inttheor1}.
For $x \in (0,1)$ it follows from  $x\!+\!{\rm{i}} y\in \fet $ and \eqref{f9bint} that
$ \im \Theta_{3}^{4}({\rm{i}} y + x)> 0$ and $ \im \Theta_{3}^{4}({\rm{i}} y + x-1)< 0$ because $x-1 \in (-1,0)$.
These two inequalities together with \eqref{f9cint} yield the validity of \eqref{f1inttheor1} and \eqref{f4inttheor1} for $x > 0$.
If  $x \in (-1,0)$ in \eqref{f1inttheor1} and in \eqref{f4inttheor1},  we similarly get their validity by using inequalities
$ \im \Theta_{3}^{4}({\rm{i}} y + x)< 0$ and $ \im \Theta_{3}^{4}({\rm{i}} y + x-1) =  \im \Theta_{3}^{4}({\rm{i}} y + x+1) > 0$
which follow from  \eqref{f9bint}, in view of $x + 1 \in (0,1) $. The proof of \eqref{f1inttheor1} and \eqref{f4inttheor1} is completed.

To prove \eqref{f3inttheor1} and \eqref{f6inttheor1}, observe that
 by using  the relationship
\begin{align}\label{f2pinttheor1}
    &  \lambda (z +1) = \lambda (z -1) =\frac{\lambda (z)}{\lambda (z) -1}  \ , \quad  z \in \Bb{H},
\end{align}

\noindent  (see \cite[p.\! 111]{cha}), we obtain from  \eqref{f1inttheor1},  \eqref{f4inttheor1} and \eqref{f6apinttheor1} that
\begin{align}\label{f1pinttheor1}
    &
    \begin{array}{l}
\begin{displaystyle}
     \max_{x \in [0,1]} \left|\lambda ({\rm{i}} y + x)\right| \ =\,\left|\lambda ({\rm{i}} y \pm 1)\right|\ =
\frac{\lambda ({\rm{i}} y)}{1-\lambda ({\rm{i}} y) }  \ ,
\end{displaystyle}
  \\[0,5cm]
  \begin{displaystyle}
\max_{x \in [0,1]} \left|\lambda_{2} ({\rm{i}} y + x)\right|  =\left|\lambda_{2} ({\rm{i}} y \pm 1)\right|= \dfrac{\lambda ({\rm{i}} y )}{\big(1-\lambda ({\rm{i}} y )\big)^{2}}  \ , \quad  y \geqslant  1/2\,,
\end{displaystyle}
      \end{array}
     \end{align}

\noindent
where, by virtue of \eqref{f6apinttheor1},  the both functions of $y$ in the right-hand sides of \eqref{f1pinttheor1}  strictly decrease on the interval $(0, +\infty)$ from $+\infty $ to $0$.
This proves \eqref{f3inttheor1} and \eqref{f6inttheor1}.

The property \eqref{f2inttheor1} is immediate from \eqref{f16inttrian}. Furthermore, \eqref{f1intcorol1}(a) is a simple consequence of  \eqref{f2inttheor1} and \eqref{f3inttheor1}.

Prove now \eqref{f1intcorol1}(c).
For arbitrary $z \in \eusm{E}^{0}_{\!{\tn{\frown}}} $ and $x \in [-1,1],$ $ y> 1$, it follows from \eqref{f1intcorol1}(a)  and \eqref{f3contgen}(b) that
 $\lambda (x\! + \!{\rm{i}} y)\in \Bb{C}_{\re \leqslant \lambda ({\rm{i}} y)}$,
$\lambda(z)\! \in \! \Bb{C}_{\re \geqslant 1/2}$
and
 \begin{align}\label{f7pinttheor1}
    &  \left|\lambda(z)-\lambda(x+{\rm{i}}y)\right|\geqslant \left|\dfrac{1}{2}- \lambda({\rm{i}}y)\right|\geqslant
    \left|\dfrac{1}{2}- \lambda({\rm{i}}y)\right|\dfrac{\left|\lambda(z)\right|}{2}  \, , \
 \end{align}

\noindent provided that $\left|\lambda(z)\right| \leqslant 2$. But if $\left|\lambda(z)\right| > 2$
then by \eqref{f3inttheor1} and $\lambda({\rm{i}}y) \in (0, 1/2)$ we get $|\lambda(x+{\rm{i}}y)| \leqslant \lambda({\rm{i}}y)/(1-\lambda({\rm{i}}y)) < 1$ and therefore
 \begin{align*}
    &  \left|\lambda(z)-\lambda(x+{\rm{i}}y)\right|\geqslant \left|\lambda(z)\right| - \left|\lambda(x+{\rm{i}}y)\right| \geqslant
    \dfrac{\left|\lambda(z)\right|}{2} \geqslant \dfrac{\left|\lambda(z)\right|}{2}  \left|\dfrac{1}{2}- \lambda({\rm{i}}y)\right|  \, ,
 \end{align*}

 \noindent which together with \eqref{f7pinttheor1} completes the proof of \eqref{f1intcorol1}(c). Next, by
 \eqref{f19int}, \eqref{f6inttheor1} and \cite[p.\! 325, (i)]{ber1}, for $x \in [-1,1]$ and $y> 1$ we obtain
 \begin{align*}
    & \left|\lambda^{\,\prime}\left(x + {\rm{i}} y \right)\right| = \pi \left|\lambda_{2}(x + {\rm{i}} y)\right| \left|\theta_{3}\left({\rm{e}}^{\imag     \pi (x + {\rm{i}} y)}\right)\right|^{4}\leqslant
\pi   \dfrac{\lambda ({\rm{i}} y )}{\big(1-\lambda ({\rm{i}} y )\big)^{2}} \left|\theta_{3}\left({\rm{e}}^{-   \pi  y}\right)\right|^{4}\leqslant \\    &
\leqslant  2 \pi \left|\theta_{3}\left({\rm{e}}^{-   \pi  }\right)\right|^{4} =
2 \pi \dfrac{ \pi}{\Gamma(3/4)^{4}}< 8,7537585 < 9\,,
 \end{align*}

\noindent
which proves \eqref{f1intcorol1}(b).  Finally, to prove  \eqref{f1intcorol1}(d) observe that for $ 1/2 + \eta := \lambda(z)$ we have $ u + {\rm{i}} v:= \eta \in \Bb{C}_{\re > 0}$, in view of  \eqref{f3contgen}(b) , and for $a := -\lambda(1+ {\rm{i}} t)$
we have $a > 0$, by virtue of Lemma~\ref{lemoneto}. Then
\begin{align*}
      2\left|\lambda(z)-\lambda(1+{\rm{i}}t)\right|^{2}& = 2 \left| 1/2  + u + a+ {\rm{i}} v\right|^{2} =
2 \left(1/2  + u + a\right)^{2} + 2v^{2} \, , \\
\left(\left|\lambda(z)\right|+\left|\lambda(1+{\rm{i}}t)\right|\right)^{2} &= (u+ 1/2)^{2} + v^{2}+ a^{2} + 2 a
 \sqrt{(u+ 1/2)^{2} + v^{2}}\\  & \leqslant (u+ 1/2)^{2} + v^{2}+ a^{2} +
(u+ 1/2)^{2} + v^{2}+ a^{2} \\    &   =
2(u+ 1/2)^{2} + 2v^{2}+ 2a^{2} = 2 \left(1/2  + u + a\right)^{2} + 2v^{2} \\    &   -
4 a (u+ 1/2) < 2\left|\lambda(z)-\lambda(1+{\rm{i}}t)\right|^{2} \, ,
\end{align*}

\noindent which proves  \eqref{f1intcorol1}(d) and
completes the proof of Theorem~\ref{inttheor1} and Corollary~\ref{intcorol1}.

\vspace{0.25cm}
\subsubsection[\hspace{-0,2cm}. Proofs of Theorem~\ref{inttheor2} and Corollary~\ref{intcorol2}.]{\hspace{-0,11cm}{\bf{.}} Proofs of Theorem~\ref{inttheor2} and Corollary~\ref{intcorol2}.}
\label{pinttheor2} $\phantom{a}$

\vspace{0.25cm} \  For $n \in \Bb{Z}_{\neq 0}$   and $z=a+{\rm{i}} b$  with $a\in [-1,1]$, $b>0$ we have
\begin{align*}
     & \im  \dfrac{1}{2n -z } = \dfrac{b}{(2n -a)^{2} +  b^{2}}\leqslant
\dfrac{b}{1 +  b^{2}}<b \ . \
\end{align*}

\noindent Besides that,
\begin{align*}
    & \im  \dfrac{1}{2n + {1}/{z} } =  \dfrac{b}{(2n a + 1)^{2} + 4 n^{2}b^{2}} =\frac{b}{1 + 4n^{2} \left( a^{2} + \dfrac{a}{n} +b^{2}\right)}\ , \end{align*}

\noindent which is strictly less than $b$ if $a=0$, but if $a\!\neq\! 0$ then $\sigma_{a}\!:=\! {\rm{sign}} (a)\! \in \!\{1,-1\}$ and
\begin{align*}
     &
a^{2} + \dfrac{a}{n} +b^{2}=
|a|\left(1+\dfrac{\sigma_{a}}{ n} \right) +\left|z-\frac{\sigma_{a}}{2}\right|^{2}-\frac{1}{4} \geqslant 0 \ , \
\end{align*}

\noindent provided that $ z \!\in\! {\rm{clos}}\left(\mathcal{F}_{{\tn{\square}}}\right)$ and the equality here is strict if $|n|\geqslant 2$ or $n \cdot\re \, z > 0$, or $ z \!\in\! \mathcal{F}_{{\tn{\square}}}$. Thus, the following properties hold.
\begin{align}\label{f1pinttheor2}
\left\{\begin{array}{l}
   z \in \Bb{H}_{{\fo{\left|\re\right|\leqslant 1}}} \\[0,3cm]
 n \in \Bb{Z}_{\neq 0}
\end{array}\right.
&  \ \Rightarrow \ &  &
      \left\{
        \begin{array}{l}
        \im    {\fo{\dfrac{1}{2n -z }}} <  \im \, z \ ,    \\[0,3cm]
          {\fo{\dfrac{1}{2n -z }}}\in \Bb{D}\cap\Bb{H} \subset \Bb{H}_{{\fo{\left|\re\right|\leqslant 1}}} \ ,
        \end{array}
      \right.
      \\[0,3cm]
\left\{\begin{array}{l}
  z \!\in\! \Bb{H}\cap {\rm{clos}}\left(\mathcal{F}_{{\tn{\square}}}\right)
  \\[0,3cm]
 n \in \Bb{Z}_{\neq 0}
\end{array}\right.
& \ \Rightarrow \ &  &
 \left\{
        \begin{array}{l}
       \im  {\fo{\dfrac{1}{2n + (1/z) }}} \leqslant  \im \, z \ ,   \\[0,3cm]
          {\fo{\dfrac{1}{2n + (1/z) }}}\in \Bb{D}\cap\Bb{H}  \subset \Bb{H}_{{\fo{\left|\re\right|\leqslant 1}}} \ , \\[0,2cm]
 \im  {\fo{\dfrac{1}{2n + (1/z) }}} <  \im \, z \ , \ \mbox{if} \
\left\{\begin{array}{l}
|n|\!\geqslant\! 2\,, \ \mbox{or} \\[0,1cm]
 n\cdot \re \, z\! >\! 0\,,
  \\[0,1cm]
\ \mbox{or} \ z\in  \fet\,.
\end{array}\right.
        \end{array}
      \right.
\label{f2pinttheor2}\end{align}

\noindent According to the definition \eqref{f1bsden}, for $N\geqslant 1$ and
$\eufm{n}=(n_{N}, n_{N-1},\ldots, n_{1})\in \Bb{Z}_{\neq 0}^{N}$ we have
\begin{align}\label{f3pinttheor2}
     & \phi_{\eufm{n}}(y)= \phi_{n_{N}, n_{N-1},\ldots, n_{1}} (y) = \phi_{n_{N}} \Big(\phi_{n_{N-1}} \big(\ldots\big( \phi_{n_{1}} (y) \big)\ldots\big)\Big) , \quad  y\in \Bb{H},
\end{align}

\noindent where
\begin{align}\label{f4pinttheor2}
     & \ \phi_{n} (z)=\frac{1}{2n-z} \ , \quad
\phi_{n} (-1/z)=\frac{1}{2n+1/z} \ ,\quad n \in \Bb{Z}_{\neq 0} \, , \ z\in \Bb{H}  .
\end{align}

\noindent Applying successively \eqref{f1pinttheor2} and \eqref{f2pinttheor2} to \eqref{f3pinttheor2} we obtain
\begin{align}\label{f7pinttheor2}
     &\vspace{-0,4cm} \begin{array}{lrll}
 {\rm{(a)}}  &      \im\, \phi_{{\fo{\eufm{n}}}}(y)\! < \! \im\, y , &
 \ \ \eufm{n}\in \cup_{N\geqslant 1} \Bb{Z}_{\neq 0}^{N}  ,   &    \ \  y \in \Bb{H}_{{\fo{\left|\re\right|\leqslant 1}}} \ ,
 \\[0,1cm]{\rm{(b)}}  &
\im\, \phi_{{\fo{\eufm{n}}}}(-1/z)\! < \! \im\, z  , & \ \  \eufm{n}\in \cup_{N\geqslant 2} \Bb{Z}_{\neq 0}^{N}  ,  &   \ \ z \in \Bb{H}\cap {\rm{clos}}\left(\mathcal{F}_{{\tn{\square}}}\right) ,
\\[0,1cm]{\rm{(c)}}  &
\im\, \phi_{{\fo{\eufm{n}}}}(-1/z) \!< \! \im\, z  , & \ \  \eufm{n}\in \Bb{Z}\setminus\{-1,0,1\}  ,  &   \ \  z \in \Bb{H}\cap {\rm{clos}}\left(\mathcal{F}_{{\tn{\square}}}\right) ,
\\[0,1cm]{\rm{(d)}}  &
\im\, \phi_{{\fo{\eufm{n}}}}(-1/z) \!< \! \im\, z  , & \ \  \eufm{n}\!= \! {\rm{sign}} (\re\, z)\!\in\!\{-1,1\}  ,  &   \ \  z \in \Bb{H}\cap {\rm{clos}}\left(\mathcal{F}_{{\tn{\square}}}\right) ,
\\[0,1cm]{\rm{(e)}}  & \im\, \phi_{{\fo{\eufm{n}}}}(-1/\zeta) \!<\!  \im\, \zeta  , & \ \  \eufm{n}\in \cup_{N\geqslant 1} \Bb{Z}_{\neq 0}^{N} ,  &   \ \ \zeta \in \fet\,,
       \end{array}
\vspace{-0,1cm}\end{align}

\noindent where \eqref{f7pinttheor2}(b) with $\eufm{n}=(n_{N}, n_{N-1},\ldots, n_{1})$ follows from \eqref{f7pinttheor2}(a) with  $\eufm{n}=(n_{N}, n_{N-1},\ldots, n_{2})$, applied to $y =  \phi_{n_{1}}(-1/z)$
which by \eqref{f4pinttheor2} and \eqref{f2pinttheor2}  satisfies
$y \in \Bb{H}_{{\fo{\left|\re\right|\leqslant 1}}}$ and
$\im\,y \leqslant \im\,z$.
Introduce the notation
\begin{align}\label{f8pinttheor2}&
\begin{array}{rcccl}
  \lambda_{{\tn{\square}}}^{(-1)}(a) &   := & \{\,z \in \Bb{H}\cap {\rm{clos}}\left(\mathcal{F}_{{\tn{\square}}}\right) \,  & \mid  & \, \lambda (z)=a\}, \\
  \lambda_{\,\sqcup}^{(-1)}(a)&   := &\{\,z \in \Bb{H}_{|\re|\leqslant 1}\setminus {\rm{clos}}\left(\mathcal{F}_{{\tn{\square}}}\right) \,
& \mid  & \, \lambda (z)=a\},
\end{array}\qquad a \in \Bb{C}\setminus\{0,1\} \,.
\end{align}

\noindent In view of Lemma~\ref{lemoneto}, Theorem~\ref{intthA} and \eqref{f1intlemma1},
\begin{align}\label{f9pinttheor2}
     &
\begin{array}{ll}
  \lambda_{{\tn{\square}}}^{(-1)}(a)=\left\{\ie (a)\right\} \subset \fet\,,  & \quad a\in (0,1)\cup \left(\Bb{C}\setminus\Bb{R}\right) , \\
\lambda_{{\tn{\square}}}^{(-1)}(a)=\left\{\ie (a+ {\rm{i}} 0), \ie (a- {\rm{i}} 0)\right\}
\subset \partial\fet\,, & \quad a\in \Bb{R}_{<0}\cup \Bb{R}_{>1} ,
\end{array}
\end{align}

\noindent where by \eqref{f6intthA}, \eqref{f2intlemma1} and  \eqref{f4intthA}, \eqref{f5intthA},
\begin{align}\label{f10pinttheor2}
     & \hspace{-0,3cm}
\begin{array}{lll}
\ie (a\! \pm \!{\rm{i}}  0) \!\in\! (\pm 1) \!+\! {\rm{i}} \Bb{R}_{>0} \, , & \ \ \
 \ie (a\! -\! {\rm{i}}  0)\! = \!\ie (a \!+ \!{\rm{i}}  0)\! -\!2 \,,  & \  \ \
a\!\in \!\Bb{R}_{<0} \, ,  \\
\ie (a\! \pm\! {\rm{i}}  0)\! \in\! \gamma(0, \pm 1)\ ,  & \  \ \
\ie (a\! -\! {\rm{i}}  0) =\dfrac{1}{-2 \!+\! 1/ \ie (a\! +\! {\rm{i}}  0)} \ ,
 & \  \ \  a\!\in\! \Bb{R}_{> 1} \,.
\end{array}
\end{align}

\vspace{-0,2cm}
\noindent Furthermore, by Lemma~\ref{intlemma1},
\begin{align}\label{f10zpinttheor2}
     & \im\, \ie (a)  = \im\,\ie (a\! +\! {\rm{i}}  0)= \im\,\ie (a\! -\! {\rm{i}}  0) \ , \ a\!\in\! \Bb{R}_{<0}\cup \Bb{R}_{> 1}\,.
\end{align}

\noindent It has been established in Lemma~\ref{lemdhp} that for each $a \in \Bb{C}\setminus\{0,1\} = \lambda (\Bb{H} \cap{\rm{clos}} (\mathcal{F}_{{\tn{\square}}}))$ the set $\lambda_{\,\sqcup}^{(-1)}(a)$ is countable and cannot have the limit points in $\Bb{H}$. That's why to prove the statement of Theorem~\ref{inttheor2} it suffices to show that the imaginary part of each number from   $\lambda_{\,\sqcup}^{(-1)}(a)$ is strictly less
than $\im\, \ie (a)$, i.e.,
\begin{align}\label{f5pinttheor2}
    & \im\, z < \im\, \ie (a) \ , \quad  z\in \lambda_{\,\sqcup}^{(-1)}(a) \ , \quad  a \in \Bb{C}\setminus\{0,1\}\,,
\end{align}

\noindent which obviously coincides with the property \eqref{f1inttheor2}.

Assume that the number $a$ in Theorem~\ref{inttheor2} belongs to $(0,1)\cup \left(\Bb{C}\setminus\Bb{R}\right)$.
In  the notation $y:= \ie (a)\in \fet$, it follows from  \eqref{f9pinttheor2} that $\lambda_{{\tn{\square}}}^{(-1)}(a)=\left\{y\right\}$ and therefore \eqref{f1lemdhp} yields
 \begin{align}\label{f6pinttheor2}  &
   \lambda_{\sqcup}^{(-1)}(a) =
    \left\{\phi_{{\fo{\eufm{n}}}}(y)\ \left|  \ \eufm{n}\in  \Bb{Z}_{0}^{2\Bb{N}}  \right\}\right.\cup
\left\{\phi_{{\fo{\eufm{n}}}}(-1/y)\ \left| \ \eufm{n}\in  \Bb{Z}_{0}^{2\Bb{N}-1}  \right\}\right. ,
\end{align}

\noindent which satisfies \eqref{f5pinttheor2}, in view of \eqref{f7pinttheor2}(a) and \eqref{f7pinttheor2}(e).
This proves  Theorem~\ref{inttheor2} for  $a\in (0,1)\cup \left(\Bb{C}\setminus\Bb{R}\right)$.

Let $a$  in Theorem~\ref{inttheor2} belong to $\Bb{R}_{<0}$ and $y := \ie (a + {\rm{i}}  0) \in \gamma (1, \infty)$.
By virtue of \eqref{f9pinttheor2} and \eqref{f10pinttheor2}, $\lambda_{{\tn{\square}}}^{(-1)}(a)= \{y, y-2\}$
and we deduce from \eqref{f1lemdhp} that
\begin{align}\label{f5apinttheor2}
    &  \lambda_{\sqcup}^{(-1)}(a) =\left\{\phi_{{\fo{\eufm{n}}}}\left(y-1  + \sigma_{\eufm{n}}\right)\ \left| \ \eufm{n}\in  \Bb{Z}_{0}^{2\Bb{N}}  \right\}\right.,
\end{align}

\noindent which satisfies \eqref{f5pinttheor2}, in view of \eqref{f7pinttheor2}(a), $\im\, (y-1  + \sigma_{\eufm{n}})=\im\, y$ and  $y-1 + \sigma_{\eufm{n}}\in \sigma_{\eufm{n}}+  {\rm{i}} \Bb{R}_{>0} \subset
\Bb{H} \cap{\rm{clos}} (\mathcal{F}_{{\tn{\square}}})\subset \Bb{H}_{{\fo{\left|\re\right|\leqslant 1}}}$. This proves  Theorem~\ref{inttheor2} for $a\!\in\!\Bb{R}_{<0}$.

Let finally  $a$  in Theorem~\ref{inttheor2} belong to $\Bb{R}_{> 1}$ and $y := \ie (a + {\rm{i}}  0) \in \gamma (1, 0)$.
Then $-1/y \in \gamma (-1, \infty)$,
\begin{align}\label{f5zapinttheor2}
    & \sigma\!+\!1\!-\!{\fo{(}}1/y{\fo{)}}\in \sigma+  {\rm{i}} \Bb{R}_{>0} \subset
\Bb{H} \cap{\rm{clos}} (\mathcal{F}_{{\tn{\square}}}) \ , \quad  \sigma \in \{1,-1\} \, , \
\end{align}

\noindent  and   \eqref{f9pinttheor2},  \eqref{f10pinttheor2} and \eqref{f10zpinttheor2} imply that $\lambda_{{\tn{\square}}}^{(-1)}(a)=\{y, 1/(-2+1/y)\}$ and
\begin{align}\label{f5wapinttheor2} & \dfrac{1}{-2 + 1/y}\in \gamma (-1, 0) \ ,  &  &  \im \dfrac{1}{-2 + 1/y}= \im\, y \ , \  \\
    &  y= \phi_{n_{1}}(\sigma_{n_{1}} + 1-1/y)\Big|_{n_{1}=1}   \ ,  &  &
    \dfrac{1}{-2 + 1/y} = \phi_{n_{1}}(\sigma_{n_{1}} + 1-1/y)\Big|_{n_{1}=-1}
\nonumber \end{align}
\noindent (see \eqref{f9contgen}). In accordance with \eqref{f1lemdhp}, we get
\begin{align}\nonumber
    \hspace{-0,1cm}  \lambda_{\sqcup}^{(-1)}(a)\! &    =\!\left\{\phi_{{\fo{\eufm{n}}}}\!\left(  \sigma_{\eufm{n}}\!+\!1\!-\!{\fo{(}}1/y{\fo{)}}
\hspace{0,05cm}\right) \ \left| \ \eufm{n}\!\in\!  \Bb{Z}_{0}^{2\Bb{N}+1}  \right\}\right.\cup\\    &\,\cup
\big\{\phi_{n_{1}}\!\!\left(  \sigma_{n_{1}}\!+\!1\!-\!{\fo{(}}1/y{\fo{)}}
\hspace{0,05cm}\right)\big\}_{n_{1}\!\in \,\Bb{Z}\setminus\{-1, 0, 1\} },
\label{f5tapinttheor2} \end{align}

\noindent because $y, 1/(-2+1/y)\in \Bb{H} \cap{\rm{clos}} (\mathcal{F}_{{\tn{\square}}})$ and, in view of
\eqref{f4pinttheor2}, \eqref{f5wapinttheor2},
\eqref{f5zapinttheor2} and \eqref{f1pinttheor2},
\begin{align}\nonumber
     & \left\{\phi_{n_{1}}\!\!\left(  \sigma_{n_{1}}\!+\!1\!-\!{\fo{(}}1/y{\fo{)}}
\hspace{0,05cm}\right)\right\}_{n_{1}\in \Bb{Z}\setminus\{ 0\} } \\    &   =\!
\left\{y\right\}\!\cup \!\left\{\dfrac{1}{-2\! +\! 1/y}\right\}
\!\cup\!
\left\{\phi_{n_{1}}\!\!\left(  \sigma_{n_{1}}\!+\!1\!-\!{\fo{(}}1/y{\fo{)}}
\hspace{0,05cm}\right)\right\}_{n_{1}\in \Bb{Z}\setminus\{-1, 0, 1\} }\subset \Bb{H}_{{\fo{\left|\re\right|\leqslant 1}}} \,.
\label{f5yapinttheor2}\end{align}

\noindent Here $\phi_{n_{1}}(\sigma_{n_{1}}\! +\! 1\!-\!(1/y))=\phi_{\widetilde{n}_{1}}(-\!1/y)$,
$\widetilde{n}_{1}= n_{1}-(\sigma_{n_{1}}\! +\! 1)/2\in \Bb{Z}\setminus\{0,-1\} $ for every $n_{1}\in \Bb{Z}\setminus\{-1, 0, 1\}$, where $\widetilde{n}_{1}=1$
if and only if $n_{1}=1= {\rm{sign}}(\re y)$. Applying \eqref{f7pinttheor2}(c) and \eqref{f7pinttheor2}(d) to
 $\phi_{\widetilde{n}_{1}}(-\!1/y)$ with $y \in \gamma (1, 0) \subset \Bb{H}\cap {\rm{clos}}\left(\mathcal{F}_{{\tn{\square}}}\right)$,  we obtain
\begin{align}\label{f5xapinttheor2}
     & \im\,\phi_{n_{1}}(\sigma_{n_{1}}\! +\! 1\!-\!(1/y))< \im\,y \ , \quad n_{1}\in \Bb{Z}\setminus\{-1, 0, 1\} , \quad y\in \gamma (1, 0).
\end{align}

\noindent At the same time, if  $N\geqslant 1$ and
$\eufm{n}=(n_{2N+1}, n_{2N},\ldots, n_{2}, n_{1})\in \left(\Bb{Z}_{\neq 0}\right)^{2N+1}$  then \eqref{f3pinttheor2} means that in \eqref{f5tapinttheor2} we have
\begin{align*}
     \phi_{\eufm{n}}\!\left(  \sigma_{\eufm{n}}\!+\!1\!-\!{\fo{(}}1/y{\fo{)}}
\hspace{0,05cm}\right)&= \phi_{n_{2N+1}, n_{2N},\ldots, n_{2}, n_{1}} \!\left(  \sigma_{n_{1}}\!+\!1\!-\!{\fo{(}}1/y{\fo{)}}
\hspace{0,05cm}\right)  \\    &   =
    \phi_{n_{2N+1}, n_{2N},\ldots, n_{2}} \big(\phi_{n_{1}}\!\!\left(  \sigma_{n_{1}}\!+\!1\!-\!{\fo{(}}1/y{\fo{)}}
\hspace{0,05cm}\right) \big)  \, , \
\end{align*}

\noindent where, by \eqref{f5yapinttheor2}, $\phi_{n_{1}}\!\!\left(  \sigma_{n_{1}}\!+\!1\!-\!{\fo{(}}1/y{\fo{)}}
\hspace{0,05cm}\right)\in \Bb{H}_{{\fo{\left|\re\right|\leqslant 1}}} $.  Hence, in accordance with \eqref{f7pinttheor2}(a), \eqref{f1pinttheor2}  and \eqref{f5yapinttheor2}, \eqref{f5wapinttheor2}, \eqref{f5xapinttheor2},
\begin{align*}
    &  \im\, \phi_{\eufm{n}}\!\left(  \sigma_{\eufm{n}}\!+\!1\!-\!{\fo{(}}1/y{\fo{)}}
\hspace{0,05cm}\right) <  \im\,\phi_{n_{1}}\!\!\left(  \sigma_{n_{1}}\!+\!1\!-\!{\fo{(}}1/y{\fo{)}}
\hspace{0,05cm}\right) \leqslant \im\, y\,.
\end{align*}

\noindent This inequality together with \eqref{f5xapinttheor2} and \eqref{f5tapinttheor2} proves
\eqref{f5pinttheor2} for each $a \in \Bb{R}_{> 1}$. Thus,  Theorem~\ref{inttheor2} holds in
this case and its proof is completed.

If we assume the contrary in Corollary~\ref{intcorol2} we obtain the contradiction with  \eqref{f1inttheor2} and therefore Corollary~\ref{intcorol2} holds as well.

\vspace{0,15cm}

\subsection[\hspace{-0,15cm}. \hspace{-0,04cm}Proofs for Section~\ref{intprel}]{\hspace{-0,11cm}{\bf{.}} Proofs for Section~\ref{intprel}}

\subsubsection[\hspace{-0,2cm}. Proofs of Lemmas~\ref{intlem1} and~\ref{intlem3}.]{\hspace{-0,11cm}{\bf{.}} Proofs of Lemmas~\ref{intlem1} and~\ref{intlem3}.}
\label{pintlem1} We first prove Lemma~\ref{intlem1}.
Let $x \in \Bb{C}\setminus\{0, 1\}$ and $P_n (x) = \sum_{k=0}^{n} p_{k} x^{k}$ satisfy the conditions of Lemma~\ref{intlem1}. Then
\begin{align*}
    &   P_{n} \left(\frac{1}{x}\right) \pm  P_{n} \left(\frac{1}{1-x}\right) =
    \sum_{k=0}^{n} p_{k}\left( \dfrac{1}{x^{k}} \pm  \dfrac{1}{(1-x)^{k}}\right) =
     x^{-n} (1-x)^{-n} R_{n}^{\pm} (x) \,,
      \\    &
        R_{n}^{\pm} (x) :=   \sum\nolimits_{k=0}^{n}\  p_{k}\left( (1-x)^{n}x^{n-k} \pm  x^{n}(1-x)^{n-k}\right)  \, , \
\end{align*}

\noindent where $R_{n}^{\pm} (1 - x) = \pm R_{n}^{\pm} ( x)$, and therefore, there exist the real numbers
$q_{k}$, $0 \leqslant k \leqslant n$, such that
\begin{align*}
    & R_{n}^{+} (1/2 + x) =  \sum_{0 \leqslant k \leqslant n} \!\!\!
    q_{2 k} \,x^{2 k}  \, , \
    R_{n}^{-} (1/2 + x) =  \sum_{0 \leqslant k \leqslant n-1 }\!\!\!
    q_{2 k+1}  \,x^{2 k+1}  \, , \
\end{align*}

\vspace{-0,1cm}
\noindent where $k \in \Bb{Z}_{\geqslant 0}$. Then there exist  the real numbers
$r_{k}^{+}$, $0 \leqslant k \leqslant n$, satisfying
\begin{align*}
       R_{n}^{+} ( x)& =  \sum_{0 \leqslant k \leqslant n} \!\!\!
    q_{2 k} \,(x-1/2)^{2 k}=  \sum_{0 \leqslant k \leqslant n} \!\!\!
   2^{-2k} q_{2 k} \,(2x-1)^{2 k} \\    &   =  \sum_{0 \leqslant k \leqslant n} \!\!\!
   2^{-2k} q_{2 k} \,(4x^{2}-4 x+1)^{ k} =  \sum_{0 \leqslant k \leqslant n} \!\!\!
   2^{-2k} q_{2 k} \,\big(1 - 4 x (1-x)\big)^{ k}\\    &   =
   \sum_{0 \leqslant k \leqslant n} r^{+}_{n-k} \big(x (1-x)\big)^{k} \, , \
\end{align*}

\vspace{-0,1cm}
\noindent and $r_{k}^{-}$, $1 \leqslant k \leqslant n$, such that
\begin{align*}
     &  R_{n}^{-} ( x)\! =\! (x\!-\!1/2)\!\!\! \sum_{0 \leqslant k \leqslant n-1} \!\!\!
    q_{2 k+1} \,(x\!-\!1/2)^{2 k}\!=\! 2^{-1} (2x\!-\!1) \!\!\!\sum_{0 \leqslant k \leqslant n-1} \!\!\!
   2^{-2k} q_{2 k+1} \,(2x\!-\!1)^{2 k} \\    &  = \! 2^{-1} (2x\!-\!1) \!\!\!
   \sum_{0 \leqslant k \leqslant n-1} \!\!\!
   2^{-2k} q_{2 k+1} \,\big(1 \!-\! 4 x (1\!-\!x)\big)^{ k}\! =\!(1\!-\!2x)
   \sum_{0 \leqslant k \leqslant n-1} r^{-}_{n-k} \big(x (1-x)\big)^{k} \, .
\end{align*}

\vspace{-0,2cm}
\noindent Hence,
\begin{align*}
    &    P_{n} \left(\frac{1}{x}\right)\! + \! P_{n} \left(\frac{1}{1-x}\right) = \eusm{R}^{+}\!\!\left[P_n\right] \left(\frac{1}{x (1-x)}\right) ,   &     &
  \!\!   \eusm{R}^{+}\!\!\left[P_n\right](x)\!:= \! \sum_{0 \leqslant k \leqslant n} r^{+}_{k} x^{k}, \  \\    &   P_{n} \left(\frac{1}{x}\right)\! - \! P_{n} \left(\frac{1}{1-x}\right) =(1\!-\!2x) \eusm{R}^{-}\!\!\left[P_n\right] \left(\frac{1}{x (1-x)}\right) ,   &     &
 \!\!   \eusm{R}^{-}\!\!\left[P_n\right](x)\!:= \!  \sum_{1 \leqslant k \leqslant n} r^{-}_{k} x^{k},
\end{align*}

\noindent where obviously, $ \eusm{R}^{-}\!\left[P_n\right]  (0)\!=\!0 $ and $ \eusm{R}^{+}\!\!\left[P_n\right](0)
\!= \!r^{+}_{0}\! =\! 2p_{0}\! = \!2 P (0)$. Lemma~\ref{intlem1} is proved.


\vspace{0.25cm}
Next we prove Lemma~\ref{intlem3}.
For arbitrary $z\! \in\! \Bb{D}\!\setminus\!\{0\} $ we deduce from \eqref{f1aint} that
\begin{align*}
    &    e^{{\fo{ n \pi \ \dfrac{\he(1-z)}{\he(z)}}}} =
\dfrac{16^{n}}{z^{n}} \  \exp\left(-n z
\dfrac{\dfrac{1}{2} +   \dfrac{2}{\pi} \sum\limits_{n=1}^{\infty} \dfrac{\Gamma (n+3/2)^{2}}{(n+1) !^{2}} \left[ \sum\limits_{k=1}^{n+1} \dfrac{1}{(2k -1)k}\right] z^{n}}{ 1 + \dfrac{1}{\pi}\sum\limits_{n = 1}^{\infty}
\dfrac{\Gamma(n+1/2)^{2}}{(n !)^{2}} z^{n}}\right) \\    &   =
\dfrac{16^{n}}{z^{n}}\sum\limits_{k\geqslant 0}\dfrac{( -nz)^{k}}{k !}
\left(\dfrac{\dfrac{1}{2} +   \dfrac{2}{\pi} \sum\limits_{n=1}^{\infty} \dfrac{\Gamma (n+3/2)^{2}}{(n+1) !^{2}} \left[ \sum\limits_{k=1}^{n+1} \dfrac{1}{(2k -1)k}\right] z^{n}}{ 1 + \dfrac{1}{\pi}\sum\limits_{n = 1}^{\infty}
\dfrac{\Gamma(n+1/2)^{2} }{(n !)^{2}} z^{n}}\right)^{k}= \dfrac{16^{n}}{z^{n}} \\    &
+\dfrac{16^{n}}{z^{n}}(- n z) \dfrac{\dfrac{1}{2} +   \dfrac{2}{\pi} \sum\limits_{n=1}^{\infty} \dfrac{\Gamma (n+3/2)^{2}}{(n+1) !^{2}} \left[ \sum\limits_{k=1}^{n+1} \dfrac{1}{(2k -1)k}\right] z^{n}}{ 1 + \dfrac{1}{\pi}\sum\limits_{n = 1}^{\infty}
\dfrac{\Gamma(n+1/2)^{2} }{(n !)^{2}} z^{n}}  \\    &
+ \dfrac{16^{n}}{z^{n}}\sum\limits_{k\geqslant 2}\dfrac{( -nz)^{k}}{k !}
\left(\dfrac{\dfrac{1}{2} +   \dfrac{2}{\pi} \sum\limits_{n=1}^{\infty} \dfrac{\Gamma (n+3/2)^{2}}{(n+1) !^{2}} \left[ \sum\limits_{k=1}^{n+1} \dfrac{1}{(2k -1)k}\right] z^{n}}{ 1 + \dfrac{1}{\pi}\sum\limits_{n = 1}^{\infty}
\dfrac{\Gamma(n+1/2)^{2} }{(n !)^{2}} z^{n}}\right)^{k}  \\    &   = \dfrac{16^{n}}{z^{n}}\left(1-\dfrac{n z}{2}\right)
+ \dfrac{16^{n}}{z^{n}}\sum\limits_{k=2}^{\infty} \beta_{n, k}z^{k} = \dfrac{16^{n}}{z^{n}}
 - 8 n\dfrac{16^{n-1}}{z^{n-1}} +
 16^{n}\sum\limits_{k=2}^{n-1}\dfrac{ \beta_{n, k}}{z^{n-k}} \\    &
 +  16^{n}\sum\limits_{k=n}^{\infty} \beta_{n, k}z^{k-n} \, , \
\end{align*}

\noindent where $\beta_{n, k}$, $n\geqslant 1$, $k\geqslant 2$, are the real numbers such that
 $\sum_{k=2}^{\infty} \beta_{n, k}z^{k}\in {\rm{Hol}}(\Bb{D})$ for each $n\geqslant 1$.
 Hence, in the notation $S^{{\tn{\triangle}}}_{n} (z) =  \sum_{k=1}^{n} s^{{\tn{\triangle}}}_{n, k} z^{k} $, $n\geqslant 1$,  of \eqref{f5intlem3},
it follows that
\begin{align*}  &   S^{{\tn{\triangle}}}_{1} (z) = 16 z  \ , \quad
 S^{{\tn{\triangle}}}_{2} (z) = 16^{2} z (z - 1) \, , \   \  S^{{\tn{\triangle}}}_{n} (z) = 16^{n} z^{n}- 8 n 16^{n-1}z^{n-1} \\    &   +  \sum\limits_{k=2}^{n-1} s^{{\tn{\triangle}}}_{n, n-k} z^{n-k} \ ,   \quad  s^{{\tn{\triangle}}}_{n, n} =16^{n}
    \ , \quad
    s^{{\tn{\triangle}}}_{n, n-1} =- 8 n 16^{n-1} \ , \quad  n\geqslant 3\,,
\end{align*}

\noindent which proves \eqref{f5intlem3} and \eqref{f6intlem3}.

According to   Lemma~{\rm{\ref{intlemma1}}},
$\im\, \ie (1)\!=\!0 $ and therefore
\eqref{f22int}   for $z\!=\!1$   gives
\begin{align*}
    &   \sum\limits_{n=1}^{\infty} S^{{\tn{\triangle}}}_{n} \left(1 \right) {\rm{e}}^{\imag  \pi n y} = \frac{
\lambda^{\,\prime}\left(y\right)  }{i \pi\left(1  - \lambda\left(y\right)\right)} \ , \quad   \im\, y >  \im\, \ie (1)=0  \, , \
\end{align*}

\noindent where, in view of  \eqref{f19int}, \eqref{f2int} and \eqref{f3cint},  we have
\begin{align*}
    &    \frac{\lambda^{\,\prime}\left(y\right)  }{i \pi\left(1  - \lambda\left(y\right)\right)} =
\dfrac{\Theta_{2}(y)^{4} \Theta_{4}(y)^{4}\Theta_{3}(y)^{-4}}{\Theta_{4}(y)^{4}\Theta_{3}(y)^{-4}} =  \Theta_{2}(y)^{4} =  16 {\rm{e}}^{\imag  \pi y }
 \theta_{2}\left({\rm{e}}^{\imag  \pi y}\right)^{4}.
\end{align*}

\noindent Together with \eqref{f3wcint}(c) this proves \eqref{f2intlem3} and  \eqref{f1intlem3}(b).

To prove \eqref{f7intlem3}, observe that
$\im\, \ie (1/z) \to 0$ as $z\in\Bb{C}\!\setminus\!\Bb{R},  z \to 0$ (see \cite[p.\! 609, (4.6)]{bh2}), which yields, in view of Lemma~\ref{intlemma1},  that $ \im\, \ie (1/z) \to 0$ as $z\in\Bb{C},  z \to 0$. So that for every $a > 0$ one can find $\delta_{a} > 0$ such that
$0 < \im\, \ie (1/z) < a$ for all $ 0< |z| < \delta_{a}$ and
\eqref{f17aint} can be written as follows
\begin{align}\label{f4pintlem3}
    &  \hspace{-0,2cm}
\frac{ S^{{\tn{\triangle}}}_{n} (z)}{z}\! =\!
\sum\limits_{k=1}^{n} s^{{\tn{\triangle}}}_{n, k} z^{k-1}\! =\!  \frac{1}{2 \pi i} \int\nolimits_{{\fo{ -1 \! +  \!  \imag   a }} }^{{\fo{ 1 \! + \!   \imag    a }} }   \frac{
   \lambda^{\,\prime}\left(\zeta\right){\rm{e}}^{{\fo{- n \pi {\rm{i}}  \zeta }}}  }{1  - z\lambda\left(\zeta\right)}\, d \zeta\,, \ \
   z\in \left(\delta_{a} \Bb{D}\right) \setminus \{0\}\,.
\end{align}

\noindent By choosing a positive number $\eta_{a}< \delta_{a}$ less than
$(1/2) \min_{\zeta \in [-1+{\rm{i}} a, 1+{\rm{i}} a]} 1/|\lambda(\zeta)|$  we deduce from
\eqref{f4pintlem3} that
\begin{align*}
     &|S^{{\tn{\triangle}}}_{n} (z)|/|z|\leqslant (2/\pi)\big(\max\nolimits_{\zeta \in [-1+{\rm{i}} a, 1+{\rm{i}} a]} |\lambda^{\, \prime}(\zeta)|\big)\exp (n \pi a)  \ , \  z\in \eta_{a}  \Bb{D} \,.
\end{align*}

\noindent Therefore for every $\im\, y > a$ the series in \eqref{f22int}, written  in the form,
\begin{align*}
    &  {\rm{i}} \pi \sum\nolimits_{n=1}^{\infty}\left( \sum\nolimits_{k=1}^{n} s^{{\tn{\triangle}}}_{n, k} z^{k-1}\right) {\rm{e}}^{\imag  \pi n y} = {
\lambda^{\,\prime}\left(y\right)  }\big/{\left(1 - z \lambda\left(y\right)\right)}\,, \quad
   z\in \left(\delta_{a} \Bb{D}\right) \setminus \{0\} \, ,
\end{align*}

\noindent  converges uniformly over all $z\in \eta_{a}  \Bb{D}$ and we
can take the limit as $z \to 0$  to get, in view of  \eqref{f19int} and arbitrariness of $a>0$, the following equivalent form of \eqref{f7intlem3}
\begin{align}\label{f5pintlem3}
    &   \sum\nolimits_{n=1}^{\infty}s^{{\tn{\triangle}}}_{n, 1}  {\rm{e}}^{\imag  \pi n y} = \lambda^{\,\prime}\left(y\right)\big/({\rm{i}} \pi)=\Theta_{2}(y)^{4} \Theta_{4}(y)^{4}\Theta_{3}(y)^{-4} \,
 , \ \im\, y > 0 \ .
\end{align}

 It follows from the functional equation
\begin{align*}
     & \ie( z) - {\rm{sign}} ({\rm{Im}}\, z) =\ie\left(
{z}\big/{(z-1)}\right)  \ , \quad
    z \in \Bb{C}\setminus \Bb{R}\,,
\end{align*}

\noindent (see \cite[p.\! 48, (A.14n)]{bh1}) that
\begin{align}\label{f1pintlem3}
    &  \exp \left(- {\rm{i}} \pi n \ie\left({z}\big/{(z\!-\!1)}\right)\right)\!=\!(-1)^{n} \exp\left(- {\rm{i}} \pi n \ie( z)\right) , \
    z \!\in \!\Bb{C}\!\setminus \!\Bb{R} , \ n \!\geqslant\! 1 .
\end{align}

\noindent By using \eqref{f6int},
\begin{align*}
    &
\begin{array}{rclcll}
\exp\left(- {\rm{i}} \pi n \ie( z)\right)  & = &  S^{{\tn{\triangle}}}_{n} (1/z) &  +  & \Delta_{n}^{S}(z)\ ,  & \quad
    z \in \Bb{D}\setminus \{0\}  ,  \\[0,2cm]
\exp \left(-{\rm{ i}} \pi n \ie\left({z}\big/{(z\!-\!1)}\right)\right)& = & S^{{\tn{\triangle}}}_{n} (1-1/z)&  +  &\Delta_{n}^{S}\left(z\big/(z-1)\right)\ ,  & \quad
    z \in \Bb{C}_{\re\, < 1/2}\,  ,
\end{array}
\end{align*}

\noindent we obtain\vspace{-0,2cm}
\begin{align}\label{f2pintlem3}
     & S^{{\tn{\triangle}}}_{n} (1-1/z)=(-1)^{n}S^{{\tn{\triangle}}}_{n} (1/z)\! + \!(-1)^{n}\Delta_{n}^{S}(z)-\Delta_{n}^{S}\left(z\big/(z-1)\right) ,
\end{align}

\vspace{-0,1cm}
\noindent for every $z \in \left(\Bb{D}\setminus \{0\}\right)\cap\Bb{C}_{\re\, < 1/2}$. Since $S^{{\tn{\triangle}}}_{n}(0)=0$ and $z /(z-1)\!\in\! {\rm{Hol}}(\Bb{D})$ we have
$\Delta_{n}^{S}(z/(z-1))\!\in\! {\rm{Hol}}(\Bb{D}/2)$ and therefore \eqref{f2pintlem3} , \eqref{f12cint} and \eqref{f2intlem3} yield
\begin{align*}
     & S^{{\tn{\triangle}}}_{n} (1-1/z)-(-1)^{n}S^{{\tn{\triangle}}}_{n} (1/z)
=\left((-1)^{n} -1\right)\Delta_{n}^{S}(0)=\left(1-(-1)^{n} \right) r_{4}(n) =S^{{\tn{\triangle}}}_{n} (1) \,
\end{align*}

\noindent for every $z\in \left(\Bb{D}\setminus \{0\}\right)\big/2$,  which proves \eqref{f1intlem3}(a) and completes the proof of Lemma~\ref{intlem3}.

\vspace{0,2cm}
\subsection[\hspace{-0,15cm}. \hspace{-0,04cm}Proofs for Section~\ref{bs}]{\hspace{-0,11cm}{\bf{.}} Proofs for Section~\ref{bs}} $\phantom{a}$\vspace{-0,1cm}

\subsubsection[\hspace{-0,2cm}. Proofs of Lemmas~\ref{bslem1} and~\ref{bslem2}.]{\hspace{-0,11cm}{\bf{.}} Proofs of Lemmas~\ref{bslem1} and~\ref{bslem2}.}
\label{pbslem1}

We first prove  Lemma~\ref{bslem1}.
The properties Lemma~\ref{bslem1}(1),(2),(3) are immediate from \eqref{f28int}, \eqref{f27int}, \eqref{f27aint}
\eqref{f1bs} and  \eqref{f3intth1}.
To prove Lemma~\ref{bslem1}(4) for positive integer $n$ we change the contour $\gamma (-1,1)$ of integration in \eqref{f28int} to
the contour which passes from $-1$ to $-1+{\rm{i}} $  along  $[-1, -1 + {\rm{i}}]$, from $-1 + {\rm{i}}$ to $1+{\rm{i}} $  along
 $[-1 + {\rm{i}}, 1 + {\rm{i}}]$ and from $1 + {\rm{i}}$ to $1$  along  $[1 + {\rm{i}}, 1 ]$.  By using the periodicity of $ \eurm{R}_{n}^{{\tn{\triangle}}} (z)$
and $ \eurm{R}_{n}^{{\tn{\triangle}}} (-1/z)$,  we obtain
\begin{align*}
    & - 4 \pi^{2} n \eurm{H}_{n} (x) \! =  \!\int\nolimits_{-1}^{1}\!
\frac{ \eurm{R}_{n}^{{\tn{\triangle}}} ( {\rm{i}} + t) {\rm{d}} t}{(x+ {\rm{i}} + t)^{2}}\!+\!
{\rm{i}}\!\int\nolimits_{0}^{1}\!\eurm{R}_{n}^{{\tn{\triangle}}} (-1\!+\!{\rm{i}} t)
\!\left[\dfrac{1}{(x\!-\!1\!+\!{\rm{i}}t)^{2}}\!-\! \dfrac{1}{(x\!+\!1\!+\!{\rm{i}}t)^{2}} \right] \!{\rm{d}}  t, \\    &
 4 \pi^{2} n \eurm{M}_{n} (x) \! =  \!\int\nolimits_{-1}^{1}\!
\frac{ \eurm{R}_{n}^{{\tn{\triangle}}}\left(\dfrac{-1}{ {\rm{i}} \!+\! t}\right)
 {\rm{d}}  t}{(x+2 {\rm{i}} + t)^{2}}\!+\!
{\rm{i}}\!\int\nolimits_{0}^{1}\!\eurm{R}_{n}^{{\tn{\triangle}}}\left(\dfrac{1}{1\!-\!{\rm{i}} t}\right)
\!\left[\dfrac{1}{(x\!-\!1\!+\!{\rm{i}}t)^{2}}\!-\! \dfrac{1}{(x\!+\!1\!+\!{\rm{i}}t)^{2}} \right] \!{\rm{d}}  t,
\end{align*}

\noindent and therefore, for any positive integer $N \geqslant 2$ it follows that
\begin{align*}\hspace{-0,1cm}
    - 4 \pi^{2} n \!\!\!\sum_{k=-N}^{N} \eurm{H}_{n} (x+2k)  &\! =  \! \int\nolimits_{-1}^{1}\!\eurm{R}_{n}^{{\tn{\triangle}}} ( {\rm{i}} + t) \left(\sum_{k=-N}^{N}\frac{1 }{(x+2k+ {\rm{i}} + t)^{2}}\right) {\rm{d}}  t
 \\ \hspace{-0,2cm}   &
\!+\!
{\rm{i}}\!\!\int\nolimits_{0}^{1}\!\!\!\eurm{R}_{n}^{{\tn{\triangle}}} (-1\!+\!{\rm{i}} t)
\!\left[\dfrac{1}{(x\!-\!2N\!-\!1\!+\!{\rm{i}}t)^{2}}\!-\! \dfrac{1}{(x\!+\!2N\!+\!1\!+\!{\rm{i}}t)^{2}} \right] \!{\rm{d}}  t, \\ \hspace{-0,1cm}
 4 \pi^{2} n \!\!\!\sum_{k=-N}^{N} \eurm{M}_{n} (x+2k)& \! =  \!\int\nolimits_{-1}^{1}\!\eurm{R}_{n}^{{\tn{\triangle}}}\left(-\dfrac{1}{ {\rm{i}} \!+\! t}\right)
\left(\sum_{k=-N}^{N}\frac{ 1 }{(x+2k+ {\rm{i}} + t)^{2}}\right) {\rm{d}}  t  \\  \hspace{-0,2cm}   &   \!+\!
{\rm{i}}\!\!\int\nolimits_{0}^{1}\!\!\!\eurm{R}_{n}^{{\tn{\triangle}}}\left(\dfrac{1}{1\!-\!{\rm{i}} t}\right)
\!\left[\dfrac{1}{(x\!-2N\!\!-\!1\!+\!{\rm{i}}t)^{2}}\!-\! \dfrac{1}{(x\!+\!2N\!+\!1\!+\!{\rm{i}}t)^{2}} \right] \!{\rm{d}}  t .
\end{align*}

\noindent Letting $N \to \infty$ and using the identity
(see \cite[p.\! 44, 1.422.4]{gra}){\hyperlink{r15}{${}^{\ref*{case15}}$}}\hypertarget{br15}{}
\begin{align*}
    &  \sum\limits_{k \in \Bb{Z}} \frac{1}{(2 k + z)^{2}} =
    \frac{\pi^{2}}{4 \sin^{2} \dfrac{\pi z}{2}}
    =  - \pi^{2} \sum_{m \geqslant 1}m {\rm{e}}^{{\fo{ {\rm{i}} \pi m z}}}
    \ ,  \qquad z \in \Bb{H}     \, , \
\end{align*}

\noindent we derive from the first of the latter two equalities and \eqref{f12aint} that
\begin{align*}
    &  - 4 \pi^{2} n \sum_{{\fo{k \in \Bb{Z}}}}   \eurm{H}_{n} (x + 2k)=
\frac{\pi^{2}}{4}\int\limits_{-1}^{1}\!
\frac{\eurm{R}_{n}^{{\tn{\triangle}}} ( {\rm{i}} + t)\, {\rm{d}}  t}{
\sin^{2} \dfrac{\pi (x+{\rm{i}} + t)}{2}} = - \pi^{2} \times
  \\    &
\times \! \sum_{m \geqslant 1}m  {\rm{e}}^{ -  \pi m } {\rm{e}}^{ {\rm{i}} \pi m x}
    \int\limits_{-1}^{1} {\rm{e}}^{ {\rm{i}} \pi m t}
\bigg(\! {\rm{e}}^{ n \pi } {\rm{e}}^{-{\rm{i}} n \pi t}\! + \!\Delta_{n}^{S} (0) + \sum\limits_{k\geqslant 1} {\eurm{d}}_{n,k} {\rm{e}}^{-  \pi k } {\rm{e}}^{\imag  \pi k t}\!\bigg)
{\rm{d}}  t \\    &   =- 2 \pi^{2} n {\rm{e}}^{ {\fo{{\rm{i}} \pi n x}}} \, , \
\end{align*}

\noindent and from the second one  and \eqref{f23aaint} that
\begin{align*}
    &  4 \pi^{2} n (-1)^{n} \sum_{{\fo{k \in \Bb{Z}}}}  \eurm{M}_{n} (x+2k) \! =  \! \frac{(-1)^{n}\pi^{2}}{4} \int\limits_{-1}^{1}\!
   \dfrac{ \eurm{R}_{n}^{{\tn{\triangle}}}\left(-\dfrac{1}{ {\rm{i}}+ t}\right)\, {\rm{d}}  t}{
\sin^{2} \dfrac{\pi (x+{\rm{i}} + t)}{2}}   =- \pi^{2}\times
\\    &
\times\! \sum_{m \geqslant 1}m  {\rm{e}}^{ -  \pi m } {\rm{e}}^{ {\rm{i}} \pi m x}
    \int\limits_{-1}^{1} {\rm{e}}^{ {\rm{i}} \pi m t}
\bigg(\! -S^{{\tn{\triangle}}}_{n}(1)\!-\!16s^{{\tn{\triangle}}}_{n, 1}
{\rm{e}}^{- \pi  }
{\rm{e}}^{\imag  \pi  t} \!+\!\sum\limits_{k\geqslant 2} {\eurm{b}}_{n,k}{\rm{e}}^{-  \pi k }{\rm{e}}^{\imag  \pi k t}\!\bigg)
{\rm{d}}  t \\    &   =0\,.
\end{align*}

\noindent This completes the proof of  Lemma~\ref{bslem1}(4) for positive integer $n$. But according to  \eqref{f2intth1}, for any $n \geqslant 1$ we have
\begin{align*}
     & \sum_{{\fo{k \in \Bb{Z}}}}   \eurm{H}_{-n} (x + 2k)=
     \sum_{{\fo{k \in \Bb{Z}}}}   \eurm{H}_{n} (-x - 2k) =
     \sum_{{\fo{k \in \Bb{Z}}}}   \eurm{H}_{n} (-x + 2k) = \frac{{\rm{e}}^{ {\fo{-{\rm{i}} \pi n x}}}}{2} \ , \ x\! \in \! \Bb{R} \,,
\end{align*}

\noindent  and, by \eqref{f3intth1},
\begin{align*}
     & \eurm{M}_{-n}(x) :=  \eurm{H}_{-n} (-1/x)/x^{2}= \eurm{H}_{n} (1/x)/x^{2}=\eurm{M}_{n}(-x) \ , \ x\! \in \! \Bb{R}\! \setminus\! \{0\} \ , \
\end{align*}

\noindent from which $\eurm{M}_{-n}(x) =\eurm{M}_{n}(-x)$ for all $x\! \in \! \Bb{R}$, in view of the continuity of $\eurm{M}_{n}$. Then
\begin{align*}
     & \sum_{{\fo{k \in \Bb{Z}}}}   \eurm{M}_{-n} (x + 2k)=
     \sum_{{\fo{k \in \Bb{Z}}}}   \eurm{M}_{n} (-x - 2k) =
     \sum_{{\fo{k \in \Bb{Z}}}}   \eurm{M}_{n} (-x + 2k) =0 \ , \ x\! \in \! \Bb{R} \,.
\end{align*}

 \noindent This proves  Lemma~\ref{bslem1}(4) and completes the proof of  Lemma~\ref{bslem1}.

\vspace{0.25cm}
Next, we prove  Lemma~\ref{bslem2}.
The properties Lemma~\ref{bslem2}(1),(2),(4) are simple consequences of \eqref{f24int} and   \eqref{f9int}.
To prove Lemma~\ref{bslem2}(3) we change the contour $\gamma (-1,1)$ of integration in \eqref{f9int}
similar to that of in Section~\ref{pbslem1} and by the periodicity of $\Theta_{3}$  we get
\begin{align*}
    &  2 \pi\imag  \eurm{H}_{0} (x)\!=\!
\int\limits_{-1}^{1}\!
 \frac{({\rm{i}}+t) \, \Theta_{3}\left({\rm{i}}+t\right)^{4}}{  x^{2} - ({\rm{i}}+t)^{2}} {\rm{d}} t \\    &   +
\dfrac{{\rm{i}}}{2}\int\limits_{0}^{1} \! \Theta_{3}\left(-1+{\rm{i}}t\right)^{4}
\left[\frac{1}{  x\! - \!(-1\!+\!{\rm{i}}t)} \!- \!\frac{1}{  x\! + \!(-1\!+\!{\rm{i}}t)} \!- \!
\frac{1}{  x \!-\! (1\!+\!{\rm{i}}t)} \!+\! \frac{1}{  x \!+\! (1\!+\!{\rm{i}}t)}\right]\! {\rm{d}} t,
\end{align*}

\noindent \noindent and therefore, for any positive integer $N \geqslant 2$ this yields
\begin{align*}
    & 2 \pi\imag \sum_{k=-N}^{N} \eurm{H}_{0} (x+2k)\!=\!
\int\limits_{-1}^{1}\!\Theta_{3}\left({\rm{i}}+t\right)^{4}
  \left(\sum_{k=-N}^{N}\frac{({\rm{i}}+t) \, }{  (x+2k)^{2} - ({\rm{i}}+t)^{2}} \right){\rm{d}} t \\    &   +
\dfrac{{\rm{i}}}{2}\int\limits_{0}^{1} \! \Theta_{3}\left(-1+{\rm{i}}t\right)^{4}
\left[\frac{1}{  x\! + 2N + \!1\!-\!{\rm{i}}t}  \!- \!
\frac{1}{  x\!-\!2N \!-\! 1\!-\!{\rm{i}}t} \right]\! {\rm{d}} t \\    &   +
\dfrac{{\rm{i}}}{2}\int\limits_{0}^{1} \! \Theta_{3}\left(-1+{\rm{i}}t\right)^{4}
\left[ \frac{1}{  x \!+\!2N\!+\! 1\!+\!{\rm{i}}t} \!- \!\frac{1}{  x\! -\!2N \!-1\!+\!{\rm{i}}t}\right]\! {\rm{d}} t\,.
\end{align*}

\noindent Letting $N \to \infty$ and using the identity{\hyperlink{r16}{${}^{\ref*{case16}}$}}\hypertarget{br16}{}
\begin{align*}
       \sum\limits_{ {\fo{ k \in \Bb{Z} }} } \dfrac{ z}{(2 k + x)^{2} - z^{2}}&=
- \dfrac{\pi}{4}\left(\cot \dfrac{\pi(z -  x)}{2}  + \cot \dfrac{\pi(z +  x)}{2}\right)
 \\    &    =
   \dfrac{\pi {\rm{i}}}{2} +  {\rm{i}} \pi \sum\limits_{ {\fo{ m  \geqslant 1  }} }{\rm{e}}^{{\rm{i}} \pi m z} \cos \pi m x
    \ ,  \qquad z \in \Bb{H}  \, , \  x \in \Bb{R}    \, , \
\end{align*}

\noindent we derive from the latter  equality that
\begin{align*}
    &  2 \pi\imag \sum_{{\fo{k \in \Bb{Z}}}}   \eurm{H}_{0} (x + 2k)=
- \dfrac{\pi}{4}\int\limits_{-1}^{1}\!\Theta_{3}\left({\rm{i}}+t\right)^{4}
  \left(\cot \dfrac{\pi({\rm{i}}+t -  x)}{2}  + \cot \dfrac{\pi({\rm{i}}+t +  x)}{2} \right){\rm{d}} t\\    &   =
\int\limits_{-1}^{1}\!
\left(1  \!+ \! 2\sum\limits_{n\geqslant 1} {\rm{e}}^{- \pi n^{2} } {\rm{e}}^{{\rm{i}} \pi n^{2} t} \right)^{\!4}
\left(
 \dfrac{\pi {\rm{i}}}{2} +  {\rm{i}} \pi \sum\limits_{ {\fo{ m  \geqslant 1  }} }{\rm{e}}^{- \pi m }{\rm{e}}^{{\rm{i}} \pi m t} \cos \pi m x
\right) {\rm{d}} t={\rm{i}} \pi \, .
\end{align*}

\noindent This proves  Lemma~\ref{bslem2}(3) and completes the proof of  Lemma~\ref{bslem2}.


\vspace{0,2cm}
\subsection[\hspace{-0,15cm}. \hspace{-0,04cm}Proofs for Section~\ref{bsden}]{\hspace{-0,11cm}{\bf{.}} Proofs for Section~\ref{bsden}} $\phantom{a}$\vspace{-0,1cm}

\subsubsection[\hspace{-0,2cm}. Proofs of \eqref{f8contgen} and \eqref{f3bsden}.]{\hspace{-0,11cm}{\bf{.}} Proofs of \eqref{f8contgen} and \eqref{f3bsden}.}
\label{pf8contgen} \ We first prove \eqref{f8contgen}. In the notation
\begin{align}\label{f1pf8contgen}
     & I := \begin{pmatrix} 1 & 0 \\ 0 & 1 \\ \end{pmatrix}  \, , \ \
 T := \begin{pmatrix} 1 & 1 \\ 0 & 1 \\ \end{pmatrix}  \, , \ \  S := \begin{pmatrix} 0 & 1 \\ -1 & 0 \\ \end{pmatrix} \, ,\ \
\phi_{{\nor{(\begin{smallmatrix} a & b \\ c & d \end{smallmatrix})}}}(z) := \dfrac{a  z + b}{c  z + d} \,  ,
\end{align}

\noindent where $ z\in \Bb{H} $ and  $a, b, c, d \!\in\! \Bb{Z}$ such that $ad - bc\! \neq\! 0$, we obviously have
\begin{align}\label{f2pf8contgen}
     &
\begin{array}{l}
\begin{displaystyle}
S^{2} =  - I\, , \quad T^{ -2 n}=
  \begin{pmatrix} 0 & -2 n \\ 0 &  1 \\ \end{pmatrix} \, , \quad  S T^{ -2 n}=
  \begin{pmatrix} 0 & 1 \\ -1 &  2 n \\ \end{pmatrix} \, ,
\end{displaystyle}\\[0,4cm]
 \begin{displaystyle}
\phi_{{\fo{ T^{{\fo{ -2 n}}}}}}(z)= z-2n  \in \Bb{H}\, , \quad
\phi_{{\fo{S T^{ {\fo{-2 n}}}}}}(z)= \dfrac{1}{2n -z}  \in \Bb{H}
 \, , \quad n\in \Bb{Z}  \, .
\end{displaystyle}
\end{array}
\end{align}

 Let $\phi \!\in \!\Gamma_{\vartheta}^{\hspace{0,015cm}{\tn{||}}}$. Then, by the definitions \eqref{f1bsden} and  \eqref{f1zbsden}, we have
\begin{align}\label{f3pf8contgen}
     & \phi(z)\!= \!\phi_{\eufm{n}}(z)\!= \!\phi_{{\fo{n_{N}, ..., n_{1}}}}(z)=
 \phi_{ {\fo{ S T^{{\fo{-2 n_{N}}}}  S T^{{\fo{-2 n_{N-1}}}}\ldots S T^{{\fo{-2 n_{1}}}}   }}}(z) \ ,
\end{align}

\noindent for some $ \eufm{n}\!=\!(n_{N}, ..., n_{1})\in \Bb{Z}_{\neq 0}^{\hspace{0,02cm}\Bb{N}_{\hspace{-0,02cm}\eurm{f}}}\!:=\!\cup_{k\geqslant 1} \Bb{Z}_{\neq 0}^{k}$, $N \in \Bb{N}$ and every $z\in \Bb{H} $.

To prove the right-hand side inclusion of \eqref{f8contgen}, which by \eqref{f7contgen} can be written
in the form\vspace{-0,3cm}
\begin{align}\label{f4pf8contgen}
    &   \phi_{\eufm{n}} \left(\Bb{H}_{|\re|\leqslant 1}\right)\! \subset\! \Bb{H}_{|\re|<1}\! \setminus\!\fet \, , \ \  \eufm{n} \!\in \! \Bb{Z}_{\neq 0}^{\hspace{0,02cm}\Bb{N}_{\hspace{-0,02cm}\eurm{f}}} \, , \
\end{align}

\noindent observe that $\Bb{H}_{|\re|\leqslant 1}- 2 n \subset \Bb{H}_{|\re|\geqslant 1}$ for any
$n \in \Bb{Z}_{\neq 0}$, and therefore it follows from $- 1/ \Bb{H}_{|\re|\geqslant 1} = \Bb{H}_{|\re|<1}\! \setminus\!\fet$ that
\begin{align}\label{f5pf8contgen}
    &\hspace{-0,2cm} \phi_{{\fo{S T^{ {\fo{-2 n}}}}}} \left(\Bb{H}_{|\re|\leqslant 1}\right)\subset
      \Bb{H}_{|\re|<1}\! \setminus\!\fet \subset  \Bb{H}_{|\re| \leqslant 1} \, , \ \  \eufm{n} \!\in \! \Bb{Z}\!\setminus\!\{0\} \,.
\end{align}

\noindent Successive applications of \eqref{f5pf8contgen} to \eqref{f3pf8contgen} prove \eqref{f4pf8contgen}
and the right-hand side inclusion of \eqref{f8contgen} as well.

\vspace{0,1cm}
For  $ \eufm{n}\!=\!(n_{N}, ..., n_{1})\!\in\! \Bb{Z}_{\neq 0}^{N}$ and $N\! \in \!\Bb{N}$ from \eqref{f3pf8contgen} introduce the matrices
\begin{align}
    &  \label{f0preresauxlem1}
         & \hspace{-0,4cm}\left\{   \begin{array}{ll}
\begin{displaystyle}  \begin{pmatrix}  p^{\eufm{n}}_{-1} & q^{\eufm{n}}_{-1} \\ p^{\eufm{n}}_{0} & q^{\eufm{n}}_{0}\\ \end{pmatrix} := \begin{pmatrix} 1 & 0 \\ 0 &  1 \\ \end{pmatrix}
   \ , \ \end{displaystyle}
    \\[0.5cm]
\begin{displaystyle}   \begin{pmatrix} p^{\eufm{n}}_{k-1} & q^{\eufm{n}}_{k-1}  \\ p^{\eufm{n}}_{k} & q^{\eufm{n}}_{k} \\ \end{pmatrix}\! :=  \!\begin{pmatrix}0 & 1 \\ -1 &  2 n_{k} \\ \end{pmatrix} \begin{pmatrix}0 & 1 \\ -1 &  2 n_{k-1} \\ \end{pmatrix}\ldots \begin{pmatrix}0 & 1 \\ -1 &  2 n_{1} \\ \end{pmatrix} ,  \
 1 \!\leqslant\! k\! \leqslant\! N \,  , \end{displaystyle}   \end{array} \right.
\end{align}

\noindent whose elements obviously satisfy
\begin{align}\label{f2apreresauxlem1a}
p^{\eufm{n}}_{k-1} q^{\eufm{n}}_{k} - p^{\eufm{n}}_{k} q^{\eufm{n}}_{k-1} = 1  \ , \  \quad\quad    0 \leqslant k \leqslant N \ ,
 \end{align}

\noindent and\vspace{-0,2cm}
\begin{align}\label{f2preresauxlem1a}
     &  \left\{  \begin{array}{lll}
        p^{\eufm{n}}_{k} = 2 n_{k} p^{\eufm{n}}_{k-1} - p^{\eufm{n}}_{k-2} \ , \   &       \ p^{\eufm{n}}_{0} = 0  \ ,   &   \   p^{\eufm{n}}_{-1} =  1\  ,    \\
         q^{\eufm{n}}_{k} = 2 n_{k} q^{\eufm{n}}_{k-1} -\, q^{\eufm{n}}_{k-2} \ , \   &        \ q^{\eufm{n}}_{0} = 1  \ ,   &   \ q^{\eufm{n}}_{-1} =  0  \  ,
              \end{array} \right.  \quad\quad    1 \leqslant k \leqslant N \,.
\end{align}

\noindent In the notation \eqref{f7contgen} and \eqref{f3pf8contgen}, we have (see \cite[p.\! 63]{bh1})
\begin{align}\label{f6zpf8contgen}
     & \phi_{\eufm{n}}(z)\!= \! \dfrac{z p^{\eufm{n}}_{N-1} + q^{\eufm{n}}_{N-1}}{z p^{\eufm{n}}_{N} + q^{\eufm{n}}_{N}} \ , \
     \eufm{n}\!=\!(n_{N}, ..., n_{1})\!\in\! \Bb{Z}_{\neq 0}^{N} \ , \ N\! \in \!\Bb{N} \, , \ z \in \Bb{H} \,,
\end{align}

\noindent and ($\phi\!= \!\phi_{\eufm{n}}$)
\begin{align}\label{f6pf8contgen}
    &  \left|a_{\phi}\right|=\left|p^{\eufm{n}}_{N-1}\right| \, , \
    \left|b_{\phi}\right|=\left|q^{\eufm{n}}_{N-1}\right| \, , \
    \left|c_{\phi}\right|=\left|p^{\eufm{n}}_{N}\right| \, , \
    \left|d_{\phi}\right|=\left|q^{\eufm{n}}_{N}\right| \, .
\end{align}

\noindent
We prove that
\begin{align}\label{f1preresauxlem2}
    &
      {\rm{(a)}} \ \ \big|q^{\eufm{n}}_{k}\big| > \big|p^{\eufm{n}}_{k}\big| > \big|p^{\eufm{n}}_{k-1}\big| \ ,  \quad   {\rm{(b)}} \  \ \big|q^{\eufm{n}}_{k}\big| > \big|q^{\eufm{n}}_{k-1}\big| > \big|p^{\eufm{n}}_{k-1}\big| \ , \quad 1 \leqslant k \leqslant N \,  .
  \end{align}

For arbitrary positive integer $Q$, real numbers $a, b$ satisfying $|a| < |b|$ and nonzero integers $\{m_{k}\}_{k = 1}^{Q}\subset \Bb{Z}_{\neq 0}$,  we define the following collection
$\{y_{k}\}_{k = -1}^{Q}$ of real numbers
\begin{gather}\label{f1preresauxprlem2}
  y_{k} = 2 m_{k} y_{k-1} -\, y_{k-2} \ , \ \ y_{0} = b    \ , \ y_{-1} =  a  \ ,  \ \quad  1 \leqslant k \leqslant Q \, , \ \ |a| < |b| \ .
\end{gather}

\noindent We state that
\begin{align}\label{f2preresauxprlem2}
    &   0 < |b| = |y_{0}|< |y_{1}| < \ldots   < |y_{Q}| \ .
\end{align}

\noindent We prove $|y_{k-1}|< |y_{k}|$, $ 1 \leqslant k \leqslant Q$, by induction on $k$. For $k=1$ such an inequality holds because   $| y_{1}| = |2 m_{1} b -\, a | \geqslant 2\cdot |m_{1} |\cdot |b| - |a| > 2\cdot |m_{1} |\cdot |b| - |b| = ( 2\cdot |m_{1} |-1) |b| \geqslant  |b|$, and hence,  $|b| =| y_{0}| <| y_{1}|$. This also completely proves \eqref{f2preresauxprlem2} for $Q=1$. If $Q\geqslant 2$
we assume that  $| y_{k-2}| < | y_{k-1}|  $ for all $ 2 \leqslant k \leqslant P$  for some $ 2\leqslant P \leqslant Q$. Then
$| y_{P }| = |2 m_{P } y_{P -1} -\, y_{P -2}| \geqslant 2\cdot |m_{P } |\cdot | y_{P -1}| - |y_{P -2}| > 2\cdot |m_{P } |\cdot | y_{P -1}| - |y_{P -1}|  =
    ( 2\cdot |m_{P } |-1) | y_{P -1}|  \geqslant | y_{P -1}|$, i.e. $| y_{P -1}| < | y_{P }|$. By induction, we conclude that \eqref{f2preresauxprlem2} is true.

For $k=1$ \eqref{f1preresauxlem2} holds because   $p^{\eufm{n}}_{0} = 0$, $q^{\eufm{n}}_{0}=1$, $p^{\eufm{n}}_{1}= -1$ and $q^{\eufm{n}}_{1}= 2 n_{1}$,  in view of \eqref{f0preresauxlem1}.

It remains to examine  in \eqref{f1preresauxlem2} the case $2 \leqslant k \leqslant N$.
By setting first $Q:=N-1$, $a:=0$, $b:=-1$, $y_{k}:=p^{\eufm{n}}_{k+1}$, $-1 \leqslant k \leqslant N-1$,
  $m_{k}:=n_{k+1}$, $1 \leq k \leqslant N-1$, in \eqref{f1preresauxprlem2} and then
  $Q:=N$, $a:=0$, $b:=1$, $y_{k}:=q^{\eufm{n}}_{k}$, $-1 \leq k\leqslant N$, $m_{k}:=n_{k}$, $1 \leqslant k \leqslant N$,
   we deduce from \eqref{f2preresauxlem1a} and \eqref{f2preresauxprlem2} that \vspace{-0,1cm}
\begin{align}\label{f6preresauxprlem2}
    &  |q^{\eufm{n}}_{k}|\! >\! |q^{\eufm{n}}_{k-1}|\! \geqslant\! 1 \, , \  1 \!\leqslant\! k \!\leqslant\! N \, ;    \ \
     |p^{\eufm{n}}_{k}|\! > \!|p^{\eufm{n}}_{k-1}|\! \geqslant\! 1  \ , \ 2 \!\leqslant\! k \!\leqslant\! N \ ,  \ \     | p^{\eufm{n}}_{1}|\! >\! |p^{\eufm{n}}_{0}|  \, ,
\end{align}

\vspace{-0,1cm}
\noindent where the last inequality follows from $p^{\eufm{n}}_{0} = 0$ and $p^{\eufm{n}}_{1}= -1$.

For positive integer  $2 \leqslant r \leqslant N$ let  $\eufm{n}(r\!):= (-n_{1}, -n_{2}, \ldots, -n_{r})\!\in\! \Bb{Z}_{\neq 0}^{r}$. Introducing  by the formulas \eqref{f0preresauxlem1} matrices  corresponding to $\eufm{n}(r\!)$, we get \vspace{-0,1cm}
\begin{align*}
   &  A_{k}^{\eufm{n}(r)}:=
 \begin{pmatrix} p_{k-1}^{\eufm{n}(r)} & q_{k-1}^{\eufm{n}(r)}  \\ p_{k}^{\eufm{n}(r)} & q_{k}^{\eufm{n}(r)} \\ \end{pmatrix} =  \begin{pmatrix}0 & 1 \\ -1 &  -2 n_{r-k+1} \\ \end{pmatrix}
 \begin{pmatrix}0 & 1 \\ -1 &  -2 n_{r-k+2} \\ \end{pmatrix}\ldots \begin{pmatrix}0 & 1 \\ -1 &  -2 n_{r} \\ \end{pmatrix}
   \end{align*}

\vspace{-0,1cm}
\noindent for every $1\leqslant k \leqslant r $,  where, in view of \eqref{f2preresauxlem1a} and \eqref{f6preresauxprlem2},\vspace{-0,1cm}
\begin{align}&  \big| p_{k}^{\eufm{n}(r)}\big| > \big|p_{k-1}^{\eufm{n}(r)}  \big| \, ,
\qquad  \big|q_{k}^{\eufm{n}(r)}\big| > \big|q_{k-1}^{\eufm{n}(r)}\big| \, ,
\qquad  1\leqslant k \leqslant r \ .
\label{f8preresauxprlem2}\end{align}

\vspace{-0,1cm}
\noindent Applying the transpose operation to $A_{r}^{\eufm{n}(r)}$,  we obtain\vspace{-0,1cm}
\begin{align*}
    & (-1)^{r}\! \begin{pmatrix} p_{r-1}^{\eufm{n}(r)} & q_{r-1}^{\eufm{n}(r)}  \\ p_{r}^{\eufm{n}(r)} & q_{r}^{\eufm{n}(r)} \\ \end{pmatrix} \!=\!
\begin{pmatrix}0 & -1 \\ 1 &  2 n_{1} \\ \end{pmatrix}
 \begin{pmatrix}0 & -1 \\ 1 &  2 n_{2} \\ \end{pmatrix}\ldots \begin{pmatrix}0 & -1 \\ 1 &  2 n_{r} \\ \end{pmatrix} \! =\!
\begin{pmatrix} p^{\eufm{n}}_{r-1} & q^{\eufm{n}}_{r-1}  \\ p^{\eufm{n}}_{r} & q^{\eufm{n}}_{r} \\ \end{pmatrix}^{\!\!{\rm{T}}}\!\! \!=\!
\begin{pmatrix} p^{\eufm{n}}_{r-1} &  p^{\eufm{n}}_{r} \\ q^{\eufm{n}}_{r-1} & q^{\eufm{n}}_{r} \\ \end{pmatrix} , \end{align*}

\vspace{-0,1cm}
\noindent from which,\vspace{-0,1cm}
\begin{align}\label{f8pf8contgen}
    &
\begin{array}{rclrcl}
  (-1)^{r} p_{r-1}^{\eufm{n}(r)}  &    =   &    p^{\eufm{n}}_{r-1}  \, ,   &   \quad (-1)^{r} p_{r}^{\eufm{n}(r)}
  &    =   &    q^{\eufm{n}}_{r-1}  \, ,  \\[0,3cm]
(-1)^{r} q_{r-1}^{\eufm{n}(r)}&    =   &  p^{\eufm{n}}_{r}  \, ,   &   \quad
(-1)^{r} q_{r}^{\eufm{n}(r)}&    =   &  q^{\eufm{n}}_{r}\, ,
\end{array}
\end{align}

\vspace{-0,1cm}
\noindent and therefore, by arbitrariness of  $2 \!\leqslant\! r\! \leqslant\! N$, we deduce from \eqref{f8preresauxprlem2} that
\vspace{-0,1cm}
\begin{align}\label{f7pf8contgen}
    &
\left| p^{\eufm{n}}_{k}\right| < \left| q^{\eufm{n}}_{k}\right| \ , \quad  1 \leqslant k \leqslant N \, .
\end{align}

\vspace{-0,1cm}
\noindent Together with \eqref{f6preresauxprlem2} and \eqref{f6pf8contgen} this proves the required
inequalities $|a_{\phi}|\!<\! |b_{\phi}|\!<\!|d_{\phi}|$,$ |a_{\phi}|\!<\! |c_{\phi}|\!<\!|d_{\phi}|$ in the left-hand side of \eqref{f8contgen} (cp. \cite[p.\! 4, Lemma 2]{boc}).

Besides that, the relationships \eqref{f2preresauxlem1a} written for ${\eufm{n}}(r)$ and $k=r$ give\vspace{-0,1cm}
\begin{align*}
    &  \begin{array}{rccccclr}
     q^{\eufm{n}}_{r-1} \!  & \!  = \!  &   (-1)^{r}  p^{\eufm{n}(r)}_{r}  &  \! =\!   &(-1)^{r}\left( \!-\!2 n_{1} p^{\eufm{n}(r)}_{r-1} \!-\! p^{\eufm{n}(r)}_{r-2}\right)  & \!  = \!  & -\!2 n_{1} p^{\eufm{n}}_{r-1} \!- \! p^{\eufm{n}(r)}_{r-2} \, ,   &  |p^{\eufm{n}(r)}_{r-2} | < |p^{\eufm{n}}_{r-1} |,     \\[0,3cm]
    q^{\eufm{n}}_{r}  &  \! = \!  &  (-1)^{r}      q^{\eufm{n}(r)}_{r}  &  \! =  \! & (-1)^{r} \left(\!-\!2 n_{1} q^{\eufm{n}(r)}_{r-1} \!-\! q^{\eufm{n}(r)}_{r-2}\right)  &  \! = \!  & -\!2 n_{1} p^{\eufm{n}}_{r}\! -\! q^{\eufm{n}(r)}_{r-2} \, ,  &  |q^{\eufm{n}(r)}_{r-2} | < |p^{\eufm{n}}_{r} |,
              \end{array}
\end{align*}

\vspace{-0,1cm}
\noindent where the inequalities \eqref{f8preresauxprlem2} have been used. Since $r$ is arbitrary integer satisfying $2 \!\leqslant\! r\! \leqslant\! N$, we get\vspace{-0,1cm}
\begin{align}\label{f9preresauxprlem2}
    &   \left|  q^{\eufm{n}}_{k} + 2 n_{1} p^{\eufm{n}}_{k} \right| \leqslant \left| p^{\eufm{n}}_{k}\right| -1 \ , \quad  1 \leqslant k \leqslant N \, .
\end{align}

\vspace{0.25cm}
Next, we prove \eqref{f3bsden}. Let  $N\! \in \!\Bb{N}$ and $ \eufm{n}\!=\!(n_{N}, ..., n_{1})\!\in\! \Bb{Z}_{\neq 0}^{N}$. Then in the notation \eqref{f0preresauxlem1} we have
\begin{align}\label{f1preresauxcor1}
    &  \begin{array}{ll}
        {\rm{(a)}} \ \  {\rm{sign}} \, (q^{\eufm{n}}_{k}) \cdot {\rm{sign}} \, (q^{\eufm{n}}_{k-1}) =
\sigma_{k}       \ ,  &   \ \
{\rm{(b)}} \ \     {\rm{sign}} \, (p^{\eufm{n}}_{k}) \cdot {\rm{sign}} \, (q^{\eufm{n}}_{k}) = -
\sigma_{1}
     \ ,  \\ \phantom{{\rm{(a)}} \ \  }
       \sigma_{k} := {\rm{sign}} \, (n_{k}) \ ,   & \ \   \phantom{{\rm{(b)}} \ \  }   1  \leqslant k \leqslant N \, .
       \end{array}
\end{align}

\noindent Indeed, \eqref{f6preresauxprlem2} and \eqref{f2preresauxlem1a} yield that
$q^{\eufm{n}}_{k}/q^{\eufm{n}}_{k-1} \in (2n_{k}-1, 2n_{k} +1)$ for each $1  \leqslant k \leqslant N$ and hence,  \eqref{f1preresauxcor1}(a) holds. To prove \eqref{f1preresauxcor1}(b) assume to the contrary that there exists $1 \leqslant k \leqslant N $ such that $  {\rm{sign}} \, (p^{\eufm{n}}_{k}) \cdot {\rm{sign}} \, (q^{\eufm{n}}_{k}) = \sigma_{1}$. Then $|  q^{\eufm{n}}_{k} + 2 n_{1} p^{\eufm{n}}_{k} | = |q^{\eufm{n}}_{k} | + 2 |n_{1}| | p^{\eufm{n}}_{k} | \geqslant 2  | p^{\eufm{n}}_{k} | +1 > | p^{\eufm{n}}_{k} | - 1$ which contradicts
\eqref{f9preresauxprlem2}. This contradiction proves \eqref{f1preresauxcor1}(b). Observe, that
 \eqref{f1preresauxcor1} for $N \geqslant 2$ yields
\begin{align}\label{f0pf3bsden}
    & p^{\eufm{n}}_{0}  =0 \ , \quad p^{\eufm{n}}_{1}  =-1 \ , \quad  {\rm{sign}} \, (p^{\eufm{n}}_{k}) \cdot {\rm{sign}} \, (p^{\eufm{n}}_{k-1}) =
\sigma_{k}       \ , \quad 2  \leqslant k \leqslant N \, .
\end{align}

\noindent
Then
 we deduce from \eqref{f6zpf8contgen} and \eqref{f1preresauxcor1} that (in accordance with \eqref{f9contgen}, we have $\sigma_{\eufm{n}} = \sigma_{1} $)
\begin{align}\label{f1pf3bsden}
    & \hspace{-0,2cm}
\begin{array}{lcccclcl}
  \phi_{\eufm{n}}(-\sigma_{\eufm{n}})  &    =  &
    \dfrac{  q^{\eufm{n}}_{ N-1}   - \sigma_{1}p^{\eufm{n}}_{ N-1}  }{ q^{\eufm{n}}_{ N} -  \sigma_{1}p^{\eufm{n}}_{ N} }&    =  &     \sigma_{N} \, \dfrac{ |q^{\eufm{n}}_{ N-1}|+|p^{\eufm{n}}_{ N-1}|}{|q^{\eufm{n}}_{ N}|+  |p^{\eufm{n}}_{ N}|} \ ,   &    \phi_{\eufm{n}}(\infty) &    =  &    \dfrac{p^{\eufm{n}}_{N-1}}{p^{\eufm{n}}_{N}}\ ,
         \\[0.5cm]
  \phi_{\eufm{n}}(\sigma_{\eufm{n}}) &    =  &
    \dfrac{  q^{\eufm{n}}_{ N-1}   + \sigma_{1}p^{\eufm{n}}_{ N-1}  }{ q^{\eufm{n}}_{ N} +  \sigma_{1}p^{\eufm{n}}_{ N} }&    =  &     \sigma_{N} \, \dfrac{ |q^{\eufm{n}}_{ N-1}|-|p^{\eufm{n}}_{ N-1}|}{|q^{\eufm{n}}_{ N}|-  |p^{\eufm{n}}_{ N}|} \ ,   &    \phi_{\eufm{n}}(\, 0\, ) &    =  &    \dfrac{q^{\eufm{n}}_{N-1}}{q^{\eufm{n}}_{N}} \ .
\end{array}
\hspace{-0,1cm}\end{align}

\noindent It follows from \eqref{f7pf8contgen} that $q^{\eufm{n}}_{N} (q^{\eufm{n}}_{N} \pm p^{\eufm{n}}_{N}) >0$
 and therefore we obtain
the  inequalities $\phi_{\eufm{n}}(- 1)\! < \!\phi_{\eufm{n}}(0) \!< \!\phi_{\eufm{n}}(1) $ of \eqref{f3bsden}(a) , because \eqref{f2apreresauxlem1a} gives
\begin{align}\label{f2pf3bsden}
    & \hspace{-0,2cm} \phi_{\eufm{n}}(0)\!- \!\phi_{\eufm{n}}(- 1) \! =\! {1}\big/\left(q^{\eufm{n}}_{N} (q^{\eufm{n}}_{N} \!-\! p^{\eufm{n}}_{N})\right)  , \
\phi_{\eufm{n}}(1)\! -\!  \phi_{\eufm{n}}(0) \!=\! {1}\big/\left(q^{\eufm{n}}_{N} (q^{\eufm{n}}_{N}\! + \! p^{\eufm{n}}_{N})\right) .
\end{align}

\noindent
At the same time, for $N=1$ we have $\sigma_{\eufm{n}} \phi_{\eufm{n}}(-\sigma_{\eufm{n}}) =
\sigma_{1} \phi_{\eufm{n}}(-\sigma_{\eufm{n}}) > 0 =  \phi_{\eufm{n}}(\infty)$, as follows from \eqref{f1pf3bsden}
and \eqref{f2preresauxlem1a}. This proves \eqref{f3bsden}(b) for  $N=1$. But if
$N \geqslant 2$ then  we deduce from \eqref{f2apreresauxlem1a} that the sign of the number
$ \phi_{\eufm{n}}(-\sigma_{\eufm{n}}) -   \phi_{\eufm{n}}(\infty) = - 1/ (p^{\eufm{n}}_{ N} ( q^{\eufm{n}}_{ N} -  \sigma_{1}p^{\eufm{n}}_{ N}) )$ is equal to $ {\rm{sign}} (p^{\eufm{n}}_{ N}q^{\eufm{n}}_{ N}) = - \sigma_{1}$,
in view of \eqref{f7pf8contgen}  and \eqref{f1preresauxcor1}(b). This proves \eqref{f3bsden}(b) for $N \geqslant 2$.
 Since \eqref{f3bsden}(c) is immediate from \eqref{f1preresauxlem2}(a) and \eqref{f1pf3bsden},
  the proof of \eqref{f3bsden} is completed. Hence,
  for $N\! \in \!\Bb{N}$ and $ \eufm{n}\!=\!(n_{N}, ..., n_{1})\!\in\! \Bb{Z}_{\neq 0}^{N}$,
the hyperbolic quadrilateral  $\fet^{\hspace{0,01cm}\eufm{n}}$  has the shape shown in Figure~\ref{figure:image}, provided that $n_{1}\geqslant 1$ (in view of \eqref{f1preresauxcor1}(b), $n_{1}\geqslant 1$  yields $p^{\eufm{n}}_{N} \, q^{\eufm{n}}_{N} \leqslant -1$).

\begin{figure}
		\centering
\begin{tikzpicture}
\definecolor{cv4}{rgb}{0.95,0.95,0.95}
 \begin{scope}[scale=1]
\clip(-7,-2.3) rectangle (5,4.75);
\draw (-6,0) circle (0.04) (-6.15,0) node[above] {$-1$};
\draw (4,0) circle (0.04) (4,0) node[above] {$1$};
\draw (-5,0) circle (0.04)  node[below] {$\dfrac{\vphantom{A^{A}} p^{\eufm{n}}_{N-1}}{p^{\eufm{n}}_{N}}  $};
\draw (-5,-1)  node[below] {$\| $};
\draw (-5,-1.3)  node[below] {$\phi_{{\fo{n_{N}, ..., n_{1}}}}(\infty)$};
\draw (-2,0) circle (0.04)  node[below] {$\dfrac{\vphantom{A^{A}} q^{\eufm{n}}_{N-1} - p^{\eufm{n}}_{N-1} }{q^{\eufm{n}}_{N} - p^{\eufm{n}}_{N}}$};
\draw (-2,-1)  node[below] {$\| $};
\draw (-2,-1.3)  node[below] {$\phi_{{\fo{n_{N}, ..., n_{1}}}}(-1)$};
\draw (0.4,0)circle (0.04)  node[below] {$\dfrac{\vphantom{A^{A}} q^{\eufm{n}}_{N-1}}{q^{\eufm{n}}_{N}} $};
\draw (0.4,-1)  node[below] {$\| $};
\draw (0.4,-1.3)  node[below] {$\phi_{{\fo{n_{N}, ..., n_{1}}}}(0)$};
\draw (3,0) circle (0.04)  node[below] {$ \dfrac{\vphantom{A^{A}} q^{\eufm{n}}_{N-1} + p^{\eufm{n}}_{N-1} }{q^{\eufm{n}}_{N}+p^{\eufm{n}}_{N}}$};
\draw (3,-1)  node[below] {$\| $};
\draw (3,-1.3)  node[below] {$\phi_{{\fo{n_{N}, ..., n_{1}}}}(1)$};
\draw[fill=cv4] (-5,0) arc (180:0:1.5)  arc (180:0:1.2)
(0.4,0) arc (180:0:1.3) (3,0) arc (0:180:4);
\draw[densely dotted] (-2,0) arc (180:0:2.5);
 \draw (-1,2.6) node[above]{$\mathcal{F}_{{\tn{\square}}}^{\hspace{0,05cm}n_{N}, ..., n_{1}}
 \!:= \! \phi_{{\fo{n_{N}, ..., n_{1}}}}\big( \mathcal{F}_{{\tn{\square}}}\big)
 $};
\draw[draw=lightgray] (1.2,2.4) -- (1.2,4);
\draw (1.15,2.5) --(1.2,2.4) -- (1.25,2.5);
\draw (1.1,4) node[above] {${{\gamma_{{\fo{n_{N}, ..., n_{1}}}}(-1, 1)}}$};
\draw[draw=lightgray] (-2.95,3.5) -- (-2.95,4);
\draw (-2.9,3.6) --(-2.95,3.5) -- (-3,3.6);
\draw (-2.9,4) node[above] {${{\gamma_{{\fo{n_{N}, ..., n_{1}}}}(1, \infty)}}$};
\draw (-4.75,2.6) node[above]{$n_{1}\!\geqslant\! 1$};
\draw[draw=lightgray] (-5.3,2.6) -- (-4.2,2.6) -- (-4.2,3.15) -- (-5.3,3.15) -- (-5.3,2.6);
\draw (-6,0) -- (4,0);
\end{scope}
\end{tikzpicture}
\caption{\hspace{-0,2cm}{\bf{.}} Image $ \fet^{\hspace{0,05cm}n_{N}, ..., n_{1}}$ of $\fet$ by $\phi_{\eufm{n}}(z)\!= \! \dfrac{z p^{\eufm{n}}_{N-1} + q^{\eufm{n}}_{N-1}}{z p^{\eufm{n}}_{N} + q^{\eufm{n}}_{N}}$, $
     \eufm{n}\!=\!(n_{N}, ..., n_{1})\!\in\! \Bb{Z}_{\neq 0}^{N}$, $ N\! \in \!\Bb{N}$.}
\label{figure:image}
\vspace{-0,5cm}	\end{figure}

\subsubsection[\hspace{-0,2cm}. Proofs of Lemmas~\ref{bsdentplem1} and~\ref{bsdentplem2}.]{\hspace{-0,11cm}{\bf{.}} Proofs of Lemmas~\ref{bsdentplem1} and~\ref{bsdentplem2}.}
\label{pbsdentplem1} \ We first prove Lemma~\ref{bsdentplem1}.

{\rm{(a)}} Assume the contrary, i.e., there exist $N, M \!\in \!\Bb{N}$, $N\! \geqslant\! M$, $\eufm{n}\!:=\!(n_{N}, ..., n_{1})\!\in \! \Bb{Z}_{\neq 0}^{N}$,  $\eufm{m}\!:=\!(m_{N}, ..., m_{1})\!\in \! \left(\Bb{Z}_{\neq 0}\right)^{M}$, $\eufm{n} \neq \eufm{m}$,  $z\!\in\! \fet \sqcup \left( \sigma_{n_{1}}\! +\! {\rm{i}} \Bb{R}_{>0} \right)$, $y\!\in \!\fet \sqcup \left( \sigma_{m_{1}}\! +\! {\rm{i}} \Bb{R}_{>0} \right)$ such that
$\phi_{{\fo{n_{N}, ..., n_{1}}}}\big( z\big) = \phi_{{\fo{m_{M}, ..., m_{1}}}}\big( y\big)$, which
by \eqref{f3pf8contgen} can be written as follows (see \eqref{f9contgen})
\begin{align}\label{f1pbsdentplem1}\hspace{-0,2cm}
    &   \phi_{ {\fo{ S T^{{\fo{-2 n_{N}}}}  S T^{{\fo{-2 n_{N-1}}}}\ldots S T^{{\fo{-2 n_{1}}}}   }}}\!(z)\!=\!
\phi_{ {\fo{ S T^{{\fo{-2 m_{M}}}}  S T^{{\fo{-2 m_{M-1}}}}\ldots S T^{{\fo{-2 m_{1}}}}   }}}\!(y).\hspace{-0,1cm}
\end{align}

\noindent Without loss of generality one can consider in \eqref{f1pbsdentplem1} that  $n_{N} \neq m_{M}$, because otherwise, we can apply $\phi_{ T^{{{2 n_{\!N}}}}S }$ to the both parts of \eqref{f1pbsdentplem1}
to get the similar relationship for  $\eufm{n}$ and $\eufm{m}$ of lower dimension and to get $\eufm{n} = \eufm{m}$, if $M=N$, or
\begin{align}\label{f6pbsdentplem1}
    &\hspace{-0,2cm} \Bb{H}_{|\re|<1}\! \setminus\!\fet \ni  \phi_{ {\fo{  S T^{{\fo{-2 n_{N-M}}}}\ldots S T^{{\fo{-2 n_{1}}}}   }}}(z)= y \in \fet^{\,{\tn{|}}\sigma_{m_{1}}}\, , \   \
 \mbox{if} \ N-M \geqslant 1\,,
\end{align}

\noindent where the left-hand side inclusion follows from \eqref{f8contgen}. But \eqref{f6pbsdentplem1} cannot hold since  $\fet^{\,{\tn{|}}\sigma_{m_{1}}}\cap (\Bb{H}_{|\re|<1}\! \setminus\!\fet)=\emptyset$. So that we assume everywhere below that  $n_{N} \neq m_{M}$.

Let  $N=M=1$ in \eqref{f1pbsdentplem1}. Then $z-y = 2n_{1}-2m_{1}$, where $n_{1} \neq m_{1}$. Since
$\re\, z, \re\, y \in [-1,1]$ then $n_{1}-m_{1}$ can take only two values $1$ or $-1$. If
$n_{1}-m_{1} = \sigma \in \{1, -1\}$ then there exists $T > 0$ such that $z = \sigma + {\rm{i}} T$ and
$y = -\sigma + {\rm{i}} T$, which yields $\sigma_{n_{1}}= \sigma$ and  $\sigma_{m_{1}}= -\sigma$, by virtue of
\eqref{f9contgen} and \eqref{f10contgen}. But then $|n_{1} - m_{1}| \geqslant 2$ which contradicts
$|n_{1} - m_{1}| = 1$. Therefore $N \geqslant 2$, $M \geqslant 1$ and it follows from \eqref{f1pbsdentplem1}
that
\begin{align*}  &
\begin{array}{l}
 \phi_{ {\fo{  S T^{{\fo{-2 n_{N-1}}}}\ldots S T^{{\fo{-2 n_{1}}}}   }}}(z)=
\phi_{ {\fo{   T^{{\fo{-2( m_{1}- n_{N})}}}   }}}(y) \, , \   \ \mbox{if} \ M=1\,,  \\
  \phi_{ {\fo{ S T^{{\fo{-2( m_{2}- n_{N})}}}  S T^{{\fo{-2 n_{N-1}}}}\ldots S T^{{\fo{-2 n_{1}}}}   }}}(z)=\phi_{ {\fo{   T^{{\fo{-2 m_{1}}}}   }}}(y)\, , \   \ \mbox{if} \ M=2\,,  \\
      \phi_{ {\fo{S T^{{\fo{2 m_{2}}}}\ldots T^{{\fo{2 m_{M-1}}}} S T^{{\fo{-2( m_{M}- n_{N})}}}  S T^{{\fo{-2 n_{N-1}}}}\ldots S T^{{\fo{-2 n_{1}}}}   }}}(z)=\phi_{ {\fo{   T^{{\fo{-2 m_{1}}}}   }}}(y)\, ,
\end{array}
\end{align*}

\noindent if $M \geqslant 3$. But none of these equations can hold because their left-hand sides belong to
$ \Bb{H}_{|\re|<1}\! \setminus\!\fet$, in view of \eqref{f8contgen}, while their right-hand sides belong to $\Bb{H}_{|\re|\geqslant 1}$.
Lemma~\ref{bsdentplem1}{\rm{(a)}} is proved.

\vspace{0,1cm}
{\rm{(d)}} Let $\eufm{n}\in \Bb{Z}_{\neq 0}^{N}$ be such that $N \geqslant 2$ if  $n_{1}\!=\! \sigma_{n_{1}} = \sigma_{\eufm{n}}$ (see \eqref{f9contgen}).
As  noted before Lemma~\ref{bsdentplem1},
$\phi_{\eufm{n}} \big(\gamma (\sigma_{\eufm{n}}, \infty) \big) \subset  \fetr^{\hspace{0,01cm}\eufm{n}}$ is the  roof of $\fetr^{\hspace{0,01cm}\eufm{n}}$. But
$\phi_{\eufm{n}} \big(\gamma (\sigma_{\eufm{n}}, \infty) \big)$  is also a part of the boundary of\vspace{-0,2cm}
\begin{align*}
     \phi_{{\fo{n_{N}, ..., n_{1}}}}\left(\fet\! +\! 2\sigma_{\eufm{n}}\right) &\! =\!
 \phi_{{\fo{n_{N}, ..., n_{2}}}}\left(\dfrac{1}{2n_{1}\! - \!\fet\! - \!2\sigma_{\eufm{n}}   }\right) \\[0,2cm]
\left|- 1/\fet\!= \!\fet \, \Rightarrow \,  \dfrac{1}{2n_{2}\! +\! 1/\fet}\!= \!
\dfrac{1}{2n_{2} \!-\!\fet}\right|
&   \!=\!
\left\{
  \begin{array}{ll}
  \phi_{{\fo{n_{N}, ..., n_{2}, n_{1} -\sigma_{\eufm{n}}}}}\left(\fet\right), & \hbox{if} \ \,  n_{1}\!\neq \! \sigma_{\eufm{n}} ,\\
 \phi_{{\fo{n_{N}, ..., n_{2}}}}\left(\fet\right), & \hbox{if} \ \,  n_{1}\!=\! \sigma_{\eufm{n}} ,
  \end{array}
\right.
\end{align*}

\noindent  and since\vspace{-0,2cm}
\begin{align*}
     \phi_{{\fo{n_{N}, ..., n_{1}}}} \left(\gamma (\sigma_{\eufm{n}},\infty)\right)&
= \phi_{{\fo{n_{N}, ..., n_{2}}}}\left(\dfrac{1}{2n_{1}\! - \!\gamma (-\sigma_{\eufm{n}},\infty)\! - \! 2\sigma_{\eufm{n}} }\right)  \\[0,2cm]  \left| \dfrac{1}{2n_{2}\! +\! 1/\gamma (-\sigma_{\eufm{n}},{\fo{\infty}})}\!= \!
\dfrac{1}{2n_{2} \!-\!\gamma (\sigma_{\eufm{n}},0)}\right| & \!=\!
\left\{
  \begin{array}{ll}
  \phi_{{\fo{n_{N}, ..., n_{2}, n_{1} \!-\!\sigma_{\eufm{n}} }}}\!\left(\gamma (-\sigma_{\eufm{n}},{\fo{\infty}})\right), & \hbox{if} \, \,  n_{1}\!\neq\! \sigma_{\eufm{n}} ,\\
 \phi_{{\fo{n_{N}, ..., n_{2}}}}\!\left(\gamma (\sigma_{\eufm{n}},0)\right), & \hbox{if} \, \,
 n_{1}\!= \! \sigma_{\eufm{n}} ,
  \end{array}
\right.
\end{align*}\!

\noindent $\sigma_{\hspace{0,005cm}n_{N}, ..., n_{2}, n_{1} \!-\sigma_{\eufm{n}}  }\! =\! {\rm{sign}} (n_{1} \!-\sigma_{\eufm{n}} )\! =\!{\rm{sign}}(n_{1})\!=\! \sigma_{\eufm{n}}$ for $n_{1}\!\neq\! \sigma_{\eufm{n}}$,
we get that
\begin{align*}
    &  \phi_{\eufm{n}} \big(\gamma (\sigma_{\eufm{n}}, \infty) \big)=
\left\{
  \begin{array}{ll}
  \gamma_{{\fo{\hspace{0,005cm}n_{N}, ..., n_{2}, n_{1} \!-\sigma_{\eufm{n}}  }}} (-\sigma_{{\fo{\hspace{0,005cm}n_{N}, ..., n_{2}, n_{1} \!-\sigma_{\eufm{n}}}}}, \infty), & \hbox{if} \, \,  n_{1}\!\neq\! \sigma_{\eufm{n}}\, ,
\\[0,2cm]
 \gamma_{{\fo{\hspace{0,005cm}n_{N}, ..., n_{2} }} } (\sigma_{\eufm{n}},0), \ \
 \sigma_{\eufm{n}} \in \{1,-1\}\,,
& \hbox{if} \, \,
 n_{1}\!= \! \sigma_{\eufm{n}}\, .
  \end{array}
\right.
\end{align*}

\noindent According to what was  stated before Lemma~\ref{bsdentplem1}, this means that
$\phi_{\eufm{n}} \big(\gamma (\sigma_{\eufm{n}}, \infty) \big)$
is
the lower arc of $ \fetr^{\hspace{0,05cm}n_{N}, ..., n_{1}\!-\sigma_{n_{1}}}\!\!$, if $n_{1}\!\neq\! \sigma_{\eufm{n}}$. At the
same time, $\phi_{\eufm{n}} \big(\gamma (\sigma_{\eufm{n}}, \infty) \big)$ for $n_{1}\!= \! \sigma_{\eufm{n}}$  is
the lower arc of $\fetr^{\hspace{0,05cm}n_{N}, ..., n_{2}}$. Lemma~\ref{bsdentplem1}{\rm{(d)}} is proved. Lemma~\ref{bsdentplem1}{\rm{(b)}} and {\rm{(c)}} follow by  similar arguments.{\hyperlink{r18}{${}^{\ref*{case18}}$}}\hypertarget{br18}{}

\vspace{0,1cm}
{\rm{(e)}} As  noted before Lemma~\ref{bsdentplem1},  $\phi_{\eufm{n}} \big(\gamma (\sigma_{\eufm{n}}, \infty) \big)$ is the  roof of $\fetr^{\hspace{0,01cm}\eufm{n}}$ and since
$\phi_{\eufm{n}} \big(\gamma (\sigma_{\eufm{n}}, \infty) \big) =\gamma
\big(\phi_{\eufm{n}}(\sigma_{\eufm{n}}), \phi_{\eufm{n}}(\infty) \big)$,
  by virtue of \eqref{12bsdenac}, we get
\begin{align}\label{f2pbsdentplem1}
    &  2 \max \left\{ \ \im z \, \left| \, z \in  \fetr^{\hspace{0,05cm}n_{N}, ..., n_{1}} \,  \right\}\right.=
\left|\phi_{\eufm{n}}(\infty)-\phi_{\eufm{n}}(\sigma_{\eufm{n}})\right| \,,
\end{align}

\noindent where in accordance with \eqref{f1pf3bsden}, \eqref{f2apreresauxlem1a}, \eqref{f1preresauxcor1} and
\eqref{f1preresauxlem2},
\begin{align}\label{f3pbsdentplem1}
    & \hspace{-0,3cm} |\phi_{\eufm{n}}(\infty)\!-\!\phi_{\eufm{n}}(\sigma_{\eufm{n}})|\!= \! \left|\dfrac{p^{\eufm{n}}_{N-1}}{p^{\eufm{n}}_{N}}\!- \!\dfrac{  q^{\eufm{n}}_{ N-1}  \! + \!\sigma_{1}p^{\eufm{n}}_{ N-1}  }{
 q^{\eufm{n}}_{ N} \!+  \!\sigma_{1}p^{\eufm{n}}_{ N} }\right|\!=\!
\dfrac{1}{\left|p^{\eufm{n}}_{N}\right| \left(\left|q^{\eufm{n}}_{ N}\right|\! -\!\left|p^{\eufm{n}}_{ N}\right|\right)} \leqslant \dfrac{1}{\left|p^{\eufm{n}}_{N}\right|}\ .\hspace{-0,1cm}
\end{align}

\noindent We prove now
\begin{align}\label{f4pbsdentplem1}
    &  \left|p^{\eufm{n}}_{k}\right| \geqslant 1+ \sum\nolimits_{m=2}^{k}\, (2 |n_{m}| -1)\geqslant k   \ , \quad    0 \leqslant k \leqslant N \,,
\end{align}

\noindent by induction on $k$, where  $\sum_{m=2}^{0} :=\sum_{m=2}^{1} := 0$ and $\{p^{\eufm{n}}_{k}\}_{k=-1}^{N}$ are defined as in \eqref{f2preresauxlem1a}. In view of \eqref{f2preresauxlem1a}, $p^{\eufm{n}}_{1}=-1$, and therefore \eqref{f4pbsdentplem1} holds for $k=1$. In addition, \eqref{f4pbsdentplem1} is proved for $N = 1$. Assume that $N \geqslant 2$, $2 \leqslant r \leqslant N$ and   \eqref{f2preresauxlem1a} is true for $k=r-1$.
In view of \eqref{f6preresauxprlem2}, we have
\begin{align*}
    & 0 = |p^{\eufm{n}}_{0}| <|p^{\eufm{n}}_{1}| < |p^{\eufm{n}}_{2}|<\ldots < |p^{\eufm{n}}_{r}|  \ .
\end{align*}

\noindent  Then $|p^{\eufm{n}}_{r-2}|\leq |p^{\eufm{n}}_{r-1}|-1$ and by
 \eqref{f2preresauxlem1a} we get
\begin{align*}
    | p^{\eufm{n}}_{r} | &= |2 n_{r} p^{\eufm{n}}_{r-1} - p^{\eufm{n}}_{r-2}| \geqslant 2 |n_{r}| |p^{\eufm{n}}_{r-1}| - |p^{\eufm{n}}_{r-2}| \geqslant 1 + ( 2 |n_{r}|-1) |p^{\eufm{n}}_{r-1}| \\    &
\geqslant 1 + ( 2 |n_{r}|-1) \left(1 + \sum\limits_{m=1}^{r-1} (2 |n_{m}| -1)\right) \geqslant  1 + \sum\limits_{m=1}^{r} (2 |n_{m}| -1)  ,
\end{align*}

\noindent which proves the validity of \eqref{f4pbsdentplem1}  for $k=r$. By induction, we conclude that \eqref{f4pbsdentplem1} is true for all $0 \leqslant k \leqslant N$. Combining \eqref{f4pbsdentplem1} for $k=N$,
\eqref{f3pbsdentplem1} and \eqref{f2pbsdentplem1} gives Lemma~\ref{bsdentplem1}{\rm{(e)}} and completes the proof of Lemma~\ref{bsdentplem1}.


\vspace{0.25cm}
Next we prove Lemma~\ref{bsdentplem2}.
Since $\fetr^{\hspace{0,01cm}\eufm{n}} \subset \Bb{H}_{|\re|<1} \setminus\fet$ for every $\eufm{n}\in \Bb{Z}_{\neq 0}^{\hspace{0,02cm}\Bb{N}_{\hspace{-0,02cm}\eurm{f}}}$ it suffices to prove that for any point in
$\Bb{H}_{|\re|<1} \setminus\fet $ there exists $\eufm{n}\in \Bb{Z}_{\neq 0}^{\hspace{0,02cm}\Bb{N}_{\hspace{-0,02cm}\eurm{f}}}$  such that
$\fetr^{\hspace{0,01cm}\eufm{n}}$ contains this point.

 Let $z\in \Bb{H}_{|\re|< 1} \setminus \fet$, $N \in \Bb{N}$ and assume that
$\Bb{G}^{k}_{2}(z)\in \Bb{H}_{|\re|< 1} \setminus \fet$ for each $1\leqslant k \leqslant N$.
We denote by $Q = Q(z)$ the number $(1/2) + ((1/4) - (\im\, z)^{2})^{1/2}> 1$. Then by \eqref{f17contgen},
$Q^{N} \im\, z \leqslant 1/2$, because $\Bb{H}_{|\re|\leqslant 1}\cap \Bb{H}_{\im > 1/2}\subset \mathcal{F}^{\,{\tn{||}}}_{{\tn{\square}}}$. Therefore for arbitrary $z\in \Bb{H}_{|\re|< 1} \setminus \fet$
there exists a minimal positive integer $\eurm{r} (z)$ such that
\begin{align}\label{f5contgendef1}
    & \eurm{r} (z):= \min \left\{\,k\in \Bb{N}\,|\, \Bb{G}^{k}_{2}(z)\in  \mathcal{F}^{\,{\tn{||}}}_{{\tn{\square}}}\,\right\} \ , \quad  z\in \Bb{H}_{|\re|< 1} \setminus \fet\,,
\end{align}

\noindent where, in accordance with \eqref{f3contgendef1},
\begin{multline}
\Bb{G}_{2}^{k} (z)\! = \!2 j_{k} (z)\! -\! \dfrac{1}{\Bb{G}_{2}^{k-1} (z)} =
\left\{\re \left(- \dfrac{1}{\Bb{G}_{2}^{k-1} (z)}\right)\right\}^{\!{\tn{\rceil\hspace{-0,015cm}\lceil}}}_{\!2} + {\rm{i}}\, \im \left(- \dfrac{1}{\Bb{G}_{2}^{k-1} (z)}\right),
\\         j_{k}(z)\!:=\!
- \dfrac{1}{2}\left\rceil{\re\left(- \dfrac{1}{\Bb{G}_{2}^{k-1} (z)}\right)}\right\lceil_{2}\!\! \in\!
 \Bb{Z}_{\neq 0} \, , \quad  1\!\leqslant\! k\!\leqslant\! {\eurm{r}} (z).
\label{f5bsdentp}\end{multline}

\noindent  Since
$\phi_{j_{k}(z)}(\Bb{G}_{2}^{k} (z))=\Bb{G}_{2}^{k-1} (z)$ for each $1\!\leqslant\! k\!\leqslant\! {\eurm{r}} (z)$
we deduce that
\begin{align}\label{f6bsdentp}
    &   \phi_{j_{1}(z), ..., j_{{\eurm{r}}(z)}(z)}(\Bb{G}_{2}^{{\eurm{r}}(z)} (z))= z \ , \quad  \Bb{G}_{2}^{{\eurm{r}}(z)} (z)\in
\mathcal{F}^{\,{\tn{||}}}_{{\tn{\square}}}\ , \quad  z\in \Bb{H}_{|\re|< 1} \setminus \fet\ ,
\end{align}

\noindent and if $\Bb{G}_{2}^{{\eurm{r}}(z)} (z) \in \gamma(\sigma,\infty)$ for some $\sigma\in \{1,-1\}$, then
\eqref{f5bsdentp} yields
\begin{align*}
    & \left\{\re \left(- {1}\big/{\Bb{G}_{2}^{k-1} (z)}\right)\right\}^{\!{\tn{\rceil\hspace{-0,015cm}\lceil}}}_{\!2}= \sigma\ , \quad  \sigma\in \{1,-1\} \, .
\end{align*}

\noindent In view of \eqref{f1ybsden}, this means that $\re (- {1}\big/{\Bb{G}_{2}^{k-1} (z)})
\in - \sigma (2\Bb{N}-1)$ and therefore $\rceil{\re (- {1}\big/{\Bb{G}_{2}^{k-1} (z)})}\lceil_{ 2}\,
\in - 2 \sigma \Bb{N}$. Then ${\rm{sign}} (j_{{\eurm{r}}(z)}(z)) = \sigma $, by virtue of \eqref{f5bsdentp}.
Hence, in addition to \eqref{f6bsdentp}, we have
\begin{align}\label{f7bsdentp}
    &  \Bb{G}_{2}^{{\eurm{r}}(z)} (z) \in \gamma(\sigma,\infty) \, , \  \sigma\in \{1,-1\} \ \Rightarrow \
\sigma_{{\eurm{r}}}(z):={\rm{sign}} (j_{{\eurm{r}}(z)}(z)) = \sigma\,.
\end{align}

\noindent Thus,\vspace{-0,3cm}
\begin{align}\label{f8bsdentp}
    &  z \in   \phi_{j_{1}(z), ..., j_{{\eurm{r}}(z)}(z)}\left( \fet^{\,{\tn{|}}\,\sigma_{{\eurm{r}}}(z)} \right) = \fetr^{\hspace{0,05cm}j_{1}(z), ..., j_{{\eurm{r}}(z)}(z)} \,,
\end{align}

\noindent which completes the proof of Lemma~\ref{bsdentplem2}.

\subsubsection[\hspace{-0,2cm}. Proofs of Theorems~\ref{bsdentpth2} and~\ref{bsdentpth3}.]{\hspace{-0,11cm}{\bf{.}} Proofs of Theorems~\ref{bsdentpth2} and~\ref{bsdentpth3}.}
\label{pbsdentpth2} $\phantom{a}$


\vspace{0.25cm} We first prove Theorem~\ref{bsdentpth2}. Let $L := \{\phi  (\gamma(1, \infty))\}_{\phi \in \Gamma_{\vartheta}}$.
In accordance with  \eqref{zbssqlem1},
\begin{align}\label{f2pbsdentpth2}
    & \partial_{\,\Bb{H}}\fet\! :=\! \Bb{H}\cap\partial\fet\! =\! \gamma(1, \infty) \sqcup \gamma(-1, \infty)\sqcup \gamma(-1, 0) \sqcup \gamma(0, 1),
\end{align}

\noindent
and since $-1/  \left(\partial_{\Bb{H}}\fet\right)= \partial_{\Bb{H}}\fet $ we deduce from
\eqref{f6contgen} that the union $S$ of all boundary points of the sets composing the  Schwarz
partition \eqref{f4bsdentp} of $\Bb{H}$, satisfies
\begin{align*}
    &  S  \! =\!
 \bigcup\limits_{n \in \Bb{Z}} \ \left(2 n + \big\{\phi_{\eufm{n}}
\big(\partial_{\,\Bb{H}}\fet\big)
\big\}_{{\fo{\eufm{n} \in \{0\}\cup\Bb{Z}_{\neq 0}^{\hspace{0,02cm}\Bb{N}_{\hspace{-0,02cm}\eurm{f}}}}}}\right)\! =\!
 \left\{ \, \phi \big(\partial_{\,\Bb{H}}\fet\big) \,\right\}_{{\fo{\phi\! \in\! \Gamma_{\vartheta} }}}\supset L\,.
\end{align*}

\noindent  Inverse inclusion $S \subset L$ is immediate from the invariance of $L$ under any transform $\phi\! \in\! \Gamma_{\vartheta}$ and obvious properties $\gamma(-1, \infty)= \gamma(1, \infty)-2\in L$,
$\gamma(-1, 0) =-1/\gamma(1, \infty)\in L$ and  $\gamma(1, 0) =-1/\gamma(-1, \infty)\in L$. This completes the proof of
Theorem~\ref{bsdentpth2}.

\vspace{0.2cm} Next, we prove Theorem~\ref{bsdentpth3}. It follows from \eqref{f5tdenac} and \eqref{f5zdenac}  that
the union  of all boundary points of the sets composing the even rational
partition \eqref{f6wdenac} of $\Bb{H}$ is equal to the union $E$ of the roofs $\gamma_{\eufm{n}}(-1,1)$
of $\eusm{E}^{\eufm{n}}_{\!{\tn{\frown}}}$ over all $\eufm{n}\in \Bb{Z}_{\neq 0}^{\hspace{0,02cm}\Bb{N}_{\hspace{-0,02cm}\eurm{f}}}\cup\{0\}$ and their shifts on any even integer. But
 by $-1/\gamma (-1,1)=\gamma (-1,1)$ and \eqref{f6contgen} this yields that $E$ coincides with the orbit
$\{\phi ( \gamma (-1,1) )\}_{\phi \in \Gamma_{\vartheta}}$ of $\gamma (-1,1)$ with respect to $\Gamma_{\vartheta}$. Theorem~\ref{bsdentpth3} is proved.

\subsubsection[\hspace{-0,2cm}. Proof of Lemma~\ref{lemdhp}.]{\hspace{-0,11cm}{\bf{.}} Proof of Lemma~\ref{lemdhp}.}
\label{plemdhp} \
According to the partition \eqref{f3bsdentp}, we have
\begin{align}\label{f0plemdhp} &
     \big\{ z\in   \Bb{H}_{|\re|\leqslant 1} \ | \ \lambda(z)=\lambda(y)\big\}\\  & \hspace{1cm} =
     \left. \left\{ z\in  \mathcal{F}^{\,{\tn{||}}}_{{\tn{\square}}} \ \right| \ \lambda(z)=\lambda(y)\right\}\sqcup     \bigsqcup\nolimits_{\, {\fo{\phi\!\in \!  \Gamma_{\vartheta}^{\hspace{0,015cm}{\tn{||}}}   }}}
     \left.\left\{ z\in \phi \left(\fet^{\,{\tn{|}}\sigma_{\phi}}\right) \ \right| \ \lambda(z)=\lambda(y)\right\} .\nonumber
\end{align}

\noindent Let $\sigma\in \{1,-1\}$ be arbitrarily prescribed and fixed number. It follows from
 Lemma~\ref{lemoneto} that
\begin{align}\label{f1plemdhp}
    &  \left.\left\{ z\in  \mathcal{F}^{\,{\tn{||}}}_{{\tn{\square}}} \ \right| \ \lambda(z)=\lambda(y)\right\}=
\left\{
  \begin{array}{ll}
    y\,, & \ \  \hbox{if} \ \ y \in \fet\,, \\
   \left\{y, y - 2\sigma\right\}\,, & \ \   \hbox{if} \ \ y \in \gamma (\sigma, \infty)\,,\\
    \emptyset , &  \ \  \hbox{if} \ \ y \in \gamma (\sigma, 0)\,.
  \end{array}
\right.
\end{align}

\noindent Fix now an arbitrary
$\phi\!\in \!  \Gamma_{\vartheta}^{\hspace{0,015cm}{\tn{||}}}$, which, in the notation \eqref{f9contgen} and \eqref{f2bsden}, is associated with $N\in \Bb{N}$ and $(n_{N}, ..., n_{1})\in  \Bb{Z}_{\neq 0}^{N}$ such that
$\phi= \phi_{n_{N}, ..., n_{1}}$ and $\sigma_{\phi}= \sigma_{n_{1}}={\rm{sign}} (n_{1}) \in \{1, -1\}$, where $\phi_{n_{N}, ..., n_{1}}$ is  the M{\"o}bius transformation defined as in \eqref{f1bsden}. Since $\phi_{n_{N}, ..., n_{1}}$ is injective on $\Bb{H}$, we get
\begin{align}\label{f2plemdhp}
      \big\{ z\in \phi_{n_{N}, ..., n_{1}} \left(\fet^{\,{\tn{|}}\sigma_{n_{1}}}\right) \   &   \big| \ \lambda(z)=\lambda(y)\big\} \\   &   =
     \left.\left\{  \phi_{n_{N}, ..., n_{1}} \left(\eta\right) \ \right| \ \eta\in\fet^{\,{\tn{|}}\sigma_{n_{1}}} \ , \
     \lambda \left(\phi_{n_{N}, ..., n_{1}} \left(\eta\right)\right)
     =\lambda(y)\right\}  ,
\nonumber \end{align}

\noindent where, in view of  \eqref{f14bsdenac},
\begin{align*}
    & \lambda\left(\phi_{n_{N}, ..., n_{1}} (\eta) \right)=
\left\{\begin{array}{rl}
  \lambda(\eta), &  \  \mbox{if} \   N \in 2 \Bb{N}\,,     \\[0,1cm]
\lambda(-1/\eta),   &   \   \mbox{if} \    N \in 2 \Bb{N}\!-\!1\,,
 \end{array}\right. \  \  \eta \in
\fet \cup \gamma\left(\sigma_{n_{1}}, \infty\right)\,.
\end{align*}

\noindent By \eqref{f2int}, the equation $\lambda(y)=\lambda(-1/\eta)=1-\lambda(\eta)$ is equivalent to $\lambda(\eta)=\lambda(-1/y)$, and  therefore \eqref{f2plemdhp} yields
\begin{align}\label{f3plemdhp} &  E_{n_{N}, ..., n_{1}} (y) := \left.\left\{ z\in \phi_{n_{N}, ..., n_{1}} \left(\fet^{\,{\tn{|}}\sigma_{n_{1}}}\right) \ \right| \ \lambda(z)=\lambda(y)\right\} \\  & =
 \left\{ \phi_{n_{N}, ..., n_{1}} \left(\eta\right) \ \left| \  \eta \in
\fet \cup \gamma\left(\sigma_{n_{1}}, \infty\right), \
\begin{array}{ll}
  \lambda(\eta)=\lambda(y), &  \  \mbox{if} \   N \in 2 \Bb{N}\,,    \\[0,1cm]
\lambda(\eta)=\lambda(-1/y),   &   \   \mbox{if} \    N \in 2 \Bb{N}\!-\!1\,,  \end{array} \right.
\right\}\,.\nonumber
\end{align}

 If $y \in \fet $, then $-1/y \in \fet $ and, in view of Lemma~\ref{lemoneto}, the solutions  of the both equations in the right-hand side of the latter equality are unique. Hence,
\begin{align}
    &   E_{n_{N}, ..., n_{1}} (y) =
\left\{\begin{array}{ll}
  \phi_{n_{N}, ..., n_{1}} (y), &  \  \mbox{if} \   N \in 2 \Bb{N}\,,   \\[0,1cm]
 \phi_{n_{N}, ..., n_{1}} (-1/y),   &   \   \mbox{if} \    N \in 2 \Bb{N}\!-\!1\,,
 \end{array}\right.   \  \quad  y \in \fet\,.
\label{f4plemdhp}\end{align}

 If $y\in \gamma(\sigma, \infty)$ then $\lambda(y)\in\Bb{R}_{<0}$, while  $\lambda(-1/\eta)\in\Bb{R}_{>1}\cup  (0,1)\cup \left(\Bb{C}\setminus\Bb{R}\right)$, by Lemma~\ref{lemoneto} and Theorem~\ref{intthA}.  So that there is no solutions  of the  equation in the right-hand side of
\eqref{f3plemdhp} for odd $N$. But for even $N$ such an equation has the unique solution $\eta=y$, if $\sigma=\sigma_{n_{1}}$, and $\eta=y-2\sigma$, if $\sigma=-\sigma_{n_{1}}$. Or, what is the same   $\eta=y-\sigma+\sigma_{n_{1}}$. Thus,
\begin{align}
    &   E_{n_{N}, ..., n_{1}} (y)\! =\!
\left\{\begin{array}{ll}
   \phi_{n_{N}, ..., n_{1}} (y\!-\!\sigma\!+\!\sigma_{n_{1}}), &  \  \mbox{if} \   N \!\in\! 2 \Bb{N}\,,   \\[0,1cm]
\emptyset,   &   \   \mbox{if} \    N\! \in\! 2 \Bb{N}\!-\!1\,,
 \end{array}\right.   \  \  y\! \in\! \gamma(\sigma, \infty)\,.
\label{f5plemdhp}
\end{align}

 If $y\in \gamma(\sigma, 0)$ then $\lambda(y)\in\Bb{R}_{>1}$, while  $\lambda(\eta)\in\Bb{R}_{<0}\cup  (0,1)\cup \left(\Bb{C}\setminus\Bb{R}\right)$, by Lemma~\ref{lemoneto} and Theorem~\ref{intthA}.  Therefore there is no solutions  of the  equation in the right-hand side of
\eqref{f3plemdhp} for even $N$. But for odd $N$ such an equation has the unique solution $\eta =-1/y $, if $\sigma=-\sigma_{n_{1}}$, and
$\eta =-1/y +2\sigma $, if $\sigma=\sigma_{n_{1}}$. I.e.,  $\eta =-1/y +\sigma+\sigma_{n_{1}}$, and
\begin{align}
    &   E_{n_{N}, ..., n_{1}} (y)\! =\!
\left\{\begin{array}{ll}
    \emptyset , &  \  \mbox{if} \   N \in 2 \Bb{N}\,,   \\[0,1cm]
\phi_{n_{N}, ..., n_{1}} (-1/y\! +\!\sigma\!+\!\sigma_{n_{1}}),   &   \   \mbox{if} \    N \in 2 \Bb{N}\!-\!1\,,
 \end{array}\right.   \  \  y\! \in\! \gamma(\sigma, 0)\,.
\label{f6plemdhp}
\end{align}

\noindent The relationships \eqref{f0plemdhp}, \eqref{f1plemdhp} and  \eqref{f3plemdhp}--\eqref{f6plemdhp} prove \eqref{f1lemdhp}. It follows from  Lemma~{\rm{\ref{bsdentplem1}(e)}}  that each set in \eqref{f1lemdhp} is countable and has no limit points in $\Bb{H}$. This completes the proof of Lemma~\ref{lemdhp}.


\vspace{0,25cm}
\subsection[\hspace{-0,15cm}. \hspace{-0,04cm}Proofs for Section~\ref{evagen}]{\hspace{-0,11cm}{\bf{.}} Proofs for Section~\ref{evagen}}$\phantom{a}$\vspace{-0,1cm}

\subsubsection[\hspace{-0,2cm}. Proof of Lemma~\ref{evagenlem1}.]{\hspace{-0,11cm}{\bf{.}} Proof of Lemma~\ref{evagenlem1}.}
\label{pevagenlem1} \ If
$z\in \eusm{E}^{\infty}_{\!{\tn{\frown}}}$  in \eqref{f2evagen}, then \eqref{f2contgendef1} yields
$ \im \, z=\im \, \Bb{G}_{2} (-1/z) $, $z \in \eusm{E}^{\infty}_{\!{\tn{\frown}}}$, and we deduce from
\eqref{f3bint}(c),  \eqref{f1auxevagen} and \eqref{f12evagen} that
\begin{align*}
    & \dfrac{\left|\Theta_{4}\left(-1/z \right)\right|^{4}}{ |z|^{2}}\, I_{\delta} (-1/z) \!\leqslant \!
\dfrac{\left|\Theta_{2}\left(z \right)\right|^{4} \im^{2} z }{\im^{2} z}\dfrac{147 \pi}{20}
\left(1
\!+ \! \dfrac{1}{\im\, z }\right)\! \leqslant  \ \dfrac{30 \pi}{\im^{2} z}
\left(1\!+ \! \dfrac{1}{\im\, z }\right),
\end{align*}

\noindent because by \cite[p.\! 325]{ber1},
$(147/20)\theta_{3}(e^{-\pi/2})^{4}=(147/20)(\pi/2) (3 + 2 \sqrt{2})\Gamma  (3/4)^{-4}
 < 30$.{\hyperlink{r30}{${}^{\ref*{case30}}$}}\hypertarget{br30}{}
  This proves the first inequality of \eqref{f13evagen}. If in \eqref{f2evagen} we have $z\in \eusm{E}_{\!{\tn{\frown}}}^{\hspace{0,05cm}0}$ then \eqref{f3bint}(a) implies
\begin{align*}
    &  \left|\Theta_{4}\left(z \right)\right|^{4} =
\dfrac{\left|\Theta_{2}\left(-1/z \right)\right|^{4} }{|z|^{2}}=
\dfrac{\left|\Theta_{2}\left(-1/z \right)\right|^{4} \im\, (-1/z)}{\im\, z}
= \dfrac{\left|\Theta_{2}\left(\Bb{G}_{2} (z) \right)\right|^{4} \im\, \Bb{G}_{2} (z)}{\im\, z} \ ,
\end{align*}

\noindent
and we derive from \eqref{f1auxevagen} and \eqref{f12evagen} that
\begin{align*}
    &  \left|\Theta_{4}\left(z \right)\right|^{4} \,I_{\delta} (z)
\leqslant \dfrac{\theta_{3}(e^{-\pi/2})^{4} I_{\delta} (z)}{\left(\im\, z\right)\im\, \Bb{G}_{2} (z)}   \leqslant
\dfrac{30 \pi \big(1+1/\im\, \Bb{G}_{2} (z)\big) }{\left(\im\, z\right)\im\, \Bb{G}_{2} (z)} \ ,
\end{align*}

\noindent which proves the second inequality of \eqref{f13evagen}.
 Finally, if $z\in \eusm{E}^{\eufm{n}}_{\!{\tn{\frown}}} $ in \eqref{f2evagen},  then \eqref{f3bint}(a) yields
\begin{align*}
    &
\im\,\Bb{G}^{N}_{2} (z)\left|\Theta_{4}\big(\Bb{G}^{N}_{2} (z) \big)\right|^{4} =\im\,\Bb{G}^{N}_{2} (z) \left|\Bb{G}^{N}_{2} (z)\right|^{-2}\left|\Theta_{2}\big(-1/\Bb{G}^{N}_{2} (z) \big)\right|^{4}
 \\ & =
\im\,\left(-1/\Bb{G}^{N}_{2} (z)\right)\left|\Theta_{2}\left(-1/\Bb{G}^{N}_{2} (z) \right)\right|^{4}=
\im\,\Bb{G}^{N+1}_{2} (z)\left|\Theta_{2}\left(\Bb{G}^{N+1}_{2} (z)\right)\right|^{4}
\end{align*}

\noindent and we conclude, by \eqref{f1auxevagen} and \eqref{f12evagen}, that the third inequality of \eqref{f13evagen} holds because
\begin{align*}
    &  \dfrac{\left|\Theta_{4}\big(\Bb{G}^{N}_{2} (z) \big)\right|^{4}\, I_{\delta} \big(\Bb{G}^{N}_{2} (z)\big)}{ \left(\im\,z\right)\big/
\im\,\Bb{G}^{N}_{2} (z)}
\leqslant \dfrac{\theta_{3}(e^{-\pi/2})^{4} I_{\delta} \big(\Bb{G}^{N}_{2} (z)\big)}{\left(\im\, z\right)\im\,\Bb{G}^{N+1}_{2} (z)}   \leqslant
\dfrac{30 \pi \big(1+1/\im\, \Bb{G}^{N+1}_{2} (z)\big) }{\left(\im\, z\right)\im\, \Bb{G}^{N+1}_{2} (z)} \ .
\end{align*}


\vspace{0,25cm}
\subsection[\hspace{-0,15cm}. \hspace{-0,04cm}Proofs for Section~\ref{mres}]{\hspace{-0,11cm}{\bf{.}} Proofs for Section~\ref{mres}}$\phantom{a}$\vspace{-0,1cm}

\subsubsection[\hspace{-0,2cm}. Proofs of Lemma~\ref{intreslem3} and Theorem~\ref{intforth1}.]{\hspace{-0,11cm}{\bf{.}} Proofs of Lemma~\ref{intreslem3} and Theorem~\ref{intforth1}.}
\label{pintreslem3} \ To prove Lemma~\ref{intreslem3} for arbitrary $\varepsilon >0$ we introduce the function
\begin{align*}
    &\omega_{\varepsilon} (x) :=  \dfrac{\chi_{(0, \varepsilon)}(x)}{ \kappa \cdot \varepsilon}
 {\rm{e}}^{{\fo{- \dfrac{\varepsilon^{2}}{x(\varepsilon-x)} }}} \in C^{\infty}(\Bb{R}), \quad  x \in \Bb{R} \, , \qquad \int\limits_{0}^{\varepsilon}\omega_{\varepsilon} (x) {\rm{d}} x = 1 \,,
\end{align*}

\noindent where (see \cite[p.\! 658, (1.17), (1.18)]{bak})
 $\max_{t\in [0,1]}\exp (- t^{-1}(1-t)^{-1} ) = {1}/{{\rm{e}}^{4}}$,
\begin{gather*}\hspace{-0.2cm}
  \kappa := \int\nolimits_{0}^{1}
  {\rm{e}}^{{\fo{- \dfrac{1}{t(1-t)} }}  } d t = 2\left[K_{1} (2) - K_{0} (2)\right]/{\rm{e}}^{2} \in \left({0.3838}/{  {\rm{e}}^{4} } \ , \ {0.38382}/{  {\rm{e}}^{4} }\right).
\end{gather*}

\noindent For an arbitrary  $\varphi\in {\rm{H}}^{1}_{+}(\Bb{R})$ let
\begin{align}\label{f0pintreslem3}
    &  \varphi_{\varepsilon} (x) =   \int\nolimits_{\Bb{R}} \omega_{\varepsilon} (t-x) \varphi (t) {\rm{d}} t \ , \quad  x\in \Bb{R} \, , \  \varepsilon >0\,.
\end{align}

 \noindent Then for any $n \in \Bb{Z}_{\geqslant 0}$ the relationships
\begin{align*}  &    \varphi_{\varepsilon}^{(n)} (x) = (-1)^{n}\int\limits_{\Bb{R}} \omega_{\varepsilon}^{(n)} (t-x) \varphi (t) {\rm{d}} t \, , \   \left|  \varphi_{\varepsilon}^{(n)} (x)\right| \leqslant \max\limits_{x \in [0, \varepsilon]} \left|\omega_{\varepsilon}^{(n)} (x)\right| \int\limits_{\Bb{R}} \left|\varphi (t)\right| {\rm{d}} t\,,
   \\    &  \int\limits_{\Bb{R}} \left|  \varphi_{\varepsilon}^{(n)} (x)\right| {\rm{d}} x \leqslant \int\limits_{\Bb{R}}  \int\limits_{\Bb{R}} \left|\omega_{\varepsilon}^{(n)} (t-x)\right| \left|\varphi (t)\right| {\rm{d}} t {\rm{d}} x =\int\limits_{\Bb{R}} \left|\omega_{\varepsilon}^{(n)} (x)\right| {\rm{d}} x \int\limits_{\Bb{R}} \left|\varphi (t)\right| {\rm{d}} t\,,
\end{align*}

\noindent yield that \vspace{-0,2cm}
\begin{align}\label{f1pintreslem3}
    & \varphi^{(n)}_{\,\varepsilon} \in C^{\infty}(\Bb{R}) \cap L_{1}(\Bb{R}) \cap L_{\infty}(\Bb{R})  \ , \quad  n  \in \Bb{Z}_{\geqslant 0} \,.
\end{align}

\noindent Since  by \cite[p.\! 88, 2(iv)]{gar} we have
$\int_{\Bb{R}}\varphi (t) \exp (i x t)  {\rm{d}} t = 0$ for every $x\geqslant 0$ then the identity
\begin{align*}
    &  \int\nolimits_{\Bb{R}} {\rm{e}}^{{\fo{{\rm{i}} y  x}}}    \varphi_{\varepsilon} (x) {\rm{d}} x =  \int\nolimits_{\Bb{R}}
    {\rm{e}}^{{\fo{{\rm{i}} y  x}}} \varphi (x) {\rm{d}} x
    \int\nolimits_{\Bb{R}} {\rm{e}}^{{\fo{-{\rm{i}} y  t}}} \omega_{\varepsilon} (t) {\rm{d}} t = 0 \, , \quad y > 0 \,,
\end{align*}

\noindent implies\vspace{-0,3cm}
\begin{align}\label{f2pintreslem3}
    &   \varphi_{\varepsilon}\in  {\rm{H}}^{1}_{+}(\Bb{R})  \, , \
\end{align}

\noindent by virtue of \eqref{f1pintreslem3} and \cite[p.\! 88, 2(iv)]{gar}. For arbitrary $\delta> 0$ we introduce
the Fourier transform of $\omega_{\delta}$ as follows
\begin{align}\label{f3pintreslem3}
    &   h_{\delta} (x)   :=  \int\nolimits_{{\fo{0}}}^{{\fo{\delta}}}  {\rm{e}}^{{\fo{{\rm{i}} t  x}}} \omega_{\delta} (t) {\rm{d}} t \in {\rm{H}}^{\infty}_{+}(\Bb{R}) \ , \quad  \left| h_{\delta} (z)  \right| \leqslant 1 \,, \ z \in \Bb{H}\cup\Bb{R} \, ,
\end{align}

\noindent where ${\rm{H}}^{\infty}_{+}(\Bb{R})$ denotes the class  of all nontangential limits on  $\Bb{R}$  of the uniformly bounded and holomorphic on $\Bb{H}$ functions. Integration by parts gives
\begin{align*}
    & h_{\delta}^{(n)} (x)   =  \int\nolimits_{{\fo{0}}}^{{\fo{\delta}}}  {\rm{e}}^{{\fo{{\rm{i}} t  x}}} ({\rm{i}} t)^{n}\omega_{\delta} (t) {\rm{d}} t  =
      \dfrac{i^{m}}{x^{m}} \int\nolimits_{{\fo{0}}}^{{\fo{\delta}}}{\rm{e}}^{{\fo{{\rm{i}} t  x}}} \big(({\rm{i}} t)^{n}\omega_{\delta} (t)\big)^{(m)}{\rm{d}} t
       \, , \quad n, m \in \Bb{Z}_{\geqslant 0}  \,,
\end{align*}

\noindent which proves that $h_{\delta} \in \eurm{S} (\Bb{R})$ and we conclude, by taking into account \eqref{f1pintreslem3} and \eqref{f2pintreslem3}, that
\begin{align}\label{f4pintreslem3}
    &  h_{\delta} \cdot \varphi_{\varepsilon} \in {\rm{H}}^1_{+}(\Bb{R}) \cap \eurm{S} (\Bb{R})  \ , \quad  \varepsilon, \delta > 0\,.
\end{align}

\noindent It follows from the inequality $|{\rm{e}}^{z}-1|< 7|z|/4$, $|z|< 1$ (see \cite[p.\! 70, 4.2.38]{abr}) that for arbitrary $\rho > 0$ the following inequality holds
\begin{align}\label{f5pintreslem3}
    & \hspace{-0,4cm}  \left| h_{\delta} (x) \!-\!1 \right|\!\leqslant\! \int\limits_{{\fo{0}}}^{{\fo{\delta}}} \left| {\rm{e}}^{{\fo{{\rm{i}} t  x}}}\!-\!1\right| \omega_{\delta} (t) {\rm{d}} t \!\leqslant\!
    \dfrac{7 \delta }{4 \rho} \int\limits_{{\fo{0}}}^{{\fo{\delta}}}\omega_{\delta} (t) {\rm{d}} t =   \dfrac{7 \delta }{4 \rho} \, , \  \ \delta \leqslant \rho \, , \  |x| \leqslant \dfrac{1}{\rho} \,. \hspace{-0,1cm}
\end{align}

\noindent We introduce the notation
\begin{align*}
    &
    \left\{\begin{array}{l}
       L (a, b) := \left\{\ x \in \Bb{R} \ | \ a \leqslant |x| \leqslant b \ \right\} = [-b,-a]  \cup [a,b]  \,,  \\[0,25cm]
       G (a, b) := \Bb{R}\setminus   L (a, b) = (-a, a) \cup \left\{\ x \in \Bb{R} \ | \ |x| > b \ \right\}\,,
    \end{array}\right.\quad 0 < a < b < +\infty\,.
\end{align*}

\noindent
Now, for arbitrary $\varepsilon \in (0,1/2)$ and  $\delta = \varepsilon^{2} < \rho = \varepsilon$  it follows from \eqref{f3pintreslem3} that
\begin{align*}
    &   \int\limits_{\Bb{R}}
       \left|\varphi (x)\! -\! h_{\delta}(x)  \varphi_{\,\varepsilon} (x)\right| {\rm{d}} x\!  \leqslant
 \hspace{-0,5cm}  \int\limits_{L(\rho,  1/\rho)} \hspace{-0,4cm}       \left|\varphi (x)\! - \!h_{\delta} (x) \varphi_{\,\varepsilon} (x)\right| {\rm{d}} x\! +   \hspace{-0,5cm}
    \int\limits_{G(\rho,  1/\rho)} \hspace{-0,4cm}    \left| \varphi (x)\right| {\rm{d}} x \!+ \hspace{-0,5cm}
     \int\limits_{G(\rho,  1/\rho)} \hspace{-0,4cm}    \left|  \varphi_{\,\varepsilon} (x)\right| {\rm{d}} x\,,
\end{align*}

\noindent where
\begin{align*}
    & \int\limits_{G(\rho,  1/\rho)} \hspace{-0,4cm}    \left|  \varphi_{\,\varepsilon} (x)\right| {\rm{d}} x
    \leqslant   \int\limits_{\Bb{R}}   \int\limits_{\Bb{R}} \left|\omega_{\varepsilon} (t)\right| \left|\varphi (t+x)\right| \chi_{G(\rho,  1/\rho)}(x) \chi_{[0,\varepsilon]}(t){\rm{d}} t {\rm{d}} x = \\ & =
    \int\limits_{\Bb{R}}   \int\limits_{\Bb{R}} \left|\omega_{\varepsilon} (t)\right| \left|\varphi (x)\right| \chi_{G(\rho,  1/\rho)}(x-t) \chi_{[0,\varepsilon]}(t) {\rm{d}} x {\rm{d}} t \leqslant \hspace{-0,4cm}\int\limits_{G(\rho + \varepsilon,  ( 1/\rho) - \varepsilon)} \hspace{-0,4cm}    \left|  \varphi (x)\right| {\rm{d}} x \, , \
    \end{align*}

\noindent and, in view of \eqref{f5pintreslem3},
\begin{align*}
    &  \hspace{-0,5cm}  \int\limits_{L(\rho,  1/\rho)} \hspace{-0,4cm}       \left|\varphi (x) - h_{\delta} (x)  \varphi_{\,\varepsilon} (x)\right| {\rm{d}} x =  \hspace{-0,5cm}  \int\limits_{L(\rho,  1/\rho)} \hspace{-0,4cm}
     \Big|\varphi (x) -\varphi_{\,\varepsilon} (x) +
        \left(1-  h_{\delta} (x)\right) \varphi_{\,\varepsilon} (x)\Big| {\rm{d}} x \\    & \leqslant
      \hspace{-0,5cm}  \int\limits_{L(\rho,  1/\rho)} \hspace{-0,4cm}   \left|\varphi (x) -\varphi_{\,\varepsilon} (x)\right|{\rm{d}} x +
     \dfrac{7 \delta }{4 \rho} \int\limits_{{\fo{\Bb{R}}}}   \left|  \varphi (x)\right| {\rm{d}} x
        \ , \
\end{align*}

\noindent while
\begin{align*}
    &   \int\limits_{L(\rho,  1/\rho)} \hspace{-0,4cm}\!   \left|\varphi (x)\! -\!\varphi_{\,\varepsilon} (x)\right|{\rm{d}} x\!\leqslant\!\!
     \int\limits_{{\fo{\Bb{R}}}} \!\! \left|\varphi (x)\! -\!\varphi_{\,\varepsilon} (x)\right|{\rm{d}} x \!= \! \! \int\limits_{{\fo{\Bb{R}}}}\!\left|\int\limits_{0}^{\varepsilon}\!\omega_{\varepsilon} (t)\left(\varphi (x)\! - \! \varphi (t+x)\right) {\rm{d}} t\right|{\rm{d}} x
     \\    &   \leqslant
\int\limits_{0}^{\varepsilon}\omega_{\varepsilon} (t)     \int\limits_{{\fo{\Bb{R}}}}\left|\varphi (x) - \varphi (t+x)\right|{\rm{d}} x {\rm{d}} t\leqslant
\max\limits_{{\fo{t \in [0, \varepsilon]}}}  \int\limits_{{\fo{\Bb{R}}}}\left|\varphi (x) - \varphi (t+x)\right|{\rm{d}} x \ .
\end{align*}

\noindent So that
\begin{align*}
    &   \int\limits_{\Bb{R}}
       \left|\varphi (x)\! -\! h_{\varepsilon^{2}}(x)  \varphi_{\,\varepsilon} (x)\right| {\rm{d}} x\!  \\    &    \leqslant
   2\hspace{-0,4cm}\int\limits_{G(2 \varepsilon,   1/(2\varepsilon))} \hspace{-0,4cm}    \left|  \varphi (x)\right| {\rm{d}} x +
     \dfrac{7 \varepsilon }{4} \int\limits_{{\fo{\Bb{R}}}}   \left|  \varphi (x)\right| {\rm{d}} x +
     \max\limits_{{\fo{t \in [0, \varepsilon]}}}  \int\limits_{{\fo{\Bb{R}}}}\left|\varphi (x) - \varphi (t+x)\right|{\rm{d}} x \, , \
\end{align*}

\noindent where the right-hand side tends to zero as $\varepsilon \to 0$ because $\varphi \in L^{1}(\Bb{R})$
and the translation is continuous in $ L^{1}(\Bb{R})$ (see  \cite[p.\! 16]{gar}). This together with \eqref{f4pintreslem3}  completes the proof of Lemma~\ref{intreslem3}.

\vspace{0,2cm}To prove Theorem~\ref{intforth1} we notice that in accordance with \eqref{f2intfor}, for each $n \geqslant 1$ the functions $\eusm{R}_{0}$ and $\eusm{R}_{n}$ on the quadrant $\Bb{R}_{\geqslant 0}\times \Bb{R}_{\leqslant 0}$ coincide, correspondingly,
with the functions
\begin{align*}
      &  \dfrac{1}{2} \!\!\! \!\!\! \int\limits_{\gamma (-1,1)} \!\!\!\! \Theta_{3}\left(z\right)^{4} {\rm{e}}^{{\fo{{\rm{i}} x z+ {\rm{i}} y/z}}}   {\rm{ d}} z=
 \dfrac{1}{2{\rm{i}}} \int\limits_{0}^{\pi} \Theta_{3}\left({\rm{e}}^{{\rm{i}}\varphi}\right)^{4} {\rm{e}}^{{\fo{{\rm{i}} x {\rm{ e}}^{{\rm{i}}\varphi}+ {\rm{i}} y{\rm{e}}^{-{\rm{i}}\varphi} + {\rm{i}}\varphi  }}}    {\rm{d}} \varphi
      \ , \  \\
    &  \dfrac{1}{2 \pi n} \!\!\! \!\!\!  \int\limits_{\gamma (-1,1)}\!\!\!\!\!
\left(x- \dfrac{y}{z^{2}}\right){\rm{e}}^{{\fo{{\rm{i}} x z+ {\rm{i}} y/z}}}
S^{{\tn{\triangle}}}_{n} \!\left(\dfrac{1}{\lambda (z)}\right) {\rm{d}} z \\  & =
\dfrac{1}{2 \pi {\rm{i}} n}   \int\limits_{0}^{\pi}
\left(x- y{\rm{e}}^{-2{\rm{i}}\varphi}\right){\rm{e}}^{{\fo{{\rm{i}} x {\rm{e}}^{{\rm{i}}\varphi}+ {\rm{i}} y{\rm{e}}^{-{\rm{i}}\varphi} + {\rm{i}}\varphi  }}}
\left(\sum\limits_{k=1}^{n}\dfrac{s^{{\tn{\triangle}}}_{n, k}}{\lambda ({\rm{e}}^{{\rm{i}}\varphi})^{k}}\right)    {\rm{d}} \varphi\ , \
\end{align*}

\noindent which both are entire functions of two variables $x, y \in \Bb{C}$
because \eqref{f3yint} and \eqref{f3zint} can be written as
\begin{align}\label{f1pintforth1}
    &
    \dfrac{1}{\left|\lambda \left({\rm{e}}^{{\rm{i}} \varphi}\right)\right| } \leqslant 64 \exp \left(-\dfrac{\pi }{\sin  \varphi}\right) , \ \ \
\left|\Theta_{3} \left({\rm{e}}^{{\rm{i}} \varphi}\right)\right|^{4}  \leqslant
 \dfrac{10}{\sin  \varphi}
 \exp \left(-\dfrac{\pi }{2 \sin  \varphi}\right) ,
\end{align}

\noindent for all $0 < \varphi < \pi$. This proves the first assertion of Theorem~\ref{intforth1}.

By setting $y=0$ in  \eqref{f5mres} and in \eqref{f6mres},
 we obtain, in view of \eqref{f2intth1}, \eqref{f3intth1} and \eqref{f6zmres},
 \begin{align*}
     & \eusm{R}_{n}(x, 0)
\!= \! \int\limits_{\Bb{R}}\!  {\rm{e}}^{{\fo{{\rm{i}} x t}}} \eurm{H}_{-n} (t) {\rm{d}} t \!= \!
\int\limits_{\Bb{R}}\!  {\rm{e}}^{{\fo{-{\rm{i}} x t}}} \eurm{H}_{n} (t) {\rm{d}} t \, ,
\  n \geqslant 0 \, ; \  \ \ \eusm{R}_{0}(0, -x) =\eusm{R}_{0}(x, 0)\,;
 \\  &
 \eusm{R}_{n}(0, -x)
\!= \! \int\limits_{\Bb{R}}\!  {\rm{e}}^{{\fo{{\rm{i}} x t}}} \eurm{M}_{-n} (t){\rm{ d}} t = \int\limits_{\Bb{R}}\!  {\rm{e}}^{{\fo{-{\rm{i}} x t}}} \eurm{M}_{n} (t) {\rm{d}} t  \ , \ \
n \geqslant  1 \,, \ \ x \in \Bb{R}\,,
 \end{align*}

\noindent  from which and \eqref{f02int} we derive \eqref{f1intforth1}.

The properties \eqref{f3intforth1} and \eqref{f4intforth1} are immediate
of  \eqref{f6zmres}, \eqref{f3fevagen}, \eqref{f5intfor} and \eqref{f2fevagen}, respectively.

To prove \eqref{f5intforth1} and \eqref{f2intforth1} observe that the following estimates have been derived  after \eqref{f17zint} for $a=2$,
\begin{align}\label{f2pintforth1}
     &\hspace{-0,25cm}
     \left|S^{{\tn{\triangle}}}_{n} \!\left(\dfrac{1}{\lambda (z)}\right)\right| = \left|\eurm{R}_{n}^{{\tn{\triangle}}} (z)\right|\leqslant
     \dfrac{9\, {\rm{e}}^{{\fo{2 \pi n  }}}  }{\pi  |\lambda(z)|}\dfrac{11 + 8\sqrt{2}}{21}\leqslant\dfrac{10 \,{\rm{e}}^{{\fo{2 \pi n  }}}  }{\pi  |\lambda(z)|} \ , \quad  z \in \gamma (-1,1)\,,
\end{align}

\noindent where $n \geqslant 1$ and the value of $1-2 \lambda (2{\rm{i}})$ has been used from \eqref{f2auxevagen}. Then, by using the estimates
\eqref{f3zint}, \eqref{f3yint}, for  the parametrization $\gamma (-1,1) \ni z = (t+{\rm{i}})/(t-{\rm{i}})$, $t > 0$,  we derive from \eqref{f2intfor} and \eqref{f2pintforth1} that
\begin{align*}
     & \hspace{-0,2cm}\left| \eurm{R}_{n}^{{\tn{\triangle}}} \left(\dfrac{t+{\rm{i}}}{t-{\rm{i}}}\right)\right| \!\leqslant\! \dfrac{640 {\rm{e}}^{{\fo{2 \pi n  }}}}{\pi} {\rm{e}}^{{\fo{-\dfrac{\pi }{2}\left(t + \dfrac{1}{t}\right)}}} \, , \
     \left|\Theta_{3} \left(\dfrac{t+{\rm{i}}}{t-{\rm{i}}}\right)\right|^{4}  \leqslant 5 \left(t + \dfrac{1}{t}\right)
{\rm{e}}^{{\fo{-\dfrac{\pi }{4}\left(t + \dfrac{1}{t}\right)}}}  \ ,  \\  &
 \eusm{R}_{n}(x, -y)=\dfrac{1}{ {\rm{i}}\pi n}\int\limits_{0}^{\infty}
\left(x+ y \dfrac{(t-{\rm{i}})^{2}}{(t+{\rm{i}})^{2}}\right)
\eurm{R}_{n}^{{\tn{\triangle}}}\left(\dfrac{t+{\rm{i}}}{t-{\rm{i}}}\right)
{\rm{e}}^{{\fo{{\rm{i}} x \dfrac{t+{\rm{i}}}{t-{\rm{i}}}  - {\rm{i}} y \dfrac{t-{\rm{i}}}{t+{\rm{i}}}  }}}\dfrac{{\rm{d}} t}{(t-{\rm{i}})^{2}} \ ,  \\  &
\eusm{R}_{0}(x, -y)=\dfrac{1}{ {\rm{i}}}\int\limits_{0}^{\infty}
\Theta_{3} \left(\dfrac{t+{\rm{i}}}{t-{\rm{i}}}\right)^{4}{\rm{e}}^{{\fo{{\rm{i}} x \dfrac{t+{\rm{i}}}{t-{\rm{i}}}  - {\rm{i}} y \dfrac{t-{\rm{i}}}{t+{\rm{i}}}  }}}\dfrac{{\rm{d}} t}{(t-{\rm{i}})^{2}}  \, , \quad x, y \geqslant 0 \, , \ t > 0 \, , \ n \geqslant 1 \,.
\end{align*}

\noindent So that
\begin{align}
     &
\begin{array}{l}
 \begin{displaystyle}
  \left|\eusm{R}_{n}(x, -y)\right| \leqslant\dfrac{640 {\rm{e}}^{{\fo{2 \pi n  }}}}{ \pi^{2} n}
     (x+y)\int\limits_{0}^{\infty}
{\rm{e}}^{{\fo{-\dfrac{\pi }{2}\left(t + \dfrac{1}{t}\right) -\dfrac{ 2 t (x+y) }{t^{2} +1} }}}\dfrac{{\rm{d}} t}{t^{2} +1} \ , \
 \end{displaystyle}
    \\
 \begin{displaystyle}
\left|\eusm{R}_{0}(x, -y)\right| \leqslant 5\int\limits_{0}^{\infty}
{\rm{e}}^{{\fo{-\dfrac{\pi }{4}\left(t + \dfrac{1}{t}\right) -\dfrac{ 2 t (x+y) }{t^{2} +1} }}}\dfrac{{\rm{d}} t}{t}\ , \quad x, y \geqslant 0 \, , \ n \geqslant 1 \,,
 \end{displaystyle}
\end{array}
\label{f3pintforth1}\end{align}

\noindent where for arbitrary $a, b > 0$ we have{\hyperlink{r37}{${}^{\ref*{case37}}$}}\hypertarget{br37}{}
\begin{align}\label{f4pintforth1}
   \begin{array}{l}
 \begin{displaystyle}
\int\limits_{0}^{\infty}
{\rm{e}}^{{\fo{-\dfrac{b}{2}\left(t + \dfrac{1}{t}\right) -\dfrac{ 2 a t  }{t^{2} +1} }}}\dfrac{{\rm{d}} t}{t^{2} +1} =
\dfrac{1}{2}\sqrt{\dfrac{b}{a+1}} K_{1} \left(2\sqrt{b (a+1)}\right)
\ , \
 \end{displaystyle}
    \\
 \begin{displaystyle}
\int\limits_{0}^{\infty}
{\rm{e}}^{{\fo{-\dfrac{b}{2}\left(t + \dfrac{1}{t}\right) -\dfrac{ 2 a t  }{t^{2} +1} }}}\dfrac{{\rm{d}} t}{t} =
K_{0} \left(2\sqrt{b (a+1)}\right) \,.
 \end{displaystyle}
\end{array}
\end{align}

\noindent Taking into account that $320\sqrt{\pi}/(\pi^{2} n) < 2 \pi^{3}$ for all $n \geqslant 1$,
 we derive from
\eqref{f3pintforth1} and \eqref{f4pintforth1} that
\begin{align}
     &
\begin{array}{l}
 \begin{displaystyle}
  \left|\eusm{R}_{n}(x, -y)\right| \leqslant
2 \pi^{3} {\rm{e}}^{{\fo{2 \pi n  }}}
     \dfrac{(x+y)}{\sqrt{x+y+1}} K_{1} \left(2\sqrt{\pi (x+y+1)}\right) \ , \
 \end{displaystyle}
    \\[0,5cm]
 \begin{displaystyle}
\left|\eusm{R}_{0}(x, -y)\right| \leqslant 5 K_{0} \left(\sqrt{2 \pi (x+y+1)}\right)\ , \quad x, y \geqslant 0 \, , \ n \geqslant 1 \,,
 \end{displaystyle}
\end{array}
\label{f5pintforth1}\end{align}

\noindent which proves \eqref{f5intforth1}, \eqref{f2intforth1}  and completes the proof of Theorem~\ref{intforth1}.

\addcontentsline{toc}{section}{References}

\vspace{0,3cm}

\vspace{0,5cm}\noindent {\bf{Remark to the References.}} The results of this preprint are based on several assertions which may be found  in \cite{abr}, \cite{and}, \cite{bh2}, \cite{bh1}, \cite[p.\! 665]{bak}, \cite{ber1}, \cite{con},
\cite{erd1}, \cite{gar}, \cite{gra}, \cite[p.\! 1715]{hed1}, \cite{ko}, \cite{lav}, \cite{nat}, \cite{olv},
\cite{rud},  \cite{vlad} and \cite{whi}.
The   other cited  books and articles are listed for completeness.

For example, the monograph \cite{law} is cited on the page \pageref{f3d1int}
in connection with the Landen trans\-for\-mation equations
\eqref{f3d1int}.
However, these equations have self-contained proofs
on the pages \pageref{case38}--\pageref{case1}.
Similarly, in connection with the reference to \cite[p.\! 303]{lop} on the page \pageref{contgendef2} we supply a self-contained proof of \cite[p.\! 303, Proposition 2]{lop}
on the pages \pageref{case39}--\pageref{case25}. Finally, the references to Lemma~2 of \cite[p.\! 112]{cha}
on the pages \pageref{f4contgen} and \pageref{bsdenac} are complemented by a self-contained proof of this lemma on the pages \pageref{case17}--\pageref{case18}.

Moreover, in \cite[p.\! 1715]{hed1} we use only  Proposition 3.7.2 which  establishes the elementary property
 of the operator ${\bf{T}}_{1}$ that it  preserves the properties of functions on $(-1,1)$ to be even and convex. Also,  we need \cite[p.\! 665]{bak}  to write explicitly the value of  the integral of the simplest test function. At last, when using a property of the theta functions needed in the preprint, we refer to   \cite{bh2} and \cite{bh1} because all the basic properties find simple proofs there with the help of  elementary integrals, Liouville's theorem, Morera's theorem, Riemann's theorem about removable singularities,   and  elementary properties of the Schwarz triangle function $\ie$. All the other facts used in the preprint without proofs are from the fundamentals of real and complex  analysis found in, e.g., \cite{abr}, \cite{and},  \cite{ber1}, \cite{con},
\cite{erd1}, \cite{gar}, \cite{gra}, \cite{ko}, \cite{lav}, \cite{nat}, \cite{olv},
\cite{rud},  \cite{vlad} and \cite{whi}.


\newpage

\renewcommand{\thesection}{A}

\section[\hspace{-0,2cm}. \hspace{0,03cm}Supplementary notes]{\hspace{-0,095cm}{\bf{.}} Supplementary notes}\label{detpro}
\setcounter{equation}{0}

%
%
%
%
%
%
\vspace{-0.15cm}
\subsection[\hspace{-0,25cm}. \hspace{0,05cm}Notes for Section~\ref{int}]{\hspace{-0,11cm}{\bf{.}} Notes for Section~\ref{int}}$\phantom{a}$

\vspace{0.1cm}
\begin{subequations}
\begin{clash}{\rm{\hypertarget{r012}{}\label{case012}\hspace{0,00cm}{\hyperlink{br012}{$\uparrow$}}
Let $M \in \Bb{N}$ and
\begin{align*}
     & \mathcal{S}_{M} (x) :=   a_{0}  + \sum_{
     n \in \Bb{Z}\setminus \{0\}, \, |n| \leqslant M}\left(a_{n} {\rm{e}}^{{\fo{\imag \pi n x}}}\! +\!
 b_{n} {\rm{e}}^{{\fo{-\imag \pi n/ x}}} \right)\, ,
    \ \ x\!\in\! \Bb{R} \,.
\end{align*}

\noindent It follows from the two equalities preceding \eqref{f05aint} that for arbitrary $\varphi \in \eurm{S} (\Bb{R})$ and $n \in \Bb{Z}\setminus \{0\}$ we have
\begin{align*}
    & \left| a_{n} \int\limits_{\Bb{R}}\! \varphi (x)\, {\rm{ e}}^{{\fo{{\rm{i}}\pi  n x }}} {\rm{d}} x\right| \leq
    \dfrac{|a_{n} |}{(\pi\,  |n|)^{N+2}}  \int\nolimits_{\Bb{R}}\left| \varphi^{(N+2)} (x)\right| {\rm{d}} x \, , \\   & \left|b_{n}  \int\limits_{\Bb{R}}\! \varphi (x) \,{\rm{e}}^{{\fo{  \dfrac{{\rm{i}} \pi n }{x} }}}  {\rm{d}} x\right| \leq   \dfrac{|b_{n} |}{(\pi\,  |n|)^{N+2}}   \int\nolimits_{\Bb{R}} \left|\left(x^{2}\dfrac{{\rm{d}}}{{\rm{d}} x}\! +\!2x \right)^{\!N+2}\!\!\varphi (x)\right|  {\rm{d}} x \, .
\end{align*}

\noindent Thus, the conditions $a_{n}, b_{n}\!=\! {\rm{O}} (n^{N}), \ n\!\to\! \infty,$ in \eqref{f04int} imply that the sequence
\begin{align*}
    & \int\limits_{\Bb{R}}\! \varphi (x) \mathcal{S}_{M} (x) {\rm{d}} x \ , \quad  M \in \Bb{N} \, ,
\end{align*}

\noindent converges and
\begin{align*}
    &  \mathcal{S}_{\infty}\big( \varphi\big) :=\lim_{ M\to \,\infty}
    \int\limits_{\Bb{R}}\! \varphi (x) \mathcal{S}_{M} (x) {\rm{d}} x \,, \quad \varphi \in \eurm{S} (\Bb{R})\,,
\end{align*}

\noindent satisfies
\begin{align*}
     \left|\mathcal{S}_{\infty}\big( \varphi\big)\right| \leq & |a_{0} | +
    \left(\sum_{n \in \Bb{Z}\setminus \{0\} }
  \dfrac{|a_{n} |}{(\pi\,  |n|)^{N+2}} \right) \int\nolimits_{\Bb{R}}\left| \varphi^{(N+2)} (x)\right| {\rm{d}} x \, +
  \\  &
  \left(\sum_{n \in \Bb{Z}\setminus \{0\} } \dfrac{|b_{n} |}{(\pi\,  |n|)^{N+2}}  \right) \int\nolimits_{\Bb{R}} \left|\left(x^{2}\dfrac{{\rm{d}}}{{\rm{d}} x}\! +\!2x \right)^{\!N+2}\!\!\varphi (x)\right|  {\rm{d}} x \,.
\end{align*}

\noindent This means that $\mathcal{S}_{\infty}$
defines a continuous linear functional  on $\eurm{S} (\Bb{R})$ and therefore $\mathcal{S}_{\infty}\in \eurm{S}^{\,\prime} (\Bb{R})$ (see
\cite[p.\! 77]{vlad}). In conclusion,  the series
\begin{align*}
    &   a_{0} + \sum\nolimits_{n \in \Bb{Z}\setminus \{0\}} \!\! \left(a_{n} {\rm{e}}^{{\fo{\imag \pi n x}}}\! +\!
 b_{n} {\rm{e}}^{{\fo{-\imag \pi n/ x}}} \right)
\end{align*}

\noindent converges to $\mathcal{S}_{\infty}\in \eurm{S}^{\,\prime} (\Bb{R})$ in the space $\eurm{S}^{\,\prime} (\Bb{R})$.
}}\end{clash}
\end{subequations}

\vspace{0.1cm}
\begin{subequations}
\begin{clash}{\rm{\hypertarget{r12}{}\label{case12}\hspace{0,00cm}{\hyperlink{br12}{$\uparrow$}}
 \ We prove \eqref{f2intcor1}.
 It follows from \eqref{f3dint}, written in the form
 \begin{align*}
    &  \lambda \left(\dfrac{1+z}{1-z}\right)=\dfrac{1}{2} +
\dfrac{i\left(1-2\lambda (z)\right)}{4  \sqrt{\lambda (z)\left(1-\lambda (z)\right)}} \ , \quad  z \in \fet\,,
 \end{align*}

\noindent and  \eqref{f2aint},  that
 \begin{align*}
    & \lambda \left(\dfrac{1+\ie(z)}{1-\ie(z)}\right)=\dfrac{1}{2} +
\dfrac{i\left(1-2\lambda (\ie(z))\right)}{4  \sqrt{\lambda (\ie(z))\left(1-\lambda (\ie(z))\right)}}=
 \dfrac{1}{2} +
\dfrac{i\left(1-2 z\right)}{4  \sqrt{z\left(1-z\right)}}
 \ , \quad  z \in \Lambda\,,
 \end{align*}

\noindent where $\Lambda:=(0,1)\cup \left(\Bb{C}\setminus\Bb{R}\right)$,  and then, by using
\begin{align*}\tag{\ref{f1case11lem1}}
    &   \phi_{{\nor{(\begin{smallmatrix} 1 & 1 \\ -1& 1\end{smallmatrix})}}}\left(\fet \right) =\fet \ , \quad   \phi_{{\nor{(\begin{smallmatrix} 1 & 1 \\ -1& 1 \end{smallmatrix})}}}\left(z\right) := \dfrac{1+ z }{1- z} \,,
\end{align*}

\noindent we can apply to the latter equality the function $\ie$ to get
\begin{align}\label{f1case12}
    & \fet \ni \dfrac{1+\ie(z)}{1-\ie(z)}= \ie\left(\dfrac{1}{2} +
\dfrac{i\left(1-2 z\right)}{4  \sqrt{z\left(1-z\right)}}\right)\ , \quad  z \in \Lambda\,.
\end{align}

\noindent Applying to \eqref{f1case12} the function $\Theta_{3}^{2}$ and using the squared equality
\eqref{f14inttrian}(b), for any $ z \in \Lambda$    we obtain
\begin{align*}
    &  \he \left(\dfrac{1}{2} +
\dfrac{i\left(1-2 z\right)}{4  \sqrt{z\left(1-z\right)}}\right) =
\Theta_{3}\left( \ie\left(\dfrac{1}{2} +
\dfrac{i\left(1-2 z\right)}{4  \sqrt{z\left(1-z\right)}}\right)\right)^{2}=
\Theta_{3}\left(\dfrac{1+\ie(z)}{1-\ie(z)}\right)^{2}
\end{align*}

\noindent where, by virtue of \eqref{f3bint} and \eqref{f3d1int}(c),
\begin{align*}
    &  \Theta_{3} \left(\dfrac{1+z}{1-z}\right)^{2} = \Theta_{3} \left(\dfrac{2}{1-z} -1\right)^{2}=\Theta_{4}
 \left(\dfrac{2}{1-z}\right)^{2}= \Theta_{3} \left(\dfrac{1}{1-z}\right)\Theta_{4} \left(\dfrac{1}{1-z}\right) \, , \\  &   \Theta_{3} \left(-\dfrac{1}{z-1}\right) = \sqrt{\dfrac{z-1}{i}} \Theta_{3} (z-1) =
\sqrt{\dfrac{z-1}{i}} \Theta_{4} (z)  \, , \\    &
\Theta_{4} \left(-\dfrac{1}{z-1}\right) = \sqrt{\dfrac{z-1}{i}} \Theta_{2} (z-1) =
{\rm{e}}^{-\imag \pi/4} \sqrt{\dfrac{z-1}{i}} \Theta_{2} (z)  \, , \\    &
\Theta_{3} \left(\dfrac{1+z}{1-z}\right)^{2} = {\rm{e}}^{- 3\imag \pi/4} (z-1) \Theta_{4} (z) \Theta_{2} (z) \ , \quad z\in \Bb{H}\,,
\end{align*}

\noindent and therefore
\begin{multline}\label{f2case12}
      \he \left(\dfrac{1}{2} +
\dfrac{i\left(1-2 z\right)}{4  \sqrt{z\left(1-z\right)}}\right)  \\  =
{\rm{e}}^{{\fo{- 3\imag \pi/4}}} \big(\ie(z)-1\big) \Theta_{4} \big(\ie(z)\big) \Theta_{2} \big(\ie(z)\big)\ , \quad  z \in \Lambda\,.
\end{multline}

\noindent But according to \eqref{f1intthA}, \eqref{f14inttrian}(c) and \eqref{f14inttrian}(a),
\begin{align*}
    & \ie(z)=    \dfrac{\imag\, \het (1-z)}{\het (z)}\,, \quad
     \Theta_{4}\big(\ie (z)\big)\!=\! (1\!-\!z)^{1/4}\he  (z)^{1/2}\,, \quad
     \Theta_{2}\big(\ie (z)\big)\!= \!z^{1/4} \he  (z)^{1/2}\,,
\end{align*}

\noindent and, substituting these expressions in \eqref{f2case12}, we get
\begin{align*}
    &   \he  \left(\dfrac{1}{2} +
\dfrac{i\left(1-2z\right)}{4  \sqrt{z(1-z)}}\right)={\rm{e}}^{- 3\imag \pi/4} \left( \dfrac{\imag\, \het (1-z)}{\het (z)}-1\right)
\sqrt[4]{z (1\!-\!z)}\he(z) \\    & =  {\rm{e}}^{- 3\imag \pi/4} \left(i  \he (1-z)-  \he(z)\right)\sqrt[4]{z (1\!-\!z)}=
{\rm{e}}^{ -\imag \pi/4} \left(  \he (1-z)+i  \he(z)\right)\sqrt[4]{z (1\!-\!z)}\,,
\end{align*}

\noindent from which
\begin{align}\label{f3case12}
    & \vspace{-0,3cm} \he  \left(\dfrac{1}{2}\! +\!
\dfrac{i\left(1-2z\right)}{4  \sqrt{z(1-z)}}\right)\!=\!{\rm{e}}^{ -\imag \pi/4}
 \big( \he (1-z)\!+\!i  \he(z)\big)\sqrt[4]{z (1\!-\!z)}\,,\quad  z \in \Lambda\,.
\end{align}

Since $(0,1)\subset \Lambda$, we can set $z=\tau\in (0,1)$ in \eqref{f3case12} and consider the equation
\begin{align*}
    &  t = \dfrac{\left(1-2\tau\right)}{4  \sqrt{\tau(1-\tau)}   } \ \Rightarrow \   \\    &
    16 t^{2} = \dfrac{\left(1-2\tau\right)^{2}}{\tau(1-\tau)} = \dfrac{1-4 \tau + 4\tau^{2}}{\tau(1-\tau)}=
    \dfrac{1}{\tau(1-\tau)} - 4 \ \Rightarrow \ \dfrac{1}{\tau(1-\tau)} = 4 (4 t^{2} +1) \\    &
     \ \Rightarrow \   \dfrac{1}{\sqrt{\tau(1-\tau)}}=2 \sqrt{4 t^{2} +1}  \, , \
\end{align*}

\noindent and therefore
\begin{align*}
    &  t = \dfrac{\left(1-2\tau\right)}{4  \sqrt{\tau(1-\tau)}   } \ \Rightarrow \
     t =  \dfrac{\left(1-2\tau\right)}{2   } \sqrt{4 t^{2} +1} \ \Rightarrow \
     \tau = \dfrac{1}{2} - \dfrac{t}{\sqrt{4 t^{2} +1}} \ .
\end{align*}

\noindent Since
\begin{align*}
      \dfrac{d}{d \tau} \dfrac{\left(1-2\tau\right)}{  \sqrt{\tau(1-\tau)}   }    &   =
     \dfrac{-2  \sqrt{\tau (1-\tau )}  - \left(1-2\tau \right) \dfrac{1-2\tau }{2\sqrt{\tau (1-\tau )}}  }{\tau (1-\tau )} = \\ & =
- \dfrac{4\tau (1-\tau ) + \left(1-2\tau \right)^{2} }{2\tau (1-\tau )\sqrt{\tau (1-\tau )}}=
- \dfrac{4\tau  - 4\tau ^{2} + 1 - 4\tau  + 4\tau^{2}}{2\tau (1-\tau )\sqrt{\tau (1-\tau )}} = \\ & =
- \dfrac{1 }{2\tau (1-\tau )\sqrt{\tau (1-\tau )}}< 0 \ , \quad  \tau \in (0,1)\,,
\end{align*}

\noindent and
\begin{align*}
    &  \lim\limits_{0<\tau\to 0} \dfrac{\left(1-2\tau\right)}{  \sqrt{\tau(1-\tau)}   }  = +\infty  \, ,   \quad    \lim\limits_{1>\tau\to 1}\dfrac{\left(1-2\tau\right)}{  \sqrt{\tau(1-\tau)}   }  = -\infty  \, ,
\end{align*}

\noindent we can replace $z$ in \eqref{f3case12} by
\begin{align*}
    &  z= \dfrac{1}{2} - \dfrac{t}{\sqrt{4 t^{2} +1}} \ \Rightarrow \  1-z =\dfrac{1}{2} + \dfrac{t}{\sqrt{4 t^{2} +1}} \, , \\    &
    z (1-z) = \dfrac{1}{4} - \dfrac{t^{2}}{4 t^{2} +1} = \dfrac{1}{4 \left(4 t^{2} +1\right)} \ , \quad
    \sqrt[4]{z (1-z)} = \dfrac{1}{\sqrt{2} \sqrt[4]{4 t^{2} +1}}
\end{align*}

\noindent and to get
\begin{align*}
    &  \he  \left(\dfrac{1}{2}\! +\!i t
\right)\!=\!{\rm{e}}^{ -\imag \pi/4}
\dfrac{ \he \left(\dfrac{1}{2} + \dfrac{t}{\sqrt{4 t^{2} +1}} \right)\!+\!i
 \he\left(\dfrac{1}{2} - \dfrac{t}{\sqrt{4 t^{2} +1}} \right)}{\sqrt{2} \sqrt[4]{4 t^{2} +1}}
\end{align*}

\noindent where
\begin{align*}
    & {\rm{e}}^{ \imag \pi/4} \sqrt{2} = \dfrac{1+i}{\sqrt{2}} \sqrt{2}= 1+i\,,
\end{align*}

\noindent and hence,
\begin{align}\label{f4case12}
    &  \he  \left(\dfrac{1}{2} +
it\right)=\dfrac{\he \left(\dfrac{1}{2} + \dfrac{ t}{\sqrt{4 t^{2} +1}}\right)+i
\he\left(\dfrac{1}{2} - \dfrac{ t}{\sqrt{4 t^{2} +1}}\right)}{(1+ i)\,\sqrt[4]{ 4 t^{2} +1}} \ , \quad t \in \Bb{R}\,,
\end{align}

\noindent which proves \eqref{f2intcor1}.

}}\end{clash}
\end{subequations}

\vspace{0.1cm}
\begin{subequations}
\begin{clash}{\rm{\hypertarget{r14}{}\label{case14}\hspace{0,00cm}{\hyperlink{br14}{$\uparrow$}}
 \ We prove the new expressions for the integrals of $ \eurm{H}_{n} (x)$, $n\geqslant 0$. Obviously
 \begin{align*}
    &   \eurm{H}_{0} (x)=\dfrac{1}{2 \pi^{2 }}\int\limits_{ - \infty}^{ +  \infty}
 \frac{\he  (1/2 + i t)\he  (1/2 - i t) d t}{  \big(t^{2}+1/4\big)   \Big(
\he  (1/2 - i t)^{2}+  x^{2}\he  (1/2 + i t)^{2}\Big)} \\  &
=\dfrac{1}{2 \pi^{2 }}\int\limits_{\Bb{R}} \frac{
\dfrac{\he  (1/2 - i t)}{\he  (1/2 + i t)} d t}{  \big(t^{2}+1/4\big)   \left(
\dfrac{\he  (1/2 - i t)^{2}}{\he  (1/2 + i t)^{2}}+  x^{2}\right)}\
,  \\    &
\eurm{H}_{n} (x)  =  \dfrac{1}{4 \pi^{3} n}
  \int\limits_{ - \infty}^{ +  \infty}
\dfrac{  S^{{\tn{\triangle}}}_{n} \left(\dfrac{1}{1/2 + i t}\right) d t}{ \big(t^{2}+1/4\big)
\Big( \he  (1/2 - i t) -i  x \he  (1/2 + i t)\Big)^{2} }  \, , \quad  n \geqslant 1\,.
 \end{align*}

\noindent But \eqref{f4case12} written in the form
\begin{align*}
    &  \he  \left(\dfrac{1}{2} +it\right)(1+ i)\,\sqrt[4]{ 4 t^{2} +1} =\he \left(\dfrac{1}{2} + \dfrac{ t}{\sqrt{4 t^{2} +1}}\right)+i \he\left(\dfrac{1}{2} - \dfrac{ t}{\sqrt{4 t^{2} +1}}\right)\ , \\    &
     \he  \left(\dfrac{1}{2} -it\right)(1+ i)\,\sqrt[4]{ 4 t^{2} +1} =\he \left(\dfrac{1}{2} - \dfrac{ t}{\sqrt{4 t^{2} +1}}\right)+i \he\left(\dfrac{1}{2} + \dfrac{ t}{\sqrt{4 t^{2} +1}}\right)\ ,
\end{align*}

\noindent yield
\begin{align*}
     & \dfrac{\he  \left(\dfrac{1}{2} -it\right)}{\he  \left(\dfrac{1}{2} +it\right)}=\dfrac{\he \left(\dfrac{1}{2} - \dfrac{ t}{\sqrt{4 t^{2} +1}}\right)+i \he\left(\dfrac{1}{2} + \dfrac{ t}{\sqrt{4 t^{2} +1}}\right)}{\he \left(\dfrac{1}{2} + \dfrac{ t}{\sqrt{4 t^{2} +1}}\right)+i \he\left(\dfrac{1}{2} - \dfrac{ t}{\sqrt{4 t^{2} +1}}\right)}  = \\ & =
i \dfrac{\he \left(\dfrac{1}{2} + \dfrac{ t}{\sqrt{4 t^{2} +1}}\right)-i \he\left(\dfrac{1}{2} - \dfrac{ t}{\sqrt{4 t^{2} +1}}\right)}{\he \left(\dfrac{1}{2} + \dfrac{ t}{\sqrt{4 t^{2} +1}}\right)+i \he\left(\dfrac{1}{2} - \dfrac{ t}{\sqrt{4 t^{2} +1}}\right)}
\ , \
\end{align*}

\noindent and
\begin{align*}
    & \left[\he  (1/2 - i t) -i  x \he  (1/2 + i t)\right] (1+ i)\sqrt[4]{4 t^{2} +1} =
     \he \left(\dfrac{1}{2}\! -\! \dfrac{ t}{\sqrt{4 t^{2} \!+\!1}}\right)
     \\ &
  \!+\!i
\he\left(\dfrac{1}{2}\! +\! \dfrac{ t}{\sqrt{4 t^{2}\! +\!1}}\right)
\!-\! i x \left(\he \left(\dfrac{1}{2}\! + \!\dfrac{ t}{\sqrt{4 t^{2}\! +\!1}}\right)\!+\!i
\he\left(\dfrac{1}{2}\! -\! \dfrac{ t}{\sqrt{4 t^{2}\! +\!1}}\right)\right) \\    &   =
(1+x)\he \left(\dfrac{1}{2} - \dfrac{ t}{\sqrt{4 t^{2} +1}}\right)+i (1-x)
\he\left(\dfrac{1}{2} + \dfrac{ t}{\sqrt{4 t^{2} +1}}\right) \, ,
\end{align*}

\noindent i.e., for any $t, x \in \Bb{R}$ we have
\begin{align*}
     \he  (1/2 - i t) - & i  x \he  (1/2 +  i t)\! \\    &   = \!\dfrac{
    (1\!+\!x)\he \left(\dfrac{1}{2}\! -\! \dfrac{ t}{\sqrt{4 t^{2}\! +\!1}}\right)\!+\!i (1\!-\!x)
\he\left(\dfrac{1}{2}\! + \!\dfrac{ t}{\sqrt{4 t^{2}\! +\!1}}\right)
    }{(1+ i)\sqrt{2}\sqrt[4]{ t^{2} +1/4}} \, .
\end{align*}

\noindent Substituting these expressions in the above integral for  $\eurm{H}_{0}$ we get
\begin{align*}
      \eurm{H}_{0} (x) &=
\dfrac{i}{2 \pi^{2 }}\int\limits_{\Bb{R}} \dfrac{\dfrac{\he \left({\fo{\dfrac{1}{2} + \dfrac{ t}{\sqrt{4 t^{2} +1}}}}\right)-i \he\left({\fo{\dfrac{1}{2} - \dfrac{ t}{\sqrt{4 t^{2} +1}}}}\right)}{\he \left({\fo{\dfrac{1}{2} + \dfrac{ t}{\sqrt{4 t^{2} +1}}}}\right)+i \he\left({\fo{\dfrac{1}{2} - \dfrac{ t}{\sqrt{4 t^{2} +1}}}}\right)}
 \dfrac{d t}{t^{2}+1/4}}{   x^{2}-
\left(\dfrac{\he \left({\fo{\dfrac{1}{2} + \dfrac{ t}{\sqrt{4 t^{2} +1}}}}\right)-i \he\left({\fo{\dfrac{1}{2} - \dfrac{ t}{\sqrt{4 t^{2} +1}}}}\right)}{\he \left({\fo{\dfrac{1}{2} + \dfrac{ t}{\sqrt{4 t^{2} +1}}}}\right)+i \he\left({\fo{\dfrac{1}{2} - \dfrac{ t}{\sqrt{4 t^{2} +1}}}}\right)}\right)^{\!\!\!\!2} }    \\  &
=\dfrac{i}{2 \pi^{2 }}\int\limits_{\Bb{R}}
\dfrac{\dfrac{1\!-\!i\eurm{y}(t)}{1\!+\!i\eurm{y}(t)}
}{x^{2}- \left(\dfrac{1\!-\!i\eurm{y}(t)}{1\!+\!i\eurm{y}(t)}\right)^{2}}\dfrac{d t}{t^{2}+1/4} \ , \quad
\eurm{y}(t)\! :=\!
\dfrac{ \he\left(\dfrac{1}{2}\! -\! \dfrac{ t}{\sqrt{4 t^{2}\! +\!1}}\right)}{\he \left(\dfrac{1}{2}\! + \!\dfrac{ t}{\sqrt{4 t^{2} \!+\!1}}\right)} \ ,
\end{align*}

\noindent and doing the same  for the above integral for  $\eurm{H}_{n}$ we obtain, taking account of
$((1+ i)\sqrt{2})^{2}= 4 i$,
\begin{align*}
    &  \eurm{H}_{n} (x)  & =  \dfrac{i}{ \pi^{3} n}\int\limits_{\Bb{R}} \dfrac{  S^{{\tn{\triangle}}}_{n} \left(\dfrac{1}{1/2 + i t}\right) \dfrac{d t}{\sqrt{t^{2}+1/4}}}{
\left((1\!+\!x)\he \left(\dfrac{1}{2} \!- \!\dfrac{ t}{\sqrt{4 t^{2}\! +\!1}}\right)\!+\!i (1\!-\!x)
\he\left(\dfrac{1}{2} \!+\! \dfrac{ t}{\sqrt{4 t^{2} \!+\!1}}\right)\right)^{2} } \, \ \
\end{align*}

\noindent for all $x\in \Bb{R}$ and $n \geqslant 1$.

}}\end{clash}
\end{subequations}

\vspace{0.2cm}
\subsection[\hspace{-0,25cm}. \hspace{0,05cm}Notes for Section~\ref{inttrian}]{\hspace{-0,11cm}{\bf{.}} Notes for Section~\ref{inttrian}}

\begin{subequations}
\begin{clash}{\rm{\hypertarget{r13}{}\label{case13}\hspace{0,00cm}{\hyperlink{br13}{$\uparrow$}}
 \  We prove \eqref{f14inttrian}.
 It follows actually  from  \cite[p.\! 598, (1.25); p.\! 599, (1.32)]{bh2} that
\begin{align}\label{f1case13}
    &  \hspace{-0,2cm}
    \begin{array}{ll}
  {\rm{(a)}}\	  \Theta_{2}\big(\ie (z)\big)^{4}\!= \!z \he  (z)^{2}\!  ,  &
 \qquad
     {\rm{(b)}}\	   \Theta_{3}\big(\ie (z)\big)^{2}\!= \!\he  (z)\, ,
     \\[0,1cm]
 {\rm{(c)}}\	  \Theta_{4}\big(\ie (z)\big)^{4}\!=\! (1\!-\!z)\he  (z)^{2}\!  , &
\phantom{{\rm{(c)}}a}
 \qquad z\in (0,1)\cup \left(\Bb{C}\setminus\Bb{R}\right).
    \end{array}\  \  \hspace{-0,1cm}
\end{align}

\noindent Since (see \cite[p.\! 598, (1.26)]{bh2})
\begin{align}\label{f0case13}
    &  \Theta_{3}(z)\Theta_{4}(z)\Theta_{2}(z)\neq 0 \ , \quad  z \in \Bb{H}\,,
\end{align}

\noindent for the principal branches of the quadratic and  of the fourth roots it makes possible to consider three
functions
\begin{align*}
    & \omega_{2}(z) :=\dfrac{z^{1/4} \he  (z)^{1/2}}{\Theta_{2}\big(\ie (z)\big)} \, , \
    \omega_{4}(z) :=\dfrac{(1-z)^{1/4} \he  (z)^{1/2}}{\Theta_{4}\big(\ie (z)\big)} \, , \
    \omega_{3}(z) :=\dfrac{ \he  (z)^{1/2}}{\Theta_{3}\big(\ie (z)\big)} \,,
\end{align*}

\noindent which are holomorphic in $\Lambda:=(0,1)\cup \left(\Bb{C}\setminus\Bb{R}\right)$ and in accordance with
\eqref{f1case13},
\begin{align}\label{f2case13}
    &  \omega_{2}(z),  \omega_{4}(z) \in \left\{ \exp\left(\dfrac{ \pi i k }{2}\right) \right\}_{k=0}^{3} \ , \quad  \omega_{3}(z) \in \left\{-1, 1\right\} \ , \quad  z \in \Lambda \,.
\end{align}

\noindent Therefore
\begin{align*}
    &  \Lambda = \underset{0\leqslant k \leqslant 3}{\sqcup }  \Lambda_{k}^{m} \, , \  m \in \{2, 4\} \ , \quad
\Lambda = \underset{0\leqslant k \leqslant 1}{\sqcup }  \Lambda_{k}^{3} \, ,
     \\    &
\Lambda_{k}^{m} := \left\{ z \in \Lambda \,\left|\,  \omega_{m}(z) = \exp\left(\dfrac{ \pi i k }{2}\right)\right\} , \quad 0\leqslant k \leqslant 3 \, , \  m \in \{2, 4\} \right. ,  \\    &
\Lambda_{k}^{3} := \left\{ z \in \Lambda \,\left|\,  \omega_{3}(z) = (-1)^{k}\right\} \right., \
k \in \left\{0, 1\right\},
\end{align*}

\noindent Here each of the set $\Lambda_{k}^{m}$ is relatively closed in $\Lambda$, i.e.,
$K \cap \Lambda_{k}^{m}$ is closed for every compact subset $K$ of $\Lambda$,  because
$\omega_{2}$, $\omega_{3}$, $\omega_{4} \in {\rm{Hol}} (\Lambda)$.
 In view of the Baire Category Theorem \cite[p.\! 159]{roy}, for each $m \in \{2,3,4\}$ there exists
a number $q_{m}$ such that  $q_{3} \in \{0,1\}$,   $q_{m} \in \{0,1,2,3\}$, if $m \in \{2, 4\}$, and
the set $\Lambda_{q_{m}}^{m}$ contains some neighborhood of at least one interior point of $\Lambda$.
Applying the uniqueness theorem  for analytic functions (see \cite[p.\! 78]{con}) we obtain
$\Lambda_{q_{m}}^{m}= \Lambda$ for all  $m \in \{2,3,4\}$ and therefore
\begin{align}\label{f3case13}
    &  \omega_{m}(z) = \exp\left(\dfrac{ \pi i q_{m} }{2}\right) \, , \  m \in \{2, 4\} \ , \quad
    \omega_{3}(z) = (-1)^{q_{3}}  \ , \quad  z\in \Lambda \,.
\end{align}

\noindent By setting here $z \in (0,1)$ we deduce  that $q_{m}=0$ for each  $m \in \{2,3,4\}$,
because,  in view of \eqref{f0pinttheor1}, $\ie ((0,1)) = i \Bb{R}$, and
 $\Theta_{m}( i \Bb{R}) \subset \Bb{R}_{>0}$, $\he ((0,1))\subset \Bb{R}_{>0}$, $z, 1-z \in (0,1)$ for arbitrary
 $z \in (0,1)$.  Therefore all equalities \eqref{f14inttrian} hold and the proof of
 \eqref{f14inttrian} is completed.

 }}\end{clash}
\end{subequations}

\begin{subequations}
\begin{clash}{\rm{\hypertarget{r38}{}\label{case38}\hspace{0,00cm}{\hyperlink{br38}{$\uparrow$}}
 \ We prove \eqref{f3d1int}. In view of the uniqueness theorem  for analytic functions (see  \cite[p.\! 78]{con}), it is enough to prove that
\begin{align}
    &
    \begin{array}{ll}
     \Psi_{1}(z)\! :=\!\dfrac{\Theta_{3}( z)^{2}\!-\! \Theta_{4}( z)^{2}}{2 \Theta_{2}(2 z)^{2}}\! =\! 1 \, ,   &   \  \Psi_{2}(z) \!:=\!\dfrac{\Theta_{3}( z)^{2}\!+ \!\Theta_{4}( z)^{2}}{2 \Theta_{3}(2 z)^{2}}\! =\! 1 \, , \ \\[0,5cm]
     \Psi_{3}(z)\! :=\!  \dfrac{\Theta_{4}(2 z)^{2}}{\Theta_{3}( z)\Theta_{4}( z)}\! =\! 1 \, ,   &   \  z\in \fet\,,
    \end{array}\label{f1case38}
\end{align}
\noindent because $ \Psi_{j}\in {\rm{Hol}}(\Bb{H})$, $1\leqslant j\leqslant 3$, as follows from \eqref{f0case13}.

We apply the approach suggested in
\cite[p.\! 24]{bh1}. According to this approach, it is necessary to introduce  the following three  functions
\begin{align}
    &
    \Phi_{j} (z)\! :=\!\Psi_{j}\big(\ie (z)\big) \ , \
    1\leqslant j\leqslant 3 \ , \  \  z\in \Lambda:=(0,1)\cup \left(\Bb{C}\setminus\Bb{R}\right) ,
    \label{f2case38}
\end{align}

\noindent which are holomorphic in $\Lambda$ and $\ie (\Lambda)=\fet$. And
then  to study the values of
$\{\Phi_{j}\}_{j=1}^{3}$ on the  two sides of the cuts along $\Bb{R}_{< 0}$
and along $\Bb{R}_{> 1}$, and their behavior near the points $\{0,1,\infty\}$.

We first look on their values on the  two sides of the cut along $\Bb{R}_{< 0}$.
Since by \cite[p.\! 19, (6.8)]{bh2} we have
\begin{align*}
    &   \Theta_4(z+1)= \Theta_3(z) \, , \  \Theta_3(z+1) = \Theta_4(z) \, , \  \Theta_2(z+1)= {\rm{e}}^{\imag \pi/4}\Theta_2(z) \ , \quad  z\in \Bb{H} \, , \
\end{align*}

\noindent which yield (see \cite[p.\! 56, (A.18a)]{bh2})
\begin{align*}
    &  \begin{array}{llll}
      \Theta_4(z+2)= \Theta_4(z) \, , &\  \Theta_3(z+2) = \Theta_3(z) \, , \
         &\ \Theta_2(z+2)= i\Theta_2(z)  \, , \   &    \\
        \Theta_4(z+4)= \Theta_4(z) \, , &\  \Theta_3(z+4) = \Theta_3(z) \, , \
            &\ \Theta_2(z+4)= -\Theta_2(z) \, , \    & z\in \Bb{H} \, , \
       \end{array}
\end{align*}

\noindent then
\begin{align*}
      \Phi_{1} (2+z)& =\dfrac{\Theta_{3}\big(2 +  z\big)^{2}\!-\! \Theta_{4}\big(2 +  z\big)^{2}}{2 \Theta_{2}\big(4 +  2z\big)^{2}}   =
      \dfrac{\Theta_{3}\big(  z\big)^{2}\!-\! \Theta_{4}\big( z\big)^{2}}{2 \Theta_{2}\big(  2z\big)^{2}}= \Phi_{1} (z) \, , \   \\
     \Phi_{2} (2+z) & =\dfrac{\Theta_{3}\big(2 + z\big)^{2}\!+\! \Theta_{4}\big(2 +  z\big)^{2}}{2 \Theta_{3}\big(4 +  2z\big)^{2}}
     =  \dfrac{\Theta_{3}\big(z\big)^{2}\!+\! \Theta_{4}\big( z\big)^{2}}{2 \Theta_{3}\big(  2z\big)^{2}}=\Phi_{2} (z) \, , \   \\
     \Phi_{3} (2+z) & = \dfrac{\Theta_{4}\big(4 +  2z\big)^{2}}{\Theta_{3}\big(2 +  z\big)\Theta_{4}\big(2 + z\big)}
     =\dfrac{\Theta_{4}\big(  2z\big)^{2}}{\Theta_{3}\big(  z\big)\Theta_{4}\big(  z\big)}= \Phi_{3} (z) \, , \   \quad z\in \Bb{H}\,,
\end{align*}

\noindent and we derive from \eqref{f4intthA},
\begin{align*}
    &  \ie (-x +  \imag  0 ) =
    2 +  \ie (-x - \imag  0 ) \ , \quad  x>0\,,
\end{align*}

\noindent that for every $x>0$ we have
\begin{align} \label{f3acase38}
    &
     \Phi_{1} \big(-x +  \imag  0  \big) = \Phi_{1} \big(-x - \imag  0  \big)  \, , \ \
    \Phi_{2} \big(-x +  \imag  0  \big) = \Phi_{2} \big(-x - \imag  0  \big)  \, ,  \\    &
     \Phi_{3} \big(-x +  \imag  0  \big) = \Phi_{3} \big(-x - \imag  0  \big)  \,.
\label{f3bcase38}\end{align}

\noindent
 Applying to these relationships the Morera theorem   (see \cite[p.\! 96]{lav})  we obtain that
 \begin{align}\label{f3ccase38}
     &  \Phi_{j}\in {\rm{Hol}}\big(
     \Bb{C}\setminus\left(\{0\}\cup [1, +\infty\right)\big),  \ \
     1\leqslant j\leqslant 3,
 \end{align}

\noindent and therefore $0$ could be a point of an isolated singularity  for all functions  $\{\Phi_{j}\}_{j=1}^{3}$ (see \cite[p.\! 103]{con}).
 We prove that actually $0$ is a point of a removable singularity  for
these functions. To prove this, we  apply the Riemann theorem about removable singularities (see \cite[p.\! 103]{con}), according to which $0$ is a point of a removable singularity of $\Phi_{j}$, where
$1\leqslant j\leqslant 3$,
if and only if
$\lim_{z\to 0} z\Phi_{j}(z)=0$. For any $1\leqslant j\leqslant 3$ the latter equality obviously follows from more strong property, $\sup_{2 z\in \Bb{D}\setminus\{0\} } |\Phi_{j}(z)| = \sup_{2 z\in \Bb{D}\setminus (-1,0] } |\Phi_{j}(z)| < +\infty$, and this in turn is equivalent to
$\limsp_{z\in \Bb{D}\setminus (-1,0] \, , \ z\to 0}|\Phi_{j}(z)|< +\infty$,
because $\Phi_{j}\in {\rm{Hol}} (\Bb{D}\setminus\{0\})$ and therefore $\Phi_{j}$ is continuous on $\Bb{D}\setminus\{0\}$.
 According to \cite[p.\! 609, (4.2), (4.5), (4.6)]{bh2},
\begin{align*}
    & (0,1)\cup \left(\Bb{C}\setminus\Bb{R}\right)\ni z \to 0 \      \Rightarrow      \  \fet \ni \ie (z)\to \infty \      \Rightarrow      \ \im \ \ie (z)\to +\infty   \, ,
\end{align*}

\noindent and since
\begin{align*}
    &  \Theta_{3}(z)^{2}-\Theta_{4}(z)^{2}= \left(1  \!+ \! 2\sum\limits_{n\geqslant 1} {\rm{e}}^{\imag  \pi n^2 z} \!-\!1\!-\! 2\sum\limits_{n\geqslant 1}(-1)^{n}{\rm{e}}^{\imag  \pi n^2 z} \right)\times \\   &  \times \left(1  \!+ \! 2\sum\limits_{n\geqslant 1} {\rm{e}}^{\imag  \pi n^2 z} \!+\!1\!+\! 2\sum\limits_{n\geqslant 1}(-1)^{n}{\rm{e}}^{\imag  \pi n^2 z} \right)  \\ & =
    \left(4{\rm{e}}^{\imag  \pi  z}+ 4\sum\limits_{n\geqslant 1} {\rm{e}}^{\imag  \pi (2n+1)^2 z} \right)
    \left(2 +4\sum\limits_{n\geqslant 1}{\rm{e}}^{4   \pi \imag n^2 z}\right) = 8 {\rm{e}}^{\imag  \pi  z}+ {\rm{O}} \left({\rm{e}}^{ 4 \pi \imag z}\right) , \\    &
    \Theta_{3}(z) = 1 +  {\rm{O}} \left({\rm{e}}^{  \imag \pi z}\right)  , \
    \Theta_{4}(z) = 1 +  {\rm{O}} \left({\rm{e}}^{  \imag \pi z}\right) , \
    \Theta_{2}(z) = 2 {\rm{e}}^{\imag  \pi  z/4} +  {\rm{O}} \left({\rm{e}}^{ 5 \pi\imag  z/4}\right)  ,
\end{align*}

\noindent as $\im \, z\to +\infty  $,  we obtain the existence of the following limits
\begin{align*}  &
 \lim_{{{\Bb{D}\setminus (-1,0]  \ni z \to 0}}} \Phi_{1} (z) =
    \lim_{{{\Bb{D}\setminus (-1,0]  \ni z \to 0}}}
    \dfrac{\Theta_{3}\big(2 \ie (z)\big)^{2}\!-\! \Theta_{4}\big(2 \ie (z)\big)^{2}}{2 \Theta_{2}\big(2 \ie (z)\big)^{2}}\\ & =
    \lim_{{{\fet \ni z \to \infty}}}  \dfrac{\Theta_{3}( z)^{2}\!-\! \Theta_{4}( z)^{2}}{2 \Theta_{2}(2 z)^{2}} =
    \lim_{{{\im \, z\to +\infty }}} \dfrac{ 8 {\rm{e}}^{\imag  \pi  z}+ {\rm{O}} \left({\rm{e}}^{ 4 \pi \imag z}\right)  }{2\left( 2 {\rm{e}}^{\imag  \pi  z/2} +  {\rm{O}} \left({\rm{e}}^{ 5 \pi\imag  z/2}\right)\right)^{2}}
    =1\,,
 \\[0,3cm]
 &
 \lim_{{{\Bb{D}\setminus (-1,0]  \ni z \to 0}}} \Phi_{2} (z) =
    \lim_{{{\Bb{D}\setminus (-1,0]  \ni z \to 0}}}
    \dfrac{\Theta_{3}\big(2 \ie (z)\big)^{2}\!+\! \Theta_{4}\big(2 \ie (z)\big)^{2}}{2 \Theta_{3}\big(2 \ie (z)\big)^{2}}\\ & =
    \lim_{{{\fet \ni z \to \infty}}}  \dfrac{\Theta_{3}( z)^{2}\!+\! \Theta_{4}( z)^{2}}{2 \Theta_{3}(2 z)^{2}} =     \lim_{{{\im \, z\to +\infty }}} \dfrac{ \left(1 +  {\rm{O}} \left({\rm{e}}^{  \imag \pi z}\right)\right)^{2}
    + \left(1 +  {\rm{O}} \left({\rm{e}}^{  \imag \pi z}\right)\right)^{2}}{2\left(1 +  {\rm{O}} \left({\rm{e}}^{  \imag \pi z}\right)\right)^{2}}
    =1\,,
 \\[0,3cm]
    &  \lim_{{{\Bb{D}\setminus (-1,0]  \ni z \to 0}}} \Phi_{3} (z) =
    \lim_{{{\Bb{D}\setminus (-1,0]  \ni z \to 0}}} \dfrac{\Theta_{4}\big(2 \ie (z)\big)^{2}}{\Theta_{3}\big(\ie (z)\big)\Theta_{4}\big(\ie (z)\big)}\\ & =
    \lim_{{{\fet \ni z \to \infty}}}  \dfrac{\Theta_{4}(2 z)^{2}}{\Theta_{3}( z)\Theta_{4}( z)}   =
    \lim_{{{\im \, z\to +\infty }}} \dfrac{\left(1 +  {\rm{O}} \left({\rm{e}}^{ 2 \pi\imag  z}\right)\right)^{2}}{\left(1 +  {\rm{O}} \left({\rm{e}}^{  \imag \pi z}\right)\right)
    \left(1 +  {\rm{O}} \left({\rm{e}}^{  \imag \pi z}\right)\right)}
    =1\,.
\end{align*}

\noindent Thus,
 \begin{align}\label{f4case38}
     &  \Phi_{j}\in {\rm{Hol}}\big(
     \Bb{C}\setminus [1, +\infty)\big),  \ \ \Phi_{j} (0) =1  \, , \   \ , \quad
     1\leqslant j\leqslant 3,
 \end{align}

We now  look on the values of $\{\Phi_{j}\}_{j=1}^{3}$ on the  two sides of the cut along $\Bb{R}_{> 1}$.
For arbitrary $z\in \Bb{H}$ and $\sigma \in \{1, -1\}$, by using the identities,
\begin{align*}
     &
     \begin{array}{lll}
      \Theta_{3} (z)   \!=\!\imag\, \dfrac{ \Theta_{3}\left(\dfrac{z}{1\!-\!2z}\right)}{(2z\!-\!1)^{1/2} } \,   \,  , & \
  \Theta_{4} (z)  \!=\!\, \,\dfrac{\Theta_{4}\left(\dfrac{z}{1\!-\!2z}\right)}{(2z\!-\!1)^{1/2} } \, \, , &\
   \Theta_{2} (z)   \!=\!\imag\, \dfrac{ \Theta_{2}\left(\dfrac{z}{1\!-\!2z}\right)}{(2z\!-\!1)^{1/2} } \, ,
             \\
              \Theta_{3} (z)^{2}   \!=\!\,\!-\!\, \dfrac{ \Theta_{3}\left(\dfrac{z}{1\!-\!2z}\right)^{2}}{2z\!-\!1 } \,   \,  , & \
  \Theta_{4} (z)^{2}  \!=\!\, \,\dfrac{\Theta_{4}\left(\dfrac{z}{1\!-\!2z}\right)^{2}}{2z\!-\!1 } \, \, , & \
   \Theta_{2} (z)^{2}   \!=\! \dfrac{ \Theta_{2}\left(\dfrac{z}{1\!-\!2z}\right)^{2}}{1\!-\!2z } \, ,
             \\
   \Theta_{3} (z)^{2}\!=\!  \, \imag \, \dfrac{\Theta_{2} \left( \dfrac{1}{\sigma \!-\!z}\right)^{2}}{z\!-\!\sigma}   \, \, , &\
    \Theta_{4} (z)^{2}\!=\!  \, \imag \, \dfrac{\Theta_{3} \left( \dfrac{1}{\sigma \!-\!z}\right)^{2}}{z\!-\!\sigma}
   \, \, , &\
    \Theta_{2} (z)^{2}\!=\!   \dfrac{\Theta_{4} \left(
    \dfrac{1}{\sigma \!-\!z}\right)^{2}}{\sigma \!-\!z} \ , \
     \end{array}
\end{align*}

\noindent (see   \cite[p.\! 57, (A.18b)(b), (A.18b)(c); p.58, (A.18e)(c)]{bh1}) we obtain, for every $z\in \Bb{H}$,
\begin{align*}
     \Psi_{1}(z) & \! =\!\dfrac{\Theta_{3}( z)^{2}\!-\! \Theta_{4}( z)^{2}}{2 \Theta_{2}(2 z)^{2}}\! =\!
\dfrac{\,\!-\!\, \dfrac{ \Theta_{3}\left(\dfrac{z}{1\!-\!2z}\right)^{2}}{(2z\!-\!1) }\!-\!\dfrac{\Theta_{4}\left(\dfrac{z}{1\!-\!2z}\right)^{2}}{(2z\!-\!1) } }{\, \!-\! \, 2\dfrac{\Theta_{4} \left( \dfrac{1}{1 \!-\!2z}\right)^{2}}{2z\!-\!1}} \\ & \!=\!
\dfrac{\,\!-\!\,  \Theta_{3}\left(\dfrac{z}{1\!-\!2z}\right)^{2}\!-\!\Theta_{4}\left(\dfrac{z}{1\!-\!2z}\right)^{2} }{\, \!-\! \, 2\Theta_{4} \left(1 \!-\!1+ \dfrac{1}{1 \!-\!2z}\right)^{2}} \!=\!
\dfrac{\,\!-\!\,  \Theta_{3}\left(\dfrac{z}{1\!-\!2z}\right)^{2}\!-\!\Theta_{4}\left(\dfrac{z}{1\!-\!2z}\right)^{2} }{\, \!-\! \, 2\Theta_{3} \left( \!-\!1+ \dfrac{1}{1 \!-\!2z}\right)^{2}} \\ & \!=\!
\dfrac{ \Theta_{3}\left(\dfrac{z}{1\!-\!2z}\right)^{2}+\Theta_{4}\left(\dfrac{z}{1\!-\!2z}\right)^{2}}{2\Theta_{3} \left( \dfrac{2z}{1 \!-\!2z}\right)^{2}}\!=\! \Psi_{2}\left(\dfrac{z}{1\!-\!2z}\right)\, ; \
       \\[0,4cm]
     \Psi_{2}(z) & \! =\!\dfrac{\Theta_{3}( z)^{2}\!+\! \Theta_{4}( z)^{2}}{2 \Theta_{3}(2 z)^{2}}\! =\!
\dfrac{\,\!-\!\, \dfrac{ \Theta_{3}\left(\dfrac{z}{1\!-\!2z}\right)^{2}}{(2z\!-\!1) }\!+\!\dfrac{\Theta_{4}\left(\dfrac{z}{1\!-\!2z}\right)^{2}}{(2z\!-\!1) } }{\,  2\imag \, \dfrac{\Theta_{2} \left( \dfrac{1}{1 \!-\!2z}\right)^{2}}{2z\!-\!1}}  \\ & \!=\!
\dfrac{\,\!-\!\,  \Theta_{3}\left(\dfrac{z}{1\!-\!2z}\right)^{2}\!+\!\Theta_{4}\left(\dfrac{z}{1\!-\!2z}\right)^{2} }{\,  2\imag \,\Theta_{2} \left(1 \!-\!1+ \dfrac{1}{1 \!-\!2z}\right)^{2}} \!=\!
\dfrac{\,\!-\!\,  \Theta_{3}\left(\dfrac{z}{1\!-\!2z}\right)^{2}\!+\!\Theta_{4}\left(\dfrac{z}{1\!-\!2z}\right)^{2} }{2\imag \cdot\,\imag \,\Theta_{2} \left( \!-\!1+ \dfrac{1}{1 \!-\!2z}\right)^{2}} \\ & \!=\!
\dfrac{ \Theta_{3}\left(\dfrac{z}{1\!-\!2z}\right)^{2}\!-\!\Theta_{4}\left(\dfrac{z}{1\!-\!2z}\right)^{2}}{2\Theta_{2} \left( \dfrac{2z}{1 \!-\!2z}\right)^{2}} \!=\! \Psi_{1}\left(\dfrac{z}{1\!-\!2z}\right)\, ; \
        \\[0,4cm]
      \Psi_{3}(z) & \! =\!
   \dfrac{\Theta_{4} (2z)^{2}}{\Theta_{3} (z) \Theta_{4} (z)}\!=\!
   \dfrac{i  \, \dfrac{\Theta_{3} \left( \dfrac{1}{1\!-\!2z}\right)^{2}}{2z\!-\!1} }{
  i\, \dfrac{ \Theta_{3}\left(\dfrac{z}{1\!-\!2z}\right)}{(2z\!-\!1)^{1/2} }
  \dfrac{\Theta_{4}\left(\dfrac{z}{1\!-\!2z}\right)}{(2z\!-\!1)^{1/2} } }\!=\!
  \dfrac{\Theta_{3} \left( \dfrac{1}{1\!-\!2z}\right)^{2}}{\Theta_{3}\left(\dfrac{z}{1\!-\!2z}\right)\Theta_{4}\left(\dfrac{z}{1\!-\!2z}\right)} \\ & \!=\!  \dfrac{\Theta_{3} \left( 1\!-\!1+ \dfrac{1}{1\!-\!2z}\right)^{2}}{\Theta_{3}\left(\dfrac{z}{1\!-\!2z}\right)\Theta_{4}\left(\dfrac{z}{1\!-\!2z}\right)} \!=\!\dfrac{\Theta_{4} \left( \!-\!1+ \dfrac{1}{1\!-\!2z}\right)^{2}}{\Theta_{3}\left(\dfrac{z}{1\!-\!2z}\right)\Theta_{4}\left(\dfrac{z}{1\!-\!2z}\right)}
    \\ & \!=\!  \dfrac{\Theta_{4} \left( \dfrac{2z}{1\!-\!2z}\right)^{2}}{\Theta_{3}\left(\dfrac{z}{1\!-\!2z}\right)
   \Theta_{4}\left(\dfrac{z}{1\!-\!2z}\right)}\!=\! \Psi_{3}\left(\dfrac{z}{1\!-\!2z}\right) \ , \
   \end{align*}

\noindent and derive from \eqref{f5intthA},
\begin{align*}
    & \hspace{-0,2cm} \ie (1 \!+\! x \!- \!\imag  0 )\! =\!
\dfrac{\ie (  1\!+ \!x \!+ \! \imag  0 )}{ 1\!- \!2 \ie (  1\!+ \!x\! +\!
  \imag  0 ) }\,  , \
 2\ie (1\! +\! x \!- \!\imag  0 )\! =\!
\dfrac{2\ie (  1\!+ \!x\! + \! \imag  0 )}{ 1\!-\! 2 \ie (  1\!+\! x \!+\!
  \imag  0 ) } \,, \
 x\!>\!0\,,
\end{align*}

\noindent that
\begin{align} \label{f4acase38}
    &\hspace{-0,2cm} {\rm{(1)}} \ \,  \Phi_{1}\big(1 \!+ \!x\! - \!\imag  0\big)\!=\!\Phi_{2}\big(1 \!+\! x \!+\! \imag  0\big)  \, , \
    \Phi_{2}\big(1\! +\! x\! - \!\imag  0\big)\!=\!\Phi_{1}\big(1\! + \!x \!+ \!\imag  0\big)  \, , \ x>0,    \\    &\hspace{-0,2cm} {\rm{(2)}} \ \,
    \Phi_{3}\big(1\! +\! x\! -\! \imag  0\big)\!=\!\Phi_{3}\big(1 \!+ \!x \!+ \!\imag  0\big)
      \, , \ x>0\,.
\label{f4bcase38}\end{align}

\noindent
 Applying the Morera theorem   (see \cite[p.\! 96]{lav}) to these relationships   we obtain
 \begin{align}\label{f5case38}
     & \Phi_{3}\in {\rm{Hol}}\big(
     \Bb{C}\setminus\{1\}\big), \ \  \Phi_{0}\in {\rm{Hol}}\big(
     \Bb{C}\setminus\{0\}\big),
 \end{align}

\noindent where the function
\begin{align}\label{f6case38}
    &   \Phi_{0}(z):=\left\{\begin{array}{ll}
                        \Phi_{1}\big(1+z^{2}\big) \, , \   &  z\in \Bb{H} \, , \  \\
                        \Phi_{2}\big(1+z^{2}\big) \, , \   &  z\in -\Bb{H} \, , \
                       \end{array}
   \right.
\end{align}

\noindent is holomorphic in $\Bb{C}\setminus  \Bb{R}$ as follows from \eqref{f4case38}, theorem about analyticity of the composition of two holomorphic functions (see \cite[p.\! 34]{con}),  and the fact that $1+z^{2}$ maps
conformally  $\pm\Bb{H}$ onto $ \Bb{C}\setminus [1, +\infty)$. The latter property is the consequence of the fact that the function $\pm i \sqrt{1-\zeta}$ maps
conformally $\Bb{C}\setminus [1, +\infty)$ onto $\pm \Bb{H}$ (see \cite[p.\! 46]{con}).
Therefore the inverse mapping $1+z^{2}$ maps  $\pm\Bb{H} $ one-to-one onto $\Bb{C}\setminus [1, +\infty)$.
Then $1+z^{2}= 1+(x+ \imag  y)^{2}=1+x^{2}- y^{2}+2 \imag  x y $, $z := x+ \imag y$, $x, y \in \Bb{R}$ and \eqref{f6case38}, for arbitrary $x>0$ imply
\begin{align*}
    &  \left\{\begin{array}{l}
                \Phi_{0}\big( x + \imag  0\big)=\Phi_{1}\big(1+x^{2}+ \imag  0\big) \, , \\
                \Phi_{0}\big( x - \imag  0\big)=\Phi_{2}\big(1+x^{2}- \imag  0\big) \, ,
              \end{array}
    \right. \ \
    \left\{\begin{array}{l}
               \Phi_{0}\big( -x + \imag  0\big)=\Phi_{1}\big(1+x^{2}- \imag  0\big) \, ,  \\
                \Phi_{0}\big( -x - \imag  0\big)=\Phi_{2}\big(1+x^{2}+ \imag  0\big) \, ,
              \end{array}
    \right.
\end{align*}

\noindent which by \eqref{f4acase38} means that
\begin{align*}
    &   \Phi_{0}\big( x + \imag  0\big)=\Phi_{1}\big(1+x^{2}+ \imag  0\big) = \Phi_{2}\big(1+x^{2}- \imag  0\big) =\Phi_{0}\big( x - \imag  0\big) \, ,   &     & \ \  x > 0,  \\    &
    \Phi_{0}\big( -x + \imag  0\big)=\Phi_{1}\big(1+x^{2}- \imag  0\big) =\Phi_{2}\big(1+x^{2}+ \imag  0\big)=
    \Phi_{0}\big(-x - \imag  0\big) \, ,  &     & \ \  x > 0,
\end{align*}

\noindent
and hence the Morera theorem   can be applied to $\Phi_{0}\in {\rm{Hol}}(
     \Bb{C}\setminus\Bb{R})$ to get $\Phi_{0}\in {\rm{Hol}}(
     \Bb{C}\setminus\{0\})$. The right-hand side inclusion of \eqref{f5case38} is completely proved.

We prove now that $1$ and  $0$ are the points of a removable singularity  for $\Phi_{3}$ and $\Phi_{0}$, respectively.   According to \cite[p.\! 609, (4.2), (4.4)]{bh2},
\begin{align}\label{f7case38}
    &\hspace{-0,25cm} (0,1)\cup \left(\Bb{C}\setminus\Bb{R}\right)\!\ni \!z \!\to \!1 \   \Rightarrow   \  \fet\! \ni\! \ie (z)\!\to \!0   \   \Rightarrow   \  \im  \big(-1/ \ie (z)\big)\!\to\! +\!\infty   \, ,
\hspace{-0,1cm}\end{align}

\noindent because in view of $-1/\fet=\fet \subset \Bb{H}_{\re <1}$, it follows from  $\fet \ni \ie (z)\to 0$ that $-1/\ie (z)\in \fet$ and $|1/\ie (z)|\to +\infty$, which means that $ \im  (-1/ \ie (z)) \geqslant |1/\ie (z)| - 1 \to +\infty$. Then the asymptotic equalities (see \eqref{f3bint}(a),(b),(c)),
\begin{align*}
    &  \Theta_3(z)  =\dfrac{\Theta_3(-1/z)}{(z/\imag )^{1/2}} \ , \
     \Theta_2(z)  =\dfrac{\Theta_4(-1/z)}{(z/\imag )^{1/2}} \ , \
      \Theta_4 (z)  =\dfrac{\Theta_2(-1/z)}{(z/\imag )^{1/2}} \ , \  \\  &
    \Theta_3(-1/z)  =1 +  {\rm{O}} \left({\rm{e}}^{\imag \pi ( -1/ z)}\right)  \, , \
\Theta_4(-1/z) =1+ {\rm{O}}\left({\rm{e}}^{ \imag \pi ( -1/ z)}\right) \, , \\  &
   \Theta_2(-1/z)=  2 {\rm{e}}^{  \imag\pi (-1/ z) /4}\left( 1 +  {\rm{O}} \left({\rm{e}}^{  \imag  \pi ( -1/ z)}\right)\right)\, ,
\end{align*}

\noindent as  $z\in \fet$, $z\to 0$ (and hence, $\im (-1/z) \to +\infty$) yield
\begin{align*}
    &  \lim_{{\fo{\fet\! \ni\! z\! \to \!0}}}  \Psi_{1}(z)  \! =\!\lim_{{\fo{\fet\! \ni\! z\! \to \!0}}} \dfrac{\Theta_{3}( z)^{2}\!-\! \Theta_{4}( z)^{2}}{2 \Theta_{2}(2 z)^{2}}\! =\!\lim_{{\fo{\fet\! \ni\! z\! \to \!0}}}
    \dfrac{\dfrac{\Theta_3(-1/z)^{2}}{(z/\imag )}\!-\! \dfrac{\Theta_2(-1/z)^{2}}{(z/\imag )}  }{2
    \dfrac{\Theta_4(-1/(2z))^{2}}{(2 z/\imag )}    } \\
    &     =
  \lim_{{\fo{\fet\! \ni\! z\! \to \!0}}}  \dfrac{\Theta_3(-1/z)^{2}\!-\!\Theta_2(-1/z)^{2}}{\Theta_4(-1/(2z))^{2}}
   \\    &
  =
  \lim_{{{\im (-1/z)\to +\infty }}}  \dfrac{\left(1 +  {\rm{O}} \left({\rm{e}}^{  \imag
     \pi ( -1/ z)}\right)\right)^{2}- 4 {\rm{e}}^{  \imag\pi (-1/ z) /2}
\left( 1 +  {\rm{O}} \left({\rm{e}}^{  \imag
     \pi ( -1/ z)}\right)\right)^{2}}{\left(1 +  {\rm{O}} \left({\rm{e}}^{  \imag
     \pi ( -1/(2 z))}\right)\right)^{2}} = 1  \, ,
     \\
      &  \lim_{{\fo{\fet\! \ni\! z\! \to \!0}}}  \Psi_{2}(z)  \! =\!\lim_{{\fo{\fet\! \ni\! z\! \to \!0}}} \dfrac{\Theta_{3}( z)^{2}\!+\! \Theta_{4}( z)^{2}}{2 \Theta_{3}(2 z)^{2}}\! =\!\lim_{{\fo{\fet\! \ni\! z\! \to \!0}}}
    \dfrac{\dfrac{\Theta_3(-1/z)^{2}}{(z/\imag )}\!+\! \dfrac{\Theta_2(-1/z)^{2}}{(z/\imag )}  }{2
    \dfrac{\Theta_3(-1/(2z))^{2}}{(2 z/\imag )}    } \\ & =
  \lim_{{\fo{\fet\! \ni\! z\! \to \!0}}}  \dfrac{\Theta_3(-1/z)^{2}\!+\!\Theta_2(-1/z)^{2}}{\Theta_3(-1/(2z)^{2}}
   \\    &
  =
  \lim_{{{\im (-1/z)\to +\infty }}}  \dfrac{\left(1 +  {\rm{O}} \left({\rm{e}}^{  \imag
     \pi ( -1/ z)}\right)\right)^{2}+ 4 {\rm{e}}^{  \imag\pi (-1/ z) /2}
\left( 1 +  {\rm{O}} \left({\rm{e}}^{  \imag
     \pi ( -1/ z)}\right)\right)^{2}}{\left(1 +  {\rm{O}} \left({\rm{e}}^{  \imag
     \pi ( -1/(2 z))}\right)\right)^{2}} = 1  \, , \    \\
      &  \lim_{{\fo{\fet\! \ni\! z\! \to \!0}}}  \Psi_{3}(z)  \! =\!
   \lim_{{\fo{\fet\! \ni\! z\! \to \!0}}}   \dfrac{\Theta_{4}(2 z)^{2}}{\Theta_{3}( z)\Theta_{4}( z)}
   \! =\! \lim_{{\fo{\fet\! \ni\! z\! \to \!0}}}
   \dfrac{\dfrac{\Theta_2(-1/(2z))^{2}}{(2z/\imag )}}{
   \dfrac{\Theta_3(-1/z)}{(z/\imag )^{1/2}}
   \dfrac{\Theta_2(-1/z)}{(z/\imag )^{1/2}}
   }
    \\ & = \lim_{{\fo{\fet\! \ni\! z\! \to \!0}}}
 \dfrac{\Theta_2(-1/(2z))^{2}}{2
   \Theta_3(-1/z)\Theta_2(-1/z)
   }
   \\ & =
       \lim_{{{\im (-1/z)\to +\infty }}}
      \dfrac{4 {\rm{e}}^{  \imag\pi (-1/ 2z) /2}
\left( 1 +  {\rm{O}} \left({\rm{e}}^{  \imag
     \pi ( -1/ (2z))}\right)\right)^{2}}{2\left(1 +  {\rm{O}} \left({\rm{e}}^{  \imag
     \pi ( -1/ z)}\right)\right)
          2 {\rm{e}}^{  \imag\pi (-1/ z) / 4}
\left( 1 +  {\rm{O}} \left({\rm{e}}^{  \imag
     \pi ( -1/ z)}\right)\right)}=1\,.
\end{align*}

\noindent Hence, by \eqref{f7case38}, we have, for $\Lambda := (0,1)\cup \left(\Bb{C}\setminus\Bb{R}\right)$,
\begin{align}\label{f8case38}
    & \lim_{{\fo{\Lambda\! \ni\! z\! \to \!1}}} \Phi_{j} (z) =
    \lim_{{\fo{\Lambda\! \ni\! z\! \to \!1}}} \Psi_{j}\big(\ie (z)\big)=
    \lim_{{\fo{\fet\! \ni\! z\! \to \!0}}} \Psi_{j}\big(z\big)
     =1 \, , \ 1\leqslant j\leqslant 3.
\end{align}

\noindent In view of the continuity of $\Phi_{0}$ on $\Bb{C}\setminus\{0\}$, as follows from
\eqref{f5case38}, the limit of $\Phi_{0}$ as $\Bb{C}\setminus\{0\}\ni z \to 0$ exists if and only if there exists the limit
of $\Phi_{0}$ as $\Bb{C}\setminus\Bb{R}\ni z \to 0$, and we derive from \eqref{f8case38},
 $\Bb{C}\setminus\Bb{R}=(-\Bb{H}) \sqcup\Bb{H}$ and $(1+z^{2}): \pm \Bb{H} \to \Lambda$,
 the existence of the following limit
\begin{align*}
    &  \lim_{{\fo{\Bb{C}\!\setminus\!\{0\}\! \ni\! z\! \to \!0}}} \Phi_{0} (z) = \lim_{{\fo{\Bb{C}\!\setminus\!\Bb{R}\! \ni\! z\! \to \!0}}} \Phi_{0} (z) \\    &    =\lim_{{\fo{\Bb{C}\!\setminus\!\Bb{R}\! \ni\! z\! \to \!0}}}
   \big[ \Phi_{1}\big(1+z^{2}\big)\chi_{\Bb{H}} (z) + \Phi_{2}\big(1+z^{2}\big)\chi_{-\Bb{H}} (z) \big]
=  \lim\limits_{{\fo{\Lambda\! \ni\! z\! \to \!1}}} \left\{\begin{array}{l}\Phi_{1} (z) \\ \Phi_{2} (z) \end{array}
  \right\}=1.
\end{align*}

\noindent Together with \eqref{f8case38} for $j=3$   and \eqref{f5case38}  this relationship yields that
$1$ and  $0$ are the points of a removable singularity  for $\Phi_{3}$ and $\Phi_{0}$, respectively, and so
$\Phi_{3}$ and $\Phi_{0}$ are entire functions, i.e.,
 \begin{align}\label{f9case38}
     & \Phi_{3}, \   \Phi_{0}\in {\rm{Hol}}\big(\Bb{C}\big) \, , \quad \Phi_{0}(0)= \Phi_{3}(1)=1.
 \end{align}

We prove now  that the  modulus  of $\Phi_{3}$ and $\Phi_{0}$ are uniformly bounded on $\Bb{C}$ by establishing the existence of the finite limits of  $\Phi_{3}(z)$ and $\Phi_{0}(z)$ as $\Bb{C}\ni z \to \infty$.

Let $\Lambda\ni z\! \to\! \infty$ approaching from  one of the half-planes $\sigma\! :=\! {\rm{sign}} (\im z)\! \in \!\{1, -1\}$. Then $\fet \ni \ie (z) \to \sigma$ (see \cite[p.\! 609, (4.2), (4.3)]{bh2})
and, in view of \eqref{f13inttrian}, ${\rm{sign}} (\re \,\ie (z))= \sigma$. The latter equality yields $\fet \ni \ie (z)-\sigma \! \to\! 0$ (see \cite[p.\! 24, item 2]{bh1})  and by
manipulations  similar to those employed in the proof of \eqref{f7case38}, for arbitrary $\sigma\! \in \!\{1, -1\}$ we get
\begin{align}\label{f10case38}
    &\hspace{-0,25cm} \sigma\, \Bb{H}\ni z\! \to\! \infty \ \Rightarrow \
    \fet \ni \ie (z)-\sigma \! \to\! 0\ \Rightarrow \
    \im  \big(-1/\left( \ie (z)-\sigma\right)\big)\!\to\! +\!\infty   \,.
\hspace{-0,1cm}
\end{align}

\noindent By using the relationships (see \cite[p.\! 58, (A.18e), (A.18d)]{bh1},
\begin{align*}
     & \Theta_{3} (z)^{2}\!=\!  \, \imag \, \dfrac{\Theta_{2} \left( \dfrac{1}{\sigma \!-\!z}\right)^{2}}{z\!-\!\sigma}   \, \, , \
    \Theta_{4} (z)^{2}\!=\!  \, \imag \, \dfrac{\Theta_{3} \left( \dfrac{1}{\sigma \!-\!z}\right)^{2}}{z\!-\!\sigma}
   \, \, , \
    \Theta_{2} (z)^{2}\!=\!   \dfrac{\Theta_{4} \left(
    \dfrac{1}{\sigma \!-\!z}\right)^{2}}{\sigma \!-\!z} \ ,  \\  &
  \Theta_{3} (z)^{2}\!=\!  \, \imag \,\dfrac{\Theta_{3}\left(-\dfrac{1}{z}\right)^{2}}{z}
    \, \, , \hspace{0,4cm}
    \Theta_{4} (z)^{2}\!=\!  \, \imag \,\dfrac{\Theta_{2}\left(-\dfrac{1}{z}\right)^{2}}{z}  \, \, , \hspace{0,4cm}
    \Theta_{2} (z)^{2}\!=\!  \, \imag \,\dfrac{\Theta_{4}\left(-\dfrac{1}{z}\right)^{2}}{z}   \, \, ,
    \\  &
      \Theta_{3} (z)\Theta_{4} (z)=
   \, \imag \, \dfrac{\Theta_{2} \left( \dfrac{1}{\sigma \!-\!z}\right)\Theta_{3} \left( \dfrac{1}{\sigma \!-\!z}\right)}{z\!-\!\sigma}  \ , \quad  z \in \Bb{H}\,, \  \ \sigma \in \{1,-1\}\,,
\end{align*}

\noindent we obtain, for arbitrary $\sigma\! \in \!\{1, -1\}$,  the existence of the following limits
\begin{align*}
    &   \lim_{{{\sigma \Bb{H}\ni z \to \infty}}} \Phi_{1} (z) =\!\!\!\!\!
    \lim_{{{\sigma \Bb{H}\ni z \to \infty}}}\Psi_{1}\big(\ie (z)\big)
   =\!\!\!\!\!
    \lim_{{{\fet \ni z-\sigma  \to 0}}}\Psi_{1}\big(z\big) =\!\!\!\!\!
    \lim_{{{ \fet \ni z-\sigma  \to 0}}}  \dfrac{\Theta_{3}( z)^{2}\!-\! \Theta_{4}( z)^{2}}{2 \Theta_{2}(2 z)^{2}}\\[0,2cm] & =
   \lim_{{{ \fet \ni z-\sigma  \to 0}}}  \dfrac{\Theta_{3}( z)^{2}\!-\! \Theta_{4}( z)^{2}}{
   2 \Theta_{2}\big(2(z \!-\!\sigma) + 2 \sigma\big)^{2}} =
   \lim_{{{ \fet \ni z-\sigma  \to 0}}}  \dfrac{\Theta_{3}( z)^{2}\!-\! \Theta_{4}( z)^{2}}{
   -2\Theta_{2}\big(2(z \!-\!\sigma)\big) ^{2}}
       \\[0,2cm]     &   =
    \lim_{{{ \fet \ni z-\sigma  \to 0}}}
     \dfrac{\imag \, \dfrac{\Theta_{2} \left(- \dfrac{1}{z \!-\!\sigma}\right)^{2}}{z\!-\!\sigma}\!-\!
     \imag \, \dfrac{\Theta_{3} \left(- \dfrac{1}{z \!-\!\sigma}\right)^{2}}{z\!-\!\sigma}}{- 2\imag \,
     \dfrac{\Theta_{4} \left(- \dfrac{1}{2(z \!-\!\sigma)}\right)^{2}}{2(z\!-\!\sigma)}      }=
     \lim_{{{\fet \ni z \to \infty}}} \dfrac{\Theta_{3}(z)^{2} - \Theta_{2}(z)^{2}}{\Theta_{4}(z/2)^{2} } \\[0,2cm]     &   =  \lim_{{{\im \, z\to +\infty }}} \dfrac{\left( 1 +  {\rm{O}} \left({\rm{e}}^{  \imag \pi z}\right)\right)^{2}
    -\left(2 {\rm{e}}^{\imag  \pi  z/4} +  {\rm{O}} \left({\rm{e}}^{ 5 \pi\imag  z/4}\right)\right)^{2}
    }{ \left(1 +  {\rm{O}} \left({\rm{e}}^{  \imag \pi z/2}\right)\right)^{2}}=1\,;
\end{align*}
\begin{align*}
    &   \lim_{{{\sigma \Bb{H}\ni z \to \infty}}} \Phi_{2} (z) =\!\!\!\!\!
    \lim_{{{\sigma \Bb{H}\ni z \to \infty}}}\Psi_{2}\big(\ie (z)\big)
   =\!\!\!\!\!
    \lim_{{{\fet \ni z-\sigma  \to 0}}}\Psi_{2}\big(z\big) =\!\!\!\!\!
    \lim_{{{ \fet \ni z-\sigma  \to 0}}}  \dfrac{\Theta_{3}( z)^{2}\!+\! \Theta_{4}( z)^{2}}{2 \Theta_{3}(2 z)^{2}}\\[0,2cm]  & = \lim_{{{ \fet \ni z-\sigma  \to 0}}}  \dfrac{\Theta_{3}( z)^{2}\!+\! \Theta_{4}( z)^{2}}{2 \Theta_{3}\big(2(z \!-\!\sigma) + 2 \sigma\big)^{2}}=
    \lim_{{{ \fet \ni z-\sigma  \to 0}}}  \dfrac{\Theta_{3}( z)^{2}\!+\! \Theta_{4}( z)^{2}}{
    2 \Theta_{3}\big(2(z \!-\!\sigma)\big)^{2}}  \\    &   =
    \lim_{{{ \fet \ni z-\sigma  \to 0}}}
     \dfrac{\imag \, \dfrac{\Theta_{2} \left(- \dfrac{1}{z \!-\!\sigma}\right)^{2}}{z\!-\!\sigma}\!+\!
     \imag \, \dfrac{\Theta_{3} \left(- \dfrac{1}{z \!-\!\sigma}\right)^{2}}{z\!-\!\sigma}}{2\imag \,
     \dfrac{\Theta_{3} \left(- \dfrac{1}{2(z \!-\!\sigma)}\right)^{2}}{2(z\!-\!\sigma)}      }=
     \lim_{{{\fet \ni z \to \infty}}} \dfrac{\Theta_{2}(z)^{2} + \Theta_{3}(z)^{2}}{\Theta_{3}(z/2)^{2} } \\[0,2cm]     &
     =\lim_{{{\im \, z\to +\infty }}}
     \dfrac{\left(2 {\rm{e}}^{\imag  \pi  z/4} +  {\rm{O}} \left({\rm{e}}^{ 5 \pi\imag  z/4}\right)\right)^{2}+\left( 1 +  {\rm{O}} \left({\rm{e}}^{  \imag \pi z}\right)\right)^{2}
        }{ \left(1 +  {\rm{O}} \left({\rm{e}}^{  \imag \pi z/2}\right)\right)^{2}}=1\,;
\end{align*}
\begin{align*}
    &   \lim_{{{\sigma \Bb{H}\ni z \to \infty}}} \Phi_{3} (z) =\!\!\!\!\!
    \lim_{{{\sigma \Bb{H}\ni z \to \infty}}}\Psi_{3}\big(\ie (z)\big)
   =\!\!\!\!\!
    \lim_{{{\fet \ni z-\sigma  \to 0}}}\Psi_{3}\big(z\big) =\!\!\!\!\!
    \lim_{{{ \fet \ni z-\sigma  \to 0}}}  \dfrac{\Theta_{4} (2z)^{2}}{\Theta_{3} (z) \Theta_{4} (z)}\\[0,2cm]  & =
     \lim_{{{ \fet \ni z-\sigma  \to 0}}}  \dfrac{\Theta_{4} \big(2(z \!-\!\sigma) + 2 \sigma\big)^{2}}{\Theta_{3} (z) \Theta_{4} (z)}=
     \lim_{{{ \fet \ni z-\sigma  \to 0}}}  \dfrac{\Theta_{4} \big(2(z \!-\!\sigma)  \big)^{2}}{\Theta_{3} (z) \Theta_{4} (z)} \\    &   =
     \lim_{{{ \fet \ni z-\sigma  \to 0}}}\dfrac{\imag \,
     \dfrac{\Theta_{2} \left(- \dfrac{1}{2(z \!-\!\sigma)}\right)^{2}}{2(z\!-\!\sigma)} }{  \imag \, \dfrac{\Theta_{2} \left( \dfrac{1}{\sigma \!-\!z}\right)\Theta_{3} \left( \dfrac{1}{\sigma \!-\!z}\right)}{z\!-\!\sigma}  }=\lim_{{{\fet \ni z \to \infty}}} \dfrac{\Theta_{2}(z/2)^{2}}{
     2\Theta_{2}(z)  \Theta_{3}(z) } \\[0,2cm]     &=
     \lim_{{{\im \, z\to +\infty }}} \dfrac{\left(2 {\rm{e}}^{\imag  \pi  z/8} +  {\rm{O}} \left({\rm{e}}^{ 5 \pi\imag  z/8}\right)\right)^{2}}{2 \left(2 {\rm{e}}^{\imag  \pi  z/4} +  {\rm{O}} \left({\rm{e}}^{ 5 \pi\imag  z/4}\right)\right)\left(1 +  {\rm{O}} \left({\rm{e}}^{  \imag \pi z/2}\right)\right)}=1\,.
\end{align*}

\noindent Hence, for arbitrary $\sigma\! \in \!\{1, -1\}$ there exist the limits
\begin{align}\label{f11case38}
    &   \lim_{{{\sigma \Bb{H}\ni z \to \infty}}} \Phi_{j} (z) = 1 \ , \quad  1 \leqslant j \leqslant 3 \,.
\end{align}

 Since by \eqref{f9case38} we have $\Phi_{3}\in {\rm{Hol}}(\Bb{C})$ then
 it follows from \eqref{f11case38} with $j=3$ that
 $ \Phi_{3} (z)  \to 1$ as
$\Bb{H}\ni z \to \infty$ and we obtain that the entire function $\Phi_{3}$ is bounded on $\Bb{C}$ while $\Phi_{3}(1)=1$, also
according to \eqref{f9case38}. The Liouville theorem \cite[p.\! 77]{con} yields $\Phi_{3}(z) = 1$ for every $z\in \Bb{C}$  and, in particular, $\Phi_{3}(z) = 1$ for every $z\in \Lambda$. Then $\ie (\Lambda)=\fet$
and \eqref{f2case38} imply the validity of \eqref{f1case38} for $\Psi_{3}$ and completes the proof of
\eqref{f3d1int}(c).

Since $1+z^{2} \in \Bb{R}$ if and only if $z \in (\imag \Bb{R}) \cup \Bb{R}$ and, in  accordance with \eqref{f9case38}, we have $\Phi_{0}\in {\rm{Hol}}(\Bb{C})$,
then  the limit of $ \Phi_{0} (z)$ as
$\Bb{C}\ni z \to \infty$ exists if and only if there exists the limit of  $ \Phi_{0} (z)$ as
$\Bb{C}\setminus( (\imag \Bb{R}) \cup \Bb{R}) \ni z \to \infty$. But in the latter case $1+z^{2} \in \Bb{C}\setminus \Bb{R}$ and $1+z^{2} \to \infty$. In view of the definition \eqref{f6case38} of
$ \Phi_{0}$ and the properties \eqref{f11case38}, we conclude that $ \Phi_{0} (z)  \to 1$ as
$\Bb{C}\ni z \to \infty$ and therefore the entire function $\Phi_{0}$ is bounded on $\Bb{C}$ while $\Phi_{0}(1)=1$,
according to \eqref{f9case38}. The Liouville theorem \cite[p.\! 77]{con} gives $\Phi_{0}(z)=1$ for every $z\in \Bb{C}$, which by \eqref{f6case38} yield that
$ \Phi_{1}\big(1+z^{2}\big)=1$ for every $z\in \Bb{H}$ and
$ \Phi_{2}\big(1+z^{2}\big)=1$ for every $z\in -\Bb{H}$. This implies that
$\Phi_{1}(z) = \Phi_{2}(z) = 1$ for every $z\in \Lambda$ because $1+z^{2}$ maps
conformally  $\pm\Bb{H}$ onto $\Lambda:= \Bb{C}\setminus [1, +\infty)$.  Then $\ie (\Lambda)=\fet$
and \eqref{f2case38} yield the validity of \eqref{f1case38} for $\Psi_{1}$ and $\Psi_{2}$ which completes the proof of \eqref{f3d1int}(a) and \eqref{f3d1int}(b). The  Landen trans\-for\-mation equations \eqref{f3d1int} have been completely proved.
 }}\end{clash}
\end{subequations}

\begin{subequations}
\begin{clash}{\rm{\hypertarget{r1}{}\label{case1}\hspace{0,00cm}{\hyperlink{br1}{$\uparrow$}}\hspace{0,15cm}{\hyperlink{br1a}{$\uparrow$}}
 \
 Let $z\in \fet $. Since $\lambda (\fet) = 1- \lambda (\fet) = (0,1)\cup \left(\Bb{C}\setminus\Bb{R}\right)$
 it is possible to use the principal branch of the square root for $\lambda (z)$ and $1-\lambda (z)$ as well.
 Then by \eqref{f2int} and \eqref{f3cint} we get
 \begin{align}\label{f1case1}
    &  \sqrt{\lambda (z)}= \dfrac{\Theta_{2}(z)^{2}}{\Theta_{3}(z)^{2}} \, , \
    \sqrt{1-\lambda (z)}= \dfrac{\Theta_{4}(z)^{2}}{\Theta_{3}(z)^{2}}  \ , \quad z\in \fet\,.
 \end{align}

 \noindent Then, by virtue of \eqref{f3d1int}(b), we get
 \begin{align*}
    &  2 \Theta_{3}(2 z)^{2}  = \Theta_{3}( z)^{2} + \Theta_{4}( z)^{2}=
     \Theta_{3}( z)^{2} \left(1+ \dfrac{\Theta_{4}( z)^{2}}{\Theta_{3}( z)^{2}}\right)=
     \Theta_{3}( z)^{2}\left(1 + \sqrt{1-\lambda (z)}\right) ,
   \end{align*}

\noindent from which
\begin{align}\label{f2case1}
    &    \Theta_{3}( z)^{2}=\dfrac{2 \Theta_{3}(2 z)^{2}}{1 + \sqrt{1-\lambda (z)}}  \ , \quad z\in \fet\,.
\end{align}

\noindent Substituting here \eqref{f14inttrian}(b) written in the form
\begin{align}\label{f3case1}
    &   \Theta_{3}\big(z\big)^{2}\!= \!\he  \big(\lambda(z)\big)\ , \quad z\in \fet\,,
\end{align}

\noindent we obtain
\begin{align}\label{f4case1}
    & \he \left(\lambda (z)\right) = \dfrac{2 \he \left(\lambda (2z)\right)}{1 + \sqrt{1-\lambda (z)}}
 \ , \quad z\in \fet\,.
\end{align}

\noindent But if  $2 z \in \fet $ and  $ z \in \fet $ then  it follows from \eqref{f3d1int}(a), \eqref{f3d1int}(b) and \eqref{f1case1} that
\begin{align*}
   \sqrt{\lambda (2z)}  & =  \dfrac{\Theta_{2}(2 z)^{2}}{\Theta_{3}(2 z)^{2}}=
 \dfrac{\Theta_{3}( z)^{2}- \Theta_{4}( z)^{2}}{\Theta_{3}( z)^{2}+ \Theta_{4}( z)^{2}}=
\dfrac{ \dfrac{\Theta_{3}( z)^{2}}{\Theta_{4}( z)^{2}} -1}{ \dfrac{\Theta_{3}( z)^{2}}{\Theta_{4}( z)^{2}} +1}=
\dfrac{\dfrac{1}{ \sqrt{1-\lambda (z)}}-1}{\dfrac{1}{ \sqrt{1-\lambda (z)}}+1} \\    &   =
\dfrac{1-\sqrt{1-\lambda (z)}}{1+\sqrt{1-\lambda (z)}} \, , \
\end{align*}

\noindent and after squaring this identity the constraint $2 z \in \fet $ can be dropped,
 \begin{align}\label{f5case1}
    &  \lambda (2z)= \left(\dfrac{1-\sqrt{1-\lambda (z)}}{1+\sqrt{1-\lambda (z)}}\right)^{2} \ , \quad z\in \fet\,,
 \end{align}

 \noindent and we deduce from \eqref{f4case1},
\begin{align}\label{f6case1}
    & \he \left(\lambda (z)\right) = \dfrac{2 }{1 + \sqrt{1-\lambda (z)}} \
    \he \left(\left(\dfrac{1-\sqrt{1-\lambda (z)}}{1+\sqrt{1-\lambda (z)}}\right)^{2}\right)
 \ , \quad z\in \fet\, .
\end{align}

\noindent Or, what is the same,
\begin{align}\label{f7case1}
    & \he \left(z\right) = \dfrac{2 }{1 + \sqrt{1-z}}
 \   \he \left(\left(\dfrac{1-\sqrt{1-z}}{1+\sqrt{1-z}}\right)^{2}\right)
 \ , \quad z\in (0,1)\cup \left(\Bb{C}\setminus\Bb{R}\right)\, ,
\end{align}

 \noindent
which coincides with the quadratic   transformation (3.1.10) with $a=b=1$ of   \cite[p.\! 128]{and} for the hypergeometric function $\he$.
}}\end{clash}
\end{subequations}

\begin{subequations}
\begin{clash}{\rm{\hypertarget{r2}{}\label{case2}\hspace{0,00cm}{\hyperlink{br2}{$\uparrow$}}
 \  The Landen relationships \eqref{f3d1int}(a)--(c),
\begin{align}\label{f1case2}
    &  \begin{array}{rcl}
      2 \Theta_{2}(2 z)^{2}   &   =   &   \Theta_{3}( z)^{2}- \Theta_{4}( z)^{2} \ , \  \\
2 \Theta_{3}(2 z)^{2}   &   =   &   \Theta_{3}( z)^{2}+ \Theta_{4}( z)^{2} \ , \  \\
\Theta_{4}(2 z)^{2}  &   =   &  \Theta_{3}( z)\Theta_{4}( z) \,,
\end{array} \  \quad  z\in \Bb{H}\,,
\end{align}

\noindent can obviously written as follows
\begin{align}\label{f2case2}
    &  \begin{array}{rcl}
      \Theta_{3}( z)^{2} &   =   & \Theta_{3}(2 z)^{2} + \Theta_{2}(2 z)^{2} \ , \  \\
\Theta_{4}( z)^{2} &   =   & \Theta_{3}(2 z)^{2} - \Theta_{2}(2 z)^{2} \ ,  \\
\Theta_{4}(2 z)^{2}  &   =   &  \Theta_{3}( z)\Theta_{4}( z) \,,
\end{array}\  \quad  z\in \Bb{H}\,,
\end{align}

\noindent from which, by \eqref{f3cint}, we get
\begin{align*}
    & \Theta_{2}( z)^{4} =  \left(\Theta_{3}(2 z)^{2} + \Theta_{2}(2 z)^{2}\right)^{2}- \left(\Theta_{3}(2 z)^{2} - \Theta_{2}(2 z)^{2}\right)^{2} = 4 \Theta_{3}(2 z)^{2}\Theta_{2}(2 z)^{2} \, , \
\end{align*}

\noindent i.e.,
\begin{align}\label{f3case2}
    &  \Theta_{2}( z)^{4} =4 \Theta_{3}(2 z)^{2}\Theta_{2}(2 z)^{2} \ , \quad  z\in \Bb{H}\,.
\end{align}

\noindent Therefore
\begin{align*}
    &  \lambda (z) = \dfrac{\Theta_{2}( z)^{4}}{\Theta_{3}( z)^{2}} =
    \dfrac{4 \Theta_{3}(2 z)^{2}\Theta_{2}(2 z)^{2}}{\Theta_{3}(2 z)^{4} + \Theta_{2}(2 z)^{4} + 2\Theta_{3}(2 z)^{2}\Theta_{2}(2 z)^{2}}
\end{align*}

\noindent and then, in view of \eqref{f3bint} and \eqref{f3cint},
\begin{align*}
    &  \Theta_2 (z-1)^{2} = i \Theta_2 (z)^{2} \ , \quad   \Theta_3(z-1)^{2} = \Theta_4 (z)^{2} \ , \quad
    \dfrac{\lambda (z)}{1-\lambda (z)} = \dfrac{\Theta_2 (z)^{4}}{\Theta_4 (z)^{4}}\,,  \quad  z \in \Bb{H}\,,
\end{align*}

\noindent we obtain
\begin{align*}
      \lambda \left(\dfrac{z-1}{2}\right)& = \dfrac{\Theta_{2}\left(\dfrac{z-1}{2}\right)^{4}}{ \Theta_{3}\left(\dfrac{z-1}{2}\right)^{4}} =  \dfrac{4 \Theta_{3}(z-1)^{2}\Theta_{2}(z-1)^{2}}{\Theta_{3}(z-1)^{4} + \Theta_{2}(z-1)^{4} +
2\Theta_{3}(z-1)^{2}\Theta_{2}(z-1)^{2}} = \\ & =
  \dfrac{4 i \Theta_{4}(z)^{2}\Theta_{2}(z)^{2}}{\Theta_{4}(z)^{4} - \Theta_{2}(z)^{4} +
2i \Theta_{4}(z)^{2}\Theta_{2}(z)^{2}} \, , \
\end{align*}

\noindent from which, taking account of \eqref{f2int}, for arbitrary $z\in \fet$ we get
\begin{align*}
    &  \lambda \left(\dfrac{1+z}{1-z}\right)= \lambda \left(\dfrac{2}{1-z} -1\right)= \dfrac{\lambda \left(\dfrac{2}{1-z}\right)}{\lambda \left(\dfrac{2}{1-z}\right) -1} =
     \dfrac{\lambda \left(\dfrac{z-1}{2}\right)-1}{\lambda \left(\dfrac{z-1}{2}\right)}= \\ & = 1 -
\dfrac{1}{\lambda \left(\dfrac{z-1}{2}\right)}  =
1-  \dfrac{\Theta_{4}(z)^{4} - \Theta_{2}(z)^{4} +
2i \Theta_{4}(z)^{2}\Theta_{2}(z)^{2}}{4 i \Theta_{4}(z)^{2}\Theta_{2}(z)^{2}} = \\ & =
\dfrac{1}{2} - \dfrac{1}{4 i} \left(\dfrac{\Theta_{4}(z)^{2}}{\Theta_{2}(z)^{2}} - \dfrac{\Theta_{2}(z)^{2}}{\Theta_{4}(z)^{2}}\right)=
\dfrac{1}{2} +  \dfrac{1}{4 i} \left( \sqrt{\dfrac{\lambda (z)}{1-\lambda (z)}} -
\sqrt{\dfrac{1-\lambda (z)}{\lambda (z)}}\right) = \\ & =
\dfrac{1}{2} +  \dfrac{2\lambda (z)-1}{4 i \sqrt{(1-\lambda (z))\lambda (z)}}=
\dfrac{1}{2} +  i\dfrac{1-2\lambda (z)}{4  \sqrt{(1-\lambda (z))\lambda (z)}}  \, ,
\end{align*}

\noindent i.e., in the notation \eqref{f3eint},
\begin{align}\label{f4case2}
    &  \lambda \left(\dfrac{1+z}{1-z}\right)=\dfrac{1}{2} +
\dfrac{i\lambda_{1} (z)}{4  \sqrt{\lambda_{2} (z)}}   \ , \quad      z\in \fet\,,
\end{align}

\noindent which proves the validity of the left-hand side equality in \eqref{f3dint}. The right-hand side equality in \eqref{f3dint} is obtained from \eqref{f4case2} by replacing $z$ by $-1/z$ and by using \eqref{f2int}.

}}\end{clash}
\end{subequations}

\begin{subequations}
\begin{clash}{\rm{\hypertarget{r4}{}\label{case4}\hspace{0,00cm}{\hyperlink{br4}{$\uparrow$}}
 \ The right-hand side equality \eqref{f3dint} written for $z=it$
\begin{align*}
     & \dfrac{z-1}{z+1}=
     \dfrac{it-1}{it+1}=\dfrac{-t-i}{-t+i}=
\dfrac{t+i}{t-i}
      \ , \  \\  & \lambda \left(\dfrac{z-1}{z+1}\right)=\dfrac{1}{2} -
\dfrac{i\lambda_{1} (z)}{4  \sqrt{\lambda_{2} (z)}}\ \Rightarrow\  \\  &
\lambda \left(\dfrac{t+i}{t-i}\right)=\dfrac{1}{2} -
\dfrac{i\lambda_{1} (it)}{4  \sqrt{\lambda_{2} (it)}} \ , \quad t > 0,
\end{align*}

\noindent where $\lambda (i t)\in (0, 1)$ for all $t> 0$, makes it possible to write
\begin{align*}
     &  \left|\lambda \left(\dfrac{t+i}{t-i}\right)\right|^{2}=
\dfrac{1}{4} + \dfrac{\left(1-2\lambda (it)\right)^{2}}{16\lambda (it)\left(1-\lambda (it)\right) } \\  &
=\dfrac{4\lambda (it)\left(1-\lambda (it)\right)+\left(1-2\lambda (it)\right)^{2}}{16\lambda (it)\left(1-\lambda (it)\right) } \\  & =
\dfrac{4\lambda (it)-4\lambda (it)^{2}+1-4\lambda (it)+4\lambda (it)^{2}
}{16\lambda (it)\left(1-\lambda (it)\right) }=
\dfrac{1}{16\lambda (it)\left(1-\lambda (it)\right) } \ , \
\end{align*}

\noindent i.e., by \eqref{f2int},
\begin{align}\label{f1case4}
     &  \left|\lambda \left(\dfrac{t+i}{t-i}\right)\right|^{2}=
     \dfrac{1}{16\lambda (it)\left(1-\lambda (it)\right) }=
      \dfrac{1}{16\lambda (i t)\lambda (i/t) } \ , \quad t > 0\ , \
\end{align}

\noindent that proves equality in \eqref{f3zint}.

By using \eqref{f0apinttheor1}(b) written in the forms,
\begin{align*}
     & 0 \!< \! \lambda (it)\! <\! 16 {\rm{e}}^{{\fo{-  \pi t }}} , \ \
     0 \!< \! \lambda (i/t)\! <\! 16 {\rm{e}}^{{\fo{-  \pi/ t }}} , \ \
 0 \!<\! t \!<\! \infty  ,
\end{align*}

\noindent we derive from \eqref{f1case4} that
\begin{align*}
     & 16 \left|\lambda \left(\dfrac{t+i}{t-i}\right)\right|^{2}\geqslant
    \dfrac{1}{16\cdot {\rm{e}}^{{\fo{-  \pi t }}}\cdot 16 {\rm{e}}^{{\fo{-  \pi/ t }}} }=\frac{1}{16^{2}}\exp \pi \left(t
     + \dfrac{1}{t}\right) \ , \
\end{align*}

\noindent from which we obtain the inequality  in \eqref{f3zint},
\begin{align}\label{f2case4}
     &  64 \left|\lambda \left(\dfrac{t+i}{t-i}\right)\right|\geqslant
  \exp \dfrac{\pi }{2}\left(t + \dfrac{1}{t}\right)   \ , \quad t > 0\ .
\end{align}

}}\end{clash}
\end{subequations}

\begin{subequations}
\begin{clash}{\rm{\hypertarget{r3}{}\label{case3}\hspace{0,00cm}{\hyperlink{br3}{$\uparrow$}}
 \   The Landen relationships \eqref{f3d1int}(c) together with
\eqref{f3bint}(g) and  (A.18e) of \cite[p.\! 58]{bh1} for any $z\in \Bb{H}$ give
\begin{align*}
      \Theta_{3} \left(\dfrac{1}{1-z}\right)^{2}&= i (1-z)\Theta_{4}(z)^{2} \ , \
\Theta_{4} \left(\dfrac{1}{1-z}\right)^{2}=  (1-z)\Theta_{2}(z)^{2} \ ,  \\  \Theta_{3} \left(\dfrac{1+z}{1-z}\right)^{4}& = \Theta_{3} \left(\dfrac{2}{1-z} -1\right)^{4}=\Theta_{4}
 \left(\dfrac{2}{1-z}\right)^{4}= \Theta_{3}
 \left(\dfrac{1}{1-z}\right)^{2}\Theta_{4}
 \left(\dfrac{1}{1-z}\right)^{2} \\  & =
 i (1-z)^{2}\Theta_{2}(z)^{2}\Theta_{4}(z)^{2} \ , \
\end{align*}

\noindent which by \eqref{f3bint}(a), (c),
\begin{align*}
     & \Theta_{2}(-1/z)\Theta_{4}(-1/z)=(z/i)\Theta_{2}(z)\Theta_{4}(z) \ \Rightarrow \  \\  &
\Theta_{2}(-1/z)^{2}\Theta_{4}(-1/z)^{2}=-z^{2}\Theta_{2}(z)^{2}\Theta_{4}(z)^{2}
\ ,
\end{align*}

\noindent lead to
\begin{align*}
     &i (1-z)^{2}\Theta_{2}(z)^{2}\Theta_{4}(z)^{2}= -i (1-1/z)^{2}\Theta_{2}(-1/z)^{2}\Theta_{4}(-1/z)^{2} \ , \
\end{align*}

\noindent and therefore
\begin{align}\label{f1case3} &
    \Theta_{3} \left(\dfrac{1+z}{1-z}\right)^{4}=
\left\{
  \begin{array}{l}
   i (1-z)^{2}\Theta_{2}(z)^{2}\Theta_{4}(z)^{2} \ , \  \\[0,5cm]
   -i (1-1/z)^{2}\Theta_{2}(-1/z)^{2}\Theta_{4}(-1/z)^{2} \ ,
  \end{array}
\right. \quad z\in \Bb{H}\,.\end{align}

\noindent Making here the change of variables $z=-1/z^{\,\prime}$, we obtain
\begin{align}&
\Theta_{3} \left(\dfrac{z-1}{z+1}\right)^{4}=
\left\{  \begin{array}{l}
-i (1+z)^{2}\Theta_{2}(z)^{2}\Theta_{4}(z)^{2}  \ , \  \\[0,5cm]
  i (1+1/z)^{2}\Theta_{2}(-1/z)^{2}\Theta_{4}(-1/z)^{2}    \,,
  \end{array}\right.\quad z\in \Bb{H}\,.
\label{f2case3}\end{align}

\noindent When $z = i t$, $t> 0$,
\begin{align*}
     & \dfrac{z-1}{z+1} = \dfrac{i t-1}{i t+1}=\dfrac{ t+i}{ t-i} \ , \
\end{align*}

\noindent and \eqref{f2case3}  yields
\begin{align*}
     & \Theta_{3} \left(\dfrac{t+i}{t-i}\right)^{4}  =
\left\{  \begin{array}{l}
4 i (t-i)^{2} e^{-\pi t /2} \theta_{2}\left(e^{-\pi t }\right)^{2}\theta_{4}\left(e^{-\pi t }\right)^{2}
\ , \  \\[0,5cm]
 4 i \dfrac{(t-i)^{2}}{t^{2}}  e^{-\pi  /(2t)} \theta_{2}\left(e^{-\pi/ t }\right)^{2}\theta_{4}\left(e^{-\pi /t }\right)^{2}
  \ ,
  \end{array}\right.\quad t> 0\,.
\end{align*}

\noindent Hence,
\begin{align*}
     & \Phi(t):= \left|\Theta_{3} \left(\dfrac{t+i}{t-i}\right)\right|^{4}  =
\left\{  \begin{array}{ll}
4  (1+ t^{2}) e^{-\pi t /2} \theta_{2}\left(e^{-\pi  t}\right)^{2}\theta_{4}\left(e^{-\pi t }\right)^{2} \ , \  & \mbox{if} \ \ t \geqslant 1 \ , \
 \\[0,5cm]
 4  \dfrac{1+ t^{2}}{t^{2}}  e^{-\pi  /(2t)} \theta_{2}\left(e^{-\pi/ t }\right)^{2}\theta_{4}\left(e^{-\pi  / t }\right)^{2}
 \ , \  & \mbox{if} \ \ 0 <  t \leqslant 1 \ , \
  \end{array}\right.
\end{align*}

\noindent Here
\begin{align*}
    &  \theta_{4} (u)   \! :=  \!1  \!+ \! 2\sum\limits_{n\geqslant 1}
(-1)^{n}u^{n^2} = 1  \!- \! 2\sum\limits_{n\geqslant 1}\left(u^{n^2}-u^{(n+1)^2}\right) < 1 \, , \quad u \in (0,1),
\end{align*}

\noindent while, by \cite[p.\! 325, (xii)]{ber1},
\begin{align*}
    &  \theta_{2}\left(u\right) =
\!1 \!+\! \sum\limits_{n\geqslant 1} u^{n^2 +n} \leqslant \theta_{2}\left(e^{-\pi}\right) \, , \  0 < u < e^{-\pi} \, , \\  &
\psi(q)=\sum\limits_{n\geqslant 0} q^{n(n+1)/2}= \theta_{2}\left(\sqrt{q}\right)  \ , \quad
\theta_{2}\left(e^{-\pi}\right)= \psi\left(e^{-2\pi}\right) =  \dfrac{\pi^{1/4}e^{\pi/4}}{2^{5/4}\Gamma (3/4)} \ , \
\end{align*}

\noindent and therefore
\begin{align*}
     & \Phi(t)= \left|\Theta_{3} \left(\dfrac{t+i}{t-i}\right)\right|^{4}  \leqslant
\left\{  \begin{array}{ll}
4  (1+ t^{2}) e^{-\pi t /2}  \dfrac{\pi^{1/2}e^{\pi/2}}{2^{5/2}\Gamma (3/4)^{2}}  \ , \  & \mbox{if} \ \ t \geqslant 1 \ , \
 \\[0,5cm]
 4  \dfrac{1+ t^{2}}{t^{2}}  e^{-\pi  /(2t)} \dfrac{\pi^{1/2}e^{\pi/2}}{2^{5/2}\Gamma (3/4)^{2}}
 \ , \  & \mbox{if} \ \ 0 <  t \leqslant 1 \ .
  \end{array}\right.
\end{align*}

\noindent But
\begin{align*}
    &  \Phi(1/t)= \left|\Theta_{3} \left(\dfrac{1/t+i}{1/t-i}\right)\right|^{4} =
\left|\Theta_{3} \left(\dfrac{1+it}{1-it}\right)\right|^{4}  =
\left|\Theta_{3} \left(\dfrac{i-t}{i+ t}\right)\right|^{4}\\ & =
\left|\theta_{3} \left(e^{{\fo{i \pi \dfrac{i-t}{i+ t}}}}\right)\right|^{4} =
\left|\theta_{3} \left(e^{{\fo{-i \pi \dfrac{-i-t}{-i+ t}}}}\right)\right|^{4} =
\left|\theta_{3} \left(e^{{\fo{i \pi \dfrac{t+i}{t-i}  }}}\right)\right|^{4}\\ &=\left|\Theta_{3} \left(\dfrac{t+i}{t-i}\right)\right|^{4}= \Phi(t) \ , \
\end{align*}

\noindent and so
\begin{align*}
    &   \Phi(t)= \sqrt{ \Phi(t) \Phi(1/t)} \leqslant  \dfrac{\pi^{1/2}e^{\pi/2}}{2^{1/2}\Gamma (3/4)^{2}} \left(t + \dfrac{1}{t}\right)
e^{{\fo{-\dfrac{\pi }{4}\left(t + \dfrac{1}{t}\right)  }}} \ , \quad t > 0,
\end{align*}

\noindent where
\begin{align*}
    & \Gamma\left(\dfrac{3}{4}\right)= 1,2254167024\ldots   \ , \quad  \Gamma\left(\dfrac{3}{4}\right)^{2} = 1,5016460\ldots   \, , \\  &   \sqrt{2}=1,41421356\ldots \, , \  \sqrt{\pi}=1,772453850\ldots \, , \quad
e^{{\fo{\pi/2}}}= 4,810477380\ldots \, , \\    &
 \sqrt{\pi} e^{{\fo{\pi/2}}}=8,526349152518913\ldots \, ,\quad
 \sqrt{2}\Gamma\left(\dfrac{3}{4}\right)^{2}=2,1236481390831\ldots \, , \\    &
\dfrac{\pi^{1/2}e^{\pi/2}}{2^{1/2}\Gamma (3/4)^{2}}=4,0149537937108354094530647660142\ldots <5 \ , \
\end{align*}

\noindent which completes the proof of \eqref{f3yint}.

}}\end{clash}
\end{subequations}

\vspace{0.2cm}
\subsection[\hspace{-0,25cm}. \hspace{0,05cm}Notes for Section~\ref{intprel}]{\hspace{-0,11cm}{\bf{.}} Notes for Section~\ref{intprel}}

\vspace{0.2cm}
\begin{subequations}
\begin{clash}{\rm{\hypertarget{r711}{}\label{case711}\hspace{0,00cm}{\hyperlink{br711}{$\uparrow$}}
 \, We prove the statements of Lemma~\ref{lemfou}.

Suppose for the moment the assertion of Lemma~\ref{lemfou} holds with $a\!=\!0$.
Then we would apply that statement to the new function $g (z)\!:=\! f_{0} (ia \!+ \!z)$ and derive
that Lemma~\ref{lemfou} holds in fact for any $a \!\in\! \Bb{R}$. Hence it suffices to obtain \eqref{f1lemfou}
in the case $a=0$ only. Then our assumptions are that the functions $f_{0}$  and $f_{1} (z)\!:= \!\varphi (\exp ({\rm{i}}\pi z))$ are both holomorphic in $\Bb{H}$ and $2$-periodic.

For $z= \beta \exp (i b)\in \Bb{D} \setminus (-1,0]$, $\beta \in (0,1)$, $-\pi< b < \pi$, we have
\begin{align}\label{f2lemfou}
    &    \dfrac{\Log z }{{\rm{i}} \pi}=
\dfrac{\Log \left(\beta {\rm{e}}^{{\rm{i}}b }\right) }{{\rm{i}} \pi}=\frac{{\rm{i}}}{\pi} \ln \frac{1}{\beta} +\frac{b}{\pi}  \in  (-1,1) +{\rm{i}} \Bb{R}_{>0} \subset \Bb{H} \,,
\end{align}

\noindent and since $\Log  \in {\rm{Hol}}\left(\Bb{C}\setminus \Bb{R}_{\leqslant 0}\right)$ we obtain that
$\Phi^{\delta}\in {\rm{Hol}}( \Bb{D} \setminus (-1,0])$ for every $\delta\in \{0,1\}$, where
\begin{align*}
    & \Phi^{\delta} (z) := f_{\delta} \left(\dfrac{\Log z }{i \pi}\right)
    \, ,  \quad
 \Phi^{1} (z) =   \varphi(z)    \, , \ \quad  z\in \Bb{D} \setminus (-1,0]  \, , \  \delta\in \{0,1\}  \,.
\end{align*}

\noindent  Here, if we write $\Phi^{\delta}_{\pm}  (-x  ):=
 \lim_{z\in \pm \Bb{H}, \, z \to -x} \  \Phi^{\delta}  (z)$ for each $\delta\in \{0,1\}$, we find
\begin{align*}
     & \Phi^{\delta}_{+} (-x) = f_{\delta} \left(\frac{i}{\pi} \ln \frac{1}{x} +1\right)=
      f_{\delta} \left(\frac{i}{\pi} \ln \frac{1}{x} -1\right) = \Phi^{\delta}_{-} (-x) \ , \ x \in (0,1) \,,
\end{align*}

\noindent and consequently, $\Phi^{\delta}$ extends continuously across $(-1,0)$.
From Morera's theorem (see \cite[p.\! 96]{lav})  we conclude that $\Phi^{\delta} \in {\rm{Hol}} (\Bb{D}\setminus\{0\})$. For $\delta =1$ this completes the proof of the second assertion of Lemma~\ref{lemfou}.
 As for $\delta = 0$, we obtain that $\Phi^{0}$ admits the  Laurent expansion (see \cite[p.\! 107]{con}) $\Phi^{0} (z) =
\sum\nolimits_{n \in \Bb{Z}} \ a_{n}  z^{n}$,  $ z \in \Bb{D}\setminus\{0\} $, which is absolutely
convergent in $\Bb{D}\setminus\{0\}$.
 The change of variables  $z =
 {\rm{e}}^{{\rm{i}} \pi \zeta} $, $\zeta = x + {\rm{i}} y$, $x \in (-1,1)$, $y > 0$, gives, by using
 \eqref{f2lemfou}, that $\Log \exp (i \pi \zeta) = {\rm{i}} \pi \zeta$, and, moreover, that \begin{align*}
    &  f_{0} (\zeta) = \sum\nolimits_{n \in \Bb{Z}} \ a_{n}   {\rm{e}}^{{\rm{i}} \pi n \zeta} \ ,\ \  \
    \im\,\zeta> 0  \ ,  \  \re\,\zeta   \in (-1,1) \,.
\end{align*}

\noindent Both $f_0$ and the Fourier series on the right-hand side are continuous on
$\Bb{H}$ and periodic with period $2$. Hence they must be equal for all $\zeta\in\Bb{H}$  and, consequently,
\begin{align*}
    & \int_{z}^{z+2} f_{0} (\zeta) d \zeta = 2  a_{0}\,,
\end{align*}

\noindent for arbitrary $z \in  \Bb{H}$. This proves the first assertion of  Lemma~\ref{lemfou} and
completes the proof of  Lemma~\ref{lemfou}.
}}\end{clash}
\end{subequations}

\vspace{0.2cm}
\begin{subequations}
\begin{clash}{\rm{\hypertarget{r7}{}\label{case7}\hspace{0,00cm}{\hyperlink{br7}{$\uparrow$}}
  \ We have, in view of \eqref{f9int},
\begin{align*}
    &  - 4 \pi^{2} n \eurm{M}_{n} (x) = - 4 \pi^{2} n \eurm{H}_{n} (-1/x)/x^{2} \! =  \!\!\! \!\!\!  \int\limits_{\gamma (-1,1)}\!\!\!\!\!
\frac{S^{{\tn{\triangle}}}_{n} \!\left(\dfrac{1}{\lambda (z)}\right) d z}{(z x-1)^{2}}=
 \!\!\! \!\!\!  \int\limits_{\gamma (-1,1)}\!\!\!\!\!
\frac{S^{{\tn{\triangle}}}_{n} \!\left(\dfrac{1}{\lambda (z)}\right) }{( x-1/z)^{2}}\,
\dfrac{d z}{z^{2}} \\    & \left|z= - \dfrac{1}{z^{\,\prime}}\,, \ d z = \dfrac{d z^{\,\prime} }{\left(z^{\,\prime}\right)^{2}}\right|  =-
\!\!\! \!\!\!  \int\limits_{\gamma (-1,1)}\!\!\!\!\!
\frac{S^{{\tn{\triangle}}}_{n} \!\left(\dfrac{1}{\lambda (-1/z)}\right) d z}{( x+z)^{2}}=-
\!\!\! \!\!\!  \int\limits_{\gamma (-1,1)}\!\!\!\!\!
\frac{S^{{\tn{\triangle}}}_{n} \!\left(\dfrac{1}{1-\lambda (z)}\right) d z}{( x+z)^{2}} \, , \
\end{align*}

\noindent from which
\begin{align*}
    &  \eurm{M}_{n} (x) =\dfrac{1}{ 4 \pi^{2} n }\!\!\! \!\!\!  \int\limits_{\gamma (-1,1)}\!\!\!\!\!
\frac{S^{{\tn{\triangle}}}_{n} \!\left(\dfrac{1}{1-\lambda (z)}\right) d z}{( x+z)^{2}} \,,
\end{align*}

\noindent and therefore by   \eqref{f1intlem3} and \eqref{f2pinttheor1},
\begin{align*}
    &  (-1)^{n}4 \pi^{2} n \eurm{M}_{n} (x) = \!\!\! \!\!\!  \int\limits_{\gamma (-1,1)}\!\!\!\!\!
\frac{S^{{\tn{\triangle}}}_{n} \!\left(\dfrac{1}{1-\lambda (z)}\right) d z}{( x+z)^{2}}=
\!\!\! \!\!\!  \int\limits_{\gamma (-1,1)}\!\!\!\!\!
\frac{S^{{\tn{\triangle}}}_{n} \!\left(1-\dfrac{1}{1-\lambda (z)}\right) -S^{{\tn{\triangle}}}_{n}(1)
}{( x+z)^{2}} d z \\    &   =
\!\!\! \!\!\!  \int\limits_{\gamma (-1,1)}\!\!\!\!\!
\frac{S^{{\tn{\triangle}}}_{n} \!\left(\dfrac{\lambda (z)}{\lambda (z)-1}\right) -S^{{\tn{\triangle}}}_{n}(1)}{( x+z)^{2}} d z=
\!\!\! \!\!\!  \int\limits_{\gamma (-1,1)}\!\!\!\!\!
\frac{S^{{\tn{\triangle}}}_{n} \!\left(\lambda (z+1)\right) -S^{{\tn{\triangle}}}_{n}(1)}{( x+z)^{2}} d z
\end{align*}

\noindent from which
\begin{align*}
    & \eurm{M}_{n} (x) =
\dfrac{(-1)^{n}}{ 4 \pi^{2} n }\!\!\! \!\!\!  \int\limits_{\gamma (-1,1)}\!\!\!\!\!
\frac{S^{{\tn{\triangle}}}_{n} \!\left(\lambda (z+1)\right) -S^{{\tn{\triangle}}}_{n}(1)}{( x+z)^{2}} d z \ .
\end{align*}

}}\end{clash}
\end{subequations}

\begin{subequations}
\begin{clash}{\rm{\hypertarget{r9}{}\label{case9}\hspace{0,00cm}{\hyperlink{br9}{$\uparrow$}}
  \ According with Theorem~\ref{inttheor2}, \eqref{f6intthA} and \eqref{f7intthA},
\begin{align}\nonumber
    &  \left\{y\!\in\! {\rm{clos}}\left(\mathcal{F}_{{\tn{\square}}}\right) \ \big| \ \lambda(y)\!=\! z\right\}
     \\[0,5cm]  \nonumber  &   =\left\{  \begin{array}{ll}
 \ie (z)    \,, &\ \  \hbox{if}\quad z \in
 (0,1)\cup \left(\Bb{C}\setminus\Bb{R}\right) \,; \\[0,25cm]
  \left\{\ie (z+ i 0), \ie (z- i 0)\right\} \  , &\ \ \hbox{if}\quad z= 1+x  \ , \quad  x > 0 \,,\\[0,25cm]
 \left\{\ie (z+ i 0), \ie (z- i 0)\right\}  , &\ \  \hbox{if}\quad z = - x  \ , \qquad  x > 0 \,,
  \end{array}\right.
  \\[0,5cm]  &   \left\{  \begin{array}{cll}
  =   &   \ie (z)    \,, &\ \  \hbox{if}\quad z \in
 (0,1)\cup \left(\Bb{C}\setminus\Bb{R}\right) \,; \\[0,25cm]
  \stackrel{{\fo{\eqref{f7intthA}}}}{\vphantom{A}=}   &
  \left\{\dfrac{-1 +  \imag   \,  {\fo{\Delta}} (x)}{1 + {\fo{\Delta}} (x)^{2}},
  \dfrac{1 +  \imag   \,  {\fo{\Delta}} (x)}{1 + {\fo{\Delta}} (x)^{2}}\right\} \  , &\ \ \hbox{if}\quad z= 1+x  \ , \quad  x > 0 \,,\\[0,5cm]
   \stackrel{{\fo{\eqref{f6intthA}}}}{\vphantom{A}=}   &
 \left\{-1 +  \imag \, {\fo{\Delta}} (x),   1 +  \imag   \,  {\fo{\Delta}} (x)\right\}  ,
 &\ \  \hbox{if}\quad z = - x  \ , \qquad  x > 0 \,.
  \end{array}\right.
\label{f1case9}\end{align}

\noindent Therefore
\begin{align*}
    &  \sum\limits_{{\fo{\left\{y\!\in\! {\rm{clos}}\left(\mathcal{F}_{{\tn{\square}}}\right) \ \big| \ \lambda(y)\!=\! z\right\}}}}\!\!\!\!\!\!\!\!\!\!\!\!\!\!\!\!\!\!\!\!e^{{\fo{- n \pi i  y }}}
\end{align*}

\noindent is equal to
\begin{align*}
    &  e^{{\fo{- n \pi i \ie (z)}}} \, , \
\end{align*}

\noindent if $z \in
 (0,1)\cup \left(\Bb{C}\setminus\Bb{R}\right)$,

 \begin{align*}
    &   e^{{\fo{   - n \pi i \dfrac{-1 +  \imag   \,  {\fo{\Delta}} (x)}{1 + {\fo{\Delta}} (x)^{2}}  }}} +  e^{{\fo{  - n \pi i \dfrac{1 +  \imag   \,  {\fo{\Delta}} (x)}{1 + {\fo{\Delta}} (x)^{2}}  }}}=
  e^{{\fo{ \dfrac{n \pi \,  {\fo{\Delta}} (x)}{1 + {\fo{\Delta}} (x)^{2}}   }}}
  \left(e^{{\fo{     \dfrac{n \pi i}{1 + {\fo{\Delta}} (x)^{2}}  }}} +
  e^{{\fo{  -   \dfrac{n \pi i}{1 + {\fo{\Delta}} (x)^{2}}  }}} \right) \\    &   =
 2 \, e^{{\fo{ \dfrac{n \pi \,  {\fo{\Delta}} (x)}{1 + {\fo{\Delta}} (x)^{2}}   }}}  \cos
 \dfrac{n \pi }{1 + {\fo{\Delta}} (x)^{2}} \, , \
 \end{align*}

 \vspace{0.25cm}
\noindent if $ z= 1+x$, $x > 0$, and

\begin{align*}
    &  e^{{\fo{- n \pi i \left( -1 +  \imag \, {\fo{\Delta}} (x)\right)   }}} +
    e^{{\fo{- n \pi i \left(1 +  \imag \, {\fo{\Delta}} (x) \right)  }}}=
    2 (-1)^{n} e^{{\fo{n \pi {\fo{\Delta}} (x)}}}  \, , \
\end{align*}

\vspace{0.25cm}
\noindent if $ z= -x$, $x > 0$. Thus,
\begin{align*}
    &  \sum\limits_{{\fo{\left\{y\!\in\! {\rm{clos}}\left(\mathcal{F}_{{\tn{\square}}}\right) \ \big| \ \lambda(y)\!=\! z\right\}}}}\!\!\!\!\!\!\!\!\!\!\!\!\!\!\!\!\!\!\!\!e^{{\fo{- n \pi i  y }}}=
\left\{
  \begin{array}{ll}
\exp \left(- n \pi i \ie (z)\right)    \,, &\ \  \hbox{if}\quad z \in
 (0,1)\cup \left(\Bb{C}\setminus\Bb{R}\right) \,; \\[0,25cm]
 2 e^{{\fo{    \dfrac{  \, n \pi {\fo{\Delta}} (x)}{1 + {\fo{\Delta}} (x)^{2}}  }}}\cos  \dfrac{ n \pi }{1 + {\fo{\Delta}} (x)^{2}} \  , &\ \ \hbox{if}\quad z= 1+x  \ , \quad  x > 0 \,,\\[0,5cm]
2 (-1)^{n} \exp \left( n \pi {\fo{\Delta}} (x)\right)     , &\ \  \hbox{if}\quad z = - x  \ , \qquad  x > 0 \,,
  \end{array}
\right.
\end{align*}

\vspace{0.25cm}\noindent  where
\begin{align*}\tag{\ref{f15int}}
    & \hspace{-0,2cm}{\fo{\Delta}} (x)\! := \!   \dfrac{\he   \left( {1}\big/{(1\!  +\!  x)}\right)}{\he
 \left(  {x}\big/{(1\!  +\!  x)}\right)} \ ,
    \quad
    \left\{\begin{array}{l}
    {\fo{\Delta}} (0)\! =\!  +\! \infty \ ,\\[0,2cm]
    {\fo{\Delta}} (+\infty)\!  = \! 0 \ ,
    \end{array}\right.
    \ \  \ \dfrac{{{\diff}} {\fo{\Delta}} (x)}{{{\diff}} x}\!  <\!  0, \ \
     {\fo{\Delta}} (x){\fo{\Delta}} (1/x)\!  = \!1,\hspace{-0,1cm}
\end{align*}

\noindent what has been asserted after \eqref{f21zint}.

}}\end{clash}
\end{subequations}

\begin{subequations}
\begin{clash}{\rm{\hypertarget{r8}{}\label{case8}\hspace{0,00cm}{\hyperlink{br8}{$\uparrow$}}
  \ It follows from  \eqref{f9int}
\begin{align*}
    &  2 \pi\imag \eurm{H}_{0} (1/x)/x^{2}\!=\!\!\! \!\!\! \int\limits_{\gamma (-1,1)} \!\!\!\!\!\! \frac{z \, \Theta_{3}\left(z\right)^{4}}{  1 - z^{2}x^{2}} d z=
    \!\!\! \!\!\! \int\limits_{\gamma (-1,1)} \!\!\!\! \!\!
    \frac{z \, \Theta_{3}\left(z\right)^{4}}{  z^{-2} - x^{2}} \dfrac{d z}{z^{2}} =-
    \!\!\! \!\!\! \int\limits_{\gamma (-1,1)} \!\!\!\!\!\!
    \frac{(-1/z)\Theta_{3}\left(-1/z\right)^{4}}{  z^{2} - x^{2}} d z \, , \
\end{align*}

\noindent where by  \eqref{f3bint}(b),
\begin{align*}
    &  \Theta_3(-1/z)^{4} = -z^{2}\Theta_3(z)^{4} \, , \
\end{align*}

\noindent and therefore
\begin{align*}
    &  2 \pi\imag \eurm{H}_{0} (-1/x)/x^{2}\!=-\!\!\! \!\!\! \int\limits_{\gamma (-1,1)} \!\!\!\!\!\!
    \frac{(-1/z)\left(-z^{2}\Theta_3(z)^{4}\right)}{  z^{2} - x^{2}} d z =
    \!\!\! \!\!\! \int\limits_{\gamma (-1,1)} \!\!\!\!\!\!
    \frac{z\Theta_3(z)^{4}}{  x^{2} - z^{2}} d z = 2 \pi\imag \eurm{H}_{0} (x) \,.
\end{align*}

\noindent Thus,
\begin{align*}
    &  \eurm{H}_{0} (x)= \eurm{H}_{0} (-x) \, , \  x\in \Bb{R}\,, \quad
    \eurm{H}_{0} (-1/x)\!=\eurm{H}_{0} (x) x^{2}\, , \  x\in \Bb{R}\setminus\{0\} \,.
\end{align*}
}}\end{clash}
\end{subequations}

\begin{subequations}
\begin{clash}{\rm{\hypertarget{r11}{}\label{case11}\hspace{0,00cm}{\hyperlink{br11}{$\uparrow$}}
  \ We prove the next assertion.

\begin{lemma}\hspace{-0,18cm}{\bf{.}}\label{case11lem1}
Let $a,b,c,d \in \Bb{C}$, $ad - b c \neq 0$ and $\phi_{{\nor{(\begin{smallmatrix} a & b \\ c & d \end{smallmatrix})}}}$ be the M\"{o}bius transformation {\rm{\cite[p.\! 47, Definition 3.5]{con}}} such that
\begin{align}\label{f1case11lem1}
    &   \phi_{{\nor{(\begin{smallmatrix} a & b \\ c & d \end{smallmatrix})}}}\left(\fet \right) =\fet \ , \quad   \phi_{{\nor{(\begin{smallmatrix} a & b \\ c & d \end{smallmatrix})}}}\left(z\right) := \dfrac{a z + b}{c z+ d} \ .
\end{align}

\vspace{0.25cm} Then
\begin{align}\label{f2case11lem1}
    & \phi_{{\nor{(\begin{smallmatrix} a & b \\ c & d \end{smallmatrix})}}}(z ) \in
\left\{ \ z \ , \ \ -\dfrac{1}{z} \ , \ \  \dfrac{ z -1}{z + 1}  \ , \ \ -\dfrac{z + 1}{ z -1} \  \right\} \,.
\end{align}
\end{lemma}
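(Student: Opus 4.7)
The plan is to reduce the claim to a classification of orientation-preserving Möbius transformations that permute the four ideal vertices $\{-1,0,1,\infty\}$ of $\fet$. First I would argue that any $\phi = \phi_{(\begin{smallmatrix} a & b \\ c & d \end{smallmatrix})}$ satisfying \eqref{f1case11lem1} preserves the upper half-plane: since $\fet \subset \Bb{H}$ has non-empty interior, $\phi(\Bb{H})$ is a connected open set containing $\fet$, and by analytic continuation $\phi$ maps $\Bb{H}$ conformally onto itself. Standard theory (see \cite[p.\! 47]{con}) then gives that, after rescaling $(a,b,c,d)$ by a common nonzero complex factor, we may take $a,b,c,d \in \Bb{R}$ with $ad-bc=1$; so $\phi$ is actually a real Möbius transformation.

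Next I would show that $\phi$ extends continuously to $\overline{\fet}$ and permutes the vertex set $V := \{-1,0,1,\infty\}$. Indeed, $V$ is exactly the set of cusps of $\fet$, i.e.\ the set of points in $\overline{\Bb{R}}$ where two boundary arcs of $\fet$ meet. Since $\phi$ is a homeomorphism of $\overline{\fet}$ (by \eqref{f1case11lem1}) and takes boundary arcs to boundary arcs, it must send cusps to cusps, so $\phi(V)=V$. Moreover $\phi$ is orientation-preserving on $\Bb{H}$, hence on $\overline{\Bb{R}} = \Bb{R} \cup \{\infty\}$ with its induced orientation, so the permutation of $V$ is a cyclic rotation of the four elements in the natural cyclic order $(-1,0,1,\infty)$.

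Having done that, the rest is a finite check. A Möbius transformation is determined by its values at three distinct points, so there are at most four candidates, one for each cyclic rotation. The identity corresponds to $\phi(z)=z$; the rotation sending $(-1,0,1,\infty) \mapsto (1,\infty,-1,0)$ is $\phi(z) = -1/z$; the rotation sending $(-1,0,1,\infty) \mapsto (\infty,-1,0,1)$ is $\phi(z)=(z-1)/(z+1)$; and the rotation sending $(-1,0,1,\infty) \mapsto (0,1,\infty,-1)$ is $\phi(z)=(1+z)/(1-z) = -(z+1)/(z-1)$. Each of these four maps is easily verified to carry $\fet$ onto itself by tracking images of the four boundary arcs (the two vertical rays $\gamma(\pm 1,\infty)$ and the two semicircles $\gamma(-1,0)$, $\gamma(0,1)$). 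This produces exactly the list in \eqref{f2case11lem1}.

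I expect no serious obstacle: the only subtlety is the orientation argument in the second step, which must rule out rotations of $V$ that reverse cyclic order (these would correspond to anticonformal, not Möbius, maps, or equivalently to real matrices with $ad-bc<0$, which send $\Bb{H}$ to $-\Bb{H}$ and therefore cannot fix $\fet$). Once that is cleanly stated, the counting argument is immediate.
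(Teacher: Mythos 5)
Your proof is correct, but it organizes the classification differently from the paper. Both arguments share the same opening move: since a M\"obius map sends circles to circles and $\partial\fet$ is built from arcs of four circles, $\phi$ must permute the four cusps $\{-1,0,1,\infty\}$. From there the paper normalizes by composing with $z\mapsto -1/z$ (which preserves $\fet$) so that it may assume $\phi(\infty)\in\{\infty,1\}$, and then solves explicitly for the coefficients in each case, discarding $-z$ because $-\fet\neq\fet$ and $(z+1)/(z-1)$ because it maps $\Bb{H}$ into $-\Bb{H}$. You instead observe that the induced permutation of the cusps must preserve the cyclic order on $\overline{\Bb{R}}$, hence is one of the four rotations, and that each rotation determines a unique M\"obius map by three-point determination; this buys a shorter finite check with no coefficient bookkeeping, at the price of an orientation argument.

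One caveat: your first step, as justified, does not stand on its own. From ``$\phi(\Bb{H})$ is a connected open set containing $\fet$'' you cannot conclude $\phi(\Bb{H})=\Bb{H}$ ``by analytic continuation''; for instance the half-plane $\{\im\, z>-1\}$ is also a connected open set containing $\fet$. The conclusion is true, but it should be derived from your second step: once $\phi$ permutes $\{-1,0,1,\infty\}$, the image of $\overline{\Bb{R}}$ is the circle through $\phi(-1),\phi(0),\phi(1)$, which is again $\overline{\Bb{R}}$, so after rescaling the coefficients are real; since $\phi(\fet)=\fet\subset\Bb{H}$ the determinant must be positive, so $\phi(\Bb{H})=\Bb{H}$ and $\phi$ preserves the cyclic order on $\overline{\Bb{R}}$ --- exactly the fact your closing parenthetical already records. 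With the steps reordered in this way the argument is complete and matches the paper's conclusion.
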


{\emph{Proof.}}
   The boundary of the image of $\fet $ under the M\"{o}bius transformation
\begin{align*}
    &  \phi_{{\nor{(\begin{smallmatrix} a & b \\ c & d \end{smallmatrix})}}}\left(z\right) = \dfrac{a z + b}{c z+ d} \ ,
\end{align*}

\noindent according to the known property of the M\"{o}bius transformations to map circles onto circles
\cite[p.\! 49, Theorem 3.14]{con} and to the  known open mapping theorem \cite[p.\! 99, Theorem 7.5]{con}  of nonconstant analytic functions to map open set to open set, consists of four circles and therefore four vertices of $\fet$
are transformed into vertices, i.e.,
\begin{align*}
    &   \phi_{{\nor{(\begin{smallmatrix} a & b \\ c & d \end{smallmatrix})}}}\left(\left\{\infty, 0, 1, -1\right\}\right) = \left\{\infty, 0, 1, -1\right\} \,.
\end{align*}

\noindent Since $-1/\fet = \fet$ then if $ \phi_{{\nor{(\begin{smallmatrix} a & b \\ c & d \end{smallmatrix})}}}\left(\fet \right) =\fet$ then
\begin{align}\label{f3fabpolquadlem2}
    &   -1/ \phi_{{\nor{(\begin{smallmatrix} a & b \\ c & d \end{smallmatrix})}}}(z) \ , \quad
  -1/ \phi_{{\nor{(\begin{smallmatrix} a & b \\ c & d \end{smallmatrix})}}}(-1/z) \ , \quad-1/ \phi_{{\nor{(\begin{smallmatrix} a & b \\ c & d \end{smallmatrix})}}}(-1/z) \ ,
\end{align}

\noindent also map $\fet $ onto itself. Therefore it suffices to consider two cases
\begin{align*}
    &  1 ) \ \  f (\infty) = \infty \ , \quad  2) \ \  f (\infty) = 1 \ , \quad  f (z):= \phi_{{\nor{(\begin{smallmatrix} a & b \\ c & d \end{smallmatrix})}}}\left(z\right) = \dfrac{a z + b}{c z+ d}  \,.
\end{align*}

\vspace{0.25cm} In the first case we get $c =0$ and therefore one may assume that $f (z) = a z + b$. Then
$f (0) = b\in \{0, -1, 1\}$, i.e. it should be
\begin{align*}
    &  \{f(1), f(-1)\}  = \{0, -1, 1\}\setminus\{b\}   \ , \quad  f (z) = a z + b \ , \quad  b\in \{0, -1, 1\}\,.
\end{align*}

\noindent we get three cases  $f (z) = a z $, $f (z) = a z -1$, $f (z) = a z +1$, where correspondingly
\begin{align*}
    & b=0  \ \Rightarrow \   \{f(1), f(-1)\}  = \left\{a, -a\right\} =  \{1, -1\}
 \ \Rightarrow \  a \in  \{1, -1\}
  \ ,  \\    &
b=-1  \ \Rightarrow \   \{f(1), f(-1)\}  = \left\{a-1, -a-1\right\} =  \{0, 1\}
 \ \Rightarrow \  \left\{a, -a\right\} =  \{1, 2\}  \\    &  \Rightarrow \  \mbox{impossible }
   ,  \\    &
b=1  \ \Rightarrow \  \{f(1), f(-1)\}  = \left\{a+1, -a+1\right\} =  \{0, -1\}
 \ \Rightarrow \  \left\{a, -a\right\} =  \{-1, -2\}  \\    &  \Rightarrow \  \mbox{impossible }
 ,
\end{align*}

\noindent and since $f (z) = - z$ is not acceptable because $-\fet \neq \fet$, we obtain $f (z)=z$.

\vspace{0.25cm} For the second case we have $a=c$, i.e.
\begin{align*}
    &   f (z) = \dfrac{ z + b}{z + d} \,,
\end{align*}

\noindent and it follows from  $\infty \in f ( \{1, -1, 0\})$ that
 \begin{align*}
    &  d  \in \{1, -1, 0\} \,.
 \end{align*}

\noindent and therefore it should be
\begin{align*}
    & f \left( \{0, -1, 1\}\setminus\{-d\}   \right) = \{-1, 0\}  \ , \quad f (z) = \dfrac{ z + b}{z + d} \ , \quad   d  \in \{1, -1, 0\} \,.
\end{align*}

\noindent We get three cases
\begin{align*}
    &  d = 0  \ \Rightarrow \  f \left( \{ -1, 1\}   \right) =\left\{ 1 +b, 1-b\right\} = \{-1, 0\} \ \Rightarrow \  \left\{ b, -b\right\} = \{-2, -1\} \\    &  \Rightarrow \  \mbox{impossible }
 ,  \\    &
d = 1  \ \Rightarrow \  f \left( \{ 1, 0\}   \right) =\left\{ \dfrac{ 1 + b}{1 + 1}, \dfrac{  b}{ 1}\right\} =
\left\{\dfrac{ 1 + b}{2}, b\right\}= \{-1, 0\}  \\    & \Rightarrow \  \left\{1+b, 2 b\right\} = \{-2, 0\}  \ \Rightarrow \ b = -1\ ,  \\    &
d = -1  \ \Rightarrow \  f \left( \{ -1, 0\}   \right) =\left\{ \dfrac{ -1 + b}{-1 - 1}, \dfrac{  b}{- 1}\right\} = \left\{\dfrac{1-b}{2}, -b\right\}= \{-1, 0\}  \\    &  \ \Rightarrow \ b=1  \ \Rightarrow \
f (z) = \dfrac{ z + 1}{z -1}  = 1 + \dfrac{ 2}{z -1} \ \Rightarrow \   f (\Bb{H}) \subset - \Bb{H} \Rightarrow \  \mbox{impossible } .
\end{align*}

\noindent Thus, $f (z)= (z-1)/(z+1)$ and taking account of \eqref{f3fabpolquadlem2} we complete the proof.

}}\end{clash}
\end{subequations}

\begin{subequations}
\begin{clash}{\rm{\hypertarget{r10}{}\label{case10}\hspace{0,00cm}{\hyperlink{br10}{$\uparrow$}}
  \
 To calculate $\Delta_{n}^{S}(0)$ in \eqref{f6int}, written as
\begin{align*}
    &  e^{{\fo{- n \pi i \ie(z)  }}} \!=\! S^{{\tn{\triangle}}}_{n} (1/z)\! + \!\Delta_{n}^{S}(z)\ , \ z \!\in \! \Bb{D}\setminus \{0\} \ , \
\end{align*}

\noindent we use the  formula
\begin{align*}
    &  \frac{1}{2 \pi i} \int\nolimits_{\beta\partial  \Bb{D}}\, \zeta^{-1}\zeta^{p}  d \zeta=\delta_{0, p} \ , \quad p \in \Bb{Z} \ , \quad \beta\in (0,1)  \, , \
\end{align*}

\noindent and similarly to \eqref{f17int}, by applying Lemma~\ref{lemfou} to the periodic integrands,  obtain
\begin{align*}
    &  \Delta_{n}^{S}(0) = \frac{1}{2 \pi i} \int_{\beta\partial  \Bb{D}}   \frac{
   e^{{\fo{- n \pi i \ \ie(\zeta) }}} d \zeta}{ \zeta} =\frac{1}{2 \pi i} \int_{-\pi}^{\pi}   \frac{
   e^{{\fo{- n \pi i \ \ie\left(\beta e^{i \varphi}\right) }}} d \left(\beta e^{i \varphi}\right) }{   \beta e^{i \varphi}} = \\ & =
 \frac{1}{2 \pi i} \int\nolimits_{{\fo{ -1 +   \imag   {\fo{\Delta}} (\beta)  }} }^{{\fo{ 1 +   \imag    {\fo{\Delta}} (\beta) }} }  \ \frac{
   \lambda^{\,\prime}\left(\zeta\right)e^{{\fo{- n \pi i  \zeta }}}  }{ \lambda\left(\zeta\right)}\, d \zeta
= \frac{1}{2 \pi i} \int\nolimits_{{\fo{ -1 \! +  \!  \imag   a }} }^{{\fo{ 1 \! + \!   \imag    a }} }   \frac{
   \lambda^{\,\prime}\left(\zeta\right)e^{{\fo{- n \pi i  \zeta }}}  }{ \lambda\left(\zeta\right)}\, d \zeta \ , \end{align*}

\noindent where  ${\fo{\Delta}} (\beta)  > 1 $, while $a$ is arbitrary positive real number.  Then for any
$\im\, y > a$ we can  apply Lemma~\ref{lemfou} once more for arbitrary $A > \im\, y$ to get similarly to the transform of \eqref{f21int},
 \begin{align*}  &
\sum\limits_{n=1}^{\infty} \Delta_{n}^{S}(0) {\rm{e}}^{\imag  \pi n y} =
    \frac{1}{2 \pi i} \int\nolimits_{{\fo{ -1 +   \imag   a }} }^{{\fo{ 1 +   \imag    a }} }
    \  \frac{  \lambda^{\,\prime}\left(\zeta\right) d \zeta }{\left(e^{{\fo{  \pi i(  \zeta -y) }}} -1\right) \lambda\left(\zeta\right)} = \\ & =
\frac{1}{i \pi}  \frac{  \lambda^{\,\prime}\left(y\right)  }{ \lambda\left(y\right)} +
      \frac{1}{2 \pi i} \int\limits_{{\fo{-1 +   \imag   A +\re\, y}} }^{{\fo{  1 +   \imag   A +\re\, y}} }
     \frac{  \lambda^{\,\prime}\left(\zeta\right) d \zeta }{\left(e^{{\fo{  \pi i(  \zeta -y) }}} -1\right)
 \lambda\left(\zeta\right)} \, , \
\end{align*}

\noindent from which  by letting $A \to +\infty$ it follows from
\begin{align*}
 \frac{1}{i \pi}  \frac{  \lambda^{\,\prime}\left(y\right)  }{ \lambda\left(y\right)}&=   \frac{1}{i \pi}  \frac{   \imag \, \pi \,\lambda (y) \,\left(1 -\lambda (y)\right)\,\Theta_{3}\left(y\right)^{4}  }{ \lambda\left(y\right)}  = \left(1 -\lambda (y)\right)\Theta_{3}\left(y\right)^{4} = \\ & =
\left(1 -\dfrac{\Theta_{2}\left(y\right)^{4}}{\Theta_{3}\left(y\right)^{4}}\right)\Theta_{3}\left(y\right)^{4}=
\Theta_{4}\left(y\right)^{4}   \ , \quad  y\in \Bb{H},
\end{align*}

\noindent and $\Theta_{4}\left(y\right) \to 1$, as $0 < \im\, y \to +\infty$, that
\begin{multline*}
   \lim\limits_{ A \to +\infty} \frac{1}{2 \pi i}\!\!\!\int\limits_{{\fo{-1 +   \imag   A +\re\, y}} }^{{\fo{  1 +   \imag   A +\re\, y}} }
    \!\!\! \frac{  \lambda^{\,\prime}\left(\zeta\right) d \zeta }{\left(e^{{\fo{  \pi i(  \zeta -y) }}} -1\right)
 \lambda\left(\zeta\right)} \\  =
 - \lim\limits_{ A \to +\infty}\frac{1}{2 }\!\!\! \int\limits_{{\fo{-1 +   \imag   A +\re\, y}} }^{{\fo{  1 +   \imag   A +\re\, y}} }\!\!\!\frac{  \Theta_{4}\left(\zeta\right)^{4} d \zeta }{1- e^{{\fo{  \pi i(  \zeta -y) }}}  }=-1,
\end{multline*}

\noindent and therefore
\begin{align*}
    & \sum\limits_{n=1}^{\infty} \Delta_{n}^{S}(0) {\rm{e}}^{\imag  \pi n y} =\frac{1}{i \pi}  \frac{  \lambda^{\,\prime}\left(y\right)  }{ \lambda\left(y\right)} -1=
\Theta_{4}\left(y\right)^{4} -1 ,
\end{align*}

\noindent i.e.,
\begin{align*}
    &  1+ \sum\limits_{n=1}^{\infty} \Delta_{n}^{S}(0) u^{n} = \left(1  \!+ \! 2\sum\nolimits_{n\geqslant 1}
(-1)^{n} u^{n^{2}} \right)^{4} = \theta_{3}(-u)^{4},  \   u \in \Bb{D} \,,
\end{align*}

\noindent which together with \eqref{f3wcint}(b) proves \eqref{f12cint}.

}}\end{clash}
\end{subequations}
\subsection[\hspace{-0,25cm}. \hspace{0,05cm}Notes for Section~\ref{bs}]{\hspace{-0,11cm}{\bf{.}} Notes for Section~\ref{bs}}
$\phantom{a}$

\vspace{0,2cm}
\begin{subequations}
\begin{clash}{\rm{\hypertarget{r35}{}\label{case35}\hspace{0,00cm}{\hyperlink{br35}{$\uparrow$}}
  \ In accordance with \eqref{f17zint}  for $a=2$ we have
\begin{align}\label{f3case35}\hspace{-0,2cm}
    &    \eurm{R}_{n}^{{\tn{\triangle}}} (z)\! =\!  \frac{1}{2 \pi i} \int\nolimits_{{\fo{ -1 \! +  \!  2 \imag    }} }^{{\fo{ 1 \! + \! 2  \imag    }} }   \frac{
   \lambda^{\,\prime}\left(\zeta\right)e^{{\fo{- n \pi i  \zeta }}}  }{\lambda(z)  - \lambda\left(\zeta\right)}\, d \zeta\,, \quad
       z \!\in\! \gamma (-1,1)  ,\hspace{-0,1cm}
\end{align}

\noindent where $|\lambda^{\,\prime}(\zeta)|\leqslant 9$ and
$  4 |\lambda(z)-\lambda(\zeta)|\geqslant |\lambda(z)|\, |1-2\lambda(2 i )|$ for all $\zeta \in [-1+2i, 1+2i]$ and
$z \!\in\! \gamma (-1,1)$, in view of Corollary~\ref{intcorol1}.
Applying \eqref{f2auxevagen}, we get
\begin{align}\label{f4case35}
     & \left|\eurm{R}_{n}^{{\tn{\triangle}}} (z)\right|\leqslant
     \dfrac{9 e^{{\fo{2 \pi n  }}}  }{\pi  |\lambda(z)|}\dfrac{11 + 8\sqrt{2}}{21}\leqslant\dfrac{10 e^{{\fo{2 \pi n  }}}  }{\pi  |\lambda(z)|} \ , \quad  z \in \gamma (-1,1)\,.
\end{align}

 For the parametrization $\gamma (-1,1) \ni z = (t+i)/(t-i)$, $t \in \Bb{R}_{> 0}$, taking into account
\begin{align*}
    &  - \dfrac{1}{z} = - \dfrac{t-i}{t+i} = \dfrac{i-t}{i+ t} = \dfrac{1+it}{1-it}= \dfrac{1/t+i}{1/t-i} \, ,
\end{align*}

\noindent and \eqref{f3zint},
\begin{align*}\tag{\ref{f3zint}}
  &    64  \left|\lambda \left(\dfrac{t+i}{t-i}\right)\right|  \geqslant \exp \dfrac{\pi }{2}\left(t + \dfrac{1}{t}\right) \ , \quad  t > 0  \ \Rightarrow \       \\    &
 64  \left|\lambda \left(-1/\left(\dfrac{t+i}{t-i}\right)\right)\right| =
 64  \left|\lambda \left(\dfrac{1/t+i}{1/t-i}\right)\right| \geqslant \exp \dfrac{\pi }{2}\left(t + \dfrac{1}{t}\right) \ , \quad  t > 0 \, , \
\end{align*}

\noindent we obtain
\begin{align}\label{f1case35}
    & \hspace{-0,2cm}\left| \eurm{R}_{n}^{{\tn{\triangle}}} (z)\right| , \   \left| \eurm{R}_{n}^{{\tn{\triangle}}} (-1/z)\right|\!\leqslant\! \dfrac{640 e^{{\fo{2 \pi n  }}}}{\pi} e^{{\fo{-\dfrac{\pi }{2}\left(t + \dfrac{1}{t}\right)}}} \, , \
    z \!= \!\dfrac{t\!+\!i}{t\!-\!i} \ , \  t\! >\! 0\,.\hspace{-0,1cm}
\end{align}

\noindent
Then we deduce from \eqref{f28int} and
\begin{align*}
    & z=\dfrac{t+i}{t-i}=1+ \dfrac{2i}{t-i}  \ , \quad  d z = \dfrac{2i d t}{(t-i)^{2}} \, , \
\end{align*}

\noindent
 that for $z = x + i y\in \Bb{H}$
\begin{align}\label{f7case35}
    &- 4 \pi^{2} n  \eurm{H}_{n} (z)  =   \int\limits_{0}^{\infty}
\frac{2 i \eurm{R}_{n}^{{\tn{\triangle}}} \left(\dfrac{t+i}{t-i}\right) d t}{(t-i)^{2}\left(z+\dfrac{t+i}{t-i}\right)^{2}}  =
\int\limits_{0}^{\infty}
\frac{2 i \eurm{R}_{n}^{{\tn{\triangle}}} \left(\dfrac{t+i}{t-i}\right) d t}{\left(z(t-i)+t+i\right)^{2}}\ ,
\\    &
4 \pi^{2} n  \eurm{M}_{n} (z)  =   \int\limits_{0}^{\infty}
\frac{2 i \eurm{R}_{n}^{{\tn{\triangle}}} \left(\dfrac{1/t+i}{1/t-i}\right) d t}{(t-i)^{2}\left(z+\dfrac{t+i}{t-i}\right)^{2}}=   \int\limits_{0}^{\infty}
\frac{2 i \eurm{R}_{n}^{{\tn{\triangle}}} \left(\dfrac{1/t+i}{1/t-i}\right) d t}{\left(z(t-i)+t+i\right)^{2}} \, , \
\label{f8case35}\end{align}

\noindent where
\begin{align*}
    & z(t-i)+t+i = (x + i y)(t-i)+t+i = x t - i x + i y t + y +t+i  \\    &   =
     x t + y +t + i (y t - x + 1) = t (x+1) + y + i \big(t y - (x-1)\big)  \, , \\    &
     \left| z(t-i)+t+i\right|^{2} = \left(t (x+1) + y\right)^{2} + \left(t y - (x-1)\right)^{2} \\    &    =
     2 t y \big((x+1) - (x-1)\big) + (1 + t^{2})y^{2} + t^{2} (x+1)^{2} + (x-1)^{2}  \\    &    =
    (1 + t^{2})y^{2} + t^{2} (x+1)^{2} + (x-1)^{2} + 4 t y\,.
\end{align*}

\noindent Thus,
\begin{align*}
    &     \left| z(t-i)+t+i\right|^{2} =(1 + t^{2})y^{2} + t^{2} (x+1)^{2} + (x-1)^{2} + 4 t y \\    &    \geqslant t^{2} (y^{2} + (x+1)^{2}) + y^{2} + (x-1)^{2} = |z-1|^{2} + t^{2}|z+1|^{2} \\    &    \geqslant
    \dfrac{|z-1|^{2} +|z+1|^{2}}{1 + \dfrac{1}{t^{2}}} \, , \
\end{align*}

\noindent from which
\begin{align}\label{f5case35}
    & \hspace{-0,2cm} n  \left|\eurm{H}_{n} (z)\right|,  n  \left|\eurm{M}_{n} (z)\right| \leqslant
   \dfrac{320 \pi^{-3} e^{{\fo{2 \pi n  }}} }{|z\!-\!1|^{2}\! +\!|z\!+\!1|^{2}}
    \int\limits_{0}^{\infty}
  \left(1\! + \!\dfrac{1}{t^{2}}\right)  e^{{\fo{-\dfrac{\pi }{2}\left(t + \dfrac{1}{t}\right)}}}d t,\hspace{-0,1cm}
\end{align}

\noindent where
\begin{align*}
    & \int\limits_{0}^{\infty}
  \left(1\! + \!\dfrac{1}{t^{2}}\right)  e^{{\fo{-\dfrac{\pi }{2}\left(t + \dfrac{1}{t}\right)}}}d t=
  2\int\limits_{1}^{\infty}
  \left(1\! + \!\dfrac{1}{t^{2}}\right)  e^{{\fo{-\dfrac{\pi }{2}\left(t + \dfrac{1}{t}\right)}}}d t  \boxed{=} \\    &
  = \left|2 t^{\,\prime} = t + \dfrac{1}{t} \, , \  2 d t^{\,\prime} = \left(1-\dfrac{1}{t^{2}}\right) d t \, , \  \dfrac{1\! + \!\dfrac{1}{t^{2}}}{1\! - \!\dfrac{1}{t^{2}}} = \dfrac{t + \dfrac{1}{t}}{t - \dfrac{1}{t}}\right| \\  &   \left|t^{2} - 2 t  t^{\,\prime} + 1 = 0 \, , \  t = t^{\,\prime} + \sqrt{\left(t^{\,\prime}\right)^{2} -1} \, , \  \dfrac{1}{t} = t^{\,\prime} - \sqrt{\left(t^{\,\prime}\right)^{2} -1}\right| \\    &     \boxed{=}
 \  2\int\limits_{1}^{\infty}
  \dfrac{1\! + \!\dfrac{1}{t^{2}}}{1\! - \!\dfrac{1}{t^{2}}}   e^{{\fo{-\dfrac{\pi }{2}\left(t + \dfrac{1}{t}\right)}}}\left(1-\dfrac{1}{t^{2}}\right)d t =
    2\int\limits_{1}^{\infty}
  \dfrac{t + \dfrac{1}{t}}{t - \dfrac{1}{t}}   e^{{\fo{-\dfrac{\pi }{2}\left(t + \dfrac{1}{t}\right)}}}\left(1-\dfrac{1}{t^{2}}\right)d t \\    &   =
   4\int\limits_{1}^{\infty}\dfrac{t}{\sqrt{t^{2}-1} } e^{-\pi t} d t =
   2\int\limits_{1}^{\infty}\dfrac{1}{\sqrt{t-1} } e^{-\pi\sqrt{ t}} d t = 2\int\limits_{0}^{\infty}\dfrac{1}{\sqrt{t} } e^{-\pi\sqrt{ t +1}} d t \\    &   =
  4 \int\limits_{0}^{\infty}e^{-\pi\sqrt{ t^{2} +1}} d t\leqslant 4
  \int\limits_{0}^{\infty}e^{-\pi t} d t = \dfrac{4}{\pi}
\end{align*}

\noindent where
\begin{align*}
    &  \int\limits_{0}^{\infty}e^{-\pi\sqrt{ t^{2} +1}} d t= \sum\limits_{n\geqslant 0} \int\limits_{n}^{n+1}e^{-\pi\sqrt{ t^{2} +1}} d t\leqslant\sum\limits_{n\geqslant 0} e^{-\pi\sqrt{ n^{2} +1}} = \\ & =e^{-\pi} + \sum\limits_{n\geqslant 1}e^{-\pi n} = \dfrac{1}{e^{\pi}} + \dfrac{1}{e^{\pi} +1} = \\ & =
    0,04321391 + 0,04142383216636282681=
    0,08463775043
\end{align*}

\noindent But actually \cite[p.\! 82, (19)]{erd}, \cite[p.\! 376, 9.6.27]{abr}
\begin{align*}
    &  K_{0} (z) = \int\limits_{1}^{\infty}\dfrac{1}{\sqrt{t^{2}-1} } e^{-z t} d t  \ \Rightarrow \
    \int\limits_{1}^{\infty}\dfrac{t}{\sqrt{t^{2}-1} } e^{-z t} d t = - K_{0}^{\,\prime} (z)  \ \Rightarrow \   \\    &     \int\limits_{0}^{\infty}e^{-\pi\sqrt{ t^{2} +1}} d t=  \int\limits_{1}^{\infty}\dfrac{t}{\sqrt{t^{2}-1} } e^{-\pi t} d t =- K_{0}^{\,\prime} (\pi)= K_{1} (\pi) \,.
\end{align*}

\noindent So that
\begin{align*}
    &  n  \left|\eurm{H}_{n} (z)\right|,  n  \left|\eurm{M}_{n} (z)\right| \leqslant \dfrac{320}{ \pi^{3}}
    \dfrac{ e^{{\fo{2 \pi n  }}} }{|z\!-\!1|^{2}\! +\!|z\!+\!1|^{2}}
    4 K_{1} (\pi) \ , \  \\  &
\dfrac{1280 K_{1} (\pi)}{ \pi^{3}} < \dfrac{1280 }{ \pi^{3}}  \left(\dfrac{1}{e^{\pi}} + \dfrac{1}{e^{\pi} +1} \right)<4 \ , \
\end{align*}

\noindent and hence,
\begin{align}\label{f6case35}
     & n  \left|\eurm{H}_{n} (z)\right|,  n  \left|\eurm{M}_{n} (z)\right| \leqslant
    \dfrac{4 e^{{\fo{2 \pi n  }}} }{|z\!-\!1|^{2}\! +\!|z\!+\!1|^{2}} \ , \
    z \in \Bb{H}\cup \Bb{R}\,.
\end{align}

}}\end{clash}
\end{subequations}

\begin{subequations}
\begin{clash}{\rm{\hypertarget{r15}{}\label{case15}\hspace{0,00cm}{\hyperlink{br15}{$\uparrow$}}
  \
  Using  \cite[p.\! 44, 1.422.4]{gra}, we obtain the validity of the left-hand  side equality in
\begin{align*}
    &  \sum\limits_{k \in \Bb{Z}} \frac{1}{(2 k + z)^{2}} =  \frac{\pi^{2}}{4 \sin^{2} \dfrac{\pi z}{2}}
    =  - \pi^{2} \sum_{m \geqslant 1}m e^{{\fo{ i \pi m z}}}
    \ ,  \qquad z \in \Bb{H}     \, , \
\end{align*}

\noindent while the validity of the right-hand  side equality here follows from the following identities
\begin{align*}
    & \sin^{2} \dfrac{\pi z}{2} = \left(\dfrac{e^{{\fo{\dfrac{i \pi z}{2}}}} - e^{{\fo{{\nor{-}}\dfrac{i \pi z}{2}}}}}{2 i}\right)^{2} = -
\dfrac{e^{{\fo{i \pi z}}}}{4} \left(1 - e^{{\fo{- i \pi z}}}\right)^{2} \ , \ \\    &
\dfrac{1}{\sin^{2} \dfrac{\pi z}{2}} = - \dfrac{4 e^{{\fo{- i \pi z}}}}{ \left(1 - e^{{\fo{- i \pi z}}}\right)^{2}} \ , \\    &
\dfrac{1}{1-u} = \sum_{m \geqslant 0} u^{m} \ , \  \dfrac{1}{(1-u)^{2}} = \sum_{m \geqslant 1} m u^{m-1}  \ , \ u \in \Bb{D}, \\    &
\dfrac{1}{\sin^{2} \dfrac{\pi z}{2}} = - \dfrac{4 e^{{\fo{ i \pi z}}}}{ \left(1 - e^{{\fo{ i \pi z}}}\right)^{2}} =
- 4 e^{{\fo{ i \pi z}}} \sum_{m \geqslant 1} m e^{{\fo{ i \pi ( m -1) z}}} = - 4 \sum_{m \geqslant 1}m e^{{\fo{ i \pi m z}}} \ .\end{align*}

}}\end{clash}
\end{subequations}

\vspace{0,2cm}
\begin{subequations}
\begin{clash}{\rm{\hypertarget{r16}{}\label{case16}\hspace{0,00cm}{\hyperlink{br16}{$\uparrow$}}
  \ For any $z\in \Bb{C}\setminus \Bb{R}$ it follows from \cite[p.\! 44, 1.421.3]{gra} that
\begin{align*}
     &\frac{\pi}{2}  \cot \dfrac{\pi z}{2}=
 \frac{1}{x}+ \sum\limits_{k \geqslant 1}\left(\dfrac{1}{z+ 2k} + \dfrac{1}{z- 2k}\right)
= \lim\limits_{N\to +\infty} \sum\limits_{ {\fo{-N \leqslant  k \leqslant N }} } \frac{1}{2 k +z }  \, ,
\end{align*}

\noindent and if $x\in\Bb{R}$ then
\begin{align*}  &
     \sum\limits_{ {\fo{ k \in \Bb{Z} }} } \dfrac{2  z}{(2 k + x)^{2} - z^{2}} =     \sum\limits_{ {\fo{ k \in \Bb{Z} }} }
     \left(\frac{1}{2  k + x -  z } -  \frac{1}{2 k + x +  z }\right)   \\ & =
\lim\limits_{N\to +\infty} \sum\limits_{ {\fo{-N \leqslant  k \leqslant N }} } \left(\frac{1}{2  k + x -  z } -  \frac{1}{2 k + x +  z }\right)
\\    &   =
\frac{\pi}{2} \cot \dfrac{\pi(x -  z)}{2}  - \frac{\pi}{2}\cot \dfrac{\pi(x +  z)}{2} =
-\frac{\pi}{2} \cot \dfrac{\pi(z -  x)}{2}  - \frac{\pi}{2}\cot \dfrac{\pi(x +  z)}{2}
 \ ,     \end{align*}

\noindent where for arbitrary $z \in \Bb{H}$ we have
\begin{align*}
 &   \cot\dfrac{\pi z}{2}
=i  \dfrac{e^{i \pi  z/2}+ e^{-i \pi  z/2}}{e^{i \pi  z/2}- e^{-i \pi  z/2}}=
i  \dfrac{e^{i \pi  z}+ 1}{e^{i \pi  z}- 1} =
- i  \dfrac{1+ e^{i \pi  z}}{1- e^{i \pi  z}}
= - i - 2 i \sum\limits_{ {\fo{ m  \geqslant 1  }} }   e^{i \pi m z} \ ,  \\  &    \dfrac{4}{\pi}\sum\limits_{ {\fo{ k \in \Bb{Z} }} } \dfrac{ z}{(2 k + x)^{2} - z^{2}} =
 - \cot \dfrac{\pi(z -  x)}{2}  - \cot \dfrac{\pi(z +  x)}{2}  \\    &    =  2 i +
 2 i \sum\limits_{ {\fo{ m  \geqslant 1  }} }  \left( e^{i \pi m z} e^{- i \pi m x}  +  e^{i \pi m z} e^{ i \pi m x}\right) =  2 i + 4 i \sum\limits_{ {\fo{ m  \geqslant 1  }} }e^{i \pi m z} \cos \pi m x \ , \
\end{align*}

\noindent which proves the identity
\begin{align*}
       \sum\limits_{ {\fo{ k \in \Bb{Z} }} } \dfrac{ z}{(2 k + x)^{2} - z^{2}}&=
- \dfrac{\pi}{4}\left(\cot \dfrac{\pi(z -  x)}{2}  + \cot \dfrac{\pi(z +  x)}{2}\right)
 \\    &    =
   \dfrac{\pi i}{2} +  i \pi \sum\limits_{ {\fo{ m  \geqslant 1  }} }e^{i \pi m z} \cos \pi m x
    \ ,  \qquad z \in \Bb{H}  \, , \  x \in \Bb{R}    .
\end{align*}

}}\end{clash}
\end{subequations}


\subsection[\hspace{-0,25cm}. \hspace{0,05cm}Notes for Section~\ref{bsden}]{\hspace{-0,11cm}{\bf{.}} Notes for Section~\ref{bsden}}

\vspace{0,5cm}
\vspace{0,2cm}
\begin{subequations}
\begin{clash}{\rm{\hypertarget{r17}{}\label{case17}\hspace{0,00cm}{\hyperlink{br17}{$\uparrow$}}
  \  Let $\phi\! \in\! \Gamma_{\vartheta} $. Then by Lemma 2 of
  \cite[p.\! 112]{cha}{\hyperlink{r43}{${}^{\ref*{case43}}$}}\hypertarget{br43b}{} the transform $\phi$ can be represented as a superposition  of a finite number of the degrees of the transformations $z+2$ and $-1/z$ (uniqueness of such representation has not been proved by Chandrasekharan in \cite{cha}).
In the notation of \eqref{f1pf8contgen} and in view of \eqref{f2pf8contgen}, this means that either $\phi(z)\in \{z, -1/z\}= \{\phi_{0}(z), \phi_{0}(-1/z)\}=\{\phi_{0}(z), \  -1/\phi_{0}(z) , \  \phi_{0}(-1/z)  , \  -1/\phi_{0}(-1/z)\}$ or there exist
    $\alpha, \beta \in \{0, 1\}$, $N \in \Bb{N}$ and $n_{1}, \ldots, n_{N} \in \Bb{Z}_{\neq 0}  $ such that (see \cite[p.\! 63]{bh1})
\begin{align*}
     & \phi(z)\!= \!
 \phi_{ {\fo{S^{{\fo{\,\alpha}}} S T^{{\fo{-2 n_{1}}}} \ldots S T^{{\fo{-2 n_{N-1}}}} S T^{{\fo{-2 n_{N}}}}
        S^{{\fo{\,\beta}}}     }}}(z)= \phi_{ {\fo{S^{{\fo{\,\alpha}}}}} }
\left(\phi_{{\fo{n_{N}, ..., n_{1}}}} \left(
 \phi_{ {\fo{S^{{\fo{\,\beta}}}}} } (z)\right)\right)\! ,
\end{align*}

\noindent where \eqref{f3pf8contgen} is used for $\eufm{n}\!:=\!(n_{N}, ..., n_{1})\in \Bb{Z}_{\neq 0}^{\hspace{0,02cm}\Bb{N}_{\hspace{-0,02cm}\eurm{f}}}\!:=\!\cup_{k\geqslant 1} \Bb{Z}_{\neq 0}^{k}$ and $z\in \Bb{H}$. In view of \eqref{f7contgen} and
\eqref{f1zbsden},
\begin{align*}
     & \phi_{\eufm{n}}\!:= \!\phi_{{\fo{n_{N}, ..., n_{1}}}}  \in \Gamma_{\vartheta}^{\hspace{0,015cm}{\tn{||}}} \, ,
\end{align*}

\noindent and by setting in the above equalities to $(\alpha, \beta)$ each of four possible values
$(0,0), (1,0), (0,1), (1,1)$ we obtain   that
\begin{align*}
     & \phi(z)\in \left\{\phi_{\eufm{n}}(z), \  -1/\phi_{\eufm{n}}(z) , \  \phi_{\eufm{n}}(-1/z)  , \  -1/\phi_{\eufm{n}}(-1/z)\right\} \, , \
\end{align*}

\noindent which completes the proof of \eqref{f4contgen}.
}}\end{clash}
\end{subequations}

\vspace{0,2cm}
\begin{subequations}
\begin{clash}{\rm{\hypertarget{r43}{}\label{case43}\hspace{0,00cm}{\hyperlink{br43a}{$\uparrow$}}\hspace{0,15cm}{\hyperlink{br43b}{$\uparrow$}}
  \     Lemma 2 of \cite[p.\! 112]{cha} can be sharpened as follows.
\begin{theorem}\hspace{-0,2cm}{\bf{.}}\label{case17th1}
   For arbitrary $A \in   {\rm{SL}}_{2} (\vartheta, \Bb{Z})\setminus \{I, - I, S , - S\}$ there exists a unique collection of numbers $\sigma \in \{1, -1\}$,
    $\alpha, \beta \in \{0, 1\}$, $N \in \Bb{N}$ and $n_{1}, \ldots, n_{N} \in \Bb{Z}_{\neq 0}  $ such that
    \begin{align}\label{f1case17th1}
        &  A  =  \sigma \cdot S^{{\fo{\,\alpha}}} \Big(S T^{{\fo{-2 n_{1}}}} \ldots S T^{{\fo{-2 n_{N-1}}}} S T^{{\fo{-2 n_{N}}}}\Big)
        S^{{\fo{\,\beta}}}  ,
    \end{align}

\noindent where
\begin{align*}  &
     {\rm{SL}}_{2} (\vartheta, \Bb{Z})\! =\! \left\{ \, {\fo{\begin{pmatrix} a & b \\ c & d \\ \end{pmatrix}}}\! \in\! {\rm{SL}}_{2} (\Bb{Z}) \ \left| \ {\fo{\begin{pmatrix} a & b \\ c & d \\\end{pmatrix}}}\! \equiv\! {\fo{ \begin{pmatrix} 1 & 0 \\ 0 & 1 \\\end{pmatrix}}} ({\rm{mod}}\, 2)\  \ \mbox{or} \
     {\fo{\begin{pmatrix} a & b \\ c & d\\ \end{pmatrix}}}\! \equiv\!
    {\fo{ \begin{pmatrix} 0 & 1 \\ 1 & 0 \\ \end{pmatrix}}}  ({\rm{mod}}\, 2)
\, \right\}\right. \! ,\\[0.3cm]
    &  T  =  {\fo{\begin{pmatrix} 1 & 1 \\ 0 & 1  \\ \end{pmatrix}}} \not\in {\rm{SL}}_{2} (\vartheta, \Bb{Z}), \ T^{\pm 2}  = {\fo{\begin{pmatrix} 1 & \pm  2 \\ 0 & 1  \\ \end{pmatrix}}}\in
     {\rm{SL}}_{2} (\vartheta, \Bb{Z}),
     \   S  =  {\fo{\begin{pmatrix} 0 & 1 \\ -1 & 0 \\ \end{pmatrix}}}\in {\rm{SL}}_{2} (\vartheta, \Bb{Z}),
\end{align*}

\vspace{0.25cm}
\noindent
$S^{2} =  - I$,  $I:= (\begin{smallmatrix} 1 & 0 \\ 0 & 1 \end{smallmatrix}) $,
and
${\rm{SL}}_{2} (\Bb{Z})$ denotes the set of all $2\times2$ matrices $(\begin{smallmatrix} a & b \\ c & d \end{smallmatrix})$ with integer coefficients  $a, b, c, d \!\in\! \Bb{Z}$ satisfying  $ad - bc\!=\!1$.
\end{theorem}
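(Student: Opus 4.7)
The proof naturally splits into existence and uniqueness, and the main tools are already in place in the paper. The plan is to lift the M{\"o}bius-level decomposition \eqref{f4contgen} back to $\mathrm{SL}_2$, tracking the sign $\sigma$ separately.

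For existence, I would begin from Chandrasekharan's Lemma 2 in \cite[p.\ 112]{cha}, which asserts that $\Gamma_\vartheta$ is generated by $T^{\pm 2}$ and $S$. Any $A \in \mathrm{SL}_2(\vartheta,\mathbb{Z})$ is then a word $W$ in these generators. Using $S^2=-I$ to collapse runs of $S$'s (modulo a sign) and $T^{2a}T^{2b}=T^{2(a+b)}$ to merge consecutive $T$-powers, I can bring $W$ to an alternating form $\sigma\,S^{\alpha}\,T^{2m_0}\,S T^{2m_1}\,S T^{2m_2}\,\cdots\,S T^{2m_{N}}\,S^{\beta}$ with $\sigma\in\{\pm 1\}$, $\alpha,\beta\in\{0,1\}$, and the internal $m_i$ nonzero. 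The leading $T^{2m_0}$ is absorbed into the alternating block via the identity $T^{2m_0}=-S\cdot(ST^{-2(-m_0)})$, while a trailing $T^{2m_N}$ is either absorbed analogously or swallowed into $S^{\beta}$ by $T^{2m_N}S^{\beta}=S^{\beta}(S^{-\beta}T^{2m_N}S^{\beta})$, always at the cost of a harmless sign. What remains is precisely the canonical form \eqref{f1case17th1}; the four elements $\{I,-I,S,-S\}$ excluded in the statement correspond to $N=0$ and are the only ones not reached.

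For uniqueness, I project to the M{\"o}bius group. If two canonical representations $(\sigma,\alpha,\beta,n_1,\ldots,n_N)$ and $(\sigma',\alpha',\beta',n_1',\ldots,n_{N'}')$ produce the same matrix $A$, they certainly produce the same $\phi_A\in\mathrm{PSL}_2$. By \eqref{f3pf8contgen} the core $ST^{-2n_1}\cdots ST^{-2n_N}$ realizes $\phi_{n_N,n_{N-1},\ldots,n_1}=\phi_{\eufm{n}}\in\Gamma_\vartheta^{\,\tn{||}}$, so $\phi_A$ belongs to the four-fold listing on the right-hand side of \eqref{f4contgen} for some $\eufm{n}$. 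As pointed out immediately after \eqref{f2denac}--\eqref{f1denac}, no repetitions occur in this listing: distinct $\eufm{n}$ give distinct $\phi_{\eufm{n}}$ by Lemma~\ref{bsdentplem1}(a) (the $\fetr^{\eufm{n}}$ are pairwise disjoint), while the four choices of $(\alpha,\beta)\in\{0,1\}^2$ for a fixed $\eufm{n}$ produce four different $2\times 2$ matrix shapes, recognisable by the position of the entry of largest absolute value—here the hypothesis $A\notin\{\pm I,\pm S\}$ is used to exclude the degenerate entries $\phi_0(z)=z$ and $\phi_0(-1/z)=-1/z$ from the comparison. This fixes $(\alpha,\beta,N,n_1,\ldots,n_N)$ uniquely, and the sign $\sigma$ is then forced by the distinction between $A$ and $-A$ in $\mathrm{SL}_2$.

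The main obstacle will be the existence step: the reduction to alternating form is conceptually simple but demands careful sign-tracking, and the boundary cases where $T^{2m_0}$ or $T^{2m_N}$ is present must be handled so as to respect the very rigid structure of \eqref{f1case17th1} (flanking exponents $\alpha,\beta\in\{0,1\}$, no outer $T$-powers). A clean induction on the length of a reduced word in $\{S,T^{\pm 2}\}$, coupled with the explicit four-case formula \eqref{f4contgen} derived in the paper, should deliver this uniformly. The uniqueness part, by contrast, is essentially a corollary of the disjointness developed in Section~\ref{bsden} and costs nothing extra beyond the matrix-level accounting of $\sigma$.
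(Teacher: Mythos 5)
Your proposal is correct in substance, but it follows a genuinely different route from the paper. You take existence from Chandrasekharan's Lemma~2 (generation of $\Gamma_{\vartheta}$ by $z\mapsto z+2$ and $z\mapsto -1/z$) and then normalize the resulting word in $S,T^{\pm2}$ by collapsing $S^{2}=-I$ and merging adjacent $T$-powers, while uniqueness is obtained by projecting to the M\"obius group and invoking the no-repetition property of the listing \eqref{f4contgen}/\eqref{f1denac}, i.e.\ Lemma~\ref{bsdentplem1}(a) together with the largest-modulus-entry position argument based on \eqref{f8contgen}; this is non-circular, since those ingredients are proved independently of the present theorem. The paper instead gives a fully self-contained matrix-level proof: Lemmas~\ref{case17lem1}--\ref{case17lem2} show that exactly one of $A$, $AS$, $SA$, $SAS$ is ``modulo isotonic'' (fixing $(\alpha,\beta)$ uniquely), Lemma~\ref{case39lem1} expands the even rational $b/d$ into its unique even-integer continued fraction (producing the core $ST^{-2n_{N}}\cdots ST^{-2n_{1}}$ and $N$, $n_{i}$), and Lemma~\ref{case17lem3} (a modulo isotonic matrix in ${\rm{SL}}_{2}(\vartheta,\Bb{Z})$ is determined by its second column) pins the representation down, with $\sigma$ forced at the end. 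What the paper's route buys is precisely the self-containedness announced in the Remark to the References: Theorem~\ref{case17th1} is intended to reprove and sharpen Chandrasekharan's lemma, so using that lemma as input, as you do, is legitimate but partially defeats the purpose, and if you insisted on self-containedness your existence step would essentially have to re-derive the generation statement by the same kind of continued-fraction argument. What your route buys is a shorter uniqueness argument, since the disjointness of the sets $\fetr^{\hspace{0,01cm}\eufm{n}}$ already encodes the freeness you need. One small slip to fix in the write-up of existence: the proposed ``swallowing'' identity $T^{2m_{N}}S^{\beta}=S^{\beta}\bigl(S^{-\beta}T^{2m_{N}}S^{\beta}\bigr)$ is useless for $\beta=1$, because $S^{-1}T^{2m}S$ is lower triangular and not a word in the allowed alphabet; fortunately it is also unnecessary, since the canonical core of \eqref{f1case17th1} already ends in a $T$-power immediately before $S^{\beta}$, so only the leading $T$-power needs the absorption $T^{2m_{0}}=-S\cdot ST^{2m_{0}}$, with the sign tracked into $\sigma$.
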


The fact that $ {\rm{SL}}_{2} (\vartheta, \Bb{Z}) $ is invariant under multiplication of matrices follows from the possibility of a termwise multiplying the congruences  (see \cite[p.\! 107, Theorem 5.2(b)]{apo}) and $ (\begin{smallmatrix} 0 & 1 \\ 1 & 0 \end{smallmatrix}) (\begin{smallmatrix} 0 & 1 \\ 1 & 0 \end{smallmatrix}) =(\begin{smallmatrix} 1 & 0 \\ 0 & 1 \end{smallmatrix})$,
  while  belonging to $ {\rm{SL}}_{2} (\vartheta, \Bb{Z}) $ of each its inverse matrix is a consequence of   the  general formula
\begin{gather}\label{f4preres}
   M = \begin{pmatrix} a & b \\ c & d \\ \end{pmatrix} \in \mathrm{SL}_2 (\Bb{Z}) \ \ \Rightarrow \ \
M^{-1} = \begin{pmatrix} d & -b \\ -c & a \\ \end{pmatrix} \in \mathrm{SL}_2 (\Bb{Z}) \ .
\end{gather}

\noindent As well as  ${\rm{SL}}_{2} (\Bb{Z})$, the set  $ {\rm{SL}}_{2} (\vartheta, \Bb{Z}) $ is also invariant with respect to the transpose operation
\begin{gather}\label{f5apreres}
  {\rm{SL}}_{2} (\vartheta, \Bb{Z}) \ni  M = \begin{pmatrix} a & b \\ c & d \\ \end{pmatrix}  \ \  \ \Rightarrow \   \ \
M^{\ast} = \begin{pmatrix} a & c \\ b & d  \\ \end{pmatrix} \in  {\rm{SL}}_{2} (\vartheta, \Bb{Z}) \ .
\end{gather}
\noindent
Everywhere below we use the notation $\{\pm I, \pm S\} := \{I, - I, S , - S\}$.

Before proving Theorem~\ref{case17th1}, we first establish several special
properties of matrices from the set $ {\rm{SL}}_{2} (\vartheta, \Bb{Z})$.
Each column and each row of any matrix from the set $ {\rm{SL}}_{2} (\vartheta, \Bb{Z})$ contains two numbers of different parity and therefore they cannot be equal to each other. In fact,  a much stronger property of these matrices takes place.
\begin{lemma}\hspace{-0,2cm}{\bf{.}}\label{case17lem1}  Let   $ A = (\begin{smallmatrix} a & b \\ c & d \end{smallmatrix}) \in  {\rm{SL}}_{2} (\vartheta, \Bb{Z})\setminus \{I, - I, S , - S\}$. Then the value of $\min\{|a|, |b|, |c|, |d|\} $ is attained on only one element.
\end{lemma}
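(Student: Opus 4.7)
The key observation is the parity structure. Because $A$ lies in $\mathrm{SL}_2(\vartheta,\Bb{Z})$, either $A\equiv I\pmod 2$, in which case $a,d$ are odd and $b,c$ are even, or $A\equiv S\pmod 2$, in which case $a,d$ are even and $b,c$ are odd. In either case the four entries split into two parity classes, $\{a,d\}$ and $\{b,c\}$, and entries from different classes have absolute values of opposite parities and therefore cannot be equal. Hence the minimum $m:=\min\{|a|,|b|,|c|,|d|\}$ can be attained by more than one entry only if $|a|=|d|=m$ or $|b|=|c|=m$.

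The plan is to rule out both of these situations using the determinant relation $ad-bc=1$. First I would dispatch the degenerate case $m=0$: if $|a|=|d|=0$ then $bc=-1$ forces $(b,c)\in\{(1,-1),(-1,1)\}$, giving $A=\pm S$; if $|b|=|c|=0$ then $ad=1$ forces $a=d=\pm 1$, giving $A=\pm I$. Both outcomes are excluded by hypothesis. Second, for $m\geqslant 1$ I would use the parity mismatch between the two classes to upgrade a weak inequality into a strict one: if $|b|=|c|=m$ is the minimum, then $|a|,|d|\geqslant m$, but the parities of $|a|,|d|$ differ from that of $m$, so in fact $|a|,|d|\geqslant m+1$; symmetrically if $|a|=|d|=m$ is the minimum.

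The contradiction then comes from comparing $|ad|$ and $|bc|$. In the first subcase one gets $|ad|\geqslant(m+1)^2$ while $|ad|\leqslant|bc|+1\leqslant m^2+1$, forcing $(m+1)^2\leqslant m^2+1$, i.e.\ $2m\leqslant 0$, which is impossible for $m\geqslant 1$. In the second subcase one gets $|bc|\geqslant(m+1)^2$ while $|bc|\leqslant|ad|+1\leqslant m^2+1$, the same contradiction. This argument works uniformly in both congruence classes $A\equiv I$ and $A\equiv S\pmod 2$, since only the parity mismatch and the identity $ad-bc=1$ enter.

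I expect the only subtle step to be bookkeeping the $m=0$ degenerate cases, which are precisely what is carved out by the exclusion of $\{\pm I,\pm S\}$; once this is handled, the parity-upgrade trick $|a|,|d|\geqslant m+1$ (resp.\ $|b|,|c|\geqslant m+1$) makes the rest a one-line arithmetic contradiction.
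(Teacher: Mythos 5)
Your proposal is correct and follows essentially the same route as the paper's proof: the parity classes $\{a,d\}$, $\{b,c\}$ force any multiple attainment of the minimum to occur within one class, and then the relation $ad-bc=1$ together with the integrality/parity gap of $1$ between the classes yields either the arithmetic contradiction or forces $A\in\{\pm I,\pm S\}$. The only differences are organizational — you separate the case $m=0$ explicitly and treat the two subcases symmetrically, whereas the paper folds the degenerate case into its estimate $\min\{|b|,|c|\}\leqslant 1$ and reduces the subcase $|b|=|c|=m$ to the other one by passing to $AS$.
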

\begin{proof}\hspace{-0,15cm}{\bf{.}}
    Suppose to the contrary that $\min\{|a|, |b|, |c|, |d|\} $ is attained on more than one element. Since the pairs $(a, b)$, $(a, c)$, $(c, d)$ and $(b, d)$ consist of numbers with different parity then the only one of two following cases  can take place
\begin{align}\label{f1preresbaspropprlem1}
    &
 \begin{array}{l} {\rm{(1)}}\quad
|a| = |d| = \min\{|a|, |b|, |c|, |d|\} < \min\{ |b|, |c|\} \ , \  \\
 {\rm{(2)}}\quad   |b| = |c| = \min\{|a|, |b|, |c|, |d|\} < \min\{|a|,  |d|\} \ .
 \end{array} \end{align}

\noindent If \eqref{f1preresbaspropprlem1}(1) holds then
 \begin{align}\label{f3preresbaspropprlem1}
    & |a| = |d| = \min\{|a|, |b|, |c|, |d|\} \leqslant  \min\{ |b|, |c|\} -1 \ , \
\end{align}

\noindent from which $ |a d|\leqslant \left( \min\{ |b|, |c|\} -1\right)^{2} \leqslant \left(\min\{ |b|, |c|\}\right)^{2} -2 \min\{ |b|, |c|\} + 1 \leq |b c| + 1 - 2 \min\{ |b|, |c|\}$ and therefore $2 \min\{ |b|, |c|\}-1 \leqslant |b c|- |a d|\leqslant |b c  -  ad| =1$, i.e.,\vspace{-0,2cm}
\begin{align}\label{f4preresbaspropprlem1}
    &  \min\{ |b|, |c|\} \leqslant 1 \ .
\end{align}

\noindent Substituting this inequality in \eqref{f3preresbaspropprlem1} we get $a=d=0$, while \eqref{f4preresbaspropprlem1} and $ad- bc =1$ mean that
$b c = -1$ and  $|b|= |c| = 1$. These conditions are satisfied only when $A=\pm S$ that contradicts  $A \not\in\{\pm I, \pm S\}$. Thus,  \eqref{f1preresbaspropprlem1} cannot be valid.

If \eqref{f1preresbaspropprlem1}(2) holds then the matrix $ A S=  (\begin{smallmatrix}  -b & a \\ - d & c \\ \end{smallmatrix}) $ satisfies the condition  \eqref{f1preresbaspropprlem1}(1) and using the above reasoning we conclude that either $A S = S$ or $A S = -S$, i.e.,
either $A  = I$ or $A  = -I$. This also contradicts  $A\not\in\{\pm I, \pm S\}$ and finishes the proof of Lemma~\ref{case17lem1}. $\square$
\end{proof}

We say that  $ A = (\begin{smallmatrix} a & b \\ c & d \end{smallmatrix}) \in  {\rm{SL}}_{2} (\vartheta, \Bb{Z})$ is {\emph{modulo isotonic}} if
\begin{align}\label{f1auxdef1}
    &  |a|     <    |b| <      |d|  \quad \mbox{ and}  \quad |a|     <    |c| <      |d| \ .
\end{align}

\noindent
Lemma~\ref{case17lem1} can be essentially sharpened as follows.

\begin{lemma}\hspace{-0,18cm}{\bf{.}}\label{case17lem2}  Let   $ A = (\begin{smallmatrix} a & b \\ c & d \end{smallmatrix}) \in  {\rm{SL}}_{2} (\vartheta, \Bb{Z})\setminus \{\pm I, \pm S\}$.  Then among four matrices
\begin{align}\label{f1case17lem2}
    & A:= {\fo{\begin{pmatrix} a & b \\ c & d \\ \end{pmatrix}}} \ , \
A S= {\fo{ \begin{pmatrix} -b & a \\ - d & c \\ \end{pmatrix} }}\ , \
S A={\fo{\begin{pmatrix} c & d \\ -a & -b \\ \end{pmatrix}}} \ , \
S A S={\fo{\begin{pmatrix} -d & c \\ b & -a \\ \end{pmatrix}}} \ ,
\end{align}

\noindent each of which belongs to ${\rm{SL}}_{2} (\vartheta, \Bb{Z})\setminus \{\pm I, \pm S\}$, there exists exactly one modulo isotonic matrix.
\end{lemma}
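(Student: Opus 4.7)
The plan is to combine Lemma~\ref{case17lem1} with the determinant identity $ad-bc=1$ and the $\vartheta$-parity pattern to show that both the minimum and the maximum of the multiset $\{|a|,|b|,|c|,|d|\}$ are strictly attained at a single entry each, and that those two entries occupy diagonally opposite corners of $A$. Since the four matrices in \eqref{f1case17lem2} share this multiset but permute its positions, exactly one of them will have its strict minimum at $(1,1)$ and its strict maximum at $(2,2)$, which is the definition of modulo isotonic.

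I would begin by checking that each of $A,AS,SA,SAS$ again belongs to ${\rm{SL}}_{2}(\vartheta,\Bb{Z})\setminus\{\pm I,\pm S\}$: for instance $AS=\pm I$ forces $A=\mp S$ and $AS=\pm S$ forces $A=\pm I$, both excluded, and the other cases are analogous. All four matrices have the same multiset $\{|a|,|b|,|c|,|d|\}$ of absolute values of entries, so Lemma~\ref{case17lem1} produces a single entry $x$ of $A$ with $|x|=\min\{|a|,|b|,|c|,|d|\}$. Inspecting \eqref{f1case17lem2}, $x$ lies in the $(1,1)$-slot of exactly one of $A,AS,SA,SAS$ (namely $A$ if $x=a$, $AS$ if $x=b$, $SA$ if $x=c$, $SAS$ if $x=d$). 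Relabeling, it suffices to prove that when $|a|$ is the strict minimum, $A$ itself is modulo isotonic.

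The core step is to show that under the hypothesis $|a|<|b|,|c|,|d|$ the maximum is strictly attained at $|d|$. The $\vartheta$-condition forces the two diagonal entries to share a parity opposite to that of the two anti-diagonal entries, so $|d|\geq |a|+2$ and $|b|,|c|\geq |a|+1$. Supposing, for a contradiction, that $|b|\geq |d|$, the identity $ad-bc=1$ yields
\begin{equation*}
|b|(|c|-|a|)\;=\;|bc|-|a||b|\;\leq\;(|ad|+1)-|a||b|\;=\;|a|(|d|-|b|)+1\;\leq\;1,
\end{equation*}
and since $|c|-|a|\geq 1$ this forces $|b|\leq 1$. When $|a|\geq 1$ this contradicts $|b|\geq |a|+1\geq 2$; when $|a|=0$ the identity $ad-bc=1$ reduces to $bc=-1$, so $|b|=|c|=1$, and the exclusion $A\neq\pm S$ combined with the required parity of $d$ (even, since $a=0$) forces $|d|\geq 2$, again making $|d|$ the strict maximum. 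The symmetric argument with $b$ and $c$ interchanged rules out $|c|$ as a maximum, so $|d|$ is strictly maximal; the chain of inequalities $|a|<|b|<|d|$ and $|a|<|c|<|d|$ is then precisely the definition of $A$ being modulo isotonic.

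Uniqueness is a bookkeeping check from \eqref{f1case17lem2}: the strict minimum $|a|$ of the shared multiset lies in positions $(1,2)$, $(2,1)$, $(2,2)$ of $AS$, $SA$, $SAS$ respectively, so in none of these three matrices does the strict minimum occupy the $(1,1)$-slot, which is a necessary feature of any modulo isotonic matrix. The principal obstacle is the degenerate case $|a|=0$ in the maximum estimate, where the generic contradiction $|b|\leq 1<|b|$ breaks down; this is resolved by exploiting $bc=-1$ together with the parity of $d$ and the standing hypothesis $A\neq\pm S$.
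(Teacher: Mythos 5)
Your proposal is correct and follows essentially the same route as the paper's proof: invoke Lemma~\ref{case17lem1} to locate the unique entry of smallest modulus, use left/right multiplication by $S$ to place it in the $(1,1)$-slot, then force $|d|$ to be the strict maximum from $ad-bc=1$ together with parity and the exclusion of $\pm S$, with uniqueness following because the minimal entry sits in the wrong slot in the other three matrices. The only difference is cosmetic: in the degenerate case $a=0$ the paper derives $d=0$ (via the parity of $b$ and $d$) and concludes $B=\pm S$, whereas you use the parity of $d$ and $A\neq\pm S$ to get $|d|\geqslant 2$ and contradict $|b|\geqslant|d|$ — the same ingredients in a slightly different order.
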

\begin{proof}\hspace{-0,15cm}{\bf{.}} Four matrices $\{\pm I, \pm S\}$ form the group with the matrix multiplication as a group operation,  and therefore one of the matrices from \eqref{f1case17lem2} can belong to $\{\pm I, \pm S\}$ if and only if $\{\pm I, \pm S\}$ contains the matrix $A$.
  It  readily follows from Lemma~\ref{case17lem1} and \eqref{f1case17lem2} that  there exists a unique ordered pair $(\alpha, \beta)$ of numbers $\alpha, \beta \in \{0, 1\}$ such that the  matrix $S^{\alpha} A S^{\beta}$ at the intersection of its first row and first column  contains exactly that  element of $A$ which has the smallest absolute value. By designating the elements of the transformed matrix $S^{\alpha} A S^{\beta}$ by the same letters, we have
\begin{align}\label{f2preresbaspropprlem1}
    & B :=  S^{\alpha} A S^{\beta} = \begin{pmatrix}a& b \\ c & d \\ \end{pmatrix} \ , \  |a| \leqslant |b| -1  \ , \  |a| \leqslant |c| -1 \ , \  |a| \leqslant |d|-1 \ .
\end{align}

\noindent
To prove that $B$ is modulo isotonic, it is sufficient to show that $|b| < |d|$ and $|c| < |d|$.

Assume first that\vspace{-0.3cm}
\begin{align}\label{f5preresbaspropprlem1}
    & |d| \leqslant |b| \ .
\end{align}

\noindent  By virtue of the different parity of $|b|$ and $|d|$, we have  $|d| \leqslant  |b| -1 $ and therefore
$|a d| \leqslant |a | |b| - |a| \leqslant ( |c| -1) |b| - |a| = |b | |c| - |b| - |a|$, i.e., $|a| + | b|\leqslant |b | |c| -|a d| $ and
$1 = |bc - ad|  \geqslant   |b | |c| - |a| | d| \geqslant  |a| + |b| \geqslant     2  |a| + 1$, which yields $a =0$.
Then $ad-bc = 1$ implies $bc = -1$ and therefore $|c| =|b| =1$. Our assumption $|d| \leqslant |b| -1 = 0$ gives $d=0$.
Finally, we obtain $a=d=0$ and   $|c| =|b| =1$. This holds if and only if  $B =\sigma \cdot S$ for some $\sigma \in \{1, -1\}$.
By solving the equation $ S^{\alpha} A S^{\beta} =\sigma \cdot S$  we get $A =(-1)^{\alpha +\beta}\sigma \cdot S^{\alpha +\beta+1} \in
\{I, - I, S , - S\}$ which contradicts  $A\not\in\{\pm I, \pm S\}$. This contradiction  proves $|b| < |d|$.

If we assume that  $ |d| \leqslant |c|$ then the conjugate matrix
$B^{*} = (\begin{smallmatrix} a & c \\ b & d \end{smallmatrix})$ also belongs to ${\rm{SL}}_{2} (\vartheta, \Bb{Z})$ and satisfies the conditions
\eqref{f2preresbaspropprlem1} and \eqref{f5preresbaspropprlem1}. By application of the above  reasoning we conclude that $B^{*} =\sigma \cdot S$ for some $\sigma \in \{1, -1\}$. Then  $B =(-\sigma) \cdot S$ and similarly to the above $A =(-1)^{\alpha +\beta+1}\sigma \cdot S^{\alpha +\beta+1} \in
\{I, - I, S , - S\}$, which contradicts  $A\not\in\{\pm I, \pm S\}$. This contradiction  proves  $|c| < |d|$ and  that $B$  is modulo isotonic. But if one of the matrices  in \eqref{f1case17lem2} is modulo isotonic then no other one in \eqref{f1case17lem2} can have this property because all other matrices contain the unique element of $A$ with the smallest absolute value at the wrong place. Lemma~\ref{case17lem2} is  proved. $\square$
\end{proof}

The next assertion is immediate from Lemma~\ref{case17lem2}.
\begin{corollary}\hspace{-0,15cm}{\bf{.}}\label{case17cor1}
 Let $A = (\begin{smallmatrix} a & b \\ c & d \end{smallmatrix}) \in  {\rm{SL}}_{2} (\vartheta, \Bb{Z})$ satisfy $|a|    \leqslant    |b| \leqslant      |d| $ and
$|a|     \leqslant   |c| \leqslant      |d|$.  Then $|a|     <    |b| <      |d|$ and $|a|     <    |c| <      |d|$, i.e., $A$ is modulo isotonic.
\end{corollary}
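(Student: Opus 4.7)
The plan is to deduce the strict inequalities in Corollary~\ref{case17cor1} from Lemma~\ref{case17lem2} by a process of elimination: among the four matrices $A,AS,SA,SAS$, exactly one is modulo isotonic, and the hypothesis forces it to be $A$ itself.

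First I would check that the hypothesis excludes the exceptional matrices $\{\pm I,\pm S\}$. For $\pm I=\pm\bigl(\begin{smallmatrix}1&0\\0&1\end{smallmatrix}\bigr)$ one has $|b|=0<|a|=1$, violating $|a|\le|b|$; for $\pm S=\pm\bigl(\begin{smallmatrix}0&1\\-1&0\end{smallmatrix}\bigr)$ one has $|d|=0<|b|=1$, violating $|b|\le|d|$. Hence $A\in {\rm{SL}}_2(\vartheta,\Bb{Z})\setminus\{\pm I,\pm S\}$ and Lemma~\ref{case17lem2} applies: exactly one of the four matrices
\[
A=\begin{pmatrix}a&b\\c&d\end{pmatrix},\quad AS=\begin{pmatrix}-b&a\\-d&c\end{pmatrix},\quad SA=\begin{pmatrix}c&d\\-a&-b\end{pmatrix},\quad SAS=\begin{pmatrix}-d&c\\b&-a\end{pmatrix}
\]
is modulo isotonic, in the sense of \eqref{f1auxdef1}.

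Next I would rule out the three candidates other than $A$. If $AS$ were modulo isotonic, one would need $|{-b}|<|a|$, i.e.\ $|b|<|a|$, contradicting $|a|\le|b|$. If $SA$ were modulo isotonic, one would need $|c|<|d|$ and $|d|<|b|$, and the latter contradicts $|b|\le|d|$. If $SAS$ were modulo isotonic, one would need $|{-d}|<|c|$, i.e.\ $|d|<|c|$, contradicting $|c|\le|d|$. Therefore the unique modulo isotonic matrix among the four is $A$ itself, so by definition $|a|<|b|<|d|$ and $|a|<|c|<|d|$, as claimed.

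The argument is short and essentially combinatorial; the only genuine input is Lemma~\ref{case17lem2}. I do not foresee an obstacle, since the hypothesis already places $|a|$ as the minimum and $|d|$ as the maximum of the four moduli, and this positioning is exactly what distinguishes $A$ from $AS$, $SA$, $SAS$ as the candidate for modulo isotonicity. One could alternatively bypass the lemma and observe directly that the parity conditions defining ${\rm{SL}}_2(\vartheta,\Bb{Z})$ force $a,b$ and $b,d$ (resp.\ $a,c$ and $c,d$) to have different parities, so their absolute values cannot coincide; but the elimination argument above is cleaner and matches the author's remark that the corollary is immediate from Lemma~\ref{case17lem2}.
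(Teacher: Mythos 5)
Your proof is correct and follows the same route as the paper, which derives the corollary directly from Lemma~\ref{case17lem2}; you have simply made explicit the elimination of $AS$, $SA$, $SAS$ and the exclusion of $\{\pm I,\pm S\}$ that the paper leaves implicit in the word ``immediate''. Your closing parity observation is also a valid shortcut, but the main argument as written matches the paper's intent exactly.
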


The following result shows that any  modulo isotonic matrix from the set ${\rm{SL}}_{2} (\vartheta, \Bb{Z})$ is uniquely determined by its second column.
\begin{lemma}\hspace{-0,15cm}{\bf{.}}\label{case17lem3}
  Let two matrices $A_{1}\! = \!(\begin{smallmatrix} a_{1} & b_{1} \\ c_{1} & d_{1} \end{smallmatrix})$ and   $A_{2}\! = \!(\begin{smallmatrix} a_{2} & b_{2} \\ c_{2} & d_{2} \end{smallmatrix})$ belong to ${\rm{SL}}_{2} (\vartheta, \Bb{Z})$ and be  modulo isotonic. Suppose that $b_{1}\! = \!b_{2}$ and $d_{1}\! = \! d_{2}$.
    Then $A_{1} \! = \! A_{2}$.
\end{lemma}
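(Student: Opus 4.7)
The plan is to subtract the two determinant equations and analyze the resulting Diophantine constraint, using both the modulo isotonic bounds and the parity structure coming from membership in $\mathrm{SL}_2(\vartheta, \mathbb{Z})$.

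First, I would record the determinant identities $a_i d - b c_i = 1$ for $i=1,2$ and subtract them to obtain
\begin{equation*}
(a_1 - a_2)\,d \;=\; b\,(c_1 - c_2).
\end{equation*}
From $a_i d - b c_i = 1$ one sees that $\gcd(b,d)$ divides $1$, so $\gcd(b,d)=1$; consequently $b \mid (a_1-a_2)$ and $d \mid (c_1-c_2)$. Moreover, since $|a_i| < |b|$ in modulo isotonic form and the entries are integers, $|a_1 - a_2| \leqslant 2(|b|-1) < 2|b|$, and similarly $|c_1 - c_2| < 2|d|$. Combined with the divisibility, this forces
\begin{equation*}
a_1 - a_2 \in \{-b,\,0,\,b\}, \qquad c_1 - c_2 \in \{-d,\,0,\,d\}.
\end{equation*}

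Next I would invoke the parity structure. Because $b_1=b_2=b$ and $d_1=d_2=d$ are fixed, the parities of $b$ and $d$ are fixed, and so both $A_1, A_2$ necessarily belong to the same residue class modulo $2$, i.e., both are of type $\equiv I$ or both of type $\equiv S$. In the type $I$ case, $a_i, d$ are odd and $b, c_i$ are even, so $c_1 - c_2$ is even while $d$ is odd; the only element of $\{-d,0,d\}$ that is even is $0$, hence $c_1 = c_2$. Substituting back into $(a_1-a_2)d = b(c_1-c_2)=0$ and noting $d\neq 0$ (since $|d|>|b|\geqslant 1$) yields $a_1=a_2$. In the type $S$ case the roles of $b$ and $d$ are swapped in parity: $b$ is odd while $a_1-a_2$ is even, forcing $a_1-a_2=0$, and then $b\neq 0$ gives $c_1=c_2$.

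The whole argument is elementary once one notices the parity obstruction; the only mildly delicate point is checking that the modulo isotonic inequalities, together with $\gcd(b,d)=1$, really do restrict $a_1-a_2$ and $c_1-c_2$ to the three-element set $\{-b,0,b\}$ and $\{-d,0,d\}$ respectively, so that a single parity parity mismatch immediately collapses one of the differences to zero. No further case analysis or additional lemmas should be required.
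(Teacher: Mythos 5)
Your proof is correct and takes essentially the same route as the paper's: subtract the determinant identities, use $\gcd(b,d)=1$ and the modulo isotonic bounds to force $a_1-a_2\in\{-b,0,b\}$ and $c_1-c_2\in\{-d,0,d\}$ (the paper phrases this as $c_1-c_2=md$, $a_1=a_2+mb$ with $m\in\{1,-1\}$), and then eliminate the nonzero options by the parity constraints defining ${\rm{SL}}_{2}(\vartheta,\Bb{Z})$. The only difference is organizational — the paper argues by contradiction from the assumption that both differences are nonzero, while you argue directly — so no further comment is needed.
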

\begin{proof}\hspace{-0,15cm}{\bf{.}}
Assume that two different modulo isotonic matrices in ${\rm{SL}}_{2} (\vartheta, \Bb{Z})$ satisfy
\begin{align}\label{f0preresbaspropprlem2}
    &  A_{1}= \begin{pmatrix} a_{1} & b \\ c_{1} & d \\ \end{pmatrix}  \ , \
     A_{2}= \begin{pmatrix} a_{2} & b \\ c_{2} & d \\ \end{pmatrix}  \ ; \quad {\rm{(1)}} \ \  ( a_{1} -a_{2})^{2} +     ( c_{1} -c_{2})^{2} > 0 \ ,
\end{align}

\noindent In view of \eqref{f1auxdef1}, $|d| > |b| > 0$ and it follows from $a_{1}d - bc_{1} = 1$, $a_{2}d - bc_{2} = 1$ that
\begin{align}\label{f1preresbaspropprlem2}
    & {\rm{(1)}} \ \,  (a_{1} - a_{2}) d =  (c_{1} - c_{2}) b\ ; \quad
    {\rm{(2)}} \ \,  {\rm{gcd}}(d, b) = 1 \ ; \quad {\rm{(3)}} \ \,   d, b \in \Bb{Z}\setminus\{0\}  \    .
\end{align}

\noindent By virtue of \eqref{f1preresbaspropprlem2}(1) and \eqref{f1preresbaspropprlem2}(3), the property \eqref{f0preresbaspropprlem2}(1) yields that
\begin{align}\label{f2preresbaspropprlem2}
  a_{1} - a_{2} \neq 0  \ , \ c_{1} - c_{2} \neq  0 \ .
\end{align}

\noindent But \eqref{f1preresbaspropprlem2}(1) and \eqref{f1preresbaspropprlem2}(2) implies that  $d$ divides $(c_{1} - c_{2}) b$  and since ${\rm{gcd}}(d, b) = 1$   we get that $d$ divides $(c_{1} - c_{2})$. Hence, taking account of \eqref{f2preresbaspropprlem2} and \eqref{f1preresbaspropprlem2}(3), there exists $m \in \Bb{Z}_{\neq 0}$ such that $c_{1} -  c_{2} = m d$. Substituting this in  \eqref{f1preresbaspropprlem2}(1) we obtain $(a_{1} - a_{2}) d = m d b$, where we can divide by nonzero number $d$ and derive from \eqref{f1auxdef1} that
\begin{align}&\nonumber
         {\rm{(1)}} \ \   a_{1} =  a_{2} + m b  \ , \ |a_{1}|, |a_{2}| \leqslant |b|-1 \ ; \quad
    {\rm{(2)}} \ \     c_{1} =  c_{2} + m d \ , \ |c_{1}|, |c_{2}| \leqslant |d|-1 \ ; \\[0.2cm]  &
    {\rm{(3)}} \ \  m \in \Bb{Z}\setminus\{0\} \ .
\label{f3preresbaspropprlem2}\end{align}

\noindent It follows from \eqref{f3preresbaspropprlem2}(1) that $|m b| \leq 2|b|-2$ and since  by \eqref{f1preresbaspropprlem2}(3) $b$ is nonzero we conclude that the condition \eqref{f3preresbaspropprlem2}(3) can be replaced by
\begin{align}\label{f4preresbaspropprlem2}
   m \in \{1,-1\} \ .
\end{align}

\noindent According to the definition ${\rm{SL}}_{2} (\vartheta, \Bb{Z})$, we have only two possibilities
\begin{align}\label{f5preresbaspropprlem2}
    &
    \begin{array}{l}
   {\rm{(1)}} \ \   \begin{pmatrix} a_{2} & b \\ c_{2} & d \\\end{pmatrix}\! \equiv\! \begin{pmatrix} a_{1} & b \\ c_{1} & d \\\end{pmatrix}\! \equiv\! \begin{pmatrix} 1 & 0 \\ 0 & 1 \\\end{pmatrix} ({\rm{mod}}\, 2) \quad \mbox{or} \\[0,5cm]
     {\rm{(2)}} \ \ \begin{pmatrix} a_{2} & b \\ c_{2} & d\\ \end{pmatrix}\! \equiv\!  \begin{pmatrix} a_{1} & b \\ c_{1} & d\\ \end{pmatrix}\! \equiv\! \begin{pmatrix} 0 & 1 \\ 1 & 0 \\ \end{pmatrix}  ({\rm{mod}}\, 2) \ .
    \end{array}
\end{align}

Assume that \eqref{f5preresbaspropprlem2}(1) holds.  But   \eqref{f3preresbaspropprlem2}(2)  and  \eqref{f4preresbaspropprlem2} yield  $c_{1} \equiv c_{2} + m d \equiv 1 ({\rm{mod}}\, 2)$ which contradicts  \eqref{f5preresbaspropprlem2}(1).

Suppose now that \eqref{f5preresbaspropprlem2}(2) holds.  Then   \eqref{f3preresbaspropprlem2}(1) and  \eqref{f4preresbaspropprlem2}  yield  $a_{1} \equiv a_{2} + m b \equiv 1 ({\rm{mod}}\, 2)$ which contradicts  \eqref{f5preresbaspropprlem2}(2).

\noindent
These contradictions together with \eqref{f0preresbaspropprlem2} complete the proof of Lemma~\ref{case17lem3}.
 $\square$ \end{proof}

\begin{lemma}\hspace{-0,15cm}{\bf{.}}\label{case17lem4}
  Let $B = (\begin{smallmatrix} a & b \\ c & d \end{smallmatrix}) \in  {\rm{SL}}_{2} (\vartheta, \Bb{Z})$
 be modulo isotonic {\rm{(}}see \eqref{f1auxdef1}{\rm{)}}. Then there exists a unique collection of numbers
    $\gamma \in \{0, 1\}$, $N \in \Bb{N}$ and $n_{1}, \ldots, n_{N} \in \Bb{Z}_{\neq 0}  $ such that
\begin{align}\label{f1case17lem4}
    &  (-1)^{\gamma} B\! =\!
    S T^{{\fo{-2 n_{N}}}}  S T^{{\fo{-2 n_{N-1}}}}\ldots S T^{{\fo{-2 n_{1}}}}.
\end{align}
\end{lemma}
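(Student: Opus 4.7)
The plan is to prove both existence and uniqueness simultaneously by strong induction on $|d|$, using the observation that $\gcd(b,d)=1$ (from $ad-bc=1$) so the pair $(b,d)$ admits a division-with-remainder reduction that mirrors stripping the leftmost factor $ST^{-2n_N}$ from the product.

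For the base of the induction, I would use that a modulo isotonic matrix forces $|d|\ge 2$: since $|a|<|b|$ and $|a|<|c|$ with $b,c$ of opposite parity to $a$, the smallest possible $|d|$ is achieved only when $|a|=0$, in which case $bc=-1$ forces $|b|=|c|=1$, and then $d\in 2\Bb{Z}$ (by the parity classification (i),(ii) of matrices in $\mathrm{SL}_2(\vartheta,\Bb{Z})$). A direct case analysis on the four sign patterns of $(b,c,d)$ produces exactly the four matrices $\pm ST^{\mp 2n_1}$, establishing both existence and uniqueness with $N=1$.

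For the inductive step I assume $|a|\ge 1$ (otherwise we are in the base case). Here the key move is to left-multiply by $T^{2n}S^{-1}$: writing $B'=T^{2n}S^{-1}B$ gives $B'=\begin{pmatrix}2na-c & 2nb-d\\ a & b\end{pmatrix}$, so that $c'=a$, $d'=b$, $a'=2na-c$, $b'=2nb-d$, and $B=ST^{-2n}B'$. Division with remainder of $c$ by $2a$ produces a unique $n\in\Bb{Z}$ with $|2na-c|\le|a|$; the parity dichotomy for $\mathrm{SL}_2(\vartheta,\Bb{Z})$ shows $|a|$ and $|a'|=|2na-c|$ have opposite parity, so in fact $|a'|<|a|$ strictly, and the modulo-isotonicity hypothesis $|c|>|a|$ forces $n\neq 0$. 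I then need to verify that $B'$ is again modulo isotonic with strictly smaller bottom-right entry. Three of the four required strict inequalities are immediate: $|a'|<|a|=|c'|$ by construction, $|c'|=|a|<|b|=|d'|$ by the hypothesis on $B$, and $|d'|=|b|<|d|$ which drives the induction. The condition $|b'|<|d'|=|b|$ follows from the identity $a'b-b'a=1$: since $|a'|\le|a|-1$, one gets $|b'|=|a'b-1|/|a|\le|b|-1+1/|a|$, and a short check using $|b|\ge 2$ rules out equality.

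The main obstacle — and the one place where the argument is more than routine — is verifying the remaining modulo-isotonicity condition $|a'|<|b'|$. Here I exploit that $|a'|$ and $|b'|$ have opposite parities (since in case (i) of the parity classification $a'$ is even and $b'$ odd, and in case (ii) vice versa), so it suffices to exclude $|a'|\ge|b'|+1$. Supposing such an inequality, the Bezout identity $a'b-b'a=1$ yields in the same-sign case $|a'b-b'a|\ge(|b'|+1)|b|-|b'||a|=|b'|(|b|-|a|)+|b|\ge|b|\ge 2$, and in the opposite-sign case $|a'b|+|b'a|\ge|b|\ge 2$; both contradict $|a'b-b'a|=1$. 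Hence $|a'|<|b'|$, and $B'\in\mathrm{SL}_2(\vartheta,\Bb{Z})$ is modulo isotonic with $|d'|<|d|$. The inductive hypothesis gives $B'=(-1)^{\gamma'}ST^{-2n_{N-1}}\cdots ST^{-2n_1}$ uniquely, and then $B=ST^{-2n_N}B'=(-1)^{\gamma'}ST^{-2n_N}ST^{-2n_{N-1}}\cdots ST^{-2n_1}$ with $\gamma=\gamma'$; uniqueness of the whole tuple $(\gamma,N,n_1,\ldots,n_N)$ follows because $n_N$ was the unique integer satisfying the division bound, and the rest is determined by the (unique) reduction applied to $B'$.
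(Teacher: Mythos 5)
You take a genuinely different route from the paper: the paper expands the even rational $b/d$ into its even continued fraction (Lemma~\ref{case39lem1}) and then invokes Lemma~\ref{case17lem3}, that a modulo isotonic matrix of ${\rm{SL}}_{2}(\vartheta,\Bb{Z})$ is determined by its second column, whereas you run a Euclidean-type induction directly on the matrix, stripping the leftmost factor $ST^{-2n}$. Your existence half is sound: the integer $n$ with $|2na-c|\le |a|$ exists, is unique, satisfies $|2na-c|<|a|$ by the parity dichotomy, and is nonzero because $|c|>|a|$; the verification that $B'=T^{2n}S^{-1}B$ is again modulo isotonic with $|d'|=|b|<|d|$ goes through (the only rough spot is $|a|=1$, where your bound only gives $|b'|\le|b|$; but there $|a'|\le|a|-1$ forces $a'=0$, hence $|b'a|=|a'b-1|=1$ and $|b'|=1<|b|$, or one can simply note that $b'\equiv d\not\equiv b \pmod 2$ excludes $|b'|=|b|$ — cosmetic either way).

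The genuine gap is in the uniqueness half. Your argument shows that the reduction algorithm produces a well-defined tuple, but uniqueness requires starting from an \emph{arbitrary} factorization $(-1)^{\gamma}B=ST^{-2m_M}\cdots ST^{-2m_1}$ and proving that its leftmost exponent $m_M$ coincides with the division-determined $n$ (and, in your base case $a=0$, that no factorization with $M\ge 2$ exists at all). Nothing in your proposal establishes any property of an arbitrary product $ST^{-2m_k}\cdots ST^{-2m_1}$: what is needed is that the modulus of its top-left entry is strictly smaller than the modulus of its bottom-left entry (in particular the top-left entry is nonzero as soon as $k\ge 2$), for then writing $(-1)^{\gamma}T^{2m_M}S^{-1}B$ as the tail product gives $|2m_Ma-c|<|a|$, forcing $m_M=n$, after which the inductive hypothesis applied to $B'$ finishes the argument; the same fact rules out $M\ge 2$ when $a=0$. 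You only prove the reverse implication (isotonic $B$ implies isotonic $B'$), never anything about $ST^{-2n}$ times a shorter word. The missing fact is exactly the monotonicity of the left column of the partial products, the paper's \eqref{f6preresauxprlem2}, which follows by a short induction from $|2my-x|\ge 2|m||y|-|x|>|y|$ whenever $|x|<|y|$; so the gap is easily closed, but as written the uniqueness claim does not stand on its own.
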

\begin{proof}\hspace{-0,15cm}{\bf{.}}
 By \eqref{f1auxdef1} and the definition of ${\rm{SL}}_{2} (\vartheta, \Bb{Z})$, $b$ and $d$ are nonzero integers of different parity, $b/d \in (-1,1)\setminus\{0\} $ and it follows from  $ad - bc\!=\!1$ that ${\rm{gcd}}(b, d)= 1$. By applying Lemma~\ref{case39lem1} to the rational number $b/d$ we obtain the existence of
the unique $N\! \in \!\Bb{N}$ and $(n_{N}, ..., n_{1})\!\in\! \Bb{Z}_{\neq 0}^{N}$ such that
\begin{align*}
    & \dfrac{b}{d} = \phi_{{\fo{\hspace{0,005cm}n_{N}, ..., n_{1}}}}(0):=
  {\fo{\dfrac{1}{2n_{N}\! -\! \dfrac{1}{ \begin{subarray}{l}
\begin{displaystyle} \vphantom{A^{A}}
2n_{N-1}\!- \!\end{displaystyle}\\[-0.4cm]
\hphantom{2n_{k-1} 11 -} \ddots\\[-0.4cm]
\hphantom{2n_{k-1} 11 - \ddots} \begin{displaystyle} - \!\end{displaystyle} \dfrac{1}{2 n_{2} \!- \! \dfrac{1}{
\vphantom{A^{A}}2 n_{ 1}  }}\end{subarray} }}}}
  \ ,  \quad (n_{N}, ..., n_{1})\!\in\! \Bb{Z}_{\neq 0}^{N} \ , \ N\! \in \!\Bb{N} \,,
\end{align*}

\vspace{-0,2cm}
\noindent where
\begin{align*}
    &  \phi_{\eufm{n}}(z)\!= \! \dfrac{z p^{\eufm{n}}_{N-1} + q^{\eufm{n}}_{N-1}}{z p^{\eufm{n}}_{N} + q^{\eufm{n}}_{N}}=
    {\fo{\dfrac{1}{2n_{N}\! -\! \dfrac{1}{ \begin{subarray}{l}
\begin{displaystyle} \vphantom{A^{A}}
2n_{N-1}\!- \!\end{displaystyle}\\[-0.4cm]
\hphantom{2n_{k-1} 11 -} \ddots\\[-0.4cm]
\hphantom{2n_{k-1} 11 - \ddots} \begin{displaystyle} - \!\end{displaystyle} \dfrac{1}{2 n_{2} \!- \! \dfrac{1}{
\vphantom{A^{A}}2 n_{ 1} -z }}\end{subarray} }}}} \, , \   \ z \in [-1,1]\cup (\Bb{C}\setminus\Bb{R})\,,
\end{align*}

\noindent in view of \eqref{f6zpf8contgen}  and \eqref{f1bsden}. Therefore there exists a unique $\gamma \in \{0, 1\}$ such that $b = (-1)^{\gamma} q^{\eufm{n}}_{N-1}$ and $d = (-1)^{\gamma} q^{\eufm{n}}_{N}$.
By virtue of \eqref{f0preresauxlem1}, \eqref{f1preresauxlem2} and \eqref{f7pf8contgen}, two modulo
isotonic matrices
\begin{align*}
    &  \begin{pmatrix} p^{\eufm{n}}_{N-1} & q^{\eufm{n}}_{N-1}  \\ p^{\eufm{n}}_{N} & q^{\eufm{n}}_{N} \\ \end{pmatrix}\! =  \!\begin{pmatrix}0 & 1 \\ -1 &  2 n_{N} \\ \end{pmatrix} \begin{pmatrix}0 & 1 \\ -1 &  2 n_{N-1} \\ \end{pmatrix}\ldots \begin{pmatrix}0 & 1 \\ -1 &  2 n_{1} \\ \end{pmatrix}
\end{align*}

\noindent and $(-1)^{\gamma} B$ belong to ${\rm{SL}}_{2} (\vartheta, \Bb{Z})$ and have the same second column. In accordance with Lemma~\ref{case17lem3} and \eqref{f2pf8contgen}, this means that \eqref{f1case17lem4} holds and that Lemma~\ref{case17lem4}  is proved.
$\square$ \end{proof}

 \vspace{0.15cm}
 \noindent
{\bf{Proof of Theorem~\ref{case17th1}.}} Let $ A \in  {\rm{SL}}_{2} (\vartheta, \Bb{Z})\setminus \{\pm I, \pm S\}$. According to Lemma~\ref{case17lem2},  there exists a unique ordered pair $(\alpha, \beta)$ of numbers
$\alpha, \beta \in \{0, 1\}$ such that the  matrix $B:= (\begin{smallmatrix} a & b \\ c & d \end{smallmatrix}) :=S^{\alpha} A S^{\beta}$ is modulo isotonic. Then, obviously,
\begin{align}\label{f3case17th1}
    &   A = (-1)^{\alpha + \beta} S^{\alpha} B S^{\beta} \ , \quad  \alpha, \beta \in \{0, 1\} \,.
\end{align}

\noindent
\noindent By using \eqref{f1case17lem4} and \eqref{f3case17th1}, we obtain
\begin{align*}
    &   A = (-1)^{\alpha + \beta+ \gamma} S^{\alpha} \big(S T^{{\fo{-2 n_{N}}}}  S T^{{\fo{-2 n_{N-1}}}}\ldots S T^{{\fo{-2 n_{1}}}}\big) S^{\beta} \,.
\end{align*}

\noindent This proves \eqref{f1case17th1} for $\{1, 0\} \ni \sigma \equiv  \alpha + \beta+ \gamma ({\rm{mod}}\, 2)$ and the uniqueness of such representation as well. Theorem~\ref{case17th1} is proved.  $\square$

}}\end{clash}
\end{subequations}


\vspace{0,2cm}
\begin{subequations}
\begin{clash}{\rm{\hypertarget{r18}{}\label{case18}\hspace{0,00cm}{\hyperlink{br18}{$\uparrow$}}
  \ We prove Lemma~\ref{bsdentplem1}{\rm{(b)}}.  The lower arc $\phi_{\eufm{n}} \big(
\gamma (- \sigma_{n_{1}}, \infty )\big)$  of $\fetr^{\hspace{0,05cm}n_{N}, ..., n_{1}}$  is  a part of the boundary of\vspace{-0,2cm}
\begin{align*}
     \phi_{{\fo{n_{N}, ..., n_{1}}}}\left(\fet\! -\! 2\sigma_{n_{1}}\right) &\! =\!
 \phi_{{\fo{n_{N}, ..., n_{2}}}}\left(\dfrac{1}{2n_{1}\! - \!\fet\! + \!2\sigma_{n_{1}}   }\right)
\!=\!
  \phi_{{\fo{n_{N}, ..., n_{2}, n_{1} +\sigma_{n_{1}}   }}}\left(\fet\right),
\end{align*}

\vspace{-0,2cm}
\noindent and also \vspace{-0,2cm}
\begin{align*}
    &  \phi_{{\fo{n_{N}, ..., n_{1}}}} \left(\gamma (-\sigma_{n_{1}},\infty)\right)=
 \phi_{{\fo{n_{N}, ..., n_{2}}}}\left(\dfrac{1}{2n_{1} - \gamma (\sigma_{n_{1}},\infty) + 2\sigma_{n_{1}}}\right)  \\[0,15cm] & =
  \phi_{{\fo{n_{N}, ..., n_{2}, n_{1} +\sigma_{n_{1}} }}}\left(\gamma (\sigma_{n_{1}},\infty)\right)=
 \gamma_{{\fo{\hspace{0,005cm}n_{N}, ..., n_{2}, n_{1}  +\sigma_{n_{1}}  }}} (\sigma_{{\fo{\hspace{0,005cm}n_{N}, ..., n_{2}, n_{1} +\sigma_{n_{1}}  }}}, \infty).
\end{align*}

\noindent  As stated before Lemma~\ref{bsdentplem1}, this means that
$\phi_{\eufm{n}} \big( - \sigma_{n_{1}}+ i \Bb{R}_{>0} \big)$ is the roof of $\fetr^{\hspace{0,05cm}n_{N}, ..., n_{1}+ \sigma_{n_{1}} }$.

Let $\sigma \in \{1, -1\}$. Then the lower arc $\phi_{\eufm{n}} \big(
\gamma ( \sigma, 0 )\big)$  of $\fetr^{\hspace{0,05cm}n_{N}, ..., n_{1}}$  is  a part of the boundary of\vspace{-0,3cm}
\begin{align*}
    &  \phi_{{\fo{n_{N}, ..., n_{1}}}}\left(-\dfrac{1}{\fet \!-  \!2 \sigma }\right)  \!= \!
 \phi_{{\fo{n_{N}, ..., n_{2}}}}\left(\dfrac{1}{2n_{1}  \!- \! \dfrac{1}{2 \sigma  \!- \!\fet}}\right) \!= \!
  \phi_{{\fo{n_{N}, ..., n_{2}, n_{1}, \sigma }}}\left(\fet\right),
\end{align*}

\vspace{-0,2cm}
\noindent and \vspace{-0,2cm}
\begin{align*}
    &  \phi_{{\fo{n_{N}, ..., n_{1}}}} \left(\gamma (\sigma,0)\right)
 \!= \!
\phi_{{\fo{n_{N}, ..., n_{1}}}}\left( -\dfrac{1}{\gamma (-\sigma,\infty) } \right)
 \!=  \!
\phi_{{\fo{n_{N}, ..., n_{1}}}}\left( -\dfrac{1}{\gamma (\sigma,\infty) - 2\sigma} \right) \\    &
 \!= \!
 \phi_{{\fo{n_{N}, ..., n_{2}}}}\left(\dfrac{1}{2n_{1}  \!- \! \dfrac{1}{2 \sigma  \!- \!\gamma (\sigma,\infty)}}\right)
  \! = \!
 \phi_{{\fo{n_{N}, ..., n_{2}, n_{1}, \sigma}}}\big(\gamma (\sigma,\infty)\big) \,.
\end{align*}

\noindent According to what was  stated before Lemma~\ref{bsdentplem1}, this means that $\phi_{{\fo{n_{N}, ..., n_{1}}}} \left(\gamma (\sigma,0)\right)$ is the roof of $\fetr^{\hspace{0,05cm}n_{N}, ..., n_{1},\sigma}$ for every
$\sigma \in \{1, -1\}$. Lemma~\ref{bsdentplem1}{\rm{(b)}} is proved. Lemma~\ref{bsdentplem1}{\rm{(c)}} follows from the obvious fact that $\gamma (0, 1)$ and $\gamma (-1, 0)$, correspondingly, are the roofs of
\begin{align*}
    &  \fetr^{\hspace{0,05cm}1}= \dfrac{1}{2 - \fet^{\,{\tn{|\!+\!1}}}} = - \dfrac{1}{\fet^{\,{\tn{|\!+\!1}}} - 2}
 \ \ \mbox{and }\  \ \fetr^{\hspace{0,05cm}-1}= \dfrac{1}{-2 - \fet^{\,{\tn{|\!-\!1}}}}= - \dfrac{1}{\fet^{\,{\tn{|\!-\!1}}} + 2}\ .
\end{align*}

}}\end{clash}
\end{subequations}


\vspace{0,2cm}
\begin{subequations}
\begin{clash}{\rm{\hypertarget{r19}{}\label{case19}\hspace{0,00cm}{\hyperlink{br19}{$\uparrow$}}
  \ We prove \eqref{f1contgendef1}. (a) We have $z\in \gamma(\sigma,0) = - 1/\gamma(-\sigma,\infty) $ if and only if there exists $T > 0$ such that
\begin{align*}
    &  z = - \dfrac{1}{- \sigma + i T}   \ \Rightarrow \  - \dfrac{1}{z} =  - \sigma + i T\, .
\end{align*}

\noindent By using \eqref{f1ybsden}, according to which
\begin{align*}
    &  \left\rceil{\re\left(- \dfrac{1}{z}\right)}\right\lceil_{2} = \left\rceil{
- \sigma}\right\lceil_{2} = - 2 \sigma\,,
\end{align*}

\noindent we obtain by \eqref{f3contgendef1},
\begin{align*}
    &  \Bb{G}_{2} (z) = - \dfrac{1}{z} -
\left\rceil{\re\left(- \dfrac{1}{z}\right)}\right\lceil_{2} =- \sigma + i T +  2 \sigma = \sigma + i T \in \gamma(\sigma,\infty) \,.
\end{align*}

\noindent Since $T> 0$ is arbitrary, \eqref{f1contgendef1}(a) is proved.

\vspace{0,1cm}(b) Since $\gamma(\sigma,\infty) \subset \mathcal{F}^{\,{\tn{||}}}_{{\tn{\square}}}$, \eqref{f1contgendef1}(b) is immediate from \eqref{f9bsdentp} and evident relationship
$-1/\gamma(\sigma,\infty)= \gamma(-\sigma,0)$.

\vspace{0,1cm}(c) In view of \eqref{f5contgen},
\begin{align*}
    &  - \dfrac{1}{\phi_{{\fo{n}}} \big(\hspace{0.07cm}\gamma(\sigma_{n}, \infty)\hspace{0.07cm}\big)} = -2n +
\gamma(\sigma_{n}, \infty) = -2n + \sigma_{n} + {\rm{i}}\, \Bb{R}_{>0}\, .
\end{align*}

\noindent By \eqref{f2contgendef1} and \eqref{f1ybsden},
\begin{align*}
    & \Bb{G}_{2} \Big( \phi_{{\fo{n}}} \big(\hspace{0.07cm}\gamma(\sigma_{n}, \infty)\hspace{0.07cm}\big)\Big) =
\left\{\re \left(- \dfrac{1}{\phi_{{\fo{n}}} \big(\hspace{0.07cm}\gamma(\sigma_{n}, \infty)\hspace{0.07cm}\big)}\right)\right\}^{{\tn{\rceil\hspace{-0,015cm}\lceil}}}_{2} + {\rm{i}} \,\im \left(- \dfrac{1}{\phi_{{\fo{n}}} \big(\hspace{0.07cm}\gamma(\sigma_{n}, \infty)\hspace{0.07cm}\big)}\right) \\    &   =
\left\{-2n + \sigma_{n}\right\}^{{\tn{\rceil\hspace{-0,015cm}\lceil}}}_{2} + {\rm{i}}\, \Bb{R}_{>0}=
\sigma_{n}+ {\rm{i}}\, \Bb{R}_{>0}=\gamma(\sigma_{n}, \infty)  \, , \
\end{align*}

\noindent which proves \eqref{f1contgendef1}(c) and completes the proof of \eqref{f1contgendef1}.

}}\end{clash}
\end{subequations}


\vspace{0,2cm}
\begin{subequations}
\begin{clash}{\rm{\hypertarget{r20}{}\label{case20}\hspace{0,00cm}{\hyperlink{br20}{$\uparrow$}}
  \ We prove \eqref{f15contgen}. It follows from
\begin{align*}\tag{\ref{f5contgen}}
    & \left\{\begin{array}{rlll}
      -1/\phi_{{\fo{\hspace{0,005cm}n_{1}}}}(z)  &    =   &   - 2 n_{1} + z \,,   &\ \mbox{if} \ N=1 \,,    \\
       -1/\phi_{{\fo{\hspace{0,005cm}n_{N}, ..., n_{1}}}}(z)  &    =  &    - 2 n_{N} +
    \phi_{{\fo{\hspace{0,005cm}n_{N-1}, ..., n_{1}}}}(z)\,, &\ \mbox{if}\ N\geqslant 2\,,
      \end{array}\right.
\end{align*}

\noindent and  from
\begin{align*}\tag{\ref{f8contgen}}\hspace{-0,2cm}
    &     \phi \left(\Bb{H}_{|\re|\leqslant 1}\right)\! \subset\! \Bb{H}_{|\re|<1}\! \setminus\!\fet \, , \ \  \phi \!\in \!\Gamma_{\vartheta}^{\hspace{0,015cm}{\tn{||}}}  \,,
\end{align*}

\noindent
that \eqref{f10bsdentp} holds because
\begin{align*}
    &      \re \left(-1/\phi_{{\fo{\hspace{0,005cm}n_{1}}}}(z)\right) =\re \left( - 2 n_{1} + z \right) \in     - 2 n_{1} + (-1,1) \,,      \ \mbox{if} \ N=1 \,, \
z \! \in \! \Bb{H}_{|\re|< 1}  \,,    \\  &
    \re  \left( -1/\phi_{{\fo{\hspace{0,005cm}n_{N-k+1}, ..., n_{1}}}}(z) \right) = \re
\left( - 2 n_{N-k+1} +
    \phi_{{\fo{\hspace{0,005cm}n_{N-k}, ..., n_{1}}}}(z)\right)
  \\    &  \in      - 2 n_{N-k+1} +
    (-1,1)\,,       \ \mbox{if}\ N\geqslant 2\,, \ z \! \in \! \Bb{H}_{|\re|\leqslant 1} \ \Rightarrow \  \phi_{{\fo{\hspace{0,005cm}n_{N-k}, ..., n_{1}}}}(z) \! \in \! \Bb{H}_{|\re|< 1} \,.
\end{align*}

\noindent Therefore
\begin{align*}
      \Bb{G}_{2} \big( \phi_{{\fo{n_{1}}}} (z)\big) &=
-\dfrac{1}{\phi_{{\fo{\hspace{0,005cm}n_{1}}}}(z)}
 -
\left\rceil{\re\left(-\dfrac{1}{\phi_{{\fo{\hspace{0,005cm}n_{1}}}}(z)}\right)}\right\lceil_{2} \\    &     =
 - 2 n_{1} + z  -
\left\rceil{- 2 n_{1} + \re\,z}\right\lceil_{2}    = z    \ , \quad  z \! \in \! \Bb{H}_{|\re|< 1}  \,,
\end{align*}

\noindent and
\begin{align*}
    &  \Bb{G}_{2} \big( \phi_{{\fo{n_{N-k+1}, ..., n_{1}}}} (z)\big) = - \dfrac{1}{ \phi_{{\fo{n_{N-k+1}, ..., n_{1}}}} (z)}-
\left\rceil{\re\left(-\dfrac{1}{\phi_{{\fo{n_{N-k+1}, ..., n_{1}}}} (z)}\right)}\right\lceil_{2}
 \\    &   =
  - 2 n_{N-k+1} +
    \phi_{{\fo{\hspace{0,005cm}n_{N-k}, ..., n_{1}}}}(z)-
\left\rceil{ - 2 n_{N-k+1} +  \re\, \phi_{{\fo{\hspace{0,005cm}n_{N-k}, ..., n_{1}}}}(z) }\right\lceil_{2} \\    &   =  \phi_{{\fo{\hspace{0,005cm}n_{N-k}, ..., n_{1}}}}(z) \ , \quad  \phi_{{\fo{\hspace{0,005cm}n_{N-k}, ..., n_{1}}}}(z)\in  \Bb{H}_{|\re|<1}\,,
\end{align*}

\noindent what was to be proved in \eqref{f15contgen}.

}}\end{clash}
\end{subequations}


\vspace{0,2cm}
\begin{subequations}
\begin{clash}{\rm{\hypertarget{r21}{}\label{case21}\hspace{0,00cm}{\hyperlink{br21}{$\uparrow$}}
  \ For $1 \! \leqslant\!  k \! \leqslant\!  N\! -\! 1$ in \eqref{f4contgendef1}(b) we have
\begin{align*}
    &   \Bb{G}^{k}_{2} \left(\fetr^{\hspace{0,05cm}n_{N}, ..., n_{1}}\right)=
 \Bb{G}^{k}_{2} \left(\phi_{{\fo{n_{N}, ..., n_{1}}}}\big( \fet^{\,{\tn{|}}\sigma_{n_{1}}}\big)\right) \\    &
=\phi_{{\fo{n_{N-k}, ..., n_{1}}}}\big( \fet^{\,{\tn{|}}\sigma_{n_{1}}}\big)=
\fetr^{\hspace{0,05cm}n_{N-k}, ..., n_{1}} \, , \
\end{align*}

\noindent by virtue of
\begin{align*}\tag{\ref{f15contgen}(b)}
    &  \Bb{G}^{k}_{2} \big( \phi_{{\fo{n_{N}, ..., n_{1}}}} (z)\big)  \! = \!  \phi_{{\fo{\hspace{0,005cm}n_{N-k}, ..., n_{1}}}}(z)\,, \  \mbox{if} \ 1 \! \leqslant\!  k \! \leqslant\!  N\! -\! 1 \,,  &\  \  z \! \in \! \Bb{H}_{|\re|\leqslant 1} \,.
\end{align*}

\noindent And
\begin{align*}
    &  \Bb{G}_{2} \left(\fetr^{\hspace{0,05cm} n_{1}}\right)=
 \Bb{G}_{2} \left(\phi_{{\fo{ n_{1}}}}\big( \fet^{\,{\tn{|}}\sigma_{n_{1}}}\big)\right)=
 \Bb{G}_{2} \left(\phi_{{\fo{ n_{1}}}}\big( \fet\big)\right)\bigcup
  \Bb{G}_{2} \Big( \phi_{{\fo{n_{1}}}} \big(\hspace{0.07cm}\gamma(\sigma_{n_{1}}, \infty)\hspace{0.07cm}\big)\Big)
 \\    &   =
\fet \bigcup \gamma(\sigma_{n_{1}}, \infty)= \fet^{\,{\tn{|}}\sigma_{n_{1}}}\, , \
\end{align*}

\noindent by virtue \eqref{f15contgen}(a) since $\fet \subset \Bb{H}_{|\re|<1} $, and by virtue of
\eqref{f1contgendef1}(c).

}}\end{clash}
\end{subequations}

\vspace{0,2cm}
\begin{subequations}
\begin{clash}{\rm{\hypertarget{r42}{}\label{case42}\hspace{0,00cm}{\hyperlink{br42}{$\uparrow$}}
  \  \ It follows from \eqref{f4bsdentp} and Theorem~\ref{bsdentpth2} that
\begin{multline}
    \bigcup\limits_{\phi \in \Gamma_{\vartheta}} \phi  (1+{\rm{i}}\Bb{R}_{>0})  =
    \bigsqcup\limits_{{\fo{n_{0} \in \Bb{Z}_{\neq 0}}}} \Big(-2 n_{0}+  \big({\rm{sign}}(n_{0})  + i \Bb{R}_{> 0}\big)\Big) \\
    \sqcup
 \bigsqcup\limits_{
\substack{  {\fo{n_{1}, ... , n_{N} \in \Bb{Z}_{\neq 0}}} \\[0,075cm] {\fo{N \in \Bb{N}\,, \ \ n_{0} \in \Bb{Z}}} }
}\Big( 2 n_{0} + \phi_{{\fo{\,n_{N}, ..., n_{1}}}}
  \big({\rm{sign}}(n_{1})  + i \Bb{R}_{> 0}\big)\Big) \, , \
\label{f1case42}\end{multline}

\noindent while \eqref{f4wbsdentp} and \eqref{f4xbsdentp} for $A=
  \gamma (-1,\infty)\sqcup\gamma (1,0)$ give
 \begin{multline}
    \bigcup\limits_{\phi \in \Gamma_{\vartheta}} \phi  (1+{\rm{i}}\Bb{R}_{>0})  =
    \bigsqcup\limits_{{\fo{n_{0} \in \Bb{Z}_{\neq 0}}}} \Big(-2 n_{0}+  \big(\gamma (-1,\infty)\sqcup\gamma (1,0)\big)\Big) \\
    \sqcup
 \bigsqcup\limits_{
\substack{  {\fo{n_{1}, ... , n_{N} \in \Bb{Z}_{\neq 0}}} \\[0,075cm] {\fo{N \in \Bb{N}\,, \ \ n_{0} \in \Bb{Z}}} }
}\Big( 2 n_{0} + \phi_{{\fo{\,n_{N}, ..., n_{1}}}}
  \big(\gamma (-1,\infty)\sqcup\gamma (1,0)\big)\Big) \, , \
\label{f2case42}\end{multline}

\noindent We prove directly that the right-hand side set in \eqref{f1case42}    equals to  the right-hand side set in \eqref{f2case42}. Everywhere below we use the notation
\begin{align*}
    &  \sigma_{n} := {\rm{sign}} (n)  \ , \quad  n \in \Bb{Z}_{\neq 0} \,,
\end{align*}

\noindent and the following identities
\begin{align}\label{f3case42}
    &
    \begin{array}{l}
  {\rm{(1)}} \ \     \gamma (0, 1) =   \phi_{1} \big(\gamma (1,\infty)\big) \ , \\[0,5cm]
  {\rm{(2)}} \ \  \phi_{n} \big(\gamma (-1,\infty)\big) =
    \left\{
    \begin{array}{ll}
 \phi_{n +1} \big(\gamma (\sigma_{n},\infty)\big)\,,  &   \
\ \mbox{if} \ n \in \Bb{Z}_{\geqslant 1} \,;     \\[0,3cm]
\phi_{n } \big(\gamma (\sigma_{n},\infty)\big),
          &   \    \ \mbox{if} \ n \in \Bb{Z}_{\leqslant -1}\,,
           \end{array}
    \right.\\[0,75cm]
  {\rm{(3)}} \ \   2n +  \gamma (-1,\infty) =
  \left\{
    \begin{array}{ll}
- 2 (-n) +  \gamma( \sigma_{- n}, \infty)\,,  &   \
\ \mbox{if} \ n \in \Bb{Z}_{\geqslant 1} \,;     \\[0,3cm]
- 2 (1-n ) +  \gamma( \sigma_{1- n }, \infty)\,,
          &   \    \ \mbox{if} \ n \in \Bb{Z}_{\leqslant 0}\,,
           \end{array}
    \right.
    \end{array}
\end{align}

\noindent which hold because
\begin{align*}
    &   \phi_{1} \left(\gamma (1,\infty)\right) =  \dfrac{1}{2 - \left(1 + i \Bb{R}\right)} = \dfrac{1}{1 - i \Bb{R}} =
    \gamma (0, 1) \, , \
\end{align*}

\noindent and
\begin{align*}
    &  \phi_{n} \left(\gamma (-1,\infty)\right) =
    \left\{
    \begin{array}{ll}
             \dfrac{1}{2n - \left(-1 + i \Bb{R}\right)} = \dfrac{1}{2n + 2 -1 - i \Bb{R}}  &\\[0,3cm]       =
          \dfrac{1}{2(n+1) - \left( \sigma_{n} + i \Bb{R}\right)}  =
       \phi_{n +1} \left(\gamma (\sigma_{n},\infty)\right),
        &   \  \ \mbox{if} \ n \in\Bb{Z}_{\geqslant 1}\,;     \\[0,3cm]
      \dfrac{1}{2n - \left(-1 + i \Bb{R}\right)} =
        \dfrac{1}{2n -\left( \sigma_{n} + i \Bb{R}\right)}  & \\[0,4cm]
        =        \phi_{n } \left(\gamma (\sigma_{n},\infty)\right),
          &   \            \ \mbox{if} \ n  \in\Bb{Z}_{\leqslant -1}\,.
           \end{array}
    \right.
\end{align*}

\noindent Applying first identity \eqref{f3case42}(1) and then identities \eqref{f3case42}(2),(3) to the set on the right-hand side of the equality \eqref{f2case42}, we obtain that it equals to
\begin{align*}
    &   \bigsqcup\limits_{ n_{0} \in \Bb{Z}}\big(2n_{0} + \gamma (-1,\infty)\big)
     \sqcup
   \bigsqcup\limits_{
\substack{  {\fo{n_{1}, ... , n_{ N} \in \Bb{Z}_{\neq 0}}} \\[0,075cm] {\fo{N \in \Bb{N}\,, \ \ n_{0} \in \Bb{Z}}} }
} \Big( 2 n_{0} + \phi_{{\fo{\hspace{0,005cm} n_{N}, ..., n_{1}}}}\big(\gamma (-1,\infty)\big)\Big) \\[0,3cm]    &
   \sqcup \bigsqcup\limits_{ n_{0} \in \Bb{Z}}\Big(2n_{0} + \phi_{1}\big(\gamma (1,\infty)\big) \Big)
     \sqcup
   \bigsqcup\limits_{
\substack{  {\fo{n_{2}, ... , n_{ N} \in \Bb{Z}_{\neq 0}}} \\[0,075cm] {\fo{N \in \Bb{Z}_{\geqslant 2}\,, \ \ n_{0} \in \Bb{Z}}} } }
\Big( 2 n_{0} + \phi_{{\fo{\hspace{0,005cm} n_{N}, ..., n_{2}, 1}}}\big(\gamma (1,\infty)\big)\Big) \\[0,3cm]    &
= \bigsqcup\limits_{ n_{0} \in \Bb{Z}_{\geqslant 1}}\big(-2(-n_{0}) + \gamma (\sigma_{-n_{0}} ,\infty)\big)
\sqcup
 \bigsqcup\limits_{ n_{0} \in \Bb{Z}_{\leqslant 0}}\big(-2(1-n_{0}) + \gamma (\sigma_{1-n_{0}} ,\infty)\big)
 \\    &     \sqcup \bigsqcup\limits_{ n_{0} \in \Bb{Z}} \bigg[
 \Big(2n_{0} + \phi_{1}\big(\gamma (1,\infty)\big) \Big)  \sqcup
   \bigsqcup\limits_{n_{1}\in \Bb{Z}_{\geqslant 2}}  \Big(2n_{0} +  \phi_{n_{1} } \big(\gamma (\sigma_{n_{1}},\infty)\big) \Big)
 \\    & \sqcup
\bigsqcup\limits_{n_{1}\in \Bb{Z}_{\leqslant -1} }  \Big(2n_{0} +  \phi_{n_{1} } \big(\gamma (\sigma_{n_{1}},\infty)\big)\Big)
  \bigg]  \\    &
  \sqcup \bigsqcup\limits_{
\substack{  {\fo{n_{2}, ... , n_{ N} \in \Bb{Z}_{\neq 0}}} \\[0,075cm] {\fo{N \in \Bb{Z}_{\geqslant 2}\,, \ \ n_{0} \in \Bb{Z}}} } }  \bigg[
   \Big( 2 n_{0} + \phi_{{\fo{\hspace{0,005cm} n_{N}, ..., n_{2}, 1}}}\big(\gamma (1,\infty)\big)\Big)
\\    &  \sqcup   \bigsqcup\limits_{n_{1} \in \Bb{Z}_{\geqslant 2}}  \Big(2n_{0} +  \phi_{{\fo{\hspace{0,005cm} n_{N}, ..., n_{1}}}} \big(\gamma (\sigma_{n_{1}},\infty)\big) \Big)
 \sqcup
 \bigsqcup\limits_{n_{1}\in \Bb{Z}_{\leqslant -1} }\! \!\!\! \Big(2n_{0} +   \phi_{{\fo{\hspace{0,005cm} n_{N}, ..., n_{1}}}}  \big(\gamma (\sigma_{n_{1}},\infty)\big)\Big)
  \bigg]  \\    &
    =
  \bigsqcup\limits_{ n \in \Bb{Z}_{\neq 0}}\big(-2 n + \gamma (\sigma_{n} ,\infty)\big)
  \sqcup
  \bigsqcup\limits_{
\substack{  {\fo{n_{1} \in \Bb{Z}_{\neq 0}}} \\[0,075cm] {\fo{ n_{0} \in \Bb{Z}}} } }
 \Big(2n_{0} +  \phi_{n_{1} } \big(\gamma (\sigma_{n_{1}},\infty)\big)\Big) \\    &
 \sqcup
 \bigsqcup\limits_{
\substack{  {\fo{n_{1}, ... , n_{ N} \in \Bb{Z}_{\neq 0}}} \\[0,075cm] {\fo{N \in \Bb{Z}_{\geqslant 2}\,, \ \ n_{0} \in \Bb{Z}}} } } \Big(2n_{0} +   \phi_{{\fo{\hspace{0,005cm} n_{N}, ..., n_{1}}}}  \big(\gamma (\sigma_{n_{1}},\infty)\big)\Big) \\    &
=
\bigsqcup\limits_{ n_{0} \in \Bb{Z}_{\neq 0}}\big(-2 n_{0} +  (\sigma_{n_{0}} + i\Bb{R}_{>0})\big)
  \sqcup
    \bigsqcup\limits_{
\substack{  {\fo{n_{1}, ... , n_{ N} \in \Bb{Z}_{\neq 0}}} \\[0,075cm] {\fo{N \in \Bb{N},, \ \ n_{0} \in \Bb{Z}}} } }
    \! \!\!\!\Big(2n_{0} +   \phi_{{\fo{\hspace{0,005cm} n_{N}, ..., n_{1}}}}  \big(\gamma (\sigma_{n_{1}},\infty)\big)\Big),
\end{align*}

\noindent where the latter set coincides with the right-hand set in \eqref{f1case42}. The proof of the equality between the right-hand sets in \eqref{f1case42} and  in \eqref{f2case42} is finished.
So that these two sets represent two different partitions of the set
\begin{align*}
    &\bigcup\limits_{{\fo{\phi \in \Gamma_{\vartheta}}}} \phi  (1+{\rm{i}}\Bb{R}_{>0})  =  \bigcup\limits_{{\fo{  \phi\! \in\! \Gamma(2)   }}}   \phi \big(\Bb{H} \cap \partial F_{\Gamma(2)} \big)\,.
\end{align*}

}}\end{clash}
\end{subequations}

\vspace{0,2cm}
\begin{subequations}
\begin{clash}{\rm{\hypertarget{r44}{}\label{case44}\hspace{0,00cm}{\hyperlink{br44}{$\uparrow$}}
  \  \ We prove \eqref{f2denac}. The set in the right-hand side of \eqref{f2denac} equals to $ \Gamma_{\vartheta}^{\hspace{0,015cm}{\tn{||}}}$, in view of the definition \eqref{f7contgen}. The relationship \eqref{f4contgen} yields that  $ \Gamma_{\vartheta}^{\hspace{0,015cm}{\tn{||}}} \subset
\Gamma_{\vartheta} $ and since
\begin{align}\label{f2case44}  \Gamma_{\vartheta}= \bigg\{
{\fo{\, \dfrac{a z + b}{c z +d} }} \ \bigg| \
{\fo{\begin{array}{l}
ab\equiv cd \equiv 0 ({\rm{mod}}\, 2)    \\[0,1cm]
ad - bc\!=\!1 \, , \ a, b, c, d \!\in\! \Bb{Z}
\end{array}}}\, \bigg\} \,,
\end{align}

\noindent we deduce from \eqref{f8contgen} that
\begin{align}\label{f1case44}
    & \Gamma_{\vartheta}^{\hspace{0,015cm}{\tn{||}}} \subset \bigg\{
{\fo{\, \dfrac{a z + b}{c z +d} }} \ \bigg| \
{\fo{\begin{array}{l}
 |a|\!<\! |b|\!<\!d,\ |a|\!<\! |c|\!<\!d     \\
ab\equiv cd \equiv 0 ({\rm{mod}}\, 2)    \\
ad - bc\!=\!1 \, , \ a, b, c, d \!\in\! \Bb{Z}
\end{array}}}\, \bigg\}  \, .
\end{align}

 \noindent
 The inverse  inclusion in \eqref{f1case44} follows from  \eqref{f2case44}, Lemma~\ref{case17lem4}, \eqref{f3pf8contgen} and \eqref{f7contgen}. The equalities \eqref{f2denac} are proved.
}}\end{clash}
\end{subequations}

\vspace{0,2cm}
\begin{subequations}
\begin{clash}{\rm{\hypertarget{r39}{}\label{case39}\hspace{0,00cm}{\hyperlink{br39}{$\uparrow$}}
  \  \ We show that this fact can be easily deduced from the properties
of the even integer part $\rceil{x}\lceil_{\! 2}\,\in\! 2 \Bb{Z}$ and of the
even fractional part
$\{x\}^{{\tn{\rceil\hspace{-0,015cm}\lceil}}}_{2} \in [-1,1]$
of the real number  $x\! \in\! \Bb{R}$. Moreover, our further reasoning does not depend on how these two functions are defined on the odd integers.

\vspace{0.15cm}
We first prove that  $\phi_{\eufm{n}} (0)$ for arbitrary $\eufm{n}\in  \Bb{Z}_{\neq 0}^{\hspace{0,02cm}\Bb{N}_{\hspace{-0,02cm}\eurm{f}}}$ is an even rational number lying in $(-1,1)\setminus\{0\}$.
  Observe that
\begin{align*}\tag{\ref{f6zpf8contgen}}
     & \phi_{\eufm{n}}(z)\!= \! \dfrac{z p^{\eufm{n}}_{N-1} + q^{\eufm{n}}_{N-1}}{z p^{\eufm{n}}_{N} + q^{\eufm{n}}_{N}} \ , \
     \eufm{n}\!=\!(n_{N}, ..., n_{1})\!\in\! \Bb{Z}_{\neq 0}^{N} \ , \ N\! \in \!\Bb{N} \, , \ z \in \Bb{H} \,,
\end{align*}

\noindent and
\begin{align*}\tag{\ref{f1preresauxlem2}}
    &
      {\rm{(a)}} \ \ \big|q^{\eufm{n}}_{k}\big| > \big|p^{\eufm{n}}_{k}\big| > \big|p^{\eufm{n}}_{k-1}\big| \ ,  \quad   {\rm{(b)}} \  \ \big|q^{\eufm{n}}_{k}\big| > \big|q^{\eufm{n}}_{k-1}\big| > \big|p^{\eufm{n}}_{k-1}\big| \ , \quad 1 \leqslant k \leqslant N \,  .
  \end{align*}

\noindent yield that
\begin{align}\label{f1case39}  &
\phi_{\eufm{n}} \in {\rm{Hol}}\big(\,[-1,1]\cup \left(\Bb{C}\setminus\Bb{R}\right) \big)
\, , \quad
     \eufm{n}\!=\!(n_{N}, ..., n_{1})\!\in\! \Bb{Z}_{\neq 0}^{N} \ , \ N\! \in \!\Bb{N} \,.
\end{align}

\noindent   So that, by setting $z=0$ in \eqref{f5contgen} and \eqref{f6zpf8contgen}, we obtain
\begin{align}\label{f2case39}
    & \left\{\begin{array}{rlll}
      -1/\phi_{{\fo{\hspace{0,005cm}n_{1}}}}(0)  &    =   &   - 2 n_{1}  \,,   &\ \mbox{if} \ N=1 \,,    \\
       -1/\phi_{{\fo{\hspace{0,005cm}n_{N}, ..., n_{1}}}}(0)  &    =  &    - 2 n_{N} +
    \phi_{{\fo{\hspace{0,005cm}n_{N-1}, ..., n_{1}}}}(0)\,, &\ \mbox{if}\ N\geqslant 2\,,
      \end{array}\right.
\end{align}

\noindent and, in view of \eqref{f1preresauxlem2},
\begin{align}\label{f3case39}
\phi_{\eufm{n}}(0)\!= \! \dfrac{ q^{\eufm{n}}_{N-1}}{ q^{\eufm{n}}_{N}} \in
\left(-1,1\right)\setminus\{0\}
\, , \quad
     \eufm{n}\!=\!(n_{N}, ..., n_{1})\!\in\! \Bb{Z}_{\neq 0}^{N} \ , \ N\! \in \!\Bb{N} \,.
\end{align}

\noindent  It follows from \eqref{f3case39} that  $\phi_{\eufm{n}} (0)$ is a rational number lying in $(-1,1)\setminus\{0\}$. Furthermore,
\begin{align*}\tag{\ref{f2apreresauxlem1a}}
p^{\eufm{n}}_{k-1} q^{\eufm{n}}_{k} - p^{\eufm{n}}_{k} q^{\eufm{n}}_{k-1} = 1  \ , \  \quad\quad    0 \leqslant k \leqslant N \ ,
 \end{align*}

\noindent for $k=N$ implies that
\begin{align}\label{f4case39}
    & {\rm{gcd}} \left(q^{\eufm{n}}_{N-1},  q^{\eufm{n}}_{N}\right) =1 \, , \
\end{align}

\noindent while \eqref{f2preresauxlem1a} written in the form
\begin{align*}\tag{\ref{f2preresauxlem1a}}
     &      q^{\eufm{n}}_{k} = 2 n_{k} q^{\eufm{n}}_{k-1} -\, q^{\eufm{n}}_{k-2} \ , \         \ q^{\eufm{n}}_{0} = 1  \ ,      \ q^{\eufm{n}}_{-1} =  0  \  ,
                \quad    1 \leqslant k \leqslant N \,,
\end{align*}

\noindent means that
\begin{align*}
    &   q_{2n-1}  \equiv 0 ({\rm{mod}} \, 2) \ , \quad   q_{2n}  \equiv 1 ({\rm{mod}} \, 2)
 \ , \quad       0 \leqslant n < (N+1)/2 \ ,
\end{align*}

\noindent and therefore $q^{\eufm{n}}_{N-1}  q^{\eufm{n}}_{N} \equiv 0 ({\rm{mod}} \, 2) $. This property together with \eqref{f4case39} proves that    $\phi_{\eufm{n}} (0)$ is an even rational number lying in $(-1,1)\setminus\{0\}$
for arbitrary $\eufm{n}\in  \Bb{Z}_{\neq 0}^{\hspace{0,02cm}\Bb{N}_{\hspace{-0,02cm}\eurm{f}}}$.

\vspace{0.15cm}
By applying the even integer and the even fractional parts  to \eqref{f2case39}, we obtain, taking into account of \eqref{f3case39},
\begin{align*}
    &
    \left\{\begin{array}{rllrlll}G^{{\tn{\rceil\hspace{-0,015cm}\lceil}}}_{2}\left( \phi_{{\fo{\hspace{0,005cm}n_{1}}}}(0)\right)&    =   &   0  \,,   &\
     \left\rceil{ 1/\phi_{{\fo{\hspace{0,005cm}n_{1}}}}(0)}\right\lceil_{2}  &    =   &    2 n_{1}  \,,   &\ \mbox{if} \ N=1 \,,    \\[0,2cm]
     G^{{\tn{\rceil\hspace{-0,015cm}\lceil}}}_{2}\left(\phi_{{\fo{\hspace{0,005cm}n_{N}, ..., n_{1}}}}(0)\right)&    =   &   \phi_{{\fo{\hspace{0,005cm}n_{N-1}, ..., n_{1}}}}(0)  \,,   &\
       \left\rceil{1/\phi_{{\fo{\hspace{0,005cm}n_{N}, ..., n_{1}}}}(0)}\right\lceil_{2}  &    =  &     2 n_{N} \,, &\ \mbox{if}\ N\geqslant 2\,.
      \end{array}\right.
\end{align*}

\noindent Hence, we can easily calculate the coefficients of the continued fraction
\begin{align*}
    &  \phi_{{\fo{\hspace{0,005cm}n_{N}, ..., n_{1}}}}(0):=
  {\fo{\dfrac{1}{2n_{N}\! -\! \dfrac{1}{ \begin{subarray}{l}
\begin{displaystyle} \vphantom{A^{A}}
2n_{N-1}\!- \!\end{displaystyle}\\[-0.4cm]
\hphantom{2n_{k-1} 11 -} \ddots\\[-0.4cm]
\hphantom{2n_{k-1} 11 - \ddots} \begin{displaystyle} - \!\end{displaystyle} \dfrac{1}{2 n_{2} \!- \! \dfrac{1}{
\vphantom{A^{A}}2 n_{ 1}  }}\end{subarray} }}}}
  \ ,  \quad (n_{N}, ..., n_{1})\!\in\! \Bb{Z}_{\neq 0}^{N} \ , \ N\! \in \!\Bb{N} \,,
\end{align*}

\vspace{-0,2cm}
\noindent by the formulas,
\begin{align}\label{f5case39}
    &  2n_{N-k}=\big\rceil 1\big/ \big(G^{{\tn{\rceil\hspace{-0,015cm}\lceil}}}_{2}\big)^{k} \big( \phi_{{\fo{n_{N}, ..., n_{1}}}} (0)\big)\big\lceil_{\! 2}  \ ,  \quad   0\leqslant k \leqslant N-1\,,
\end{align}

\noindent and also,
\begin{align}\label{f6case39}
     & \left\{
     \begin{array}{ll}
  \big(G^{{\tn{\rceil\hspace{-0,015cm}\lceil}}}_{2}\big)^{k}\! \big( \phi_{{\fo{n_{N}, ..., n_{1}}}} (0)\big)   \! = \!   \phi_{{\fo{\hspace{0,005cm}n_{N-k}, ..., n_{1}}}}(0) \, ,   &   \  0\!\leqslant\! k \!\leqslant \!N\!-\!1 \,,  \\[0,1cm]
     \big(G^{{\tn{\rceil\hspace{-0,015cm}\lceil}}}_{2}\big)^{N} \!\big( \phi_{{\fo{n_{N}, ..., n_{1}}}} (0)\big)   \! = \!0  \, ,   &   \ (n_{N}, ..., n_{1})\!\in\! \Bb{Z}_{\neq 0}^{N} \ , \ N\! \in \!\Bb{N} \,.
     \end{array}
     \right.
\end{align}

\noindent  It follows from \eqref{f5case39} that
\begin{align}\label{f7case39}
    &  \phi_{\eufm{n}}(0)=\phi_{\eufm{m}}(0) \, , \  \eufm{n}, \eufm{m}\in  \Bb{Z}_{\neq 0}^{\hspace{0,02cm}\Bb{N}_{\hspace{-0,02cm}\eurm{f}}} \ \ \Longleftrightarrow \ \
     \eufm{n} = \eufm{m} \,.
\end{align}

\vspace{0.25cm}
 We  prove now that the following analogue of \eqref{f6case39} for an arbitrary even rational number $p/q\in (-1,1)$, $p, q \in \Bb{Z}_{\neq 0}$, ${\rm{gcd}}(p, q) =1$, holds
\begin{align}\nonumber
    &  \dfrac{p}{q}\!\in \!(-1,1)\!\setminus\!\{0\} \ \mbox{is even}
     \Rightarrow  \
     \left\{ \begin{array}{ll} G^{{\tn{\rceil\hspace{-0,015cm}\lceil}}}_{2} \left(\dfrac{p}{q}\right)\!=\!\dfrac{r}{p}\!\in \!(-1,1)\!\setminus\!\{0\}\ \mbox{is even} \,,   & \ \mbox{if}\ |p|\!\geqslant\! 2 ;   \\[0,3cm]  G^{{\tn{\rceil\hspace{-0,015cm}\lceil}}}_{2} \left(\dfrac{p}{q}\right)=
      0\,,   & \ \mbox{if}\ |p| = 1 \,;  \end{array} \right. \\   &
       {\rm{(a)}} \  {\rm{gcd}}(r, p)={\rm{gcd}}(p, q)= 1  ;    \      {\rm{(b)}} \
     r  \equiv q ({\rm{mod}}\, 2) , \  {\rm{(c)}} \   1 \leqslant  |r| \leqslant  |p| -1 ,
\label{f8case39}\end{align}

\noindent where $r\in \Bb{Z}_{\neq 0}$. Since for $|p| = 1 $ the integer $q$   is even, then $p/q = \phi_{n_{1}}(0)$
with $n_{1} :=(q/2)\,{\rm{sign}} (p) \in \Bb{Z}_{\neq 0}$ and $G^{{\tn{\rceil\hspace{-0,015cm}\lceil}}}_{2}\left( \phi_{{\fo{\hspace{0,005cm}n_{1}}}}(0)\right)=0$, as follows from \eqref{f6case39} for $N=1$. It remains to prove
\eqref{f8case39} for $|p|\!\geqslant\! 2$.
 Since $q$ and $p$ are coprime then $q$ belongs to the set
\begin{align*}
    & \big(  - |p| ,   |p| \big) \  \sqcup \bigsqcup_{{\fo{m \in \Bb{Z}_{\neq 0} }}} \Big( 2 m |p| - |p| , \  2 m |p| + |p|\Big) =
    \Bb{R} \setminus   \bigsqcup_{{\fo{m \in \Bb{Z}}}} \Big\{\big(2m+1\big) \cdot p\Big\} \ ,
\end{align*}

\noindent and does not lie on the interval $(  - |p| ,   |p| )$ because $|q| > |p|$. Thus, there exists a unique $m_{1} \in \Bb{Z}_{\neq 0}$ such that\vspace{-0.3cm}
\begin{align*}
    &  q \in \Big( 2 m_{1} |p\,| - |p\,| ,  2 m_{1} |p\,| + |p\,| \Big) \cap \Bb{Z} = \big\{2 m_{1} |p\,| \big\}  \  \sqcup
    \bigsqcup_{ \substack{ {\fo{k = -  |p\,|+1}} \\[0.1cm] {\fo{k \neq 0}} } }^{{\fo{|p\,| -1}}} \big\{2 m_{1} |p\,| - k \big\}  \ ,
\end{align*}

\noindent and since $q \neq 2 m_{1} |p\,|$,   there also exists  a unique $k_{1} \in \Bb{Z}_{\neq 0}$ satisfying
\begin{align*}
    &  q = 2 m_{1} |p\,| - k_{1}  \ , \quad  -  |p\,|+1 \leq  k_{1} \leq |p\,| -1  \ , \  k_{1} \neq 0 \ .
\end{align*}

\noindent By setting $n := m_{1} \cdot {\rm{sign}} (p)$ and $r := k_{1}$ we obtain the existence of the integers $n$ and $r$ satisfying
\begin{align}\label{f9case39}
    &  q = 2 n p - r  \ , \quad  n, r \in \Bb{Z}_{\neq 0}  \, , \   1 \leqslant  |r| \leqslant  |p| -1 \,.
\end{align}

\noindent Here, the property \eqref{f8case39}(b) is immediate, while \eqref{f8case39}(a)
follows from   \cite[p.\! 5, Lemma 1.5]{jon}, which states that
\begin{align}\label{f0bpreres}
    &  a, c   \in   \Bb{Z} \ , \  \ b,  d   \in    \Bb{Z}_{\neq 0} \ ,  \quad
     a= d b - c \ \Longrightarrow \ {\rm{gcd}}(a, b)= {\rm{gcd}}(c, b) .
\end{align}

\noindent  Finally,
$-q/p = -2 n  + r/p$ and $r/p \in (-1,1)\setminus\{0\}$ yield $G^{{\tn{\rceil\hspace{-0,015cm}\lceil}}}_{2} ({p}/{q})\!=\!{r}/{p}$ and completes the proof of \eqref{f8case39}.

\vspace{0.25cm}
We prove that  an arbitrary even rational number $p/q$, $p, q \in \Bb{Z}_{\neq 0}$, ${\rm{gcd}}(p, q) =1$, lying on the set $(-1,1)$, can be represented as $\phi_{{\fo{\hspace{0,005cm}n_{N}, ..., n_{1}}}}(0)$ with  $(n_{N}, ..., n_{1})\!\in\! \Bb{Z}_{\neq 0}^{N}$, $N\! \in \!\Bb{N}$,  by employing induction on $|p|$.  It has already been written above that for $|p| = 1 $ the integer $q$   is even, and therefore  $p/q = \phi_{n_{1}}(0)$ with $n_{1} :=(q/2)\,{\rm{sign}} (p) \in \Bb{Z}_{\neq 0}$. Assume that $P \in \Bb{Z}_{\geqslant 2}$ and this statement is proved for all nonzero even rational numbers from $(-1,1)$ the modulus of whose nominators is less than or equal $P-1$. Let $p/q \in (-1,1)$, $p, q \in \Bb{Z}_{\neq 0}$, ${\rm{gcd}}(p, q) =1$ and $|p|=P$. Invoking the induction hypothesis on the even rational number $r/p$ satisfying \eqref{f9case39}, we obtain the existence  of
$N  \in \Bb{Z}_{\geqslant 2}$ and $n_{1}, ..., n_{N-1}\in \Bb{Z}_{\neq 0}$ such that $r/p =\phi_{{\fo{\hspace{0,005cm}n_{N-1}, ..., n_{1}}}}(0)$. But then \eqref{f8case39}  and  \eqref{f9case39}
yield
\begin{align*}
    &  \dfrac{p}{q} =\dfrac{1}{\dfrac{q}{p}} \!=\!
    \dfrac{1}{2n \!-\! \dfrac{r}{p} }\! =\!\dfrac{1}{2n\! -\!\phi_{{\fo{\hspace{0,005cm}n_{N-1}, ..., n_{1}}}}(0) } \!  =\!
 \phi_{{\fo{\hspace{0,005cm}n_{N}, n_{N-1}, ..., n_{1}}}}(0)\ , \ n_{N}\!:= \!n \!\in \!\Bb{Z}_{\neq 0}\,.
 \end{align*}

 \noindent The induction is complete. In view of \eqref{f5case39} and \eqref{f6case39}, we have proved the following assertion.
 \begin{lemma}\hspace{-0,2cm}{\bf{.}}\label{case39lem1}
    Each nonzero even rational number $p/q$ in $(-1,1)$ with nonzero coprime integers $p$ and $q$
  can be uniquely represented in the form $\phi_{{\fo{n_{N}, ..., n_{1}}}} (0)$ with  $(n_{N}, ..., n_{1})\!\in\! \Bb{Z}_{\neq 0}^{N}$, $N\! \in \!\Bb{N}$, satisfying
  \begin{align}\label{f10case39}
    &  2n_{N-k}=\Bigg\rceil\dfrac{ 1}{\big(G^{{\tn{\rceil\hspace{-0,015cm}\lceil }}}_{2}\big)^{k} \big(
     p/q\big)} \Bigg\lceil_{\! 2}  \ , \ \ 0\leqslant k \leqslant N-1\,, \ \ \big(G^{{\tn{\rceil\hspace{-0,015cm}\lceil}}}_{2}\big)^{N} \big(p/q\big)=0\,.
\end{align}

\noindent Conversely, $\phi_{\eufm{n}} (0)$ for arbitrary $\eufm{n}\in  \Bb{Z}_{\neq 0}^{\hspace{0,02cm}\Bb{N}_{\hspace{-0,02cm}\eurm{f}}}\!:=\!\sqcup_{k\geqslant 1}\! \left(\Bb{Z}_{\neq 0}\right)^{k}$ is an even rational number lying in $(-1,1)\setminus\{0\}$.
For different $\eufm{n}$ and $\eufm{m}$ from  $\Bb{Z}_{\neq 0}^{\hspace{0,02cm}\Bb{N}_{\hspace{-0,02cm}\eurm{f}}}$
the numbers $\phi_{\eufm{n}} (0)$ and $\phi_{\eufm{m }} (0)$ are different.
 \end{lemma}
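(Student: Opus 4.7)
The plan is to establish both halves of the lemma by exploiting the interaction between the recursive structure of the continued-fraction maps $\phi_{\eufm{n}}$ and the even Gauss map $G^{\tn{\rceil\hspace{-0,015cm}\lceil}}_2$. All of the key ingredients have already been assembled in the paragraphs immediately preceding the statement, so the task is essentially one of bookkeeping plus a tidy induction.

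First I would handle the direct direction. Using \eqref{f6zpf8contgen} with $z=0$, one has $\phi_{\eufm{n}}(0) = q^{\eufm{n}}_{N-1}/q^{\eufm{n}}_N$, and the strict inequality $|q^{\eufm{n}}_{N-1}| < |q^{\eufm{n}}_N|$ from \eqref{f1preresauxlem2} together with $q^{\eufm{n}}_{N-1}\neq 0$ for $N\geqslant 2$ (and $q^{\eufm{n}}_0=1$, $q^{\eufm{n}}_1=2n_1\neq 0$ for $N=1$) places $\phi_{\eufm{n}}(0)$ in $(-1,1)\setminus\{0\}$. The identity $p^{\eufm{n}}_{N-1}q^{\eufm{n}}_N - p^{\eufm{n}}_N q^{\eufm{n}}_{N-1} = 1$ from \eqref{f2apreresauxlem1a} gives $\gcd(q^{\eufm{n}}_{N-1}, q^{\eufm{n}}_N)=1$, and the recursion $q^{\eufm{n}}_k = 2 n_k q^{\eufm{n}}_{k-1} - q^{\eufm{n}}_{k-2}$ from \eqref{f2preresauxlem1a} together with the initial parities $q^{\eufm{n}}_{-1}=0$, $q^{\eufm{n}}_0=1$ forces $q^{\eufm{n}}_{2n-1}$ to be even and $q^{\eufm{n}}_{2n}$ to be odd. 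Hence the product $q^{\eufm{n}}_{N-1}q^{\eufm{n}}_N$ is always even, proving that $\phi_{\eufm{n}}(0)$ is an even rational in $(-1,1)\setminus\{0\}$.

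Next I would derive the explicit formula \eqref{f10case39} and uniqueness. Applying the even integer part $\rceil{\cdot}\lceil_2$ and the even fractional part $\{\cdot\}^{\tn{\rceil\hspace{-0,015cm}\lceil}}_2$ to the identity \eqref{f2case39} $-1/\phi_{n_N,\ldots,n_1}(0) = -2n_N + \phi_{n_{N-1},\ldots,n_1}(0)$ (valid for $N\geqslant 2$, with the base case $-1/\phi_{n_1}(0) = -2n_1$ handled separately), one reads off $\lceil 1/\phi_{n_N,\ldots,n_1}(0)\rceil_2 = 2n_N$ and $G^{\tn{\rceil\hspace{-0,015cm}\lceil}}_2(\phi_{n_N,\ldots,n_1}(0)) = \phi_{n_{N-1},\ldots,n_1}(0)$ (with the terminal convention $\phi_0(0):=0$). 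Iterating gives the two displays in \eqref{f10case39}, and uniqueness follows immediately: the sequence $2n_N, 2n_{N-1},\ldots,2n_1$ is recovered algorithmically from $p/q$, and the iteration halts exactly at step $N$ since $(G^{\tn{\rceil\hspace{-0,015cm}\lceil}}_2)^N(p/q)=0$.

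For the converse I would argue by induction on $|p|$, leveraging the auxiliary property \eqref{f8case39}. The base case $|p|=1$ is immediate: coprimality forces $q$ to be even, so $n_1:=(q/2)\,\mathrm{sign}(p)\in\Bb{Z}_{\neq 0}$ gives $p/q = \phi_{n_1}(0)$. For the inductive step with $|p|\geqslant 2$, writing $q=2np-r$ with $1\leqslant |r|\leqslant |p|-1$ (unique via placement of $q$ in the interval partition generated by odd multiples of $p$), the lemma $\gcd(a,b)=\gcd(c,b)$ whenever $a=db-c$ (cited from \cite[p.\ 5]{jon}) gives $\gcd(r,p)=\gcd(p,q)=1$, and $r\equiv q\pmod 2$ makes $r/p$ an even rational in $(-1,1)\setminus\{0\}$ with strictly smaller denominator modulus. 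The inductive hypothesis produces the representation $r/p = \phi_{n_{N-1},\ldots,n_1}(0)$, and the identity $p/q = 1/(2n - r/p)$ then rewrites this as $\phi_{n_N,n_{N-1},\ldots,n_1}(0)$ with $n_N:=n$. I expect the only real wrinkle to be the parity-plus-coprimality bookkeeping in \eqref{f8case39}, since one must verify both that $r$ inherits the evenness of $p/q$ and that $\gcd(r,p)=1$; once that is in place, everything else is routine induction and algebra.
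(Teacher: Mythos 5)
Your proposal is correct and follows essentially the same route as the paper's own argument: the explicit form $\phi_{\eufm{n}}(0)=q^{\eufm{n}}_{N-1}/q^{\eufm{n}}_{N}$ with the coprimality and parity bookkeeping for the $q^{\eufm{n}}_{k}$, the algorithmic recovery of the coefficients (hence uniqueness) by iterating the even Gauss map on $-1/\phi_{n_{N},\ldots,n_{1}}(0)=-2n_{N}+\phi_{n_{N-1},\ldots,n_{1}}(0)$, and existence by induction via $q=2np-r$, $1\leqslant|r|\leqslant|p|-1$, with the gcd-preservation lemma. The only quibble is the phrase ``strictly smaller denominator modulus'' for $r/p$: the quantity your induction on $|p|$ actually needs to decrease is the numerator modulus, which it does since $|r|\leqslant|p|-1$.
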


}}\end{clash}
\end{subequations}

\vspace{0,2cm}
\begin{subequations}
\begin{clash}{\rm{\hypertarget{r25}{}\label{case25}\hspace{0,00cm}{\hyperlink{br25}{$\uparrow$}}
  \  \ We prove \eqref{f16contgen}(a). Since $\gamma(-1,1)\subset \mathcal{F}^{\,{\tn{||}}}_{{\tn{\square}}}$
we can apply \eqref{f9bsdentp} to get $\Bb{G}_{2}\big(\gamma(-1,1)\big)= - 1/\gamma(-1,1)= \gamma(-1,1)$.
To prove the left-hand side equality
\begin{align*}\tag{\ref{f16contgen}{\rm{(a)}}}
    &  \Bb{G}_{2}  \big(\eusm{E}^{0}_{\!{\tn{\frown}}} \big) \! = \!
        \mathcal{F}^{\,{\tn{||}}}_{{\tn{\square}}}\setminus\gamma(-1,1) \,,
\end{align*}

\noindent  of  \eqref{f16contgen}(a), we use \eqref{f8bsdenac}(a),
\begin{align*}
    &  \eusm{E}^{0}_{\!{\tn{\frown}}}\!:=\! \fetd \sqcup\!\!\!\!
 \bigsqcup\limits_{{\fo{n_{0}\!\in\! \Bb{Z}_{\neq 0}}}}\!\!\!\!\! \fetur^{\hspace{0,05cm} n_{0}} \ \Rightarrow \
 \Bb{G}_{2}\left(\eusm{E}^{0}_{\!{\tn{\frown}}}\right)\!:=\!  \Bb{G}_{2}\left(\fetd \right)\sqcup\!\!\!\!
 \bigsqcup\limits_{{\fo{n_{0}\!\in\! \Bb{Z}_{\neq 0}}}}\!\!\!\!\!  \Bb{G}_{2}\left(\fetur^{\hspace{0,05cm} n_{0}}\right) \, , \
\end{align*}

\noindent where by \eqref{f3zbsdenac} $\fetd = \Bb{D} \cap \fet \subset \mathcal{F}^{\,{\tn{||}}}_{{\tn{\square}}}$ we apply \eqref{f9bsdentp} to get
\begin{align*}
    & \Bb{G}_{2}\left(\fetd \right)= - 1/\fetd \, , \
\end{align*}

\noindent while for every $n_{0}\!\in\! \Bb{Z}_{\neq 0}$ we can apply to (see \eqref{f5bsden})
\begin{align*}
    &  \fetur^{\hspace{0,01cm}n_{0}} = \phi_{n_{0}}  \big(\fetu\sqcup\gamma(\sigma_{n_{0}}, \infty)\big)=
\phi_{n_{0}}  \big(\fetu\big)\sqcup\phi_{n_{0}} \big(\gamma(\sigma_{n_{0}}, \infty)\big)
  \ , \quad  \fetu \subset \Bb{H}_{|\re|< 1} \, , \
\end{align*}

\noindent the equality \eqref{f15contgen}(a)
\begin{align*}\tag{\ref{f15contgen}{\rm{(a)}}}
    &  \Bb{G}_{2} \big( \phi_{{\fo{n_{0}}}} (z)\big) = z   \ , \quad  \ z \! \in \! \Bb{H}_{|\re|< 1}  \,,
\end{align*}

\noindent  and the equality \eqref{f1contgendef1}(c)
\begin{align*}\tag{\ref{f1contgendef1}{\rm{(c)}}}
    &   \Bb{G}_{2} \Big( \phi_{{\fo{n}}} \big(\hspace{0.07cm}\gamma(\sigma_{n}, \infty)\hspace{0.07cm}\big)\Big)  = \gamma(\sigma_{n}, \infty)\ , \quad  n\!\in\! \Bb{Z}_{\neq 0}\, ,
\end{align*}

\noindent to obtain
\begin{align*}
    &   \Bb{G}_{2}\left(\fetur^{\hspace{0,05cm} n_{0}}\right)= \fetu \sqcup\gamma(\sigma_{n_{0}}, \infty)\ , \quad  n_{0}\!\in\! \Bb{Z}_{\neq 0}\,.
\end{align*}

\noindent Finally,
\begin{align*}
       \Bb{G}_{2}\left(\eusm{E}^{0}_{\!{\tn{\frown}}}\right)&= \fetu \sqcup  \left(- 1/\fetd\right) \sqcup
\gamma(1, \infty)\sqcup
\gamma(-1, \infty) \\    &= \fetu\sqcup
\gamma(1, \infty)\sqcup
\gamma(-1, \infty)     =
\mathcal{F}^{\,{\tn{||}}}_{{\tn{\square}}} \setminus\overline{\Bb{D}}\,.
\end{align*}
\noindent
This finishes the proof of \eqref{f16contgen}(a).

}}\end{clash}
\end{subequations}


\vspace{0,2cm}
\begin{subequations}
\begin{clash}{\rm{\hypertarget{r22}{}\label{case22}\hspace{0,00cm}{\hyperlink{br22}{$\uparrow$}}
  \ We prove \eqref{f2bsdentppre}. Since
  $z \in \phi_{{\fo{n_{N}, ..., n_{1}}}} \left(\Bb{H}_{|\re|\leqslant 1}\right)$ then there exists
  $z_{0} \in \Bb{H}_{|\re|\leqslant 1}$ such that
  \begin{align}\label{f1case22}
    & z =  \phi_{{\fo{n_{N}, ..., n_{1}}}} (z_{0}) \stackrel{{\fo{\eqref{f8contgen}}}}{\vphantom{A}\in}  \Bb{H}_{|\re| < 1}\! \setminus\!\fet \, , \quad  z_{0}\in \Bb{H}_{|\re|\leqslant 1}\,,
  \end{align}

\noindent and by \eqref{f5bsdenac},
\begin{align}\label{f2case22}
    &\hspace{-0,3cm}  z_{0} \!= \!   \psi_{{\fo{n_{N}, ..., n_{1}}}} (z)\!=\!
\dfrac{1}{\phi_{{\fo{n_{1}, ..., n_{N}}}}(1/z)} \!= \!
\left\{
  \begin{array}{ll}
  2 n_{1} \! - \! \phi_{{\fo{n_{2}, ..., n_{N}}}}(1/z), & \hbox{if} \  N  \!\geqslant  \!2 ; \\
   2 n_{1}  \!-  \!\dfrac{1}{z}, & \hbox{if} \ N \!= \!1 \,.
  \end{array}
\right.\hspace{-0,1cm}
\end{align}

 If $ N  \!\geqslant  \!2$ the application of \eqref{f15contgen}(b) to \eqref{f1case22} gives
\begin{align*}
    & \Bb{G}^{k+1}_{2}(z)= \Bb{G}^{k+1}_{2} \big( \phi_{{\fo{n_{N}, ..., n_{1}}}} (z_{0})\big)   \! = \!   \phi_{{\fo{\hspace{0,005cm}n_{N-k-1}, ..., n_{1}}}}(z_{0})\,, \quad  \mbox{if} \ \ \ 0 \! \leqslant\!  k \! \leqslant\!  N\! -\! 2 \,,
\end{align*}

\noindent where, by virtue of \eqref{f2case22} and \eqref{f5bsdenac},
\begin{align*}
     \phi_{{\fo{\hspace{0,005cm}n_{N-k-1}, ..., n_{1}}}}(z_{0})&=
  \phi_{{\fo{\hspace{0,005cm}n_{N-k-1}, ..., n_{1}}}}\big(\psi_{{\fo{n_{N}, ..., n_{1}}}} (z)\big) \\    &   =
  \psi_{{\fo{n_{N}, ..., n_{N-k}}}}\left(\phi_{{\fo{n_{N}, ..., n_{N-k}}}}\Big(\phi_{{\fo{\hspace{0,005cm}n_{N-k-1}, ..., n_{1}}}}\big(\psi_{{\fo{n_{N}, ..., n_{1}}}} (z)\big)\Big)\right)\\    &   =
 \psi_{{\fo{n_{N}, ..., n_{N-k}}}}\Big(\phi_{{\fo{\hspace{0,005cm}n_{N}, ..., n_{1}}}}\big(\psi_{{\fo{n_{N}, ..., n_{1}}}} (z)\big)\Big) =  \psi_{{\fo{n_{N}, ..., n_{N-k}}}} (z  )\, , \
\end{align*}

\noindent and therefore
\begin{align}\label{f3case22}
    &  \Bb{G}^{k+1}_{2}(z)= \psi_{{\fo{n_{N}, ..., n_{N-k}}}} (z  )\,, \quad  \mbox{if} \ \ \ 0 \! \leqslant\!  k \! \leqslant\!  N\! -\! 2 \,,
\end{align}

\noindent which proves the right-hand side equalities in \eqref{f2bsdentppre}.

Then for $y =  \phi_{{\fo{n_{N}, ..., n_{1}}}} (y_{0}) $, $y_{0}\in \Bb{H}_{|\re|< 1}$ and $N \geqslant 2$ we have
\begin{align*}
    &\Bb{G}^{N}_{2}(y)=\Bb{G}_{2} \left(\Bb{G}^{N-1}_{2}(y)\right)=
\Bb{G}_{2} \left(\Bb{G}^{N-1}_{2}\big( \phi_{{\fo{n_{N}, ..., n_{1}}}} (y_{0})\big)\right)
= \Bb{G}_{2} \left(\phi_{{\fo{ n_{1}}}} (y_{0})\right) \\    &   =
\Bb{G}_{2} \left(\dfrac{1}{2 n_{1} -y_{0}}\right) = y_{0} -2 n_{1} -
\left\rceil{\re\left(y_{0} -2 n_{1}\right)}\right\lceil_{2} =  y_{0} \!= \!   \psi_{{\fo{n_{N}, ..., n_{1}}}} (y) \, , \
\end{align*}

\noindent
while, if $ N  \!=  \!1$, then similarly $\Bb{G}^{N}_{2}(y)= \Bb{G}_{2} \left(\phi_{{\fo{ n_{1}}}} (y_{0})\right) =  y_{0} \!= \!   \psi_{{\fo{n_{1}}}} (y)$. This proves the left-hand side equalities in \eqref{f2bsdentppre} and finishes the proof of \eqref{f2bsdentppre}.

}}\end{clash}
\end{subequations}


\subsection[\hspace{-0,25cm}. \hspace{0,05cm}Notes for Section~\ref{contgen}]{\hspace{-0,11cm}{\bf{.}} Notes for Section~\ref{contgen}}$\phantom{a}$


\vspace{0,2cm}
\begin{subequations}
\begin{clash}{\rm{\hypertarget{r24}{}\label{case24}\hspace{0,00cm}{\hyperlink{br24}{$\uparrow$}}
  \  We prove \eqref{f1wbsdenac}. In view of
\begin{align*} \tag{\ref{f3genfunbs}}
    & \hspace{-0,2cm}  \sum\limits_{n \geqslant 1}
  \, n \,\eurm{H}_{n} (x)\, {\rm{e}}^{{\fo{{\rm{i}}\pi  n  y}} }  =  \frac{1}{4 \pi^{3} {\rm{i}}} \hspace{-0,2cm}    \int\limits_{ \gamma (-1,1)} \hspace{-0,2cm}\dfrac{\lambda^{\,\prime}(y)}{\lambda(y)- \lambda(z)}\frac{d z}{(x+z)^{2}}  \, , \quad \  \ \ \im\, y \!>\! 1\,,\hspace{-0,1cm}
\\    &  \hspace{-0,2cm}
\sum\limits_{n \geqslant 1}
  \, n \,\eurm{M}_{n} (x)\, {\rm{e}}^{{\fo{{\rm{i}}\pi  n  y}} }  =  \frac{1}{4 \pi^{3} {\rm{i}}}  \hspace{-0,2cm}   \int\limits_{ \gamma (-1, 1)}\hspace{-0,2cm} \dfrac{\lambda^{\,\prime}(y)}{1\!-\!\lambda(y)\!-\! \lambda(z)}\frac{d z}{(x+z)^{2}}  \, , \  \ \  \im\, y \!>\!  1\,,\hspace{-0,1cm}
\tag{\ref{f4genfunbs}}
\end{align*}

and \eqref{f1bsdenac}
\begin{align*}
     &  \hspace{-0,15cm}
\Phi^{0}_{\infty}(x;y) := \frac{1}{2\pi i}
\int\limits_{{\fo{\gamma (-1,1)}}}\dfrac{\lambda^{\,\prime}(y)d z}{\big(\lambda(y)- \lambda(z)\big)\big(z+x\big)^{2}} \ \, ,  \\    &
\Phi^{1}_{\infty}(x;y) := \frac{1}{2\pi i}
\int\limits_{{\fo{\gamma (-1,1)}}}\dfrac{\lambda^{\,\prime}(y)d z}{\big(\lambda(y)- \lambda(z)\big)\big(x z-1\big)^{2}} \ \, ,
 \hspace{-0,1cm}
\end{align*}

\noindent we get
\begin{align}\label{f1case24}
    &   \sum\limits_{n \geqslant 1}
  \, n \,\eurm{H}_{n} (x)\, {\rm{e}}^{{\fo{{\rm{i}}\pi  n  y}} }  =  \frac{\Phi^{0}_{\infty}(x;y)}{2 \pi^{2} }\, , \quad \  \ \ \im\, y \!>\! 1\,,
\end{align}

\noindent and since
\begin{align*}
    &  \int\limits_{ \gamma (-1, 1)}\hspace{-0,2cm} \dfrac{\lambda^{\,\prime}(y)}{1\!-\!\lambda(y)\!-\! \lambda(z)}\frac{d z}{(x+z)^{2}} =
\int\limits_{ \gamma (-1, 1)}\hspace{-0,2cm} \dfrac{\lambda^{\,\prime}(y)}{\lambda(-1/z)\!-\!\lambda(y) }\frac{d z}{(x+z)^{2}}
\\    & =\left|z= - \dfrac{1}{z^{\,\prime}}\,, \ d z = \dfrac{d z^{\,\prime} }{\left(z^{\,\prime}\right)^{2}} \, , \  - \dfrac{1}{\gamma (-1,1)} = \gamma (1,-1)
\right|   \\    &
=-\int\limits_{ \gamma (-1, 1)}\hspace{-0,2cm} \dfrac{\lambda^{\,\prime}(y)}{\lambda(z)\!-\!\lambda(y) }\frac{d z}{z^{2}(x-1/z)^{2}} = \int\limits_{ \gamma (-1, 1)}\hspace{-0,2cm} \dfrac{\lambda^{\,\prime}(y)}{\lambda(y)\!-\!\lambda(z) }\frac{d z}{(xz -1)^{2}} = 2\pi i \Phi^{1}_{\infty}(x;y) \, , \
\end{align*}

\noindent we obtain
\begin{align}\label{f2case24}
    &   \sum\limits_{n \geqslant 1}
  \, n \,\eurm{M}_{n} (x)\, {\rm{e}}^{{\fo{{\rm{i}}\pi  n  y}} }  =  \frac{\Phi^{1}_{\infty}(x;y)}{2 \pi^{2} }\, , \quad \  \ \ \im\, y \!>\! 1\,.
\end{align}

\noindent The equalities \eqref{f1case24} and \eqref{f2case24} finishes the proof of \eqref{f1wbsdenac}.

}}\end{clash}
\end{subequations}


\vspace{0,2cm}
\begin{subequations}
\begin{clash}{\rm{\hypertarget{r23}{}\label{case23}\hspace{0,00cm}{\hyperlink{br23}{$\uparrow$}}
  \  \ We prove \eqref{10bsdenac}. We introduce the {\it{parity indicator}} $\eurm{p} : \Bb{Z} \mapsto \{0, 1\}$ such that $\eurm{p}_{n}  = 0$ if the integer $n$ is even and $\eurm{p}_{n}= 1$ if $n$ is odd. Let $\phi \in \Gamma_{\vartheta}$ and $\psi \in \Gamma_{\vartheta}$ be inverse to $\phi$, i.e., $\phi(\psi(z))=\psi(\phi(z))=z$ for all $z\in \Bb{H}$. Denote $\alpha := \eurm{p}_{d(\phi)}\in \{0,1\}$. Then by \eqref{f14bsdenac} and \eqref{f9bsdenac},
\begin{align*}
    &  \lambda\big(\phi (z)\big)=\alpha + (-1)^{\alpha} \lambda(z) \ , \quad
\lambda^{\,\prime} (z) = (-1)^{\alpha}
      \lambda^{\,\prime}(\phi (z))\phi^{\,\prime}(z)\,,  \  \quad    z\in \Bb{H}\,.
\end{align*}

\noindent Then
\begin{align*}
       \Phi_{\infty}^{\,\delta}(x;\phi(z))&\! :=\! \frac{1}{2\pi i}   \int\limits_{
\gamma(1,-1)} \dfrac{\lambda^{\,\prime}(\phi(z))d \zeta}{\big(\lambda(\phi(z))\!-\! \lambda(\zeta)\big)\big(x^{\delta}\zeta-(-x)^{1-\delta}\big)^{2}}   \\    &
  \\  &   =
\frac{(-1)^{\alpha}}{2\pi i\phi^{\,\prime}(z)}\int\limits_{
\gamma(1,-1)} \dfrac{\lambda^{\,\prime}\big(z \big)d \zeta}{\big(
\alpha + (-1)^{\alpha} \lambda\big(z \big)\!-\! \lambda(\zeta)\big)\big(x^{\delta}\zeta-(-x)^{1-\delta}\big)^{2}} \, , \
\end{align*}

\noindent from which
\begin{align*}
    &  (-1)^{\alpha}  \Phi^{\,\delta}_{\infty}(x;\phi(z))\phi^{\,\prime}(z)=
\frac{1}{2\pi i}\int\limits_{
\gamma(1,-1)} \dfrac{\lambda^{\,\prime}\big(z \big)d \zeta}{\big(
\alpha + (-1)^{\alpha} \lambda\big(z \big)\!-\! \lambda(\zeta)\big)\big(x^{\delta}\zeta-(-x)^{1-\delta}\big)^{2}}
\end{align*}

\noindent where the right-hand side is $ \Phi^{\,\delta}_{\infty}(x;z)$ if $\alpha =0$  and \eqref{10bsdenac} is proved for that case. But if $\alpha =1$ then
\begin{align*}
       -\Phi^{\,\delta}_{\infty}(x;\phi(z))\phi^{\,\prime}(z)&=
\frac{1}{2\pi i}\int\limits_{
\gamma(1,-1)} \dfrac{\lambda^{\,\prime}\big(z \big)d \zeta}{\big(
1 - \lambda\big(z \big)\!-\! \lambda(\zeta)\big)\big(x^{\delta}\zeta-(-x)^{1-\delta}\big)^{2}}  \\    &   =
\frac{1}{2\pi i}\int\limits_{
\gamma(1,-1)} \dfrac{\lambda^{\,\prime}\big(z \big)d \zeta}{\big(
\lambda(-1/\zeta) - \lambda\big(z \big)\big)\big(x^{\delta}\zeta-(-x)^{1-\delta}\big)^{2}}\\    &   =
\left| -1/\zeta : \gamma(1,-1) \mapsto \gamma(-1,1), \ d (-1/\zeta) = 1/\zeta^{2} \right|\\    &   =
-\frac{1}{2\pi i}\int\limits_{
\gamma(1,-1)} \dfrac{\lambda^{\,\prime}\big(z \big)d \zeta}{\big(
\lambda(\zeta) - \lambda(z )\big)\zeta^{2}\left(-\dfrac{x^{\delta}}{\zeta}-(-x)^{1-\delta}\right)^{2}}\\    &   =\frac{1}{2\pi i}\int\limits_{
\gamma(1,-1)} \dfrac{\lambda^{\,\prime}\big(z \big)d \zeta}{\big(
\lambda(z) - \lambda(\zeta )\big)\left(x^{\delta} + (-x)^{1-\delta} \zeta\right)^{2}} \\    &   =
\frac{1}{2\pi i}\int\limits_{
\gamma(1,-1)} \dfrac{\lambda^{\,\prime}\big(z \big)d \zeta}{\big(
\lambda(z) - \lambda(\zeta )\big)\big(x^{1-\delta}\zeta-(-x)^{\delta}\big)^{2}} = \Phi^{1-\delta}_{\infty}(x;z)
 \, , \
\end{align*}

\noindent because
\begin{align*}
    & \left(x^{\delta} + (-x)^{1-\delta} \zeta\right)^{2}=
\left\{
  \begin{array}{ll}
 \left(1 - x \zeta\right)^{2} =  \left( x \zeta -1\right)^{2}, & \hbox{if} \ \ \delta = 0\,, \\
 \left(x + \zeta\right)^{2}, & \hbox{if} \ \ \delta = 1\,,
  \end{array} \ \ \ = \big(x^{1-\delta}\zeta-(-x)^{\delta}\big)^{2}\,.
\right.
\end{align*}

\noindent This finished the proof of  \eqref{10bsdenac}.

}}\end{clash}
\end{subequations}


\vspace{0,5cm}
\begin{subequations}
\begin{clash}{\rm{\hypertarget{r5}{}\label{case5}\hspace{0,00cm}{\hyperlink{br5}{$\uparrow$}}
\ We prove \eqref{f24bsdenac}.  If $z \in \eusm{E}^{\infty}_{\!{\tn{\frown}}} = \Bb{H} \setminus \cup_{\, {\fo{m \in \Bb{Z}}}}\ (2m + \overline{\Bb{D}})$ then $z\not\in S^{\infty}_{\!{\tn{\frown}}}$ and
\begin{align*}
    &  z\not\in\phi_{{\fo{\eufm{n}}}}\left(\Bb{D}_{\im > 0}\right) \ , \quad  \eufm{n} \!\in \!\Bb{Z}_{\neq 0}^{\hspace{0,02cm}\Bb{N}_{\hspace{-0,02cm}\eurm{f}}} \cup\{0\} \,.
\end{align*}

\noindent Therefore, in this case,
\begin{align*}
    &  \chi_{{\fo{\phi_{{\fo{\eufm{n}}}}\left(\Bb{D}_{\im > 0}\right)}}}(z)=0 \ , \quad  \eufm{n} \!\in \!\Bb{Z}_{\neq 0}^{\hspace{0,02cm}\Bb{N}_{\hspace{-0,02cm}\eurm{f}}} \cup\{0\} \,,
\end{align*}

\noindent  while $\eurm{h}_{\eusm{E}} (z)=0$, as follows from \eqref{f16xcontgen}. So that \eqref{f24bsdenac} is proved for $z \in \eusm{E}^{\infty}_{\!{\tn{\frown}}}$.

Let
\begin{align*}
    &  z \in \Bb{D}_{\im > 0}\setminus S^{\hspace{0,02cm}{\tn{||}}}_{\!{\tn{\frown}}}= \eusm{E}^{0}_{\!{\tn{\frown}}}\ \  \sqcup \ \
 \bigsqcup\nolimits_{
  {\fo{\ \eufm{n}\in  \Bb{Z}_{\neq 0}^{\hspace{0,02cm}\Bb{N}_{\hspace{-0,02cm}\eurm{f}}}  }} } \ \   \eusm{E}^{\eufm{n}}_{\!{\tn{\frown}}} \, .
\end{align*}

\noindent In view of  \eqref{f16xcontgen}, written for $z\in \Bb{D}_{\im > 0}\setminus S^{\hspace{0,02cm}{\tn{||}}}_{\!{\tn{\frown}}}$,  $z \not\in\eusm{E}^{\infty}_{\!{\tn{\frown}}}$ in the form,
\begin{align*}\tag{\ref{f16xcontgen}}
    & \hspace{-0,5cm} \eurm{h}_{\eusm{E}} (z) =
   1 + d \left(\phi_{{\fo{\eufm{n}}}}\right) \, , \
 \ \mbox{if} \ z \in
  \phi_{{\fo{\eufm{n}}}}
\big(\eusm{E}^{0}_{\!{\tn{\frown}}}\big)= \eusm{E}^{\eufm{n}}_{\!{\tn{\frown}}}\, , \  \eufm{n} \!\in \!\Bb{Z}_{\neq 0}^{\hspace{0,02cm}\Bb{N}_{\hspace{-0,02cm}\eurm{f}}} \cup\{0\}\,,\hspace{-0,7cm}
\end{align*}

\noindent we have  $\eurm{h}_{\eusm{E}} (z) = 1$, if $z\in \eusm{E}^{0}_{\!{\tn{\frown}}}$. Together with
($\phi_{0} (z)=z$),
\begin{align*}
    & \chi_{{\fo{\phi_{0}\left(\Bb{D}_{\im > 0}\right)}}}(z)=1 \ ,\quad
 \chi_{{\fo{\phi_{{\fo{\eufm{n}}}}\left(\Bb{D}_{\im > 0}\right)}}}(z)=0 \ , \quad  \eufm{n} \!\in \!\Bb{Z}_{\neq 0}^{\hspace{0,02cm}\Bb{N}_{\hspace{-0,02cm}\eurm{f}}} \,,
\end{align*}

\noindent this yields the validity  of \eqref{f24bsdenac} for $z \in \eusm{E}^{0}_{\!{\tn{\frown}}}$,
\begin{align*}
    &  \sum\limits_{{\fo{\eufm{n} \!\in \!\Bb{Z}_{\neq 0}^{\hspace{0,02cm}\Bb{N}_{\hspace{-0,02cm}\eurm{f}}} \cup\{0\}}}}\chi_{{\fo{\phi_{{\fo{\eufm{n}}}}\left(\Bb{D}_{\im > 0}\right)}}}(z)= \eurm{h}_{\eusm{E}} (z) \ , \quad z\in \eusm{E}^{0}_{\!{\tn{\frown}}}\,.
\end{align*}

\noindent At the same time,  $\eurm{h}_{\eusm{E}} (z) = 1+N$, if $z\in \eusm{E}^{\hspace{0,05cm}n_{N}, ...,\, n_{1}}_{\!{\tn{\frown}}}$, $N\in \Bb{N}$, $ \eufm{n}=(n_{N}, ...,\, n_{1}) \!\in \!\Bb{Z}_{\neq 0}^{N} $.
But for such $z$, in view of \eqref{f22bsdenac},
\begin{align*}
    &  \hspace{-0,25cm}  \phi_{{\fo{\eufm{n}}}}\big(\Bb{D}_{\im > 0}\big)\setminus S^{\hspace{0,02cm}{\tn{||}}}_{\!{\tn{\frown}}}=\eusm{E}^{\hspace{0,05cm}n_{N}, ...,\, n_{1}}_{\!{\tn{\frown}}}\ \sqcup \!\!\!\!\!\!
 \bigsqcup\limits_{\
  {\fo{k_{K}, ...,\, k_{1} \in \Bb{Z}_{\neq 0}\,, \ K \in \Bb{N}  }} }
\!\!\!\!\!\! \eusm{E}^{\hspace{0,05cm}n_{N}, ...,\, n_{1}, k_{K}, ...,\, k_{1}}_{\!{\tn{\frown}}}
\ ,\tag{\ref{f22bsdenac}}
\end{align*}

\noindent we have $z\in \Bb{D}_{\im > 0}\setminus S^{\hspace{0,02cm}{\tn{||}}}_{\!{\tn{\frown}}} = \phi_{{\fo{\,0}}} \big(\Bb{D}_{\im > 0}\big)\setminus S^{\hspace{0,02cm}{\tn{||}}}_{\!{\tn{\frown}}}$, and it follows from
\begin{align}\label{f3case5}
    &   \chi_{{\fo{\phi_{{\fo{\eufm{m}}}}\left(\Bb{D}_{\im > 0}\right)}}}(z)\neq 0  \ , \quad  M\in \Bb{N}\,,  \ \  \eufm{m}=(m_{M}, ...,\, m_{1}) \!\in \!\Bb{Z}_{\neq 0}^{M}  \ \Leftrightarrow \\[0,4cm]  &
\eusm{E}^{\hspace{0,05cm}n_{N}, ...,\, n_{1}}_{\!{\tn{\frown}}}\!
\subset \!\phi_{{\fo{\eufm{m}}}}\big(\Bb{D}_{\im > 0}\big)\setminus S^{\hspace{0,02cm}{\tn{||}}}_{\!{\tn{\frown}}}   \!=\!\eusm{E}^{\hspace{0,05cm}m_{M}, ...,\, m_{1}}_{\!{\tn{\frown}}}\ \sqcup \!\!\!\!\!\!\!\!
 \bigsqcup\limits_{\
  {\fo{k_{K}, ...,\, k_{1} \in \Bb{Z}_{\neq 0}\,, \ K \in \Bb{N}  }} }
\!\!\!\!\!\! \eusm{E}^{\hspace{0,05cm}m_{M}, ...,\, m_{1}, \,k_{K}, ...,\, k_{1}}_{\!{\tn{\frown}}}
\nonumber \end{align}

\noindent that
\begin{multline*}
     z\in \phi_{{\fo{\,n_{N}, ..., n_{1}}}} \big(\Bb{D}_{\im > 0}\big) \, , \
z\in \phi_{{\fo{\,n_{N}, ..., n_{2}}}} \big(\Bb{D}_{\im > 0}\big) \, , \ \ldots   \, , \\[0,2cm]
z\in \phi_{{\fo{\,n_{N}}}} \big(\Bb{D}_{\im > 0}\big) \, , \
z\in \phi_{{\fo{\,0}}} \big(\Bb{D}_{\im > 0}\big) \, ,
\end{multline*}

\noindent while
\begin{align*}
    &  \chi_{{\fo{\phi_{{\fo{\eufm{n}}}}\left(\Bb{D}_{\im > 0}\right)}}}(z)=0 \ , \quad  \eufm{n} \!\in \!\Bb{Z}_{\neq 0}^{\hspace{0,02cm}\Bb{N}_{\hspace{-0,02cm}\eurm{f}}} \setminus  \{(n_{N}), (n_{N},n_{N-1}), \ldots, (n_{N}, ..., n_{1})\} \,,
\end{align*}

\noindent which yields the validity  of \eqref{f24bsdenac} for $z \in \eusm{E}^{\hspace{0,05cm}n_{N}, ...,\, n_{1}}_{\!{\tn{\frown}}}$,
\begin{align*}
    &  \sum\limits_{{\fo{\eufm{n} \!\in \!\Bb{Z}_{\neq 0}^{\hspace{0,02cm}\Bb{N}_{\hspace{-0,02cm}\eurm{f}}} \cup\{0\}}}}\chi_{{\fo{\phi_{{\fo{\eufm{n}}}}\left(\Bb{D}_{\im > 0}\right)}}}(z)= \eurm{h}_{\eusm{E}} (z) \ , \quad z\in  \eusm{E}^{\hspace{0,05cm}n_{N}, ...,\, n_{1}}_{\!{\tn{\frown}}}\,.
\end{align*}

\noindent  This completes the proof of \eqref{f24bsdenac}. Besides, we have proved that
\begin{align}\label{f1case5}
    & \sum\limits_{{\fo{\eufm{n} \!\in \!\Bb{Z}_{\neq 0}^{\hspace{0,02cm}\Bb{N}_{\hspace{-0,02cm}\eurm{f}}} \cup\{0\}}}}\chi_{{\fo{\phi_{{\fo{\eufm{n}}}}\left(\Bb{D}_{\im > 0}\right)}}}(z)= \eurm{h}_{\eusm{E}} (z) \ , \quad z\in \Bb{H}_{\re \leqslant 1 }\setminus S^{\hspace{0,02cm}{\tn{||}}}_{\!{\tn{\frown}}}  \, , \
\end{align}

\noindent and
\begin{align}\label{f2case5}
    & \sum\limits_{{\fo{\eufm{n} \!\in \!\Bb{Z}_{\neq 0}^{\hspace{0,02cm}\Bb{N}_{\hspace{-0,02cm}\eurm{f}}} \cup\{0\}}}}\chi_{{\fo{\phi_{{\fo{\eufm{n}}}}\left(\overline{\Bb{D}}_{\im > 0}\right)}}}(z)= \eurm{h}_{\eusm{E}} (z) \ , \quad z\in  \Bb{H}_{\re \leqslant 1 }  \, .
\end{align}

  }}\end{clash}
\end{subequations}


\subsection[\hspace{-0,25cm}. \hspace{0,05cm}Notes for Section~\ref{evagen}]{\hspace{-0,11cm}{\bf{.}} Notes for Section~\ref{evagen}}$\phantom{a}$


\vspace{0,2cm}
\begin{subequations}
\begin{clash}{\rm{\hypertarget{r28}{}\label{case28}\hspace{0,00cm}{\hyperlink{br28}{$\uparrow$}}
  \ We prove
the left-hand side inequality of \eqref{f1auxevagen}.

\vspace{0.15cm}
Since $x + i y  \! \in \! \eusm{E}^{\infty}_{\!{\tn{\frown}}}  \ \Rightarrow \ -x + i y  \! \in \! \eusm{E}^{\infty}_{\!{\tn{\frown}}} $,   and $\Theta_{2} (-x + i y) = \overline{\Theta_{2} (x + i y)}$, $x \in \Bb{R}$, $y> 0$, it is enough to
consider the case $\re\, z  \geqslant 0$ for which it is stated that
\begin{align}\label{f0case28}
    &  \left|\Theta_{2} (z)\right|^{4} \im^{2} z\! \leqslant\! \theta_{3}(e^{-\pi/2})^{4}
\! \leqslant\!  5  \ , \quad       z\! \in \! \eusm{E}^{\infty}_{\!{\tn{\frown}}} \cap \Bb{C}_{\re \geqslant 0}\,.
\end{align}

We first assume that $z\! \in \! \eusm{E}^{\infty}_{\!{\tn{\frown}}} \cap \Bb{C}_{\re \geqslant 0}$,
$\im\,z \leqslant 1$, and prove that
\begin{align}\label{f1case28}
    & \im (1- z)^{-1} \geqslant 1/2 \, , \quad  |z|> 1 \, , \quad   \re\, z \in [0,1] \, , \quad  \im \, z \leqslant 1 \ , \quad  z \in \Bb{H} \,.
\end{align}

\noindent For any such that there exists $a\in (0, \pi/2)$ such that
$z=\cos a +i y$,   $\sin a\leqslant y\leqslant 1$. Then it is necessary to obtain that
\begin{align*}
    &  \dfrac{1}{2} \leqslant \im \dfrac{1}{1-z} = \dfrac{\im\, z}{|1-z|^{2}} =
 \dfrac{y}{\left(1-\cos a\right)^{2} +y^{2}} \ , \ \   \sin a\leqslant y\leqslant 1  \, , \
\end{align*}

\noindent i.e.,
\begin{align*}
    &  \left(1-\cos a\right)^{2} +y^{2}\leqslant 2 y\, , \   \sin a\leqslant y\leqslant 1 \ \Leftrightarrow \
\left(1-\cos a\right)^{2}\leqslant y (2-y) \, ,  \   \sin a\leqslant y\leqslant 1 \,.
\end{align*}

\noindent But on the interval $[0,1]$ the function $y (2-y)$ increases from $0$ to $1$. Therefore the above inequality is equivalent to
\begin{align*}
    &  \left(1-\cos a\right)^{2}\leqslant  (2-\sin a)\sin a  \ \Leftrightarrow \
1+   \cos^{2} a-2 \cos a\leqslant 2\sin a-\sin^{2} a \\    &
1-\cos a \leqslant \sin a \ \Leftrightarrow \
\cos a + \sin a \geqslant 1 \ \Leftrightarrow \  \sin \left(a + \dfrac{\pi}{4}\right)\geqslant \sin \dfrac{\pi}{4} \, ,
\end{align*}

\noindent which is true because $a\in (0, \pi/2)$. Thus, \eqref{f1case28} is proved.

Then for such $z$ we deduce from the following consequence of \eqref{f3bint},
\begin{align*}
    &  \Theta_{2} (z)^{4}=(1- z)^{-2} \Theta_{4} \left({1}/{(1- z)}\right)^{4}, \ \ z \in \Bb{H}\,,
\end{align*}

\noindent and \eqref{f1case28} that
\begin{align*}
     \left|\Theta_{2} (z)\right|^{4} \im^{2} z & \leqslant  |1-z|^{2}  \left|\Theta_{2} (z)\right|^{4}=
  \left|\Theta_{4} \left(\dfrac{1}{1- z}\right)\right|^{4}
 \\    &    = \left|1  \!+ \! 2\sum\limits_{n\geqslant 1}
(-1)^{n}u^{n^2}\right|^{4}_{{\nor{u=
e^{{\fo{ \dfrac{i \pi}{1-z}}} }}}
}\leqslant \theta_{3}\left(e^{-\pi/2}\right)^{4} \ ,
\end{align*}

\noindent where by \cite[p.\! 325]{ber1},
\begin{align}\nonumber
    &  \theta_{3}\left(e^{-\pi/2}\right)^{4}= \dfrac{\pi}{\Gamma(3/4)^{4}}\dfrac{(1 + \sqrt{2})^{2}}{2}=
 \dfrac{\pi}{\Gamma(3/4)^{4}}\dfrac{3 + 2\sqrt{2}}{2} \, , \\  \nonumber   &2,914213562373095<
\dfrac{3 + 2\sqrt{2}}{2}< 2,9142135623730951  \, , \\  \nonumber   &1,393203929652002<
\dfrac{\pi}{\Gamma(3/4)^{4}} < 1,393203929652003 \ \Rightarrow \    \\    &4 < 4,06009<
\theta_{3} \left(e^{{\fo{-  \pi /2  }}}\right)^{4}<4,060094 < 5 \,.
\label{f2case28}\end{align}

\noindent It remains to prove \eqref{f0case28} for $z\! \in \! \eusm{E}^{\infty}_{\!{\tn{\frown}}} \cap \Bb{C}_{\re \geqslant 0}$,
$\im\,z > 1$, where due to $2$-periodicity of $\Theta_{2}$ it is possible to consider that $\re\, z \in [-1,1]$.
Then
\begin{align*}
    &  \left|\Theta_{2} (z)\right|^{4} \leqslant 16 e^{-\pi \im\, z}  \left|\theta_{2}\left({\rm{e}}^{\imag  \pi z}\right)\right|^{4} \leqslant 16 e^{-\pi \im\, z}  \theta_{2}\left( e^{-\pi }\right)^{4}
 \, ,
\end{align*}

\noindent and therefore
\begin{align*}
    &  \left|\Theta_{2} (z)\right|^{4} \im^{2} z \leqslant 16 \left(\im^{2} z\right) e^{-\pi \im\, z}  \theta_{2}\left( e^{-\pi }\right)^{4} \, , \
\end{align*}

\noindent where by \cite[p.\! 325]{ber1},
\begin{align*}
    &  \theta_{2}\left( e^{-\pi }\right)^{4}= \dfrac{e^{\pi}}{32} \dfrac{\pi}{\Gamma(3/4)^{4}} \, , \
\end{align*}

\noindent and the function $f (y) = y^{2}e^{-\pi y}$ has the derivative, satisfying
\begin{align*}
    & f^{\,\prime} (y) = y^{2}e^{-\pi y} = 2 y e^{-\pi y} - \pi y^{2}e^{-\pi y} =
y e^{-\pi y}\left(2 - \pi y\right) < 0  \, , \   y >2 / \pi  \, ,
\end{align*}

\noindent and therefore $f (y) \leqslant f (1) = e^{-\pi }$, $y \geqslant 1$. Thus,
\begin{align*}
    &  \left|\Theta_{2} (z)\right|^{4} \im^{2} z \leqslant 16 \left(\im^{2} z\right) e^{-\pi \im\, z}  \theta_{2}\left( e^{-\pi }\right)^{4} \leqslant 16 e^{-\pi } \dfrac{e^{\pi}}{32} \dfrac{\pi}{\Gamma(3/4)^{4}} =
\dfrac{\pi}{2 \Gamma(3/4)^{4}} \,,
\end{align*}

\noindent where
\begin{align*}
    &  \dfrac{\pi}{\Gamma(3/4)^{4}}< 1,3932039296520022532040318760206  \, , \
\end{align*}

\noindent and hence,
\begin{align*}
    &   \left|\Theta_{2} (z)\right|^{4} \im^{2} z <0,69660196482600112660201593801 < \theta_{3} \left(e^{{\fo{-  \pi /2  }}}\right)^{4} \, , \
\end{align*}

\noindent by \eqref{f2case28}. This completes the proof of \eqref{f0case28} and of the left-hand side inequality of \eqref{f1auxevagen} as well.

 }}\end{clash}
\end{subequations}

\vspace{0,2cm}
\begin{subequations}
\begin{clash}{\rm{\hypertarget{r26}{}\label{case26}\hspace{0,00cm}{\hyperlink{br26}{$\uparrow$}}
  \
\noindent In view of \eqref{f0apinttheor1}(a) and \eqref{f5case1}, $\lambda (i) =1/2$,
\begin{align*}
    &  \lambda (2i) = \left(\dfrac{1-\dfrac{1}{\sqrt{2}}}{1+\dfrac{1}{\sqrt{2}}}\right)^{2} =
\dfrac{3-2\sqrt{2}}{3+2\sqrt{2}} = 17 - 12\sqrt{2}=(0.029437251, 0.029437252) \, , \   \\    &
1-2 \lambda (2i) = 1- 2  \dfrac{3-2\sqrt{2}}{3+2\sqrt{2}} = \dfrac{3+2\sqrt{2}- 6 + 4 \sqrt{2}}{3+2\sqrt{2}}=
\dfrac{6\sqrt{2} - 3 }{3+2\sqrt{2}}= 3 \dfrac{2\sqrt{2} - 1 }{3+2\sqrt{2}}
\ ,  \\    &    \dfrac{1}{1-2 \lambda (2i)} = \dfrac{1}{3} \cdot\dfrac{3+2\sqrt{2}}{2\sqrt{2} - 1 }=
\dfrac{(2\sqrt{2} + 1)(3+2\sqrt{2})}{21}= \dfrac{6\sqrt{2} + 8 + 3 + 2\sqrt{2}}{21} \\    &   =
\dfrac{11 + 8\sqrt{2}}{21} \in \dfrac{(22.31370, 22.31371)}{21}
\in (1,062557547570, 1,062557547571) \, , \\    &
\dfrac{1}{5(1-2 \lambda (2i))}= \dfrac{11 + 8\sqrt{2}}{105} \in (0.21251150, 0.21251151)
\, .
\end{align*}

\noindent At the same time, by virtue of \eqref{f2aqinttheor1}  and \eqref{f0apinttheor1}(b),
\begin{align*}
    &   \dfrac{1}{\lambda (1 \!+\! i t)} +  \dfrac{1}{\lambda (i t)} =1 \ \Rightarrow \
\dfrac{1}{\lambda (1 \!+\! i t)}= \dfrac{\lambda (i t)-1}{\lambda (i t)} \ \Rightarrow \  \\    &
\left|\lambda (1 \!+\! i t)\right| = \dfrac{\lambda (i t)}{1- \lambda (i t)} =
\dfrac{\lambda (i t)}{ \lambda (i (1/t))} \geqslant \dfrac{\lambda (2i)}{16} e^{\pi /t} =
\dfrac{ 17 - 12\sqrt{2}}{16} e^{\pi /t} \ , \quad  t \in (0,2]\,.
\end{align*}

\noindent Here
\begin{align*}
    &  \dfrac{16}{ 17 - 12\sqrt{2}}= 16 \cdot \dfrac{3+2\sqrt{2}}{3-2\sqrt{2}} =
16 \cdot (3+2\sqrt{2})^{2} = 16 (17 + 12\sqrt{2}) \\    &    = 272 + 192 \sqrt{2}\in ( 543.52900, 543.52901)
\leqslant 555\,.
\end{align*}

\noindent Since $\lambda$ is $2$-periodic, we obtain
\begin{align}\label{f1case26}
    &  \left|\lambda (\pm 1 \!+\! i t)\right| \geqslant \dfrac{ 17 - 12\sqrt{2}}{16} e^{\pi /t}  \geqslant
\dfrac{e^{\pi /t}}{555}\ , \quad  t  \in (0,2]\,,  \\    &
\dfrac{1}{5(1-2 \lambda (2i))}= \dfrac{11 + 8\sqrt{2}}{105} \leqslant  0.21251151\,.
\label{f2case26}\end{align}

   }}\end{clash}
\end{subequations}

\vspace{0,2cm}
\begin{subequations}
\begin{clash}{\rm{\hypertarget{r27}{}\label{case27}\hspace{0,00cm}{\hyperlink{br27}{$\uparrow$}}
  \ The expression for the integral follows from the following transforms
  \begin{align*}
      \int\limits_{0}^{2}\! \dfrac{ d t}{t^{2} \left(a+ \dfrac{ 17 - 12\sqrt{2}}{16} e^{\pi /t}\right) }  &=
 \int\limits_{1/2}^{\infty} \dfrac{ d t}{a+ \dfrac{ 17 - 12\sqrt{2}}{16} e^{\pi t} } =
\left|t^{\,\prime} = e^{\pi t}\,, \ t = \dfrac{\log t^{\,\prime}}{\pi}\right| \\    &
= \dfrac{16}{\pi \left( 17 - 12\sqrt{2}\right)} \int\limits_{e^{\pi /2}}^{\infty} \dfrac{d t}{t (t+b)} =
\left|b := \dfrac{16 a}{17 - 12\sqrt{2}}\right|  \\    &   = \dfrac{16}{\pi  b\left( 17 - 12\sqrt{2}\right)}
\int\limits_{e^{\pi /2}}^{\infty} \left(\dfrac{1}{t}-\dfrac{1}{t+b}\right) d t  \\    &
= \dfrac{16}{\pi b \left( 17 - 12\sqrt{2}\right)} \left.\log \dfrac{t}{t+b}\right|_{t=e^{\pi /2}}^{t=\infty}
\\    & =  \dfrac{16}{\pi b \left( 17 - 12\sqrt{2}\right)} \log \left(1 + b e^{-\pi /2}\right) \\    &
= \dfrac{1}{\pi a}  \log  \left( 1 + \dfrac{16 e^{-\pi /2}}{17 - 12\sqrt{2}}a\right) \,.
\end{align*}

   }}\end{clash}
\end{subequations}

\vspace{0,2cm}
\begin{subequations}
\begin{clash}{\rm{\hypertarget{r29}{}\label{case29}\hspace{0,00cm}{\hyperlink{br29}{$\uparrow$}}
  \
We use the following results of the numerical calculations
\begin{align*}
    & \dfrac{55 + 40\sqrt{2}}{21} < 5,313  \, , \\    &
 1+ \dfrac{16 e^{-\pi /2}}{17 - 12\sqrt{2}}< 113,9884 \, , \\    &
\dfrac{5 \sqrt{2}}{\pi} <  2,251 < \dfrac{9}{4} \, , \\    &
\log \left(1 + \dfrac{16 e^{-\pi /2}}{17 - 12\sqrt{2}}\right)< 4,7361 \, , \\    &
 \dfrac{5 \sqrt{2}}{\pi} \log \left(1 +\dfrac{16 e^{-\pi /2}}{17 - 12\sqrt{2}}\right)<
10,6609536465591 \, ,  \\    &
\dfrac{55 + 40\sqrt{2}}{21}+ \dfrac{5 \sqrt{2}}{\pi} \log \left(1 +\dfrac{16 e^{-\pi /2}}{17 - 12\sqrt{2}}\right)< 15,974
\end{align*}

\noindent  which yield that
\begin{align*}
    &   I_{\delta} (z)\!\leqslant \! 16 + (9/4) \log \left( 1 + \left|\lambda (-1/z)\right|\right) \ , \quad
z \in \eusm{E}_{\!{\tn{\frown}}}^{\hspace{0,05cm}0}
 \,.
\end{align*}

\vspace{0.25cm} To prove \eqref{f12evagen}, observe that
\begin{align*}
   \log \left(1 + \left|\lambda(x+i y)\right|\right)  & \leqslant
     \log \left(1 + 16\, {\rm{e}}^{{\fo{-  \pi y + \pi / y }}}\right)  \\    &  \dfrac{\pi}{y} +  \log \left( {\rm{e}}^{{\fo{ - \pi / y }}}+ 16 {\rm{e}}^{{\fo{-  \pi y }}}\right)\leqslant   \dfrac{\pi}{y} + \log 17 \leqslant \pi \left(1
+  {1}/{y}\right)    \, , \
\end{align*}

\noindent and therefore it follows from \eqref{f6evagen}
\begin{align}
    &  \hspace{-0,3cm} I_{\delta} (z)\!\leqslant\! 16\! +\! (9/4) \log \Big( 1 \!+ \!\left|\lambda \big(\Bb{G}_{2} (z)\big)\right|\Big), \
\Bb{G}_{2} (z)\!\in\! \mathcal{F}^{\,{\tn{||}}}_{{\tn{\square}}} \setminus\overline{\Bb{D}}\!\subset \! \eusm{E}^{\infty}_{\!{\tn{\frown}}},
\    z\in\eusm{E}_{\!{\tn{\frown}}}^{\hspace{0,05cm}0}  \,,\hspace{-0,3cm}\tag{\ref{f6evagen}}
\end{align}

\noindent that
\begin{align*}
    &  I_{\delta} (z)\!\leqslant\! 16\! +\! \dfrac{9 \pi}{4} +   \dfrac{9 \pi}{4\im\,\Bb{G}_{2} (z) }\leqslant \dfrac{147 \pi}{20} \left(1 +  \dfrac{1}{\im\,\Bb{G}_{2} (z) }\right) \,,
\end{align*}

\noindent i.e., \eqref{f12evagen} holds, because
\begin{align*}
    &16 +   \dfrac{9 \pi}{4} <23,068583471 < 23,09 < \dfrac{147 \pi}{20} \ .
\end{align*}

 }}\end{clash}
\end{subequations}

\vspace{0,2cm}
\begin{subequations}
\begin{clash}{\rm{\hypertarget{r30}{}\label{case30}\hspace{0,00cm}{\hyperlink{br30}{$\uparrow$}}
  \  In view of \cite[p.\! 325]{ber1}, we have
\begin{align*}
    &  \dfrac{9 \pi}{4} < 7,068583471\, , \\    &
16 + \dfrac{9 \pi}{4} < 23,0685834705771\, , \\    &
\dfrac{16}{\pi} + \dfrac{9 }{4} < 7,342958178941 < \dfrac{147}{20}= 7,35  \ ,  \\    &
\theta_{3}(e^{-\pi/2})^{4} =   \dfrac{\pi}{\Gamma\left(\dfrac{3}{4}\right)^{4}} \dfrac{\left(1 + \sqrt{2}\right)^{2}}{2} <  4,0600937869433563 \, ,
 \\    &
\dfrac{147}{20} \cdot \theta_{3}(e^{-\pi/2})^{4} < \dfrac{147}{20} \cdot 4,0600937869433563 <
29,8416893341 < 30\,.
\end{align*}
 }}\end{clash}
\end{subequations}

\vspace{0,2cm}
\begin{subequations}
\begin{clash}{\rm{\hypertarget{r31}{}\label{case31}\hspace{0,00cm}{\hyperlink{br31}{$\uparrow$}}
  \
Here we use the inequality $1 + 60 \pi < 20 \pi^{2}$ which holds because
\begin{align*}
    & 60 \pi < 188,49555921538759430775860299677 \, , \\    &
1 + 60 \pi <189,49555921538759430775860299677 \, , \\    &  20 \pi^{2}=
197,39208802178717237668981999752... > 190 > 1 + 60 \pi \,.
\end{align*}

 }}\end{clash}
\end{subequations}

\vspace{0,2cm}
\begin{subequations}
\begin{clash}{\rm{\hypertarget{r32}{}\label{case32}\hspace{0,00cm}{\hyperlink{br32}{$\uparrow$}}
  \   Actually, we have
\begin{align*}
    & e^{\pi} <23,140692632779269005729086367949  \, , \\    &
40 e^{\pi} < 925,6277053111707602291634547178 \, , \\    &
 \pi^{6}= 961,38919357530443703021944365242...> 961 > 40 e^{\pi} \,.
\end{align*}

}}\end{clash}
\end{subequations}

\vspace{0,2cm}
\begin{subequations}
\begin{clash}{\rm{\hypertarget{r33}{}\label{case33}\hspace{0,00cm}{\hyperlink{br33}{$\uparrow$}}
  \ We prove \eqref{f2fevagen}. In view of \eqref{f2intth1},
\begin{align*}
     & \left| \eurm{H}_{n} (x)\right| \ , \
         \left| \eurm{M}_{n} (x)\right|\leqslant\dfrac{\pi^{6} n^{2}}{4}\, , \quad x\in \Bb{R}  \ , \ n \in \Bb{Z}_{\neq 0}\,.
\end{align*}

\noindent
 By using \eqref{f3intth1}, we get
\begin{align*}
    &  \left| \eurm{H}_{n} (x)\right| \ , \
         \left| \eurm{M}_{n} (x)\right|\leqslant
         \dfrac{\pi^{6}n^{2}}{4 x^{2}} \leqslant \dfrac{\pi^{6}n^{2}}{
         2(1+ x^{2})}\, , \quad |x| \geqslant 1 \,,
\end{align*}

\noindent while
\begin{align*}
     & \left| \eurm{H}_{n} (x)\right| \ , \
         \left| \eurm{M}_{n} (x)\right|\leqslant \dfrac{\pi^{6} n^{2}}{4}\leqslant \dfrac{\pi^{6} n^{2}}{2(1+ x^{2})}\, , \quad x\in [-1,1] \,.
\end{align*}

\noindent This proves \eqref{f2fevagen}.
}}\end{clash}
\end{subequations}

\vspace{0,2cm}
\begin{subequations}
\begin{clash}{\rm{\hypertarget{r34}{}\label{case34}\hspace{0,00cm}{\hyperlink{br34}{$\uparrow$}}
  \
We prove \eqref{f3fevagen}.  The explicit integral formula
for $\eurm{H}_{0}$ written after \eqref{f2intcor1},
\begin{align*}
      &   \eurm{H}_{0} (x) =\dfrac{i}{2 \pi^{2 }}\int\limits_{\Bb{R}}
\dfrac{\dfrac{1 \! - \! i \eurm{y}(t) }{ 1 \! + \! i \eurm{y}(t) }
}{ x^{2}- \left( \dfrac{1 \! - \! i \eurm{y}(t) }{ 1 \! + \! i \eurm{y}(t) }\right)^{2}} \, \dfrac{d t}{t^{2} + 1/4} \\  & =
\dfrac{i}{4 \pi^{2 }}\int\limits_{\Bb{R}}
\dfrac{1 }{ x- \dfrac{1 \! - \! i \eurm{y}(t) }{ 1 \! + \! i \eurm{y}(t) }} \, \dfrac{d t}{t^{2} + 1/4} -
\dfrac{i}{4 \pi^{2 }}\int\limits_{\Bb{R}}
\dfrac{1 }{ x+ \dfrac{1 \! - \! i \eurm{y}(t) }{ 1 \! + \! i \eurm{y}(t) }} \, \dfrac{d t}{t^{2} + 1/4}
\end{align*}

\noindent yields
\begin{align*}
     & \left| \eurm{H}_{0} (x) \right|\leqslant
     \dfrac{1}{2 \pi^{2 }}\int\limits_{\Bb{R}}
\dfrac{1 }{ \im \dfrac{i \eurm{y}(t) \! - \! 1 }{i \eurm{y}(t)  \! + \! 1 }} \, \dfrac{d t}{t^{2} + 1/4} =
 \dfrac{1}{4 \pi^{2 }}\int\limits_{\Bb{R}} \dfrac{1 \! + \!  \eurm{y}(t)^{2}}{\eurm{y}(t)}\dfrac{d t}{t^{2} + 1/4} \ , \
\end{align*}

\noindent where $\eurm{y}(-t) = 1/\eurm{y}(t)$ and therefore
\begin{align*}
     & \int\limits_{\Bb{R}} \dfrac{1 \! + \!  \eurm{y}(t)^{2}}{\eurm{y}(t)}\dfrac{d t}{t^{2} + 1/4} =
     \int\limits_{\Bb{R}} \left( \eurm{y}(t) + \eurm{y}(-t)\right)\dfrac{d t}{t^{2} + 1/4} = 2 \int\limits_{0}^{+\infty}\left( \eurm{y}(t) + \eurm{y}(-t)\right)\dfrac{d t}{t^{2} + 1/4}  \\  & =
     2 \int\limits_{0}^{+\infty}\left( \eurm{y}(t) + \dfrac{1}{\eurm{y}(t)}\right)\dfrac{d t}{t^{2} + 1/4}
     \ .
\end{align*}

\noindent Thus, by using $1 \leqslant \he (x)\leqslant 2$, $0 \leqslant x \leqslant 1/2$ (see \cite[p.\! 35, (A.9c)]{bh1}), which actually gives  $ \he (x)\geqslant 2$, $1/2 \leqslant x < 1$,  we get
\begin{align*}
     & \left| \eurm{H}_{0} (x) \right|\leqslant \dfrac{1}{2 \pi^{2 }}\int\limits_{0}^{+\infty}\left( \eurm{y}(t) + \dfrac{1}{\eurm{y}(t)}\right)\dfrac{d t}{t^{2} + 1/4} \\  & =
      \dfrac{1}{2 \pi^{2 }}\int\limits_{0}^{+\infty}
\left(
\dfrac{\he\left(\dfrac{1}{2}\! -\! \dfrac{ t}{\sqrt{4 t^{2}\! +\!1}}\right)}{\he \left(\dfrac{1}{2}\! + \!\dfrac{ t}{\sqrt{4 t^{2} \!+\!1}}\right)}
 +
\dfrac{\he\left(\dfrac{1}{2}\! +\! \dfrac{ t}{\sqrt{4 t^{2}\! +\!1}}\right)}{\he \left(\dfrac{1}{2}\! - \!\dfrac{ t}{\sqrt{4 t^{2} \!+\!1}}\right)}
\right) \dfrac{d t}{t^{2} + 1/4} \\  & \leqslant
      \dfrac{1}{2 \pi^{2 }}\int\limits_{0}^{+\infty}
  \left( 1+ \he\left(\dfrac{1}{2}\! +\! \dfrac{ t}{\sqrt{4 t^{2}\! +\!1}}\right)\right)
     \dfrac{d t}{t^{2} + 1/4}\\  & \leqslant
    \dfrac{1}{2 \pi^{2 }}\int\limits_{0}^{+\infty}    \dfrac{d t}{t^{2} + 1/4} +
   \dfrac{1}{2 \pi^{2 }}\int\limits_{0}^{+\infty}
     \he \left(\dfrac{1}{2}\! + \!\dfrac{ t}{\sqrt{4 t^{2} \!+\!1}}\right)
       \dfrac{d t}{t^{2} + 1/4}
     \ .
\end{align*}

\noindent Applying   $ \he (x)\leqslant 3 + \log (1-x)^{-1}$, $0 \leqslant x < 1$ (see \cite[p.\! 35, (A.9d)]{bh1}), we deduce that
\begin{align*}
    & \he \left(\dfrac{1}{2}\! + \!\dfrac{ t}{\sqrt{4 t^{2} \!+\!1}}\right) -3\leqslant  \log
\dfrac{1}{\dfrac{1}{2}\! - \!\dfrac{ t}{\sqrt{4 t^{2} \!+\!1}}} =
\log \dfrac{2\sqrt{4 t^{2} \!+\!1}}{\sqrt{4 t^{2} \!+\!1} - 2 t} \\    &   =
\log 2 + \log \sqrt{4 t^{2} \!+\!1} + \log \left(\sqrt{4 t^{2} \!+\!1} + 2 t\right) \leqslant
\log 2 + \log \sqrt{4 t^{2} \!+\!1}  \\    & + \log \left(\sqrt{4 t^{2} \!+\!1} + \sqrt{4 t^{2} \!+\!1}\right)=
2\log 2 + 2\log \sqrt{4 t^{2} \!+\!1} \, , \
\end{align*}

\noindent and therefore from the above inequality we obtain
\begin{align*}
    &  \left| \eurm{H}_{0} (x) \right|\leqslant  \dfrac{4 + 2\log 2}{ 2\pi^{2 }}\int\limits_{0}^{+\infty}    \dfrac{d t}{t^{2} + 1/4} +
\dfrac{1}{ \pi^{2 }}\int\limits_{0}^{+\infty} \dfrac{\log \sqrt{4 t^{2} \!+\!1}}{t^{2} + 1/4}
d t  \\ & =
  \dfrac{4 + 2\log 2}{ \pi^{2 }}\int\limits_{0}^{+\infty}    \dfrac{d t}{t^{2} + 1} +
\dfrac{2}{ \pi^{2 }}\int\limits_{0}^{+\infty}  \dfrac{\log \sqrt{ t^{2} \!+\!1}}{t^{2} + 1}
d t=\dfrac{2 + \log 2}{ \pi} \\    & +
\dfrac{1}{ \pi^{2 }}\int\limits_{0}^{+\infty}  \dfrac{\log ( t^{2} \!+\!1)}{t^{2} + 1}
d t    =
\dfrac{2 + \log 2}{ \pi} +\dfrac{1}{ 2\pi^{2 }}\int\limits_{0}^{+\infty}  \dfrac{\log ( 1 \!+\!t)}{
(t + 1)\sqrt{t}} d t \  \boxed{=}   \\    &
\int\limits_{0}^{+\infty} \dfrac{\log ( 1 \!+\!t)}{
(t + 1)\sqrt{t}} d t = \int\limits_{1}^{+\infty}  \dfrac{\log t}{
t\sqrt{(t - 1)}} d t =  \int\limits_{0}^{1} \dfrac{\log(1/ t)}{
\sqrt{\left(1 - t\right) t}} d t \\    & = \ \mbox{\cite[p.\! 490, 2.6.5.4]{pru}} \      =
- B (1/2, 1/2) \left[\psi (1/2) - \psi (1)\right] \\    &  = B (1/2, 1/2)\left(\psi (1) - \psi (1/2) \right)
  =
\pi \left((-\gamma) - (- \gamma - \log 4) \right) = \pi  \log 4 \, , \   \\    &
\boxed{=}   \ \dfrac{2 + \log 2}{ \pi} +\dfrac{\log 4}{ 2\pi} =\dfrac{2 + \log 2}{ \pi} +\dfrac{\log 2}{ \pi} =
2 \,\dfrac{1 + \log 2}{ \pi}< 1,077891\,.
\end{align*}

\noindent So that
\begin{align*}
    &  \left| \eurm{H}_{0} (x) \right|\leqslant  \dfrac{3}{2} \ , \quad x \in \Bb{R} \ .
\end{align*}

\noindent
 By using \eqref{f24int}, we get
\begin{align*}
    &  \left| \eurm{H}_{0} (x)\right| \leqslant
         \dfrac{3}{2 x^{2}} \leqslant \dfrac{3}{
         1+ x^{2}}\, , \quad |x| \geqslant 1 \,,
\end{align*}

\noindent while
\begin{align*}
     & \left| \eurm{H}_{0} (x)\right| \leqslant \dfrac{3}{2}\leqslant  \dfrac{3}{
         1+ x^{2}}\, , \quad x\in [-1,1] \,.
\end{align*}

\noindent This proves
\begin{align*}
    & \left| \eurm{H}_{0} (x) \right|\leqslant \dfrac{3}{1 + x^{2}} \ , \quad  x \in \Bb{R} \ ,
\end{align*}

\noindent and completes the proof of \eqref{f3fevagen}.

}}\end{clash}
\end{subequations}

\vspace{0,2cm}
\begin{subequations}
\begin{clash}{\rm{\hypertarget{r40}{}\label{case40}\hspace{0,00cm}{\hyperlink{br40}{$\uparrow$}}
  \ Actually, let $ x\!\in\! \Bb{R}$, $\delta \in \{0,1\}$, $\zeta\in \gamma (-1,1)$ and
$z\in \Bb{H}\setminus S_{\!{\tn{\frown}}}^{\infty}$. Since
\begin{align}\label{f1case40}
      &   \dfrac{d}{d \zeta} \ \ \dfrac{\zeta^{\delta}}{x^{\delta} \zeta-(-x)^{1-\delta}}=
      -\dfrac{1}{\big(x^{\delta} \zeta-(-x)^{1-\delta}\big)^{2}}  \,,
\end{align}

\noindent and
\begin{align*}
      &    \dfrac{d}{d z} \ \dfrac{\lambda (z)\lambda^{\,\prime} (\zeta)}{
      \lambda (\zeta)\left(\lambda (\zeta)-\lambda (z)\right)} =
      \dfrac{\lambda^{\,\prime} (\zeta)}{\lambda (\zeta)}\  \dfrac{d}{d z} \
      \left(-1 + \dfrac{\lambda (\zeta)}{\lambda (\zeta)-\lambda (z)}\right) \\    &   =
       \dfrac{\lambda^{\,\prime} (\zeta)}{\lambda (\zeta)}\ \dfrac{\lambda (\zeta)\lambda^{\,\prime} (z)}{\big(\lambda (\zeta)-\lambda (z)\big)^{2}}=
       \lambda^{\,\prime} (z)\dfrac{\lambda^{\,\prime} (\zeta)}{\big(
       \lambda (\zeta)-\lambda (z)\big)^{2}} \, , \
\end{align*}

\noindent we deduce from \eqref{f10fevagen} that
\begin{align*}
      \dfrac{d}{d z} \ \  \Psi^{\,\delta}_{\infty}(x;z) &  =  \dfrac{d}{d z} \ \
     \frac{1}{2\pi \imag }\int\limits_{{\fo{\gamma (-1,1)}}}
\dfrac{\lambda (z)\lambda^{\,\prime} (\zeta)}{\lambda (\zeta)\left(\lambda (\zeta)-\lambda (z)\right)}\dfrac{\zeta^{\delta}{\rm{d}}\zeta}{\big(x^{\delta}\zeta-(-x)^{1-\delta}\big)}  \\ & = \frac{1}{2\pi \imag }\int\limits_{{\fo{\gamma (-1,1)}}}
\dfrac{\lambda^{\,\prime} (z)\lambda^{\,\prime} (\zeta)}{\big(
       \lambda (\zeta)-\lambda (z)\big)^{2}}
\dfrac{\zeta^{\delta}{\rm{d}}\zeta}{\big(x^{\delta}\zeta-(-x)^{1-\delta}\big)}  \\ & =
\frac{1}{2\pi \imag }\int\limits_{{\fo{\gamma(-1,1)}}}
\dfrac{\zeta^{\delta}}{\big(x^{\delta}\zeta-(-x)^{1-\delta}\big)}
d \dfrac{\lambda^{\,\prime} (z)}{\lambda (z)-\lambda (\zeta)}= \left|\lambda (\pm 1) =\infty\right|  \\ & =
- \frac{1}{2\pi \imag }\int\limits_{{\fo{\gamma(-1,1)}}}
 \dfrac{\lambda^{\,\prime} (z)}{\lambda (z)-\lambda (\zeta)}
 d \dfrac{\zeta^{\delta}}{\big(x^{\delta}\zeta-(-x)^{1-\delta}\big)} \\ & \stackrel{\eqref{f1case40}}{=  }
 \frac{1}{2\pi \imag }\int\limits_{{\fo{\gamma(-1,1)}}}
 \dfrac{\lambda^{\,\prime} (z)}{\lambda (z)-\lambda (\zeta)} \dfrac{{\rm{d}} \zeta}{\big(x^{\delta}\zeta-(-x)^{1-\delta}\big)^{2}}\stackrel{\eqref{f1bsdenac}}{=  } \Phi^{\,\delta}_{\infty}(x;z)\,,
\end{align*}

\noindent i.e.,
\begin{align}\label{f2case40}
   \dfrac{d}{d z} \   \Psi^{\,\delta}_{\infty}(x;z) =  \Phi^{\,\delta}_{\infty}(x;z) \ , \quad z\in \Bb{H}\setminus S_{\!{\tn{\frown}}}^{\infty} \, , \  x\in \Bb{R}\setminus\{0\} \, , \  \delta \in \{0,1\} \,.
\end{align}

}}\end{clash}
\end{subequations}

\vspace{0,2cm}
\begin{subequations}
\begin{clash}{\rm{\hypertarget{r41}{}\label{case41}\hspace{0,00cm}{\hyperlink{br41}{$\uparrow$}}
  \ We prove \eqref{f12fevagen}. Let $ x\!\in\! \Bb{R}$,
$z\in \Bb{H}\setminus S_{\!{\tn{\frown}}}^{\infty}$  and
\begin{align*}\tag{\ref{f10fevagen}}
      & \Psi^{\,\delta}_{\infty}(x;z)= \frac{1}{2 }\!\!\!\!\!\int\limits_{{\fo{\gamma (-1,1)}}}\!\!\!
\dfrac{\lambda (z)\Theta_{4}(\zeta)^{4}\, \zeta^{\delta}{\rm{d}}\zeta}{
\big(\lambda(\zeta)-\lambda(z)\big)\big(x^{\delta}\zeta-(-x)^{1-\delta}\big)} \, , \   \  \delta\! \in\! \{0,1\}\,,
\end{align*}

\noindent or, by the identity  $\lambda^{\,\prime} (z)\!= \! \imag  \pi \lambda (z)
       \Theta_{4}(z)^{4}$, $z\in \Bb{H}$ (see \eqref{f19int}, \eqref{f2int}), it can be written as follows
\begin{align}\label{f1case41}
    &    \Psi^{\,\delta}_{\infty}(x;z)\!= \!\frac{1}{2\pi \imag }\!\!\!\!\!\int\limits_{{\fo{\gamma (-1,1)}}}\!\!\!\!\!
\dfrac{\lambda (z)\lambda^{\,\prime} (\zeta)}{\lambda (\zeta)\left(\lambda (\zeta)-\lambda (z)\right)}\dfrac{\zeta^{\delta}{\rm{d}}\zeta}{\big(x^{\delta}\zeta-(-x)^{1-\delta}\big)} \, , \ \delta \!\in \!\{0,1\} \,.
\end{align}

\noindent  For $z\in \fet \setminus [-1\!+\!2{\rm{i}}, 1+\!2{\rm{i}}] $ introduce
\begin{align}\label{f2case41}
    &  \Psi^{\hspace{0,02cm}\delta}_{{\tn{\sqcap}}}(x;z)\!= \!\frac{1}{2\pi \imag }\!\!\!\!\!\int\limits_{{\fo{\Pi (-1,1)}}}\!\!\!\!\!
\dfrac{\lambda (z)\lambda^{\,\prime} (\zeta)}{\lambda (\zeta)\left(\lambda (\zeta)-\lambda (z)\right)}\dfrac{\zeta^{\delta}{\rm{d}}\zeta}{\big(x^{\delta}\zeta-(-x)^{1-\delta}\big)} \, , \ \delta \!\in \!\{0,1\} \,,
\end{align}

\noindent where the contour $\Pi (-1,1)= (-1,-1+\!2{\rm{i}}]\cup [-1\!+\!2{\rm{i}}, 1+\!2{\rm{i}}] \cup [1+\!2{\rm{i}}, 1)$ passes from $-1$ to $1$.

By  transforming the contour $\gamma(-1,1)$ of integration in \eqref{f1case41} to $\Pi(-1,1)$ and using
 Lemma~\ref{lemoneto},  we obtain from the residue theorem \cite[p.\! 112]{con} that
\begin{align}\label{f3case41}
    &
\begin{array}{ll}
  \Psi^{\hspace{0,02cm}\delta}_{\infty}(x;z)\! =\!\Psi^{\hspace{0,02cm}\delta}_{{\tn{\sqcap}}}(x;z) \, ,   &   \quad
z\in \fetd  \sqcup  \Bb{H}_{\im >2} ,    \\[0,2cm]
   \Psi^{\hspace{0,02cm}\delta}_{\infty}(x;z)\! =\!\Psi^{\hspace{0,02cm}\delta}_{{\tn{\sqcap}}}(x;z) + \dfrac{z^{\delta}}{\big(x^{\delta} z-(-x)^{1-\delta}\big)}\, ,   &    \quad
z\in \fetu \cap \Bb{H}_{\im <2}\,  .
\end{array}
\end{align}

\noindent This means that the function $\Psi^{\hspace{0,02cm}\delta}_{{\tn{\sqcap}}}(x;z)  +  z^{\delta} (x^{\delta}z - (-x)^{1-\delta})^{-1}$, being holomorphic on
$\Bb{H}_{\im <2}\cap \fet $, coincides  on the set  $\Bb{H}_{\im <2} \cap \fetu  \subset  \Bb{H}_{\im <2}\cap \fet$  with the  function $\Psi^{\hspace{0,02cm}\delta}_{\infty}(x;z)$, which is  holomorphic on $\fetu  \supset  \Bb{H}_{\im <2} \cap \fetu$.

By the uniqueness theorem  for analytic functions (see \cite[p.\! 78]{con}),  we find that
 for $z \in  \fetd \subset \Bb{H}_{\im <2}\cap \fet$, the analytic
extension $\Psi^{\,\delta}_{\Bb{H}}(x;z)$ of the function $\Psi^{\hspace{0,02cm}\delta}_{\infty}(x;z)$ from
$\fetu $  to $\Bb{H}$ (see \eqref{f9fevagen})  equals the expression  $H(z):=\Psi^{\hspace{0,02cm}\delta}_{\infty}(x;z)  +  z^{\delta} (x^{\delta}z - (-x)^{1-\delta})^{-1}$ since
 $\Psi^{\hspace{0,02cm}\delta}_{{\tn{\sqcap}}}(x;z)=
\Psi^{\hspace{0,02cm}\delta}_{\infty}(x;z) $ holds for all  $z\in
\fetd $, in view of \eqref{f3case41}.

 But by  \eqref{f8bsdenac}(a) and \eqref{f6wdenac}, we see that $\fetd \!\subset \!\eusm{E}^{0}_{\!{\tn{\frown}}}\!\subset\!\Bb{H} \big\backslash  S_{\!{\tn{\frown}}}^{\infty}$, and
since $\eusm{E}^{0}_{\!{\tn{\frown}}}$ is simply connected it follows that
  the latter function $H(z)$  is actually holomorphic on $\eusm{E}^{0}_{\!{\tn{\frown}}}$.

  So that the equality
\begin{align*}\tag{\ref{f12fevagen}}
      &  \Psi^{\hspace{0,02cm}\delta}_{\Bb{H}}(x;z) = \Psi^{\,\delta}_{\infty}(x;z)
        + \dfrac{z^{\delta}}{x^{\delta} z-(-x)^{1-\delta}} \ , \quad
        z\in  \eusm{E}^{0}_{\!{\tn{\frown}}} \subset \Bb{H}\setminus S_{\!{\tn{\frown}}}^{\infty}\,,
\end{align*}

\noindent is proved  for arbitrary $\delta \in \{0,1\}$ and $x\in \Bb{R}$.

}}\end{clash}
\end{subequations}

\vspace{0,2cm}
\begin{subequations}
\begin{clash}{\rm{\hypertarget{r36}{}\label{case36}\hspace{0,00cm}{\hyperlink{br36}{$\uparrow$}}
  \ We prove \eqref{f1conhypfouser}. For $z= x + i y \in\Bb{H}$, $\overline{z}:=x + i y $ and
  $t \in \Bb{R}\setminus\{0\} $ the Poisson kernel
\begin{align*}
    &  2 \pi i P_{z} (t) :=  \dfrac{1}{t-z} - \dfrac{1}{t-\overline{z}} = \dfrac{1}{t - x - i y} - \dfrac{1}{t - x + i y} = \dfrac{2 i y}{(t - x)^{2} + y^{2}}  \, ,
\end{align*}

\noindent possesses the following property
\begin{align*}
     \dfrac{2 \pi i P_{z} (-1/t)}{t^{2}}&  = \dfrac{1}{t^{2}}\left(\dfrac{1}{(-1/t)-z} - \dfrac{1}{(-1/t)-\overline{z}}\right)= \dfrac{1}{t\left(-1-zt\right)} - \dfrac{1}{t\left(-1-t\overline{z}\right)} \\    &   =
\dfrac{t\left(-1-t\overline{z}\right) - t\left(-1-zt\right)}{t^{2}\left(-1-zt\right)\left(-1-t\overline{z}\right)}=
\dfrac{-t - t^{2}\overline{z} + t + t^{2} z}{t^{2}\left(-1-zt\right)\left(-1-t\overline{z}\right)} \\    &
=\dfrac{z-\overline{z}}{|z|^{2}\left(\dfrac{-1}{z}-t\right)\left(\dfrac{-1}{\overline{z}}-t\right) }   =
\dfrac{\dfrac{z-\overline{z}}{z \overline{z}}}{\left(t- \left(\dfrac{-1}{z}\right)\right)\left(t-\left(\dfrac{-1}{\overline{z}}\right)\right) } \\    &   =
\dfrac{\left(\dfrac{-1}{z}\right)-\left(\dfrac{-1}{\overline{z}}\right)}{
\left(t- \left(\dfrac{-1}{z}\right)\right)\left(t-\left(\dfrac{-1}{\overline{z}}\right)\right) } =
\dfrac{1}{t- \left(\dfrac{-1}{z}\right)}- \dfrac{1}{t-\left(\dfrac{-1}{\overline{z}}\right)} \\    &   =
2 \pi i P_{-1/z} (t) \, , \
\end{align*}

\noindent i.e.,
\begin{align}\label{f1case36}
    &  \dfrac{ P_{z} (-1/t)}{t^{2}} = P_{-1/z} (t ) \ , \quad  z \in\Bb{H}\, , \ \ t \in \Bb{R}\setminus\{0\}\,.
\end{align}

\noindent Since for every $z \in\Bb{H}$ the Poisson kernel $P_{z} (t)$ of the variable $t$ belongs to
$L^1 (\Bb{R})$, by \eqref{f05aint} and  Jordan's lemma \eqref{f4zmres}, for every $n \geqslant 1$ we have
\begin{align*}
    & 2 \pi i  \eurm{h}_{-n}^{\star}(P_{z})\!:=\!2 \pi i \int\limits_{\Bb{R}}\! P_{z}(t)\,
    e^{{\fo{{\rm{i}}\pi  n t }}} d t= \int\limits_{\Bb{R}}  e^{{\fo{{\rm{i}}\pi  n t }}} \left(\dfrac{1}{t-z} - \dfrac{1}{t-\overline{z}} \right)d t \\    &   =
    \lim\limits_{A\to+\infty} \int\limits_{-A}^{A} \dfrac{e^{{\fo{{\rm{i}}\pi  n t }}}d t}{t-z}-
       \lim\limits_{A\to+\infty}\int\limits_{-A}^{A}\dfrac{e^{{\fo{{\rm{i}}\pi  n t }}}d t}{t-\overline{z}} = 2 \pi i  e^{{\fo{{\rm{i}}\pi  n z }}} \,,
\end{align*}

\noindent from which
\begin{align}\label{f2case36}
    &  \eurm{h}_{-n}^{\star}(P_{z})=\int\limits_{\Bb{R}}\! P_{z}(t)\,
    e^{{\fo{{\rm{i}}\pi  n t }}} d t=e^{{\fo{{\rm{i}}\pi  n z }}} =
      e^{{\fo{  {\rm{i}}\pi  n (x+ i y) }}} \ , \quad  n \geqslant 1 \ , \quad  z \in\Bb{H}\,.
\end{align}

 \noindent Applying complex conjugation, taking into account that $P_{z}(t)\in \Bb{R}$ for arbitrary
 $z \in\Bb{H}$ and $t \in\Bb{R}$, we get
 \begin{align}\label{f3case36}
    & \hspace{-0,2cm} \eurm{h}_{n}^{\star}(P_{z})=\int\limits_{\Bb{R}}\! P_{z}(t)\,
    e^{{\fo{ - {\rm{i}}\pi  n t }}} d t=e^{{\fo{- {\rm{i}}\pi  n \overline{z} }}}=
      e^{{\fo{ - {\rm{i}}\pi  n (x-i y) }}}
      \ , \quad  n \geqslant 1 \ , \ \   z \in\Bb{H}\,,
\end{align}

\noindent while
\begin{align}\nonumber
   \eurm{h}_{0}^{\star}(P_{z}) &=  \int\limits_{\Bb{R}}\! P_{z}(t)\,
     d t =  \dfrac{1}{ \pi }\int\limits_{\Bb{R}} \dfrac{y d t}{(t- x)^{2} + y^{2}} = \dfrac{1}{ \pi }\left.\arctan \dfrac{t- x}{y}\right|_{t=-\infty}^{t = + \infty} \\    &   =1\ , \quad  z \in\Bb{H}\,.
\label{f4case36}\end{align}

\noindent Next,
\begin{align*}
     \eurm{m}_{-n}^{\star}(P_{z})  &:= \int\limits_{\Bb{R}}\! P_{z}(t) \,
    e^{{\fo{ - \dfrac{{\rm{i}} \pi n }{t} }}}  d t = \left|t^{\,\prime}= - 1/t \right| =
    \int\limits_{\Bb{R}}\! \dfrac{P_{z}(-1/t)}{t^{2}} \,
    e^{{\fo{  {\rm{i}}\pi  n t }}}  d t \\    &   =
     \int\limits_{\Bb{R}}\! P_{-1/z}(t)\,
    e^{{\fo{  {\rm{i}}\pi  n t }}}  d t = \eurm{h}_{-n}^{\star}(P_{-1/z})=e^{{\fo{ -\dfrac{{\rm{i}}\pi  n}{z}  }}} =e^{{\fo{ -\dfrac{{\rm{i}}\pi  n}{x+ i y}  }}}
       \ ,
\end{align*}

\noindent i.e.,
\begin{align}\label{f5case36}
    & \eurm{m}_{-n}^{\star}(P_{z})= \int\limits_{\Bb{R}}\! P_{z}(t) \,
    e^{{\fo{ - \dfrac{{\rm{i}} \pi n }{t} }}}  d t = e^{{\fo{ -\dfrac{{\rm{i}}\pi  n}{z}  }}} =e^{{\fo{ -\dfrac{{\rm{i}}\pi  n}{x+ i y}  }}}
       \ ,  \quad  n \geqslant 1 \ , \quad  z \in\Bb{H}\,.
\end{align}

\noindent Applying complex conjugation, as above ,we get
 \begin{align}\label{f6case36}
    &\hspace{-0,25cm} \eurm{m}_{n}^{\star}(P_{z})= \int\limits_{\Bb{R}}\! P_{z}(t) \,
    e^{{\fo{  \dfrac{{\rm{i}} \pi n }{t} }}}  d t = e^{{\fo{\dfrac{{\rm{i}}\pi  n}{\overline{z}}  }}} =e^{{\fo{ \dfrac{{\rm{i}}\pi  n}{x- i y}  }}}
       \ ,  \ \  n \geqslant 1 \ , \ \   z\!=x\!+\!iy \in\Bb{H}\,.
\end{align}

\noindent Finally,
\begin{align}\label{f7case36}
    & \hspace{-0,3cm} \begin{array}{ll}
        \eurm{h}_{n}^{\star}(P_{z})\!=\! e^{{\fo{ - {\rm{i}}\pi  n (x-i y)  }}}
       \! = \!e^{{\fo{ - \pi  n (y +i x)  }}}\,,   &  \ \
        \eurm{m}_{n}^{\star}(P_{z})\!= \!e^{{\fo{ \dfrac{{\rm{i}}\pi  n}{x- i y}  }}}\!=\! e^{{\fo{ -\dfrac{\pi  n}{y+ i x}  }}}\,,
                 \\
         \eurm{h}_{-n}^{\star}(P_{z})\!=\!e^{{\fo{  {\rm{i}}\pi  n (x+ i y) }}}\!= \!
          e^{{\fo{ - \pi  n (y- i x) }}} \,, &  \ \
          \eurm{m}_{-n}^{\star}(P_{z})\!= \!e^{{\fo{ -\dfrac{{\rm{i}}\pi  n}{x+ i y}  }}}\!= \!
          e^{{\fo{ -\dfrac{\pi  n}{y- i x}  }}}\,,
       \end{array}\hspace{-0,1cm}
\end{align}

\noindent for all $n\geqslant 1$, which together with \eqref{f3case36} and  Theorem~\ref{mresth1} give
\begin{align*}
      \dfrac{1}{ \pi } \dfrac{y}{(t- x)^{2} + y^{2}}& = P_{z} (t)\! =\! \eurm{h}_{0}^{\star}(P_{z})\eurm{H}_{0}(t)\! +\!
    \sum\nolimits_{n\in \Bb{Z}_{\neq 0} }\!
    \Big( \eurm{h}_{n}^{\star}(P_{z}) \eurm{H}_{n}(t)\! +\! \eurm{m}_{n}^{\star}(P_{z}) \eurm{M}_{n}(t)\Big) \\    &=\eurm{H}_{0}(t)\! +\!
     \sum\limits_{n\geqslant 1 }\!\Big( e^{{\fo{ - \pi  n (y +i x)  }}} \eurm{H}_{n}(t)\! +\! e^{{\fo{ -\dfrac{\pi  n}{y+ i x}  }}} \eurm{M}_{n}(t)\Big)   \\    &      +
      \sum\limits_{n\geqslant 1 }\!\Big(  e^{{\fo{ - \pi  n (y- i x) }}} \eurm{H}_{-n}(t)\! +\! e^{{\fo{ -\dfrac{\pi  n}{y- i x}  }}}\eurm{M}_{-n}(t)\Big) \ , \quad   \ t, x\in \Bb{R}, \ y > 0 \,,
\end{align*}

\noindent which completes the proof of \eqref{f1conhypfouser}.

}}\end{clash}
\end{subequations}


\subsection[\hspace{-0,25cm}. \hspace{0,05cm}Notes for Section~\ref{mres}]{\hspace{-0,11cm}{\bf{.}} Notes for Section~\ref{mres}}$\phantom{a}$


\vspace{0,2cm}
\begin{subequations}
\begin{clash}{\rm{\hypertarget{r37}{}\label{case37}\hspace{0,00cm}{\hyperlink{br37}{$\uparrow$}}
  \ For arbitrary $a,b > 0$ we calculate the integrals \eqref{f4pintforth1}. Obviously,
  \begin{align*}
    &  \int\limits_{0}^{\infty}
e^{{\fo{-\dfrac{b}{2}\left(t + \dfrac{1}{t}\right) -\dfrac{ 2 a t  }{t^{2} +1} }}}\dfrac{d t}{t^{2} +1} =
\int\limits_{0}^{\infty}
e^{{\fo{-b \dfrac{t + \dfrac{1}{t}}{2} -\dfrac{  a }{\dfrac{t + \dfrac{1}{t}}{2}} }}}\dfrac{d t}{t^{2} +1}
\boxed{=} \\    &
2 t^{\,\prime} = t + \dfrac{1}{t} \, , \  2 d t^{\,\prime} = \left(1-\dfrac{1}{t^{2}}\right) d t =
\left(t-\dfrac{1}{t}\right)\dfrac{d t}{t}
\, , \ \dfrac{d t}{t} = \dfrac{ 2 d t^{\,\prime} }{t-\dfrac{1}{t}} \,,
 \\    &
t^{2} - 2 t  t^{\,\prime} + 1 = 0 \, , \  t = t^{\,\prime} + \sqrt{\left(t^{\,\prime}\right)^{2} -1} \, , \  \dfrac{1}{t} = t^{\,\prime} - \sqrt{\left(t^{\,\prime}\right)^{2} -1} \\    &
t-\dfrac{1}{t}=2\sqrt{\left(t^{\,\prime}\right)^{2} -1} \, , \
\dfrac{d t}{t^{2} +1}=\dfrac{1}{ t + \dfrac{1}{t}} \dfrac{d t}{t}  = \dfrac{1}{2 t^{\,\prime}}\dfrac{ 2 d t^{\,\prime} }{2\sqrt{\left(t^{\,\prime}\right)^{2} -1}}
 \\    &
\boxed{=} \ \dfrac{1}{2}\int\limits_{1}^{\infty}e^{{\fo{-b t - \dfrac{a}{t}}}}\dfrac{d t}{t \sqrt{t^{2}-1}}
 \, , \
  \end{align*}

\noindent and\vspace{-0,2cm}
\begin{align*}
    &  \int\limits_{0}^{\infty}
e^{{\fo{-\dfrac{b}{2}\left(t + \dfrac{1}{t}\right) -\dfrac{ 2 a t  }{t^{2} +1} }}}\dfrac{d t}{t} =
\int\limits_{0}^{\infty}
e^{{\fo{-b \dfrac{t + \dfrac{1}{t}}{2} -\dfrac{  a }{\dfrac{t + \dfrac{1}{t}}{2}} }}}\dfrac{d t}{t}\boxed{=} \\    &
2 t^{\,\prime} = t + \dfrac{1}{t} \, , \  2 d t^{\,\prime} = \left(1-\dfrac{1}{t^{2}}\right) d t =
\left(t-\dfrac{1}{t}\right)\dfrac{d t}{t}
\, , \ \dfrac{d t}{t} = \dfrac{ 2 d t^{\,\prime} }{t-\dfrac{1}{t}} \,, \\    &
t^{2} - 2 t  t^{\,\prime} + 1 = 0 \, , \  t = t^{\,\prime} + \sqrt{\left(t^{\,\prime}\right)^{2} -1} \, , \  \dfrac{1}{t} = t^{\,\prime} - \sqrt{\left(t^{\,\prime}\right)^{2} -1} \\    &
t-\dfrac{1}{t}=2\sqrt{\left(t^{\,\prime}\right)^{2} -1} \, , \
\dfrac{d t}{t}=\dfrac{ 2 d t^{\,\prime} }{2\sqrt{\left(t^{\,\prime}\right)^{2} -1}}
 \\    &
\boxed{=} \ \int\limits_{1}^{\infty}e^{{\fo{-b t - \dfrac{a}{t}}}}\dfrac{d t}{ \sqrt{t^{2}-1}} \, , \
\end{align*}

\noindent where (see \cite[p.\! 82, (19)]{erd}, \cite[p.\! 376, 9.6.27]{abr}) for $x,y>0$ we have
\begin{align*}
    &  K_{0} (x) = \int\limits_{1}^{\infty}\dfrac{e^{-x t} d t}{\sqrt{t^{2}-1}} \ \Rightarrow \
 K_{0} \left(2\sqrt{x (y+1)}\right) =  \int\limits_{1}^{\infty}\dfrac{e^{{\fo{-x t - \dfrac{y}{t}}}} d t}{\sqrt{t^{2}-1}} \ \Rightarrow \  \\    &
-\int\limits_{1}^{\infty}\dfrac{e^{{\fo{-x t - \dfrac{y}{t}}}} d t}{t\sqrt{t^{2}-1}}= \dfrac{\partial}{\partial y} K_{0} \left(2\sqrt{x (y+1)}\right)    =- \sqrt{\dfrac{x}{y+1}}K_{1} \left(2\sqrt{x (y+1)}\right) \, , \
\end{align*}

\noindent i.e.,
\begin{align*}
    &   \int\limits_{0}^{\infty}
e^{{\fo{-\dfrac{b}{2}\left(t + \dfrac{1}{t}\right) -\dfrac{ 2 a t  }{t^{2} +1} }}}\dfrac{d t}{t^{2} +1} =
\dfrac{1}{2}\sqrt{\dfrac{b}{a+1}}K_{1} \left(2\sqrt{b (a+1)}\right) \, , \\    &
\int\limits_{0}^{\infty}
e^{{\fo{-\dfrac{b}{2}\left(t + \dfrac{1}{t}\right) -\dfrac{ 2 a t  }{t^{2} +1} }}}\dfrac{d t}{t} =
  K_{0} \left(2\sqrt{b (a+1)}\right)   \, , \  a, b > 0\,,
\end{align*}

\noindent which proves \eqref{f4pintforth1}.

}}\end{clash}
\end{subequations}



\vspace{0,1cm}

\addcontentsline{toc}{section}{A\hspace{0,03cm}.\hspace{0,07cm} References for Supplementary Notes}


\end{document}